 \theoremstyle{plain}
 \newtheorem{Thm}{Theorem}[section]
 \newtheorem{Cor}[Thm]{Corollary}
 \newtheorem{Lemma}[Thm]{Lemma}
 \newtheorem{Prop}[Thm]{Proposition}
 \newtheorem{introtheorem}{Theorem}
 \newtheorem{IntroThm}[introtheorem]{Theorem}
 \theoremstyle{definition}
 \newtheorem{Rem}[Thm]{Remark}
 \newtheorem{Ex}[Thm]{Example}
 \numberwithin{Thm}{section}
 \numberwithin{equation}{section}
\def\qquad{\quad\quad}
\def\msy#1{{\mathbb #1}}
\def\C{{\msy C}}
\def\N{{\msy N}}
\def\P{{\msy P}}
\def\Q{{\msy Q}}
\def\R{{\msy R}}
\def\Z{{\msy Z}}
\def\fa{{\mathfrak a}}
\def\fb{{\mathfrak b}}
\def\fg{{\mathfrak g}}
\def\fh{{\mathfrak h}}
\def\fj{{\mathfrak j}}
\def\fk{{\mathfrak k}}
\def\fl{{\mathfrak l}}
\def\fm{{\mathfrak m}}
\def\fn{{\mathfrak n}}
\def\fp{{\mathfrak p}}
\def\fq{{\mathfrak q}}
\def\fs{{\mathfrak s}}
\def\ft{{\mathfrak t}}
\def\fA{{\mathfrak A}}
\def\fB{{\mathfrak B}}
\def\fC{{\mathfrak C}}
\def\fN{{\mathfrak N}}
\def\fU{{\mathfrak U}}
\def\cA{{\mathcal A}}
\def\cB{{\mathcal B}}
\def\cC{{\mathcal C}}
\def\cD{{\mathcal D}}
\def\cE{{\mathcal E}}
\def\cF{{\mathcal F}}
\def\cH{{\mathcal H}}
\def\cI{{\mathcal I}}
\def\cL{{\mathcal L}}
\def\cM{{\mathcal M}}
\def\cN{{\mathcal N}}
\def\cO{{\mathcal O}}
\def\cP{{\mathcal P}}
\def\cQ{{\mathcal Q}}
\def\cR{{\mathcal R}}
\def\cS{{\mathcal S}}
\def\cT{{\mathcal T}}
\def\cU{{\mathcal U}}
\def\cV{{\mathcal V}}
\def\cW{{\mathcal W}}
\def\cX{{\mathcal X}}
\def\cZ{{\mathcal Z}}
\def\Ft{\mathscr{F}}
\def\WP{\cW\cP}
\def\Const{\mathrm{CT}}
\def\const{\Gamma}
\def\to{\rightarrow}
\def\Re{\mathrm{Re}\,}
\def\Im{\mathrm{Im}\,}
\def\Ad{\mathrm{Ad}}
\def\End{\mathrm{End}}
\def\Hom{\mathrm{Hom}}
\def\Id{\mathrm{Id}}
\def\ad{\mathrm{ad}}
\def\pr{\mathrm{pr}}
\def\tr{\mathrm{tr}\,}
\def\spec{\mathrm{spec}\,}
\def\trdeg{\mathrm{trdeg}\,}
\def\supp{\mathop{\rm supp}}
\def\Lie{\mathop{\rm Lie}}
\def\Ind{\mathrm{Ind}}
\def\dotvar{\, \cdot\,}
\def\diag{\mathrm{diag}}
\def\1{\mathbf{1}}
\def\tds{\mathrm{tds}}
\def\mc{\mathrm{mc}}
\def\bs{\backslash}
\def\spn{\mathrm{span}}
\def\eucl{\mathrm{eucl}}
\def\rank{\mathrm{rank}}
\def\Cen{\mathfrak{Z}}
\def\Sym{\mathrm{Sym}}
\def\Gr{\mathrm{Grass}}
\def\bp{{}^\backprime}
\def\nc{\mathrm{nc}}
\def\nm{\mathrm{nm}}
\def\open{\mathrm{open}}
\def\Diff{\mathbb{D}}
\def\ev{\mathrm{ev}}
\def\Pl{\mathrm{Pl}}
\def\Iwasawa{a}
\def\hor{\mathrm{hor}}
 \title{The most continuous part of the Plancherel decomposition for a real spherical space}
 \author{Job J.~Kuit, Eitan Sayag}
 \date{}        
\begin{document}
 \maketitle
\begin{abstract}
In this article we give a precise description of the Plancherel decomposition of the most continuous part of $L^{2}(Z)$ for a real spherical homogeneous space $Z$. Our starting point is the recent construction of Bernstein morphisms by Delorme, Knop, Kr{\"o}tz and Schlichtkrull. The most continuous part decomposes into a direct integral of unitary principal series representations. We give an explicit construction of the $H$-invariant functionals on these principal series. We show that for generic induction data the multiplicity space equals the full space of $H$-invariant functionals. Finally, we determine the inner products on the multiplicity spaces by refining the Maa\ss-Selberg relations.
\end{abstract}
 \tableofcontents

\section{Introduction}
In this paper we provide a complete description of the most continuous part of the Plancherel decomposition for a unimodular real spherical homogeneous space.

Let $Z:=G/H$, where $G=\underline{G}(\R)$ is the group of real points of a connected reductive algebraic group $\underline{G}$ defined over $\R$ and $H=\underline{H}(\R)$ the set of real points of an algebraic subgroup $\underline{H}$ of $\underline{G}$.
We assume that $Z$ is unimodular and hence admits a positive $G$-invariant Radon measure $\mu_{Z}$.
The basic problem in harmonic analysis on $Z$ is to obtain an explicit description of the Plancherel decomposition of the regular representation of $G$ on $L^{2}(Z,\mu_{Z})$ into a direct integral of irreducible unitary representations of $G$.

In the case that the group $G$ is considered as a homogeneous space of $G\times G$ such an explicit description of the Plancherel decomposition is found in the celebrated work of Harish-Chandra \cite{Harish-Chandra_HarmonicAnalyisOnRealReductiveGroupsI},
\cite{HarishChandra_HarmonicAnalysisOnRealReductiveGroupsII}, \cite{HarishChandra_HarmonicAnalysisOnRealReductiveGroupsIII}.
For real reductive symmetric spaces it was obtained by Delorme in \cite{DelormeFormuleDePlancherelPourLEspaceSymmetriques} and independently by Van den Ban and Schlichtkrull in \cite{vdBanSchlichtkrull_PlancherelDecompositionI}, \cite{vdBanSchlichtkrull_PlancherelDecompositionII}.

Recall that the space $Z=G/H$ is called symmetric in case $H$ is an open subgroup of the fixed point subgroup $G^{\sigma}$ for an involutive automorphism $\sigma:G \to G$.
The representations of $G$ occurring in the Plancherel decomposition of a reductive symmetric space $Z$ split into finitely many {\em series} according to the (class of) parabolic subgroup $P\subseteq G$ from which they are induced. The relevant parabolic subgroups of $G$ are the so-called $\sigma$-parabolics, namely those parabolic subgroups $P$ for which $P$ and $\sigma(P)$ are opposite to each other. With a Langlands decomposition $P=M_{P}A_{P}N_{P}$, with $\sigma(A_{P})=A_{P}$, the part attached to $P$ has the form of a direct integral of generalized principal series representations.
More specifically, these are induced representations $\Ind_{P}^{G}(\xi\otimes\lambda\otimes\1)$,
where $\xi$ is in the discrete series of representations for the symmetric space $M_{P}/M_{P} \cap H$ and $\lambda$ is a unitary character of $\fa_{P}=\Lie(A_{P})$ that vanishes on $\fa_{P} \cap \fh$. The part corresponding to the minimal $\sigma$-parabolic subgroup $Q$ is called the most continuous part of $L^{2}(Z)$. The Plancherel decomposition of the most continuous part was determined for real reductive symmetric spaces in the work of Van den Ban and Schlichtkrull in \cite{vdBanSchlichtkrull_MostContinuousPart}. This was based on the earlier works of Van den Ban on invariant linear functionals \cite{vdBan_PrincipalSeriesI}, \cite{vdBan_PrincipalSeriesII}. The most continuous part of $L^{2}(Z)$ decomposes as
$$
L_{\mc}^{2}(Z)
    \simeq \widehat{\bigoplus_{\xi\in\widehat{M}_{Q}}}\int_{i(\fa_{Q}/\fa_{Q}\cap\fh)_{+}^{*}}^{\oplus} V^{*}(\xi)\otimes\Ind_{\overline{Q}}^{G}(\xi\otimes\lambda\otimes \1)\,d\lambda,
$$
where $d\lambda$ is the Lebesgue measure on $i(\fa_{Q}/\fa_{Q}\cap\fh)^{*}$ and $i(\fa_{Q}/\fa_{Q}\cap\fh)_{+}^{*}$ is a fundamental domain for the stabilizer of $(\fa_{Q}/\fa_{Q}\cap \fh)^{*}$ in the Weyl group. The multiplicity spaces $V^{*}(\xi)$ are independent of $\lambda$. Moreover, $V^{*}(\xi)$ is non-zero only for finite dimensional unitary representations $\xi$ of $M_{Q}$.

A homogeneous space $Z$ is called real spherical if a minimal parabolic subgroup $P$ of $G$ admits an open orbit in $Z$.
All real reductive symmetric spaces are real spherical. A remarkable property of the class of real spherical spaces is the fact that all irreducible smooth representations of $G$ admit a finite dimensional space of $H$-invariant functionals by \cite[Theorem C]{KobayashiOshima_FiniteMultiplicitiyTheorems} and \cite{KrotzSchlichtkrull_MultiplicityBoundsAndSubrepresentationTheorem}. This property makes harmonic analysis on real spherical spaces suitable for developing Plancherel theory.

\medbreak

In this paper we determine the Plancherel decomposition of the most continuous part of $L^{2}(Z)$ for a real spherical space $Z$, thus generalizing the main result of \cite{vdBanSchlichtkrull_MostContinuousPart}.

Our starting point is the recent work of Delorme, Knop, Kr{\"o}tz and Schlichtkrull \cite{DelormeKnopKrotzSchlichtkrull_PlancherelTheoryForRealSphericalSpacesConstructionOfTheBernsteinMorphisms}.
Their construction of Bernstein morphisms allows to decompose $L^{2}(Z)$
into finitely many {\em blocks} of representations, each attached to a so-called boundary degeneration of $Z$. The block for the most degenerate of these boundary degenerations we call the most continuous part of $L^{2}(Z)$.
We show that, as in the symmetric case, the most continuous part decomposes into a direct integral of principal series representations. To determine the Plancherel decomposition of the most continuous part we construct linear functionals on these principal series. For generic parameters our construction provides a basis for the space of $H$-invariant linear functionals.
We then show the key result that all $H$-invariant functionals are tempered and the wave packets constructed using these functionals are square integrable.
Finally, by refining the Maa{\ss}-Selberg relations of \cite{DelormeKnopKrotzSchlichtkrull_PlancherelTheoryForRealSphericalSpacesConstructionOfTheBernsteinMorphisms}, we obtain a complete description of the inner product on the multiplicity spaces. This yields the full description of the most continuous part of $L^{2}(Z)$.

Assuming that the twisted discrete series conjecture from  \cite[(1.3)]{KrotzKuitOpdamSchlichtkrull_InfinitesimalCharactersOfDiscreteSeriesForRealSphericalSpaces} holds, the most continuous part of $L^{2}(Z)$ exhausts $L^{2}(Z)$ in case $Z$ is a complex spherical space, i.e., in case $G$ and $H$ are both complex. Thus, our construction is expected to yield the full Plancherel formula for complex spherical spaces.

\subsection{The most continuous part via Bernstein morphisms}
To describe the most continuous part $L^{2}(Z)_{\mc}$ of $L^{2}(Z)$ and the results in this article we recall some important invariants of the real spherical homogeneous space $Z$, boundary degenerations of $Z$, twisted discrete series representations and finally the Bernstein morphism, relating $L^{2}(Z)$ to twisted discrete series representations for boundary degenerations of $Z$.

We fix a minimal parabolic subgroup $P$ and a well chosen (with respect to $H$) Langlands decomposition $P=MAN$.
Inside the Lie algebra $\fa$ of $A$ one finds the compression cone, which is an open cone $\cC$ whose closure is finitely generated and contains $\fa_{\fh}:=\fa\cap \fh$. The cone $\overline{\cC}/\fa_{\fh}\subseteq \fa/\fa_{\fh}$ serves as a fundamental domain for a finite reflection group $W_{Z}$, called the little Weyl group of $Z$. Attached to the little Weyl group is a root system $\Sigma_{Z}$, called the spherical root system. The faces of the cone $\overline{\cC}$ are parameterized by subsets of $\Sigma_{Z}$, i.e., the sets
$$
\cF_{I}
:=\overline{\cC}\cap \bigcap_{\alpha\in I}\ker(\alpha)
\qquad(I\subseteq\Sigma_{Z})
$$
are precisely the faces of $\overline{\cC}$.

In \cite{KnopKrotz_ReductiveGroupActions} a smooth $G$-equivariant compactification $\widehat{\underline{Z}}(\R)$ of $\underline{Z}(\R)$ was constructed. For every $I\subseteq\Sigma_{Z}$ and $X$ contained in the relative interior of $\cF_{I}$ the limit
$$
z_{I}
:=\lim_{t\to\infty}\exp(tX) H\in\widehat{\underline{Z}}(\R)
$$
exists and does not depend on the choice of $X$. The stabilizer of $z_{I}$ is a real algebraic subgroup of $G$, and hence equals the set of real points of an algebraic subgroup $\widehat{\underline{H}}_{I}$ of $\underline{G}$ defined over $\R$. We note that $A_{I}:=\exp(\spn \cF_{I})$ is a subgroup of  $\widehat{\underline{H}}_{I}(\R)$. We set $\widehat{\underline{Z}}_{I}:=\underline{G}/\widehat{\underline{H}}_{I}$. Now $\widehat{\underline{Z}}(\R)$ admits a stratification in $G$-manifolds of the form $\widehat{\underline{Z}}_{I}(\R)$ where
$I\subseteq\Sigma_{Z}$. In the case where $Z$ admits a wonderful compactification $
\widehat{\underline{Z}}$, one has
$$
\widehat{\underline{Z}}(\R)
=\underline{Z}(\R)\cup\bigcup_{I\subseteq\Sigma_{Z}}\widehat{\underline{Z}}_{I}(\R).
$$
In the general case there is a need to use multiple copies of $G$-manifolds of the form $\widehat{\underline{Z}}_{I}(\R)$.

We use these spaces to define the boundary degenerations.
The group $\widehat{H}_{I}(\R)$ acts on the normal space of $\widehat{\underline{Z}}_{I}(\R)$ at the point $z_{I}$. The kernel of this representation on the normal space is a normal real algebraic subgroup of $\widehat{H}_{I}(\R)$, i.e., there exists a normal algebraic subgroup $\underline{H}_{I}$ of $\widehat{\underline{H}}_{I}$ so that the kernel of the isotropy action of $\widehat{H}_{I}(\R)$ on the normal space of $\widehat{\underline{Z}}_{I}(\R)$ at $z_{I}$ is equal to $\underline{H}_{I}(\R)$. The quotient $\widehat{\underline{H}}_{I}(\R)/\underline{H}_{I}(\R)$ is abelian. Its identity component is equal to $A_{I}/(A_{I}\cap H)$.

We define the algebraic varieties
$$
\underline{Z}_{I}
:=\underline{G}\cdot z_{I}
\qquad(I\subseteq\Sigma_{z}).
$$
These varieties are called boundary degenerations of $\underline{Z}$. The manifold $\underline{Z}_{I}(\R)$ is a finite union of homogeneous spaces for $G$, each of which is real spherical and is unimodular. The group $A_{I}$ acts from the right on $\underline{Z}_{I}(\R)$. The kernel of this action is $A_{H}:=\exp(\fa_{\fh})$.

The right-action of $A_{I}$ on $\underline{Z}_{I}(\R)$ induces a right-action on $L^{2}\big(\underline{Z}_{I}(\R)\big)$, which commutes with the left-regular representation of $G$. The decomposition of $L^{2}\big(\underline{Z}_{I}(\R)\big)$ with respect to the right-action of $A_{I}$ yields a disintegration in unitary $G$-modules
$$
L^{2}\big(\underline{Z}_{I}(\R)\big)
=\int_{\rho+i(\fa_{I}/\fa_{\fh})^{*}}^{\oplus}L^{2}\big(\underline{Z}_{I}(\R),\chi\big)\,d\chi.
$$
Here $\rho\in (\fa/\fa_{\fh})^{*}$ is an element so that the sections of the line bundle
$$
\underline{Z}_{I}(\R)\times_{A_{I}}\C_{\chi}\to \underline{Z}_{I}(\R)/A_{I}.
$$
with $\chi\in\rho+i(\fa_{I}/\fa_{\fh})^{*}$ are half-densities,
$L^{2}\big(\underline{Z}_{I}(\R),\chi\big)$ is the space of square integrable sections of this line bundle and $d\chi$ is the Lebesgue measure on $\rho_{Q}+i(\fa_{I}/\fa_{\fh})^{*}$.

The irreducible subrepresentations of $L^{2}_{\tds}\big(\underline{Z}_{I}(\R),\chi\big)$ for any $\chi\in\rho_{Q}+i(\fa_{I}/\fa_{\fh})^{*}$ are called twisted discrete series representations.
Let $L^{2}_{\tds}\big(\underline{Z}_{I}(\R),\chi\big)$ be the closure of the span of all irreducible subrepresentations of $L^{2}\big(\underline{Z}_{I}(\R),\chi\big)$. The spaces $L^{2}_{\tds}\big(\underline{Z}_{I}(\R),\chi\big)$ depend measurably on the character $\chi$.
We define
$$
L^{2}_{\tds}\big(\underline{Z}_{I}(\R)\big)
:=\int_{\rho_{Q}+i(\fa_{I}/\fa_{\fh})^{*}}^{\oplus}L^{2}_{\tds}\big(\underline{Z}_{I}(\R),\chi\big)\,d\chi.
$$

The main result of \cite{DelormeKnopKrotzSchlichtkrull_PlancherelTheoryForRealSphericalSpacesConstructionOfTheBernsteinMorphisms} is the construction of a map
\begin{equation}\label{eq Def B}
B:\bigoplus_{I\subseteq\Sigma_{Z}}L^{2}_{\tds}\big(\underline{Z}_{I}(\R)\big)\to L^{2}\big(\underline{Z}(\R)\big)
\end{equation}
with the following properties: $B$ is $G$-equivariant, surjective, isospectral, and for every $I\subseteq\Sigma_{Z}$ the restriction
\begin{equation}\label{eq Def B_I}
B_{I}
:=B|_{L^{2}_{\tds}\big(\underline{Z}_{I}(\R)\big)}
\end{equation}
is a sum of partial isometries. The existence of such a map goes back to ideas of Bernstein and hence $B$ is called the Bernstein morphism.
The Bernstein morphism was first constructed by Sakellaridis and Venkatesh for $p$-adic spherical spaces in \cite{SakellaridisVenkatesh_PeriodsAndHarmonicAnalysisOnSphericalVarieties}.

We mention here that for the case that $G$ is split and under the assumption of a conjecture on the nature of twisted discrete series representations, Delorme determined the kernel of the Bernstein morphism in \cite{Delorme_ScatteringOperators}. The kernel is described by so-called scattering operators.

In this article we focus on  $I=\emptyset$. The subspace
$$
L^{2}_{\mc}(Z)
:=\Im\big(B_{\emptyset}\big)\cap L^{2}(Z)
$$
decomposes in the largest continuous families of representations. Therefore, $L^{2}_{\mc}(Z)$ is called the most continuous part of $L^{2}(Z)$. For general $I\subseteq \Sigma_{Z}$  very little is known about the twisted discrete series of representations for $\underline{Z}_{I}(\R)$. This is different for $I=\emptyset$ because the degeneration $\underline{Z}_{\emptyset}(\R)$ can be described rather explicitly.

The boundary degeneration $\underline{Z}_{\emptyset}(\R)$ equals a finite union of copies of one real spherical homogeneous space for $G$ which we denote by $Z_{\emptyset}=G/H_{\emptyset}$. To be more precise, the copies of $Z_{\emptyset}$ in $\underline{Z}_{\emptyset}(\R)$ are parameterized by the open $P$-orbits in $\underline{Z}(\R)$.

The local structure theorem of \cite{KnopKrotzSchlichtkrull_LocalStructureTheorem} applied to the spherical space $Z$, provides an adapted parabolic subgroup $Q\subseteq G$ and Langlands decomposition $Q=M_{Q}A_{Q}N_{Q}$ with $A_{Q}\subseteq A$. Let $\overline{Q} =M_{Q}A_{Q}\overline{N}_{Q}$ be the opposite parabolic. For a reductive symmetric space $Q$ is the minimal $\sigma$-parabolic subgroup. Now the space $Z_{\emptyset}$ can be explicitly described as
$$
Z_{\emptyset}
=G/H_{\emptyset},
\qquad
H_{\emptyset}
=(M_{Q} \cap H)(A \cap H)\overline{N}_{Q}.
$$
In this case $A_{\emptyset}=A$.
The fact that the subgroup $H_{\emptyset}$ satisfies
$$
\overline{N}_{Q}\subseteq H_{\emptyset}\subseteq \overline{Q}
$$
makes decomposing $L^{2}(Z_{\emptyset})$ into a direct integral of irreducible unitary representation of $G$ easy.
Indeed employing induction by stages we obtain
$$
L^{2}(Z_{\emptyset})
=\Ind_{H_{\emptyset}}^{G}(\1)
=\Ind_{\overline{Q}}^{G}\Big(\Ind_{H_{\emptyset}}^{\overline{Q}}(\1)\Big).
$$
Moreover,
$$
\Ind_{\overline{Q}}^{G}\Big(\Ind_{H_{\emptyset}}^{\overline{Q}}(\1)\Big)
=L^{2}\big(M_{Q}/M_{Q}\cap H\big)\otimes L^{2}\big(A/A\cap H\big)
$$
and hence
$$
L^{2}(Z_{\emptyset})
    \simeq \widehat{\bigoplus_{\xi\in\widehat{M}_{Q}}}\int_{i(\fa/\fa_{\fh})^{*}_{+}}^{\oplus} \cM_{\emptyset,\xi}\otimes\Ind_{\overline{Q}}^{G}(\xi\otimes\lambda\otimes \1)\,d\lambda.
$$
Here $d\lambda$ is the Lebesgue measure on $i(\fa/\fa_{\fh})^{*}$ and $i(\fa/\fa_{\fh})^{*}_{+}$ is a fundamental domain for the stabilizer of $i(\fa/\fa_{\fh})^{*}$ in the Weyl group.
The space $\cM_{\emptyset,\xi}$ is the so-called multiplicity space attached to the representation $\Ind_{\overline{Q}}^{G}(\xi \otimes \lambda\otimes \1)$. It is independent of $\lambda$ and is only non-zero for finite dimensional unitary representations of $M_{Q}$. It follows from this description of the Plancherel decomposition that all irreducible unitary representations occurring in $L^{2}(Z_{\emptyset})$ belong to the twisted discrete series of representation for $Z_{\emptyset}$.
Furthermore, the twisted discrete series for $Z_{\emptyset}$ consists of the principal series representations of the form $\Ind_{\overline{Q}}^{G}(\xi \otimes \lambda)$ with $\lambda\in i(\fa/\fa_{\fh})^{*}$ and $(\xi, V_{\xi})$ a finite dimensional unitary representation of $M_{Q}$.

Invoking the formal properties of the Bernstein maps described above, we obtain a decomposition of $L_{\mc}^{2}(Z)$ as
$$
L_{\mc}^{2}(Z)
    \simeq \widehat{\bigoplus_{\xi\in\widehat{M}_{Q,\mathrm{fu}}}}\int_{i(\fa/\fa_{\fh})^{*}}^{\oplus} \cM_{\xi,\lambda}\otimes\Ind_{\overline{Q}}^{G}(\xi\otimes\lambda\otimes \1)\,d\lambda.
$$
Here $\widehat{M}_{Q,\mathrm{fu}}$ denotes the set of equivalence classes of finite dimensional unitary representation of $M_{Q}$.
A precise description of the Plancherel decomposition of $L^{2}_{\mc}(Z)$ amounts to the determination of the multiplicity spaces $\cM_{\xi,\lambda}$ with their inner product structure.

The elements of the multiplicity spaces can be interpreted as $H$-invariant continuous linear functionals on the space of smooth vectors for principal series representations $\Ind_{\overline{Q}}^{G}(\xi\otimes\lambda\otimes \1)$. As such, they can be studied as $V^{*}_{\xi}$-valued distributions on $Z$.

\subsection{Main results}
\label{Subsectoin Introduction - Main results}
Recall that we denote the adapted parabolic subgroup by $Q$.
To formulate our results concerning the multiplicity spaces for the representation $\Ind_{\overline{Q}}^{G}(\xi\otimes\lambda\otimes \1)$ and the Plancherel decomposition of $L^{2}_{\mc}(Z)$, we need some preparation. For the proofs in the text it is more convenient to work with  $V^{*}_{\xi}$-valued distributions rather than functionals. However, for clarity of exposition we state our results here in terms of continuous linear functionals.

More is known about $P$-orbits in $Z$ than about $\overline{Q}$ or $Q$-orbits. Therefore, instead of representations induced from $\overline{Q}$, we rather first consider representations induced from the minimal parabolic subgroup $P$. To describe the connection between the relevant representations induced from $\overline{Q}$ and representations induced from $P$, we fix a finite dimensional unitary representation $(\xi, V_{\xi})$ of $M_{Q}$.
Such a representation is necessarily trivial on the connected subgroup of $M_{Q}$ with Lie algebra equal to the sum of all non-compact simple ideals in the Lie algebra of $M_{Q}$. Therefore, for $\lambda\in \fa_{Q,\C}^{*}$ the representation $\Ind_{Q}^{G}(\xi\otimes\lambda\otimes 1)$ is a subrepresentation of $\Ind_{P}^{G}(\xi\big|_{M}\otimes\lambda+\rho_{P,Q}\otimes1)$, where $\rho_{P,Q}$ is the half sum of all roots of $\fa$ that occur in $P\cap M_{Q}$. Moreover, for generic $\lambda\in \fa_{Q,\C}^{*}$ the representations $\Ind_{\overline{Q}}^{G}(\xi\otimes\lambda\otimes \1)$ and $\Ind_{Q}^{G}(\xi\otimes\lambda\otimes 1)$ are equivalent.

We write $\cH_{\overline{Q},\xi,\lambda}$, $\cH_{Q,\xi,\lambda}$ and $\cH_{P,\xi,\lambda}$ for the spaces of smooth vectors for the representations $\Ind_{\overline{Q}}^{G}(\xi\otimes\lambda\otimes 1)$, $\Ind_{Q}^{G}(\xi\otimes\lambda\otimes 1)$ and $\Ind_{P}^{G}(\xi\big|_{M}\otimes\lambda+\rho_{P,Q}\otimes1)$, respectively. Now for generic $\lambda\in\fa_{Q,\C}^{*}$
$$
\cH_{\overline{Q},\xi,\lambda}\simeq\cH_{Q,\xi,\lambda}\subseteq\cH_{P,\xi,\lambda}.
$$
Our concern is with $H$-invariant continuous linear functionals on $\cH_{\overline{Q},\xi,\lambda}\simeq \cH_{Q,\xi,\lambda}$.
It is a remarkable fact that every such functional on $\cH_{Q,\xi,\lambda}$ is obtained by restricting an $H$-fixed continuous linear functional on $\cH_{P,\xi,\lambda}$.  The geometry of the orbits makes it more convenient to first determine the $H$-fixed continuous functionals on $\cH_{P,\xi,\lambda}$ and with that those on the  $\cH_{\overline{Q},\xi,\lambda}$, rather than considering functionals on  $\cH_{\overline{Q},\xi,\lambda}$ directly.

The analysis of $H$-fixed continuous linear functionals on $\cH_{P,\xi,\lambda}$ requires a closer study of the $P$-orbits in $Z$. We now discuss some aspects of this.
For $z \in Z$ we denote by $H_{z}$ the stabilizer of $z$ in $G$ and by $\fh_{z}=\Lie(H_{z})$ its Lie algebra. For every element $X \in \fa$  the limit
\begin{equation}\label{eq Intro fh_{z,X} def}
\fh_{z,X}:=\lim_{t \to \infty}\Ad\big(\exp(tX)\big)\fh_{z}
\end{equation}
exists in the Grassmannian manifold. Let $\cO$ be a $P$-orbit in $Z$. The subspace $\fa_{\cO}:=\fa \cap \fh_{z,X}$ with $z\in\cO$ and $X\in \fa^{-}$ is an invariant of $\cO$ as it is independent of the choices of $z$ and $X$. This allows us to define the rank of $\cO$ by
$$
\rank(\cO)
=\dim(\fa/\fa_{\cO}).
$$
For every open $P$-orbit $\cO$ we have $\fa_{\cO}=\fa_{\fh}$. The rank of each open orbit is therefore the same; this is an invariant of $Z$ called the rank of $Z$.
The rank of any $P$-orbit is bounded by $\rank(Z)$ and an orbit is called of maximal rank if $\rank(\cO)=\rank(Z)$. The set of maximal rank orbits is in general strictly larger than the set of open orbits. For example, in the group case every $P$-orbit is of maximal rank. See Example \ref{Ex Limits in group case}.
For our purposes the set of maximal rank orbits $\cO$ with $\fa_{\cO}=\fa_{\fh}$ is of great importance. We denote this set by $(P\bs Z)_{\fa_{\fh}}$. For many real spherical spaces the set of open $P$-orbits does not exhaust  $(P\bs Z)_{\fa_{\fh}}$. This is for example the case for $Z=G/\overline{N}_{P}$ and $Z=\mathrm{SO}(5,\C)/\mathrm{GL}(2,\C)$.

For any $H$-fixed continuous linear functional $\ell$ on $\cH_{P,\xi,\lambda}$ one can naturally attach a $V_{\xi}^{*}$-valued distribution $\mu_{\ell}$ on $Z$ that is left-$P$ equivariant.
For such distributions on $Z$ we denote by $(P \bs Z)_{\ell}$ the set of $P$-orbits in $\supp(\mu_{\ell})$ that are open in the relative topology of $\supp(\mu_{\ell})$.
Our first result is a strong restriction on the support of the distributions $\mu_{\ell}$ when the induction parameter $\lambda$ is generic.
Furthermore, we show that these distributions do not admit transversal derivatives.
More precisely, we prove the following:

\begin{IntroThm}[Theorems \ref{Thm condition on orbits for given generic lambda}, \ref{Thm bound on transversal degree} \& \ref{Thm Construction on max rank orbits for Q}]\label{Thm Intro 1}
There exists a finite union $\cS$ of hyperplanes in $(\fa/\fa_{\fh})^{*}$ so that for $\lambda\in (\fa/\fa_{\fh})^{*}_{\C}$ with $\Im\lambda\notin\cS$ the $H$-fixed continuous linear functionals $\ell$ on $\cH_{P,\xi,\lambda}$ satisfy the following.
\begin{enumerate}[(i)]
\item\label{Thm Intro 1 - item 1}
Only maximal rank orbits with $\fa_{\cO}=\fa_{\fh}$ contribute to $(P\bs Z)_{\ell}$, i.e.,
$$
(P \bs Z)_{\ell}
\subseteq (P\bs Z)_{\fa_{\fh}}.
$$
\item\label{Thm Intro 1 - item 2}
For each orbit $\cO\in (P\bs Z)_{\ell}$ there exists a representative $x_{\cO}\in G$ and an $\eta_{\cO}\in (V_{\xi}^{*})^{M_{Q}\cap x_{\cO}Hx_{\cO}^{-1}}$ so that for every $f\in \cH_{P,\xi,\lambda}$ with
$$
\supp(f)\cap\bigcup_{\cO \in (P \bs Z)_{\mu}} \partial \cO
=\emptyset
$$
we have the formula
$$
\ell(f)
=\sum_{\cO \in (P \bs Z)_{\ell}} \int_{(x_{\cO}^{-1}Px_{\cO} \cap H) \bs H} \big\langle\eta_{\cO},f(x_{\cO}h)\big\rangle dh,
$$
where $dh$ denotes an $H$-invariant Radon measure on $(x_{\cO}^{-1}Px_{\cO} \cap H) \bs H$.
In particular, the distribution $\mu_{\ell}$ attached to the functional $\ell$ does not admit any transversal derivatives.
\item\label{Thm Intro 1 - item 3}
Every non-zero $H$-fixed continuous linear functional $\ell$ on $\cH_{P,\xi,\lambda}$ restricts to a non-zero $H$-fixed continuous linear functional on $\cH_{Q,\xi,\lambda}$. In fact, the restriction map
$$
\Hom_{H}\big(\cH_{P,\xi,\lambda},\C\big)\to \Hom_{H}\big(\cH_{Q,\xi,\lambda},\C\big)
$$
is an isomorphism.
\end{enumerate}
\end{IntroThm}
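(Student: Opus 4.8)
The plan is to translate $H$-fixed functionals into left-$P$-equivariant distributions on $Z$ and to analyse these distributions orbit by orbit, using the asymptotic geometry encoded in the limit subalgebras $\fh_{z,X}$. To an $H$-fixed continuous functional $\ell$ on $\cH_{P,\xi,\lambda}$ one attaches in the standard way the $V_\xi^*$-valued distribution $\mu_\ell$ on $Z$ that is left-$P$-equivariant for the character built from $\xi$, $\lambda+\rho_{P,Q}$ and the modular functions; since $\ell\mapsto\mu_\ell$ is injective it suffices to control such distributions. The support of $\mu_\ell$ is closed and $P$-invariant, hence a union of $P$-orbit closures, and $(P\bs Z)_\ell$ consists of its open orbits. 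Fix $\cO\in(P\bs Z)_\ell$ and a $P$-invariant open $U\subseteq Z$ in which $\cO$ is closed; then $\mu_\ell|_U$ is a nonzero $P$-equivariant $V_\xi^*$-valued distribution supported on $\cO\cong P/(P\cap H_z)$. Using a $P$-equivariant tubular neighbourhood one expands $\mu_\ell|_U$ into a transversal Taylor series along $\cO$; its leading term is a nonzero $P$-equivariant distribution on $\cO$ itself, and testing its equivariance on the subtorus $\exp(\fa_\cO)$, which fixes the base point and acts trivially on $\cO$, forces $\lambda$ — up to an explicit $\xi$- and $\rho$-dependent shift — to vanish on $\fa_\cO$. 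As $\fa_\cO$ depends only on $\cO$, this is one linear condition for each orbit with $\fa_\cO\supsetneq\fa_\fh$; letting $\cS$ be the finite union of the corresponding hyperplanes, for $\Im\lambda\notin\cS$ one gets $\fa_\cO=\fa_\fh$, which is precisely $(P\bs Z)_\ell\subseteq(P\bs Z)_{\fa_\fh}$. This yields Theorem~\ref{Thm condition on orbits for given generic lambda} and item~(i).

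Enlarging $\cS$, the same weight computation applies to every transversal multi-index $\beta$: the $\beta$-coefficient in the expansion of $\mu_\ell|_U$ transforms under $A$ with weight shifted from the leading one by the $\fa$-weight of $\beta$ on the normal bundle of $\cO$, and for $\Im\lambda$ outside a further finite enlargement of $\cS$ this weight is incompatible with the required equivariance; hence all coefficients with $\beta\neq0$ vanish, so $\mu_\ell|_U$ is transversally a measure, i.e. $\mu_\ell$ admits no transversal derivatives. A $P$-equivariant $V_\xi^*$-valued measure on $\cO\cong P/(P\cap x_\cO Hx_\cO^{-1})$ amounts, up to the natural Radon measure, to a single vector $\eta_\cO\in(V_\xi^*)^{M_Q\cap x_\cO Hx_\cO^{-1}}$: the $\overline N_Q$- and $A$-parts of the stabilizer act by the already-matched character ($\xi$ trivial on $N$, $\lambda$ vanishing on $\fa_\fh$), which forces exactly $M_Q\cap x_\cO Hx_\cO^{-1}$-invariance. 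Summing over $\cO\in(P\bs Z)_\ell$, the open orbits of $\supp\mu_\ell$, the terms do not interfere on $f$ supported away from the boundaries $\partial\cO$, and one obtains the formula of item~(ii). Absolute convergence of the orbital integrals is first established for $\lambda$ in a suitable cone and then propagated by meromorphic continuation, the poles confined to $\cS$; this is Theorem~\ref{Thm bound on transversal degree} together with the construction in Theorem~\ref{Thm Construction on max rank orbits for Q}.

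For item~(iii): for generic $\lambda$ the inclusion $\cH_{Q,\xi,\lambda}\hookrightarrow\cH_{P,\xi,\lambda}$ is that of a closed $G$-submodule (using $P\subseteq Q$ and the triviality of $\xi$ on the noncompact part of $M_Q$), and $\cH_{Q,\xi,\lambda}\simeq\cH_{\overline Q,\xi,\lambda}$, so restriction of $H$-invariant functionals is well defined. For injectivity, by the previous steps $\ell$ is determined by the data $(\eta_\cO)_{\cO\in(P\bs Z)_\ell}$, and each such $\cO$, being of maximal rank with $\fa_\cO=\fa_\fh$, lies in an open $Q$-orbit in such a way that $\cH_{Q,\xi,\lambda}\to C^\infty(\cO)$ still separates a neighbourhood of $\cO$ from $\partial\cO$; choosing a support orbit maximal for the closure order, one produces $f\in\cH_{Q,\xi,\lambda}$ on which only the corresponding term of the formula survives and is nonzero, so $\ell|_{\cH_{Q,\xi,\lambda}}\neq0$ whenever $\ell\neq0$. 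For surjectivity, one runs the analogue of the first two paragraphs on the $\overline Q$-side: for generic $\lambda$ every $H$-invariant functional on $\cH_{\overline Q,\xi,\lambda}$ is given on maximal-rank orbits by such a datum $(\eta_\cO)$, and feeding the same datum into the $P$-side orbital formula produces an $H$-invariant continuous functional on $\cH_{P,\xi,\lambda}$ restricting to it, the two formulae agreeing once $f$ is restricted to the orbits. This is the content of Theorem~\ref{Thm Construction on max rank orbits for Q}.

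The crux is the suppression of transversal derivatives: a priori $\mu_\ell$ is only known to be a distribution of finite but uncontrolled order, so ruling out all normal derivatives requires a precise description of the $\fa$-weights occurring on the normal bundle of each maximal-rank orbit — equivalently, sharp control of the limit subalgebras $\fh_{z,X}$ — together with the careful passage from "absolutely convergent in a cone" to "holomorphic off $\cS$". The secondary difficulty is the orbit bookkeeping behind item~(iii): identifying how the maximal-rank $P$-orbits with $\fa_\cO=\fa_\fh$ sit inside the open $Q$-orbits and why the smaller space $\cH_{Q,\xi,\lambda}$ still sees them.
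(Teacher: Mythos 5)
Your overall strategy --- passing from functionals to left-$P$-equivariant $V_\xi^*$-valued distributions, analysing them orbit by orbit via the asymptotic weights coming from the limit subalgebras $\fh_{z,X}$, and deducing item (iii) by showing that the orbital data on maximal rank orbits with $\fa_{\cO}=\fa_{\fh}$ already produces $Q$-equivariant distributions --- is the one the paper follows, and your treatment of item (i) is essentially the paper's principal-asymptotics argument (with the caveat that $\exp(\fa_{\cO})$ does not literally fix the base point of a non-open orbit; the correct statement is that $\fa_{\cO}$ lies in the \emph{limit} subalgebra $\fh_{z,X}$, which is exactly what the limit procedure is for).

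The genuine gap is in item (ii). You claim that, after enlarging $\cS$, the transversal Taylor coefficient attached to a multi-index $\beta\neq 0$ is killed because its $\fa$-weight is ``incompatible with the required equivariance.'' But the only constraint that the (left-$P$, right-$\fh_{z,X}$)-equivariance of the leading asymptotic term imposes is the vanishing of $\big(\lambda+\rho_P+2\rho_{\cO,X}-\kappa_\beta\big)\big|_{\fa_{\cO}}$, i.e.\ a condition on the \emph{restriction} of the normal-bundle weight $\kappa_\beta$ to $\fa_{\cO}$. Any $\beta$ with $\kappa_\beta|_{\fa_{\cO}}$ equal to an allowed value --- in particular any nonzero $\kappa_\beta$ vanishing on $\fa_{\cO}$, or, on an orbit with $\fa_{\cO}=\fa_{\fh}$ where $\lambda|_{\fa_{\fh}}$ is a \emph{fixed} real number independent of the free parameter $\lambda\in(\fa/\fa_{\fh})^*_{\C}$, any $\kappa_\beta$ hitting that fixed value --- passes this test no matter how generic $\Im\lambda$ is. So no enlargement of $\cS$ inside $(\fa/\fa_{\fh})^*$ can close the argument. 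This is precisely why the paper replaces the abelian weight comparison by an infinitesimal-character computation: it evaluates $R^{\vee}(u)\mu_{z,X}$ for $u\in\Cen(\fg)$ using the annihilation of $\mu_{z,X}$ by $\Ad(w)\overline{\fn}_P$, compares the resulting character $\gamma(u)(-\lambda+\kappa+\eta-\rho_{\fm})$ with the known infinitesimal character $-(\lambda+\Lambda_\sigma+\rho_{\fm})$, and derives a $W_{\C}$-orbit coincidence $W_{\C}\cdot(\lambda+\Lambda_\sigma+\rho_{\fm})=W_{\C}\cdot(\lambda+\Lambda+\rho_{\fm}+\nu)$ with $\nu\neq 0$ not vanishing on $\fa_{\cO}^{\perp}$; only this nonabelian condition is violated for $\Im\lambda$ outside a finite union of proper subspaces. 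Without some substitute for this step your proof of the transversal-degree bound, and hence of the formula in item (ii) and of the classification underlying item (iii), does not go through. Your surjectivity argument in item (iii) is also under-justified as written, but it would follow once the classification of item (ii) is in place together with the observation (which the paper proves via the explicit integral formula and the invariance under $M_Q\cap\overline{N}_P\subseteq H_z$) that the resulting distributions are automatically $Q$-equivariant.
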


The next result concerns the actual construction of $H$-invariant continuous functionals attached to maximal rank orbits. First, for each $\cO\in (P\bs Z)_{\fa_{\fh}}$ we carefully choose a representative $x_{\cO}\in G$, see Section \ref{Subsection Construction - Description}.
Given an orbit $\cO\in (P\bs Z)_{\fa_{\fh}}$ there exists a shifted open cone $\Gamma_{\cO}\subseteq (\fa/\fa_{\fh})^{*}$ so that for all $\lambda\in (\fa/\fa_{\fh})^{*}_{\C}$ with $\Re\lambda+\rho_{P,Q}\in\Gamma_{\cO}$ the integrals
\begin{equation}\label{eq Intro integral}
\ell_{\xi,\lambda,\eta}(f):=\int_{(x_{\cO}^{-1}Px_{\cO} \cap H) \bs H} \big\langle\eta,f(x_{\cO}h)\big\rangle dh
\end{equation}
are absolutely convergent for all $\eta\in (V_{\xi}^{*})^{M_{Q}\cap x_{\cO}Hx_{\cO}^{-1}}$ and $f\in \cH_{P,\xi,\lambda}$. Moreover, when viewed as $V_{\xi}^{*}$-valued distributions on $Z$, each family $\lambda\mapsto \ell_{\xi,\lambda,\eta}$ extends to a meromorphic family with parameter $\lambda\in (\fa/\fa_{\fh})^{*}_{\C}$.
We set
$$
V^{*}(\xi)
:=\bigoplus_{\cO\in (P\bs Z)_{\fa_{\fh}}}(V_{\xi}^{*})^{M_{Q}\cap x_{\cO}Hx_{\cO}^{-1}}.
$$
Note that $V^{*}(\xi)$ is finite dimensional.
We thus obtain a map
$$
V^{*}(\xi)\to\Hom_{H}\big(\cH_{P,\xi,\lambda},\C\big);
\quad \eta\mapsto \ell_{\xi,\lambda,\eta}
$$
with meromorphic dependence on $\lambda\in (\fa/\fa_{\fh})^{*}_{\C}$. After suitably normalizing these functionals using amongst other things the long intertwining operator, see (\ref{eq Def mu^circ}), we arrive at a map
$$
\ell^{\circ}_{\xi,\lambda}:V^{*}(\xi)\to \Hom_{H}\big(\cH_{\overline{Q},\xi,\lambda},\C\big),
$$
which is an isomorphism for generic $\lambda$. More precisely, the following hold.

\begin{IntroThm}[Theorem \ref{Thm Description D'(overline Q:xi:lambda)^H} \& Corollary \ref{Cor mu^circ holomorphic on imaginary axis}]
\label{Thm Intro 2}\,
\begin{enumerate}[(i)]
\item\label{Thm Intro 2 - item 1}
For every $\eta\in V^{*}(\xi)$ the map $\lambda\mapsto \ell^{\circ}_{\xi,\lambda}(\eta)$, considered as a family of  $V_{\xi}^{*}$-valued distributions on $Z$, is meromorphic on $(\fa/\fa_{\fh})^{*}_{\C}$.
\item\label{Thm Intro 2 - item 2}
For every $\eta\in V^{*}(\xi)$ the map $\lambda\mapsto \ell^{\circ}_{\xi,\lambda}(\eta)$ is holomorphic on an open neighborhood of $i(\fa/\fa_{\fh})^{*}$.
\item\label{Thm Intro 2 - item 3}
There exists a finite union $\cS$ of proper subspaces of $(\fa/\fa_{\fh})^{*}$ so that $\ell^{\circ}_{\xi,\lambda}$ is an isomorphism for $\lambda\in (\fa/\fa_{\fh})^{*}_{\C}$ with $\Im\lambda\notin\cS$.
\end{enumerate}
\end{IntroThm}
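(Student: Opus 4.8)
\emph{Proof strategy.} I would establish the three assertions in increasing order of depth. Recall that $\ell^{\circ}_{\xi,\lambda}$ is obtained from the raw family $\ell_{\xi,\lambda,\eta}$ of \eqref{eq Intro integral} by the normalization~(\ref{eq Def mu^circ}): restrict each $\ell_{\xi,\lambda,\eta}$ to $\cH_{Q,\xi,\lambda}$, transport it to $\cH_{\overline{Q},\xi,\lambda}$ through the long standard intertwining operator $\cH_{\overline{Q},\xi,\lambda}\to\cH_{Q,\xi,\lambda}$, and divide by an explicit product of scalar ($c$-function, rank-one $\Gamma$-type) factors attached to $\Sigma_{Z}$. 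Granting this, part~(i) is bookkeeping: each family $\lambda\mapsto\ell_{\xi,\lambda,\eta}$, $\eta\in(V_{\xi}^{*})^{M_{Q}\cap x_{\cO}Hx_{\cO}^{-1}}$, is absolutely convergent on the tube over $\Gamma_{\cO}-\rho_{P,Q}$ and, as recalled above, extends to a meromorphic family of $V_{\xi}^{*}$-valued distributions on $Z$; the long intertwining operators form a meromorphic family of continuous operators by the usual rank-one reduction along the spherical roots; and the scalar normalizing factors are meromorphic. A finite composition and product of meromorphic families is meromorphic, whence (i).

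Part~(ii) is the analytic core and the main obstacle. The difficulty is that both the raw periods $\ell_{\xi,\lambda,\eta}$ and the long intertwining operator may a priori be singular along a set meeting a neighborhood of $i(\fa/\fa_{\fh})^{*}$, and one must show these singularities cancel after~(\ref{eq Def mu^circ}). I would proceed in three steps. First, locate the singularities of the raw family: expanding $f\in\cH_{P,\xi,\lambda}$ along the lower-dimensional $P$-orbits and analyzing the defining integral in the spirit of van den Ban's study of $H$-fixed distribution vectors of principal series, show that near $i(\fa/\fa_{\fh})^{*}$ the family $\lambda\mapsto\ell_{\xi,\lambda,\eta}$ has at worst poles along a locally finite set of real hyperplanes $\{\langle\lambda,\alpha^{\vee}\rangle=c_{\alpha}\}$ with $\alpha\in\Sigma_{Z}$, with residues governed by rank-one data. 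Second, record the functional equations relating the $\ell_{\xi,\lambda,\eta}$ across Weyl chambers through the standard intertwining operators and the corresponding rank-one $c$-functions; these pin down the polar divisor of the un-normalized composite as exactly the divisor cancelled by the $c$-factor in~(\ref{eq Def mu^circ}). Third, combine this with the fact that the normalized long intertwining operator is holomorphic on a neighborhood of the unitary axis (a Knapp--Stein type statement for induction from a minimal parabolic) to conclude that $\lambda\mapsto\ell^{\circ}_{\xi,\lambda}(\eta)$ is holomorphic near $i(\fa/\fa_{\fh})^{*}$. A more robust variant of the last step: since convergence of the period integral depends only on $\Re\lambda$, one shifts the contour after applying the normalized intertwining operator and obtains a bound for $\lambda\mapsto\ell^{\circ}_{\xi,\lambda}(\eta)(f)$, with $f\in\cH_{\overline{Q},\xi,\lambda}$ fixed, that is locally uniform on a tube over a neighborhood of $0$; a locally bounded meromorphic function is holomorphic. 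I expect this cancellation/boundedness argument, and inside it the uniform control of the intertwining operators on the unitary axis, to be where the real work lies.

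Part~(iii) is a genericity statement that follows by a dimension count once (i)--(ii) and Theorem~\ref{Thm Intro 1} are available. Both $V^{*}(\xi)$ and $\Hom_{H}(\cH_{\overline{Q},\xi,\lambda},\C)$ are finite dimensional, the latter by the multiplicity bound of \cite{KobayashiOshima_FiniteMultiplicitiyTheorems} and \cite{KrotzSchlichtkrull_MultiplicityBoundsAndSubrepresentationTheorem}. By Theorem~\ref{Thm Intro 1}\,(iii), restriction gives isomorphisms $\Hom_{H}(\cH_{\overline{Q},\xi,\lambda},\C)\cong\Hom_{H}(\cH_{Q,\xi,\lambda},\C)\cong\Hom_{H}(\cH_{P,\xi,\lambda},\C)$ for $\Im\lambda\notin\cS$, so it suffices to show that $\eta\mapsto\ell_{\xi,\lambda,\eta}$ is an isomorphism $V^{*}(\xi)\to\Hom_{H}(\cH_{P,\xi,\lambda},\C)$ for generic $\lambda$ (the passage to $\ell^{\circ}_{\xi,\lambda}$ only removes a further finite union of hyperplanes, namely the exceptional loci of the normalized long intertwining operator and of the scalar $c$-factors). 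Consider the leading-term map $\Hom_{H}(\cH_{P,\xi,\lambda},\C)\to V^{*}(\xi)$, $\ell\mapsto(\eta_{\cO})_{\cO}$, furnished by Theorem~\ref{Thm Intro 1}\,(i)--(ii). It is injective for $\Im\lambda\notin\cS$: if every $\eta_{\cO}$ vanishes, then by Theorem~\ref{Thm Intro 1}\,(ii) the distribution $\mu_{\ell}$ vanishes on the open set $Z\setminus\bigcup_{\cO\in(P\bs Z)_{\ell}}\partial\cO$, so $\supp\mu_{\ell}\subseteq\bigcup_{\cO\in(P\bs Z)_{\ell}}\partial\cO$; since $\supp\mu_{\ell}$ is the closure of $\bigcup_{\cO\in(P\bs Z)_{\ell}}\cO$ and these orbits are relatively open in it, this forces $\mu_{\ell}=0$, i.e.\ $\ell=0$. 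Conversely, for $\lambda$ outside a further finite union of hyperplanes the distribution attached to $\ell_{\xi,\lambda,\eta}$ has support $\bigcup_{\cO}\overline{\cO}$ with relatively open orbit set $(P\bs Z)_{\fa_{\fh}}$ and leading term $\eta_{\cO}$ along $\cO$ by construction, so $\eta\mapsto\ell_{\xi,\lambda,\eta}$ is a section of the leading-term map. An injective linear map between finite-dimensional spaces that admits a section is an isomorphism; hence both maps are isomorphisms for all $\lambda$ avoiding the finite union of: the exceptional set of Theorem~\ref{Thm Intro 1}, the polar sets of the meromorphic families $\ell^{\circ}_{\xi,\lambda}(\eta)$, the zero sets of the scalar $c$-factors, and the polar sets of the normalized long intertwining operator and its inverse. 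This union of proper subspaces is the required $\cS$.
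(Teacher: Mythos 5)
Parts (i) and (iii) of your proposal are essentially the paper's own arguments. Meromorphy in (i) is obtained exactly as you say, by composing the Bernstein--Sato continuations of the raw families with inverse intertwining operators and with $\beta(\xi:\lambda)^{-1}$ (which is meromorphic because $\beta(\xi:\lambda)$ is triangular with $\gamma$-factor reciprocals on the diagonal). Your injectivity-plus-section argument for (iii) is the peeling argument of Theorem \ref{Thm Description D'(Z,Q:xi:lambda)}; one caveat is that the ``section'' property is not a single linear statement, since for nested orbits $\cO'\subseteq\partial\cO$ the distribution built on $\cO$ is supported on $\overline{\cO}\supseteq\cO'$ and the set $(P\bs Z)_{\mu}$ depends on $\mu$, so the comparison has to be organized by successive subtraction in decreasing order of orbit dimension, as in the paper, rather than by exhibiting one global section of a fixed leading-term map.

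The genuine gap is in part (ii), and it sits exactly where you flagged ``the real work.'' The paper does \emph{not} prove holomorphy near $i(\fa/\fa_{\fh})^{*}$ by any local cancellation of singularities between periods, intertwining operators and $c$-factors. It proves it only at the very end, in Corollary \ref{Cor mu^circ holomorphic on imaginary axis}, as a consequence of the Plancherel theorem for $L^{2}_{\mc}(Z)$ (Theorem \ref{Thm Plancherel Theorem}): since the polar set lies on hyperplanes $\{\lambda(X)=a\}$ with $X\in\fa$ and $a\in\R$, a pole meeting the unitary axis would produce a codimension-one singular set $\cH\subseteq i(\fa/\fa_{\fh})^{*}$ along which $\lambda\mapsto\Ft f(\xi^{\vee}:\lambda)\eta$ is singular for suitable $f\in\cD(Z)$, contradicting square-integrability of the Fourier transform. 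Your Steps 1 and 2 --- that near $i(\fa/\fa_{\fh})^{*}$ the raw periods have poles only along $\{\langle\lambda,\alpha^{\vee}\rangle=c_{\alpha}\}$ with $\alpha\in\Sigma_{Z}$, with rank-one residues, and that functional equations identify the polar divisor with the one removed by the normalization --- are precisely the refined information that the Bernstein--Sato continuation does not supply (it locates poles only on hyperplanes $\{\lambda(X)=a\}$ for unspecified $X$), and precisely the van den Ban-type machinery that the authors explain is unavailable here because non-open orbits in $(P\bs Z)_{\fa_{\fh}}$ contribute (only in the wavefront case do they not; see Appendix A). Your fallback contour-shift argument is circular: absolute convergence of the defining integrals holds only for $\Re\lambda$ in a shifted cone such as $\rho_{P}-\Gamma$ and its Weyl translates, and the families on non-open orbits are moreover defined through \emph{inverses} of standard intertwining operators applied to distributions supported on orbit closures, so local boundedness of the continued family near the unitary axis is exactly what has to be proven and cannot be read off the integral representation or Knapp--Stein theory for smooth vectors. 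To close (ii) you must either restrict to the wavefront/symmetric setting or, as the paper does, route the argument through temperedness, square-integrability of wave packets and the Maa{\ss}--Selberg relations.
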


We now come to the determination of the multiplicity spaces.
Each multiplicity space $\cM_{\xi,\lambda}$ is naturally identified with a subspace of $\Hom_{H}(\cH_{\overline{Q},\xi,\lambda},\C)$. However, an $H$-fixed continuous linear functional $\ell$ on $\cH_{\overline{Q},\xi,\lambda}$ can only be contained in $\cM_{\xi,\lambda}$ if the generalized matrix coefficients with $\ell$ are almost contained in $L^{2}(Z)$. To be more precise, a functional $\ell $ can only contribute if it is tempered, i.e., if all generalized matrix coefficients with $\ell$ define tempered functions on $Z$.

\begin{IntroThm}[Theorem \ref{Thm temperedness} \& Theorem \ref{Thm Wave packets are L^2} and its Corollary \ref{Cor Multiplicity space is V^*(xi)}]\label{Thm Intro 3}
For $\lambda\in i(\fa/\fa_{\fh})^{*}$ outside of a finite union of proper subspaces of $i(\fa/\fa_{\fh})^{*}$ every $H$-fixed continuous linear functional on $\cH_{\overline{Q},\xi,\lambda}$ is tempered. In fact, for almost every $\lambda\in i(\fa/\fa_{\fh})^{*}$ we have
$$
\cM_{\xi,\lambda}=\Hom_{H}\big(\cH_{\overline{Q},\xi,\lambda},\C\big).
$$
\end{IntroThm}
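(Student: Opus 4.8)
The plan is to deduce the statement from the explicit description of the $H$-fixed functionals furnished by Theorems~\ref{Thm Intro 1} and \ref{Thm Intro 2}, combined with a truncation/Maa\ss--Selberg argument for the associated wave packets. \emph{Temperedness first.} Fix $\xi$, let $\cS$ be the union of the exceptional hyperplane families appearing in Theorems~\ref{Thm Intro 1} and \ref{Thm Intro 2}, and suppose $\Im\lambda\notin\cS$, so that every $H$-fixed functional on $\cH_{\overline{Q},\xi,\lambda}$ is $\ell^{\circ}_{\xi,\lambda}(\eta)$ for some $\eta\in V^{*}(\xi)$ and this family is holomorphic near $i(\fa/\fa_{\fh})^{*}$. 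By the isomorphism of Theorem~\ref{Thm Intro 1}(\ref{Thm Intro 1 - item 3}) it suffices to estimate generalized matrix coefficients of the corresponding functionals on $\cH_{P,\xi,\lambda}$, and for these one uses the integral formula of Theorem~\ref{Thm Intro 1}(\ref{Thm Intro 1 - item 2}): away from the lower-dimensional orbits the functional is a finite sum of ``naive'' integrals $\int_{(x_{\cO}^{-1}Px_{\cO}\cap H)\bs H}\langle\eta_{\cO},f(x_{\cO}h)\rangle\,dh$, every contributing orbit lies in $(P\bs Z)_{\fa_{\fh}}$, and there are no transversal derivatives. Plugging $f=\pi(g)v$ into this formula and combining a Cartan-type (polar) decomposition of $Z$ with the standard asymptotic estimates for principal-series functions, the real part of the leading exponent of the matrix coefficient along each boundary degeneration equals $\Re\lambda+\rho_{P,Q}=\rho_{P,Q}$ (since $\Im\lambda\notin\cS$ and $\Re\lambda=0$), which after induction by stages from $P$ to $\overline{Q}$ is precisely the $L^{2}$-critical value; the absence of transversal derivatives prevents any strictly larger exponent from occurring. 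Hence the matrix coefficient is $O\big(\Xi_{Z}(z)(1+\|z\|)^{N}\big)$, i.e.\ the functional is tempered.

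\emph{Square integrability of wave packets.} For $\eta\in V^{*}(\xi)$ and $\varphi$ in a dense space of test functions on $i(\fa/\fa_{\fh})^{*}$, form the wave packet obtained by integrating the $\ell^{\circ}_{\xi,\lambda}(\eta)$-matrix coefficients against $\varphi(\lambda)\,d\lambda$. Since the right action of $A=A_{\emptyset}$ makes the $\fa/\fa_{\fh}$-direction a Euclidean Fourier variable with kernel $A_{H}$, one computes the $L^{2}(Z)$-norm of a suitably truncated wave packet fibrewise: the difference between the wave packet and its constant-term approximation along each boundary degeneration is shown to be $L^{2}$-negligible by refining the Maa\ss--Selberg relations of \cite{DelormeKnopKrotzSchlichtkrull_PlancherelTheoryForRealSphericalSpacesConstructionOfTheBernsteinMorphisms}, while the constant-term part is controlled by the Euclidean Plancherel theorem. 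This shows these wave packets lie in $L^{2}(Z)$, in fact in $L^{2}_{\mc}(Z)$ because they are built from the continuous spectrum induced from $\overline{Q}$, and produces a nondegenerate Hermitian form on $V^{*}(\xi)$ governing their norms.

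\emph{Conclusion.} By construction of the Plancherel decomposition each $\cM_{\xi,\lambda}$ is a subspace of $\Hom_{H}(\cH_{\overline{Q},\xi,\lambda},\C)$, which is finite dimensional by \cite[Theorem C]{KobayashiOshima_FiniteMultiplicitiyTheorems}; moreover only tempered functionals occur, so the first step gives $\cM_{\xi,\lambda}\subseteq\Hom_{H}(\cH_{\overline{Q},\xi,\lambda},\C)$ and both spaces coincide for $\Im\lambda\notin\cS$ with the image of $\ell^{\circ}_{\xi,\lambda}$ on $V^{*}(\xi)$ by Theorem~\ref{Thm Intro 2}(\ref{Thm Intro 2 - item 3}). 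Conversely, square integrability of the wave packets shows that for each $\eta\neq 0$ the functional $\ell^{\circ}_{\xi,\lambda}(\eta)$ genuinely contributes to the spectral decomposition of $L^{2}_{\mc}(Z)$ for $\lambda$ in a set of full measure; running this over a basis of $V^{*}(\xi)$ and using that $\cM_{\xi,\lambda}$ is a subspace together with the nondegeneracy of the form from the previous step, we get $\cM_{\xi,\lambda}\supseteq\ell^{\circ}_{\xi,\lambda}\big(V^{*}(\xi)\big)=\Hom_{H}(\cH_{\overline{Q},\xi,\lambda},\C)$ for almost every $\lambda$. That $B_{\emptyset}$ is a sum of partial isometries, isospectral, and has image equal to the relevant part of $L^{2}(Z)$, matching the spectrum of $L^{2}(Z_{\emptyset})$ computed above, rules out any further contributions and yields the asserted equality for almost every $\lambda\in i(\fa/\fa_{\fh})^{*}$.

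\emph{Main obstacle.} I expect the technical heart to be the square-integrability of the wave packets, that is, establishing the refined Maa\ss--Selberg relations and controlling the constant-term approximations uniformly in the parameter $\lambda$; this is where the construction of the functionals in Theorems~\ref{Thm Intro 1} and \ref{Thm Intro 2} must be made to interact with the Bernstein morphism. Temperedness, though delicate, reduces to asymptotic estimates once the explicit formula of Theorem~\ref{Thm Intro 1} is available, and the final spectral bookkeeping is comparatively soft.
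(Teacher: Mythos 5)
There is a genuine gap in the temperedness step. You propose to read off the asymptotics of the generalized matrix coefficients directly from the integral formula of Theorem~\ref{Thm Intro 1}(\ref{Thm Intro 1 - item 2}) by ``plugging in $f=\pi(g)v$'' and invoking standard principal-series asymptotics. This does not work: that formula only describes the functional on test functions supported away from the boundaries $\partial\cO$ of the contributing orbits, so it gives no control of $\big(\pi^{\vee}(g)\ell\big)(v)$ as $g\to\infty$ along the compression cone, which is exactly where temperedness must be established. Moreover, there is no ``standard asymptotic estimate'' that produces the bound $e^{\rho_{Q}(X)}(1+\|X\|)^{N}$ on $\overline{\cC}$; even for symmetric spaces this is the hard part of van den Ban's work, and the paper explicitly notes that the recursive method used there is not applicable here because real spherical spaces lack a good polar decomposition. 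The actual proof (Sections \ref{Subsection Wave packets - A priori estimate}--\ref{Subsection Wave packets - Proof of Theorem}) is an induction over the faces $\cF$ of $\overline{\cC}$: one starts from a weak a priori bound $e^{\zeta(X)+\langle\lambda\rangle(X)}$ controlled only on the edge $\fa_{E}$, and repeatedly improves the piecewise-linear exponent $\zeta$ down to $\rho_{Q}$ on each face by solving the system of ordinary differential equations coming from $\Diff(Z_{\cF})$ (the constant-term machinery of Delorme--Kr\"otz--Souaifi), with a careful bookkeeping of the generalized eigenvalues $\cQ_{\cF,\lambda}^{\zeta,\pm}$. Nothing in your proposal substitutes for this recursion, and your closing remark that temperedness ``reduces to asymptotic estimates once the explicit formula is available'' inverts the actual difficulty: this is the technical heart of the theorem, not the soft part.

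Two further corrections to the second half. First, the Maa\ss--Selberg relations play no role in the square integrability of wave packets: in Theorem~\ref{Thm Wave packets are L^2} the error between the matrix coefficient and its constant term is $L^{2}$ because of the uniform exponential decay $e^{\gamma(X)}$ in Theorem~\ref{Thm constant term}(\ref{Thm constant term - item 2}), combined with the Jacobian bound $J\leq Ce^{-2\rho_{Q}}$ and the decomposition of $\overline{\cC}$ into cones $C_{\cF}$; the (refined) Maa\ss--Selberg relations enter only afterwards, in Section~\ref{Section Most continuous part}, to pin down the inner products on the multiplicity spaces for Theorem~\ref{Thm Intro 4}. Second, once temperedness and square integrability of wave packets are in hand, the identification $\cM_{\xi,\lambda}=\cD'(\overline{Q}:\xi:\lambda)^{H}\simeq V^{*}(\xi)$ for almost every $\lambda$ is immediate from Theorem~\ref{Thm Description D'(overline Q:xi:lambda)^H}; the additional appeals you make to nondegeneracy of a Hermitian form and to isospectrality of $B_{\emptyset}$ are not needed at this stage.
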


To state the main result of the article, Theorem \ref{Thm Plancherel Theorem}, we define the Fourier transform for a smooth function $\phi$ with compact support on $Z$
$$
\Ft(\phi)(\xi,\lambda)
\in V^{*}(\xi)\otimes\cH_{\overline{Q},\xi,\lambda}
\simeq \Hom_{\C}\big(V^{*}(\xi^{\vee}),\cH_{\overline{Q},\xi,\lambda}\big)
$$
by
$$
\Ft(\phi)(\xi,\lambda)\eta:=\int_{Z}\phi(gH) \big(g\cdot \ell_{\xi^{\vee},-\lambda}^{\circ}\big)(\eta)\, dgH
\qquad\big(\eta\in V^{*}(\xi^{\vee})\big).
$$
On $V^{*}(\xi)$ there is a natural inner product induced by the inner product on $V_{\xi}$. We normalize this inner product by a factor of $\dim (V_{\xi})$.

\begin{IntroThm}[Theorem \ref{Thm Plancherel Theorem}]\label{Thm Intro 4}
Let $i(\fa/\fa_{\fh})^{*}_{+}$ be a fundamental domain for the stabilizer of $i(\fa/\fa_{\fh})^{*}$ in the Weyl group.
Then the Fourier transform $f\mapsto\Ft f$ extends to a unitary isomorphism
$$
L^{2}_{\mc}(Z)
    \to \widehat{\bigoplus_{\xi\in\widehat{M}_{Q,\mathrm{fu}}}}\int_{i(\fa/\fa_{\fh})^{*}_{+}}^{\oplus} V^{*}(\xi)\otimes\Ind_{\overline{Q}}^{G}(\xi\otimes\lambda\otimes \1)\,d\lambda.
$$
\end{IntroThm}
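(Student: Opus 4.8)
The plan is to assemble Theorems~\ref{Thm Intro 1}, \ref{Thm Intro 2} and~\ref{Thm Intro 3} with the formal properties of the Bernstein morphism $B_{\emptyset}$ of \cite{DelormeKnopKrotzSchlichtkrull_PlancherelTheoryForRealSphericalSpacesConstructionOfTheBernsteinMorphisms} and a sharpening of the associated Maa{\ss}--Selberg relations. First I would check that $\Ft$ and its formal adjoint are well behaved. By Theorem~\ref{Thm Intro 2}\,(\ref{Thm Intro 2 - item 2}) the normalised functionals $\ell^{\circ}_{\xi,\lambda}$ are holomorphic on a neighbourhood of $i(\fa/\fa_{\fh})^{*}$, so for $\phi\in C_{c}^{\infty}(Z)$ the element $\Ft(\phi)(\xi,\lambda)$ is defined and depends smoothly on $\lambda$; and by Theorem~\ref{Thm Intro 3} the functionals $\ell^{\circ}_{\xi^{\vee},-\lambda}$ are tempered for $\lambda$ away from a finite union of proper subspaces, so the generalised matrix coefficients $gH\mapsto\big(g\cdot\ell^{\circ}_{\xi^{\vee},-\lambda}\big)(\eta)$ are tempered functions on $Z$. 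The resulting estimates ensure that $\Ft$ and the adjoint wave packet operator $\Ft^{*}$ are well defined and bounded, the former from $L^{2}(Z)$ to the target Hilbert space of the theorem, the latter in the reverse direction. Using the statement that wave packets built from tempered $H$-functionals lie in $L^{2}(Z)$ (Theorem~\ref{Thm Wave packets are L^2}, part of Theorem~\ref{Thm Intro 3}), one sees that $\Ft^{*}$ maps the target into $L^{2}(Z)$; and since its range consists of wave packets over $\overline{Q}$-principal series, it is contained in $\Im\big(B_{\emptyset}\big)$, hence in $L^{2}_{\mc}(Z)$.

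Next I would prove the Parseval identity $\Ft^{*}\Ft=P_{\mc}$, with $P_{\mc}$ the orthogonal projection of $L^{2}(Z)$ onto $L^{2}_{\mc}(Z)$. Here the Bernstein morphism is the bridge: since the map $B$ of~(\ref{eq Def B}) is $G$-equivariant, surjective and isospectral and its restriction $B_{\emptyset}$ in~(\ref{eq Def B_I}) (the case $I=\emptyset$) is a sum of partial isometries, $P_{\mc}$ can be expressed through $B_{\emptyset}B_{\emptyset}^{*}$, and $B_{\emptyset}^{*}\phi$ can in turn be computed from the explicit Plancherel decomposition of $L^{2}(Z_{\emptyset})=\Ind_{H_{\emptyset}}^{G}(\1)$ recalled in the introduction, together with the asymptotic relation between $H$-invariant functionals on $Z$ and on $Z_{\emptyset}$ that is built into the normalisation~(\ref{eq Def mu^circ}) by the long intertwining operator. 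Matching the two descriptions forces one to identify the inner product the $\overline{Q}$-Plancherel measure places on the abstract multiplicity spaces $\cM_{\xi,\lambda}$, and this is where the refinement of the Maa{\ss}--Selberg relations of \cite{DelormeKnopKrotzSchlichtkrull_PlancherelTheoryForRealSphericalSpacesConstructionOfTheBernsteinMorphisms} enters: it pins down the relevant scattering data so that, after the normalisation defining $\ell^{\circ}_{\xi,\lambda}$, the inner product on $\cM_{\xi,\lambda}$ transported along $\ell^{\circ}_{\xi,\lambda}$ to $V^{*}(\xi)$ becomes exactly $\dim(V_{\xi})^{-1}$ times the natural inner product on $V^{*}(\xi)$ induced by the one on $V_{\xi}$, which is what the normalisation by $\dim(V_{\xi})$ in the statement records. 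Passing from the a priori decomposition of $L^{2}_{\mc}(Z)$ over all of $i(\fa/\fa_{\fh})^{*}$ to the integral over the fundamental domain $i(\fa/\fa_{\fh})^{*}_{+}$ is then a bookkeeping step, tracking the action of the stabiliser of $i(\fa/\fa_{\fh})^{*}$ in the Weyl group on the pairs $(\xi,\lambda)$ and the equivalences $\Ind_{\overline{Q}}^{G}(\xi\otimes\lambda\otimes\1)\simeq\Ind_{\overline{Q}}^{G}(w\xi\otimes w\lambda\otimes\1)$.

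It then remains to establish surjectivity, i.e.\ $\Ft\Ft^{*}=\Id$ on the target. Once $\Ft$ is known to be a partial isometry with initial space $L^{2}_{\mc}(Z)$, its image is a closed $G$-invariant subspace of $\widehat{\bigoplus_{\xi}}\int^{\oplus}V^{*}(\xi)\otimes\Ind_{\overline{Q}}^{G}(\xi\otimes\lambda\otimes\1)\,d\lambda$ and hence, by direct integral theory, corresponds to a measurable field of subspaces $W_{\xi,\lambda}\subseteq V^{*}(\xi)$. Since $\Ft^{*}$ is built from the functionals $\ell^{\circ}_{\xi,\lambda}$, this image contains $\ell^{\circ}_{\xi,\lambda}\big(V^{*}(\xi)\big)$, which by Theorems~\ref{Thm Intro 2}\,(\ref{Thm Intro 2 - item 3}) and~\ref{Thm Intro 3} equals $\Hom_{H}\big(\cH_{\overline{Q},\xi,\lambda},\C\big)=\cM_{\xi,\lambda}$ for almost every $\lambda$, and therefore has the full dimension $\dim V^{*}(\xi)$; so $W_{\xi,\lambda}=V^{*}(\xi)$ almost everywhere and $\Ft$ is onto.

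The step I expect to be the main obstacle is the middle one: computing $B_{\emptyset}^{*}$ explicitly enough and pinning down the inner product on the multiplicity spaces --- including the precise constant $\dim(V_{\xi})$ --- by refining the Maa{\ss}--Selberg relations. The temperedness from Theorem~\ref{Thm Intro 3} and the holomorphy of $\ell^{\circ}$ near the imaginary axis from Theorem~\ref{Thm Intro 2} are precisely what make the Fourier and wave packet operators honestly defined and bounded, but upgrading ``isometry up to scalars'' to ``unitary with the stated normalisation'' is where the real work lies.
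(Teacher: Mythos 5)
Your overall architecture matches the paper's: reduce to the abstract Plancherel decomposition of $L^{2}_{\mc}(Z)$ (which, via Corollary \ref{Cor Multiplicity space is V^*(xi)}, already identifies $\cM_{\xi,\lambda}$ with $V^{*}(\xi)$ through $\mu^{\circ}(\xi:\lambda)$ for almost every $\lambda$, so boundedness, the partial-isometry property and surjectivity all come essentially for free), then pin down the inner product $\langle\cdot,\cdot\rangle_{\Pl,\xi,\lambda}$ on $V^{*}(\xi)$ by comparing with the explicitly computed Plancherel formula for $Z_{\emptyset}$ (Theorem \ref{Thm Plancherel Theorem horospherical case}) through the constant term maps and the Maa{\ss}--Selberg relations of \cite{DelormeKnopKrotzSchlichtkrull_PlancherelTheoryForRealSphericalSpacesConstructionOfTheBernsteinMorphisms}. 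You correctly locate the difficulty in this middle step, but you leave it as a black box, and that is a genuine gap: the Maa{\ss}--Selberg relations as they stand only say that each constant term map $\const_{v}(\xi:\lambda):V^{*}(\xi)\to V^{*}_{\emptyset,v}(\xi)$ is a partial isometry, i.e.\ unitary from the orthocomplement of its kernel onto its image. By Corollary \ref{Cor Formula const_v_w} the kernel of $\const_{v_{w}}(\xi:\lambda)$ is $\bigoplus_{w'\neq w}V^{*}_{w'}(\xi)$ and the restriction of $\const_{v_{w}}$ to $V^{*}_{w}(\xi)$ is essentially the identity; but this determines $\langle\cdot,\cdot\rangle_{\Pl,\xi,\lambda}$ on each $V^{*}_{w}(\xi)$ only if one \emph{also} knows that the decomposition $V^{*}(\xi)=\bigoplus_{w\in\cN/\cW}V^{*}_{w}(\xi)$ is orthogonal for the (unknown) Plancherel inner product. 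Nothing in the data you invoke supplies that orthogonality, and for non-wavefront spaces it is exactly the missing refinement.

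The paper closes this gap with an input your plan does not contain: a spectral separation argument using invariant differential operators. Via Knop's Harish--Chandra homomorphism and a degeneration comparison of $\Cen(Z_{\C})$ with $\Cen(Z_{\emptyset,\C})$ (Lemma \ref{Lemma Knop HC-hom diagram commutes}, together with the commutation relation $\Const_{\cO}\circ D=\delta_{\emptyset}(D)\circ\Const_{\cO}$ of Lemma \ref{Lemma Const D=delta(D) Const}), one constructs, for generic $\lambda$, central operators $D_{w}\in\Diff(Z)$ acting as the identity on $\mu^{\circ}(\xi:\lambda)\big(V^{*}_{w}(\xi)\big)$ and as zero on the other summands (Proposition \ref{Prop distributions separated by inv diff operators}); since central elements of $\Diff(Z)$ act as normal operators on the multiplicity spaces, their eigenspaces are Plancherel-orthogonal, which is Corollary \ref{Cor Orthogonal decomposition V^*(xi)}. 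Only then do the Maa{\ss}--Selberg relations, combined with Theorem \ref{Thm Plancherel Theorem horospherical case} and the explicit formula for $\const_{v_{w}}$, yield $\langle\cdot,\cdot\rangle_{\Pl,\xi,\lambda}=\dim(V_{\xi})\langle\cdot,\cdot\rangle_{\xi}$. A secondary point: you speak of computing $B_{\emptyset}^{*}$ explicitly, whereas the paper never does this; the bridge to $Z_{\emptyset}$ is entirely through the constant term map of Section \ref{Subsection Wave packets - Constant term} and its explicit evaluation in Propositions \ref{Prop Const mu formula} and \ref{Prop const formula}, which is what makes the comparison of normalisations tractable.
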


It is known that for generic $\lambda\in i(\fa/\fa_{\fh})^{*}$ the representations $\Ind_{\overline{Q}}^{G}(\xi\otimes\lambda\otimes \1)$ are irreducible. Moreover, if $\xi,\xi'\in \widehat{M}_{Q,\mathrm{fu}}$ and $\lambda,\lambda'\in i(\fa/\fa_{\fh})_{+}^{*}$ are generic the representations $\Ind_{\overline{Q}}^{G}(\xi\otimes\lambda\otimes \1)$ and $\Ind_{\overline{Q}}^{G}(\xi'\otimes\lambda'\otimes \1)$ are not equivalent if $(\xi,\lambda)\neq(\xi',\lambda')$.
Therefore, the above decomposition of $L^{2}_{\mc}(Z)$ is the Plancherel decomposition.

\subsection{Methods of Proof and structure of the article}
After setting up our notation in Section \ref{Section Setup and notation}, we begin in Section \ref{Section Orbits of max rank} with the study of $P$-orbits in $Z$. There are two main results. The first is Theorem \ref{Thm structure theorem for wPw^(-1) cdot z}, which is a structure theorem for maximal rank orbits. It is a generalization of a structure theorem of Brion, \cite[Proposition 6 \& Theorem 3]{Brion_OrbitClosuresOfSphericalSubgroupsInFlagVarieties}, for complex spherical spaces. Theorem \ref{Thm structure theorem for wPw^(-1) cdot z} is of crucial importance for our construction of $H$-fixed distributions.
The second main result in Section \ref{Section Orbits of max rank} is Theorem \ref{Thm properties of W-action}. We define an equivalence relation on the $P$-orbits of maximal rank. We then show that the Weyl group $W$ of the root system of $\fa$ in $\fg$ naturally acts on the set of equivalence classes. This action is transitive. The set of open orbits forms one equivalence class; its stabilizer is the little Weyl group $W_{Z}$. This result was first obtained by Knop in \cite{Knop_OnTheSetOfOrbitsForABorelSubgroup} for complex spaces and by Knop and Zhgoon in \cite{KnopZhgoon_ComplexityOfActionsOverPerfectFields} for spherical spaces defined over a field of characteristic $0$. Their results are more general than ours, but our description of the action of $W$ is tailor made for the way we apply it. The $W$-action is applied at several places, most notably for the precise choice of the representatives $x_{\cO}$ for the $P$-orbits in $(P\bs Z)_{\fa_{\fh}}$.
Our approach to $P$-orbits on $Z$ differs substantially from the techniques used by Brion, Knop and Zhgoon. The main tool for our considerations is the limit subalgebra $\fh_{z,X}$ from (\ref{eq Intro fh_{z,X} def}). Previously we used an analysis of these limit subalgebras to give a construction of the little Weyl group in \cite{KuitSayag_OnTheLittleWeylGroupOfARealSphericalSpace}. We heavily rely on the results from that article for the two main results in Section \ref{Section Orbits of max rank}.

In Section \ref{Section Distribution vectors} we set up a dictionary between invariant functionals on the smooth vectors of a principal series representation $\Ind_{S}^{G}(\xi\otimes\lambda\otimes 1)$ induced from a parabolic subgroup $S$ and a space $\cD'(S:\xi:\lambda)$ of $S$-equivariant $V_{\xi}^{*}$-valued distributions. Even though the exposition of the results is easier in the language of functionals, as in Section \ref{Subsectoin Introduction - Main results}, we find it easier to work with distributions and that is what we chose to do throughout the article. In Section \ref{Subsection Distribution vectors - Intertwining operators} we describe the action of the intertwining operators on those distributions as we were not able to locate such a description in the literature. The proofs for these results are relegated to Appendix B. As was explained in Section \ref{Subsectoin Introduction - Main results}, it is easier to work with the minimal parabolic subgroup $P$, rather than directly with $\overline{Q}$ or $Q$. To facilitate this, we make a comparison between inductions from different parabolic subgroups, when the induction data allows, in Sections \ref{Subsection Distribution vectors - Comparison} -- \ref{Subsection Distribution vectors - Comparison between P and Q}.

In Section \ref{Section Support and transdeg} we prove the first two assertions in Theorem \ref{Thm Intro 1}. For reductive symmetric spaces the restrictions on the support and transversal derivatives of $H$-fixed distributions in $\cD'(P:\xi:\lambda)$ are obtained using  Bruhat's theory which he developed in his thesis. See \cite[Theorem 5.1]{vdBan_PrincipalSeriesI} and \cite[Th{\'e}or{\`e}me 1 in Section 3.3]{CarmonaDelorme_BaseMeromorpheDeVecteursDistributionsHInvariants}. This approach relies heavily on precise knowledge of all $P$-orbits in $Z$. For reductive symmetric spaces the $P$-orbits are very well understood; a complete description of the $P$-orbits has been given by Matsuki in \cite{Matsuki_OrbitsOfSymmetricSpacesUnderActionOfMinimalParabolic} and \cite{Matsuki_OrbitsOnSymmetricSpacesUnderActionOfParabolicSubgroups}. Unfortunately, for real spherical spaces such a description is not available. We therefore resort to a different method, namely principal asymptotics, which is a technical tool from \cite[Theorem 5.1]{KrotzKuitOpdamSchlichtkrull_InfinitesimalCharactersOfDiscreteSeriesForRealSphericalSpaces}. The method of principal asymptotics can be considered as the analogue of the limit subalgebras (\ref{eq Intro fh_{z,X} def}) for $H$-fixed distributions in $\cD'(P:\xi:\lambda)$. Given such a distribution $\mu$, an orbit $\cO\in (P\bs Z)_{\mu}$, a point $z\in\cO$ and a sufficiently regular $X\in\fa^{-}$, the principal asymptotics of $\mu$ is a distribution $\mu_{z,X}$ defined on a left-$P$ invariant open neighborhood of $e$ in $G$ that is left $P$-equivariant and right invariant under the limit subalgebra $\fh_{z,X}$. These last distributions are easier to analyse. An immediate corollary is that the imaginary part of $\lambda$ must vanish on $\fa_{\cO}$, which implies the first assertion in Theorem \ref{Thm Intro 1}, see Theorem \ref{Thm condition on orbits for given generic lambda}. Moreover, $\mu$ has transversal derivatives on $\cO$ if and only if $\mu_{z,X}$ has transversal derivatives. The proof for the second assertion is now essentially reduced to the case of $\overline{N}_{P}$-invariant distributions in $\cD'(P:\xi:\lambda)$. For the latter distributions the absence of transversal derivatives for generic $\lambda$ is proved by an analysis of the action of the center of $\cU(\fg)$ in Theorem \ref{Thm bound on transversal degree}.

In Section \ref{Section Construction} we construct the $H$-invariant distributions in $\cD'(P:\xi:\lambda)$. By considering powers of matrix coefficients of finite dimensional $H$-spherical representations, one easily sees that the integrals (\ref{eq Intro integral}) are absolutely convergent if $\Re\lambda$ is in a certain shifted cone. We thus find holomorphic families of $H$-fixed distributions with family parameter $\lambda$. We then use the technique of Bernstein and Sato to extend these families to meromorphic families. This method is well known; it was for example used before by Olafsson in \cite[Theorem 5.1]{Olafsson_FourierAndPoissonTransformationAssociatedToASemisimpleSymmetricSpace}, Brylinski and Delorme \cite[Proposition 4]{BrylinskiDelorme_VecteursDistributionsH-Invariants} and Frahm \cite[Theorem 3.3]{Frahm}.

For symmetric spaces there are other ways to obtain the meromorphic extension. We mention here two methods of Van den Ban: \cite[Theorem 5.10]{vdBan_PrincipalSeriesI} using intertwining operators and \cite[Theorem 9.1]{vdBan_PrincipalSeriesII} using translation functors. The second method of Van den Ban is arguably the best since it provides a rather explicit functional equation. See also \cite[Th{\'e}or{\`e}me 2]{CarmonaDelorme_BaseMeromorpheDeVecteursDistributionsHInvariants} where this method was used by Carmona and Delorme. In our setting neither the method based on intertwining operators, nor the method based on translation functors is straightforwardly applicable since both require that the only orbits contributing in Theorem \ref{Thm Intro 1} (\ref{Thm Intro 1 - item 1}) are the open orbits. For real spherical spaces with the wavefront property, e.g. reductive symmetric spaces, only the open orbits contribute. We give a short proof of this in Appendix A.

To construct the $H$-invariant distributions in $\cD'(P:\xi:\lambda)$ on non-open $P$-orbits of maximal rank we use an idea from \cite[Theorem 7.1]{vdBanKuit_NormalizationsOfEisensteinIntegrals}.
We mention here that a similar construction for $p$-adic spherical spaces was done by Sakellaridis in \cite[Section 4]{Sakellaridis_UnramifiedSpectrumOfSphericalVarietiesOverPAdicField}.
The applicability of the idea heavily relies on the structure of maximal rank orbits. For reductive symmetric spaces this is readily obtained from the rich structure theory that exists for these spaces. For real spherical spaces the necessary assertions were proven using our methods concerning limits subalgebras in Theorem \ref{Thm structure theorem for wPw^(-1) cdot z}. Every maximal rank orbit $\cO$ is contained in an open $P'$-orbit $\cO'$  for a certain minimal parabolic subgroup $P'$. Moreover, $\cO'$ decomposes as a family of orbits of a unipotent subgroup of $P'$ parameterized by the points in $\cO$. This geometric decomposition translates on the level of distributions to a decomposition of the distributions we constructed before on open orbits into the application of a standard intertwining operator on a distribution supported on $\overline{\cO}$. The outcome of this analysis is a construction of $H$-invariant distributions $\mu$ in $\cD'(P:\xi:\lambda)$ with $(P\bs Z)_{\mu}=\{\cO\}$ by applying the inverse of a standard intertwining operator to a $H$-invariant distribution in $\cD'(P':\xi:\lambda)$ constructed on an open orbit. As a corollary we find that the $H$-invariant distributions in $\cD'(P:\xi:\lambda)$ fit into meromorphic families with family parameter $\lambda$. All this is described in Proposition \ref{Prop construction on max rank orbits}. With the rather explicit formulas for the distributions we obtain from Proposition \ref{Prop construction on max rank orbits} it is then shown that the distributions constructed on $P$-orbits in $(P\bs Z)_{\fa_{\fh}}$ actually are $Q$-equivariant, which establishes assertion (\ref{Thm Intro 1 - item 3}) in Theorem \ref{Thm Intro 1}. See Theorem \ref{Thm Construction on max rank orbits for Q}.

By combining Theorem \ref{Thm condition on orbits for given generic lambda}, Theorem \ref{Thm bound on transversal degree} and Theorem \ref{Thm Construction on max rank orbits for Q} we obtain in Theorem \ref{Thm Description D'(Z,Q:xi:lambda)} a full description of $\cD'(Q:\xi:\lambda)^{H}$ for generic $\lambda\in (\fa/\fa_{\fh})^{*}_{\C}$. The remainder of Section \ref{Section Construction} is devoted to a description of the action of the normalizer of $H$ in $A$ on $\cD'(Q:\xi:\lambda)$ and a proper normalization of the families of distributions we constructed. The latter results in the assertions (\ref{Thm Intro 2 - item 1}) and (\ref{Thm Intro 2 - item 3}) in Theorem \ref{Thm Intro 2}.

In section \ref{Section Wave packets} we prove Theorem \ref{Thm Intro 3}. There are two main results. The first is Theorem \ref{Thm temperedness}, which asserts that for generic $\lambda\in i(\fa/\fa_{\fh})^{*}$ all $H$-fixed distributions in $\cD'(\overline{Q}:\xi:\lambda)$ are tempered. The proof begins with an a priori estimate, which is then improved in a recursive process. For reductive symmetric spaces this was done by Van den Ban in \cite[Section 18]{vdBan_PrincipalSeriesII} using a technique of Wallach from \cite[Theorem 4.3.5]{Wallach_RealReductiveGroupsI}. For real spherical spaces this method is not easily applicable. This is due to the lack of a good polar decomposition. Instead we adapt the techniques developed in \cite{DelormeKrotzSouaifi_ConstantTerm} for the construction of the constant term map. In \cite{DelormeKrotzSouaifi_ConstantTerm} only tempered eigenfunctions are considered. However the techniques can be applied to non-tempered eigenfunctions as well and then used to improve estimates and prove temperedness.

Once we have established the temperedness of the distributions, we move on to the second main result in section \ref{Section Wave packets}: the square integrability of wave packets in Theorem \ref{Thm Wave packets are L^2}. The proof is similar to the analogous result for reductive symmetric spaces by Van den Ban, Carmona and Delorme in \cite{vdBanCarmonaDelorme_PaquetsDOndesDansLEspaceDeSchwartzDUnEspaceSymmetrique}.
Also this result heavily relies on the constant term map.
An important consequence, Corollary \ref{Cor Multiplicity space is V^*(xi)}, is that for almost every $\lambda\in i(\fa/\fa_{\fh})^{*}$ the multiplicity space $\cM_{\xi,\lambda}$ is identical to $\cD'(\overline{Q}:\xi:\lambda)^{H}$, and hence can be identified with $V^{*}(\xi)$ in view of Theorem \ref{Thm Intro 2} (\ref{Thm Intro 2 - item 3}).

In Section \ref{Section Most continuous part} we prove the Plancherel decomposition of $L^{2}_{\mc}(Z)$.  The abstract Plancherel decomposition provides $V^{*}(\xi)$ for almost every $\lambda\in i(\fa/\fa_{\fh})^{*}$ with an inner product.
To prove Theorem \ref{Thm Intro 4} it remains to show that this inner product is independent of $\lambda$ and up to a factor of $\dim(V_{\xi})$ equal to the inner product induced from the one on $V_{\xi}$. This we do in Section \ref{Section Most continuous part}. We first prove the required identity for the space $Z=Z_{\emptyset}$ by a direct computation in Theorem \ref{Thm Plancherel Theorem horospherical case}. The result for $Z_{\emptyset}$ can in view of the Maa\ss-Selberg relations \cite[Theorem 9.6]{DelormeKnopKrotzSchlichtkrull_PlancherelTheoryForRealSphericalSpacesConstructionOfTheBernsteinMorphisms} be used to determine the inner products for $Z$ itself. In order to apply the Maa\ss-Selberg relations we have to determine the constant terms of all distributions. We give explicit formulas in Proposition \ref{Prop Const mu formula} and \ref{Prop const formula}. If $Z$ has the wavefront property, then the Maa\ss-Selberg relations from \cite[Theorem 9.6]{DelormeKnopKrotzSchlichtkrull_PlancherelTheoryForRealSphericalSpacesConstructionOfTheBernsteinMorphisms} suffice to determine the inner product on $V^{*}(\xi)$. For the group case, and more generally for reductive symmetric spaces, this was done in \cite[Sections 14 \& 15]{DelormeKnopKrotzSchlichtkrull_PlancherelTheoryForRealSphericalSpacesConstructionOfTheBernsteinMorphisms}. For general real spherical spaces a refinement of the Maa\ss-Selberg relations is needed. This refinement is obtained in Corollaries \ref{Cor Orthogonal decomposition V^*(xi)} and \ref{Cor Maass-Selberg relations}. For the proof we construct suitable $G$-invariant differential operators on $Z$ using Knop's Harish-Chandra homomorphism from \cite{Knop_HarishChandraHomomorphism}. We then determine the Plancherel decomposition of $L^{2}_{\mc}(Z)$ in Theorem \ref{Thm Plancherel Theorem}.

Assertion (\ref{Thm Intro 1 - item 2}) in Theorem \ref{Thm Intro 1} is an easy corollary to Theorem \ref{Thm Plancherel Theorem}. For reductive symmetric spaces this was proven in \cite[Theorem 1]{vdBanSchlichtkrull_FourierTransformOnASemisimpleSymmetricSpace}. Finally, we provide in Corollary \ref{Cor Formula scattering operators} an explicit form for the so-called scattering operators introduced by \cite{Delorme_ScatteringOperators} in the case $G$ is a split real reductive group. Our formulas are written in terms of the standard intertwining operators acting on $H$-fixed linear functionals.

\subsection{Acknowledgments}
We thank Bernhard Kr{\"o}tz, Henrik Schlichtkrull and Erik van den Ban for various discussions on the subject matter of this paper.
This article was written over several years. While working on the sections 3 to 6 the first author was funded by the Deutsche Forschungsgemeinschaft Grant 262362164.

\section{Setup and notation}
\label{Section Setup and notation}

Groups are indicated by capital roman letters. Their Lie algebras are denoted by the corresponding lower-case fraktur letter.

Let $\underline{G}$ be a connected reductive algebraic group defined over $\R$ and set $G:=G(\R)$. Let $\underline{H}$ be an algebraic subgroup of $\underline{G}$ defined over $\R$ and set $H:=\underline{H}(\R)$.
We write $Z=G/H$.  If $z\in Z$, then the stabilizer subgroup of $Z$ is indicated by $H_{z}$ and its Lie algebra by $\fh_{z}$.

We set $G_{\C}:=\underline{G}(\C)$ and $H_{\C}:=\underline{H}(\C)$. If $E$ is a real vector space, then we write $E_{\C}$ for its complexification.

Throughout the article we fix a minimal parabolic subgroup $P$ of $G$ and a Langlands decomposition $P=MAN$. We assume that $Z$ is real spherical, i.e., there exists an open $P$-orbit in $Z$.
We further assume that $Z$ is unimodular. In view of \cite[Lemma 12.7]{DelormeKnopKrotzSchlichtkrull_PlancherelTheoryForRealSphericalSpacesConstructionOfTheBernsteinMorphisms} the space $Z$ is quasi-affine.

Let $\theta$ be a Cartan involution of $G$ so that $A$ is $\theta$-stable. We denote the corresponding involution on the Lie algebra $\fg$ by $\theta$ as well and write $K$ for the fixed point subgroup of $\theta$. Note that $K$ is a maximal compact subgroup of $G$.

If $Q$ is a parabolic subgroup of $G$, then we write $N_{Q}$ for the unipotent radical of $Q$ and $\overline{N}_{Q}$ for the unipotent radical $\theta N_{Q}$ of the opposite parabolic subgroup $\theta Q$.

We write $\Sigma$ for the root system of $\fa$ in $\fg$. If $Q$ is a parabolic subgroup containing $A$, then we define $\Sigma(Q)$ to be subset of $\Sigma$ of roots $\alpha$ so that the root space $\fg_{\alpha}$ is contained in $\fn_{Q}$. We define $\rho_{Q}$ to be the element of $\fa^{*}$ given by
$$
\rho_{Q}(X)
=\frac{1}{2}\tr \ad(X)\big|_{\fn_{Q}}.
$$
Further, we write $\fa^{-}$ for the open negative Weyl chamber with respect to $\Sigma(P)$.

We fix an $\Ad(G)$-invariant bilinear form $B$ on $\fg$ so that $-B(\dotvar,\theta \dotvar)$ is positive definite. For $E\subseteq \fg$, we define
$$
E^{\perp}
=\big\{X\in\fg:B(X,E)=\{0\}\big\}.
$$

For the notation for function spaces we follow the book of Schwartz \cite{Schwartz_Distributions_I}. In particular, spaces of compactly supported smooth, smooth and Schwartz functions are denoted by $\cD$, $\cE$ and $\cS$ respectively. Their strong duals are as usual indicated by a ${}'$.

If $N$ is a connected and simply connected subgroup of $G$ so that its Lie algebra $\fn$ is a nilpotent subalgebra of $\fg$, then we equip $N$ with the Haar-measure $dn$ given by the pull-back of the Lebesgue measure on $\fn$ along the exponential map. This we do in particular for the group $A$ and the unipotent radicals $N_{Q}$ of parabolic subgroups $Q$. Every compact subgroup we equip with the normalized Haar measure. We do this in particular for the groups $K$ and $M$. We normalize the Haar measure $dg$ on $G$ so that
$$
\int_{G}\phi(g)\,dg
=\int_{K}\int_{A}\int_{N_{P}}a^{2\rho_{P}}\phi(kan)\,dn\,da\,dk
\qquad\big(\phi\in \cD(G)\big).
$$
In view of the Local structure theorem, see Proposition \ref{Prop Local structure theorem}, there exists a parabolic subgroup $Q$ containing $A$ and a point $z\in Z$, so that $P\cdot z$ is open and
$$
N_{Q}\times M/(M\cap H_{z})\times A/(A\cap H_{z})\to P\cdot z;\quad(n,m,a)\mapsto nma\cdot z
$$
is a diffeomorphism. Let $\fa_{0}=\fa\cap (\fa\cap\fh_{z})^{\perp}$ and $A_{0}=\exp(\fa_{0})$. Then the group $MA_{0}N_{Q}$ is unimodular. We  normalize the invariant Radon measure on $Z$ by
$$
\int_{Z}\phi(z)\,dz
=\int_{N_{Q}}\int_{M}\int_{A_{0}}\phi(nma\cdot z)\,da\,dm\,dn
\qquad\big(\phi\in \cD(Z)\big).
$$
The Haar measure on $H$ we normalize by requiring that
$$
\int_{G}\phi(g)\,dg
=\int_{Z}\int_{H}\phi(gh)\,dh\,dgH
\qquad\big(\phi\in \cD(G)\big).
$$
Finally, we normalize the Lebesgue measure $i(\fa/\fa\cap\fh_{z})^{*}$ so that
$$
\phi(e)
=\int_{i(\fa/\fa\cap\fh_{z})^{*}}\int_{A/(A\cap H_{z})}\phi(a)a^{\lambda}\,d\lambda
\qquad\Big(\phi\in\cD\big(A/(A\cap H_{z})\big)\Big).
$$

\section{$P$-Orbits of maximal rank}
\label{Section Orbits of max rank}

\subsection{The local structure theorem}
\label{Subsection Orbits of maximal rank - LST}
In this section we give a reformulation of the local structure theorem, which follows from {\cite[Theorem 2.3]{KnopKrotzSchlichtkrull_LocalStructureTheorem} and its constructive proof.

\begin{Prop}\label{Prop Local structure theorem}
There exists a parabolic subgroup $Q$ with $P\subseteq Q$,  a Levi decomposition $Q=L_{Q}N_{Q}$ with $A\subseteq L_{Q}$, and for every open $P$-orbit $\cO$ in $Z$ a point $z\in \cO$ so that the following assertions hold.
\begin{enumerate}[(i)]
 \item\label{Prop Local structure theorem - item 0}
            $Q\cdot \cO=\cO$.
 \item\label{Prop Local structure theorem - item 1}
          $Q\cap H_{z}=L_{Q}\cap H_{z}$.
  \item\label{Prop Local structure theorem - item 2}
          The map
          $$
          N_{Q}\times L_{Q}/L_{Q}\cap H_{z}\to Z;
            \quad\big(n,l(L_{Q}\cap H_{z})\big)\mapsto nl\cdot z
          $$
          is a diffeomorphism onto $\cO$.
  \item\label{Prop Local structure theorem - item 3}
          The sum $\fl_{Q,\nc}$ of all non-compact simple ideals in $\fl_{Q}$ is contained in $\fh_{z}$.
  \item\label{Prop Local structure theorem - item 4}
          There exists an $X\in\fa\cap\fh_{z}^{\perp}$ so that $L_{Q}=Z_{G}(X)$ and $\alpha(X)>0$ for all $\alpha\in\Sigma(Q)$.
\end{enumerate}
\end{Prop}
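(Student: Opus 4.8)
The proposition is a repackaging of \cite[Theorem 2.3]{KnopKrotzSchlichtkrull_LocalStructureTheorem}, so the plan is to recall that result, observe that its constructive proof yields the \emph{same} pair $(Q,L_{Q})$ for every open $P$-orbit, and then peel off the remaining items by routine arguments. First I would invoke \cite[Theorem 2.3]{KnopKrotzSchlichtkrull_LocalStructureTheorem}, which attaches to the real spherical space $Z$ (with its fixed $P=MAN$) a parabolic $Q\supseteq P$, a $\theta$-stable Levi decomposition $Q=L_{Q}N_{Q}$ with $A\subseteq L_{Q}$, and, for each open $P$-orbit $\cO$, a base point $z\in\cO$ with $N_{Q}\cap H_{z}=\{e\}$ such that
$$
N_{Q}\times L_{Q}/(L_{Q}\cap H_{z})\ \longrightarrow\ \cO,\qquad\big(n,l(L_{Q}\cap H_{z})\big)\mapsto nl\cdot z,
$$
is a diffeomorphism. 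This is assertion (iii). Assertion (i) is then immediate: $\cO$ equals the image $N_{Q}L_{Q}\cdot z$ of this map and $Q=N_{Q}L_{Q}$, so $Q\cdot\cO=N_{Q}L_{Q}\cdot N_{Q}L_{Q}\cdot z=N_{Q}L_{Q}\cdot z=\cO$. For assertion (ii) I would use that $Q\cap H_{z}$ is the stabiliser of $z$ in $Q$: writing $q\in Q\cap H_{z}$ as $q=nl$ with $n\in N_{Q}$ and $l\in L_{Q}$, the pairs $(n,l)$ and $(e,e)$ have the same image under the parametrisation above, so its injectivity forces $n=e$ and $l\in L_{Q}\cap H_{z}$, giving $Q\cap H_{z}\subseteq L_{Q}\cap H_{z}$; the reverse inclusion is trivial. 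Thus (i) and (ii) follow from (iii) alone.

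Next I would extract assertions (iv) and (v) from the \emph{constructive} part of \cite[Theorem 2.3]{KnopKrotzSchlichtkrull_LocalStructureTheorem}, since they are not explicit in its statement. There the parabolic $Q$ is produced by choosing a generic element $X\in\fa\cap\fh_{z}^{\perp}$ in a suitable open cone and setting $L_{Q}=Z_{G}(X)$ and $\Sigma(Q)=\{\alpha\in\Sigma:\alpha(X)>0\}$; any such $X$ then satisfies the requirements of (v) verbatim, and since $Q$ is a standard parabolic containing $A$ this pins down $L_{Q}=Z_{G}(X)$ and the positivity of $X$ on $\Sigma(Q)$. Genericity of $X$ is precisely what allows the choice to be arranged so that $Q$ and $L_{Q}$ do not depend on $\cO$ — only $X$ (equivalently $z$) varies with the orbit — which is the uniformity implicit in the statement. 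For (iv), the same construction is set up so that $\fl_{Q}\cap\fh_{z}\supseteq\fl_{Q,\nc}$: the space $L_{Q}/(L_{Q}\cap H_{z})$ is, modulo a compact and an abelian factor, trivial, so a non-compact simple ideal of $\fl_{Q}$, having trivial projection onto both a compact ideal and the center, must lie in $\fh_{z}$. I would cite this part of the proof of \cite[Theorem 2.3]{KnopKrotzSchlichtkrull_LocalStructureTheorem} rather than reprove it.

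The hard part is not any of the derivations above — those are formal once (iii) is granted — but the bookkeeping needed to see that a single pair $(Q,L_{Q})$, together with an element $X\in\fa\cap\fh_{z}^{\perp}$ as in (v), can be read off uniformly from the constructive proof of \cite[Theorem 2.3]{KnopKrotzSchlichtkrull_LocalStructureTheorem} over all open $P$-orbits simultaneously. That is the only place where one must descend into the construction; everything else is a consequence of the diffeomorphism in (iii) together with standard facts about standard parabolic subgroups containing $A$.
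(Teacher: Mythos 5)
Your proposal is correct and follows the same route as the paper, which offers no proof beyond the remark that the proposition "follows from \cite[Theorem 2.3]{KnopKrotzSchlichtkrull_LocalStructureTheorem} and its constructive proof." Your derivations of (i) and (ii) from the diffeomorphism in (iii), and your extraction of (iv) and (v) from the constructive part of the cited theorem, are exactly the intended (and valid) filling-in of that citation.
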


\begin{Rem}\label{Rem L_Q roots}
The existence of an $X\in\fa\cap\fh_{z}^{\perp}$ with $L_{Q}=Z_{G}(X)$ has the following consequence. Let $\alpha\in\Sigma$. Then $\fg_{\alpha}\subseteq\fl_{Q}$ if and only if $\alpha^{\vee}\in\fa\cap\fh_{z}$.
\end{Rem}

\subsection{Adapted points}
\label{Subsection Orbits of maximal rank - Adapted points}
We now recall the notion of adapted points and some relevant results from \cite{KuitSayag_OnTheLittleWeylGroupOfARealSphericalSpace}.

Following \cite{KuitSayag_OnTheLittleWeylGroupOfARealSphericalSpace} we say that a point $z\in Z$ is adapted (to the Langlands decomposition $P=MAN$) if the following two conditions are satisfied.
\begin{enumerate}[(i)]
\item\label{Def Adapted points - item 1}
        $P\cdot z$ is open in $Z$, i.e., $\fp+\fh_{z}=\fg$,
\item\label{Def Adapted points - item 3}
        There exists an $X\in\fa\cap\fh_{z}^{\perp}$ so that $Z_{\fg}(X)=\fl_{Q}$.
\end{enumerate}
See Definition 3.3 and Remark 3.4 (b) in \cite{KuitSayag_OnTheLittleWeylGroupOfARealSphericalSpace}. It follows from Proposition \ref{Prop Local structure theorem} that every open $P$-orbit in $Z$ contains an adapted point. Moreover, the set of adapted points is $MA$-stable.

The Lie subalgebra $\fa\cap\fh_{z}$ is the same for all adapted points $z$ by \cite[Corollary 3.17]{KuitSayag_OnTheLittleWeylGroupOfARealSphericalSpace}. We denote this subalgebra by $\fa_{\fh}$ and refer to the dimension of $\fa/\fa_{\fh}$ as the rank of $Z$.

Adapted points have several of the properties that are listed in the local structure theorem, Proposition \ref{Prop Local structure theorem}. The following proposition is a combination of Proposition 3.6 and Remark 3.7 (b) in \cite{KuitSayag_OnTheLittleWeylGroupOfARealSphericalSpace}.

\begin{Prop}\label{Prop LST holds for adapted points}
Let $z\in Z$ be adapted. Then the following hold.
\begin{enumerate}[(i)]
  \item\label{Prop LST holds for adapted points - item 1}
            $Q\cap H_{z}=L_{Q}\cap H_{z}$,
  \item\label{Prop LST holds for adapted points - item 2}
            $\fl_{Q,\nc}\subseteq\fh_{z}$
  \item\label{Prop LST holds for adapted points - item 3}
          The map
          \begin{align*}
          M/(M\cap H_{z})\times \fa/\fa_{\fh}&\to L_{Q}/(L_{Q}\cap H_{z});\\
          \big(m(M\cap H_{z}), X+\fa_{\fh}\big)&\mapsto m\exp(X)(L_{Q}\cap H_{z})
          \end{align*}
          is a diffeomorphism.
  \item\label{Prop LST holds for adapted points - item 4}
          The map
          $$
          N_{Q}\times L_{Q}/(L_{Q}\cap H_{z})\to Z;
            \quad\big(n,l(L_{Q}\cap H_{z})\big)\mapsto nl\cdot z
          $$
          is a diffeomorphism onto $P\cdot z$.
\end{enumerate}
\end{Prop}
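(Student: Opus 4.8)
The plan is to deduce all four assertions from the local structure theorem, Proposition~\ref{Prop Local structure theorem}, by transporting its conclusions from the distinguished point it provides to an arbitrary adapted point of the same open $P$-orbit. Fix an adapted $z\in Z$ and set $\cO=P\cdot z$, which is open by the definition of adaptedness, and let $z_{0}\in\cO$ be the point furnished by Proposition~\ref{Prop Local structure theorem}. At $z_{0}$ all four properties already hold: assertion~(i) is Proposition~\ref{Prop Local structure theorem}(ii); assertion~(ii) is Proposition~\ref{Prop Local structure theorem}(iv); assertion~(iv) follows from Proposition~\ref{Prop Local structure theorem}(i) and (iii), the map there having image $Q\cdot z_{0}=\cO$; and assertion~(iii) is a formal consequence of (ii): since $L_{Q,\nc}$ is connected, $\fl_{Q,\nc}\subseteq\fh_{z_{0}}$ gives $L_{Q,\nc}\subseteq H_{z_{0}}$, and as $L_{Q,\nc}$ is a normal subgroup of $L_{Q}$ with $L_{Q}=L_{Q,\nc}\cdot MA$ we get $L_{Q}=MA\,(L_{Q}\cap H_{z_{0}})$ and hence $L_{Q}/(L_{Q}\cap H_{z_{0}})\cong MA/(MA\cap H_{z_{0}})$; one then checks, using Remark~\ref{Rem L_Q roots} and $\fa_{\fh}=\fa\cap\fh_{z_{0}}$, that $MA\cap H_{z_{0}}=(M\cap H_{z_{0}})(A\cap H_{z_{0}})$ with $(A\cap H_{z_{0}})^{\circ}=\exp\fa_{\fh}$, which gives the diffeomorphism in (iii).

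The essential step is to show that $z\in MA\cdot z_{0}$; equivalently, that $MA$ acts transitively on the set of adapted points of $\cO$. Using the diffeomorphism of Proposition~\ref{Prop Local structure theorem}(iii) I would write $z=nl\cdot z_{0}$ with $n\in N_{Q}$ and $l\in L_{Q}$ uniquely determined, and reduce to proving $n=e$, since $L_{Q}=MA\,(L_{Q}\cap H_{z_{0}})$ then gives $z=l\cdot z_{0}\in MA\cdot z_{0}$. Pick a witness $X'\in\fa\cap\fh_{z}^{\perp}$ of adaptedness of $z$, so $Z_\fg(X')=\fl_{Q}$, and, after replacing $X'$ by $-X'$ if necessary, $\alpha(X')>0$ for all $\alpha\in\Sigma(Q)$. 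Since $Z_\fg(X')=\fl_{Q}$ we have $l\in Z_{G}(X')$, so conjugation by $\exp(-tX')$ fixes $l$ while contracting $n$ to $e$ as $t\to\infty$; hence the $N_{Q}$-part drops out in the limit and the limit subalgebra $\fh_{z,-X'}=\lim_{t\to\infty}\Ad(\exp(-tX'))\fh_{z}$ (which exists in the Grassmannian) equals $\Ad(l)\,\fh_{z_{0},-X'}$. Now the point of invoking limit subalgebras: because $P\cdot z$ and $P\cdot z_{0}$ are open and because the sign pattern $\big(\sign\,\alpha(X')\big)_{\alpha\in\Sigma}$ is constant on the open cone $\{Y\in\fa:Z_\fg(Y)=\fl_{Q},\ \alpha(Y)>0\ \forall\,\alpha\in\Sigma(Q)\}$ that contains both $X'$ and the witness $X_{0}$ of $z_{0}$, the analysis of limit subalgebras in \cite{KuitSayag_OnTheLittleWeylGroupOfARealSphericalSpace} gives an explicit description of $\fh_{z_{0},-X'}$ and shows that the value is independent of the chosen witness; comparing this with $\fh_{z,-X'}=\Ad(l)\,\fh_{z_{0},-X'}$ and with $\fh_{z}=\Ad(nl)\fh_{z_{0}}$ forces $n=e$. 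This comparison is the main obstacle of the proof, and it is exactly where the detailed study of \cite{KuitSayag_OnTheLittleWeylGroupOfARealSphericalSpace} (Proposition~3.6 and Remark~3.7(b) there) is used; everything else is bookkeeping.

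Granted that $z=ma\cdot z_{0}$ for some $m\in M$, $a\in A$, the four assertions for $z$ follow from those for $z_{0}$. Indeed $ma\in MA\subseteq L_{Q}$ normalizes $L_{Q}$, $N_{Q}$, $Q$ and the ideal $\fl_{Q,\nc}$, and $\fh_{z}=\Ad(ma)\fh_{z_{0}}$; therefore conjugating Proposition~\ref{Prop Local structure theorem}(ii) by $ma$ gives $Q\cap H_{z}=L_{Q}\cap H_{z}$, which is assertion~(i), and $\fl_{Q,\nc}=\Ad(ma)\fl_{Q,\nc}\subseteq\Ad(ma)\fh_{z_{0}}=\fh_{z}$, which is assertion~(ii). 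The diffeomorphism in assertion~(iv) for $z$ is obtained from that for $z_{0}$ by composing with $\mathrm{id}\times R_{ma}$, where $R_{ma}\colon L_{Q}/(L_{Q}\cap H_{z})\to L_{Q}/(L_{Q}\cap H_{z_{0}})$, $l(L_{Q}\cap H_{z})\mapsto l\,ma\,(L_{Q}\cap H_{z_{0}})$, is a well-defined diffeomorphism because $L_{Q}\cap H_{z}=ma\,(L_{Q}\cap H_{z_{0}})\,(ma)^{-1}$. Finally, assertion~(iii) for $z$ follows from assertion~(ii) for $z$ exactly as it did for $z_{0}$ in the first paragraph.
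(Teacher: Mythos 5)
The paper gives no proof of this proposition at all: it is imported verbatim from \cite[Proposition 3.6 and Remark 3.7\,(b)]{KuitSayag_OnTheLittleWeylGroupOfARealSphericalSpace}, so any self-contained argument is necessarily a different route. Your route, however, has a genuine gap at what you yourself call the essential step. You reduce everything to the claim that an arbitrary adapted point $z$ of the open orbit $\cO=P\cdot z_{0}$ lies in $MA\cdot z_{0}$, i.e.\ that $MA$ acts transitively on the adapted points of $\cO$. This is false in general. Proposition \ref{Prop parameterization adapted points} gives $MA$-transitivity only on the set of adapted points sharing a \emph{fixed} witness $X$; since $\Ad(ma)$ fixes $\fa$ pointwise, two adapted points can be $MA$-conjugate only if $\fa\cap\fh_{z}^{\perp}=\fa\cap\fh_{z'}^{\perp}$, and distinct witness sets produce distinct $MA$-orbits. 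The paper itself treats "the adapted points of each open $P$-orbit form one $MA$-orbit" as an \emph{extra hypothesis} in Remark \ref{Remark properties of W-action}\,(\ref{Remark properties of W-action - item 3}) (equivalent to $\fa\cap\fa_{\fh}^{\perp}\subseteq\fh_{z}^{\perp}$, satisfied e.g.\ for symmetric spaces), which would be pointless if it held automatically. Correspondingly, the step where you assert that comparing $\fh_{z,-X'}=\Ad(l)\,\fh_{z_{0},-X'}$ with $\fh_{z}=\Ad(nl)\fh_{z_{0}}$ "forces $n=e$" is never actually carried out, and it cannot be: the witnesses $X'$ of $z$ and $X_{0}$ of $z_{0}$ may be genuinely different elements of $\fa\cap\fa_{\fh}^{\perp}$, and nothing in Proposition \ref{Prop Limits of subspaces} recovers $n$ from these two identities. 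As written, your argument only establishes the proposition for adapted points in the single $MA$-orbit of $z_{0}$.

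A secondary soft spot: even at $z_{0}$, assertion (\ref{Prop LST holds for adapted points - item 3}) is not a purely formal consequence of (\ref{Prop LST holds for adapted points - item 2}). Besides $L_{Q}=MA(L_{Q}\cap H_{z_{0}})$, injectivity of the map in (\ref{Prop LST holds for adapted points - item 3}) requires $MA\cap L_{Q,\nc}(L_{Q}\cap H_{z_{0}})\subseteq(M\cap H_{z_{0}})\exp(\fa_{\fh})$, and your "one then checks" does not say where this comes from; Remark \ref{Rem L_Q roots} concerns root spaces and does not supply it. The honest fix for both problems is the paper's own: quote \cite[Proposition 3.6 and Remark 3.7\,(b)]{KuitSayag_OnTheLittleWeylGroupOfARealSphericalSpace}, whose proof works directly at the given adapted point via the limit subalgebras $\fh_{z,X}$ rather than by transporting the conclusion from a preferred base point.
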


The adapted points in a given open $P$-orbit are up to $MA$-translation parameterized by $Q$-regular elements in $\fa\cap\fa_{\fh}^{\perp}$, i.e., by elements $X\in \fa\cap\fa_{\fh}^{\perp}$ so that $Z_{\fg}(X)=\fl_{Q}$. The following proposition follows directly from \cite[Proposition 3.12]{KuitSayag_OnTheLittleWeylGroupOfARealSphericalSpace}.

\begin{Prop}\label{Prop parameterization adapted points}
Let $\cO$ be an open $P$-orbit in $Z$. Let $X\in \fa\cap\fa_{\fh}^{\perp}$.
If $Z_{\fg}(X)=\fl_{Q}$, then there exists an adapted point $z\in \cO$ so that $X\in\fh_{z}^{\perp}$. Moreover, if $z'\in\cO$ is another adapted point so that $X\in\fh_{z'}^{\perp}$, then there exist $m\in M$ and $a\in A$ so that $z'=ma\cdot z$.
\end{Prop}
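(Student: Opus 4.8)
The plan is to deduce the statement from the already-quoted Proposition \ref{Prop parameterization adapted points}'s direct source, Proposition 3.12 of \cite{KuitSayag_OnTheLittleWeylGroupOfARealSphericalSpace}, but since we are asked to reconstruct the argument I would instead proceed from the local structure theorem (Proposition \ref{Prop Local structure theorem}) and the characterization of adapted points. First I would fix a base adapted point $z_{0}\in\cO$, which exists by the remark following the definition of adapted points, and by Proposition \ref{Prop Local structure theorem} (\ref{Prop Local structure theorem - item 4}) I may pick it so that there is an $X_{0}\in\fa\cap\fh_{z_{0}}^{\perp}$ with $Z_{\fg}(X_{0})=\fl_{Q}$ and $\alpha(X_{0})>0$ for all $\alpha\in\Sigma(Q)$. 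Using Proposition \ref{Prop LST holds for adapted points} (\ref{Prop LST holds for adapted points - item 3}), the set $L_{Q}/(L_{Q}\cap H_{z_{0}})$ is diffeomorphic to $M/(M\cap H_{z_{0}})\times\fa/\fa_{\fh}$; thus every adapted point in $\cO$ of the form $l\cdot z_{0}$ with $l\in L_{Q}$ is, up to $(M\cap H_{z_{0}})$-action and the kernel $A_{H}=\exp\fa_{\fh}$, of the form $m\exp(Y)\cdot z_{0}$ with $m\in M$, $Y\in\fa\cap\fa_{\fh}^{\perp}$.

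Next I would establish existence. Given $X\in\fa\cap\fa_{\fh}^{\perp}$ with $Z_{\fg}(X)=\fl_{Q}$, both $X$ and $X_{0}$ are $Q$-regular elements of $\fa\cap\fa_{\fh}^{\perp}$ with the same centralizer $\fl_{Q}$; hence they lie in the same connected component of the $Q$-regular locus, i.e.\ they are conjugate under the Weyl group $W(\fl_{Q},\fa)$ of $\fl_{Q}$ — more precisely, since $\fl_{Q,\nc}\subseteq\fh_{z_{0}}$ and $\fa\cap\fa_{\fh}^{\perp}$ is complementary to $\fa\cap\fh_{z_{0}}=\fa_{\fh}$ inside $\fa$, the relevant chamber structure forces $X$ and $X_{0}$ to be $W(\fl_{Q},\fa)$-conjugate, and this Weyl group is realized by elements of $M=Z_{K}(\fa)\cap\ldots$; actually the key point is that $N_{L_Q}(\fa)$ surjects onto $W(\fl_Q,\fa)$ and $N_{L_{Q}\cap K}(\fa)$ already does. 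So there is $m\in M$ with $\Ad(m)X_{0}=X$. Then $z:=m\cdot z_{0}$ is adapted (the set of adapted points is $MA$-stable), and $X=\Ad(m)X_{0}\in\Ad(m)(\fa\cap\fh_{z_{0}}^{\perp})=\fa\cap\fh_{z}^{\perp}$, which gives the first claim.

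For uniqueness, suppose $z'\in\cO$ is adapted with $X\in\fh_{z'}^{\perp}$. Since $z'\in\cO=P\cdot z$ and, by Proposition \ref{Prop LST holds for adapted points} (\ref{Prop LST holds for adapted points - item 4}), $\cO\cong N_{Q}\times L_{Q}/(L_{Q}\cap H_{z})$, write $z'=nl\cdot z$ with $n\in N_{Q}$, $l\in L_{Q}$. The condition $X\in\fh_{z'}^{\perp}$ together with $X\in\fh_{z}^{\perp}$ and $Z_{\fg}(X)=\fl_{Q}$ should force $n=e$: indeed $\fh_{z'}=\Ad(nl)\fh_{z}$, and $X\perp\fh_{z'}$ means $\Ad(nl)^{-1}X\perp\fh_{z}$; but $\Ad(l)^{-1}X\in\fl_{Q}$ since $l\in L_{Q}$ centralizes... no — rather, $\Ad(n)$ shifts $X$ by an element of $\fn_Q$, and comparing with the known decomposition $\fg=\fh_{z}\oplus(\fn_Q\oplus(\fa\cap\fa_\fh^\perp)\oplus\ldots)$ coming from the local structure theorem forces the $\fn_{Q}$-part to vanish, hence $n=e$ and $z'=l\cdot z$ with $l\in L_{Q}$. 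Finally, by Proposition \ref{Prop LST holds for adapted points} (\ref{Prop LST holds for adapted points - item 3}) write $l=m\exp(Y)h'$ with $m\in M$, $Y\in\fa$, $h'\in L_{Q}\cap H_{z}$; the requirement that $X$ be orthogonal to both $\fh_{z}$ and $\fh_{z'}=\Ad(m\exp Y)\fh_{z}$, combined with $Q$-regularity of $X$, pins down $\Ad(m\exp Y)$ to lie in the stabilizer of $X$ modulo $A$, hence $z'=m\exp(Y)\cdot z = m a\cdot z$ with $a=\exp(Y)\in A$, as desired.

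\textbf{Main obstacle.} The delicate point is the uniqueness half: controlling the $N_{Q}$-component and showing that the residual $L_{Q}$-ambiguity is exactly $MA$ (and not more). This requires using the $Q$-regularity of $X$ to eliminate nilpotent directions and the diffeomorphism statements of Proposition \ref{Prop LST holds for adapted points} to split off $M$ and $A$ cleanly; the identity $\fa\cap\fh_{z}=\fa_{\fh}$ (independent of the adapted point, \cite[Corollary 3.17]{KuitSayag_OnTheLittleWeylGroupOfARealSphericalSpace}) is what makes the bookkeeping consistent. I expect the existence half to be comparatively routine once one identifies the component structure of the $Q$-regular locus in $\fa\cap\fa_{\fh}^{\perp}$ with the action of $W(\fl_{Q},\fa)$ realized inside $M$.
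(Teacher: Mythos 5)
You cannot really be compared against ``the paper's proof'' here: the paper does not prove Proposition \ref{Prop parameterization adapted points} at all, but imports it verbatim from \cite[Proposition 3.12]{KuitSayag_OnTheLittleWeylGroupOfARealSphericalSpace}. Judged on its own terms, your existence argument has a fatal flaw. You propose to move the reference element $X_{0}$ to the given $X$ by conjugation with an element $m\in M$ realizing a Weyl group element of $W(\fl_{Q},\fa)$. But $M$ centralizes $\fa$, so $\Ad(m)X_{0}=X_{0}$ for every $m\in M$; your construction can only ever produce $X=X_{0}$. Enlarging $M$ to $N_{L_{Q}}(\fa)$ does not help: by Remark \ref{Rem L_Q roots} the coroots of the roots of $\fa$ in $\fl_{Q}$ lie in $\fa_{\fh}$, so $W(\fl_{Q},\fa)$ fixes $\fa\cap\fa_{\fh}^{\perp}$ pointwise. (Also, two $Q$-regular elements with the same centralizer need not lie in the same connected component of the $Q$-regular locus; $X$ and $-X$ already give a counterexample.) The structural point you are missing is that the adapted points of $\cO$ in general form \emph{several} $MA$-orbits (this is exactly the content of Remark \ref{Remark properties of W-action} (\ref{Remark properties of W-action - item 3})), and $\fa\cap\fh_{ma\cdot z_{0}}^{\perp}=\fa\cap\fh_{z_{0}}^{\perp}$ is constant along $MA\cdot z_{0}$, so no argument confined to $MA\cdot z_{0}$ can capture an $X\notin\fh_{z_{0}}^{\perp}$. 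The correct mechanism is translation by the unipotent radical: since $p_{\fa}(\fq\cap\fh_{z_{0}})=\fa_{\fh}$ (the $w=e$ instance of Lemma \ref{Lemma a-proj of w-part of h_z}), one has $X\in(\fq\cap\fh_{z_{0}})^{\perp}=\fn_{Q}+\fh_{z_{0}}^{\perp}$, hence $X+Y\in\fh_{z_{0}}^{\perp}$ for some $Y\in\fn_{Q}$; the $Q$-regularity of $X$ makes $\ad(X)$ invertible on $\fn_{Q}$, so there is $n\in N_{Q}$ with $\Ad(n)X=X+Y$, and $z:=n^{-1}\cdot z_{0}\in\cO$ does the job. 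This is precisely the argument the paper runs for the weakly adapted analogue, Proposition \ref{Prop parameterization weakly adapted points}.

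Your uniqueness sketch points in the right direction but the decisive step is left as ``should force $n=e$''. To close it: by Proposition \ref{Prop LST holds for adapted points} write $z'=nma\cdot z$ with $n\in N_{Q}$, $m\in M$, $a\in A$. Then $X\in\fh_{z'}^{\perp}=\Ad(nma)\fh_{z}^{\perp}$ is equivalent to $X+Y'\in\fh_{z}^{\perp}$, where $Y'=\Ad(a^{-1}m^{-1})\big(\Ad(n^{-1})X-X\big)\in\fn_{Q}$; combined with $X\in\fh_{z}^{\perp}$ this gives $Y'\in\fn_{Q}\cap\fh_{z}^{\perp}$. Since the orbit is open, $\fh_{z}+\fq=\fg$, whence $\fn_{Q}\cap\fh_{z}^{\perp}=(\fh_{z}+\fq)^{\perp}=\{0\}$; thus $\Ad(n^{-1})X=X$, and $Z_{\fg}(X)=\fl_{Q}$ forces $n=e$. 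Your final clause about $\Ad(m\exp Y)$ ``lying in the stabilizer of $X$ modulo $A$'' is both superfluous and vacuous ($MA$ centralizes $\fa$): once $n=e$, the local structure theorem already yields $z'\in MA\cdot z$.
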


\subsection{Limits of subspaces}\label{Subsection Orbits of max rank - Limits of subspaces}
In this section we discuss limits of subspaces of $\fg$ in the Grassmannian and their properties.

\medbreak

For $k\in\N$ let $\Gr(\fg,k)$ be the Grassmannian of $k$-dimensional subspaces of the Lie algebra $\fg$.

We say that an element $X\in\fa$ is {\it order-regular} if
$$
\alpha(X)\neq \beta(X)
$$
for all $\alpha,\beta\in \Sigma$ with $\alpha\neq \beta$.

If $X\in\fa$ is order-regular, then in particular $\alpha(X)\neq-\alpha(X)$ and therefore $\alpha(X)\neq 0$ for every $\alpha\in\Sigma$. This implies that order-regular elements in $\fa$ are regular. The name order-regular refers to the fact that every order-regular element $X\in\fa$ determines a linear order $\geq$ on $\Sigma$ by setting
$$
\alpha\geq\beta
\quad\text{if and only if}\quad
\alpha(X)\geq\beta(X)
$$
for $\alpha,\beta\in\Sigma$.

The following proposition is taken from \cite[Lemma 4.1]{KrotzKuitOpdamSchlichtkrull_InfinitesimalCharactersOfDiscreteSeriesForRealSphericalSpaces} and \cite[Proposition 5.2]{KuitSayag_OnTheLittleWeylGroupOfARealSphericalSpace}.

\begin{Prop}\label{Prop Limits of subspaces}
Let $E\in \Gr(\fg,k)$ and let $X\in\fa$. The limit
$$
E_{X}:=\lim_{t\to\infty}\Ad\big(\exp(tX)\big)E,
$$
exists in the Grassmannian $\Gr(\fg,k)$. If $\lambda_{1}<\lambda_{2}<\dots<\lambda_{n}$ are the eigenvalues and $p_{1},\dots,p_{n}$ the corresponding projections onto the eigenspaces $V_{i}$ of $\ad(X)$, then $E_{X}$ is given by
\begin{equation}\label{eq formula for E_X}
E_{X}
=\bigoplus_{i=1}^{n}p_{i}\big(E\cap \bigoplus_{j=1}^{i}V_{j}\big).
\end{equation}
The following hold.
\begin{enumerate}[(i)]
\item\label{Prop Limits of subspaces - item 1} If $E$ is a Lie subalgebra of $\fg$, then $E_{X}$ is a Lie subalgebra of $\fg$.
\item\label{Prop Limits of subspaces - item 2} If $X\in\fa$ is order-regular, then $E_{X}$ is $\fa$-stable.
\item\label{Prop Limits of subspaces - item 3} Let $\cR\subseteq\fa$ be a connected component of the set of order-regular elements in $\fa$. If $X\in\overline{\cR}$ and $Y\in\cR$, then $\big(E_{X}\big)_{Y}=E_{Y}$. In particular, if $X,Y\in\cR$, then $E_{X}=E_{Y}$.
\item\label{Prop Limits of subspaces - item 4}If  $g,g'\in G$ and
$$
\lim_{t\to\infty}\exp(tX)g\exp(-tX)
=g',
$$
then
$$
\big(\Ad(g)E\big)_{X}
=\Ad(g')E_{X}
$$
\item\label{Prop Limits of subspaces - item 5}
Let $E_{\C,X}$ be the limit of $\Ad\big(\exp(tX)\big)E_{\C}$ for $t\to\infty$ in the Grassmannian of $k$-dimensional complex subspaces in the complexification $\fg_{\C}$ of $\fg$. Then
$$
E_{\C,X}
=(E_{X})_{\C}.
$$
\end{enumerate}
\end{Prop}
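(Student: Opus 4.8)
The plan is to prove first that the limit exists together with the explicit formula (\ref{eq formula for E_X}); the five assertions will then follow, four of them essentially formally. For the formula I would decompose $\fg=\bigoplus_{i=1}^{n}V_{i}$ into the $\ad(X)$-eigenspaces with eigenvalues $\lambda_{1}<\dots<\lambda_{n}$, set $W_{i}:=\bigoplus_{j\le i}V_{j}$, and consider the flag $F_{i}:=E\cap W_{i}$, so that $0=F_{0}\subseteq F_{1}\subseteq\dots\subseteq F_{n}=E$. Choose a basis $e_{1},\dots,e_{k}$ of $E$ adapted to this flag and for each $j$ let $\ell(j)$ be the least index $i$ with $e_{j}\in W_{i}$; decomposing $e_{j}=\sum_{i\le\ell(j)}(e_{j})_{i}$ in eigencomponents we have $(e_{j})_{\ell(j)}=p_{\ell(j)}(e_{j})\neq 0$ and
\[
e^{-t\lambda_{\ell(j)}}\,\Ad\!\big(\exp(tX)\big)e_{j}=p_{\ell(j)}(e_{j})+\sum_{i<\ell(j)}e^{t(\lambda_{i}-\lambda_{\ell(j)})}(e_{j})_{i}\ \xrightarrow[t\to\infty]{}\ p_{\ell(j)}(e_{j}),
\]
since $\lambda_{i}-\lambda_{\ell(j)}<0$ for $i<\ell(j)$. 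These rescaled vectors span $\Ad(\exp tX)E$, and their limits $\{p_{\ell(j)}(e_{j})\}_{j}$ are linearly independent: for each $i$ the vectors $p_{i}(e_{j})$ with $\ell(j)=i$ form a basis of $p_{i}(F_{i})\cong F_{i}/F_{i-1}$, and distinct $i$ lie in independent eigenspaces. By the characterization of convergence in $\Gr(\fg,k)$ via convergent bases this gives $\Ad(\exp tX)E\to\bigoplus_{i}p_{i}(F_{i})$, which is exactly the right-hand side of (\ref{eq formula for E_X}).

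Assertions (i), (iv) and (v) are soft. For (i): the $k$-dimensional Lie subalgebras form a closed subset of $\Gr(\fg,k)$ (the condition $[\Lambda,\Lambda]\subseteq\Lambda$ is algebraic) and each $\Ad(\exp tX)E$ lies in it, hence so does $E_{X}$; equivalently, if $u,v\in E_{X}$ are limits of $u_{t},v_{t}\in\Ad(\exp tX)E$ then $[u_{t},v_{t}]\in\Ad(\exp tX)E$ converges to $[u,v]\in E_{X}$. For (iv): $(\Ad(g)E)_{X}=\lim_{t}\Ad\!\big(\exp(tX)g\exp(-tX)\big)\Ad(\exp tX)E$, and since $\exp(tX)g\exp(-tX)\to g'$ and $\Ad(\exp tX)E\to E_{X}$, continuity of $(h,V)\mapsto\Ad(h)V$ on $G\times\Gr(\fg,k)$ yields $\Ad(g')E_{X}$. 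For (v): the $\ad(X)$-eigenvalues on $\fg_{\C}$ and the decomposition $\fg_{\C}=\bigoplus_{i}(V_{i})_{\C}$ are the complexifications of those on $\fg$, and for a real subspace $E\subseteq\fg$ one has $E_{\C}\cap\bigoplus_{j\le i}(V_{j})_{\C}=(E\cap W_{i})_{\C}$; feeding this into the complex analogue of (\ref{eq formula for E_X}) gives $E_{\C,X}=(E_{X})_{\C}$. For (ii): $\Ad(\exp sX)E_{X}=\lim_{t}\Ad\!\big(\exp((s+t)X)\big)E=E_{X}$, so $E_{X}$ is $\ad(X)$-stable, i.e. $E_{X}=\bigoplus_{i}(E_{X}\cap V_{i})$; and when $X$ is order-regular the eigenspaces $V_{i}$ are exactly $\fm\oplus\fa$ (eigenvalue $0$) and the individual root spaces $\fg_{\alpha}$, on each of which $\fa$ acts by a character, so every subspace of $V_{i}$ is $\fa$-stable and hence so is $E_{X}$.

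Assertion (iii) is where the real work lies. For the last statement, the linear order an order-regular element induces on $\Sigma$ is locally constant on the order-regular set, hence constant on the connected component $\cR$; together with the fact that the sign of $\alpha(W)$ is constant for $W\in\cR$, this makes the full ordered eigenspace decomposition — and therefore the right-hand side of (\ref{eq formula for E_X}) — the same for every $W\in\cR$, so $E_{W}=E_{Y}$ for all $W\in\cR$. For $(E_{X})_{Y}=E_{Y}$ with $X\in\overline{\cR}$ and $Y\in\cR$, I would first approximate $X$ by elements of $\cR$ and pass to the limit to get $\alpha(X)<\beta(X)\Rightarrow\alpha(Y)<\beta(Y)$ for $\alpha,\beta\in\Sigma\cup\{0\}$; this forces each $\ad(Y)$-eigenspace $V^{Y}_{\nu}$ to lie inside a single $\ad(X)$-eigenspace $V^{X}_{\mu}$, and yields, for every $\ad(X)$-eigenvalue $\mu$, an identification of the filtration step $\bigoplus_{\mu'\le\mu}V^{X}_{\mu'}$ with a filtration step $\bigoplus_{\nu\le\nu^{*}}V^{Y}_{\nu}$ of the $\ad(Y)$-decomposition. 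Since $E_{X}$ is $\ad(X)$-graded and the $\ad(Y)$-filtration steps are $\ad(X)$-graded as well, one can then compute $(E_{X})_{Y}$ from (\ref{eq formula for E_X}) applied to $Y$ and match it term by term against $E_{Y}=\bigoplus_{\nu}p^{Y}_{\nu}\big(E\cap\bigoplus_{\nu'\le\nu}V^{Y}_{\nu'}\big)$, the filtration identities forcing equality. I expect this last piece of bookkeeping — reconciling the iterated limit $(E_{X})_{Y}$ with the single limit $E_{Y}$ through the compatibility of the two eigenspace filtrations — to be the only genuinely delicate point; everything else is the linear-algebra computation underlying (\ref{eq formula for E_X}) or a routine continuity and closedness argument.
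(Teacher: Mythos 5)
Your proof is correct. Note that the paper itself does not prove this proposition — it is quoted verbatim from \cite[Lemma 4.1]{KrotzKuitOpdamSchlichtkrull_InfinitesimalCharactersOfDiscreteSeriesForRealSphericalSpaces} and \cite[Proposition 5.2]{KuitSayag_OnTheLittleWeylGroupOfARealSphericalSpace} — so there is no in-paper argument to compare against; your derivation of (\ref{eq formula for E_X}) via an adapted basis and the deductions of (i), (ii), (iv), (v) are the standard ones and are sound. The one step you defer in (iii) does close as you predict: writing $U_{\nu}$ for the $\ad(Y)$-filtration steps and $W_{\mu}$ for the $\ad(X)$-filtration steps, your implication $\alpha(X)<\beta(X)\Rightarrow\alpha(Y)<\beta(Y)$ gives that each $U_{\nu}$ is $\ad(X)$-graded with $U_{\nu}=W_{\mu-1}\oplus(U_{\nu}\cap V^{X}_{\mu})$ where $V^{Y}_{\nu}\subseteq V^{X}_{\mu}$; since $p^{Y}_{\nu}$ annihilates $W_{\mu-1}$, one gets $p^{Y}_{\nu}(E\cap U_{\nu})\subseteq p^{Y}_{\nu}(E_{X}\cap U_{\nu})$, hence $E_{Y}\subseteq(E_{X})_{Y}$, and equality follows because both sides have dimension $k$. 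With that sentence added, the argument is complete.
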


We note that if $X$ is not order-regular, then $E_{X}$ need not be stable under the action of $\fa$, even if $X$ is regular.

For $z\in Z$ and $X\in\fa$ we define
$$
\fh_{z,X}
:=(\fh_{z})_{X}.
$$

\subsection{Compression cone}\label{Subsection Orbits of max rank - Compression cone}
We may and will assume that the point $e H\in G/H=Z$ is adapted. We define
$$
\fh_{\emptyset}
:=(\fl_{Q}\cap\fh)\oplus\overline{\fn}_{Q}.
$$
For $z\in Z$, we define the cone
$$
\cC_{z}
:=\{X\in\fa:\fh_{z,X}=\Ad(m)\fh_{\emptyset} \text{ for some }m\in M\}.
$$
By \cite[Proposition 6.5]{KuitSayag_OnTheLittleWeylGroupOfARealSphericalSpace} the cones $\cC_{z}$ are the same for all adapted points $z\in Z$. We therefore may define
$$
\cC
:=\cC_{z},
$$
where $z$ is any adapted point in $Z$. We call $\cC$ the compression cone of $Z$.

In the following proposition we list some of the properties of the compression cone from \cite[Section 6]{KuitSayag_OnTheLittleWeylGroupOfARealSphericalSpace}.

\begin{Prop}\label{Prop properties compression cone}
\,
\begin{enumerate}[(i)]
\item\label{Prop properties compression cone - item 1} Let $z\in Z$. If $P\cdot z$ is not open, then $\cC_{z}=\emptyset$. If $P\cdot z$ is open, then $\fa^{-}\subseteq\cC_{z}\subseteq\cC$.
\item\label{Prop properties compression cone - item 2} $\cC=\cC+\fa_{\fh}$.
\item\label{Prop properties compression cone - item 3} $\overline{\cC}$ is a finitely generated cone.
\end{enumerate}
\end{Prop}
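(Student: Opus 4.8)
The plan is to handle the three assertions separately: (i) and (ii) follow quickly from the limit calculus of Section~\ref{Subsection Orbits of max rank - Limits of subspaces} together with the local structure theorem, while (iii) is the substantive point. For (ii): since $\fa_{\fh}=\fa\cap\fh_{z}\subseteq\fh_{z}$, the group $\exp(\fa_{\fh})$ is contained in $H_{z}$, so $\Ad(\exp tY)\fh_{z}=\fh_{z}$ for all $Y\in\fa_{\fh}$ and $t\in\R$. As $\exp(tX)$ and $\exp(tY)$ commute for $X,Y\in\fa$, this gives $\Ad\big(\exp t(X+Y)\big)\fh_{z}=\Ad(\exp tX)\fh_{z}$ for every $t$, whence $\fh_{z,X+Y}=\fh_{z,X}$ after passing to the limit in the Grassmannian. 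Therefore $X\in\cC_{z}\Leftrightarrow X+Y\in\cC_{z}$ whenever $Y\in\fa_{\fh}$, which, for $z$ adapted, is exactly $\cC+\fa_{\fh}=\cC$.

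For (i), the first step is to check that $\fp+\fh_{\emptyset}=\fg$: using $\fg=\overline{\fn}_{Q}\oplus\fl_{Q}\oplus\fn_{Q}$ with $\fn_{Q}\subseteq\fn_{P}\subseteq\fp$, the decomposition $\fl_{Q}=(\fm+\fa)+\fl_{Q,\nc}$ (the center and the compact ideals of $\fl_{Q}$ centralise $\fa$, hence lie in $\fm+\fa$), and $\fl_{Q,\nc}\subseteq\fl_{Q}\cap\fh\subseteq\fh_{\emptyset}$ (Proposition~\ref{Prop LST holds for adapted points}, since $eH$ is adapted), one gets $\fp+\fh_{\emptyset}\supseteq\fn_{Q}+(\fm+\fa)+\fl_{Q,\nc}+\overline{\fn}_{Q}=\fg$; as $M\subseteq P$ normalises $\fp$, likewise $\fp+\Ad(m)\fh_{\emptyset}=\fg$ for all $m\in M$. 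Now if $X\in\cC_{z}$, then $\fh_{z,X}=\Ad(m)\fh_{\emptyset}$ for some $m\in M$, so $\fp+\fh_{z,X}=\fg$; but $\exp(tX)\in A\subseteq P$ normalises $\fp$, so $(\fp)_{X}=\fp$, and monotonicity of the limit with respect to inclusion of subspaces (immediate from formula~\eqref{eq formula for E_X}) gives $\fp+\fh_{z,X}=(\fp)_{X}+(\fh_{z})_{X}\subseteq(\fp+\fh_{z})_{X}$, a subspace of the same dimension as $\fp+\fh_{z}$; hence $\fp+\fh_{z}=\fg$, i.e., $P\cdot z$ is open. Contrapositively, if $P\cdot z$ is not open then $\cC_{z}=\emptyset$. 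When $P\cdot z$ is open, choose an adapted point $z_{0}\in P\cdot z$ (Proposition~\ref{Prop Local structure theorem}) and write $z=nma\cdot z_{0}$ with $n\in N_{P}$, $m\in M$, $a\in A$; for $X\in\fa^{-}$ one has $\lim_{t\to\infty}\exp(tX)\,nma\,\exp(-tX)=ma$, so Proposition~\ref{Prop Limits of subspaces}(iv) yields $\fh_{z,X}=\Ad(ma)\fh_{z_{0},X}$. Since $\fh_{\emptyset}$ is $\fa$-stable — it is the limit of $\fh_{z_{0}}$ along an order-regular element of $\fa^{-}$ (Proposition~\ref{Prop Limits of subspaces}(ii)) — the $M$-orbit of $\fh_{\emptyset}$ is stable under $Z_{G}(\fa)=MA$, so $X\in\cC_{z}\Leftrightarrow X\in\cC_{z_{0}}$ for $X\in\fa^{-}$. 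Combined with the fact that $\cC_{z_{0}}$ contains $\fa^{-}$ and equals $\cC$ for adapted $z_{0}$, established in \cite[Section~6]{KuitSayag_OnTheLittleWeylGroupOfARealSphericalSpace} (this amounts to computing $\fh_{z_{0},X}$ for $X\in\fa^{-}$), this gives $\fa^{-}\subseteq\cC_{z}$, and the inclusion $\cC_{z}\subseteq\cC$ reduces to the adapted case in the same way.

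Assertion (iii) is the main obstacle. By (ii) it is equivalent, after passing to $\fa/\fa_{\fh}$, to showing that $\overline{\cC}/\fa_{\fh}$ is a finitely generated cone, which by the Minkowski--Weyl theorem is the same as its being polyhedral. One thing is essentially free: by formula~\eqref{eq formula for E_X} the limit $\fh_{z,X}$ depends only on the pattern of equalities and inequalities among the numbers $\alpha(X)$, $\alpha\in\Sigma\cup\{0\}$, and this pattern is constant on each relatively open face of the finite hyperplane arrangement $\{\alpha=\beta:\alpha,\beta\in\Sigma\cup\{0\},\ \alpha\neq\beta\}$ in $\fa$. Hence $\{\fh_{z,X}:X\in\fa\}$ is finite, $\cC_{z}$ is a union of finitely many of these relatively open faces, and $\overline{\cC}$ is a finite union of closed polyhedral cones. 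What is not formal — and this is where the real work lies — is that this union is itself a single convex, hence polyhedral, cone. I would deduce this from the structure theory of the compression cone in \cite[Section~6]{KuitSayag_OnTheLittleWeylGroupOfARealSphericalSpace}, where $\overline{\cC}/\fa_{\fh}$ is identified with a fundamental domain of the little Weyl group $W_{Z}$ acting as a finite reflection group on $\fa/\fa_{\fh}$; such a fundamental domain is a closed Weyl chamber, which is manifestly polyhedral. (Equivalently, $\overline{\cC}$ is the intersection of the closed half-spaces cut out by a simple system of the spherical root system $\Sigma_{Z}$.) Thus the entire difficulty of the proposition is concentrated in identifying $\overline{\cC}$ with a Weyl-chamber-type cone; parts (i) and (ii) require only the limit calculus and the local structure theorem.
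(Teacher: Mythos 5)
The paper offers no proof of this proposition: it is imported verbatim from \cite[Section 6]{KuitSayag_OnTheLittleWeylGroupOfARealSphericalSpace}, so the only comparison available is with the logical development of that reference. Your arguments for (ii) and for the first half of (i) are correct and are essentially the standard limit-calculus arguments (monotonicity of $E\mapsto E_{X}$ plus dimension count, and $\Ad(\exp tY)\fh_{z}=\fh_{z}$ for $Y\in\fa_{\fh}$). Two points, however, do not go through as written. First, a minor one: at the end of (i) you claim that $\cC_{z}\subseteq\cC$ for a non-adapted $z=nma\cdot z_{0}$ "reduces to the adapted case in the same way". It does not: the conjugation trick $\lim_{t}\exp(tX)\,n\,\exp(-tX)=e$ is only available when $\alpha(X)<0$ on the roots occurring in $n$, so your argument identifies $\cC_{z}\cap\fa^{-}$ with $\cC_{z_{0}}\cap\fa^{-}$ — enough for $\fa^{-}\subseteq\cC_{z}$, but silent about elements of $\cC_{z}$ outside $\fa^{-}$, which is exactly what the inclusion $\cC_{z}\subseteq\cC$ is about.

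The substantive gap is in (iii). You deduce polyhedrality of $\overline{\cC}$ from the statement that $\overline{\cC}/\fa_{\fh}$ is a fundamental domain (a closed Weyl chamber) for the little Weyl group $W_{Z}$. But that identification is a much later and deeper result (Sections 10--12 of \cite{KuitSayag_OnTheLittleWeylGroupOfARealSphericalSpace}, quoted in Section \ref{Subsection Orbits of max rank - Admissible points and little Weyl group} of this paper), and its proof — indeed the very construction of $W_{Z}$ and of $\Sigma_{Z}$ — presupposes the finite generation of $\overline{\cC}$ established in Section 6 of that reference. Using it here is circular. The non-circular route, and the one the cited reference actually takes, is via the finitely generated monoid $\Lambda$ of $\fa$-weights of $\C[Z]^{(P)}$ (cf.\ Section \ref{Subsection Construction - Finite dimensional reps} of this paper and \cite[Lemma 5.6, Proposition 3.13]{KnopKrotzSayagSchlichtkrull_SimpleCompactificationsAndPolarDecomposition}): one shows that $\overline{\cC}$ is cut out, up to $\fa_{\fh}$, by the finitely many inequalities $\lambda(X)\leq 0$ for $\lambda$ in a generating set of $\Lambda$ — the condition that the matrix coefficients $a\mapsto a^{\lambda}f(z_{0})$ remain bounded along $\exp(tX)$ — so $\overline{\cC}$ is a finite intersection of half-spaces and hence finitely generated by Minkowski--Weyl. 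Your observation that $\cC_{z}$ is a finite union of faces of the arrangement $\{\alpha=\beta\}$ correctly reduces the problem to convexity, but convexity is precisely what neither that observation nor the (unavailable) Weyl-chamber description supplies at this stage.
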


The edge of $\overline{\cC}$ we denote by $\fa_{E}$, i.e.,
\begin{equation}\label{eq def a_E}
\fa_{E}
:=\overline{\cC}\cap -\overline{\cC}.
\end{equation}
Note that $\fa_{\fh}\subseteq\fa_{E}$, but that in general $\fa_{E}$ may be strictly larger.
We recall from \cite[Section 6]{KnopKrotzSayagSchlichtkrull_SimpleCompactificationsAndPolarDecomposition} that $Z$ is called wavefront if
$$
\cC
=\fa^{-}+\fa_{\fh}.
$$
For wavefront spaces $Z$ we have $\fa_{E}=\fa_{\fh}$. All reductive symmetric spaces are wavefront, i.e., all spaces $G/H$ with $H$ an open subgroup of the fixed point subgroup of an involutive automorphism of $G$.

\subsection{Rank of a $P$-orbit}\label{Subsection Orbits of max rank - Rank of orbit}

In this section we define the rank of a $P$-orbit in $Z$.  We begin with a lemma.

\begin{Lemma}\label{Lemma fa cap fh_(pz,X) independent of p,X}
Let $z\in Z$. The set $\fa\cap\fh_{p\cdot z,X}$ is the same for all $p\in P$ and $X\in\fa^{-}$.
\end{Lemma}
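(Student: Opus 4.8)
The plan is to show that the subspace $\fa\cap\fh_{p\cdot z,X}$ does not change when we replace $z$ by $p\cdot z$ for $p\in P$, nor when we vary $X$ within $\fa^{-}$. I would separate these into two independent reductions and treat them in order.

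\textbf{Step 1: Independence of $p\in P$.} Write $P=MAN$. Since conjugation by $A$ fixes $\fa$ pointwise and normalizes the eigenspace decomposition of $\ad(X)$, and since translating the base point by $a\in A$ just conjugates $\fh_{z}$ by $\Ad(a)$, one checks directly from formula (\ref{eq formula for E_X}) that $\fh_{az\cdot,X}=\Ad(a)\fh_{z,X}$ need not hold literally, so instead I would argue via Proposition \ref{Prop Limits of subspaces}\,(\ref{Prop Limits of subspaces - item 4}): if $p=man$ with $m\in M$, $a\in A$, $n\in N$, then $\lim_{t\to\infty}\exp(tX)p\exp(-tX)$ exists because $X\in\fa^{-}$ contracts $N$ (all roots in $\Sigma(P)$ are positive on $-\fa^{-}$, hence negative on $\fa^{-}$ — one must be careful with the sign convention here, but the point is that either $N$ or $\overline N$ is contracted, and the local structure setup is arranged so the limit exists), and this limit lies in $MA$. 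Hence $\fh_{p\cdot z,X}=(\Ad(p)\fh_{z})_{X}=\Ad(p')\fh_{z,X}$ for some $p'\in MA$. Since $\Ad(MA)$ preserves $\fa$, intersecting with $\fa$ gives $\fa\cap\fh_{p\cdot z,X}=\fa\cap\Ad(p')\fh_{z,X}=\Ad(p')(\fa\cap\fh_{z,X})$; and as $\Ad(MA)$ acts trivially on $\fa$ (since $A$ is central in $MA$ and $M$ centralizes $\fa$), this equals $\fa\cap\fh_{z,X}$.

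\textbf{Step 2: Independence of $X\in\fa^{-}$.} Here I would invoke Proposition \ref{Prop Limits of subspaces}\,(\ref{Prop Limits of subspaces - item 3}): the open negative chamber $\fa^{-}$ is contained in the closure of a single connected component $\cR$ of the order-regular set (indeed $\fa^{-}$ meets the interior of exactly one such component and lies in its closure, since order-regularity only imposes finitely many additional hyperplane conditions $\alpha(X)=\beta(X)$ inside the chamber). For $X\in\fa^{-}$ and any order-regular $Y\in\cR\cap\fa^{-}$ we get $(\fh_{z,X})_{Y}=\fh_{z,Y}$; moreover $\fh_{z,X}$ is already $\fa$-stable — this is the one point needing a small argument, since $X$ itself may not be order-regular, but $\fa^{-}\subseteq\overline{\cR}$ and applying $(\ )_{Y}$ for order-regular $Y$ lands us in an $\fa$-stable subalgebra, and one shows $\fa\cap\fh_{z,X}=\fa\cap\fh_{z,Y}$ by comparing the two via the formula in Proposition \ref{Prop Limits of subspaces}. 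Concretely: $\fh_{z,X}$ is $\ad(X)$-stable and $X$ is regular, so $\fh_{z,X}$ is a sum of its intersections with $\fa$ and with the root spaces grouped by $\ad(X)$-eigenvalue; applying $(\ )_{Y}$ cannot change the $\fa$-component. Since the chamber $\fa^{-}$ is connected and any two of its points lie in the closure of the same component $\cR$, the subspace $\fa\cap\fh_{z,X}$ is constant on $\fa^{-}$.

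\textbf{Main obstacle.} The delicate point is Step 2: reconciling the fact that a general $X\in\fa^{-}$ need not be order-regular (so $\fh_{z,X}$ is a priori not $\fa$-stable, per the remark right after Proposition \ref{Prop Limits of subspaces}) with the claim that its intersection with $\fa$ is nonetheless well-defined and constant. The resolution is that $X$ regular already forces enough structure on $\fh_{z,X}$ to pin down $\fa\cap\fh_{z,X}$, and then Proposition \ref{Prop Limits of subspaces}\,(\ref{Prop Limits of subspaces - item 3}) transports this along $\overline{\cR}$; I would want to state this as a short internal claim (``for regular $X$, $\fa\cap\fh_{z,X}$ is determined by $\fa\cap(\fh_{z,X})_{Y}$ for any order-regular $Y$ in the same chamber'') and verify it from (\ref{eq formula for E_X}). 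The $P$-independence in Step 1 is comparatively routine once the existence of the relevant limit $\lim_{t}\exp(tX)p\exp(-tX)$ is noted, which follows from $X\in\fa^{-}$ together with the conventions fixed in Section \ref{Section Setup and notation}.
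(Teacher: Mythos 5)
Your Step 1 is correct and is a legitimate variant of the paper's argument: the limit $\lim_{t\to\infty}\exp(tX)\,man\,\exp(-tX)=ma$ exists for $X\in\fa^{-}$ because $\fa^{-}$ contracts $N_{P}$, so Proposition \ref{Prop Limits of subspaces}\,(\ref{Prop Limits of subspaces - item 4}) gives $\fh_{p\cdot z,X}=\Ad(ma)\fh_{z,X}$, and $MA\subseteq Z_{G}(\fa)$ finishes it. The problem is in Step 2. Your key topological claim --- that $\fa^{-}$ lies in the closure of a single connected component of the order-regular set --- is false in general. Order-regularity removes, in addition to the root hyperplanes, all hyperplanes $\{\alpha(X)=\beta(X)\}$ with $\alpha\neq\pm\beta$, and these do cut through the interior of the Weyl chamber: already for $\Sigma$ of type $A_{2}$ with simple roots $\alpha_{1},\alpha_{2}$, the hyperplane $\ker(\alpha_{1}-\alpha_{2})$ meets the open negative chamber, so $\fa^{-}$ decomposes into several order-regular components. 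Consequently your closing sentence (``any two of its points lie in the closure of the same component $\cR$'') fails, and the argument as written never compares $\fa\cap\fh_{z,X_{1}}$ with $\fa\cap\fh_{z,X_{2}}$ when $X_{1},X_{2}$ sit in different components. The gap is repairable --- join $X_{1}$ to $X_{2}$ by a path crossing finitely many walls, and at each wall point $X$ use that $X$ lies in the closure of both adjacent components together with your internal claim $\fa\cap(\fh_{z,X})_{Y}=\fa\cap\fh_{z,X}$ --- but that chaining step is missing and is exactly what the independence of $X$ requires.

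It is worth noting that the paper's proof sidesteps order-regularity altogether and is much shorter: from formula (\ref{eq formula for E_X}) one reads off directly that $\fa\cap\fh_{p\cdot z,X}=p_{\fa}(\fh_{p\cdot z}\cap\fp)$, where $p_{\fa}:\fp\to\fa$ is the projection along $\fp=\fm\oplus\fa\oplus\fn_{P}$; the only property of $X$ used is that the non-positive eigenspaces of $\ad(X)$ sum to $\fp$ with kernel $\fm\oplus\fa$, which holds uniformly for all $X\in\fa^{-}$, so the right-hand side is manifestly independent of $X$, and the $\Ad(P)$-invariance of $p_{\fa}$ gives independence of $p$ in the same stroke. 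Your ``internal claim'' computation is essentially this observation applied to $\fh_{z,X}$ instead of $\fh_{z}$; applying it to $\fh_{z}$ itself would have given you the whole lemma without any appeal to Proposition \ref{Prop Limits of subspaces}\,(\ref{Prop Limits of subspaces - item 3}).
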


\begin{proof}
Let $X\in \fa^{-}$ and $p\in P$. Further, let $p_{\fa}:\fp\to\fa$ be the projection along the decomposition $\fp=\fm\oplus\fa\oplus\fn_{P}$.
In view of (\ref{eq formula for E_X})
$$
\fa\cap\fh_{p\cdot z,X}
=p_{\fa}\big(\fh_{p\cdot z}\cap\fp\big).
$$
Note that $p_{\fa}$ is invariant under the adjoint action of $P$ on $\fp$.
As
$$
\fh_{p\cdot z}\cap\fp
=\Ad(p)\fh_{z}\cap\fp
=\Ad(p)\big(\fh_{z}\cap\fp\big),
$$
it follows that
$$
\fa\cap\fh_{p\cdot z,X}
=p_{\fa}\big(\fh_{z}\cap\fp\big).
$$
The right-hand side is independent of $p$ and $X$.
\end{proof}

Let $\cO$ be a $P$-orbit in $Z$.
Lemma \ref{Lemma fa cap fh_(pz,X) independent of p,X} allows us to define the set
$$
\fa_{\cO}
:=\fa\cap\fh_{z,X}
$$
where $z$ is any point $\cO$ and $X$ is any element in $\fa^{-}$.
We call the dimension of $\fa/\fa_{\cO}$ the rank of the orbit $\cO$.

\begin{Rem}
If $\cO$ is an open $P$-orbit, then it follows from Proposition \ref{Prop properties compression cone} (\ref{Prop properties compression cone - item 2}) that
$$
\fa_{\cO}
=\fa_{\fh}.
$$
\end{Rem}

\subsection{$P$-Orbits of maximal rank}\label{Subsection Orbits of max rank - Orbits of max rank}
The main result in this section is the following proposition, which will be crucial in this article.

\begin{Prop}\label{Prop limits of max rank orbits are conjugates of h_empty}
Let $\cO\in P\bs Z$. Then
$$
\rank(\cO)\leq\rank (Z).
$$
Let $X\in\fa^{-}$ be order-regular and let $z\in\cO$.
Then  $\rank(\cO)=\rank(Z)$ if and only if there exists a  $w\in N_{G}(\fa)$ so that
\begin{equation}\label{eq h_(z,X)=Ad(w)h_empty}
\fh_{z,X}=\Ad(w)\fh_{\emptyset}.
\end{equation}
If (\ref{eq h_(z,X)=Ad(w)h_empty}) holds, then
\begin{equation}\label{eq a_O=Ad(w)a_h}
\fa_{\cO}
=\Ad(w)\fa_{\fh}
\end{equation}
and there exists an open $P$-orbit $\cO'$ in $Z$ so that
$$
w^{-1}\cdot \cO
\subseteq \cO'.
$$
\end{Prop}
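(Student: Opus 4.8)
The plan is to leverage the structure theory of limit subalgebras from \cite{KuitSayag_OnTheLittleWeylGroupOfARealSphericalSpace} together with the formula (\ref{eq formula for E_X}). First I would establish the rank bound. Given $z\in\cO$ and an order-regular $X\in\fa^-$, the limit $\fh_{z,X}$ is $\fa$-stable by Proposition \ref{Prop Limits of subspaces}(\ref{Prop Limits of subspaces - item 2}), so $\fh_{z,X}=(\fa\cap\fh_{z,X})\oplus\bigoplus_{\alpha}(\fh_{z,X}\cap\fg_\alpha)$; moreover it is a Lie subalgebra by part (\ref{Prop Limits of subspaces - item 1}), and by the general theory it is the Lie algebra of a point in the boundary degeneration. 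Since $\dim\fh_{z,X}=\dim\fh_z$ and for any open $P$-orbit the corresponding limit is (a conjugate of) $\fh_\emptyset$, whose $\fa$-part has dimension $\dim\fa_\fh$, while $\fa_\cO=\fa\cap\fh_{z,X}$, the inequality $\rank(\cO)=\dim(\fa/\fa_\cO)\le\dim(\fa/\fa_\fh)=\rank(Z)$ should follow by comparing dimensions of the projections in (\ref{eq formula for E_X}): the projection of $\fh_z$ to $\fa$ along $\fp=\fm\oplus\fa\oplus\fn_P$ has image of dimension at least... — more carefully, one wants $\dim(\fa\cap\fh_{z,X})\ge\dim\fa_\fh$, which I would get from the fact that $\fh_{z,X}$ lies in (a degeneration with the same $A$-action structure as) $\fh_\emptyset$, so $\fa_\fh\subseteq\fa\cap\fh_{z,X}$ up to conjugation by $N_G(\fa)$. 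This is really the heart of the matter and I'd cite the relevant results from \cite{KuitSayag_OnTheLittleWeylGroupOfARealSphericalSpace} on the possible limit subalgebras.

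For the equivalence: the direction ``(\ref{eq h_(z,X)=Ad(w)h_empty}) $\Rightarrow$ $\rank(\cO)=\rank(Z)$'' is immediate, since then $\fa\cap\fh_{z,X}=\fa\cap\Ad(w)\fh_\emptyset=\Ad(w)(\fa\cap\fh_\emptyset)=\Ad(w)\fa_\fh$ (using that $w$ normalizes $\fa$), giving both (\ref{eq a_O=Ad(w)a_h}) and $\rank(\cO)=\dim(\fa/\Ad(w)\fa_\fh)=\dim(\fa/\fa_\fh)=\rank(Z)$. For the converse, assume $\rank(\cO)=\rank(Z)$. The key input is the classification of limit subalgebras $\fh_{z,X}$ for order-regular $X\in\fa^-$: by the results of \cite{KuitSayag_OnTheLittleWeylGroupOfARealSphericalSpace} (in particular the description used to construct the little Weyl group, cf.\ the compression cone material in Section \ref{Subsection Orbits of max rank - Compression cone}), every such limit subalgebra is of the form $\Ad(w)\fh_\emptyset$ for some $w\in N_G(\fa)$ \emph{precisely when} its $\fa$-part has the maximal possible dimension $\dim\fa_\fh$, i.e.\ precisely when $\rank(\cO)=\rank(Z)$. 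I would extract this as a citable statement; if it is not stated in exactly this form, I would reconstruct it from the $\fa$-stability of $\fh_{z,X}$, the fact that $\fh_{z,X}$ contains $\overline{\fn}_{Q'}$ for an appropriate parabolic (forced by $X\in\fa^-$ and the normalization) and the rank condition, pinning down $\fh_{z,X}$ up to $N_G(\fa)$-conjugacy.

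Finally, assuming (\ref{eq h_(z,X)=Ad(w)h_empty}), I would produce the open $P$-orbit $\cO'$ with $w^{-1}\cdot\cO\subseteq\cO'$. Set $z':=w^{-1}\cdot z$, so that by Proposition \ref{Prop Limits of subspaces}(\ref{Prop Limits of subspaces - item 4}) (with $g=w^{-1}$, noting $w$ normalizes $\fa$ hence $\exp(tX)w^{-1}\exp(-tX)=w^{-1}$) we get $\fh_{z',X}=\Ad(w^{-1})\fh_{z,X}=\fh_\emptyset$. Now $\fh_\emptyset=(\fl_Q\cap\fh)\oplus\overline{\fn}_Q$ satisfies $\fp+\fh_\emptyset=\fg$ (since $\fp+\overline{\fn}_Q\supseteq\fn_P+\overline{\fn}_Q$ together with $\fl_Q\cap\fh$ covering the rest via the adapted-point structure), so by semicontinuity of $\dim(\fp+\fh_{z',X})$ — or rather, since $\fh_{z',X}$ is a limit of $\Ad(\exp(tX))\fh_{z'}$ and $\fp$ is $\exp(tX)$-stable, $\dim(\fp+\fh_{z'})\ge\dim(\fp+\fh_{z',X})=\dim\fg$ — the orbit $P\cdot z'=w^{-1}\cdot\cO$ must be open; call it $\cO'$. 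I expect the main obstacle to be the converse implication: precisely extracting from \cite{KuitSayag_OnTheLittleWeylGroupOfARealSphericalSpace} (or proving from scratch) that the maximal-rank condition forces $\fh_{z,X}$ to be $N_G(\fa)$-conjugate to $\fh_\emptyset$, rather than merely to have the right $\fa$-part; the nilpotent part of the limit subalgebra must also be shown to match, which is where the compression-cone structure and the specific form of $\fh_\emptyset$ enter.
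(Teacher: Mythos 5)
Your overall strategy is the paper's: the crux of both the rank bound and the equivalence is the classification of limit subalgebras $\fh_{z,X}$ whose $\fa$-part has dimension at least $\dim\fa_{\fh}$ as $N_{G}(\fa)$-conjugates of $\fh_{\emptyset}$, and the paper indeed isolates exactly this as a citable statement (Lemma \ref{Lemma limits in Ad(W)fh_emptyset determined by fa_z,X cap fa}, a mild strengthening of [Lemma 10.8] of \cite{KuitSayag_OnTheLittleWeylGroupOfARealSphericalSpace} with $=$ replaced by $\geq$). The paper even gets the inequality $\rank(\cO)\leq\rank(Z)$ for free from that lemma (if $\dim\fa_{\cO}\leq\dim\fa_{\fh}$ then conjugacy holds, which forces $\fa_{\cO}=\Ad(w)\fa_{\fh}$ and hence equality of dimensions), so your separate, rather vague argument for the bound is unnecessary once the lemma is in hand. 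Up to that point your proposal is essentially correct.

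The genuine gaps are in the last part. First, the identity $\exp(tX)w^{-1}\exp(-tX)=w^{-1}$ is false unless $\Ad(w)X=X$: normalizing $\fa$ does not mean commuting with $\exp(tX)$. The correct equivariance is $\fh_{w^{-1}\cdot z,\Ad(w^{-1})X}=\Ad(w^{-1})\fh_{z,X}$, and $\Ad(w^{-1})X$ need not lie in $\fa^{-}$; the paper avoids this by staying with $X$ and showing $\fh_{z,X}+\Ad(w)\fp=\fg$, then descending to $\fh_{z}+\Ad(w)\fp=\fg$ using $A$-stability of $\Ad(w)\fp$, so that $wPw^{-1}\cdot z$, and hence $Pw^{-1}\cdot z$, is open. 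Second, and more seriously, you identify $P\cdot(w^{-1}\cdot z)$ with $w^{-1}\cdot(P\cdot z)$. These coincide only if $w$ normalizes $P$; in general $w^{-1}\cdot\cO$ is a $w^{-1}Pw$-orbit, not a $P$-orbit, which is precisely why the proposition asserts containment $w^{-1}\cdot\cO\subseteq\cO'$ rather than equality. Your argument only places the single point $w^{-1}\cdot z$ in an open $P$-orbit. To finish one must show that $w^{-1}p\cdot z$ lies in the \emph{same} open orbit for every $p\in P$: the paper does this by noting $\fh_{n\cdot z,X}=\Ad(w)\fh_{\emptyset}$ for $n\in N_{P}$ (so each $Pw^{-1}n\cdot z$ is open), that the connected set $w^{-1}N_{P}\cdot z$ meets $\cO'$ and must therefore lie in $\cO'$ since $\partial\cO'$ contains no open orbits, and finally that the $MA$-factor is absorbed because $MA$ is normal in $N_{G}(\fa)$. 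This step is absent from your proposal.
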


We say that a $P$-orbit $\cO$ in $Z$ is of maximal rank if $\rank(\cO)=\rank(Z)$.

\begin{Rem}\label{Rem Orbits of max rank}
\,
\begin{enumerate}[(a)]
\item\label{Rem Orbits of max rank - item 1}
Fix a $P$-orbit $\cO$ of maximal rank, a point $z\in \cO$ and an order-regular element $X\in\fa^{-}$. The element $w\in N_{G}(\fa)$ in (\ref{eq h_(z,X)=Ad(w)h_empty}) is not unique.  It follows from \cite[Lemma 10.3]{KuitSayag_OnTheLittleWeylGroupOfARealSphericalSpace} that the stabilizer of $\fh_{\emptyset}$ in $N_{G}(\fa)$ is equal to $N_{L_{Q}}(\fa)$. Therefore, the equality (\ref{eq h_(z,X)=Ad(w)h_empty}) only determines the coset $w N_{L_{Q}}(\fa)\in N_{G}(\fa)/N_{L_{Q}}(\fa)$.
The element $w$ may be chosen so that
\begin{equation}\label{eq wSigma^+ cap -Sigma^+=wSigma(Q) cap -Sigma}
\Ad^{*}(w)\Sigma(P)\cap \big(-\Sigma(P)\big)
=\Ad(w)^{*}\Sigma(Q)\cap \big(-\Sigma(P)\big).
\end{equation}
To see this, consider the group $L_{Q}'=(L_{Q}\cap H)A$. $L_{Q}'$ is reductive and normalizes $\fh_{\emptyset}$.
Since $P\cap wL_{Q}'w^{-1}$ and $w(P\cap L_{Q}')w^{-1}$ are both minimal parabolic subgroups of $wL_{Q}'w^{-1}$ containing $A$, there exists a $v\in N_{wL_{Q}'w^{-1}}(\fa)$ so that
$$
vw(P\cap L_{Q}')w^{-1}v^{-1}
=P\cap wL_{Q}w^{-1}
=P\cap vwL_{Q}'w^{-1}v^{-1}.
$$
Let $w'=vw$. Then
$$
w'N_{P}w'^{-1}
=w'\big((N_{P}\cap L_{Q}')N_{Q}\big)w'^{-1}
=(N_{P}\cap w'L_{Q}w'^{-1})w'N_{Q}w'^{-1}.
$$
If now $w$ is replaced by $w'$, then it follows that both (\ref{eq h_(z,X)=Ad(w)h_empty}) and (\ref{eq wSigma^+ cap -Sigma^+=wSigma(Q) cap -Sigma}) hold.
\item\label{Rem Orbits of max rank - item 2}
Fix an order-regular element $X\in\fa^{-}$ and a $P$-orbit $\cO$ in $Z$ of maximal rank. The element $w\in N_{G}(\fa)$ in (\ref{eq h_(z,X)=Ad(w)h_empty}) depends on the choice of the point $z\in \cO$. Indeed, it follows from Proposition \ref{Prop Limits of subspaces} (\ref{Prop Limits of subspaces - item 4}) that
$$
\fh_{man\cdot z,X}
=\Ad(m)\fh_{z,X}
\qquad(m\in M,a\in A,n\in N_{P}).
$$
Therefore,
$$
\fh_{man\cdot z,X}
=\Ad(mw)\fh_{\emptyset}
\qquad(m\in M,a\in A,n\in N_{P})
$$
if $z\in\cO$ satisfies (\ref{eq h_(z,X)=Ad(w)h_empty}).
Note that the coset $w Z_{G}(\fa)\in N_{G}(\fa)/Z_{G}(\fa)=W$ is independent of $z\in\cO$.
\item\label{Rem Orbits of max rank - item 3}
Fix a $P$-orbit $\cO$ in $Z$ of maximal rank and a point $z\in\cO$.
The element $w\in N_{G}(\fa)$  in (\ref{eq h_(z,X)=Ad(w)h_empty}) depends on the choice of the order-regular element $X\in\fa^{-}$, as can be seen in the following example.

\begin{Ex}\label{Ex Limits in group case}
Assume that $G=\bp G\times \bp G$ for an reductive group $\bp G$, and $H=\diag(\bp G)$. Let $\bp P$ be a minimal parabolic subgroup of $\bp G$ with Langlands decomposition $\bp P=\bp M\bp A\bp N$ and let $\bp \overline{P}=\bp M\bp A\bp \overline{N}$ be opposite to $\bp P$. We write $P$ for the minimal parabolic subgroup $\bp P\times\bp \overline{P}$ of $G$.
Let $\cR$ be a set of representatives for the Weyl group of $\bp G$ in $N_{\bp G}(\bp \fa)$. Then $\cR$ is in bijection with $P\bs G/H$ via the map
$$
\cR\to P\bs G/H;\quad w\mapsto \cO(w):=P(e,w)H.
$$
Now fix $w\in\cR$ and let $z$ be the point $(e,w) H$ in $\cO(w)$. Let $X_{1},X_{2}\in\bp \fa^{-}(\bp P)$. We assume that $X:=(X_{1},-X_{2})\in\fa^{-}$ is order-regular. Then for all $\alpha,\beta\in\Sigma(\bp \fg,\bp \fa)$
$$
\alpha(X_{1})\neq \beta(X_{2}).
$$
The limit subalgebra $\fh_{z,X}$ is equal to
$$
\{(Y,\Ad(w)Y):Y\in\bp\fm\oplus\bp\fa\}
\oplus\bigoplus_{\substack{\alpha\in\Sigma(\bp\fg,\bp\fa)\\ \alpha(X_{1})>-w\cdot\alpha(X_{2})}}\big(\bp\fg_{\alpha}\times\{0\}\big)
\oplus\bigoplus_{\substack{\alpha\in\Sigma(\bp\fg,\bp\fa)\\ \alpha(X_{1})<-w\cdot\alpha(X_{2})}}\big(\{0\}\times\bp\fg_{w\cdot\alpha}\big).
$$
From this formula it follows that every $P$-orbit in $G/H$ is of maximal rank.

If $w=e$, so that $\cO(w)$ is the open $P$ orbit in $G/H$, then
$$
\fh_{z,X}
=\diag(\bp\fm\oplus\bp\fa)
\oplus\big(\bp\overline{\fn}\times\{0\}\big)
\oplus\big(\{0\}\times\bp\fn\big)
=:\fh_{\emptyset}
$$
is independent of the choice of $X$. For other orbits $\fh_{z,X}$ does depend on $X$. We illustrate this by considering the most extreme case: the closed $P$-orbit in $G/H$. Let $w\in\cR$ represent the longest Weyl group element, so that $\cO(w)$ is the closed orbit.
Every choice of $X_{1}$ and $X_{2}$ corresponds to a unique positive system $\bp\Sigma^{+}$ of $\Sigma(\bp\fg,\bp\fa)$ satisfying.
\begin{equation}\label{eq inequality on roots for ex group case}
\alpha(X_{1})>-w\cdot\alpha(X_{2})
\qquad(\alpha\in\bp\Sigma^{+}).
\end{equation}
Vice versa, given a positive system $\bp\Sigma^{+}$ of $\Sigma(\bp\fg,\bp\fa)$, we may choose $X_{1}$ and $X_{2}$ so that (\ref{eq inequality on roots for ex group case}) holds. If (\ref{eq inequality on roots for ex group case}) is satisfied, then
$$
\fh_{z,X}
=\{(Y,\Ad(w)Y):Y\in\bp\fm\oplus\bp\fa\}
\oplus\bigoplus_{\alpha\in\bp\Sigma^{+}}\big(\bp\fg_{\alpha}\times\{0\}\big)
\oplus\bigoplus_{\alpha\in\bp\Sigma^{+}}\big(\{0\}\times\bp\fg_{\alpha}\big).
$$
In this case there exists a $v\in N_{\bp G}(\bp\fa)$ so that
$$
\fh_{z,X}
=\Ad(v,wv)\fh_{\emptyset}.
$$
Note that $v$ is a representative for the element of the Weyl group mapping $\Sigma(\bp\overline{P})$ to $\bp\Sigma^{+}$.
\end{Ex}
\end{enumerate}
\end{Rem}

For the proof of Proposition \ref{Prop limits of max rank orbits are conjugates of h_empty} we need a slight strengthening of \cite[Lemma 10.8]{KuitSayag_OnTheLittleWeylGroupOfARealSphericalSpace}.

\begin{Lemma}\label{Lemma limits in Ad(W)fh_emptyset determined by fa_z,X cap fa}
Let $z\in Z$ and let $X\in\fa$ be order regular. Then $\dim(\fa_{\fh})\geq \dim(\fh_{z,X}\cap\fa)$ if and only if there exists a $w\in N_{G}(\fa)$ so that $\fh_{z,X}=\Ad(w)\fh_{\emptyset}$. In that case
$$
\fa\cap\fh_{z,X}
=\Ad(w)\fa_{\fh}.
$$
\end{Lemma}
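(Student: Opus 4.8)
The plan is to prove the easy implication ``$\exists\,w\Rightarrow$ dimension inequality'' together with the concluding identity by a direct computation, and to obtain the converse from \cite[Lemma 10.8]{KuitSayag_OnTheLittleWeylGroupOfARealSphericalSpace} after transporting $X$ into $\fa^{-}$ via the $N_G(\fa)$-action.

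\emph{One implication and the identity.} Suppose $\fh_{z,X}=\Ad(w)\fh_{\emptyset}$ with $w\in N_G(\fa)$. Since $w$ and $w^{-1}$ normalize $\fa$, one has $\fa\cap\Ad(w)\fh_{\emptyset}=\Ad(w)(\fa\cap\fh_{\emptyset})$, so it suffices to compute $\fa\cap\fh_{\emptyset}$. Recall $\fh_{\emptyset}=(\fl_Q\cap\fh)\oplus\overline{\fn}_Q$, that $\fa\subseteq\fl_Q$, and that $\fl_Q\cap\overline{\fn}_Q=\{0\}$; decomposing an element of $\fa\cap\fh_{\emptyset}$ along $\fh_{\emptyset}=(\fl_Q\cap\fh)\oplus\overline{\fn}_Q$ and invoking $\fa\subseteq\fl_Q$ forces its $\overline{\fn}_Q$-component to vanish, so $\fa\cap\fh_{\emptyset}=\fa\cap\fl_Q\cap\fh=\fa\cap\fh$, which equals $\fa\cap\fh_{eH}=\fa_{\fh}$ since $eH$ is adapted. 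Hence $\fa\cap\fh_{z,X}=\Ad(w)\fa_{\fh}$, which is the asserted identity; in particular $\dim(\fh_{z,X}\cap\fa)=\dim\fa_{\fh}$, so the inequality $\dim\fa_{\fh}\geq\dim(\fh_{z,X}\cap\fa)$ holds.

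\emph{The converse.} Assume $\dim\fa_{\fh}\geq\dim(\fh_{z,X}\cap\fa)$. An order-regular element is regular, so $X$ lies in a unique open Weyl chamber, and there is $n\in N_G(\fa)$ with $X':=\Ad(n^{-1})X\in\fa^{-}$; moreover $X'$ is again order-regular, order-regularity being invariant under the $W$-action on $\fa$. Writing $\exp(tX)=n\exp(tX')n^{-1}$ and using $\fh_{n^{-1}\cdot z}=\Ad(n^{-1})\fh_z$, we get $\Ad(\exp(tX))\fh_z=\Ad(n)\,\Ad(\exp(tX'))\,\fh_{n^{-1}\cdot z}$; letting $t\to\infty$ in the Grassmannian and using continuity of $E\mapsto\Ad(n)E$ gives
\[
\fh_{z,X}=\Ad(n)\,\fh_{n^{-1}\cdot z,\,X'}.
\]
Since $n$ normalizes $\fa$, we have $\fh_{n^{-1}\cdot z,X'}\cap\fa=\Ad(n^{-1})(\fh_{z,X}\cap\fa)$, whence $\dim(\fh_{n^{-1}\cdot z,X'}\cap\fa)=\dim(\fh_{z,X}\cap\fa)\leq\dim\fa_{\fh}$; thus the pair $(n^{-1}\cdot z,X')$, with $X'\in\fa^{-}$ order-regular, satisfies the hypothesis of \cite[Lemma 10.8]{KuitSayag_OnTheLittleWeylGroupOfARealSphericalSpace}. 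That lemma supplies $w'\in N_G(\fa)$ with $\fh_{n^{-1}\cdot z,X'}=\Ad(w')\fh_{\emptyset}$, so $\fh_{z,X}=\Ad(nw')\fh_{\emptyset}$ and $w:=nw'\in N_G(\fa)$ is as required.

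\emph{Expected main obstacle.} The delicate point is lining up the hypothesis of \cite[Lemma 10.8]{KuitSayag_OnTheLittleWeylGroupOfARealSphericalSpace} after the reduction: one must know that for $(z',X')=(n^{-1}\cdot z,X')$ with $X'\in\fa^{-}$ the inequality $\dim\fa_{\fh}\geq\dim(\fh_{z',X'}\cap\fa)$ is exactly what is needed there, and not, say, that $z'$ lie in an open $P$-orbit (which the reduction does not guarantee) or an exact equality of dimensions. Should the cited lemma be stated with an equality, the remaining gap is the reverse inequality $\dim(\fh_{z',X'}\cap\fa)\geq\dim\fa_{\fh}$, which can be supplied by upper semicontinuity of $E\mapsto\dim(\fa\cap E)$ on the Grassmannian together with the fact that $\dim\bigl(\fa\cap\Ad(\exp(tX'))\fh_{z'}\bigr)=\dim(\fa\cap\fh_{z'})$ is independent of $t$ (as $\exp(tX')$ centralizes $\fa$), reducing matters to comparing $\dim(\fa\cap\fh_{z'})$ with $\dim\fa_{\fh}$. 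If on the other hand \cite[Lemma 10.8]{KuitSayag_OnTheLittleWeylGroupOfARealSphericalSpace} already treats arbitrary order-regular $X$, the conjugation step is unnecessary and one applies it directly.
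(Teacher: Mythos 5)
Your first implication, together with the identity $\fa\cap\fh_{z,X}=\Ad(w)\fa_{\fh}$, is correct: the computation $\fa\cap\fh_{\emptyset}=\fa\cap\fl_{Q}\cap\fh=\fa_{\fh}$ and the transport by $\Ad(w)$ are exactly right. The conjugation step reducing to $X'\in\fa^{-}$ is also harmless (the equivariance of limits under $N_{G}(\fa)$ is Proposition \ref{Prop Limits of subspaces}~(\ref{Prop Limits of subspaces - item 4})), although the cited lemma appears to be stated for arbitrary order-regular $X$, in which case it is unnecessary.

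The converse, however, has a genuine gap, and it sits exactly where you flagged it. The cited result is stated with the equality $\dim(\fa_{\fh})=\dim(\fh_{z,X}\cap\fa)$ --- the paper's own comment confirms this, since it says the present lemma is obtained by rerunning the \emph{proof} of \cite[Lemma 10.8]{KuitSayag_OnTheLittleWeylGroupOfARealSphericalSpace} with the equality replaced by the inequality. A black-box application therefore requires upgrading $\dim(\fh_{z,X}\cap\fa)\leq\dim\fa_{\fh}$ to an equality, i.e.\ proving $\dim(\fh_{z,X}\cap\fa)\geq\dim\fa_{\fh}$ for every $z$ and every order-regular $X$. Your proposed route to this bound fails: upper semicontinuity only yields $\dim(\fa\cap\fh_{z,X})\geq\dim(\fa\cap\fh_{z})$, and $\dim(\fa\cap\fh_{z})$ can be strictly smaller than $\dim\fa_{\fh}$. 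In the group case $G=\bp G\times \bp G$, $H=\diag(\bp G)$ of Example \ref{Ex Limits in group case}, one has $\fa_{\fh}=\diag(\bp\fa)$, while for $z=(e,g)H$ with generic $g$ the intersection $\fa\cap\fh_{z}=\{(Y,\Ad(g)Y):Y\in\bp\fa\cap\Ad(g)^{-1}\bp\fa\}$ is zero; the intersection dimension genuinely jumps up in the limit, so the ``independent of $t$'' observation gives only a useless lower bound. Worse, the inequality $\dim(\fh_{z,X}\cap\fa)\geq\dim\fa_{\fh}$ is precisely the assertion $\rank(\cO)\leq\rank(Z)$, which the paper \emph{derives from} this lemma in Proposition \ref{Prop limits of max rank orbits are conjugates of h_empty}; presupposing it here would be circular. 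The intended argument is therefore not a citation of the statement of \cite[Lemma 10.8]{KuitSayag_OnTheLittleWeylGroupOfARealSphericalSpace} but a re-examination of its proof, checking that each step uses only the one-sided bound $\dim(\fh_{z,X}\cap\fa)\leq\dim\fa_{\fh}$ and never the reverse inequality.
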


The proof for the lemma is essentially the same as the proof of \cite[Lemma 10.8]{KuitSayag_OnTheLittleWeylGroupOfARealSphericalSpace}; in the proof the equality $\dim(\fa_{\fh})= \dim(\fh_{z,X}\cap\fa)$ can straightforwardly be replaced  by the inequality $\dim(\fa_{\fh})\geq \dim(\fh_{z,X}\cap\fa)$.

\begin{proof}[Proof of Proposition \ref{Prop limits of max rank orbits are conjugates of h_empty}]
Let $z\in \cO$ and $X\in\fa^{-}$.
If $\rank(\cO)\geq\rank (Z)$, then $\dim(\fh_{z,X}\cap \fa)=\dim(\fa_{\cO})\leq \dim (\fa_{\fh})$. By Lemma \ref{Lemma limits in Ad(W)fh_emptyset determined by fa_z,X cap fa} there exists a $w\in N_{G}(\fa)$ so that $\fh_{z,X}=\Ad(w)\fh_{\emptyset}$. Moreover, if $\fh_{z,X}=\Ad(w)\fh_{\emptyset}$ for some $w\in N_{G}(\fa)$, then
$$
\fa_{\cO}
=\fh_{z,X}\cap\fa
=\Ad(w)\fh_{\emptyset}\cap\fa
=\Ad(w)\fa_{\fh},
$$
and hence $\rank(\cO)=\rank(Z)$.

It remains to prove the existence of an open $P$-orbit $\cO'$ in $Z$ so that $w\cdot \cO\subset\cO'$. We first prove that $w^{-1}\cdot z$ lies in an open $P$-orbit.
As $\fg=\fp+\fh_{\emptyset}$ we have
$$
\fh_{z,X}+\Ad(w)\fp
=\Ad(w)\big(\fh_{\emptyset}+\fp\big)
=\fg.
$$
It follows that for sufficiently large $t>0$ we have
$$
\Ad\big(\exp(tX)\big)\fh_{z}+\Ad(w)\fp
=\fg.
$$
As $\Ad(w) \fp$ and $\fg$ are both $A$-stable, it follows that
$$
\fh_{z}+\Ad(w)\fp
=\fg.
$$
Therefore, $wPw^{-1}\cdot z$ is open in $Z$, and hence $Pw^{-1}\cdot z$ is open in $Z$.

Now set $\cO'=Pw^{-1}\cdot z$. Let $n\in N_{P}$. In view of Proposition \ref{Prop Limits of subspaces} (\ref{Prop Limits of subspaces - item 4}) we have
$$
\fh_{n\cdot z,X}=\Ad(w)\fh_{\emptyset}.
$$
By the argument above, the $P$-orbit $Pw^{-1}n\cdot z$ is open. It follows that $w^{-1}N_{P}\cdot z$ is contained in the union of all open $P$-orbits in $Z$. As $w^{-1}N\cdot z$ is connected, intersects with $\cO'$ and the boundary of $\cO'$ only contains non-open $P$-orbits, it follows that $w^{-1}N\cdot z$ is contained in $\cO'$.
Moreover, since $MA$ is a normal subgroup of $N_{G}(\fa)$ we have
$$
Pw^{-1}man\cdot z
=Pw^{-1}n\cdot z
=\cO'
$$
for all $m\in M$, $a\in A$ and $n\in N_{P}$. This proves the last assertion.
\end{proof}

\subsection{Weakly adapted points}
Let $X\in \fa$. If $\cO$ is a $P$-orbit of maximal rank, then we say that $X$ is $\cO$-regular if $X\in\fa\cap\fa_{\cO}^{\perp}$ and $\alpha(X)\neq 0$ for all roots $\alpha\in\Sigma$ that do not vanish on $\fa\cap\fa_{\cO}^{\perp}$.
We say that a point $z\in Z$ is weakly adapted (to the Langlands decomposition $P=MAN$) if the following two conditions are satisfied.
\begin{enumerate}[(i)]
\item\label{Def Weakly dapted points - item 1}
        The $P$-orbit $\cO=P\cdot z$ is of maximal rank.
\item\label{Def Weakly adapted points - item 3}
        There exists an $\cO$-regular element in $\fa\cap\fh_{z}^{\perp}$.
\end{enumerate}
Note that an adapted point $z\in Z$ is also weakly adapted.

The weakly adapted points in a given maximal rank $P$-orbit admit a similar parametrization as the adapted points in Proposition \ref{Prop parameterization adapted points}.

\begin{Prop}\label{Prop parameterization weakly adapted points}
Let $\cO\in P\bs Z$ be of maximal rank. The following hold.
\begin{enumerate}[(i)]
\item\label{Prop parameterization weakly adapted points - item 1}
    For every $\cO$-regular element $X\in\fa$ there exists a weakly adapted point $z\in \cO$ so that $X\in\fh_{z}^{\perp}$. Moreover, if $z'\in\cO$ is another adapted point so that $X\in\fh_{z'}^{\perp}$, then there exist $m\in M$ and $a\in A$ so that $z'=ma\cdot z$.
\item\label{Prop parameterization weakly adapted points - item 2} Let $z\in \cO$ be weakly adapted and $w\in N_{G}(\fa)$. If there exists an $X\in\fa^{-}$ so that $\fh_{z,X}=\Ad(w)\fh_{\emptyset}$,  then $w^{-1}\cdot z$ is adapted.
\end{enumerate}
\end{Prop}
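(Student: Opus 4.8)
The plan is to establish (ii) first and then to read off both halves of (i) from it by conjugating everything — via the element $w$ of Proposition~\ref{Prop limits of max rank orbits are conjugates of h_empty} — into an open $P$-orbit, where Proposition~\ref{Prop parameterization adapted points} is available.

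For (ii), let $z$ be weakly adapted with an $\cO$-regular witness $X_{2}\in\fa\cap\fh_{z}^{\perp}$, and let $X\in\fa^{-}$ and $w\in N_{G}(\fa)$ satisfy $\fh_{z,X}=\Ad(w)\fh_{\emptyset}$. First, $P\cdot(w^{-1}z)$ is open: from $\fh_{z,X}+\Ad(w)\fp=\fg$ and the $\fa$-stability of $\Ad(w)\fp$ and of $\fg$ one gets $\fh_{z}+\Ad(w)\fp=\fg$, exactly as in the proof of Proposition~\ref{Prop limits of max rank orbits are conjugates of h_empty}. The substantive step is to exhibit an element of $\fa\cap\fh_{w^{-1}z}^{\perp}$ with centraliser $\fl_{Q}$. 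Since $w\in N_{G}(\fa)$, conjugation by $w^{-1}$ preserves $\fa$, $\fm$ and the form $B$, so $Y:=\Ad(w^{-1})X_{2}$ lies in $\fa\cap\fh_{w^{-1}z}^{\perp}$, and it remains to check $Z_{\fg}(Y)=\fl_{Q}$, equivalently $Z_{\fg}(X_{2})=\Ad(w)\fl_{Q}$. Here Lemma~\ref{Lemma fa cap fh_(pz,X) independent of p,X} together with the hypothesis gives $\fa_{\cO}=\fa\cap\fh_{z,X}=\Ad(w)\fa_{\fh}$, and combining this with Remark~\ref{Rem L_Q roots} (equivalently, since $B$ is nondegenerate on $\fa$: $\fg_{\alpha}\subseteq\fl_{Q}$ if and only if $\alpha$ vanishes on $\fa\cap\fa_{\fh}^{\perp}$) shows that the roots vanishing on $\fa\cap\fa_{\cO}^{\perp}$ are exactly those $\alpha$ with $\fg_{\alpha}\subseteq\Ad(w)\fl_{Q}$. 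As $X_{2}$ is $\cO$-regular, this coincides with $\{\alpha\in\Sigma:\alpha(X_{2})=0\}$, so $Z_{\fg}(X_{2})=\fm\oplus\fa\oplus\bigoplus_{\fg_{\alpha}\subseteq\Ad(w)\fl_{Q}}\fg_{\alpha}=\Ad(w)\fl_{Q}$, using $\Ad(w)\fm=\fm$ and $\Ad(w)\fa=\fa$. Hence $w^{-1}z$ is adapted.

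For the uniqueness half of (i): if $z,z_{1}\in\cO$ both satisfy $X\in\fh^{\perp}$ then both are weakly adapted (witness $X$); I would fix an order-regular $X_{0}\in\fa^{-}$ and $w$ with $\fh_{z,X_{0}}=\Ad(w)\fh_{\emptyset}$, write $z_{1}=m'a'n'\cdot z$, and use $w_{1}:=m'w$ for $z_{1}$ as permitted by Remark~\ref{Rem Orbits of max rank}(b). By (ii), $w^{-1}z$ and $w_{1}^{-1}z_{1}$ are adapted points of the common open orbit $\cO':=Pw^{-1}z$, with $\Ad(w^{-1})X=\Ad(w_{1}^{-1})X$ perpendicular to $\fh$ (as $M$ centralises $\fa$); Proposition~\ref{Prop parameterization adapted points} then yields $w_{1}^{-1}z_{1}\in MA\cdot w^{-1}z$, and transporting back by $w_{1}$, which normalises $MA$, gives $z_{1}\in MA\cdot z$. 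For the existence half: given an $\cO$-regular $X'\in\fa$, I would pick $z_{\flat}\in\cO$, an order-regular $X_{0}\in\fa^{-}$ and $w$ with $\fh_{z_{\flat},X_{0}}=\Ad(w)\fh_{\emptyset}$, so that $\cO':=Pw^{-1}\cdot\cO$ is an open $P$-orbit, $w^{-1}\cdot\cO\subseteq\cO'$ and $\fa_{\cO}=\Ad(w)\fa_{\fh}$. Exactly as in (ii), $Y:=\Ad(w^{-1})X'$ lies in $\fa\cap\fa_{\fh}^{\perp}$ and satisfies $Z_{\fg}(Y)=\fl_{Q}$, so Proposition~\ref{Prop parameterization adapted points} produces an adapted $q\in\cO'$ with $Y\in\fh_{q}^{\perp}$; then $z:=w\cdot q$ has $X'=\Ad(w)Y\in\fh_{z}^{\perp}$ and is weakly adapted, provided $z\in\cO$.

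The hard part will be this last proviso: arranging that $q$ can be chosen in $w^{-1}\cdot\cO$, so that $z=wq$ lands in $\cO$ itself and not merely in the larger open orbit $\cO'$. Since the adapted points of $\cO'$ with $Y\perp\fh$ form a single $MA$-orbit, $w^{-1}\cdot\cO$ is $MA$-stable, and $P\cdot(w^{-1}\cdot\cO)=\cO'$ (from the proof of Proposition~\ref{Prop limits of max rank orbits are conjugates of h_empty}), it is enough to show that this $MA$-orbit meets $w^{-1}\cdot\cO$; I expect this to come out of a connectedness argument of the type used in the proof of Proposition~\ref{Prop limits of max rank orbits are conjugates of h_empty}, exploiting that $\cO'$ is swept out from the maximal-rank orbit $w^{-1}\cdot\cO$ along the unipotent directions of $w^{-1}Pw$ complementary to $w^{-1}\cdot\cO$. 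Everything else is routine bookkeeping with the conjugation $\Ad(w^{-1})$, together with Proposition~\ref{Prop limits of max rank orbits are conjugates of h_empty}, Remark~\ref{Rem L_Q roots} and Proposition~\ref{Prop parameterization adapted points}.
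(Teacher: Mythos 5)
Your part~(ii) and the uniqueness half of~(i) are essentially the paper's arguments and are fine: for~(ii) you use Proposition~\ref{Prop limits of max rank orbits are conjugates of h_empty} for openness and transport an $\cO$-regular element by $\Ad(w^{-1})$, checking via Remark~\ref{Rem L_Q roots} and $\fa_{\cO}=\Ad(w)\fa_{\fh}$ that its centraliser is $\fl_{Q}$; for uniqueness you push both points into the common open orbit and invoke Proposition~\ref{Prop parameterization adapted points}. That matches the paper.

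The existence half of~(i) has a genuine gap, and it is exactly the step you flag as ``the hard part.'' Your route produces, via Proposition~\ref{Prop parameterization adapted points}, an adapted point $q$ in the open orbit $\cO'=Pw^{-1}\cdot z_{\flat}$ with $Y=\Ad(w^{-1})X'\in\fh_{q}^{\perp}$, and then needs $q$ (or some $MA$-translate of it) to lie in the subset $w^{-1}\cdot\cO$ of $\cO'$. Nothing established so far forces this: the adapted points of $\cO'$ with a prescribed perpendicular $Q$-regular element form a single $MA$-orbit, but $\cO'$ generally contains several $MA$-orbits of adapted points (cf.\ Remark~\ref{Remark properties of W-action}~(\ref{Remark properties of W-action - item 3}), where the hypothesis that there is only one such orbit is an extra assumption), and there is no a priori reason the one you need meets $w^{-1}\cdot\cO$. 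A ``connectedness argument exploiting that $\cO'$ is swept out from $w^{-1}\cdot\cO$ along unipotent directions'' is not available at this point either: that sweeping-out statement is the diffeomorphism (\ref{eq decomp of wPw^(-1) wrt O}) of Theorem~\ref{Thm structure theorem for wPw^(-1) cdot z}, whose proof uses the very proposition you are proving, so you would be arguing in a circle.

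The paper avoids the detour through $\cO'$ entirely and works inside $\cO$. The key input is a separate lemma computing $p_{\fa}\bigl((\fp+\Ad(w)\fq)\cap\fh_{z_{0}}\bigr)=\fa_{\cO}$ for $z_{0}\in\cO$ (this uses the limit algebra $\fh_{z_{0},X}=\Ad(w)\fh_{\emptyset}$ and the inclusion of a conjugate of $\fl_{Q,\nc}$ in $\fh_{z_{0}}$, so it is not routine). Taking $B$-orthocomplements, an $\cO$-regular $X'\in\fa\cap\fa_{\cO}^{\perp}$ then satisfies $X'\in(\fn_{P}\cap\Ad(w)\fn_{Q})+\fh_{z_{0}}^{\perp}$, i.e.\ $X'+Y\in\fh_{z_{0}}^{\perp}$ for some $Y\in\fn_{P}\cap\Ad(w)\fn_{Q}$; since $\ad(X')$ is invertible on $\fn_{P}\cap\Ad(w)\fn_{Q}$ ($\cO$-regularity), one finds $n\in N_{P}\cap wN_{Q}w^{-1}$ with $\Ad(n)X'=X'+Y$, and $z:=n^{-1}\cdot z_{0}$ is the desired point — it lies in $\cO$ automatically because $n\in P$. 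To repair your proof you should either prove this orthocomplement lemma and argue as above, or supply an independent proof that the $MA$-orbit of $q$ meets $w^{-1}\cdot\cO$; as written, the existence claim is unproved.
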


For the proof of the proposition we need the following lemma. We write $p_{\fa}:\fg\to\fa$ for the projection onto $\fa$ along the root space decomposition.

\begin{Lemma}\label{Lemma a-proj of w-part of h_z}
Let $\cO$ be a $P$-orbit of maximal rank and let $z\in\cO$. Let $w\in N_{G}(\fa)$ be so that $\fh_{z,X}=\Ad(w)\fh_{\emptyset}$ for some order-regular element $X\in\fa^{-}$. Then
$$
p_{\fa}\big((\fp+\Ad(w)\fq)\cap\fh_{z}\big)
=\fa_{\cO}.
$$
\end{Lemma}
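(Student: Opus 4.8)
The plan is to pass through $X$-limits so that the projection $p_\fa$ on the intersection $(\fp+\Ad(w)\fq)\cap\fh_z$ can be computed after taking the limit $\Ad(\exp(tX))$, where the subalgebra $\fh_z$ degenerates to the explicit model $\Ad(w)\fh_\emptyset$. First I would record that $p_\fa$ is $\Ad(P)$-invariant and, more importantly, that $p_\fa$ is continuous and $\Ad(\exp(tX))$ acts trivially on $\fa$; so if $V\in\Gr(\fg,k)$ and $V_X=\lim_{t\to\infty}\Ad(\exp(tX))V$, then by formula (\ref{eq formula for E_X}) and the observation that $\fa$ lies in the zero eigenspace of $\ad(X)$, one gets $\fa\cap V_X = p_\fa(V\cap\fp)$ whenever $X\in\fa^-$ — this is exactly the identity already used in the proof of Lemma \ref{Lemma fa cap fh_(pz,X) independent of p,X}. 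The subtlety is that here we need this not for $V=\fh_z$ but for $V=(\fp+\Ad(w)\fq)\cap\fh_z$, and we need to know its $X$-limit.

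The key step is therefore to identify $\big((\fp+\Ad(w)\fq)\cap\fh_z\big)_X$. I would argue that since $X\in\fa^-$ and $\fp$, $\Ad(w)\fq$, $\fq$ are all $\fa$-stable, the subspace $\fp+\Ad(w)\fq$ is $\fa$-stable and in particular $\Ad(\exp(tX))$-invariant; hence its $X$-limit is itself. Combined with Proposition \ref{Prop Limits of subspaces}, taking limits should commute with the intersection in the sense that $\big((\fp+\Ad(w)\fq)\cap\fh_z\big)_X = (\fp+\Ad(w)\fq)\cap\fh_{z,X} = (\fp+\Ad(w)\fq)\cap\Ad(w)\fh_\emptyset$. (The commutation of limits with intersection is not automatic for arbitrary subspaces, so I would justify it here using $\fa$-stability of $\fp+\Ad(w)\fq$ and order-regularity of $X$ — concretely, via the eigenspace formula (\ref{eq formula for E_X}) applied to both factors, or by noting that on the $\fa$-stable complement everything decomposes into root spaces so intersections and $X$-limits are both computed root-space by root-space.) Applying $p_\fa$ and using $\fa\cap V_X=p_\fa(V\cap\fp)$ to the degenerated subspace then reduces the computation to the explicit algebra $(\fp+\Ad(w)\fq)\cap\Ad(w)\fh_\emptyset$.

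Finally I would compute $p_\fa\big((\fp+\Ad(w)\fq)\cap\Ad(w)\fh_\emptyset\big)$ by hand. Conjugating by $w^{-1}$: $\Ad(w)^{-1}(\fp+\Ad(w)\fq)=\Ad(w^{-1})\fp+\fq$, and $\Ad(w^{-1})\fh_\emptyset=(\fl_Q\cap\fh)\oplus\overline\fn_Q$ unchanged only if $w$ normalizes these pieces — it does not in general, so instead I keep $\fh_\emptyset=(\fl_Q\cap\fh)\oplus\overline\fn_Q$ on the right and observe $\fa\subseteq\fl_Q$, $p_\fa=p_\fa\circ\Ad(w)$ on $\fa$. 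Since $\fh_\emptyset$ contains $\overline\fn_Q$ and $\fl_Q\cap\fh$, and $\fa\cap\fh_\emptyset=\fa\cap\fl_Q\cap\fh=\fa_\fh$, the $\fa$-component of $\Ad(w)\fh_\emptyset$ is $\Ad(w)\fa_\fh=\fa_\cO$ by Proposition \ref{Prop limits of max rank orbits are conjugates of h_empty}. The containment $p_\fa\big((\fp+\Ad(w)\fq)\cap\Ad(w)\fh_\emptyset\big)\supseteq\fa_\cO$ follows because $\fa_\cO=\fa\cap\Ad(w)\fh_\emptyset\subseteq(\fp+\Ad(w)\fq)\cap\Ad(w)\fh_\emptyset$ and $p_\fa$ fixes $\fa_\cO$ pointwise; for the reverse containment I would take $Y$ in the intersection, write $Y=p+\Ad(w)q$ with $p\in\fp$, $q\in\fq$, and show $p_\fa(Y)=p_\fa(p)$ lies in $\fa_\cO$ using that $p\in\fh_z\cap\fp$ after the limit and Lemma \ref{Lemma fa cap fh_(pz,X) independent of p,X}, i.e.\ $p_\fa(\fh_{z,X}\cap\fp)=\fa\cap\fh_{z,X}=\fa_\cO$, and that the $\Ad(w)\fq$-part contributes only through $p_\fa\circ\Ad(w)$ applied to $\fn_Q\oplus\fl_Q$, whose image meets $\fa$ inside $\Ad(w)(\fa\cap\fl_Q)$, which combined with membership in $\Ad(w)\fh_\emptyset$ is forced into $\fa_\cO$.

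The main obstacle I expect is the step asserting that taking $X$-limits commutes with the intersection $(\fp+\Ad(w)\fq)\cap\fh_z$; this needs care because limit-in-the-Grassmannian and intersection do not commute in general. The resolution should come from the $\fa$-stability of $\fp+\Ad(w)\fq$ together with order-regularity of $X$, reducing everything to a root-space-by-root-space (and $\fm\oplus\fa$-block) bookkeeping where the eigenspace-filtration formula (\ref{eq formula for E_X}) makes both operations compatible.
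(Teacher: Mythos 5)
The inclusion $\fa_{\cO}\subseteq p_{\fa}\big((\fp+\Ad(w)\fq)\cap\fh_{z}\big)$ in your last paragraph is fine and matches the paper. The reverse inclusion, however, is where your argument has a genuine gap, and the gap is not the one you flag. Even if one grants the commutation $\big((\fp+\Ad(w)\fq)\cap\fh_{z}\big)_{X}=(\fp+\Ad(w)\fq)\cap\Ad(w)\fh_{\emptyset}$ (which is in fact salvageable, not by ``$\fa$-stability of one factor'' --- a two-dimensional example with $V=\R(e_{1}+e_{2})$ and $W=\R e_{2}$ shows that is insufficient --- but by the dimension count $\fh_{z}+\fp+\Ad(w)\fq=\fg=\fh_{z,X}+\fp+\Ad(w)\fq$), the reduction does not compute the quantity the lemma asks for. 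By formula (\ref{eq formula for E_X}), for any subspace $V$ and $X\in\fa^{-}$ one has $p_{\fa}(V_{X})=p_{\fa}(V\cap\fp)$, so passing to the limit only recovers $p_{\fa}(V\cap\fp)$, whereas the lemma concerns $p_{\fa}(V)$ for $V=(\fp+\Ad(w)\fq)\cap\fh_{z}$, which is \emph{not} contained in $\fp$. In general $p_{\fa}(V)\supseteq p_{\fa}(V_{X})$, possibly strictly: the limit destroys exactly the information about the $\fa$-components of vectors in $V$ whose components in $\Ad(w)\fn_{Q}\cap\overline{\fn}_{P}$ are nonzero, and bounding those is the entire content of the hard inclusion. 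Your closing sentences do not repair this: for $Y=p+\Ad(w)q$ one has $p_{\fa}(Y)=p_{\fa}(p)+p_{\fa}(\Ad(w)q)$, the component $p$ has no reason to lie in $\fh_{z}$, and since $\fa\subseteq\fl_{Q}$ the image $p_{\fa}(\Ad(w)\fl_{Q})$ is all of $\fa$, so nothing is ``forced into $\fa_{\cO}$.''

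The paper closes this gap with two ideas absent from your proposal. First, a $Y$ in the intersection is decomposed as $Y=Y_{\fp}+Y_{0}+Y_{-}$ with $Y_{0}\in\Ad(w)\fl_{Q,\nc}$ and $Y_{-}\in\Ad(w)\fn_{Q}\cap\overline{\fn}_{P}$, and $Y_{0}$ is removed by subtracting an element of $\Ad(wn^{-1})\fl_{Q,\nc}\subseteq\fh_{z}$ (using that $Pw^{-1}\cdot z$ is open), which keeps $Y$ in $\fh_{z}\cap(\fp+\Ad(w)\fq)$ and changes $p_{\fa}(Y)$ only by an element of $\fa_{\cO}$. Second, one shows $Y_{-}=0$ by taking the limit of the \emph{line} $\R Y$: since $(\R Y)_{X}\subseteq\fh_{z,X}=\Ad(w)\fh_{\emptyset}$ while a nonzero $Y_{-}$ would force $(\R Y)_{X}$ to lie in $\Ad(w)\fn_{Q}\cap\overline{\fn}_{P}$ (the strictly positive $\ad(X)$-eigenvalues dominate), one gets a contradiction, whence $Y\in\fp\cap\fh_{z}$ and $p_{\fa}(Y)\in\fa_{\cO}$. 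Some version of this vector-by-vector argument is needed; the passage to the limit of the whole subspace cannot see it.
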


\begin{proof}
It follows from (\ref{eq formula for E_X}) that
$$
p_{\fa}(\fp\cap\fh_{z})
=\fa\cap\fh_{z,X}
=\fa_{\cO},
$$
where $X$ is any element in $\fa^{-}$.
Therefore,
$$
\fa_{\cO}
\subseteq p_{\fa}\big((\fp+\Ad(w)\fq)\cap\fh_{z}\big).
$$

We move on to the other inclusion.
Let $Y\in \fp+\Ad(w)\fq$ and assume that $Y\in\fh_{z}$. We will prove that $p_{\fa}(Y)\in\fa_{\cO}$. We decompose $Y$ as
$$
Y
=Y_{\fp}+Y_{0}+Y_{-},
$$
where $Y_{\fp}\in\fp$, $Y_{0}\in\Ad(w)\fl_{Q,\nc}$ and $Y_{-}\in\Ad(w)\fn_{Q}\cap\overline{\fn}_{P}$.

In view of Proposition \ref{Prop limits of max rank orbits are conjugates of h_empty} the $P$-orbit $Pw^{-1}\cdot z$ is open.  Therefore, there exists a $n\in N_{Q}$ so that the connected subgroup $L_{Q,\nc}$ with Lie algebra $\fl_{Q,\nc}$ is contained in $H_{nw^{-1}\cdot z}$. It follows that $\Ad(wn^{-1})\fl_{Q,\nc}\subseteq \fh_{z}$. Note that $\Ad(w)\fl_{Q,\nc}\subseteq \Ad(wn^{-1})\fl_{Q,\nc}+\Ad(w)\fn_{Q}$. Therefore, there exists a $Y'\in \Ad(w)\fn_{Q}$ so that $Y_{0}+Y'\in\Ad(wn^{-1})\fl_{Q,\nc}\subseteq \fh_{z}$. Note that $p_{\fa}(Y_{0}+Y')\in\Ad(w)\fa_{\fh}=\fa_{\cO}$. By subtracting  $Y_{0}+Y'$ from $Y$ we may thus without loss of generality assume that $Y_{0}=0$, i.e.,
$$
Y
=Y_{\fp}+Y_{-}.
$$
Let $X\in\fa^{-}$ be order-regular and satisfy $\fh_{z,X}=\Ad(w)\fh_{\emptyset}$. The line $(\R  Y)_{X}$ is $\fa$-stable and contained in $\Ad(w)\fh_{\emptyset}$. Note that $Y_{-}$ is a linear combination of eigenvectors of $\ad(X)$ with strictly positive eigenvalues, whereas $Y_{\fp}$ is a linear combination of eigenvectors with non-positive eigenvalues. It follows that $Y_{-}=0$ as $(\R Y)_{X}$ would otherwise be a line in $\Ad(w)\fn_{Q}\cap\overline{\fn}_{P}$, which would be in contradiction with the fact that $(\R Y)_{X}$ is contained in $\Ad(w)\fh_{\emptyset}$. Now $Y\in\fp\cap\fh_{z}$ and hence $p_{\fa}(Y)\in p_{\fa}(\fp\cap\fh_{z}) =\fa_{\cO}$.
\end{proof}

\begin{proof}[Proof of Proposition \ref{Prop parameterization weakly adapted points}]
Let $z_{0}\in\cO$ and let $X\in\fa$ be $\cO$-regular.
Let $X'\in \fa^{-}$ be order-regular. By Proposition \ref{Prop limits of max rank orbits are conjugates of h_empty} there exists a $w\in N_{G}(\fa)$ so that $\fh_{z_{0},X'}=\Ad(w)\fh_{\emptyset}$.
It follows from Lemma \ref{Lemma a-proj of w-part of h_z} that
$$
X
\in\big((\fp+\Ad(w)\fq)\cap\fh_{z_{0}}\big)^{\perp}
=(\fn_{P}\cap\Ad(w)\fn_{Q})+\fh_{z_{0}}^{\perp}.
$$
In particular, there exists a $Y\in \fn_{P}\cap\Ad(w)\fn_{Q}$ so that $X+Y\in\fh_{z_{0}}^{\perp}$. Since $X$ is $\cO$-regular, it follows from (\ref{eq a_O=Ad(w)a_h}) that $\alpha(X)\neq 0$ for every $\alpha\in\Sigma$ so that $\alpha|_{\fa\cap\Ad(w)\fa_{\fh}^{\perp}}\neq 0$. As the roots of $\fa$ in $\fn_{P}\cap\Ad(w)\fn_{Q}$ do not vanish on $\fa\cap\Ad(w)\fa_{\fh}^{\perp}$, this implies that there exists a $n\in N_{P}\cap wN_{Q}w^{-1}$ so that  $\Ad(n)X=X+Y$.
Set $z=n^{-1}\cdot z_{0}$. Then
$$
X
\in \Ad(n^{-1})\fh_{z_{0}}^{\perp}
=\fh_{z}^{\perp}.
$$
This proves the first assertion in (\ref{Prop parameterization weakly adapted points - item 1}).

We move on to the second assertion in (\ref{Prop parameterization weakly adapted points - item 1}).
Let $z'\in\cO$ be another point so that $X\in\fh_{z'}^{\perp}$. By Proposition \ref{Prop limits of max rank orbits are conjugates of h_empty} the points $w^{-1}\cdot z$ and $w^{-1}\cdot z'$ lie in the same open $P$-orbit. Moreover,
$$
\Ad(w^{-1})X\in \fh_{w^{-1}\cdot z}^{\perp}\cap\fh_{w^{-1}\cdot z'}^{\perp}.
$$
By Proposition \ref{Prop parameterization adapted points}
$$
w^{-1}\cdot z'
\in MAw^{-1}\cdot z.
$$
As $MA$ is a normal subgroup of $N_{G}(\fa)$, it follows that
$$
z'
\in wMAw^{-1}\cdot z
= MA\cdot z.
$$
This concludes the proof of (\ref{Prop parameterization weakly adapted points - item 1}).

It remains to prove (\ref{Prop parameterization weakly adapted points - item 2}). Assume that $z$ is weakly adapted and there exists an $X\in\fa^{-}$ so that $\fh_{z,X}=\Ad(w)\fh_{\emptyset}$. By Proposition \ref{Prop limits of max rank orbits are conjugates of h_empty} the $P$-orbit through $w^{-1}\cdot z$ is open. Moreover, as $\fa\cap\fh_{z}^{\perp}$ contains $\cO$-regular elements, the set
$$
\fa\cap \fh_{w^{-1}\cdot z}^{\perp}
=\Ad(w^{-1})\big(\fa\cap\fh_{z}^{\perp}\big)
$$
contains elements $X$ so that $\alpha(X)\neq 0$ for all $\Sigma(Q)$. It follows that $w^{-1}\cdot z$ is adapted.
\end{proof}

\subsection{Structure of orbits of maximal rank}\label{Subsection Orbits of max rank - Structure of max rank orbit}

\begin{Prop}\label{Prop decomp of N}
Let $\cO\in P\bs Z$ be of maximal rank and $w\in N_{G}(\fa)$. Assume that there exist a point $z\in \cO$ and an order-regular element $X\in\fa^{-}$ so that $$
\fh_{z,X}
=\Ad(w)\fh_{\emptyset}.
$$
Then for every $y\in\cO$ the multiplication map
\begin{equation}\label{eq decomp of N}
(N_{P}\cap wN_{Q}w^{-1})\times(N_{P}\cap H_{y})\to N_{P};
\quad(n,n_{H})\mapsto nn_{H}
\end{equation}
is a diffeomorphism.
\end{Prop}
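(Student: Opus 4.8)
The plan is to reduce the statement to a dimension count combined with the observation that the map in \eqref{eq decomp of N} is always injective, and that it suffices to check the claim at a single point $y_{0}\in\cO$ from which it propagates to all of $\cO$. First I would fix an order-regular $X\in\fa^{-}$ and, using Proposition \ref{Prop limits of max rank orbits are conjugates of h_empty}, note that $w^{-1}\cdot\cO\subseteq\cO'$ for some open $P$-orbit $\cO'$. By Remark \ref{Rem Orbits of max rank} (\ref{Rem Orbits of max rank - item 2}) we have $\fh_{n\cdot z,X}=\Ad(w)\fh_{\emptyset}$ for all $n\in N_{P}$ and, more generally, $\fh_{man\cdot z,X}=\Ad(mw)\fh_{\emptyset}$; since replacing $w$ by $mw$ does not change $wN_{Q}w^{-1}$ (as $M$ normalizes $N_{Q}$), the group $N_{P}\cap wN_{Q}w^{-1}$ is genuinely an invariant of $\cO$, not of the chosen point. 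Thus it is enough to establish the diffeomorphism property for the specific point $z\in\cO$ in the hypothesis, and then transport it along $\cO$ using left translation by $MAN_{P}$ — here one uses that $N_{P}\cap H_{y}=\Ad(p)(N_{P}\cap H_{z})$ up to the $N_{P}$-part when $y=p\cdot z$, which after the usual bookkeeping shows the map for $y$ is obtained from the one for $z$ by an automorphism of $N_{P}$.

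Next I would verify injectivity of \eqref{eq decomp of N}. If $n_{1}n_{H,1}=n_{2}n_{H,2}$ with $n_{i}\in N_{P}\cap wN_{Q}w^{-1}$ and $n_{H,i}\in N_{P}\cap H_{y}$, then $n_{2}^{-1}n_{1}=n_{H,2}n_{H,1}^{-1}\in (N_{P}\cap wN_{Q}w^{-1})\cap H_{y}$. Applying $\Ad(w^{-1})$, this element lies in $N_{Q}\cap\Ad(w^{-1})\fh_{y}$-exponential; but $w^{-1}\cdot y$ lies in the open orbit $\cO'$, and by the local structure theorem (Proposition \ref{Prop LST holds for adapted points}) applied at the corresponding adapted point, $N_{Q}\cap H_{w^{-1}\cdot y}$ is trivial since $Q\cap H_{w^{-1}\cdot y}=L_{Q}\cap H_{w^{-1}\cdot y}$ meets $N_{Q}$ only in the identity. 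Hence $n_{1}=n_{2}$, giving injectivity. For the dimension count: $\dim N_{P}=\#\Sigma(P)$ (counting with multiplicity), $\dim(N_{P}\cap wN_{Q}w^{-1})=\#\big(\Sigma(P)\cap\Ad^{*}(w)\Sigma(Q)\big)$, and $\dim(N_{P}\cap H_{y})=\dim(\fn_{P}\cap\fh_{y})$. Using \eqref{eq formula for E_X} applied to $E=\fh_{z}$ and $X$ (so that $\fh_{z,X}=\Ad(w)\fh_{\emptyset}$), one reads off that the roots in $\Sigma(P)$ for which $\fg_{\alpha}\subseteq\fh_{z}+\bigoplus_{\beta\geq\alpha}\fg_{\beta}$-type truncation survives are exactly the complement of $\Sigma(P)\cap\Ad^{*}(w)\Sigma(Q)$ within $\Sigma(P)$; equivalently, $\fn_{P}=(\fn_{P}\cap\Ad(w)\fn_{Q})\oplus(\fn_{P}\cap\fh_{z})$ as a vector-space direct sum, because $\Ad(w)\fh_{\emptyset}=\Ad(w)(\fl_{Q}\cap\fh)\oplus\Ad(w)\overline{\fn}_{Q}$ contributes no root vectors lying in $\fn_{P}\cap\Ad(w)\fn_{Q}$. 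This yields $\dim(N_{P}\cap wN_{Q}w^{-1})+\dim(N_{P}\cap H_{y})=\dim N_{P}$.

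Finally, I would assemble these: the multiplication map is a smooth map between nilpotent Lie groups of equal dimension, it is injective, and a tangent-space computation at the identity — using precisely the direct sum decomposition $\fn_{P}=(\fn_{P}\cap\Ad(w)\fn_{Q})\oplus(\fn_{P}\cap\fh_{z})$ just obtained — shows its differential at $e$ is an isomorphism. Since the source is connected and simply connected and the map is an injective immersion of maximal rank onto an open subset, a standard argument (e.g.\ the map is a covering onto its image, which is open, combined with properness coming from the algebraic/polynomial nature of multiplication in the unipotent group in exponential coordinates) upgrades it to a diffeomorphism onto $N_{P}$. The main obstacle I anticipate is the Lie-algebra bookkeeping in the middle step: pinning down, via \eqref{eq formula for E_X} and the explicit form $\Ad(w)\fh_{\emptyset}=\Ad(w)(\fl_{Q}\cap\fh)\oplus\Ad(w)\overline{\fn}_{Q}$, that the $\fa$-weights appearing in $\fh_{z}\cap\fn_{P}$ are exactly those in $\fn_{P}$ not conjugated into $\fn_{Q}$ by $w$. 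Lemma \ref{Lemma a-proj of w-part of h_z} and the normalization \eqref{eq wSigma^+ cap -Sigma^+=wSigma(Q) cap -Sigma} of $w$ from Remark \ref{Rem Orbits of max rank} (\ref{Rem Orbits of max rank - item 1}) should make this clean, but it is where the real content lies; everything else is formal manipulation with unipotent groups.
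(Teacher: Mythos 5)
Your core step --- the direct sum decomposition $\fn_{P}=(\fn_{P}\cap\Ad(w)\fn_{Q})\oplus(\fn_{P}\cap\fh_{z})$, read off from (\ref{eq formula for E_X}) together with $\fg=\Ad(w)\fn_{Q}\oplus(\Ad(w)\fh_{\emptyset}+\fm+\fa)$ and an $\Ad(\exp(tX))$-limit to remove the subscript $X$ --- is exactly the paper's, and your injectivity argument via the triviality of $N_{Q}\cap H_{z'}$ for $z'$ in an open orbit is a valid alternative to the paper's (which simply notes that a $0$-dimensional algebraic subgroup of a unipotent group is trivial). Two places, however, do not close as written.

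First, the reduction to general $y\in\cO$. If $y=man_{0}\cdot z$ with $n_{0}\in N_{P}$ arbitrary, conjugation by $man_{0}$ is indeed an automorphism of $N_{P}$, but it does \emph{not} preserve the factor $N_{P}\cap wN_{Q}w^{-1}$, so the map for $y$ is not the map for $z$ composed with an automorphism respecting the product structure. The repair is to choose the representative $n_{0}$ inside $N_{P}\cap wN_{Q}w^{-1}$, and the existence of such a representative for every $y\in\cO$ is precisely the identity $(N_{P}\cap wN_{Q}w^{-1})\cdot z=N_{P}\cdot z$, i.e.\ surjectivity of the map at $z$. So the transport step must come \emph{after} the full statement at $z$, not as a preliminary reduction; in the order you give it, that paragraph is unsupported. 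Second, surjectivity at $z$: ``properness coming from the polynomial nature of multiplication'' is not an argument --- polynomial maps need not be proper --- and what your route actually requires is the theorem of Bia\l{}ynicki-Birula and Rosenlicht that an injective polynomial self-map of a real algebraic variety (here $\R^{\dim\fn_{P}}$ in exponential coordinates) is surjective. That theorem does make your approach work, but it must be invoked explicitly. The paper instead argues geometrically: by Rosenlicht's theorem the orbits $N_{P}\cdot z$ and $(N_{P}\cap wN_{Q}w^{-1})\cdot z$ are closed in $Z$, the latter is open in the former by the local diffeomorphism, and $N_{P}\cdot z$ is connected, so they coincide --- which yields surjectivity and, at the same time, the identity needed for the transport to general $y$.
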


\begin{proof}
We prove the assertion first for $y=z$.
It follows from (\ref{eq formula for E_X}) in Proposition \ref{Prop Limits of subspaces} that
$$
(\fn_{P}\cap \fh_{z})_{X}
=\fn_{P}\cap \fh_{z,X}
=\fn_{P}\cap \Ad(w)\fh_{\emptyset}.
$$
Since
$$
\fg
=\Ad(w)\fn_{Q}\oplus(\Ad(w)\fh_{\emptyset}+\fm+\fa),
$$
and this decomposition is compatible with the root space decomposition of $\fg$, it follows that
$$
\fn_{P}
=(\fn_{P}\cap \Ad(w)\fn_{Q})\oplus(\fn_{P}\cap \fh_{z})_{X}.
$$
Hence, for sufficiently large $t>0$
$$
\fn_{P}
=(\fn_{P}\cap \Ad(w)\fn_{Q})\oplus\Ad\big(\exp(tX)\big)(\fn_{P}\cap \fh_{z})
$$
As $\fn_{P}$ and $\fn_{P}\cap \Ad(w)\fn_{Q}$ are both $\fa$-stable, it follows, that
$$
\fn_{P}
=(\fn_{P}\cap \Ad(w)\fn_{Q})\oplus(\fn_{P}\cap \fh_{z})
$$
and thus (\ref{eq decomp of N}) is a local diffeomorphism onto an open neighborhood of $e$ in $N_{P}$.
It remains to show that (\ref{eq decomp of N}) is a bijection.

The intersection $(N_{P}\cap wN_{Q}w^{-1})\cap(N_{P}\cap H_{z})$ is an algebraic subgroup of $N_{P}$ of dimension $0$. The only such subgroup is the trivial one. Therefore, (\ref{eq decomp of N}) is injective.

By \cite[Theorem 2]{Rosenlicht_OnQuotientVarietiesAndTheAffineEmbeddingOfCertainHomogeneousSpaces} both $N_{P}\cdot z$ and $(N_{P}\cap wN_{Q}w^{-1})\cdot z$ are closed submanifolds of $Z$. Since the image of (\ref{eq decomp of N}) is open in $N_{P}$, the set $(N_{P}\cap wN_{Q}w^{-1})\cdot z$ is a relatively open subset of $N_{P}\cdot z$. Hence, $(N_{P}\cap wN_{Q}w^{-1})\cdot z$ is open and closed in $N_{P}\cdot z$. As $N_{P}\cdot z$ is connected, it follows that
\begin{equation}\label{eq (N_P cap wN_Qw^(-1)) cdot z = N_P cdot z}
(N_{P}\cap wN_{Q}w^{-1})\cdot z
=N_{P}\cdot z.
\end{equation}
From this we conclude that (\ref{eq decomp of N}) is surjective and this concludes the proof of the proposition for $z=y$.

Let now $y\in \cO$ and let $m\in M$, $a\in A$ and $n\in N_{P}$ be such that $y=man\cdot z$. The identity (\ref{eq (N_P cap wN_Qw^(-1)) cdot z = N_P cdot z}) shows that we may choose $n\in N_{P}\cap wN_{Q}w^{-1}$. Since the groups $(N_{P}\cap wN_{Q}w^{-1})$ and $N_{P}$ are normalized by $MA(N_{P}\cap wN_{Q}w^{-1})$, the assertion in the proposition now follows from the case $z=y$.
\end{proof}

\begin{Thm}\label{Thm structure theorem for wPw^(-1) cdot z}
Let $\cO\in P\bs Z$ be of maximal rank and $z\in \cO$ weakly adapted. Let $w\in N_{G}(\fa)$ be so that
$$
\fh_{z,X}=\Ad(w)\fh_{\emptyset}
$$
for some order-regular $X\in\fa^{-}$.
Then $\fa_{\cO}=\Ad(w)\fa_{\fh}\subseteq\fh_{z}$. Moreover,  $Pw^{-1}\cdot z$ is open and the maps
\begin{align}
\label{eq local structure thm for Pw^(-1) cdot z}
    N_{Q}\times M/(M\cap H_{w^{-1}\cdot z})\times A/\exp(\fa_{\fh})\to Pw^{-1}\cdot z;&\quad(n,m,a)\mapsto nmaw^{-1}\cdot z\\
\nonumber\\
\label{eq local structure thm for O}
    (N_{P}\cap wN_{Q}w^{-1})\times M/(M\cap H_{z})\times A/\exp(\fa_{\cO})\to \cO;&\quad(n,m,a)\mapsto nma\cdot z\\
\nonumber\\
\label{eq decomp of wPw^(-1) wrt O}
    (\overline{N}_{P}\cap wN_{Q}w^{-1})\times \cO\to wPw^{-1}\cdot z;&\quad(n,x)\mapsto n\cdot x
\end{align}
are diffeomorphisms.
\end{Thm}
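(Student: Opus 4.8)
The plan is to deduce all the assertions from the local structure theorem for adapted points (Proposition~\ref{Prop LST holds for adapted points}) applied to $z':=w^{-1}\cdot z$, combined with the factorization of $N_P$ supplied by Proposition~\ref{Prop decomp of N}. First, by Proposition~\ref{Prop parameterization weakly adapted points}~(ii) the point $z'$ is adapted; hence $P\cdot z'=Pw^{-1}\cdot z$ is open and $\fa\cap\fh_{z'}=\fa_\fh$. Since $\fh_{z'}=\Ad(w^{-1})\fh_z$ this yields $\Ad(w)\fa_\fh\subseteq\fh_z$, and as $\fa_\cO=\Ad(w)\fa_\fh$ by Proposition~\ref{Prop limits of max rank orbits are conjugates of h_empty} (equation~(\ref{eq a_O=Ad(w)a_h})), the first assertion $\fa_\cO=\Ad(w)\fa_\fh\subseteq\fh_z$ follows. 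The diffeomorphism~(\ref{eq local structure thm for Pw^(-1) cdot z}) is then obtained by composing parts (iv) and (iii) of Proposition~\ref{Prop LST holds for adapted points} for the adapted point $z'$, after identifying $\fa/\fa_\fh$ with $A/\exp(\fa_\fh)$ via $\exp$.

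For the remaining two maps the strategy is to parametrize the larger open orbit $wPw^{-1}\cdot z$ first and then split off a unipotent factor. Conjugating the diffeomorphism just obtained by $w$, and using $wH_{z'}w^{-1}=H_z$, $\Ad(w)\fa=\fa$, $\Ad(w)\fa_\fh=\fa_\cO$, and $wMw^{-1}=M$ (the last because $M={}^{0}Z_G(\fa)$ is normalized by $N_G(\fa)$), produces a diffeomorphism $\Theta\colon (wN_Qw^{-1})\times M/(M\cap H_z)\times A/\exp(\fa_\cO)\to wPw^{-1}\cdot z$, $(\nu,m,a)\mapsto\nu m a\cdot z$. Set $\check N:=N_P\cap wN_Qw^{-1}$ and $\widehat N:=\overline N_P\cap wN_Qw^{-1}$. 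Since $\Ad(w)\fn_Q$ is a sum of root spaces, each root being nonzero on $\fa$, it decomposes as $(\Ad(w)\fn_Q\cap\fn_P)\oplus(\Ad(w)\fn_Q\cap\overline{\fn}_P)$, and choosing $Y\in\fa$ with $\beta(Y)>0$ for all $\beta\in\Sigma(P)$ exhibits $\check N$ and $\widehat N$ as the expanding and contracting subgroups of the unipotent group $wN_Qw^{-1}$ under $\exp(\R Y)$. By the standard fact that a unipotent group acted on by a one-parameter group with no zero weight is the direct product, via multiplication, of its contracting and expanding subgroups, the map $\widehat N\times\check N\to wN_Qw^{-1}$ is a diffeomorphism; substituting it into $\Theta$ gives a diffeomorphism $\Psi\colon\widehat N\times\bigl(\check N\times M/(M\cap H_z)\times A/\exp(\fa_\cO)\bigr)\to wPw^{-1}\cdot z$, $(\hat n,\check n,m,a)\mapsto\hat n\cdot(\check n m a\cdot z)$.

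The decisive point is that the $\Psi$-image of $\{e\}\times\bigl(\check N\times M/(M\cap H_z)\times A/\exp(\fa_\cO)\bigr)$ equals $\cO$. Indeed, Proposition~\ref{Prop decomp of N} with $y=z$ gives $N_P=\check N\,(N_P\cap H_z)$, hence $N_P\cdot z=\check N\cdot z$; since $MA$ normalizes $N_P$ and $wN_Qw^{-1}$, and therefore $\check N$, we obtain $\cO=P\cdot z=MA\,N_P\cdot z=MA\,\check N\cdot z=\check N\,MA\cdot z$. Consequently the map~(\ref{eq local structure thm for O}), which is precisely $\phi:=\Psi(e,\,\cdot\,)$, has image $\cO$; being the restriction of the diffeomorphism $\Psi$ to the closed submanifold $\{e\}\times(\cdots)$ of its domain, it is a diffeomorphism onto $\cO$, where $\cO$ carries the submanifold structure of the open orbit $wPw^{-1}\cdot z$, which coincides with its $P$-orbit structure. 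Finally, $(\hat n,x)\mapsto\hat n\cdot x$ composed with $\Id_{\widehat N}\times\phi$ gives back $\Psi$, so the map~(\ref{eq decomp of wPw^(-1) wrt O}) is a diffeomorphism as well. I expect the main obstacle to be exactly the identity $\cO=\check N\,MA\cdot z$: it is what lets a $P$-orbit that is \emph{not} the open orbit of any sub-parabolic appear as a slice of the open $wPw^{-1}$-orbit, and it relies on Proposition~\ref{Prop decomp of N}, hence ultimately on the structure theory of maximal rank orbits developed in this section; the rest is a routine conjugation by $w$ together with a standard unipotent-group decomposition.
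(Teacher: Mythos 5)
Your proof is correct and follows essentially the same route as the paper's: both rest on Proposition~\ref{Prop parameterization weakly adapted points}~(ii) plus the local structure theorem for the adapted point $w^{-1}\cdot z$, the identity $\cO=(N_P\cap wN_Qw^{-1})MA\cdot z$ from Proposition~\ref{Prop decomp of N}, and the product decomposition of $wN_Qw^{-1}$ into its $N_P$- and $\overline{N}_P$-parts. The only difference is cosmetic — you conjugate the parametrization by $w$ before restricting to the slice giving $\cO$, while the paper restricts inside $Pw^{-1}\cdot z$ and translates back afterwards.
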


\begin{Rem}
The diffeomorphism (\ref{eq local structure thm for O}) may be viewed as a structure theorem for a $P$-orbit of maximal rank. For complex spherical spaces this structure theorem was first proven by Brion in \cite[Proposition 6 \& Theorem 3]{Brion_OrbitClosuresOfSphericalSubgroupsInFlagVarieties}. For our purposes the diffeomorphism (\ref{eq decomp of wPw^(-1) wrt O}) will be of particular  importance for the construction of distributions in Section \ref{Section Construction}.
\end{Rem}

\begin{proof}[Proof of Theorem \ref{Thm structure theorem for wPw^(-1) cdot z}]
In view of Proposition \ref{Prop parameterization weakly adapted points} (\ref{Prop parameterization weakly adapted points - item 2}) the $P$-orbit through $w^{-1}\cdot z$ is open and $w^{-1}\cdot z$ is adapted. The map (\ref{eq local structure thm for Pw^(-1) cdot z}) is a diffeomorphism by Proposition \ref{Prop LST holds for adapted points}.

It follows from Proposition \ref{Prop limits of max rank orbits are conjugates of h_empty} that
$$
w^{-1}\cdot\cO
\subseteq Pw^{-1}\cdot z.
$$
In view of Proposition \ref{Prop decomp of N}
$$
\cO
=(N_{P}\cap wN_{Q}w^{-1})MA\cdot z,
$$
and hence
$$
w^{-1}\cdot \cO
=(w^{-1}N_{P}w\cap N_{Q})MAw^{-1}\cdot z
$$
Since (\ref{eq local structure thm for Pw^(-1) cdot z}) is a diffeomorphism, the map
$$
(w^{-1}N_{P}w\cap N_{Q})\times M/(M\cap H_{w^{-1}\cdot z})\times A/\exp(\fa_{\fh})\to w^{-1}\cO;\quad(n,ma)\mapsto nmaw^{-1}\cdot z\\
$$
is a diffeomorphism. As $\fa_{\cO}=\Ad(w)\fa_{\fh}$, this implies that (\ref{eq local structure thm for O}) is a diffeomorphism.

Finally, the map (\ref{eq decomp of wPw^(-1) wrt O}) is a diffeomorphism since (\ref{eq local structure thm for Pw^(-1) cdot z}), (\ref{eq local structure thm for O}) and the product map
$$
(N_{P}\cap wN_{Q}w^{-1})\times (\overline{N}_{P}\cap wN_{Q}w^{-1})\to wN_{Q}w^{-1}
$$
are diffeomorphisms.
\end{proof}

\subsection{Admissible points and the little Weyl group}\label{Subsection Orbits of max rank - Admissible points and little Weyl group}
Following \cite[Definition 10.1]{KuitSayag_OnTheLittleWeylGroupOfARealSphericalSpace} we call a point $z\in Z$ admissible if it is adapted and if for every order-regular element $X\in\fa$ there exists a $w\in N_{G}(\fa)$ so that $\fh_{z,X}=\Ad(w)\fh_{\emptyset}$. By \cite[Proposition 10.4]{KuitSayag_OnTheLittleWeylGroupOfARealSphericalSpace} the set of admissible points is open and dense in the set of adapted points in $Z$ (with respect to the subspace topology). In particular, every open $P$-orbit in $Z$ contains an admissible point. We may and will assume that the point $eH\in G/H\in Z$ is admissible.

We define the groups
\begin{equation}\label{eq def cN}
\cN
:=N_{G}(\fa)\cap N_{G}(\fa_{\fh})
\end{equation}
and
\begin{equation}\label{eq def cZ}
\cZ
:=\{w\in N_{G}(\fa):\Ad(w)\fh_{\emptyset}=\Ad(m)\fh_{\emptyset}\text{ for some }m\in M\}
=N_{L_{Q}}(\fa)
\end{equation}
Note that $\cZ$ is a normal subgroup of $\cN$.
For an admissible point $z\in Z$ we set
\begin{equation}\label{eq def cW}
\cW
:=\{w\in N_{G}(\fa):\fh_{z,X}=\Ad(wm)\fh_{\emptyset} \text{ for some }X\in\fa\text{ and }m\in M\}.
\end{equation}
By \cite[Proposition 10.4]{KuitSayag_OnTheLittleWeylGroupOfARealSphericalSpace} the set $\cW$ does not depend on the choice of the admissible point. Furthermore, by \cite[Theorem 11.1]{KuitSayag_OnTheLittleWeylGroupOfARealSphericalSpace} it is a subgroup of $\cN$.  The quotient group
$$
W_{Z}
:=\cW/\cZ
$$
is equal to the little Weyl group of $Z$ as defined in \cite{KnopKrotz_ReductiveGroupActions}. The little Weyl group acts on $\fa/\fa_{\fh}$ as a finite reflection group. The set $\overline{\cC}/\fa_{\fh}$ is a fundamental domain for this action.

Let $\fa_{E}$ be the edge of $\overline{\cC}$, i.e.,
$$
\fa_{E}
:=\overline{\cC}\cap -\overline{\cC}.
$$
The little Weyl group acts trivially on $\fa_{E}/\fa_{\fh}$. See \cite[Lemma 12.1]{KuitSayag_OnTheLittleWeylGroupOfARealSphericalSpace}. Moreover, by \cite[Proposition 10.3]{KnopKrotz_ReductiveGroupActions} and \cite[Theorem 12.2]{KuitSayag_OnTheLittleWeylGroupOfARealSphericalSpace} the little Weyl group is the Weyl group of a root system in $(\fa/\fa_{E})^{*}$. This root system is called the spherical root system. We will indicate it by $\Sigma_{Z}$.

For our purposes the following characterization of $\cW$ is important.

\begin{Prop}\label{Prop characterization cV}
Let $z\in Z$ be admissible and let $w\in N_{G}(\fa)$. Then $Pw^{-1}\cdot z$ is open if and only if $w\in \cW$. In that case $w^{-1}\cdot z$ is admissible.
\end{Prop}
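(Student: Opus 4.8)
The plan is to prove the two directions of the equivalence separately, and then deduce admissibility of $w^{-1}\cdot z$ as an easy consequence of the work already done in the previous subsections. For the direction ``$w\in\cW\Rightarrow Pw^{-1}\cdot z$ open'': by definition of $\cW$ there exists an order-regular $X\in\fa$ and $m\in M$ with $\fh_{z,X}=\Ad(wm)\fh_{\emptyset}$. Replacing $z$ by $m^{-1}\cdot z$ (which is again admissible, since the set of adapted points is $MA$-stable and admissibility does not depend on the choice of admissible point), we may assume $\fh_{z,X}=\Ad(w)\fh_{\emptyset}$. Now I would reuse verbatim the argument from the proof of Proposition \ref{Prop limits of max rank orbits are conjugates of h_empty}: since $\fg=\fp+\fh_{\emptyset}$ we get $\fh_{z,X}+\Ad(w)\fp=\fg$, hence for $t\gg0$ we have $\Ad(\exp tX)\fh_{z}+\Ad(w)\fp=\fg$, and since both $\Ad(w)\fp$ and $\fg$ are $\fa$-stable this forces $\fh_{z}+\Ad(w)\fp=\fg$, so $wPw^{-1}\cdot z$ — and hence $Pw^{-1}\cdot z$ — is open. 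One subtlety: the definition of $\cW$ only guarantees \emph{some} $X\in\fa$, not necessarily $X\in\fa^{-}$; but since $z$ is admissible, for an order-regular $X\in\fa^{-}$ we again get some $w'\in N_{G}(\fa)$ with $\fh_{z,X}=\Ad(w')\fh_{\emptyset}$, and $w'\in\cW$, so the coset modulo $\cZ=N_{L_{Q}}(\fa)$ is unchanged; thus without loss of generality one may take the witnessing $X$ in $\fa^{-}$.

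For the converse ``$Pw^{-1}\cdot z$ open $\Rightarrow w\in\cW$'': suppose $Pw^{-1}\cdot z$ is open, so $\fp+\fh_{w^{-1}\cdot z}=\fg$, i.e. $\fp+\Ad(w^{-1})\fh_{z}=\fg$, equivalently $\Ad(w)\fp+\fh_{z}=\fg$. Pick an order-regular $X\in\fa^{-}$. Since $z$ is admissible, $\fh_{z,X}=\Ad(v)\fh_{\emptyset}$ for some $v\in N_{G}(\fa)$; in particular $P$-orbit of maximal rank arguments apply and $\fa_{\cO}=\Ad(v)\fa_{\fh}$. I want to conclude $v\in w\cZ$, i.e. $\Ad(w)^{-1}\Ad(v)\fh_{\emptyset}=\Ad(m)\fh_{\emptyset}$ for some $m\in M$. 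The cleanest route is a dimension/stability count: applying $(\cdot)_{X}$ to the identity $\Ad(w)\fp+\fh_{z}=\fg$ and using that $\Ad(w)\fp$ is $\fa$-stable (so equals its own limit) together with Proposition \ref{Prop Limits of subspaces}, one gets $\Ad(w)\fp+\fh_{z,X}=\fg$, that is $\Ad(w)\fp+\Ad(v)\fh_{\emptyset}=\fg$, hence $\fp+\Ad(w^{-1}v)\fh_{\emptyset}=\fg$. So $\Ad(w^{-1}v)\fh_{\emptyset}$ is a limit subalgebra (it is $\fa$-stable, being a $N_{G}(\fa)$-conjugate of $\fa$-stable $\fh_{\emptyset}$) that together with $\fp$ spans $\fg$; by the characterization of when such a conjugate of $\fh_{\emptyset}$ lies in the ``open'' position — this is exactly the content behind Lemma \ref{Lemma limits in Ad(W)fh_emptyset determined by fa_z,X cap fa} and the definition of $\cZ$ in \eqref{eq def cZ} as the $N_{G}(\fa)$-stabilizer of $\fh_{\emptyset}$ up to $M$ — one concludes $w^{-1}v\in\cZ$, hence $v\in w\cZ$ and therefore $w\in\cW$ since $v\in\cW$. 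I should double-check that ``$\Ad(u)\fh_{\emptyset}$ together with $\fp$ spans $\fg$'' really pins down $u\cZ$; if a direct citation to \cite{KuitSayag_OnTheLittleWeylGroupOfARealSphericalSpace} is not available I would argue it by hand using that both $\fp+\fh_{\emptyset}=\fg$ and $\dim\fh_{\emptyset}$ is minimal among limits, comparing $\fa$-weight spaces on both sides as in Proposition \ref{Prop limits of max rank orbits are conjugates of h_empty}.

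For the final clause, that $w^{-1}\cdot z$ is admissible: this is essentially immediate from Proposition \ref{Prop parameterization weakly adapted points} (\ref{Prop parameterization weakly adapted points - item 2}) together with the definition of admissibility. Indeed $z$ is admissible, hence weakly adapted, and we have just produced (in the first direction, or can produce after the second) an order-regular $X\in\fa^{-}$ with $\fh_{z,X}=\Ad(w)\fh_{\emptyset}$; Proposition \ref{Prop parameterization weakly adapted points} (\ref{Prop parameterization weakly adapted points - item 2}) then gives that $w^{-1}\cdot z$ is adapted. To upgrade ``adapted'' to ``admissible'' one invokes that the set of admissible points is open and dense in the adapted locus and is a union of (open) $P$-orbits — but more directly: for \emph{every} order-regular $Y\in\fa$, admissibility of $z$ gives $v_{Y}\in N_{G}(\fa)$ with $\fh_{z,Y}=\Ad(v_{Y})\fh_{\emptyset}$; by Proposition \ref{Prop Limits of subspaces} (\ref{Prop Limits of subspaces - item 4}) applied along a path, $\fh_{w^{-1}\cdot z,Y'}$ for suitable order-regular $Y'$ is an $N_{G}(\fa)$-conjugate of $\fh_{\emptyset}$, which is exactly the admissibility condition \eqref{eq def cW} for $w^{-1}\cdot z$. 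I expect the main obstacle to be bookkeeping the ``up to $M$ and up to $N_{L_Q}(\fa)$'' ambiguities cleanly — i.e. making sure at each step that replacing $z$ by an $MA$-translate or $w$ by a $\cZ$-translate is harmless — rather than any genuinely new geometric input; all the substantive content is already in Propositions \ref{Prop limits of max rank orbits are conjugates of h_empty} and \ref{Prop parameterization weakly adapted points} and Lemma \ref{Lemma limits in Ad(W)fh_emptyset determined by fa_z,X cap fa}.
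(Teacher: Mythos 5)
Your first direction and the closing admissibility claim are essentially workable, but the converse direction (``$Pw^{-1}\cdot z$ open $\Rightarrow w\in\cW$'') rests on a false step. From $\Ad(w)\fp+\fh_{z}=\fg$ you cannot conclude $\Ad(w)\fp+\fh_{z,X}=\fg$: passing to the Grassmannian limit preserves spanning only in the \emph{other} direction (spanning is an open condition, so if the limit spans, then $\Ad(\exp(tX))\fh_{z}$ spans for large $t$, and hence $\fh_{z}$ does by $\fa$-stability of $\Ad(w)\fp$ --- this is exactly why your first direction works); in general one only has $E_{X}+F_{X}\subseteq(E+F)_{X}$, and the inclusion can be strict even when one summand is $\fa$-stable. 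The group case of Example \ref{Ex Limits in group case} gives an explicit counterexample: take $z=eH$, $w=(v,v)$ with $v\in N_{\bp G}(\bp\fa)\setminus\bp M\bp A$, and $X=(X_{1},-X_{2})$ with $X_{1},X_{2}\in\bp\fa^{-}$. Then $\Ad(w)\fp+\fh_{z}=\fg$ (and $Pw^{-1}\cdot z$ is the open orbit, and $w\in\cW$), while $\fh_{z,X}=\fh_{\emptyset}$ and the projection of $\Ad(w)\fp+\fh_{\emptyset}$ to the first factor is $\Ad(v)\bp\fp+\bp\overline{\fp}\subsetneq\bp\fg$. In the same example your intended conclusion ``$v\in w\cZ$'' (with $v$ determined by $\fh_{z,X}=\Ad(v)\fh_{\emptyset}$ for $X\in\fa^{-}$, so here $v=e$) is false, so the argument cannot be saved merely by justifying the final ``pins down $u\cZ$'' step you flagged.

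The correct route for this direction --- essentially what the paper does by combining the cited external result with the equivariance $\fh_{w^{-1}\cdot z,X}=\Ad(w^{-1})\fh_{z,\Ad(w)X}$ --- is to translate first and take limits afterwards: if $Pw^{-1}\cdot z$ is open, then $\fa^{-}\subseteq\cC_{w^{-1}\cdot z}$ by Proposition \ref{Prop properties compression cone}, so $\fh_{w^{-1}\cdot z,X}=\Ad(m)\fh_{\emptyset}$ for $X\in\fa^{-}$ and some $m\in M$, whence $\fh_{z,\Ad(w)X}=\Ad(wm)\fh_{\emptyset}$ and $w\in\cW$; the witnessing element $\Ad(w)X$ need not lie in $\fa^{-}$, which is precisely what a limit taken along $\fa^{-}$ at $z$ itself cannot see. (The same proposition also yields the easy direction at once, since $w\in\cW$ means $\cC_{w^{-1}\cdot z}\neq\emptyset$.) Two smaller repairs: replacing $z$ by $m^{-1}\cdot z$ gives $\fh_{m^{-1}\cdot z,X}=\Ad(m^{-1}wm)\fh_{\emptyset}$ rather than $\Ad(w)\fh_{\emptyset}$, so it is cleaner to keep $z$ and use $\Ad(wm)\fp=\Ad(w)\fp$; and Proposition \ref{Prop parameterization weakly adapted points} (\ref{Prop parameterization weakly adapted points - item 2}) requires a witness $X\in\fa^{-}$ with $\fh_{z,X}=\Ad(w)\fh_{\emptyset}$, which for admissible $z$ exists only when $w\in\cZ$, so adaptedness of $w^{-1}\cdot z$ should instead be read off from $\fa\cap\fh_{w^{-1}\cdot z}^{\perp}=\Ad(w^{-1})\big(\fa\cap\fh_{z}^{\perp}\big)$ together with $\cW\subseteq\cN$.
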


\begin{proof}
The assertion follows from \cite[Proposition 7.2]{KuitSayag_OnTheLittleWeylGroupOfARealSphericalSpace} and the equivariance of limits of subalgebras
$$
\fh_{w^{-1}\cdot z, X}
=\Ad(w^{-1})\fh_{z,\Ad(w)X}
\qquad(X\in\fa).
$$
\end{proof}

If $Z$ is wavefront, then the $\cW=\cN$ and the little Weyl group is equal to $W_{Z}=\cW/\cZ$. See Proposition \ref{Prop wavefront} in Appendix A.

\subsection{Weakly admissible points}\label{Subsection Orbits of max rank - Weakly admissible}
We call a point $z\in Z$ weakly admissible if it is weakly adapted and for every order-regular element $X\in\fa$ there exists a $w\in N_{G}(\fa)$ so that $\fh_{z,X}=\Ad(w)\fh_{\emptyset}$. Note that every admissible point is weakly admissible.

\begin{Prop}\label{Prop weakly admissible is W-stable}
Let $z\in Z$. If $z$ is weakly admissible, then $w\cdot z$ is weakly admissible for every $w\in N_{G}(\fa)$.
\end{Prop}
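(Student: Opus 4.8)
The plan is to reduce the statement to the two defining conditions of weak admissibility and verify each under left translation by an element $w_{0}\in N_{G}(\fa)$. So let $z\in Z$ be weakly admissible and set $z':=w_{0}\cdot z$. First I would establish the key equivariance identity for limit subalgebras under $N_{G}(\fa)$: since $\exp(tX)w_{0}=w_{0}\exp\big(t\,\Ad(w_{0}^{-1})X\big)$ (because $w_{0}$ normalizes $\fa$, hence $A$), Proposition \ref{Prop Limits of subspaces}(\ref{Prop Limits of subspaces - item 4}) — applied with $g=w_{0}$, $g'=w_{0}$, using $\lim_{t\to\infty}\exp(tX)w_{0}\exp(-tX)$, which exists only when $\Ad(w_{0}^{-1})X$ and $X$ pair correctly, so instead I would directly use formula (\ref{eq formula for E_X}) or, more cleanly, the relation already recorded in the proof of Proposition \ref{Prop characterization cV}, namely
$$
\fh_{w_{0}\cdot z,\,X}
=\Ad(w_{0})\,\fh_{z,\,\Ad(w_{0}^{-1})X}
\qquad(X\in\fa).
$$
The one point requiring a remark is that $X\in\fa$ is order-regular if and only if $\Ad(w_{0}^{-1})X$ is order-regular, since $w_{0}$ permutes the roots in $\Sigma$ and hence preserves the condition $\alpha(X)\neq\beta(X)$ for all $\alpha\neq\beta$; likewise $w_{0}$ maps $\fa^{-}$ to another Weyl chamber but that is harmless because the relevant definitions only ask for \emph{some} order-regular $X\in\fa$ (for the ``admissible'' half) and \emph{some} $X\in\fa^{-}$ (where one is free to pick $X$ after choosing the orbit).

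Next I would check the two conditions. For the first condition of weak admissibility — that $P\cdot z'$ is of maximal rank — I note $P\cdot z' = P w_{0}\cdot z$, and by Proposition \ref{Prop limits of max rank orbits are conjugates of h_empty} an orbit has maximal rank exactly when some (equivalently, using Remark \ref{Rem Orbits of max rank}(\ref{Rem Orbits of max rank - item 3}), any order-regular $X\in\fa^{-}$ gives a) limit of the form $\Ad(w)\fh_{\emptyset}$. Since $z$ is weakly admissible, pick an order-regular $X\in\fa^{-}$ and $w\in N_{G}(\fa)$ with $\fh_{z,X}=\Ad(w)\fh_{\emptyset}$; to transfer this to $z'$ I need an order-regular element in $\fa^{-}$ whose $w_{0}^{-1}$-image is order-regular in $\fa^{-}$ — but rather than fussing over chambers, I would instead use the ``for every order-regular $X\in\fa$'' clause of weak admissibility: given \emph{any} order-regular $X\in\fa$, the element $\Ad(w_{0}^{-1})X\in\fa$ is order-regular, so there is $w\in N_{G}(\fa)$ with $\fh_{z,\Ad(w_{0}^{-1})X}=\Ad(w)\fh_{\emptyset}$, whence $\fh_{z',X}=\Ad(w_{0})\Ad(w)\fh_{\emptyset}=\Ad(w_{0}w)\fh_{\emptyset}$ with $w_{0}w\in N_{G}(\fa)$. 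Specializing $X$ to order-regular elements of $\fa^{-}$ and invoking Proposition \ref{Prop limits of max rank orbits are conjugates of h_empty} shows $P\cdot z'$ has maximal rank, and the same computation, now ranging over \emph{all} order-regular $X\in\fa$, verifies the second clause (``for every order-regular $X$ there is $w$ with $\fh_{z',X}=\Ad(w)\fh_{\emptyset}$'') of the definition of weak admissibility.

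It remains to verify that $z'$ is weakly \emph{adapted}, i.e. condition (\ref{Def Weakly adapted points - item 3}): there is an $\cO'$-regular element in $\fa\cap\fh_{z'}^{\perp}$, where $\cO'=P\cdot z'$. Here I would use $\fh_{z'}=\Ad(w_{0})\fh_{z}$, hence $\fh_{z'}^{\perp}=\Ad(w_{0})\fh_{z}^{\perp}$ (as $B$ is $\Ad(G)$-invariant and $w_{0}\in G$), together with $\fa_{\cO'}=\Ad(w_{0})\fa_{\cO}$ which follows from (\ref{eq a_O=Ad(w)a_h}) combined with the maximal-rank computation just made (if $\fh_{z,X}=\Ad(w)\fh_{\emptyset}$ then $\fh_{z',\Ad(w_{0})X}=\Ad(w_{0}w)\fh_{\emptyset}$, so $\fa_{\cO'}=\Ad(w_{0}w)\fa_{\fh}=\Ad(w_{0})\fa_{\cO}$). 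Thus if $Y\in\fa\cap\fh_{z}^{\perp}$ is $\cO$-regular, then $\Ad(w_{0})Y$ lies in $\Ad(w_{0})\fa\cap\Ad(w_{0})\fh_{z}^{\perp}=\fa\cap\fh_{z'}^{\perp}$, and it is $\cO'$-regular: it lies in $\fa\cap\fa_{\cO'}^{\perp}=\Ad(w_{0})(\fa\cap\fa_{\cO}^{\perp})$, and for $\alpha\in\Sigma$ not vanishing on $\fa\cap\fa_{\cO'}^{\perp}$ we have $\alpha\circ\Ad(w_{0})=\Ad(w_{0})^{*}\alpha\in\Sigma$ not vanishing on $\fa\cap\fa_{\cO}^{\perp}$, so $\alpha(\Ad(w_{0})Y)=(\Ad(w_{0})^{*}\alpha)(Y)\neq 0$ by $\cO$-regularity of $Y$. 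This completes the verification. The main obstacle — really the only subtle point — is the bookkeeping of how $w_{0}$ permutes $\Sigma$, the Weyl chambers and the subspaces $\fa_{\cO}$; once the equivariance identity $\fh_{w_{0}\cdot z,X}=\Ad(w_{0})\fh_{z,\Ad(w_{0}^{-1})X}$ is in hand everything is a direct translation, and no new analytic input is needed beyond Propositions \ref{Prop Limits of subspaces} and \ref{Prop limits of max rank orbits are conjugates of h_empty}.
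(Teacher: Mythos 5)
Your argument follows the paper's route almost step for step: the equivariance identity $\fh_{w_{0}\cdot z,X}=\Ad(w_{0})\fh_{z,\Ad(w_{0}^{-1})X}$, the observation that order-regularity is preserved under $\Ad(w_{0}^{-1})$, and the use of the ``for every order-regular $X$'' clause of weak admissibility of $z$ together with Proposition \ref{Prop limits of max rank orbits are conjugates of h_empty} to conclude both that $P\cdot z'$ has maximal rank and that $z'$ satisfies the limit condition for weak admissibility. This first half is exactly the paper's proof and is correct.

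The gap is in your justification of $\fa_{\cO'}=\Ad(w_{0})\fa_{\cO}$. From $\fh_{z,X}=\Ad(w)\fh_{\emptyset}$ with $X\in\fa^{-}$ order-regular you get $\fh_{z',\Ad(w_{0})X}=\Ad(w_{0}w)\fh_{\emptyset}$, and then conclude $\fa_{\cO'}=\Ad(w_{0}w)\fa_{\fh}$ by appealing to (\ref{eq a_O=Ad(w)a_h}). But $\fa_{\cO'}$ is by definition $\fa\cap\fh_{z',Y}$ for $Y\in\fa^{-}$, and (\ref{eq a_O=Ad(w)a_h}) is only available for order-regular elements of $\fa^{-}$, whereas $\Ad(w_{0})X$ in general lies in a different Weyl chamber. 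Lemma \ref{Lemma limits in Ad(W)fh_emptyset determined by fa_z,X cap fa} does give $\fa\cap\fh_{z',\Ad(w_{0})X}=\Ad(w_{0}w)\fa_{\fh}$, but the fact that $\fa\cap\fh_{z',X'}$ is independent of the order-regular element $X'$ beyond the chamber $\fa^{-}$ is not established at this point --- it is essentially Theorem \ref{Thm properties of W-action}(\ref{Thm properties of W-action - item 4}), whose proof uses the present proposition, so invoking it here would be circular. The paper closes this step differently: since $z$ is weakly adapted, Theorem \ref{Thm structure theorem for wPw^(-1) cdot z} gives $\fa_{\cO}\subseteq\fh_{z}$, hence $\Ad(w_{0})\fa_{\cO}\subseteq\fa\cap\fh_{z'}\subseteq\fa\cap\fh_{z',Y}=\fa_{\cO'}$ for $Y\in\fa^{-}$, and equality follows because both spaces are $N_{G}(\fa)$-conjugates of $\fa_{\fh}$ and hence have the same dimension. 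With that substitution your final verification of $\cO'$-regularity of $\Ad(w_{0})Y$ goes through unchanged.
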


\begin{Rem}
As the set of admissible points is (relatively) open and dense in the set of adapted points, it follows from the proposition and Proposition \ref{Prop parameterization weakly adapted points} (\ref{Prop parameterization weakly adapted points - item 2}) that the set of weakly admissible points is open and dense (with respect to the subspace topology) in the set of weakly adapted points.
\end{Rem}

\begin{proof}[Proof of Proposition \ref{Prop weakly admissible is W-stable}]
Assume that $z$ is weakly admissible and $w\in N_{G}(\fa)$.
If $X\in \fa$ is order-regular then $\Ad(w)X$ is also order-regular. Therefore, there exists a $w'\in N_{G}(\fa)$ so that $\fh_{z,\Ad(w^{-1})X}=\Ad(w')\fh_{\emptyset}$, and hence
$$
\fh_{w\cdot z,X}
=\Ad(w)\fh_{z,\Ad(w^{-1})X}
=\Ad(ww')\fh_{\emptyset}.
$$
By Proposition \ref{Prop limits of max rank orbits are conjugates of h_empty} $\cO:=Pw\cdot z$ has maximal rank. It remains to prove that $ w\cdot z$ is weakly adapted.

Let $\cO'=P\cdot z$. Since $z$ is weakly adapted, we have $\fa_{\cO'}\subseteq\fh_{z}$ by Theorem \ref{Thm structure theorem for wPw^(-1) cdot z}, and hence $\Ad(w)\fa_{\cO'}\subseteq\fh_{w\cdot z}$. Let $X\in \fa^{-}$. Then
$$
\Ad(w)\fa_{\cO'}
\subseteq \fa\cap\fh_{w\cdot z,X}
=\fa_{\cO}.
$$
As both $\fa_{\cO'}$ and $\fa_{\cO}$ are conjugate to $\fa_{\fh}$, these two spaces are of equal dimension. Therefore, $\Ad(w)\fa_{\cO'}=\fa_{\cO}$.
Since $z$ is weakly adapted, there exists $\cO'$-regular elements in $\fa\cap\fh_{z}^{\perp}$. It follows that
there exist $\cO$-regular elements in $\fa\cap\fh_{w\cdot z}^{\perp}=\Ad(w)\big(\fa\cap\fh_{z}^{\perp}\big)$. This proves that $w\cdot z$ is weakly adapted.
\end{proof}

\begin{Prop}\label{Prop equivalence for h_(z,X)=Ad(w)h_empty}
Let $z\in Z$ be weakly admissible, let $X\in\fa$ be order-regular and let $w\in N_{G}(\fa)$. Then $\fh_{z,X}=\Ad(wm)\fh_{\emptyset}$ for some $m\in M$ if and only if $Pw^{-1}\cdot z$ is open and $X\in \Ad(w)\cC$.
\end{Prop}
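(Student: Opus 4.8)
The plan is to conjugate by $w$ so as to reduce to the case $w=\mathrm{id}$, using the equivariance of limit subalgebras, and then to read both implications off the basic properties of the compression cone. Concretely, I would use the identity $\fh_{w^{-1}\cdot z,\,Y}=\Ad(w^{-1})\fh_{z,\,\Ad(w)Y}$ for $Y\in\fa$ and $w\in N_{G}(\fa)$ (the same identity used in the proof of Proposition~\ref{Prop characterization cV}), together with the fact that $\Ad(w^{-1})X$ is order-regular exactly when $X$ is, since $w$ permutes $\Sigma$. Writing $z'=w^{-1}\cdot z$ and $X'=\Ad(w^{-1})X$, equivariance gives $\fh_{z',X'}=\Ad(w^{-1})\fh_{z,X}$.

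For ``$\Rightarrow$'': if $\fh_{z,X}=\Ad(wm)\fh_{\emptyset}$ with $m\in M$, then $\fh_{z',X'}=\Ad(m)\fh_{\emptyset}$, so $X'\in\cC_{z'}$ by the definition of $\cC_{z'}$, and in particular $\cC_{z'}\neq\emptyset$; by Proposition~\ref{Prop properties compression cone}~(\ref{Prop properties compression cone - item 1}) this forces $P\cdot z'$ to be open and $\cC_{z'}\subseteq\cC$, whence $X'\in\cC$, i.e.\ $X\in\Ad(w)\cC$. For ``$\Leftarrow$'': assuming $P\cdot z'$ is open and $X'\in\cC$, I would first apply Proposition~\ref{Prop weakly admissible is W-stable} to get that $z'$ is weakly admissible, hence weakly adapted; since $P\cdot z'$ is open we have $\fa_{\cO}=\fa_{\fh}$ for $\cO=P\cdot z'$, so the second condition for being weakly adapted furnishes an $X_{0}\in\fa\cap\fh_{z'}^{\perp}$ with $\alpha(X_{0})\neq 0$ for every $\alpha\in\Sigma$ not vanishing on $\fa\cap\fa_{\fh}^{\perp}$. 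A short root-space computation using Remark~\ref{Rem L_Q roots} then gives $Z_{\fg}(X_{0})=\fl_{Q}$, so $z'$ is adapted, and therefore $\cC_{z'}=\cC$ (the cones at all adapted points coincide, which is how $\cC$ is defined). Thus $X'\in\cC_{z'}$, i.e.\ $\fh_{z',X'}=\Ad(m)\fh_{\emptyset}$ for some $m\in M$, and equivariance once more yields $\fh_{z,X}=\Ad(w)\fh_{z',X'}=\Ad(wm)\fh_{\emptyset}$.

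Note that the implication ``$\Rightarrow$'' uses neither weak admissibility of $z$ nor order-regularity of $X$; the real content is in ``$\Leftarrow$'', and there the step I expect to require the most care is the passage ``weakly adapted $+$ open orbit $\Rightarrow$ adapted''. One must verify that, when $\cO=P\cdot z'$ is open so that $\fa_{\cO}=\fa_{\fh}$, the non-vanishing condition in the definition of an $\cO$-regular element is precisely what forces $Z_{\fg}(X_{0})=\fl_{Q}$ — i.e.\ that the roots of $\fl_{Q}$ are exactly those vanishing on $\fa\cap\fa_{\fh}^{\perp}$ (Remark~\ref{Rem L_Q roots}), and that no positivity of $X_{0}$ on a chamber is being invoked. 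The remaining manipulations are formal consequences of the equivariance formula and Proposition~\ref{Prop properties compression cone}~(\ref{Prop properties compression cone - item 1}).
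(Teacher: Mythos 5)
Your proof is correct and follows the same route as the paper: conjugate by $w$ using the equivariance of limit subalgebras together with Proposition~\ref{Prop weakly admissible is W-stable}, then read off both implications from the compression-cone properties in Proposition~\ref{Prop properties compression cone}. You in fact make explicit a step the paper's one-line appeal to Proposition~\ref{Prop properties compression cone} leaves implicit — namely that a weakly admissible point in an open $P$-orbit is adapted, so that $\cC_{w^{-1}\cdot z}$ equals $\cC$ rather than merely being contained in it — and your verification of this via the $\cO$-regular element and Remark~\ref{Rem L_Q roots} is sound.
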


\begin{proof}
We have
$$
\Ad(w)^{-1}\fh_{z,X}
=\fh_{w^{-1}\cdot z,\Ad(w^{-1})X}.
$$
By Proposition \ref{Prop weakly admissible is W-stable} the point $w^{-1}\cdot z $ is weakly admissible.
In view of Proposition \ref{Prop properties compression cone} the limit subalgebra $\fh_{w^{-1}\cdot z,\Ad(w^{-1})X}$ is equal to $\Ad(m)\fh_{\emptyset}$ for some $m\in M$ if and only if $w^{-1}\cdot z$ is open and $\Ad(w^{-1})X\in\cC$.
\end{proof}

\subsection{An action of the Weyl group}\label{Subsection Orbits of max rank - Weyl group action}
We write $(P\bs Z)_{\max}$ for the subset of $P\bs Z$ consisting of all $P$-orbits in $Z$ of maximal rank and $(P\bs Z)_{\open}$ for set of all open $P$-orbits in $Z$.

\begin{Prop}\label{Prop limits same for some X, then same for all X}
Let $\cO_{1},\cO_{2}\in(P\bs Z)_{\max}$, and let $z_{1}\in\cO_{1}$ and $z_{2}\in\cO_{2}$ be weakly admissible. Let further $X_{1},X_{2}\in\fa$ be order-regular, and let $m\in M$. If $\fh_{z_{1},X_{1}}=\Ad(m)\fh_{z_{2},X_{1}}$, then  $\fh_{z_{1},X_{2}}=\Ad(m)\fh_{z_{2},X_{2}}$.
\end{Prop}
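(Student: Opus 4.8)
The plan is to strip off the parameters $m$ and $w$ by a chain of reductions, so that we are left with two \emph{admissible} points lying in open $P$-orbits, after which the statement becomes almost formal.

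First I would reduce to $m=e$. Replacing $z_{2}$ by $m\cdot z_{2}\in\cO_{2}$, which is again weakly admissible by Proposition~\ref{Prop weakly admissible is W-stable}, and using that $\fh_{m\cdot z_{2},X}=\Ad(m)\fh_{z_{2},X}$ for every $X\in\fa$ (as $m\in M$ centralizes $\fa$, so $\exp(tX)$ commutes with $m$ and Proposition~\ref{Prop Limits of subspaces}\,(iv) applies with $g=g'=m$), the hypothesis becomes $\fh_{z_{1},X_{1}}=\fh_{z_{2},X_{1}}$ and the conclusion to be proved becomes $\fh_{z_{1},X_{2}}=\fh_{z_{2},X_{2}}$. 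Next, by weak admissibility of $z_{1}$ there is $w\in N_{G}(\fa)$ with $\fh_{z_{1},X_{1}}=\Ad(w)\fh_{\emptyset}$, and then also $\fh_{z_{2},X_{1}}=\fh_{z_{1},X_{1}}=\Ad(w)\fh_{\emptyset}$; Proposition~\ref{Prop equivalence for h_(z,X)=Ad(w)h_empty} then shows that $Pw^{-1}\cdot z_{1}$ and $Pw^{-1}\cdot z_{2}$ are both open. Put $y_{i}:=w^{-1}\cdot z_{i}$: these are weakly admissible (Proposition~\ref{Prop weakly admissible is W-stable}) and lie in open $P$-orbits, and from the equivariance $\fh_{w^{-1}\cdot z,X}=\Ad(w^{-1})\fh_{z,\Ad(w)X}$ (valid for $w\in N_{G}(\fa)$ since $w\exp(tX)w^{-1}=\exp(t\Ad(w)X)$) we get $\fh_{y_{1},\Ad(w^{-1})X_{1}}=\fh_{y_{2},\Ad(w^{-1})X_{1}}=\fh_{\emptyset}$, while the conclusion is equivalent to $\fh_{y_{1},\Ad(w^{-1})X_{2}}=\fh_{y_{2},\Ad(w^{-1})X_{2}}$. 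Since $\Ad(w^{-1})X_{1}$ and $\Ad(w^{-1})X_{2}$ are again order-regular, after renaming we are reduced to: $z_{1},z_{2}$ are weakly admissible points in open $P$-orbits with $\fh_{z_{1},X_{1}}=\fh_{z_{2},X_{1}}=\fh_{\emptyset}$, and we must show $\fh_{z_{1},X_{2}}=\fh_{z_{2},X_{2}}$.

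Now I claim $z_{1}$ and $z_{2}$ are in fact \emph{admissible}. Pick any order-regular $X\in\fa^{-}$; since $\fa^{-}\subseteq\cC$ by Proposition~\ref{Prop properties compression cone}\,(i) and $P\cdot z_{i}$ is open, Proposition~\ref{Prop equivalence for h_(z,X)=Ad(w)h_empty} (with $w=e$) gives $\fh_{z_{i},X}=\Ad(m_{i})\fh_{\emptyset}$ for some $m_{i}\in M$; then Proposition~\ref{Prop parameterization weakly adapted points}\,(ii) shows $m_{i}^{-1}\cdot z_{i}$ is adapted, and as the set of adapted points is $MA$-stable, $z_{i}$ itself is adapted. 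Being adapted and weakly admissible, $z_{i}$ is admissible. Finally, let $X_{2}$ be any order-regular element. By weak admissibility, $\fh_{z_{i},X_{2}}=\Ad(w_{i})\fh_{\emptyset}$ for some $w_{i}\in N_{G}(\fa)$; Proposition~\ref{Prop equivalence for h_(z,X)=Ad(w)h_empty} gives that $Pw_{i}^{-1}\cdot z_{i}$ is open and $X_{2}\in\Ad(w_{i})\cC$, whence $w_{i}\in\cW$ by Proposition~\ref{Prop characterization cV}. Thus $w_{1}\cZ,w_{2}\cZ\in W_{Z}=\cW/\cZ$ and $X_{2}\in\Ad(w_{1})\cC\cap\Ad(w_{2})\cC$. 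As $\cC$ is open and $\overline{\cC}$ is a fundamental domain for $W_{Z}$, which is the Weyl group of the spherical root system in $(\fa/\fa_{E})^{*}$ and hence acts simply transitively on its chambers, an order-regular element can lie in at most one of the open cones $\Ad(w)\cC$, $w\cZ\in W_{Z}$ (an order-regular element of $\Ad(w)\cC$ maps into the interior of the corresponding $W_{Z}$-chamber of $\fa/\fa_{E}$, using $\cC+\fa_{E}=\cC$). Therefore $w_{1}\cZ=w_{2}\cZ$, and since $\cZ$ is the stabilizer of $\fh_{\emptyset}$ in $N_{G}(\fa)$ (Remark~\ref{Rem Orbits of max rank}), we conclude $\fh_{z_{1},X_{2}}=\Ad(w_{1})\fh_{\emptyset}=\Ad(w_{2})\fh_{\emptyset}=\fh_{z_{2},X_{2}}$.

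The bulk of the argument is bookkeeping with the equivariance of limits and the parametrization results of Section~\ref{Section Orbits of max rank}; the genuinely substantive point, and the place I expect to have to be careful, is the recognition that a weakly admissible point in an open $P$-orbit is automatically admissible, which is what makes the sharp tools — Proposition~\ref{Prop characterization cV} and the fundamental-domain property of $\overline{\cC}$ — applicable. Once that is available, the hypothesis $\fh_{z_{1},X_{1}}=\fh_{z_{2},X_{1}}$ is used only to ensure, via the single element $w$, that both translated points land in open orbits, and the conclusion is read off from the tiling of $\fa$ by the cones $\Ad(w)\cC$; the only residual care needed is the point-set remark that an order-regular element lies in the interior of its $W_{Z}$-chamber.
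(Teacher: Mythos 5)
Your reductions are sound: passing to $m\cdot z_{2}$, translating both points by $w^{-1}$, and observing that a weakly admissible point in an open $P$-orbit is automatically admissible are all correct and closely parallel the paper's own argument, and the chamber-disjointness argument does legitimately give $w_{1}\cZ=w_{2}\cZ$. The gap is in the very last inference. From $w_{1}\cZ=w_{2}\cZ$ you conclude $\Ad(w_{1})\fh_{\emptyset}=\Ad(w_{2})\fh_{\emptyset}$ on the grounds that $\cZ$ "is the stabilizer of $\fh_{\emptyset}$". But by its definition (\ref{eq def cZ}), $\cZ$ consists of the $w\in N_{G}(\fa)$ with $\Ad(w)\fh_{\emptyset}=\Ad(m)\fh_{\emptyset}$ for \emph{some} $m\in M$; in particular $M\subseteq\cZ$, and $\Ad(m)\fh_{\emptyset}\neq\fh_{\emptyset}$ in general (already for $Z=\mathrm{SO}(n,1)/\mathrm{SO}(n-1,1)$, where $\fm\cap\fh=\fs\fo(n-2)$ is not $\Ad(M)$-stable). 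So $w_{2}=w_{1}z$ with $z\in\cZ$ only yields $\fh_{z_{2},X_{2}}=\Ad(m')\fh_{z_{1},X_{2}}$ for some $m'\in M$, not equality. The tell-tale symptom is that your final paragraph never uses the hypothesis $\fh_{z_{1},X_{1}}=\fh_{z_{2},X_{1}}$: were the argument valid, it would show that \emph{any} two admissible points in open $P$-orbits have identical limits along every order-regular direction, which is refuted by $z_{2}=m\cdot z_{1}$ with $\Ad(m)\fh_{\emptyset}\neq\fh_{\emptyset}$.

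What is missing is precisely the mechanism by which the normalization at $X_{1}$ is transported to $X_{2}$. The paper does this by proving the auxiliary claim (\ref{eq fm-components}) that $\fm\cap\fh_{z,X}=\fm\cap\fh_{z}$ for every weakly adapted $z$ and every order-regular $X$, and by observing that two $M$-conjugate subalgebras of the form $\Ad(u)\fh_{\emptyset}$ coincide if and only if their $\fm$-components coincide (the $\fa$-part and the root-space parts being rigid under $\Ad(M)$). Combining this with the hypothesis at $X_{1}$ gives $\fm\cap\fh_{z_{1},X_{2}}=\fm\cap\fh_{z_{1},X_{1}}=\Ad(m)(\fm\cap\fh_{z_{2},X_{1}})=\Ad(m)(\fm\cap\fh_{z_{2},X_{2}})$, which upgrades "equal up to $\Ad(M)$" to the asserted equality. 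Your proof needs this (or an equivalent) additional step; everything before it is fine but only delivers the weaker, $M$-conjugacy version of the statement, which is already contained in Remark \ref{Rem Orbits of max rank}.
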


\begin{proof}
Let $w\in N_{G}(\fa)$ be so that $\fh_{z_{1},X_{1}}=\Ad(w)\fh_{\emptyset}$. By Proposition \ref{Prop parameterization weakly adapted points} (\ref{Prop parameterization weakly adapted points - item 2}) the point $w^{-1}\cdot z_{1}$ is adapted. In view of Proposition \ref{Prop weakly admissible is W-stable} it is also weakly admissible, and hence $w^{-1}\cdot z_{1}$ is admissible.  Moreover,
$$
\fh_{w^{-1}\cdot z_{1},\Ad(w^{-1})X_{1}}
=\Ad(w^{-1})\fh_{z_{1},X_{1}}
=\fh_{\emptyset},
$$
and hence $\Ad(w^{-1})X_{1}\in\cC$. Let $v\in \cW$ be so that $\Ad(w^{-1})X_{2}\in\Ad(v)\cC$. Then $v^{-1}w^{-1}\cdot z_{1}$ is admissible by Proposition \ref{Prop characterization cV}. As $\Ad(v^{-1}w^{-1})X_{2}\in\cC$, it follows that there exists an $m_{1}\in M$ so that
$$
\fh_{z_{1},X_{2}}
=\Ad(wv)\fh_{v^{-1}w^{-1}\cdot z,\Ad(v^{-1}w^{-1})X_{2}}
=\Ad(m_{1}wv)\fh_{\emptyset}.
$$
In the same way we find
$$
\fh_{z_{2},X_{2}}
=\Ad(m_{2}wv)\fh_{\emptyset}
$$
for some $m_{2}\in M$. Now
$$
\fh_{z_{1},X_{2}}
=\Ad(m_{1}m_{2}^{-1})\fh_{z_{2},X_{2}}.
$$
The latter is equal to $\Ad(m)\fh_{z_{2},X_{2}}$ if and only if
\begin{equation}\label{eq fm condition}
\fm\cap \fh_{z_{1},X_{2}}
=\Ad(m)\big(\fm\cap\fh_{z_{2},X_{2}}\big).
\end{equation}
It thus suffices to prove the latter.

We claim that
\begin{equation}\label{eq fm-components}
\fm\cap \fh_{z, X}
=\fm\cap\fh_{z}
\end{equation}
for all weakly adapted points $z\in Z$ and all order-regular elements $X\in \fa$. To prove the claim we first consider an adapted point $z\in Z$. Then (\ref{eq fm-components}) follows from Proposition \ref{Prop Local structure theorem} if $X\in \fa^{-}$.  Since the limit $\fh_{z,X}$ is the same for all $X\in \cC$, (\ref{eq fm-components}) also holds for $X\in \cC$. If $X\in \fa$ is any order-regular element, then there exists a $u\in \cW$ so that $\Ad(u)X\in\cC$. Then
$$
\fm\cap \fh_{z, X}
=\fm\cap \Ad(u^{-1})\fh_{u\cdot z,\Ad(u)X}
=\Ad(u^{-1})\big(\fm\cap \fh_{u\cdot z,\Ad(u)X}\big).
$$
By Proposition \ref{Prop characterization cV} the point $u\cdot z$ is adapted. Therefore,
$$
\Ad(u^{-1})\big(\fm\cap \fh_{u\cdot z,\Ad(u)X}\big)
=\Ad(u^{-1})\big(\fm\cap \fh_{u\cdot z}\big)
=\fm\cap \fh_{z}.
$$
This proves (\ref{eq fm-components}) in case $z\in Z$ is adapted. Let now $z\in Z$ be weakly adapted. Then there exists a $u\in N_{G}(\fa)$ so that $u\cdot z$ is adapted. In that case
$$
\fm\cap \fh_{z, X}
=\Ad(u^{-1})\big(\fm\cap \fh_{u\cdot z,\Ad(u)X}\big)
=\Ad(u^{-1})\big(\fm\cap \fh_{u\cdot z}\big)
=\fm\cap \fh_{z}.
$$
This proves the claim (\ref{eq fm-components}).

The required identity (\ref{eq fm condition}) follows from (\ref{eq fm-components}) as $\fh_{z_{1},X_{1}}=\Ad(m)\fh_{z_{2},X_{1}}$ and hence
$$
\fm\cap \fh_{z_{1},X_{2}}
=\fm\cap\fh_{z_{1},X_{1}}
=\Ad(m)\big(\fm\cap\fh_{z_{2},X_{1}}\big)
=\Ad(m)\big(\fm\cap \fh_{z_{2},X_{2}}\big).
$$
\end{proof}

If $\cO\in P\bs Z$ and $X\in \fa^{-}$, then up to $M$-conjugacy the limits $\fh_{z,X}$ do not depend on the point $z\in \cO$,  see Remark \ref{Rem Orbits of max rank} (\ref{Rem Orbits of max rank - item 2}). In view of Proposition \ref{Prop limits same for some X, then same for all X} we may thus define an equivalence relation $\sim$ on $(P\bs Z)_{\max}$ by requiring that
$$
\cO_{1}\sim\cO_{2}
$$
if and only if for a given order-regular element $X\in\fa$  there exists weakly admissible points $z_{1}\in \cO_{1}$ and $z_{2}\in \cO_{2}$  so that
$$
\fh_{z_{1},X}=\fh_{z_{2},X}.
$$
The equivalence relation does not depend on the choice of the order-regular element $X\in\fa$.

\begin{Rem}\,
\begin{enumerate}[(a)]
\item
If $\cO\in P\bs Z$, then by Proposition \ref{Prop properties compression cone} the limit subalgebra $\fh_{z,X}$ for a given order-regular element $X\in\fa^{-}$ does not depend on $z\in \cO$ up to $M$-conjugacy. Moreover, $\fh_{m\cdot z,X}=\Ad(m)\fh_{z,X}$ for every $m\in M$. Therefore, two $P$-orbits $\cO_{1},\cO_{2}$ of maximal rank are equivalent if and only if there exists an order-regular elements $X\in\fa^{-}$, points $z_{1}\in\cO_{1}$ and $z_{2}\in\cO_{2}$, and an $m\in M$ so that $\fh_{z_{1},X}=\Ad(m)\fh_{z_{2},X}$.
\item
Let $X\in \fa^{-}$ be order-regular. If $z\in Z$, then by Proposition \ref{Prop properties compression cone} the limit subalgebra $\fh_{z,X}$ is an $M$-conjugate of $\fh_{\emptyset}$ if and only if $P\cdot z$ is open. Therefore, if $\cO_{1}\in P\bs Z$ is open and $\cO_{2}\in (P\bs Z)_{\max}$, then $\cO_{1}\sim\cO_{2}$ if and only if $\cO_{2}$ is open. In particular, the set $(P\bs Z)_{\open}$ of all open $P$-orbits in $Z$ forms an equivalence class.
\end{enumerate}
\end{Rem}

We denote the equivalence classes of $\sim$ by $[\cdot]$ and recall the subgroup $\cW$ of $N_{G}(\fa)$ from (\ref{eq def cW}).

\begin{Thm}\label{Thm properties of W-action}
For any $v\in N_{G}(\fa)$ and any $\cO\in (P\bs Z)_{\max}$, the equivalence class $[Pv\cdot z]$ is independent of the choice of the weakly admissible point $z\in \cO$. For $w\in W$ and  $\cO\in (P\bs Z)_{\max}$ we may thus set
$$
w\cdot [\cO]=[Pv\cdot z],
$$
where $v\in N_{G}(\fa)$ is any representative of $w$ and $z\in\cO$ is any weakly admissible point. The map
$$
W\times (P\bs Z)_{\max}/_{\sim}\to(P\bs Z)_{\max}/_{\sim}
$$
thus obtained defines an action of $W$ on $(P\bs Z)_{\max}/_{\sim}$. This action has the following properties.
\begin{enumerate}[(i)]
\item\label{Thm properties of W-action - item 1} $W$ acts transitively on $(P\bs Z)_{\max}/_{\sim}$.
\item\label{Thm properties of W-action - item 2} The stabilizer of the equivalence class $(P\bs Z)_{\open}$ is equal to $\cW/MA$.
\item\label{Thm properties of W-action - item 3} Let $w\in W$ and let $v\in N_{G}(\fa)$ be a representative for $w$. If $X\in \fa\cap\fa_{\fh}^{\perp}$ satisfies $Z_{\fg}(X)=\fl_{Q}$, and $z_{1},\dots, z_{n}$ is a set of admissible points representing the open $P$-orbits in $Z$ with
    $$
    X\in\fa\cap\fh_{z_{i}}^{\perp}
    \qquad(1\leq i\leq n),
    $$
    then
    $$
    Pw\cdot z_{i}\neq Pw\cdot z_{j}
    \qquad (1\leq i<j\leq n)
    $$
    and
    $$
    w\cdot(P\bs Z)_{\open}
    =\{Pw\cdot z_{i}:1\leq i\leq n\}.
    $$
    In particular, the cardinalities of the equivalence classes are all equal, i.e, for every  $\cO\in(P\bs Z)_{\max}$ the cardinality of $[\cO]$ is equal to the number of open $P$-orbits in $Z$.
\item\label{Thm properties of W-action - item 4} If $w\in W$ and $\cO\in (P\bs Z)_{\max}$, then $\fa_{\cO'}=\Ad(w)\fa_{\cO}$ for every $\cO'\in w\cdot[\cO]$.
\end{enumerate}
\end{Thm}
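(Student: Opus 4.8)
The plan is to build the $W$-action in three stages: well-definedness, the group-action axioms, and then the four numbered properties.

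\emph{Well-definedness.} First I would show that for $v \in N_{G}(\fa)$ and $\cO \in (P\bs Z)_{\max}$, the class $[Pv\cdot z]$ does not depend on the weakly admissible $z \in \cO$. Two weakly admissible points in $\cO$ differ by an element of $MA$ by Proposition \ref{Prop parameterization weakly adapted points}(\ref{Prop parameterization weakly adapted points - item 1}) (applied with a suitable $\cO$-regular $X$), and since $MA$ is normal in $N_{G}(\fa)$, replacing $z$ by $maz$ replaces $v\cdot z$ by $(v m a v^{-1})\, v \cdot z$ with $vmav^{-1}\in MA$; so $Pv\cdot z$ changes only within its $P$-orbit, and the class is unchanged. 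Here one also uses Proposition \ref{Prop weakly admissible is W-stable} to know $v\cdot z$ is again weakly admissible so that $[Pv\cdot z]$ makes sense. Next, that $[Pv\cdot z]$ depends only on the Weyl class $w = vZ_{G}(\fa)$: two representatives differ by $Z_{G}(\fa)$, which again lies in $MA$ (as $Z_{G}(\fa)=MA$), so the same normality argument applies. Thus $w\cdot[\cO] := [Pv\cdot z]$ is well-defined. Finally, that it depends only on the equivalence class $[\cO]$: if $\cO_{1}\sim\cO_{2}$, pick weakly admissible $z_{i}\in\cO_{i}$, an order-regular $X\in\fa^{-}$ and $m\in M$ with $\fh_{z_{1},X}=\Ad(m)\fh_{z_{2},X}$; apply $\Ad(v)$ and use equivariance of limits ($\fh_{v\cdot z_{i},\Ad(v)X}=\Ad(v)\fh_{z_{i},X}$) together with Proposition \ref{Prop limits same for some X, then same for all X} to conclude $Pv\cdot z_{1}\sim Pv\cdot z_{2}$.

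\emph{Group action.} Associativity $w_{1}\cdot(w_{2}\cdot[\cO]) = (w_{1}w_{2})\cdot[\cO]$ follows by choosing a weakly admissible $z\in\cO$, representatives $v_{1},v_{2}$, and noting $v_{2}\cdot z$ is weakly admissible (Proposition \ref{Prop weakly admissible is W-stable}) so $w_{1}\cdot[Pv_{2}\cdot z] = [Pv_{1}v_{2}\cdot z]$; that $v_{1}v_{2}$ represents $w_{1}w_{2}$ is immediate. The identity acts trivially since $e$ is represented by $e\in N_{G}(\fa)$.

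\emph{The four properties.} For (\ref{Thm properties of W-action - item 1}), transitivity: given any $\cO\in(P\bs Z)_{\max}$ with weakly admissible $z$, Proposition \ref{Prop limits of max rank orbits are conjugates of h_empty} produces $w\in N_{G}(\fa)$ with $\fh_{z,X}=\Ad(w)\fh_{\emptyset}$ and $w^{-1}\cdot\cO$ inside an open orbit; by Proposition \ref{Prop parameterization weakly adapted points}(\ref{Prop parameterization weakly adapted points - item 2}), $w^{-1}\cdot z$ is adapted, hence lies in $(P\bs Z)_{\open}$, so $w^{-1}\cdot[\cO] = (P\bs Z)_{\open}$ and every class is in the orbit of the open class. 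For (\ref{Thm properties of W-action - item 2}), the stabilizer: $w$ stabilizes $(P\bs Z)_{\open}$ iff for an admissible $z$ (take $z = eH$) and a representative $v$, the orbit $Pv\cdot z$ is open; by Proposition \ref{Prop characterization cV} this happens iff $v\in\cW$, and since $MA\subseteq\cW$ the stabilizer is exactly $\cW/MA$ inside $W = N_{G}(\fa)/MA$. For (\ref{Thm properties of W-action - item 3}), I would first establish that the open $P$-orbits are in bijection — via $z\mapsto Pw\cdot z$ — with the $P$-orbits forming the class $w\cdot(P\bs Z)_{\open}$: surjectivity is the definition of the action together with the fact that every open orbit contains an admissible point with $X$ in its perp (Proposition \ref{Prop parameterization adapted points}, using the hypothesis $Z_{\fg}(X)=\fl_{Q}$); injectivity is the content to prove. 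If $Pw\cdot z_{i} = Pw\cdot z_{j}$, then $wz_{j}\in Pwz_{i}$, and since $\Ad(w^{-1})X\in\fh_{wz_{i}}^{\perp}$ by equivariance, applying $w^{-1}$ and Proposition \ref{Prop parameterization weakly adapted points}(\ref{Prop parameterization weakly adapted points - item 1}) (second assertion) forces $z_{j}\in MA\cdot z_{i}$, i.e.\ $i=j$; the "cardinalities equal" statement is then immediate from transitivity plus this bijection, the open class having $n$ elements. For (\ref{Thm properties of W-action - item 4}): given $\cO'\in w\cdot[\cO]$, write $\cO' = Pv\cdot z$ with $z\in\cO$ weakly admissible and $v$ representing $w$; for order-regular $X\in\fa^{-}$, equivariance gives $\fh_{v\cdot z,X}\supseteq$-data relating $\fa_{\cO'} = \fa\cap\fh_{v\cdot z,X'}$ for suitable $X'$, and combining Lemma \ref{Lemma fa cap fh_(pz,X) independent of p,X} with the conjugation relation $\fa_{\cO'} = \Ad(v)\fa_{\cO}$ — proven exactly as in the proof of Proposition \ref{Prop weakly admissible is W-stable}, where the inclusion $\Ad(v)\fa_{\cO}\subseteq\fa_{\cO'}$ comes from $\fa_{\cO}\subseteq\fh_{z}$ (Theorem \ref{Thm structure theorem for wPw^(-1) cdot z}) and equality from both sides being conjugate to $\fa_{\fh}$, hence of equal dimension.

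\emph{Main obstacle.} The delicate point is the injectivity in (\ref{Thm properties of W-action - item 3}): distinct open orbits must not be merged by the $W$-action. The argument must carefully track that the representative freedom is exactly $MA$ and that the second assertion of Proposition \ref{Prop parameterization weakly adapted points}(\ref{Prop parameterization weakly adapted points - item 1}) is applicable — i.e.\ that the common $\cO$-regular element survives conjugation by $w^{-1}$ into a $\Sigma(Q)$-regular element as needed. Everything else is bookkeeping with normality of $MA$ in $N_{G}(\fa)$ and the equivariance formula for limit subalgebras.
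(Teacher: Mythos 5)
Your overall architecture matches the paper's, and several pieces (transitivity via Proposition \ref{Prop limits of max rank orbits are conjugates of h_empty}, the stabilizer via Proposition \ref{Prop characterization cV}, the injectivity of $z_{i}\mapsto Pw\cdot z_{i}$ via Proposition \ref{Prop parameterization weakly adapted points}, and item (\ref{Thm properties of W-action - item 4})) are argued correctly. However, there are two genuine gaps.

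First, your well-definedness argument within a single orbit rests on the claim that ``two weakly admissible points in $\cO$ differ by an element of $MA$.'' This is false in general: Proposition \ref{Prop parameterization weakly adapted points}(\ref{Prop parameterization weakly adapted points - item 1}) only identifies, up to $MA$, weakly adapted points sharing the \emph{same} $\cO$-regular element $X$ in $\fh_{z}^{\perp}$; points attached to different $X$ may lie in different $MA$-orbits. Indeed, Remark \ref{Remark properties of W-action}(\ref{Remark properties of W-action - item 3}) singles out ``the adapted points form one $MA$-orbit'' as an \emph{extra} hypothesis (satisfied, e.g., for symmetric spaces) under which the action refines to an action on orbits rather than on equivalence classes. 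In general two weakly admissible $z_{1},z_{2}\in\cO$ only satisfy $z_{2}=man\cdot z_{1}$, so $Pv\cdot z_{1}$ and $Pv\cdot z_{2}$ may be genuinely different orbits, and one can only conclude $[Pv\cdot z_{1}]=[Pv\cdot z_{2}]$. The correct route --- which the paper takes and which you yourself use for the cross-orbit case --- is Proposition \ref{Prop Limits of subspaces}(\ref{Prop Limits of subspaces - item 4}) / Remark \ref{Rem Orbits of max rank}(\ref{Rem Orbits of max rank - item 2}) to get $\fh_{z_{1},X}=\Ad(m)\fh_{z_{2},X}$ for $X\in\fa^{-}$, then Proposition \ref{Prop limits same for some X, then same for all X} to upgrade this to all order-regular $X$, then equivariance of limits.

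Second, in (\ref{Thm properties of W-action - item 3}) you dismiss the equality $w\cdot(P\bs Z)_{\open}=\{Pw\cdot z_{i}\}$ as ``surjectivity is the definition of the action.'' The definition only gives the inclusion $\{Pw\cdot z_{i}\}\subseteq w\cdot(P\bs Z)_{\open}$: the equivalence class is the set of \emph{all} orbits equivalent to $Pw\cdot z_{1}$, and nothing so far rules out additional members. Your injectivity argument therefore only yields $|[\cO]|\geq n$ for every class. The missing step is the bound $|w\cdot(P\bs Z)_{\open}|\leq n$, which the paper obtains by choosing $u\in N_{G}(\fa)$ with $\fh_{z,Y}=\Ad(u)\fh_{\emptyset}$ for some $Y\in\fa^{-}$ and a point $z$ in the class, and showing that $\cO\mapsto Pu^{-1}\cdot\cO$ maps $w\cdot(P\bs Z)_{\open}$ \emph{injectively} into $(P\bs Z)_{\open}$ (again via Proposition \ref{Prop parameterization weakly adapted points}(\ref{Prop parameterization weakly adapted points - item 1})). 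Without this counting argument the claimed description of the class, and hence the ``all equivalence classes have equal cardinality'' conclusion, is not established.
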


The action of $W$ on $(P\bs Z)_{\max}$ lifts to an action of $N_{G}(\fa)$. In later sections we will use interchangeably the actions of $W$ and $N_{G}(\fa)$ on $(P\bs Z)_{\open}$ and use the same notation without further indication.

\begin{Rem}\label{Remark properties of W-action}\,
\begin{enumerate}[(a)]
\item\label{Remark properties of W-action - item 1}
Let $\underline{P}\subseteq \underline{G}$ be a minimal parabolic subgroup defined over $\R$  and let $\underline{Z}$ be an algebraic $\underline{G}$-variety.  Assume that $\underline{Z}$ is real spherical, i.e., that $\underline{P}$ admits an open orbit in $\underline{Z}$.  In \cite{KnopZhgoon_ComplexityOfActionsOverPerfectFields} Knop and Zhgoon constructed an action of $W$ on the set of $\underline{P}$-orbits $\underline{\cO}$ in $\underline{Z}$  with the property that $\underline{\cO}(\R)\neq\emptyset$. If $Z$ is an open $G$-orbit in $\underline{Z}(\R)$, then each equivalence class in $(\underline{P}(\R)\bs Z)_{\max}$ corresponds to one $\underline{P}$-orbit $\underline{\cO}$ of maximal rank  in $\underline{Z}$  with the property that $\underline{\cO}(\R)\neq\emptyset$. The construction of Knop and Zhgoon then coincides with the $W$-action on $(P\bs Z)_{\max}/\sim$ from Theorem \ref{Thm properties of W-action}.
\item\label{Remark properties of W-action - item 2}
If $P$ admits only one open orbit in $Z$, then each equivalence class of $\sim$ consists of precisely one $P$-orbit in $Z$. The above action then yields a transitive action of $W$ on $(P\bs Z)_{\max}$. This is in particular the case if $G$ and $H$ are both complex groups; the latter action then coincides with Knop's $W$-action on $P\bs Z$ from \cite{Knop_OnTheSetOfOrbitsForABorelSubgroup} restricted to the set of maximal rank orbits.
\item\label{Remark properties of W-action - item 3}
Assume that in each open $P$-orbit in $Z$ the set of adapted points is precisely equal to one $MA$-orbit, equivalently if there exists a $z\in Z$ so that $P\cdot z$ is open and $\fa\cap\fa_{\fh}^{\perp}\subseteq\fh_{z}^{\perp}$. Then every adapted point is admissible. If $\cO$ is an open orbit in $Z$, $z\in \cO$ is adapted, and $w=vMA\in W$, then the orbit
\begin{equation}\label{eq w cdot O=Pv cdot z}
w\cdot \cO
:=Pv\cdot z
\end{equation}
does not depend on the choice of $z$.  In view of Proposition \ref{Prop weakly admissible is W-stable} the map
\begin{equation}\label{eq W-action of max rank orbits}
W\times(P\bs Z)_{\max}\to (P\bs Z)_{\max};\quad(w,\cO)\mapsto w\cdot \cO
\end{equation}
defines an action of $W$ on $(P\bs Z)_{\max}$. This action is a refinement of the action of $W$ on $(P\bs Z)_{\max}/\sim$ defined in Theorem \ref{Thm properties of W-action}. The condition that in each open $P$-orbit in $Z$ the set of adapted points forms one $MA$-orbit is in particular satisfied in case $Z$ is symmetric, i.e., in case $H$ is an open subgroup of the fixed point subgroup of an involutive automorphism of $G$.
\end{enumerate}
\end{Rem}

\begin{proof}[Proof of Theorem \ref{Thm properties of W-action}]
Recall from Proposition \ref{Prop weakly admissible is W-stable} that the set of weakly admissible points is stable under $N_{G}(\fa)$. In particular, if $\cO\in (P\bs Z)_{\max}$ and $z\in\cO$ is weakly admissible, then $Pw\cdot z\in (P\bs Z)_{\max}$ for all $w\in N_{G}(\fa)$.

Let $\cO\in(P\bs Z)_{\max}$ and let $z_{1},z_{2}\in\cO$ be weakly admissible. Let further $w\in N_{G}(\fa)$. We claim that
\begin{equation}\label{eq Pwz_1 sim Pwz_2}
Pw\cdot z_{1}\sim Pw\cdot z_{2}.
\end{equation}

By Remark \ref{Rem Orbits of max rank} (\ref{Rem Orbits of max rank - item 2}) there exists an $m\in M$ so that for all $X\in\fa^{-}$ we have  $\fh_{z_{1},X}=\Ad(m)\fh_{z_{2},X}$. After replacing $z_{2}$ by $m\cdot z_{2}$ we may assume that $\fh_{z_{1},X}=\fh_{z_{2},X}$ for all $X\in\fa^{-}$.
By Proposition \ref{Prop limits same for some X, then same for all X} the limits $\fh_{z_{1},X}$ and $\fh_{z_{2},X}$ are equal for all order-regular $X\in\fa$. Fix now an order-regular $X\in\fa$ and let $w\in N_{G}(\fa)$.  Then $\Ad(w^{-1})X$ is order-regular, and hence
$$
\fh_{w\cdot z_{1},X}
=Ad(w)\fh_{z_{1},\Ad(w^{-1})X}
=Ad(w)\fh_{z_{2},\Ad(w^{-1})X}
=\fh_{w\cdot z_{2},X}.
$$
This proves the claim (\ref{eq Pwz_1 sim Pwz_2}).

From the claim it follows that for a given weakly adapted point $z_{0}$ and $v\in N_{G}(\fa)$ we have $[Pv\cdot z]=[Pv\cdot z_{0}]$ for all weakly adapted point $z\in P\cdot z_{0}$. This proves the first assertion in the theorem and we thus obtain an action of $W$ on $(P\bs Z)_{\max}$.

We move on to prove the listed properties of this action.
It follows from Proposition \ref{Prop limits of max rank orbits are conjugates of h_empty} that the action is transitive, and from Proposition \ref{Prop characterization cV} that the stabilizer of the equivalence class of open $P$-orbits is equal to $\cW/MA$.

We move on to prove (\ref{Thm properties of W-action - item 3}).
Let $w\in W$, let $v\in N_{G}(\fa)$ be a representative for $w$ and let $X\in \fa\cap\fa_{\fh}^{\perp}$ satisfy $Z_{\fg}(X)=\fl_{Q}$. Every open $P$-orbit admits by Proposition \ref{Prop parameterization adapted points} an admissible point $z$ with $X\in \fa\cap\fh_{z}^{\perp}$. This point is unique up to translation by an element in $MA$. Let $z_{1},\dots, z_{n}$ be a set of adapted point representing the open $P$-orbits in $Z$ and assume that
$$
X\in \fa\cap\fh_{z_{i}}^{\perp}\qquad (1\leq i\leq n).
$$
Then $Pv\cdot z_{i}\in w\cdot(P\bs Z)_{\open}$ for all $1\leq i\leq n$. Moreover, the points $v\cdot z_{1},\dots, v\cdot z_{n}$ are weakly admissible and
$$
\Ad(w)X\in\fa\cap\fh_{v\cdot z_{i}}^{\perp}
\qquad(1\leq i\leq n).
$$
If $Pv\cdot z_{i}=Pv\cdot z_{j}$ for some $1\leq i<j\leq n$, then it follows from Proposition \ref{Prop parameterization weakly adapted points} (\ref{Prop parameterization weakly adapted points - item 1}) that $z_{i}\in MA\cdot z_{j}$, which leads to a contradiction. We conclude that the orbits $Pv\cdot z_{1},\dots Pv\cdot z_{n}$ are pairwise distinct.
It now suffices to show that the number of orbits in $w\cdot (P\bs Z)_{\open}$ does not exceed the number of open orbits.

Let $z\in Z$ be any point so that $[P\cdot z]=w\cdot(P\bs Z)$ and let $Y\in \fa^{-}$. There exists a $u\in N_{G}(\fa)$ so that $\fh_{z,Y}=\Ad(u)\fh_{\emptyset}$. By Proposition \ref{Prop limits of max rank orbits are conjugates of h_empty} the assignment
\begin{equation}\label{eq O mapsto Pv^-1 cdot O}
\cO\mapsto Pu^{-1}\cdot \cO
\end{equation}
maps $w\cdot (P\bs Z)_{\open}$ to $(P\bs Z)_{\open}$.
We claim that this map is injective.

Let $\cO,\cO'\in w\cdot (P\bs Z)_{\open}$ be so that $Pu^{-1}\cdot \cO=Pu^{-1}\cdot \cO'$. By Proposition \ref{Prop parameterization weakly adapted points} (\ref{Prop parameterization weakly adapted points - item 1}) there exist weakly adapted points $z\in\cO$ and $z'\in \cO'$ so that $\Ad(u)X\in\fh_{z}^{\perp}$ and $\Ad(u)X\in\fh_{z'}^{\perp}$. Now $u^{-1}\cdot z$ and $u^{-1}\cdot z'$ are adapted points in the same open orbit. It follows from Proposition \ref{Prop parameterization weakly adapted points} (\ref{Prop parameterization weakly adapted points - item 1}) that $u^{-1}\cdot z\in MAu^{-1}\cdot z'$. This implies that $z\in MA\cdot z'$, and hence
$$
\cO
=P\cdot z
=P\cdot z'
=\cO'.
$$
This proves the injectivity of (\ref{eq O mapsto Pv^-1 cdot O}) and hence (\ref{Thm properties of W-action - item 3}).

Finally, we prove (\ref{Thm properties of W-action - item 4}). Let $\cO\in (P\bs Z)_{\max}$ and $w\in W$. If $z\in \cO$ is weakly admissible, then for every order-regular element $X\in\fa$ we have $\fa_{\cO}=\fa\cap\fh_{z,X}$. Since
$$
\fa\cap\fh_{w\cdot z,X}
=\Ad(w)\big(\fa\cap\fh_{z,\Ad(w^{-1}X)}\big)
=\Ad(w)\fa_{\cO}
\qquad(X\in\fa \text{ order-regular})
$$
it follows that $\fa_{Pw\cdot z}=\Ad(w)\fa_{\cO}$.
This proves (\ref{Thm properties of W-action - item 4}).
\end{proof}

\section{Distribution vectors of principal series representations}
\label{Section Distribution vectors}
\subsection{Basic Definitions}
\label{Section Distribution vectors - basic definitions}

For a parabolic subgroup $S$ of $G$ with Langlands decomposition $S=M_{S}A_{S}N_{S}$ and a representation $\xi$ of $M_{S}$ on a Hilbert space $V_{\xi}$ and $\lambda\in\fa_{S,\C}^{*}$, we define $C^{\infty}(S:\xi:\lambda)$ to be the space smooth vectors in the principal series representation induced from $S$ with induction data $\xi\otimes\lambda\otimes 1$, i.e., the space of smooth $V_{\xi}$-valued functions $f$ on $G$ with the property that
$$
f(manx)
=a^{\lambda+\rho_{S}}\xi(m)f(x)
\qquad(m\in M_{S},a\in A_{S},n\in N_{S},x\in G).
$$

Recall that $K$ is a maximal compact subgroup of $G$. The pairing
$$
C^{\infty}(S:\xi^{\vee}:-\lambda)\times C^{\infty}(S:\xi:\lambda)\to\C;
\quad(\chi,f)\mapsto \int_{K}\big(\chi(k),f(k)\big)\,dk
$$
is non-degenerate and $G$-equivariant. We thus obtain a $G$-equivariant inclusion
$$
C^{\infty}(S:\xi^{\vee}:-\lambda)\hookrightarrow C^{\infty}(S:\xi:\lambda)'.
$$

For a smooth manifold $\cM$ and a Hilbert space $V$ we define $\cE(\cM,V)$ to be the vector space of all smooth functions $\cM\to V$, and $\cD(\cM,V)$ to be the subspace of $\cE(\cM,V)$ consisting of all functions with compact support.
We write $\cD'(\cM,V)$ for the continuous dual of $\cD(\cM,V)$. Note that in case $\cM$ is an open subset of $G$, there is a natural injection $\cE(\cM,V^{*})\hookrightarrow\cD'(\cM,V)$ (using the Haar measure on $G$ to identify densities with functions).

Let $L^{\vee}$ and $R^{\vee}$ be the contragredients of the left-regular representation $L$ and the right-regular representation $R$, respectively.
We define $\cD'(S:\xi:\lambda)$ to be the subspace of $\cD'(G,V_{\xi})$ consisting of all distributions $\mu$ such that
\begin{equation}\label{eq L(man)mu=a^lambda xi(m) mu}
L^{\vee}(man)\mu
=a^{\lambda-\rho_{S}}\xi^{\vee}(m^{-1})\mu
\qquad(m\in M_{S}, a\in A_{S},n\in N_{S}).
\end{equation}

Let $V$ be a Hilbert space. We write $\cD'(G,V)^{H}$ for the subspace of $\cD'(G,V)$ of distributions that are invariant under the right-regular representation of $H$ on $\cD'(G,V)$, i.e.,
$$
\cD'(G,V)^{H}
=\{\mu\in\cD'(G,V):R^{\vee}(h)\mu=\mu \text{ for all }h\in H\}.
$$
If $\phi\in \cD(G,V)$, then in view of the identification $Z=G/H$ the function
$$
gH\mapsto \int_{H}\phi(gh)\,dh
$$
defines an element of $\cD(Z,V)$. The map $\cD(G,V)\to\cD(Z,V)$ thus obtained is continuous. Moreover, the induced map
\begin{equation}\label{eq Identification D'(Z) simeq D'(G)^H}
\cD'(Z,V)\to\cD'(G,V)^{H};\quad\mu\mapsto\Big(\phi\mapsto \mu\Big(\int_{H}\phi(\dotvar h)\,dh\Big)\Big)
\end{equation}
is a topological isomorphism. We will use this isomorphism to identify $\cD'(Z,V)$ with $\cD'(G,V)^{H}$.
Finally, we define the space
$$
\cD'(Z,S:\xi:\lambda)
=\cD'(Z,V_{\xi})\cap\cD'(S:\xi:\lambda).
$$

\subsection{Distribution vectors versus functionals}
\label{Subsection Distribution vectors - Distribution vectors versus functionals}

Let $S=M_{S}A_{S}N_{S}$, $(\xi,V_{\xi})$ and $\lambda\in\fa_{S,\C}^{*}$ be as before.
In this section we compare the spaces $C^{\infty}(S:\xi:\lambda)'$ and $\cD'(S:\xi:\lambda)$. We follow for this the analysis in \cite[Section 2.3]{CarmonaDelorme_BaseMeromorpheDeVecteursDistributionsHInvariants}.

Let $\psi_{0}\in \cD(G)$ satisfy
$$
\int_{M_{S}}\int_{A_{S}}\int_{N_{S}}a^{2\rho_{S}}\psi_{0}(man x)\,dn\,da\,dm
=1
\qquad(x\in G).
$$
One may for instance take $\psi_{0}\in \cD(G)$ to be right $K$-invariant and satisfying
$$
\int_{G}\psi_{0}(x)\Iwasawa_{S}^{2\rho_{S}}(x)\,dx
=1,
$$
where $\Iwasawa_{S}:G\to A_{S}$ is the map given by
$$
x\in N_{S}\Iwasawa_{S}(x)M_{S}K\qquad(x\in G).
$$
For $\mu\in\cD'(S:\xi:\lambda)$, let $\omega^{S}_{\xi,\lambda}\mu\in C^{\infty}(S:\xi:\lambda)'$ be given by
$$
\big(\omega^{S}_{\xi,\lambda}\mu\big)(f)
=\mu(\psi_{0}f)
\qquad\big(f\in C^{\infty}(S:\xi:\lambda)\big).
$$
The map
\begin{equation}\label{eq Def omega^S_(xi,lambda)}
\omega^{S}_{\xi,\lambda}:\cD'(S:\xi:\lambda)\to C^{\infty}(S:\xi:\lambda)'
\end{equation}
we thus obtain is a topological isomorphism; it is easily seen that the map
$$
\theta^{S}_{\xi,\lambda}: C^{\infty}(S:\xi:\lambda)'\to\cD'(S:\xi:\lambda),
$$
which for $\eta\in C^{\infty}(S:\xi:\lambda)'$ and $\phi\in \cD(G,V_{\xi})$ is given by
\begin{equation}\label{eq def theta}
\big(\theta^{S}_{\xi,\lambda}\eta\big)(\phi)
=\eta\Big(x\mapsto\int_{M_{S}}\int_{A_{S}}\int_{N_{S}} a^{-\lambda+\rho_{S}}\xi(m^{-1}) \phi(manx)\,dn\,da\,dm\Big),
\end{equation}
is the inverse of $\omega^{S}_{\xi,\lambda}$. In particular it follows that $\omega^{S}_{\xi,\lambda}$ does not depend on the choice of the function $\psi_{0}$.
Note that $\theta^{S}_{\xi,\lambda}$ intertwines the representation $\pi_{S:\xi:\lambda}^{\vee}$ on $C^{\infty}(S:\xi:\lambda)'$ with $R^{\vee}$ on $\cD'(S:\xi:\lambda)$.
The restriction of $\omega^{S}_{\xi,\lambda}$ to $\cD'(S:\xi:\lambda)^{H}$ is a $G$-equivariant isomorphism to the space of $H$-fixed functionals on $C^{\infty}(S:\xi:\lambda)$.

\subsection{Intertwining operators}
\label{Subsection Distribution vectors - Intertwining operators}
Let $S=M_{S}A_{S}N_{S}$, $(\xi,V_{\xi})$ and $\lambda\in\fa_{S,\C}^{*}$ be as before.
For $u\in \cU(\fg)$ we set
$$
p_{S,\xi,\lambda,u}:\cE(G,V_{\xi})\to[0,\infty];
\quad\phi\mapsto\int_{G}\|\Iwasawa_{S}(x)^{-\lambda+\rho_{S}}\,R(u)\phi(x)\|_{\xi}\,dx
$$
and endow the space
$$
\cV_{S,\xi,\lambda}
:=\big\{\phi\in \cE(G,V_{\xi}):p_{S,\xi,\lambda,u}(\phi)<\infty\text{ for every }u\in\cU(\fg)\big\},
$$
with the Fr\'echet topology induced by the seminorms $p_{S,\xi,\lambda,u}$. Note that $\cD(G,V_{\xi})\subseteq\cV_{S,\xi,\lambda}$.
Further, for two parabolic subgroups $S_{1}$ and $S_{2}$ of $G$ with $A_{S_{1}}=A_{S_{2}}=A_{S}$, we write
$$
A(S_{2}:S_{1}:\xi:\lambda):C^{\infty}(S_{1}:\xi:\lambda)\to C^{\infty}(S_{2}:\xi:\lambda)
$$
for the standard Knapp-Stein intertwining operators and define
$$
\cA(S_{2}:S_{1}:\xi:\lambda)
:=\theta^{S_{2}}_{\xi,\lambda}\circ A(S_{1}:S_{2}:\xi:\lambda)^{*}\circ\omega^{S_{1}}_{\xi,\lambda}.
$$
We assume that $A_{S}\subseteq A$ and identify $\fa_{S,\C}^{*}$ with the annihilator of $\fm_{S}\cap \fa$ in $\fa_{\C}^{*}$.

\begin{Prop}\label{Prop int formula for a(S_2:S_1:xi:lambda)}
Let $S_{1}$, $S_{2}$, $\xi$ and $\lambda$ be as above.
The following diagram commutes.
$$
\xymatrix{
   \cD'(S_{1}:\xi:\lambda) \ar[rr]^{\cA(S_{2}:S_{1}:\xi:\lambda)} \ar@<-2pt>[dd]_{\omega^{S_{1}}_{\xi,\lambda}}
        && \cD'(S_{2}:\xi:\lambda) \ar@<-2pt>[dd]_{\omega^{S_{2}}_{\xi,\lambda}} \\
\\
   C^{\infty}(S_{1}:\xi:\lambda)'\ar@<-2pt>[uu]_{\theta^{S_{1}}_{\xi,\lambda}} \ar[rr]^{A(S_{1}:S_{2}:\xi:\lambda)^{*}}
        && C^{\infty}(S_{2}:\xi:\lambda)' \ar@<-2pt>[uu]_{\theta^{S_{2}}_{\xi,\lambda}}
}
$$
Assume that $\lambda\in\fa_{S,\C}^{*}$ satisfies
$$
\langle\Re\lambda,\alpha\rangle>0\qquad\big(\alpha\in\Sigma(\fa:S_{2})\cap-\Sigma(\fa:S_{1})\big),
$$
Then for every $\phi\in \cV_{S_{2},\xi,\lambda}$ and every $x\in G$ the integral
$$
\int_{N_{S_{2}}\cap \overline{N}_{S_{1}}}\phi(nx)\,dx
$$
is absolutely convergent and the function $\int_{N_{S_{2}}\cap \overline{N}_{S_{1}}}\phi(n\dotvar)\,dx$ thus obtained is an element of $\cV_{S_{1},\xi,\lambda}$. Moreover, the map
$$
\cV_{S_{2},\xi,\lambda}\to\cV_{S_{1},\xi,\lambda};\quad\phi\mapsto\int_{N_{S_{2}}\cap \overline{N}_{S_{1}}}\phi(n\dotvar)\,dn
$$
is continuous.
Finally, if $\mu\in\cD'(S_{1}:\xi:\lambda)$, then $\mu$ extends to a continuous linear functional on $\cV_{S_{1},\xi,\lambda}$, and
the distribution $\cA(S_{2}:S_{1}:\xi:\lambda)\mu\in\cD'(S_{2}:\xi:\lambda)$ is given by
\begin{equation}\label{eq a(S_2:S_1)mu}
\big[\cA(S_{2}:S_{1}:\xi:\lambda)\mu\big](\phi)
=\mu\Big(\int_{N_{S_{2}}\cap\overline{N}_{S_{1}}}\phi(n\dotvar)\,dn\Big)
\qquad\big(\phi\in \cV_{S_{2},\xi,\lambda}\big).
\end{equation}
\end{Prop}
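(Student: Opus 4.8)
First, I would observe that the diagram commutes tautologically: since $\omega^{S_{2}}_{\xi,\lambda}$ and $\theta^{S_{2}}_{\xi,\lambda}$ are mutually inverse, applying $\omega^{S_{2}}_{\xi,\lambda}$ to the definition $\cA(S_{2}:S_{1}:\xi:\lambda)=\theta^{S_{2}}_{\xi,\lambda}\circ A(S_{1}:S_{2}:\xi:\lambda)^{*}\circ\omega^{S_{1}}_{\xi,\lambda}$ yields $\omega^{S_{2}}_{\xi,\lambda}\circ\cA(S_{2}:S_{1}:\xi:\lambda)=A(S_{1}:S_{2}:\xi:\lambda)^{*}\circ\omega^{S_{1}}_{\xi,\lambda}$. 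For the remaining assertions the key object is, for a parabolic $S=M_{S}A_{S}N_{S}$, the map
$$
P_{S,\xi,\lambda}\phi(x):=\int_{M_{S}}\int_{A_{S}}\int_{N_{S}}a^{-\lambda+\rho_{S}}\xi(m^{-1})\phi(manx)\,dn\,da\,dm ,
$$
so that $\theta^{S}_{\xi,\lambda}\eta=\eta\circ P_{S,\xi,\lambda}$ and hence $\mu=\bigl(\omega^{S}_{\xi,\lambda}\mu\bigr)\circ P_{S,\xi,\lambda}$ on $\cD(G,V_{\xi})$ for $\mu\in\cD'(S:\xi:\lambda)$. I would first record, using the Iwasawa integration formula for $G$ relative to $S$ together with elliptic (Sobolev) estimates on $G$, that $\cV_{S,\xi,\lambda}$ is a Fr\'echet space in which $\cD(G,V_{\xi})$ is dense and that $P_{S,\xi,\lambda}$ extends to a continuous $G$-equivariant surjection $\cV_{S,\xi,\lambda}\to C^{\infty}(S:\xi:\lambda)$, its defining integral being controlled by $p_{S,\xi,\lambda,1}(\phi)$. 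Granting this, $\phi\mapsto\bigl(\omega^{S_{1}}_{\xi,\lambda}\mu\bigr)\bigl(P_{S_{1},\xi,\lambda}\phi\bigr)$ is the (by density unique) continuous extension of $\mu\in\cD'(S_{1}:\xi:\lambda)$ to $\cV_{S_{1},\xi,\lambda}$; applying the same with $S_{2}$, the distribution $\cA(S_{2}:S_{1}:\xi:\lambda)\mu\in\cD'(S_{2}:\xi:\lambda)$ likewise extends continuously to $\cV_{S_{2},\xi,\lambda}$.

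Next I would analyse the integration operator $I\colon\phi\mapsto\int_{N_{S_{2}}\cap\overline{N}_{S_{1}}}\phi(n\dotvar)\,dn$. Writing $U:=N_{S_{2}}\cap\overline{N}_{S_{1}}$, one has $U\subseteq N_{S_{2}}$ and the $\fa$-roots occurring in $U$ are exactly $\Sigma(\fa:S_{2})\cap\bigl(-\Sigma(\fa:S_{1})\bigr)$. Since $R(u)$ commutes with $I$ for $u\in\cU(\fg)$, and by Tonelli followed by a left translation
$$
p_{S_{1},\xi,\lambda,1}(I\chi)\le\int_{G}\Bigl(\int_{U}\Iwasawa_{S_{1}}(u^{-1}x)^{-\Re\lambda+\rho_{S_{1}}}\,du\Bigr)\|\chi(x)\|_{\xi}\,dx ,
$$
the well-definedness and continuity of $I\colon\cV_{S_{2},\xi,\lambda}\to\cV_{S_{1},\xi,\lambda}$ reduce to the pointwise estimate $\int_{U}\Iwasawa_{S_{1}}(u^{-1}x)^{-\Re\lambda+\rho_{S_{1}}}\,du\le C\,\Iwasawa_{S_{2}}(x)^{-\Re\lambda+\rho_{S_{2}}}$ for $x\in G$, which holds precisely when $\langle\Re\lambda,\alpha\rangle>0$ for $\alpha\in\Sigma(\fa:S_{2})\cap(-\Sigma(\fa:S_{1}))$. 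This estimate is, in substance, the Harish-Chandra/Gindikin--Karpelevich estimate underlying the convergence of the Knapp--Stein operator $A(S_{1}:S_{2}:\xi:\lambda)$ in this range; I would prove it by factoring $U$ along a chain of adjacent parabolic subgroups and reducing to the rank-one case, or cite it. Together with the pointwise decay bounds on elements of $\cV_{S_{2},\xi,\lambda}$ (obtained from the seminorms $p_{S_{2},\xi,\lambda,u}$ by elliptic regularity), which make $\int_{U}\phi(ux)\,du$ converge for \emph{every} $x$, this is the main technical obstacle; everything else is formal.

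Finally, for the formula (\ref{eq a(S_2:S_1)mu}) I would argue by density from $\cD(G,V_{\xi})$. For $\phi\in\cD(G,V_{\xi})$, unwinding the definition of $\theta^{S_{2}}_{\xi,\lambda}$ and of the transpose gives $\bigl[\cA(S_{2}:S_{1}:\xi:\lambda)\mu\bigr](\phi)=\bigl(\omega^{S_{1}}_{\xi,\lambda}\mu\bigr)\bigl(A(S_{1}:S_{2}:\xi:\lambda)\,P_{S_{2},\xi,\lambda}\phi\bigr)$, while the extension of $\mu$ from the first step gives $\mu(I\phi)=\bigl(\omega^{S_{1}}_{\xi,\lambda}\mu\bigr)\bigl(P_{S_{1},\xi,\lambda}(I\phi)\bigr)$. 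It therefore suffices to prove the operator identity
$$
A(S_{1}:S_{2}:\xi:\lambda)\circ P_{S_{2},\xi,\lambda}=P_{S_{1},\xi,\lambda}\circ I
$$
on $\cD(G,V_{\xi})$. This I would verify first for $\Re\lambda$ deep inside the cone of the previous step, where the relevant iterated integrals converge absolutely and Fubini applies, by a change of variables moving $U$ past the common Levi $M_{S}A_{S}$ of $S_{1}$ and $S_{2}$, using the decompositions $N_{S_{i}}=(N_{S_{1}}\cap N_{S_{2}})(N_{S_{i}}\cap\overline{N}_{S_{j}})$, the fact that the $\fa$-roots of $N_{S_{1}}\cap\overline{N}_{S_{2}}$ are the negatives of those of $U$, and the unimodularity of the $M_{S}$-action on $\fu$; it then extends to the full range by analytic continuation in $\lambda$. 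Since both $\phi\mapsto[\cA(S_{2}:S_{1}:\xi:\lambda)\mu](\phi)$ and $\phi\mapsto\mu(I\phi)$ are continuous on $\cV_{S_{2},\xi,\lambda}$ by the first two steps and agree on the dense subspace $\cD(G,V_{\xi})$, they agree on all of $\cV_{S_{2},\xi,\lambda}$, which is (\ref{eq a(S_2:S_1)mu}).
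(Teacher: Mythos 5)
Your proposal is correct in substance and follows essentially the same route as the paper's Appendix B: the same weighted $L^{1}$-type spaces $\cL_{S,\xi,\lambda}$ and their smooth vectors $\cV_{S,\xi,\lambda}$, the same averaging map $P_{S,\xi,\lambda}$ (the paper's $\cT$), the same reduction of the continuity of $I\colon\phi\mapsto\int_{N_{S_{2}}\cap\overline{N}_{S_{1}}}\phi(n\dotvar)\,dn$ to the identity $\int_{N_{S_{2}}\cap\overline{N}_{S_{1}}}|\Iwasawa_{S_{1}}^{-\lambda+\rho_{S_{1}}}(nx)|\,dn=c(S_{2}:S_{1}:-\Re\lambda)\,\Iwasawa_{S_{2}}^{-\Re\lambda+\rho_{S_{2}}}(x)$ (the partial $c$-function), and the same change-of-variables computation via the decompositions $N_{S_{i}}=(N_{S_{1}}\cap N_{S_{2}})(N_{S_{i}}\cap\overline{N}_{S_{j}})$. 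Three small divergences: (1) where you invoke elliptic/Sobolev estimates to get everywhere-convergence and smoothness of $P_{S,\xi,\lambda}\phi$ and $I\phi$, the paper uses Dixmier--Malliavin to write $\phi=\sum\pi(f_{i})\psi_{i}$ and then Fubini -- either works; (2) your analytic-continuation detour for the operator identity $A(S_{1}:S_{2}:\xi:\lambda)\circ P_{S_{2},\xi,\lambda}=P_{S_{1},\xi,\lambda}\circ I$ is unnecessary, since in the stated range all iterated integrals already converge absolutely and the paper verifies it directly; (3) most substantively, your last step passes from $\cD(G,V_{\xi})$ to $\cV_{S_{2},\xi,\lambda}$ by asserting that $\cD(G,V_{\xi})$ is dense in $\cV_{S_{2},\xi,\lambda}$. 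That density is true but not immediate (one needs a cutoff family with uniformly bounded left-invariant derivatives and dominated convergence in every seminorm $p_{S_{2},\xi,\lambda,u}$), and you leave it unproved; the paper avoids it by running the density argument in the other variable, namely that $C^{\infty}(S_{1}:\xi^{\vee}:-\lambda)$ is dense in $\cD'(S_{1}:\xi:\lambda)$ and both sides of (\ref{eq a(S_2:S_1)mu}) are continuous in $\mu$, after which the identity for smooth $\mu$ is an explicit computation. You should either supply the cutoff argument or switch to the paper's density-in-$\mu$ argument.
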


For the proof of the proposition we refer to Appendix B.
\medbreak

We define an inner product $\langle\cdot,\cdot\rangle_{S,\xi,\lambda}$ on $C^{\infty}(S:\xi:\lambda)$ by
\begin{equation}\label{eq inner product on C^infty(S:xi:lambda)}
\langle \phi,\psi\rangle_{S,\xi,\lambda}
=\int_{K}\langle\phi(k),\psi(k)\rangle_{\xi}\,dk
\qquad\big(\phi,\psi\in C^{\infty}(S:\xi:\lambda)\big),
\end{equation}
where $\langle\cdot,\cdot\rangle_{\xi}$ is the inner product on $V_{\xi}$. We consider parabolic subgroup $S_{1}$ and $S_{2}$ with $A_{S_{1}}=A_{S_{2}}=A_{S}\subseteq A$ as before. The adjoint of the intertwining operator $A(S_{2}:S_{1}:\xi:\lambda)$ with respect to (\ref{eq inner product on C^infty(S:xi:lambda)}) is given by
$$
A(S_{2}:S_{1}:\xi:\lambda)^{\dagger}
=A(S_{1}:S_{2}:\xi:-\overline{\lambda})
$$
The composition $A(S_{2}:S_{1}:\xi:\lambda)\circ A(S_{1}:S_{2}:\xi:\lambda)$ is an intertwining operator from $C^{\infty}(S_{1}:\xi:\lambda)$ to itself. It is therefore given by multiplication by a scalar. As in \cite[\S 14.6]{Knapp_RepresentationTheoryOfSemisimpleGroups} we choose a meromorphic function on $\fa_{S,\C}^{*}$
$$
\lambda\mapsto \gamma(S_{1}:S_{2}:\xi:\lambda)
$$
that is real and non-negative on $\fa_{S}^{*}$ and satisfies the identity of meromorphic operators
$$
A(S_{2}:S_{1}:\xi:\lambda)\circ A(S_{1}:S_{2}:\xi:\lambda)
=\gamma(S_{2}:S_{1}:\xi:\lambda)\gamma(S_{1}:S_{2}:\xi:\lambda)\Id.
$$
We may choose these functions so that for all $\lambda\in\fa_{S,\C}^{*}$
$$
\gamma(S_{1}:S_{2}:\xi:\lambda)
=\overline{\gamma(S_{2}:S_{1}:\xi:-\overline{\lambda})},
$$
$$
\gamma(S_{1}:S_{2}:\xi:\lambda)
=\gamma(S_{1}:S_{2}:\xi':\lambda)
\qquad(\xi'\simeq \xi,\lambda\in \fa_{S,\C}^{*})
$$
for every $\xi'$ equivalent to $\xi$,
and
$$
\gamma\big(vS_{1}v^{-1}:vS_{2}v^{-1}:v\cdot \xi:\Ad^{*}(v)\lambda\big)
=\gamma(S_{1}:S_{2}:\xi:\lambda)
$$
for every $v\in N_{G}(\fa_{S})$.
Here $v\cdot \xi$ is the representation of $M_{S}$ with representation space $V_{\xi}$ given by
$$
(v\cdot\xi)(m)
=\xi(v^{-1}mv)
\qquad(m\in M_{S}).
$$
If we normalize the intertwining operators with these $\gamma$-functions as
$$
A^{\circ}(S_{1}:S_{2}:\xi:\lambda)
:=\frac{1}{\gamma(S_{1}:S_{2}:\xi:\lambda)}A(S_{1}:S_{2}:\xi:\lambda),
$$
then we obtain the identities
$$
A^{\circ}(S_{3}:S_{1}:\xi:\lambda)\\
=A^{\circ}(S_{3}:S_{2}:\xi:\lambda)\circ A^{\circ}(S_{2}:S_{1}:\xi:\lambda)
$$
for all parabolic subgroups $S_{1},S_{2},S_{3}$ with $A_{S_{1}}=A_{S_{2}}=A_{S_{3}}=A_{S}\subseteq A$, $\lambda\in(\fa_{S})^{*}_{\C}$ and unitary representations $\xi$ of $M_{S_{1}}=M_{S_{2}}=M_{S_{3}}=M_{S}$.
In particular,
$$
A^{\circ}(S_{1}:S_{2}:\xi:\lambda)^{-1}
=A^{\circ}(S_{2}:S_{1}:\xi:\lambda),
$$
and hence the operator $A^{\circ}(S_{1}:S_{2}:\xi:\lambda)$ is unitary if $\lambda\in i\fa_{S}^{*}$.

For $v\in N_{G}(\fa_{S})$ we define the intertwining operator
$$
\cI_{v}(S:\xi:\lambda):\cD'(S:\xi:\lambda)\to\cD'(S:v\cdot\xi:\Ad^{*}(v)\lambda)
$$
by
$$
\cI_{v}(S:\xi:\lambda)
:=L^{\vee}(v)\circ \cA(v^{-1}Sv:S:\xi:\lambda)
$$
and the corresponding normalized intertwining operator
$$
\cI^{\circ}_{v}(S:\xi:\lambda):\cD'(S:\xi:\lambda)\to\cD'(S:v\cdot\xi:\Ad^{*}(v)\lambda)
$$
by
$$
\cI^{\circ}_{v}(S:\xi:\lambda)
:=\frac{1}{\gamma(v^{-1}Sv:S:\xi:\lambda)}\cI_{v}(S:\xi:\lambda).
$$
We note that
$$
\cI^{\circ}_{v}(S:w\cdot\xi:w\cdot\lambda)\circ \cI^{\circ}_{w}(S:\xi:\lambda)
=\cI^{\circ}_{vw}(S:\xi:\lambda)
\qquad\big(v,w\in N_{G}(\fa_{S})\big).
$$
The family of operators $\lambda\mapsto \cI^{\circ}_{v}(S:\xi:\lambda)$ is meromorphic.
There exists a locally finite union $\cH$ of complex affine hyperplanes in $\fa_{S,\C}^{*}$ of the form $\{\lambda\in\fa_{S,\C}^{*}:\lambda(\alpha^{\vee})=c\}$ for some $\alpha\in \Sigma(S)$ and $c\in\R$, so that for all unitary representations $\xi$ of $M_{S}$ the poles of the families $\lambda\mapsto \cI_{v}(S:\xi:\lambda)$ and  $\lambda\mapsto \cI^{\circ}_{v}(S:\xi:\lambda)$ lie on $\cH$.

\subsection{Comparison between induction from different parabolic subgroups}\label{Subsection Distribution vectors - Comparison}

Let $S$ and $T$ be parabolic subgroups of $G$ and assume that $S\subseteq T$. Let $S=M_{S}A_{S}N_{S}$ $T=M_{T}A_{T}N_{T}$ be Langlands decompositions of $S$ and $T$, respectively, and assume that $A_{T}\subseteq A_{S}\subseteq A$. Observe that $S \cap M_{T}$ is a parabolic subgroup
of $M_{T}$. Moreover, $\fa=\fa_{T}\oplus(\fm_{T}\cap\fa)$.
We identify $\fa_{T}^{*}$ as a subspace $\fa^{*}$ by extending the functionals by $0$ on $\fm_{T}\cap\fa$.
Note that
$$
\rho_{S \cap M_{T}}
=\rho_{S}-\rho_{T}.
$$

Let $(\xi, V_{\xi})$ be a representation of $M_{T}$ on a Hilbert space and assume that
$$
M_{T}\cap N_{S}\subseteq\ker(\xi).
$$
Let further $\lambda\in\fa_{T,\C}^{*}$.
There is a natural $M_{T}$-equivariant embedding
$$
i: \xi \hookrightarrow \Ind_{M_{T} \cap S}^{M_{T}}(\xi|_{M_{S}} \otimes \rho_{T}-\rho_{S} \otimes 1),
$$
see \cite[Lemma 4.4]{vdBan_PrincipalSeriesI}. Concretely, the map $i$ from $V_{\xi}$ into
the space
$C^\infty(M_{T}\cap S: \xi|_{M_{S}}: \rho_{T}-\rho_{S})$
of smooth vectors  for the principal series
representation on the right-hand side is given by
$$
i(v)(m_T) = \xi(m_T) v,\qquad (v \in V_{\xi}, \;m_T \in M_{T}).
$$
Let $\lambda\in\fa_{T,\C}^{*}$.
Using induction by stages, we obtain a $G$-equivariant embedding
$$
\Ind_{T}^G (\xi \otimes \lambda\otimes 1)
\hookrightarrow \Ind_{S}^G (\xi|_{M_{S}} \otimes (\lambda +\rho_{T}-\rho_{S}) \otimes 1).
$$
On the level of smooth vectors this results in a $G$-equivariant embedding
$$
i_{\xi,\lambda}^{\#}: C^\infty(T : \xi : \lambda) \hookrightarrow C^\infty(S: \xi|_{M_{S}} : \lambda +\rho_{T}-\rho_{S}),
$$
which is the natural inclusion map. Note that $i_{\xi^{\vee},-\lambda}^{\#}$ extends to a continuous inclusion
$$
\cD'(T:\xi:\lambda)\hookrightarrow\cD'(S:\xi|_{M_{S}}:\lambda-\rho_{T}+\rho_{S}).
$$
Using the isomorphisms from Section \ref{Subsection Distribution vectors - Distribution vectors versus functionals}, we now arrive at the following result.

\begin{Prop}\label{Prop inclusion of functionals}
Let $\lambda\in\fa_{T,\C}^{*}$ and let $(\xi, V_{\xi})$ be a representation of $M_{T}$ on a Hilbert space and assume that $M_{T}\cap N_{S}\subseteq\ker(\xi)$.
There exists a $G$-equivariant injective map
$$
C^{\infty}(T:\xi:\lambda)'\hookrightarrow C^{\infty}(S:\xi|_{M_{S}}:\lambda-\rho_{T}+\rho_{S})'
$$
so that
$$
\xymatrix{
   \cD'(T:\xi:\lambda) \ar@{^{(}->}[rr] \ar@<-2pt>[dd]_{\omega^{T}_{\xi,\lambda}}
        && \cD'(S:\xi|_{M_{S}}:\lambda-\rho_{T}+\rho_{S}) \ar@<-2pt>[dd]_{\omega^{S}_{\xi|_{M_{S}},\lambda-\rho_{T}+\rho_{S}}} \\
\\
   C^{\infty}(T:\xi:\lambda)'\ar@<-2pt>[uu]_{\theta^{T}_{\xi,\lambda}} \ar@{^{(}->}[rr]
        && C^{\infty}(S:\xi|_{M_{S}}:\lambda-\rho_{T}+\rho_{S})' \ar@<-2pt>[uu]_{\theta^{S}_{\xi|_{M_{S}},\lambda-\rho_{T}+\rho_{S}}}
}
$$
is a commuting diagram.
\end{Prop}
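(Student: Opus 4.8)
The plan is to transport the distribution-level inclusion recorded just above the statement to the level of functionals by means of the topological isomorphisms $\omega^{T}_{\xi,\lambda}$ and $\omega^{S}_{\xi|_{M_{S}},\lambda-\rho_{T}+\rho_{S}}$ of Section~\ref{Subsection Distribution vectors - Distribution vectors versus functionals} and their inverses $\theta^{T}_{\xi,\lambda}$, $\theta^{S}_{\xi|_{M_{S}},\lambda-\rho_{T}+\rho_{S}}$. Writing $\iota\colon\cD'(T:\xi:\lambda)\hookrightarrow\cD'(S:\xi|_{M_{S}}:\lambda-\rho_{T}+\rho_{S})$ for the continuous $G$-equivariant inclusion extending $i_{\xi^{\vee},-\lambda}^{\#}$, I would simply \emph{define} the map on functionals to be the composite
$$
\omega^{S}_{\xi|_{M_{S}},\lambda-\rho_{T}+\rho_{S}}\circ\iota\circ\theta^{T}_{\xi,\lambda}\colon C^{\infty}(T:\xi:\lambda)'\longrightarrow C^{\infty}(S:\xi|_{M_{S}}:\lambda-\rho_{T}+\rho_{S})'.
$$
With this definition commutativity of the square is automatic: since $\theta^{T}_{\xi,\lambda}\circ\omega^{T}_{\xi,\lambda}=\Id$, precomposing the composite with $\omega^{T}_{\xi,\lambda}$ yields $\omega^{S}_{\xi|_{M_{S}},\lambda-\rho_{T}+\rho_{S}}\circ\iota$, which is exactly the commutativity asserted by the diagram.

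The remaining properties are then formal. The composite is injective because $\theta^{T}_{\xi,\lambda}$ and $\omega^{S}_{\xi|_{M_{S}},\lambda-\rho_{T}+\rho_{S}}$ are bijections while $\iota$ is injective. It is $G$-equivariant because, by the observation in Section~\ref{Subsection Distribution vectors - Distribution vectors versus functionals}, the maps $\theta$ and $\omega$ intertwine the contragredient principal-series actions $\pi^{\vee}$ on the spaces $C^{\infty}(\cdot)'$ with the right-regular actions $R^{\vee}$ on the corresponding distribution spaces, and $\iota$ — being the extension of the $G$-intertwining map $i_{\xi^{\vee},-\lambda}^{\#}$ — is $R^{\vee}$-equivariant; so is therefore the composite.

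Hence all the content is concentrated in the map $\iota$, and I expect this to be the genuine obstacle. On smooth vectors the map $i_{\xi^{\vee},-\lambda}^{\#}$ has already been produced by induction by stages, via the embedding of \cite[Lemma 4.4]{vdBan_PrincipalSeriesI} recalled above and the hypothesis $M_{T}\cap N_{S}\subseteq\ker\xi$; what remains is to extend it continuously to $\cD'(T:\xi:\lambda)$ and to verify that the extension is injective and lands in $\cD'(S:\xi|_{M_{S}}:\lambda-\rho_{T}+\rho_{S})$. For this I would use the $G$-equivariant submersion $S\bs G\to T\bs G$ induced by $S\subseteq T$, whose fibres are copies of $S\bs T\cong(S\cap M_{T})\bs M_{T}$: the map $\iota$ is the pullback of distributional sections along this submersion, followed by the density normalization that produces the shift $\rho_{S}-\rho_{T}=\rho_{S\cap M_{T}}$. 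Continuity and $R^{\vee}$-equivariance are clear; injectivity follows because the transpose operation, integration over the fibres, is a continuous surjection on compactly supported test densities; and the fact that $\iota$ restricts on the dense subspace of smooth vectors $C^{\infty}(T:\xi^{\vee}:-\lambda)$ to $i_{\xi^{\vee},-\lambda}^{\#}$ identifies it as the desired extension. The delicate point, on which I expect to spend the most effort, is the parameter bookkeeping — confirming that the left transformation rule (\ref{eq L(man)mu=a^lambda xi(m) mu}) for $S$ holds with exactly the shift $\lambda-\rho_{T}+\rho_{S}$ — which rests on $\rho_{S}=\rho_{T}+\rho_{S\cap M_{T}}$, the decomposition $\fa=\fa_{T}\oplus(\fm_{T}\cap\fa)$, the induced decompositions $A_{S}=A_{T}(A_{S}\cap M_{T})$ and $N_{S}=(N_{S}\cap M_{T})N_{T}$, and the triviality of $\xi$ on $M_{T}\cap N_{S}$.
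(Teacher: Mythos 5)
Your proposal is correct and is essentially the paper's own (largely implicit) argument: the functional-level map is \emph{defined} as $\omega^{S}_{\xi|_{M_{S}},\lambda-\rho_{T}+\rho_{S}}\circ\iota\circ\theta^{T}_{\xi,\lambda}$, whence commutativity, injectivity and $G$-equivariance are formal, and all content sits in the distribution-level inclusion $\iota$. The one place you make things harder than necessary is the construction of $\iota$: in the paper's setup both $\cD'(T:\xi:\lambda)$ and $\cD'(S:\xi|_{M_{S}}:\lambda-\rho_{T}+\rho_{S})$ are realized as subspaces of $\cD'(G,V_{\xi})$ cut out by the left-equivariance condition (\ref{eq L(man)mu=a^lambda xi(m) mu}), so $\iota$ is the literal set-theoretic inclusion of one subspace into the other — no pullback along $S\backslash G\to T\backslash G$ nor surjectivity of fibre integration is needed — and the entire proof reduces to exactly the bookkeeping you single out at the end, namely that the $T$-equivariance implies the $S$-equivariance with parameter $\lambda-\rho_{T}+\rho_{S}$, via $\rho_{S}=\rho_{T}+\rho_{S\cap M_{T}}$, the decompositions $A_{S}=A_{T}(A_{S}\cap M_{T})$ and $N_{S}=(N_{S}\cap M_{T})N_{T}$, and the triviality of $\xi$ on $M_{T}\cap N_{S}$.
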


\subsection{$L_{Q,\nc}$-spherical representations of $M_{Q}$}
\label{Subsection Distribution vectors - M_Q representations}
Let $Q=M_{Q}A_{Q}N_{Q}$ be the Langlands decomposition of $Q$ with $A_{Q}\subseteq A$. Then $L_{Q}=M_{Q}A_{Q}$.
We recall that $\fl_{Q,\nc}$ is the sum of the simple ideals of non-compact type in $\fl_{Q}$. We write $L_{Q,\nc}$ for the connected subgroup of $L_{Q}$ with Lie algebra $\fl_{Q,\nc}$. Note that $L_{Q,\nc}\subseteq M_{Q}$.

We first look at a few properties of $M_{Q}$ and $L_{Q,\nc}$ which will be needed in this and later sections.

\begin{Lemma}\label{Lemma decomposition of M_Q}
\ \begin{enumerate}[(i)]
\item\label{Lemma decomposition of M_Q -  item 1}
$L_{Q,\nc}$ is a closed normal subgroup of $M_{Q}$.
\item\label{Lemma decomposition of M_Q - item 2}
$
M_{Q} = M L_{Q,\nc} \simeq M \times_{M\cap L_{Q,\nc}} L_{Q,\nc}.
$
\item\label{Lemma decomposition of M_Q - item 3}
The group $M_{L,\nc}$ acts trivially on $M_{Q} / M_{L,\nc}$.
\item\label{Lemma decomposition of M_Q - item 4}
Let $z\in Z$ be a weakly adapted point and $w\in N_{G}(A)$ so that $\fh_{z,X}=\Ad(w)\fh_{\emptyset}$ for some order-regular element $X\in\fa^{-}$.  Then $wL_{Q,\nc}w^{-1}\subseteq M_{Q}\cap H_{z}$. If  $w$ normalizes $\fa_{\fh}$, or equivalently if $\fa_{P\cdot z}=\fa_{\fh}$, then $L_{Q,\nc}\subseteq M_{Q}\cap H_{z}$.
\end{enumerate}
\end{Lemma}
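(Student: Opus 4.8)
The four assertions are essentially structure-theoretic facts about the Levi $L_Q = M_Q A_Q$ and the decomposition of $M_Q$ into its "compact" part $M$ and its "non-compact" part $L_{Q,\nc}$. My plan is to establish (i)--(iii) by routine Lie-theoretic bookkeeping and reserve the real work for (iv).

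For (i): $\fl_{Q,\nc}$ is a sum of ideals of $\fl_Q$, hence an ideal of $\fl_Q$, so $L_{Q,\nc}$ is normalized by the connected group $L_Q^0$; since $M_Q$ has finitely many components and $L_{Q,\nc}$ is the connected group attached to a characteristic subalgebra, one checks $\Ad(M_Q)$ preserves $\fl_{Q,\nc}$, giving normality. Closedness follows because $L_{Q,\nc}$, being semisimple with finite center inside the linear group $L_Q$, is closed (it is the analytic subgroup attached to a semisimple ideal, which is algebraic). For (ii): every component of $M_Q$ meets $M L_{Q,\nc}$ — this is where one uses that $M_Q = Z_K(A_Q) M_{Q}^0$ together with the fact that $\fm_Q = \fm \oplus \fl_{Q,\nc}$ as Lie algebras (the compact part $\fm$ of $\fm_Q$ is precisely the sum of the compact simple ideals plus the center of $\fm_Q$, modulo $\fa$-components that are absent since $A_Q \subseteq A$). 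Then $M_Q = M L_{Q,\nc}$, and the intersection $M \cap L_{Q,\nc}$ is the obstruction to this being a direct product, yielding $M_Q \simeq M \times_{M \cap L_{Q,\nc}} L_{Q,\nc}$. Assertion (iii) should be a formal consequence of (i) and (ii): since $L_{Q,\nc}$ is normal, left multiplication by $L_{Q,\nc}$ on $M_Q$ descends to $M_Q/L_{Q,\nc}$, and it acts trivially there by definition of quotient. (I suspect "$M_{L,\nc}$" in the statement is a typo for $L_{Q,\nc}$, and I would read it that way.)

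The substantive assertion is (iv). The first claim, $wL_{Q,\nc}w^{-1} \subseteq M_Q \cap H_z$, is exactly the kind of statement already proved en route to Lemma~\ref{Lemma a-proj of w-part of h_z}: there one shows that, because $Pw^{-1}\cdot z$ is open (by Proposition~\ref{Prop limits of max rank orbits are conjugates of h_empty}, using weak adaptedness), there exists $n \in N_Q$ with $L_{Q,\nc} \subseteq H_{nw^{-1}\cdot z}$, hence $\Ad(wn^{-1})\fl_{Q,\nc} \subseteq \fh_z$; but $\fl_{Q,\nc}$ is normalized by $N_Q$ modulo $\fn_Q$, and one must upgrade the Lie-algebra containment to the group level and eliminate the $N_Q$-factor. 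The cleanest route: $w^{-1}\cdot z$ is adapted (Proposition~\ref{Prop parameterization weakly adapted points}(\ref{Prop parameterization weakly adapted points - item 2})), so by Proposition~\ref{Prop LST holds for adapted points}(\ref{Prop LST holds for adapted points - item 2}) we have $\fl_{Q,\nc} \subseteq \fh_{w^{-1}\cdot z}$; since $L_{Q,\nc}$ is connected and $H_{w^{-1}\cdot z}$ is a subgroup whose Lie algebra contains $\fl_{Q,\nc}$, the analytic subgroup $L_{Q,\nc}$ lies in $H_{w^{-1}\cdot z}$, i.e. $L_{Q,\nc}$ fixes $w^{-1}\cdot z$, equivalently $wL_{Q,\nc}w^{-1}$ fixes $z$, so $wL_{Q,\nc}w^{-1} \subseteq H_z$; combining with $L_{Q,\nc} \subseteq M_Q$ and $M_Q \lhd L_Q$ normal in $N_G(\fa)$'s relevant subgroup gives $wL_{Q,\nc}w^{-1} \subseteq M_Q$, whence $wL_{Q,\nc}w^{-1} \subseteq M_Q \cap H_z$. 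For the final sentence: if $w$ normalizes $\fa_{\fh}$ then $\fa_{P\cdot z} = \Ad(w)\fa_{\fh} = \fa_{\fh}$ by (\ref{eq a_O=Ad(w)a_h}), and conversely; in that case $w \in \cN = N_G(\fa) \cap N_G(\fa_{\fh})$, and I would argue that $w$ then normalizes $\fl_{Q,\nc}$ as well — because $\fl_{Q,\nc}$ is the sum of noncompact simple ideals of $\fl_Q = Z_\fg(X)$ for $X \in \fa \cap \fh_z^\perp$, and $w$ sends one such centralizer to another of the same type, so $\Ad(w)\fl_{Q,\nc}$ and $\fl_{Q,\nc}$ coincide up to the ambiguity $N_{L_Q}(\fa)$ noted in Remark~\ref{Rem Orbits of max rank}(\ref{Rem Orbits of max rank - item 1}), which normalizes $\fl_{Q,\nc}$. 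Hence $wL_{Q,\nc}w^{-1} = L_{Q,\nc}$ and the first part of (iv) gives $L_{Q,\nc} \subseteq M_Q \cap H_z$.

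The main obstacle I anticipate is being careful with the connectedness/analytic-subgroup passage in (iv): containment of Lie algebras $\fl_{Q,\nc} \subseteq \fh_{w^{-1}\cdot z}$ only gives $L_{Q,\nc} \subseteq (H_{w^{-1}\cdot z})^0 \subseteq H_{w^{-1}\cdot z}$ because $L_{Q,\nc}$ is itself connected — so this is actually fine — but one must make sure the reference to Proposition~\ref{Prop LST holds for adapted points}(\ref{Prop LST holds for adapted points - item 2}) genuinely applies to $w^{-1}\cdot z$, which requires knowing $w^{-1}\cdot z$ is adapted with respect to the same Langlands decomposition $P = MAN$; this is precisely the content of Proposition~\ref{Prop parameterization weakly adapted points}(\ref{Prop parameterization weakly adapted points - item 2}), so the chain closes. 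A secondary subtlety is justifying "$w$ normalizes $\fa_{\fh}$ $\iff$ $\fa_{P\cdot z} = \fa_{\fh}$": the forward direction is immediate from (\ref{eq a_O=Ad(w)a_h}), and the reverse direction uses that $\fa_{P\cdot z} = \Ad(w)\fa_{\fh}$ together with $\fa_{P \cdot z} = \fa_{\fh}$ forces $\Ad(w)\fa_{\fh} = \fa_{\fh}$.
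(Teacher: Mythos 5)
Your proof follows essentially the same route as the paper's: (i)--(iii) via the decomposition $\fm_{Q}=\fm+\fl_{Q,\nc}$ (note the sum need not be direct, which is why the fibered product over $M\cap L_{Q,\nc}$ appears) and normality of the characteristic ideal $\fl_{Q,\nc}$, and (iv) by passing to the adapted point $w^{-1}\cdot z$ via Proposition \ref{Prop parameterization weakly adapted points}~(\ref{Prop parameterization weakly adapted points - item 2}), using connectedness of $L_{Q,\nc}$, and reducing the final claim to the observation that an element of $N_{G}(\fa)\cap N_{G}(\fa_{\fh})$ normalizes $\fl_{Q,\nc}$. The only cosmetic differences are your closedness argument for $L_{Q,\nc}$ (analytic subgroup attached to a semisimple ideal) versus the paper's identification of $L_{Q,\nc}$ as the identity component of $Z_{M_{Q}}(\fm_{Q,c})$ for a complementary ideal $\fm_{Q,c}$; both work, and you correctly read $M_{L,\nc}$ as a typo for $L_{Q,\nc}$.
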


\begin{proof}
Since $\fm_{Q}$ is reductive, there exists an ideal $\fm_{Q,c}$ complementary to $\fl_{Q,\nc}$. The group $L_{Q,\nc}$ is equal to the connected component of $Z_{M_{Q}}(\fm_{Q,c})$ and therefore $L_{Q,\nc}$ is closed. As $\fm_{Q} = \fm + \fl_{Q,\nc}$ the set $ML_{Q,\nc}$ is open. Moreover, since $M$ is compact and $L_{Q,\nc}$ is closed,  $ML_{Q,\nc}$ is also closed. From the fact that $M$ intersects with every connected component of $M_{Q}$  assertion (\ref{Lemma decomposition of M_Q - item 2}) now follows.

The subalgebra $\fl_{Q,\nc}$ is an $M$-stable ideal of $\fm_{Q}$. Assertion (\ref{Lemma decomposition of M_Q -  item 1}) therefore follows from (\ref{Lemma decomposition of M_Q - item 2}).

Since  $L_{Q,\nc}$ is normal in $M_{Q}$, it acts trivially on the quotient $M_{Q}/L_{Q,\nc}$, and hence (\ref{Lemma decomposition of M_Q - item 3}) follows.

Finally we prove (\ref{Lemma decomposition of M_Q - item 4}). Let $\cO$ be a $P$-orbit in $Z$ of maximal rank and let $z\in \cO$ be weakly admissible. We select a regular element $X\in\fa^{-}$. Then there exists a $w\in N_{G}(\fa)$ so that $\fh_{z,X}=\Ad(w)\fh_{\emptyset}$. By Proposition \ref{Prop parameterization weakly adapted points} (\ref{Prop parameterization weakly adapted points - item 2}) the point $w^{-1}\cdot z$ is adapted. Therefore, $\fl_{Q,\nc}\subseteq\fh_{w^{-1}\cdot z}=\Ad(w^{-1})\fh_{z}$, and hence $\Ad(w)\fl_{Q,\nc}\subseteq\fh_{z}$. The assertion now follows as $L_{Q,\nc}$ is connected. By Remark \ref{Rem L_Q roots} the roots of $\fa$ in $\fl_{Q,\nc}$ are precisely those roots that vanish on $\fa\cap \fa_{\fh}^{\perp}$. If $w$ normalizes $\fa_{\fh}$, then it follows that $w$ normalizes $\fl_{Q,\nc}$ and hence $L_{Q,\nc}$.
\end{proof}

Given a continuous representation of $M_{Q}$ in a Fr\'echet space $V$, we denote its space of
smooth vectors by $V^\infty$ and equip it with the structure of a continuous Fr\'echet $M_{Q}$-module in the usual way.  The continuous linear
dual we denote by ${V^\infty}'$.

\begin{Cor}\label{Cor irreducible restriction to M}
Let $(\xi, V_{\xi})$ be an irreducible continuous representation of $M_{Q}$ in
a Fr\'echet space $V$ such that
$$
({V_{\xi}^{\infty}}')^{L_{Q,\nc}} \neq 0.
$$
Then $\xi|_{L_{Q,\nc}}$ is trivial and $\xi|_M$ is irreducible.
In particular, $\xi$ is finite dimensional and unitarizable. In particular this is the case if
$$
({V_{\xi}^{\infty}}')^{H_{z}}\neq\{0\}.
$$
for some weakly adapted point $z$ in a $P$-orbit $\cO$ in $Z$ with $\fa_{\cO}=\fa_{\fh}$.
\end{Cor}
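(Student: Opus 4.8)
The plan is to reduce everything to the single assertion that $\xi|_{L_{Q,\nc}}$ is trivial; the remaining claims then follow from compactness of $M_{Q}/L_{Q,\nc}$, and the last clause from Lemma \ref{Lemma decomposition of M_Q}(\ref{Lemma decomposition of M_Q - item 4}). So the first and main step is: assuming $({V_{\xi}^{\infty}}')^{L_{Q,\nc}}\neq\{0\}$, show $\xi(\fl_{Q,\nc})=0$ on $V_{\xi}^{\infty}$. Recall from Lemma \ref{Lemma decomposition of M_Q}(\ref{Lemma decomposition of M_Q -  item 1}) that $L_{Q,\nc}$ is normal in $M_{Q}$, so $\Ad(m)\fl_{Q,\nc}=\fl_{Q,\nc}$ for all $m\in M_{Q}$. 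I would argue by contradiction: if $U_{0}:=\spn\{\xi(X)v:X\in\fl_{Q,\nc},\ v\in V_{\xi}^{\infty}\}\neq\{0\}$, then the relation $\xi(m)\xi(X)=\xi(\Ad(m)X)\xi(m)$ together with the $\Ad(M_{Q})$-stability of $\fl_{Q,\nc}$ shows that $U_{0}$ is an $M_{Q}$-invariant subspace of $V_{\xi}^{\infty}$; hence its closure in $V_{\xi}$ is a nonzero closed $M_{Q}$-invariant subspace, and by topological irreducibility of $\xi$ it equals $V_{\xi}$. A standard Dirac-sequence argument carried out on the smooth vectors --- convolving elements of $U_{0}$, and then elements of $V_{\xi}^{\infty}$ that they approximate, against an approximate identity on $M_{Q}$ and using the local equicontinuity of $\xi$ --- upgrades this to density of $U_{0}$ in $V_{\xi}^{\infty}$ for its own Fr\'echet topology. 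On the other hand, every $\eta\in({V_{\xi}^{\infty}}')^{L_{Q,\nc}}$ satisfies $\langle\xi(X)v,\eta\rangle=-\langle v,\xi^{\vee}(X)\eta\rangle=0$ for all $X\in\fl_{Q,\nc}$ and $v\in V_{\xi}^{\infty}$, i.e. $\eta|_{U_{0}}=0$; by density $\eta=0$, contradicting the hypothesis. Thus $\xi(X)=0$ on $V_{\xi}^{\infty}$ for all $X\in\fl_{Q,\nc}$, and since $L_{Q,\nc}$ is connected and $V_{\xi}^{\infty}$ is dense in $V_{\xi}$, the restriction $\xi|_{L_{Q,\nc}}$ is trivial.

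Granting this, $\xi$ descends to an irreducible continuous representation of $M_{Q}/L_{Q,\nc}$, which by Lemma \ref{Lemma decomposition of M_Q}(\ref{Lemma decomposition of M_Q - item 2}) is isomorphic to the compact group $M/(M\cap L_{Q,\nc})$. An irreducible continuous representation of a compact group on a Fr\'echet space is finite dimensional and admits an invariant inner product, so $\xi$ is finite dimensional and unitarizable, the inner product being automatically $M_{Q}$-invariant since $L_{Q,\nc}$ acts trivially. Moreover $M_{Q}=ML_{Q,\nc}$ with $\xi(L_{Q,\nc})=\{\Id\}$, so a subspace of $V_{\xi}$ is $M$-invariant if and only if it is $M_{Q}$-invariant, whence $\xi|_{M}$ is irreducible. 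For the last clause, let $z$ be weakly adapted in a $P$-orbit $\cO$ with $\fa_{\cO}=\fa_{\fh}$ and suppose $({V_{\xi}^{\infty}}')^{H_{z}}\neq\{0\}$, where the superscript refers to the action of $M_{Q}\cap H_{z}$. By Lemma \ref{Lemma decomposition of M_Q}(\ref{Lemma decomposition of M_Q - item 4}), applied with $\fa_{P\cdot z}=\fa_{\cO}=\fa_{\fh}$, we have $L_{Q,\nc}\subseteq M_{Q}\cap H_{z}$, hence $({V_{\xi}^{\infty}}')^{H_{z}}\subseteq({V_{\xi}^{\infty}}')^{L_{Q,\nc}}$, which is therefore nonzero, and the first part applies.

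The only delicate point is the density claim in the first step: topological irreducibility of $\xi$ yields only that the invariant subspace $U_{0}$ is dense in $V_{\xi}$, whereas the relevant functionals lie in ${V_{\xi}^{\infty}}'$ and need not extend continuously to $V_{\xi}$, so one must pass to density of $U_{0}$ in the finer Fr\'echet topology of the smooth vectors. This is precisely the assertion that the smooth-vector representation is again topologically irreducible, which for Banach $V_{\xi}$ is classical and in general follows by the convolution argument indicated above (using local equicontinuity of $\xi$ and the fact that smooth vectors are recovered as limits of such convolutions); equivalently, one then observes that $({V_{\xi}^{\infty}}')^{L_{Q,\nc}}$, being a nonzero closed $M_{Q}$-invariant subspace of ${V_{\xi}^{\infty}}'$ whose pre-annihilator in $V_{\xi}^{\infty}$ is a proper closed invariant subspace and hence $\{0\}$, separates points and so forces $\xi(\fl_{Q,\nc})=0$ on $V_{\xi}^{\infty}$. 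Once this is in place the argument is forced, and all remaining steps are routine.
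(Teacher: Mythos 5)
Your proof is correct and is essentially the paper's argument in dual form: the paper deduces triviality of $\xi|_{L_{Q,\nc}}$ by embedding $V_{\xi}^{\infty}$ equivariantly into $\cE(M_{Q}/L_{Q,\nc})$ via the matrix coefficient with a nonzero $\eta$ (injectivity resting on irreducibility of the smooth-vector module), while you show the $\eta$-annihilated subspace $\xi(\fl_{Q,\nc})V_{\xi}^{\infty}$ would be dense by the same irreducibility — the same key inputs (normality of $L_{Q,\nc}$, topological irreducibility of $V_{\xi}^{\infty}$, Lemma \ref{Lemma decomposition of M_Q}) arranged contrapositively. The remaining deductions (compactness of $M_{Q}/L_{Q,\nc}$, irreducibility of $\xi|_{M}$, and the final clause via Lemma \ref{Lemma decomposition of M_Q}(\ref{Lemma decomposition of M_Q - item 4})) match the paper exactly.
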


\begin{proof}
The proof is the same as the one for \cite[Corollary 4.4]{vdBanKuit_NormalizationsOfEisensteinIntegrals}. For convenience we give it here.
Let $\eta\in ({V_{\xi}^{\infty}}')^{L_{Q,\nc}}$. If $\eta\neq 0$, then there is a unique injective
continuous linear $M_{Q}$-equivariant map $j : V^{\infty} \to \cE(M_{Q}/L_{Q,\nc})$ such that $j^{*}\delta = \eta$,
with $\delta$ denoting the Dirac measure of $M_{Q}/L_{Q,\nc}$ at $eL_{Q,\nc}$. It follows from Lemma \ref{Lemma decomposition of M_Q} (\ref{Lemma decomposition of M_Q - item 3}) that $L_{Q,\nc}$ acts trivially $\cE(M_{Q}/L_{Q,\nc})$ and hence on $V^{\infty}$. We conclude that $L_{Q,\nc}\subseteq\ker(\xi)$. By application of Lemma \ref{Lemma decomposition of M_Q} (\ref{Lemma decomposition of M_Q - item 2}) it follows that $\xi|_{M}$ is irreducible. The final assertion follows from Lemma \ref{Lemma decomposition of M_Q} (\ref{Lemma decomposition of M_Q - item 4}).
\end{proof}

Let $\widehat M_{Q,\mathrm{fu}}$ be the set of equivalence classes of finite dimensional irreducible unitary representations of $M_{Q}$.

\begin{Cor}\label{Cor M_Q,fu simeq M reps with trivial restriction to M cap L_Q,nc}
Every representation in $\widehat M_{Q,\mathrm{fu}}$ restricts to the trivial representation on $L_{Q,\nc}$. The restriction map $\xi \mapsto \xi|_{M_{Q}}:= \xi|_M$ induces an injection
$$
\widehat M_{Q,\mathrm{fu}}  \hookrightarrow \widehat M.
$$
The image of this injection equals
$$
\big\{[\xi]\in \widehat{M}:\xi\big|_{M\cap L_{Q,\nc}}\text{ is trivial}\big\}.
$$
\end{Cor}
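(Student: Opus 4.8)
The plan is to reduce the whole statement to the two structural facts in Lemma \ref{Lemma decomposition of M_Q}: that $L_{Q,\nc}$ is a closed normal subgroup of $M_{Q}$, and that $M_{Q}=ML_{Q,\nc}$. Together these say that the inclusion $M\hookrightarrow M_{Q}$ induces a surjection $M\to M_{Q}/L_{Q,\nc}$ with kernel $M\cap L_{Q,\nc}$ (note $M\cap L_{Q,\nc}$ is normal in $M$ since $L_{Q,\nc}\trianglelefteq M_{Q}$ and $M\le M_{Q}$). Hence pullback along $M_{Q}\to M_{Q}/L_{Q,\nc}$ and restriction to $M$ set up mutually inverse bijections between representations of $M_{Q}$ that are trivial on $L_{Q,\nc}$ and representations of $M$ that are trivial on $M\cap L_{Q,\nc}$, and these bijections respect equivalence, unitarity, finite-dimensionality and irreducibility (an $M$-invariant subspace of $V_{\xi}$ is $M_{Q}$-invariant because $M$ surjects onto $M_{Q}/L_{Q,\nc}$ and $L_{Q,\nc}$ acts trivially). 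The only extra input needed is that every $\xi\in\widehat M_{Q,\mathrm{fu}}$ is automatically trivial on $L_{Q,\nc}$.

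For that I would argue on the level of the differential $d\xi\colon\fl_{Q,\nc}\to\fu(V_{\xi})$, where $\fu(V_{\xi})$ is the Lie algebra of the unitary group of the $M_{Q}$-invariant inner product on $V_{\xi}$, hence of compact type. The image $d\xi(\fl_{Q,\nc})$ is a subalgebra of $\fu(V_{\xi})$, so it is of compact type; on the other hand it is a quotient of $\fl_{Q,\nc}$, hence a sum of simple ideals of non-compact type. A nonzero sum of simple ideals of non-compact type is not of compact type, so $d\xi(\fl_{Q,\nc})=0$, and since $L_{Q,\nc}$ is connected, $\xi|_{L_{Q,\nc}}$ is trivial. (Once this is in hand, the irreducibility of $\xi|_{M}$ could alternatively be quoted from Corollary \ref{Cor irreducible restriction to M}, since $V_{\xi}^{*}=(V_{\xi}^{*})^{L_{Q,\nc}}\neq 0$.)

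Given this, the first assertion is immediate, and the restriction map $[\xi]\mapsto[\xi|_{M}]$ lands in $\widehat M$ by the irreducibility just discussed; it is injective because any $T$ intertwining $\xi|_{M}$ with $\xi'|_{M}$ also intertwines $\xi$ and $\xi'$ on $L_{Q,\nc}$ (both trivial there), hence on $M_{Q}=ML_{Q,\nc}$. The inclusion of the image into $\{[\sigma]\in\widehat M:\sigma|_{M\cap L_{Q,\nc}}\text{ trivial}\}$ is clear from triviality of $\xi$ on $L_{Q,\nc}$. For the reverse inclusion, given such a $\sigma$ I would let it descend through $M/(M\cap L_{Q,\nc})\simeq M_{Q}/L_{Q,\nc}$ and pull it back to a representation $\xi$ of $M_{Q}$; then $\xi|_{L_{Q,\nc}}$ is trivial and $\xi|_{M}=\sigma$, while finite-dimensionality, unitarity and irreducibility of $\xi$ follow from those of $\sigma$ (again using that $M$ surjects onto $M_{Q}/L_{Q,\nc}$), so $[\xi]\in\widehat M_{Q,\mathrm{fu}}$ and it maps to $[\sigma]$.

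The only genuinely non-formal step is the triviality of finite-dimensional unitary representations of $L_{Q,\nc}$; the rest is bookkeeping with Lemma \ref{Lemma decomposition of M_Q}. I do not expect difficulty there, but one should be mindful that it is precisely the hypothesis "$\fl_{Q,\nc}$ is a sum of simple ideals of non-compact type" that rules out a nonzero compact quotient and makes the argument go through.
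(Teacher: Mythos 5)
Your proof is correct and follows essentially the same route as the paper: the paper likewise asserts that a finite-dimensional unitary representation of the connected semisimple non-compact group $L_{Q,\nc}$ is trivial (you supply the standard Lie-algebra argument the paper leaves implicit) and then derives everything else from Lemma \ref{Lemma decomposition of M_Q}.
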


\begin{proof}
Since the $L_{Q,\nc}$ is connected semisimple of the non-compact type, the restriction of a representation from $\widehat M_{Q,\mathrm{fu}}$ to $L_{Q,\nc}$ is trivial. The remaining assertions follow from Lemma \ref{Lemma decomposition of M_Q}.
\end{proof}

\subsection{Comparison between induction from $P$ and $Q$}\label{Subsection Distribution vectors - Comparison between P and Q}

The following proposition follows directly from Corollary \ref{Cor irreducible restriction to M} and the comparison of induction from different parabolic subgroups in Section \ref{Subsection Distribution vectors - Comparison}.

\begin{Prop}
Let $\xi$ be a representation of $M_{Q}$ on a Hilbert space $V_{\xi}$ and $\lambda\in\fa_{Q,\C}^{*}$. Assume that
$$
({V_{\xi}^{\infty}}')^{L_{Q,\nc}}\neq\{0\}.
$$
Then $\xi$ is finite dimensional, $\xi|_M$ is irreducible and
$$
\cD'(Q:\xi:\lambda)
\subseteq\cD'(P:\xi|_{M}:\lambda+\rho_{P}-\rho_{Q}).
$$
Moreover, there exists a natural inclusion
$$
C^{\infty}(Q:\xi:\lambda)'\hookrightarrow C^{\infty}(P:\xi|_{M}:\lambda+\rho_{P}-\rho_{Q})'
$$
so that
$$
\xymatrix{
   \cD'(Q:\xi:\lambda) \ar@{^{(}->}[rr] \ar@<-2pt>[dd]_{\omega^{Q}_{\xi,\lambda}}
        && \cD'(P:\xi|_{M}:\lambda+\rho_{P}-\rho_{Q}) \ar@<-2pt>[dd]_{\omega^{P}_{\xi|_{M},\lambda+\rho_{P}-\rho_{Q}}} \\
\\
   C^{\infty}(Q:\xi:\lambda)'\ar@<-2pt>[uu]_{\theta^{Q}_{\xi,\lambda}} \ar@{^{(}->}[rr]
        && C^{\infty}(P:\xi|_{M}:\lambda+\rho_{P}-\rho_{Q})' \ar@<-2pt>[uu]_{\theta^{P}_{\xi|_{M},\lambda+\rho_{P}-\rho_{Q}}}
}
$$
is a commuting diagram. In particular this is the case if
$$
({V_{\xi}^{\infty}}')^{H_{z}}\neq\{0\}.
$$
for some weakly adapted point $z$ contained in a $P$-orbit $\cO$ of maximal rank in $Z$ with $\fa_{\cO}=\fa_{\fh}$.
\end{Prop}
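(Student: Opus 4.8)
The plan is to deduce the proposition by combining Corollary \ref{Cor irreducible restriction to M} with the comparison of inductions from Section \ref{Subsection Distribution vectors - Comparison}, applied to the chain of parabolic subgroups $P\subseteq Q$. There is no serious computation involved; the content is just the bookkeeping of the two cited results together with one small verification.

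First I would apply Corollary \ref{Cor irreducible restriction to M} to $\xi$. Since $({V_{\xi}^{\infty}}')^{L_{Q,\nc}}\neq\{0\}$, that corollary yields at once that $\xi|_{L_{Q,\nc}}$ is trivial, that $\xi|_{M}$ is irreducible, and that $\xi$ is finite dimensional and unitarizable; this establishes the first group of assertions. For the final ``in particular'' clause, I would invoke the last sentence of Corollary \ref{Cor irreducible restriction to M} (equivalently, Lemma \ref{Lemma decomposition of M_Q} (\ref{Lemma decomposition of M_Q - item 4})): if $z$ is a weakly adapted point lying in a maximal-rank $P$-orbit $\cO$ with $\fa_{\cO}=\fa_{\fh}$, then $L_{Q,\nc}\subseteq M_{Q}\cap H_{z}$, so $H_{z}$-invariance of a functional on $V_{\xi}^{\infty}$ forces its $L_{Q,\nc}$-invariance, and we are back in the case already treated.

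Next I would check that the setup of Section \ref{Subsection Distribution vectors - Comparison} and the hypotheses of Proposition \ref{Prop inclusion of functionals} hold with $T=Q$ and $S=P$: indeed $P\subseteq Q$, $A_{Q}\subseteq A_{P}=A$, $V_\xi$ is a Hilbert space, $\lambda\in\fa_{Q,\C}^{*}$, and $\rho_{P\cap M_{Q}}=\rho_{P}-\rho_{Q}$. The only condition requiring an argument is $M_{Q}\cap N_{P}\subseteq\ker(\xi)$. For this I would note, exactly as in the proof of Lemma \ref{Lemma decomposition of M_Q}, that by Remark \ref{Rem L_Q roots} the roots of $\fa$ occurring in $\fn_{P}\cap\fm_{Q}$ are precisely those roots of $\fn_{P}$ that vanish on $\fa\cap\fa_{\fh}^{\perp}$, hence $\fn_{P}\cap\fm_{Q}\subseteq\fl_{Q,\nc}$; since $M_{Q}\cap N_{P}$ is a connected unipotent subgroup with Lie algebra $\fn_{P}\cap\fm_{Q}$, it is contained in $L_{Q,\nc}$, on which $\xi$ is trivial by the previous paragraph. (Alternatively one can argue that a finite-dimensional unitary representation of $M_{Q}$ is automatically trivial on every unipotent subgroup, because the image of a unipotent element is both unipotent and contained in a compact subgroup, hence equals the identity.)

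With these verifications in place, the asserted inclusion $\cD'(Q:\xi:\lambda)\subseteq\cD'(P:\xi|_{M}:\lambda+\rho_{P}-\rho_{Q})$ is precisely the continuous inclusion on distribution vectors described in Section \ref{Subsection Distribution vectors - Comparison} (taking $T=Q$, $S=P$, and using $\lambda-\rho_{Q}+\rho_{P}=\lambda+\rho_{P}-\rho_{Q}$), while the natural inclusion $C^{\infty}(Q:\xi:\lambda)'\hookrightarrow C^{\infty}(P:\xi|_{M}:\lambda+\rho_{P}-\rho_{Q})'$ and the commutativity of the square are exactly Proposition \ref{Prop inclusion of functionals} specialized to this pair. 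I do not expect any genuine obstacle here; the one point that is not entirely formal is the inclusion $M_{Q}\cap N_{P}\subseteq\ker(\xi)$, which is the ``main'' — and still rather minor — step of the argument.
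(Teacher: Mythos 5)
Your proposal is correct and follows exactly the route the paper takes: the paper states that this proposition ``follows directly from Corollary \ref{Cor irreducible restriction to M} and the comparison of induction from different parabolic subgroups in Section \ref{Subsection Distribution vectors - Comparison}'', which is precisely your combination of that corollary with Proposition \ref{Prop inclusion of functionals} for $T=Q$, $S=P$. Your explicit verification that $M_{Q}\cap N_{P}\subseteq L_{Q,\nc}\subseteq\ker(\xi)$ is the one detail the paper leaves implicit, and your argument for it (the nonzero root spaces of $\fa$ in $\fl_{Q}$ lie in noncompact simple ideals) is sound.
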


The following lemma describes $\cD'(Q:\xi:\lambda)$ as a subspace of $\cD'(P:\xi|_{M}:\lambda+\rho_{P}-\rho_{Q})$.

\begin{Lemma}\label{Lemma characterization of D'(Q:sigma:lambda)}
Let $\sigma\in\widehat{M}$ be so that $\sigma|_{M\cap L_{Q,\nc}}$ is trivial, and let $\lambda\in\fa_{Q,\C}^{*}$.
Let $\xi$ be the representation of $M_{Q}$ so that $\xi|_{L_{Q,\nc}}$ is trivial and $\xi|_{M}=\sigma$.
If $\mu\in\cD'(P:\sigma:\lambda+\rho_{P}-\rho_{Q})$ satisfies
$$
L^{\vee}(\overline{n})\mu=\mu
\qquad(n\in M_{Q}\cap \overline{N}_{P}),
$$
then $\mu\in\cD'(Q:\xi:\lambda)$.
\end{Lemma}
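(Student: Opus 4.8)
The plan is to verify directly that a distribution $\mu \in \cD'(P:\sigma:\lambda+\rho_P-\rho_Q)$ which is additionally invariant under $L^{\vee}(\overline{n})$ for $\overline{n}\in M_Q\cap\overline{N}_P$ satisfies the defining transformation rule \eqref{eq L(man)mu=a^lambda xi(m) mu} for the parabolic $Q=M_QA_QN_Q$, i.e.\ that
$$
L^{\vee}(m_Q a_Q n_Q)\mu = a_Q^{\lambda-\rho_Q}\,\xi^{\vee}(m_Q^{-1})\mu
\qquad (m_Q\in M_Q,\ a_Q\in A_Q,\ n_Q\in N_Q).
$$
Since the embedding $\cD'(Q:\xi:\lambda)\hookrightarrow \cD'(P:\sigma:\lambda+\rho_P-\rho_Q)$ of the preceding proposition is exactly the inclusion of those distributions in the larger space that satisfy this stronger equivariance, establishing this rule identifies $\mu$ as an element of $\cD'(Q:\xi:\lambda)$.

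The key steps, in order, are as follows. First I would record what membership in $\cD'(P:\sigma:\lambda+\rho_P-\rho_Q)$ already gives: $L^{\vee}(man)\mu = a^{(\lambda+\rho_P-\rho_Q)-\rho_P}\sigma^{\vee}(m^{-1})\mu = a^{\lambda-\rho_Q}\sigma^{\vee}(m^{-1})\mu$ for all $m\in M$, $a\in A$, $n\in N_P$. Second, I would decompose $N_Q$ as a product $N_Q = (N_Q\cap N_P)(N_Q\cap M_Q)$ compatible with the root space decomposition; since $A_Q\subseteq A$ and $\fm_Q\cap\fa\oplus\fa_Q = \fa$, every root occurring in $\fn_Q$ either has positive pairing with the defining element of $A_Q$ (these lie in $\fn_P$) and the remaining ones lie in $\fm_Q$. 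Hence the $N_Q$-invariance needed splits into invariance under $N_Q\cap N_P$, which is already contained in the $N_P$-invariance from step one, and invariance under $N_Q\cap M_Q$. Third, decompose an arbitrary $m_Q\in M_Q$ using Lemma \ref{Lemma decomposition of M_Q}\eqref{Lemma decomposition of M_Q - item 2}: $M_Q = M L_{Q,\nc}$, and since $\xi|_{L_{Q,\nc}}$ is trivial, $\xi^{\vee}(m_Q^{-1})$ only sees the $M$-component. So the $M_Q$-equivariance reduces to the $M$-equivariance from step one (which we have) together with showing $L^{\vee}(\ell)\mu=\mu$ for $\ell\in L_{Q,\nc}$. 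Fourth, and this is where the hypothesis $L^{\vee}(\overline{n})\mu=\mu$ for $\overline{n}\in M_Q\cap\overline{N}_P$ enters: I would use an Iwasawa-type decomposition inside $M_Q$ relative to the minimal parabolic $M\cap P$ of $M_Q$. Concretely $L_{Q,\nc}\subseteq M_Q$ is generated by $M\cap L_{Q,\nc}$, $A\cap L_{Q,\nc}$, $N_P\cap M_Q$, and $\overline{N}_P\cap M_Q$; $\mu$ is invariant (up to the known scalars/characters) under the first three by step one (note $\lambda$ vanishes on $\fm_Q\cap\fa$ since $\lambda\in\fa_{Q,\C}^*$, and $\rho_Q$ vanishes there too, and $\sigma|_{M\cap L_{Q,\nc}}$ is trivial, so the character is trivial on $A\cap L_{Q,\nc}$ and on $M\cap L_{Q,\nc}$), and invariant under $\overline{N}_P\cap M_Q$ by hypothesis. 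Finally $A_Q$ acts by the character $a_Q^{\lambda-\rho_Q}$, which is immediate from step one since $A_Q\subseteq A$ and $\rho_Q|_{\fa_Q}$ agrees with the restriction of $\rho_P$ minus $\rho_{P\cap M_Q}$ — here I would just check $\rho_P - \rho_{S\cap M_T} = \rho_Q$ restricted to $\fa_Q$ using the identity $\rho_{S\cap M_T}=\rho_S-\rho_T$ recorded in Section \ref{Subsection Distribution vectors - Comparison}.

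Assembling these, any element of $M_QA_QN_Q$ is a product of elements each of which acts on $\mu$ by the prescribed factor, and multiplicativity of $L^{\vee}$ together with the cocycle behaviour of the characters gives the full transformation rule, so $\mu\in\cD'(Q:\xi:\lambda)$. I expect the main obstacle to be the bookkeeping in step four: verifying that the generators of $L_{Q,\nc}$ listed above really do exhaust it and that the characters by which $\mu$ transforms under $M\cap L_{Q,\nc}$ and $A\cap L_{Q,\nc}$ are trivial — this uses both the hypothesis $\sigma|_{M\cap L_{Q,\nc}}$ trivial and the fact, from Remark \ref{Rem L_Q roots}, that the roots in $\fl_{Q,\nc}$ are exactly those vanishing on $\fa\cap\fa_{\fh}^{\perp}$, hence $\lambda$ and $\rho_Q$ both vanish on $\fa\cap\fl_{Q,\nc}$. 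Once that is pinned down, the rest is the routine Langlands-decomposition computation and the compatibility of $\rho$'s.
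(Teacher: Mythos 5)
Your argument is correct and is essentially the paper's proof: both reduce, via $M_{Q}=ML_{Q,\nc}$ and the triviality of $\xi$ on $L_{Q,\nc}$, to showing that $\mu$ is left-invariant under $L_{Q,\nc}$, which follows because $L_{Q,\nc}$ is generated by $M_{Q}\cap N_{P}$ (covered by the $N_{P}$-equivariance) and $M_{Q}\cap \overline{N}_{P}$ (the hypothesis). The paper just invokes that $L_{Q,\nc}$ is the smallest closed subgroup containing these two unipotent pieces, so your additional character checks on $M\cap L_{Q,\nc}$ and $A\cap L_{Q,\nc}$ (and the decomposition of $N_{Q}$, which is vacuous since $N_{Q}\subseteq N_{P}$) are correct but not needed.
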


\begin{proof}
Let $G_{\mu}$ be the closed subgroup of $G$ consisting of elements $g\in G$ so that
$$
L^{\vee}(g)\mu=\mu.
$$
Since $M_{Q}=ML_{Q,\nc}$, see Lemma \ref{Lemma decomposition of M_Q}, it suffices to prove that $L_{Q,\nc}\subseteq G_{\mu}$.
The latter follows from the assumptions as $L_{Q,\nc}$ is the smallest closed subgroup of $G$ containing $M_{Q}\cap N_{P}$ and $M_{Q}\cap \overline{N}_{P}$.

\end{proof}

\section{Support and transversal degree }\label{Section Support and transdeg}

Throughout this section we fix $\sigma\in\widehat{M}$ and $\lambda\in\fa_{\C}^{*}$.

\subsection{Transversal degree}\label{Subsection Support and transdeg - Transversal degree}
Let $\cM$ be a smooth submanifold of $G$ and let $U$ be an open subset of $G$.

We fix a set of smooth vector fields $v_{1},\dots,v_{n}$ on $U$ so that at every point $y\in \cM\cap U$
$$
T_{y}G=\R v_{1}(y)\oplus\cdots\oplus \R v_{n}(y)\oplus T_{y}\cM.
$$
For a multi-index $\beta$ in $n$-variables, let $\partial^{\beta}$ be the differential operator $C^{\infty}(U:V)\to C^{\infty}(U:V)$ given by
$$
\partial^{\beta}\phi
=\underbrace{v_{1}\cdots v_{1}}_{\beta_{1} \text{ times}} \cdots \underbrace{v_{n}\cdots v_{n}}_{\beta_{n} \text{ times}}(\phi)
\qquad\big(\phi\in C^{\infty}(U:V)\big).
$$

Let $\mu\in\cD'(G,V)$ and assume that $\supp\mu=\cM\cap U$. It follows from \cite[p. 102]{Schwartz_Distributions_I} that for every multi-index $\beta$ there exists a distribution $\mu_{\beta}\in\cD'(\cM\cap U,V)$ such that for all $\phi\in \cD(U,V)$
$$
\mu(\phi)
=\sum_{\beta}\mu_{\beta}\big(\partial^{\beta}\phi\big).
$$
This decomposition of $\mu$ is unique. Let $k_{U}=\max\{|\beta|:\mu_{\beta}\neq 0\}$. The {\em transversal degree of $\mu$ at a point $y\in \cM$}, is defined to be the minimum of the numbers $k_{U}$, where $U$ runs over all neighborhoods of $y$ in $G$.
The transversal degree is independent of the choice of the vector fields $v_{i}$.

For a distribution $\mu\in\cD'(Z,P:\sigma:\lambda)$ let $(P\bs Z)_{\mu}$ be the set of $\cO\in P\bs Z$ with the property that there exists an open neighborhood $U$ of $\cO$ in $G$ such that
$$
\supp\mu\cap U
= \cO.
$$

The proof for the following proposition can be found in Remark 5.2 in \cite{KrotzKuitOpdamSchlichtkrull_InfinitesimalCharactersOfDiscreteSeriesForRealSphericalSpaces}.

\begin{Prop}\label{Prop support condition}
Let $\mu\in\cD'(Z,P:\sigma:\lambda)$. Then
$$
\supp\mu
=\overline{\bigcup_{\cO\in (P\bs Z)_{\mu}}\cO}.
$$
\end{Prop}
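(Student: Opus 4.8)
The plan is to show the two inclusions separately. For the inclusion $\supp\mu \supseteq \overline{\bigcup_{\cO\in(P\bs Z)_{\mu}}\cO}$ there is essentially nothing to do: by the very definition of $(P\bs Z)_{\mu}$, each such orbit $\cO$ is contained in $\supp\mu$ (it equals $\supp\mu\cap U$ for some open neighborhood $U$), and since the support of a distribution is closed, the whole closure $\overline{\bigcup_{\cO}\cO}$ is contained in $\supp\mu$.

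The content is the reverse inclusion. Here the key observation is that $\supp\mu$, being closed and $P$-invariant (by the equivariance relation \eqref{eq L(man)mu=a^lambda xi(m) mu}), is a union of $P$-orbits in $G$, equivalently a union of $P$-orbits in $Z$. Let $\cF$ denote the set of $P$-orbits $\cO'$ with $\cO' \subseteq \supp\mu$. I would first argue that $\supp\mu = \overline{\bigcup_{\cO'\in\cF}\cO'}$ trivially, and then reduce to showing that every orbit $\cO'\in\cF$ lies in the closure of $\bigcup_{\cO\in(P\bs Z)_{\mu}}\cO$. The point is the following standard fact about $P$-orbits on a spherical (or more generally finite-orbit) space: there are only finitely many $P$-orbits in $Z$, each orbit is locally closed, and the boundary $\partial\cO' = \overline{\cO'}\setminus\cO'$ of any orbit is a union of orbits of strictly smaller dimension. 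Consequently, the partial order on orbits given by ``$\cO_1 \leq \cO_2$ iff $\cO_1 \subseteq \overline{\cO_2}$'' is well-founded, and any orbit in $\cF$ either is maximal in $\cF$ with respect to this order — in which case I claim it belongs to $(P\bs Z)_{\mu}$ — or lies in the closure of a strictly larger orbit of $\cF$, and by downward induction on dimension that larger orbit is itself in the closure of $\bigcup_{\cO\in(P\bs Z)_{\mu}}\cO$.

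It remains to verify the claim that a maximal element $\cO'$ of $\cF$ belongs to $(P\bs Z)_{\mu}$, i.e.\ that there is an open neighborhood $U$ of $\cO'$ in $G$ with $\supp\mu\cap U = \cO'$. Since there are finitely many $P$-orbits, the union $\bigcup_{\cO''\neq\cO',\ \cO''\in\cF}\overline{\cO''}$ of the closures of the \emph{other} orbits in $\cF$ is closed; by maximality of $\cO'$ this union does not meet $\cO'$ (if $\cO'$ were contained in $\overline{\cO''}$ for some other $\cO''\in\cF$, then $\cO' < \cO''$, contradicting maximality). Hence its complement $U$ is an open neighborhood of $\cO'$, and $\supp\mu \cap U = \cO'$ because $\supp\mu = \bigcup_{\cO''\in\cF}\cO'' \subseteq \cO' \cup \bigcup_{\cO''\neq\cO'}\overline{\cO''}$. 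This gives $\cO'\in(P\bs Z)_{\mu}$, completing the induction and hence the proof. I expect the main (and only real) obstacle to be making precise the two inputs about $P$-orbit geometry — finiteness of the orbit set and the fact that orbit boundaries consist of lower-dimensional orbits — but both are available from the local structure theorem (Proposition~\ref{Prop Local structure theorem}) together with the real sphericality hypothesis, and indeed the excerpt attributes this proposition to \cite[Remark 5.2]{KrotzKuitOpdamSchlichtkrull_InfinitesimalCharactersOfDiscreteSeriesForRealSphericalSpaces}, so the argument above is essentially a reconstruction of that remark.
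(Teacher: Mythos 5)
Your argument is correct and is essentially the standard one: the paper itself gives no proof but delegates to \cite[Remark 5.2]{KrotzKuitOpdamSchlichtkrull_InfinitesimalCharactersOfDiscreteSeriesForRealSphericalSpaces}, and your reconstruction (the support is a finite union of $P\times H$-double cosets; a coset maximal for the closure order is relatively open in the support and hence lies in $(P\bs Z)_{\mu}$; the rest follows by induction on the finite poset) is exactly that argument. Two small points to tighten: the finiteness of $P\bs Z$ is not a direct consequence of the local structure theorem but is the Kimelfeld/Kr\"otz--Schlichtkrull theorem on finite orbit decompositions for real spherical spaces, and the passage from ``$\cO'$ meets $\overline{\cO''}$'' to ``$\cO'\subseteq\overline{\cO''}$'' deserves the one-line remark that $\overline{\cO''}$, being closed and $P$-$H$-biinvariant, is itself a union of double cosets.
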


Let $\mu\in\cD'(Z,P:\sigma:\lambda)$ and let $\cO\in (P\bs Z)_{\mu}$. Then the transversal degree of $\mu$ at $z\in\cO$ does not depend on $z\in\cO$, see \cite[Lemma 5.5]{KrotzKuitOpdamSchlichtkrull_InfinitesimalCharactersOfDiscreteSeriesForRealSphericalSpaces}. Therefore, we may define the {\em transversal degree of $\mu$ at the orbit $\cO$} to be the transversal degree of $\mu$ at any point $z\in\cO$. We write $\trdeg_{\cO}(\mu)$ for the transversal degree of $\mu$ at $\cO$.

\subsection{Principal asymptotics}\label{Subsection Support and transdeg - Principal asymptotics}

Let $X\in\fa^{-}$ be order-regular and let $z\in Z$.
We define the $\fa$-stable subalgebra
$$
\overline\fn_{z,X}:=\fh_{z,X}\cap \overline{\fn}_{P}
$$
and write $\overline{N}_{z,X}$ for the connected subgroup of $G$ with Lie algebra equal to $\overline\fn_{z,X}$.
Let $\overline{\fn}_{X}^{z}$ be an $\fa$-stable complementary subspace to $\overline\fn_{z,X}$ in $\overline{\fn}_{P}$, so that
$$
\overline{\fn}_{P}=\overline\fn_{z,X}\oplus\overline\fn_{X}^{z}.
$$
We define $\Sigma(\overline{\fn}_{X}^{z};\fa)$ to be the set of roots of $\fa$ in $\overline{\fn}_{X}^{z}$ and $\rho_{\cO,X}\in\fa^{*}$ by setting
$$
\rho_{\cO,X}(Y)
=\frac{1}{2}\tr\big(\ad(Y)\big|_{\overline{\fn}_{z,X}}\big)
\qquad(Y\in\fa).
$$
Note that for $m\in M$, $a\in A$ and $n\in N_{P}$ we have
$$
\overline{\fn}_{man\cdot z,X}
=\Ad(m)\overline{\fn}_{z,X},
$$
and thus $\Sigma(\overline{\fn}_{X}^{z};\fa)$ and $\rho_{\cO,X}$ only depend on $\cO$, not on the choice for $z\in\cO$.

Let $e_{1},\dots, e_{n}$ be a basis of $\overline\fn_{X}^{z}$ consisting of joint eigenvectors for the action of $\ad(\fa)$ on $\overline\fn_{X}^{z}$.
For a multi-index $\beta$, let $\kappa_{\beta}\in\N_{0}\Sigma(\overline{\fn}_{X}^{z};\fa)$ be the $\fa$-weight of $e_{1}^{\beta_{1}}\cdots e_{n}^{\beta_{n}}\in\cU(\overline{\fn}_{P})$, where $\cU(\overline{\fn}_{P})$ denotes the universal enveloping algebra of $\overline{\fn}_{P}$. We write $\partial^{\beta}$ for the differential operator on $P\overline{N}$ that for $\phi\in\cE(P\overline{N},V)$ is given by
$$
\big(\partial^{\beta}\phi\big)(p\overline{n})
:=\frac{\partial^{\beta_{1}}}{\partial x_{1}^{\beta_{1}}}\cdots\frac{\partial^{\beta_{n}}}{\partial x_{n}^{\beta_{n}}}
    \phi\big(p\exp(\sum_{i=1}^{n}x_{i}e_{i})\overline n\big)\Big|_{x_{i}=0}
\qquad(p\in P, \overline{n}\in\overline{N}).
$$

The following theorem was proven in \cite{KrotzKuitOpdamSchlichtkrull_InfinitesimalCharactersOfDiscreteSeriesForRealSphericalSpaces}; see Theorem 5.1 and its proof and Corollary 5.3. We formulate the results here using distributions instead of functionals, for which we use the identifications in Section \ref{Section Distribution vectors}.

\begin{Prop}\label{Prop principal asymptotics}
Let $\mu\in\cD'(Z,P:\sigma:\lambda)$ and let $\cO\in (P\bs Z)_{\mu}$. We fix a point $z\in\cO$ and identify $\mu$ with an $H_{z}$-invariant distribution in $\cD'(P:\sigma:\lambda)$ as in section \ref{Section Distribution vectors}, for which we, with abuse of notation, also write $\mu$.
Let $X\in \fa^{-}$ be order regular and satisfy
\begin{equation}\label{eq omega_beta(X) neq omega_gamma(X)}
\kappa_{\beta}(X)\neq\kappa_{\gamma}(X)
\end{equation}
for any two multi-indices $\beta,\gamma$ with $|\beta|,|\gamma|\leq \trdeg_{\cO}(\mu)$ and $\kappa_{\beta}\neq\kappa_{\gamma}$.
Then there exist a left-$P$-invariant open neighborhood $\Omega$ of $e$ in $G$, a $\kappa\in\N_{0}\Sigma(\overline{\fn}_{X}^{z};\fa)$, and a unique non-zero distribution $\mu_{z,X}\in\cD'(\Omega,V_{\sigma})$, so that
$$
\lim_{t\to\infty}e^{t \big(\lambda+\rho_P+2\rho_{\cO,X}-\kappa\big)(X)}R^{\vee}\big(\exp(tX)\big)\mu
=\mu_{z,X}\,.
$$
Here the limit is with respect to weak-$*$ topology on $\cD'(\Omega,V_{\sigma})$.
The distribution $\mu_{z,X}$ is given by the following.
For every multi-index $\beta$ with $\kappa_{\beta}=\kappa$ there exists a $c_{\beta}\in V_{\sigma}^{*}$ such that for all $\phi\in\cD(\Omega,V_{\sigma})$
$$
\mu_{z,X}(\phi)
=\sum_{\substack{\beta\\ \kappa_{\beta}=\kappa}}
\int_{M}\int_{A}\int_{N_{P}}\int_{\overline{N}_{z,X}} a^{-\lambda+\rho_{P}}
\Big(\sigma^{\vee}(m) c_{\beta},\partial^{\beta}\phi(man\overline{n})\Big)\,d\overline{n}\,dn\,da\,dm.
$$
Finally, $\mu_{z,X}$ has the following properties.
\begin{enumerate}[(i)]
\item \label{Prop principal asymptotics - item 1}
    $L^{\vee}(man)\mu_{z,X}=a^{\lambda-\rho_{P}}\sigma^{\vee}(m^{-1})\mu_{z,X}$ for every $m\in M$, $a\in A$ and $n\in N_{P}$.
\item \label{Prop principal asymptotics - item 2}
    $R^{\vee}(Y)\mu_{z,X}=\big(-\lambda-\rho_P-2\rho_{\cO,X}+\kappa\big)(Y)\mu_{z,X}$ for every $Y\in\fa$.
\item \label{Prop principal asymptotics - item 3}
    $R^{\vee}(Y)\mu_{z,X}=0$ for every $Y\in\fh_{z,X}$.
\item \label{Prop principal asymptotics - item 5}
    The following are equivalent:
    \begin{enumerate}[(a)]
    \item $\trdeg_{\cO}(\mu)\neq 0$
     \item the transversal degree of $\mu_{z,X}$ (w.r.t. the submanifold $P\overline{N}_{z,X}\cap \Omega$ of $G$) at any point in $\supp\mu_{z,X}$ is non-zero.
     \item $\kappa\neq 0$.
     \end{enumerate}
\end{enumerate}
\end{Prop}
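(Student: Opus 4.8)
The plan is to reduce to a distribution carried near $e$ by the submanifold $PH_{z}$ of $G$, decompose it by its transversal derivatives in coordinates adapted to $X$, and extract the asymptotics of the right-translation flow $t\mapsto\exp(tX)$. Since $\cO\in(P\bs Z)_{\mu}$, the distribution $\mu$ is supported near $e$ on $PH_{z}$. One has the unique $\fa$-stable splitting $\overline{\fn}_{P}=\overline{\fn}_{z,X}\oplus\overline{\fn}_{X}^{z}$ (with $\overline{\fn}_{z,X}=\fh_{z,X}\cap\overline{\fn}_{P}$), and by (\ref{eq formula for E_X}) one computes $\lim_{t\to\infty}\Ad(\exp tX)\big(\overline{\fn}_{P}\cap(\fp+\fh_{z})\big)=\overline{\fn}_{z,X}$, so that $PH_{z}$ has codimension $\dim\overline{\fn}_{X}^{z}$ in $G$. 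By Schwartz's structure theorem for distributions supported on a submanifold (\cite[p.~102]{Schwartz_Distributions_I}), in coordinates built from the $\fa$-weight vectors $e_{1},\dots,e_{n}$ of $\overline{\fn}_{X}^{z}$ (after a preliminary translation by some $\exp(t_{0}X)$, if necessary, to make the carrier transverse to $\overline{\fn}_{X}^{z}$), one gets a unique finite decomposition $\mu=\sum_{|\beta|\le k}\partial^{\beta}\mu_{\beta}$, $k=\trdeg_{\cO}(\mu)$, where $\partial^{\beta}$ is the operator attached to $e_{1}^{\beta_{1}}\cdots e_{n}^{\beta_{n}}\in\cU(\overline{\fn}_{P})$ and each $\mu_{\beta}$ satisfies (\ref{eq L(man)mu=a^lambda xi(m) mu}), is right-$H_{z}$-invariant, and is carried by $PH_{z}$ with no transversal derivatives.

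Now I would compute $R^{\vee}(\exp tX)$ on each summand. From $\Ad(\exp(-tX))e_{i}=e^{-t\gamma_{i}(X)}e_{i}$, where $\gamma_{i}\in\Sigma(\overline{\fn}_{X}^{z};\fa)$ is the weight of $e_{i}$, one gets $\partial^{\beta}\circ R(\exp tX)=e^{-t\kappa_{\beta}(X)}R(\exp tX)\circ\partial^{\beta}$, which dualizes to $R^{\vee}(\exp tX)\big(\partial^{\beta}\mu_{\beta}\big)=e^{t\kappa_{\beta}(X)}\partial^{\beta}\big(R^{\vee}(\exp tX)\mu_{\beta}\big)$. Moreover $R^{\vee}(\exp tX)\mu_{\beta}$ is carried by $PH_{z}\exp(-tX)=P\cdot\exp(tX)H_{z}\exp(-tX)$, whose Lie algebra $\Ad(\exp tX)\fh_{z}$ converges to $\fh_{z,X}$, and a volume/Jacobian computation shows that $e^{t(\lambda+\rho_{P}+2\rho_{\cO,X})(X)}R^{\vee}(\exp tX)\mu_{\beta}$ converges weakly to a nonzero distribution $\mu_{\beta,\infty}$ carried by $P\overline{N}_{z,X}$ whenever $\mu_{\beta}\ne0$ (the exponent absorbing the $A$-character shift, giving $\rho_{P}$, and the Jacobian of $\Ad(\exp tX)$ on $\overline{\fn}_{z,X}$, giving $2\rho_{\cO,X}$). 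Since $X\in\fa^{-}$, every root $\gamma$ of $\fa$ in $\overline{\fn}_{P}$ has $\gamma(X)>0$, so $\kappa_{\beta}(X)>0=\kappa_{0}(X)$ for $|\beta|\ge1$; the genericity hypothesis (\ref{eq omega_beta(X) neq omega_gamma(X)}) then forces $\max\{\kappa_{\beta}(X):\mu_{\beta}\ne0\}$ to be attained at a single weight, which we call $\kappa$. Multiplying through by $e^{-t\kappa(X)}$ annihilates the subdominant terms and produces
\[
\lim_{t\to\infty}e^{t(\lambda+\rho_{P}+2\rho_{\cO,X}-\kappa)(X)}R^{\vee}(\exp tX)\mu=\sum_{\beta:\ \kappa_{\beta}=\kappa}\partial^{\beta}\mu_{\beta,\infty}=:\mu_{z,X}
\]
weakly on a left-$P$-invariant neighbourhood $\Omega$ of $e$; writing out the $\mu_{\beta,\infty}$ in the coordinates above gives the asserted integral formula with coefficients $c_{\beta}\in V_{\sigma}^{*}$.

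Finally I would check the properties. (i) is inherited, since weak-$*$ limits of distributions satisfying (\ref{eq L(man)mu=a^lambda xi(m) mu}) again satisfy it. For (iii): $R^{\vee}(\exp tX)\mu$ is right-invariant under the connected group with Lie algebra $\Ad(\exp tX)\fh_{z}$, which converges to the connected group with Lie algebra $\fh_{z,X}$; a scalar rescaling preserves invariance, so weak-$*$ continuity of the right action forces $\mu_{z,X}$ to be right-invariant under the latter group. For (ii): applying $R^{\vee}(\exp sY)$, $Y\in\fa$, to the explicit formula, the scaling of $a^{-\lambda+\rho_{P}}$, the Jacobian $e^{2s\rho_{\cO,X}(Y)}$ of conjugation on $\overline{N}_{z,X}$, and the factor $e^{-s\kappa(Y)}$ from the transversal factor $\exp(\sum_{i}x_{i}e_{i})$ inside $\partial^{\beta}$ (legitimate because $\kappa_{\beta}=\kappa$ for every surviving $\beta$) combine into the eigenvalue $(-\lambda-\rho_{P}-2\rho_{\cO,X}+\kappa)(Y)$; alternatively, the eigenvalue in the $X$-direction is immediate from the defining limit and extends to all of $\fa$ because $\mu_{z,X}$, carried by the $\fa$-stable set $P\overline{N}_{z,X}$ and right-$\overline{N}_{z,X}$-invariant, is automatically a generalized $\fa$-eigendistribution. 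For (v): if $\trdeg_{\cO}(\mu)>0$, then $\mu_{\beta}\ne0$ for some $|\beta|\ge1$, so $\kappa_{\beta}(X)>0$ and the maximizing weight $\kappa$ is nonzero; by uniqueness of the transversal decomposition the top-order transversal part of $\mu_{z,X}$ does not cancel, so the transversal degree of $\mu_{z,X}$ along $P\overline{N}_{z,X}$ is positive, and the remaining implications, together with the equivalence with $\kappa\ne0$, are read off the formula. The main obstacle is the nonvanishing assertion invoked above --- that the rate of $R^{\vee}(\exp tX)\mu_{\beta}$ is exactly $e^{-t(\lambda+\rho_{P}+2\rho_{\cO,X})(X)}$, so $\mu_{\beta,\infty}$, hence $\mu_{z,X}$, is genuinely nonzero --- together with the uniform control of the flow of the carrier $PH_{z}$ towards $P\overline{N}_{z,X}$, the part of $\fh_{z,X}$ in $\fm\oplus\fa\oplus\fn_{P}$ being absorbed into $P$.
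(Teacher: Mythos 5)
Your outline reconstructs the right strategy, but note first that the paper does not prove Proposition \ref{Prop principal asymptotics} at all: it is imported verbatim (translated from functionals to distributions) from \cite[Theorem 5.1 and Corollary 5.3]{KrotzKuitOpdamSchlichtkrull_InfinitesimalCharactersOfDiscreteSeriesForRealSphericalSpaces}, and the argument given there is essentially the one you describe --- a Schwartz-type transversal decomposition $\mu=\sum_{|\beta|\le k}\partial^{\beta}\mu_{\beta}$ along the $\fa$-weight vectors of $\overline{\fn}_{X}^{z}$, the commutation $R^{\vee}(\exp tX)\circ\partial^{\beta}=e^{t\kappa_{\beta}(X)}\partial^{\beta}\circ R^{\vee}(\exp tX)$, and extraction of the leading weight $\kappa$ using (\ref{eq omega_beta(X) neq omega_gamma(X)}). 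Your derivations of (\ref{Prop principal asymptotics - item 1})--(\ref{Prop principal asymptotics - item 3}) and (\ref{Prop principal asymptotics - item 5}) from the limit formula are sound, granted the limit formula itself.

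The genuine gap is the one you flag: the existence \emph{and non-vanishing} of $\lim_{t}e^{t(\lambda+\rho_{P}+2\rho_{\cO,X})(X)}R^{\vee}(\exp tX)\mu_{\beta}$ for each transversal-degree-zero layer is the entire analytic content of the proposition, and ``a volume/Jacobian computation shows'' does not discharge it. Concretely, one must (a) write each $\mu_{\beta}$ explicitly as a left-$P$-equivariant distribution carried by the locally closed orbit $PH_{z}$ (this requires a parametrization of $P/(P\cap H_{z})$ and care with measures, since the layers are obtained abstractly from Schwartz's theorem), and (b) control the degeneration of the carrier $P\exp(tX)H_{z}\exp(-tX)\to P\overline{N}_{z,X}$ at the level of group decompositions, not merely of Lie algebras, in order to obtain uniform coordinates in which the integrals converge and to see that the correct renormalizing exponent is $(\lambda+\rho_{P}+2\rho_{\cO,X})(X)$. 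This matters beyond the non-vanishing of $\mu_{z,X}$ itself: if some $\mu_{\beta,\infty}$ were zero, then $\kappa$ could not be identified as $\max\{\kappa_{\beta}(X):\mu_{\beta}\neq0\}$ and the equivalence in (\ref{Prop principal asymptotics - item 5}) between $\trdeg_{\cO}(\mu)\neq0$ and $\kappa\neq0$ would break. Two smaller imprecisions: the layers $\mu_{\beta}$ inherit left-$P$-equivariance by uniqueness (the operators $\partial^{\beta}$ commute with left translation by $P$), but they need not individually be right-$H_{z}$-invariant, so your argument should only use that each $\mu_{\beta}$ is \emph{carried} by $PH_{z}$; and the transversality of $\overline{\fn}_{X}^{z}$ to $\fp+\fh_{z}$ holds in general only after the preliminary translation by $\exp(t_{0}X)$ you mention, which must then be propagated consistently through the uniqueness assertion and the final formula.
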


\begin{Cor}\label{Cor condition on lambda|fa_O}
Let $\mu\in\cD'(Z,P:\sigma:\lambda)$ and let $\cO\in (P\bs Z)_{\mu}$. Let $X\in \fa^{-}$ satisfy (\ref{eq omega_beta(X) neq omega_gamma(X)}). Then
\begin{equation}\label{eq lambda restricted to a_x}
\lambda\big|_{\fa_{\cO}}
\in \big(-\rho_P -2\rho_{\cO,X}+\N_{0}\Sigma(\overline{\fn}_{X}^{z};\fa)\big)\big|_{\fa_{\cO}}.
\end{equation}
Moreover, $\trdeg_{\cO}(\mu)\neq 0$ if and only if there exists a non-zero element $\kappa\in\N_{0}\Sigma(\overline{\fn}_{X}^{z};\fa)$ so that
$$
\lambda\big|_{\fa_{\cO}}
\in\big(-\rho_P -2\rho_{\cO,X}+\kappa\big)\big|_{\fa_{\cO}}.
$$
\end{Cor}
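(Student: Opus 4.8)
\emph{The plan is to read the corollary off Proposition \ref{Prop principal asymptotics} with almost no extra work.} First I would fix, together with the point $z\in\cO$, an order-regular element $X\in\fa^{-}$ satisfying (\ref{eq omega_beta(X) neq omega_gamma(X)}) (this is exactly the hypothesis of Proposition \ref{Prop principal asymptotics}; the statement of the corollary implicitly assumes order-regularity of $X$), and apply that proposition to obtain the non-zero distribution $\mu_{z,X}$ together with the weight $\kappa\in\N_{0}\Sigma(\overline{\fn}_{X}^{z};\fa)$. Throughout I use that, by Section \ref{Subsection Orbits of max rank - Rank of orbit} (see Lemma \ref{Lemma fa cap fh_(pz,X) independent of p,X}), $\fa_{\cO}=\fa\cap\fh_{z,X}$ for this choice of $z\in\cO$ and $X\in\fa^{-}$.

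For the first assertion, take any $Y\in\fa_{\cO}=\fa\cap\fh_{z,X}$. Since $Y\in\fa$, item (ii) of Proposition \ref{Prop principal asymptotics} gives $R^{\vee}(Y)\mu_{z,X}=\big(-\lambda-\rho_{P}-2\rho_{\cO,X}+\kappa\big)(Y)\,\mu_{z,X}$, while since $Y\in\fh_{z,X}$ item (iii) gives $R^{\vee}(Y)\mu_{z,X}=0$. As $\mu_{z,X}\neq 0$, the eigenvalue must vanish, so $\big(\lambda+\rho_{P}+2\rho_{\cO,X}-\kappa\big)\big|_{\fa_{\cO}}=0$, that is $\lambda\big|_{\fa_{\cO}}=\big(-\rho_{P}-2\rho_{\cO,X}+\kappa\big)\big|_{\fa_{\cO}}$. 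Since $\kappa\in\N_{0}\Sigma(\overline{\fn}_{X}^{z};\fa)$, this is precisely (\ref{eq lambda restricted to a_x}).

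For the equivalence I would use item (v) of Proposition \ref{Prop principal asymptotics}, which says $\trdeg_{\cO}(\mu)\neq 0$ if and only if $\kappa\neq 0$. If $\trdeg_{\cO}(\mu)\neq0$, then the $\kappa$ produced above is a non-zero element of $\N_{0}\Sigma(\overline{\fn}_{X}^{z};\fa)$ with $\lambda|_{\fa_{\cO}}=(-\rho_{P}-2\rho_{\cO,X}+\kappa)|_{\fa_{\cO}}$, giving the required witness. For the converse, suppose a non-zero $\kappa'\in\N_{0}\Sigma(\overline{\fn}_{X}^{z};\fa)$ satisfies $\lambda|_{\fa_{\cO}}=(-\rho_{P}-2\rho_{\cO,X}+\kappa')|_{\fa_{\cO}}$; combining with the first assertion yields $(\kappa-\kappa')|_{\fa_{\cO}}=0$, so if $\kappa$ were $0$ we would obtain a non-zero element of $\N_{0}\Sigma(\overline{\fn}_{X}^{z};\fa)$ vanishing on $\fa_{\cO}$.

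\textbf{The only point requiring a genuine argument}—and really the sole content beyond quoting Proposition \ref{Prop principal asymptotics}—is to exclude this last possibility: no non-zero element of the monoid $\N_{0}\Sigma(\overline{\fn}_{X}^{z};\fa)$ restricts to $0$ on $\fa_{\cO}$. I would prove this from the $\fa$-stable root-space structure of the limit subalgebra $\fh_{z,X}$: every $\alpha\in\Sigma(\overline{\fn}_{X}^{z};\fa)$ lies in $\Sigma(\overline{N}_{P})$, and using order-regularity of $X$ together with formula (\ref{eq formula for E_X}) of Proposition \ref{Prop Limits of subspaces} (and the analysis of limit subalgebras in \cite{KuitSayag_OnTheLittleWeylGroupOfARealSphericalSpace}) one checks that the restrictions $\{\alpha|_{\fa_{\cO}}:\alpha\in\Sigma(\overline{\fn}_{X}^{z};\fa)\}$ all lie strictly on one side of a hyperplane in $(\fa_{\cO})^{*}$; hence no non-trivial non-negative linear combination of them can vanish. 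With this in hand $\kappa\neq0$, so $\trdeg_{\cO}(\mu)\neq0$ by item (v), which completes the proof. I expect this positivity lemma about $\Sigma(\overline{\fn}_{X}^{z};\fa)|_{\fa_{\cO}}$ to be the main obstacle; everything else is a direct substitution into the conclusions of Proposition \ref{Prop principal asymptotics}.
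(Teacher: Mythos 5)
Your handling of the first assertion and of the implication $\trdeg_{\cO}(\mu)\neq 0\Rightarrow\kappa\neq 0$ is exactly the paper's argument: both are read off from items (ii), (iii) and (v) of Proposition \ref{Prop principal asymptotics}. (The paper's proof adds only the remark that $\rho_{\cO,X}$ and $\Sigma(\overline{\fn}_{X}^{z};\fa)$ depend merely on the connected component of order-regular elements containing $X$, so that a generic $X$ satisfying (\ref{eq omega_beta(X) neq omega_gamma(X)}) can always be found in that component.)

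The gap is in your converse direction, and it is genuine: the positivity lemma you reduce it to is false. Take $Z=G/\overline{N}_{P}$. Then $\fa_{\fh}=\fa\cap\overline{\fn}_{P}=\{0\}$, every $P$-orbit is of maximal rank, and $\fa_{\cO}=\{0\}$ for every orbit; consequently \emph{every} element of $\N_{0}\Sigma(\overline{\fn}_{X}^{z};\fa)$ restricts to $0$ on $\fa_{\cO}$, while for the closed orbit $\Sigma(\overline{\fn}_{X}^{z};\fa)=-\Sigma(P)\neq\emptyset$, so non-zero elements exist. In fact the literal ``there exists'' form of the second assertion fails here: for every $\lambda$, integration against a smooth density on the closed orbit, namely
$$
\phi\mapsto \int_{M}\int_{A}\int_{\overline{N}_{P}} a^{\nu}\big(\sigma^{\vee}(m)\eta,\phi(maw_{0}\overline{n})\big)\,d\overline{n}\,da\,dm
$$
with $w_{0}$ the longest Weyl element and $\nu$ the exponent forced by left $P$-equivariance, converges absolutely (the domain of integration meets $\supp\phi$ in a compact set, since $MAw_{0}\overline{N}_{P}=w_{0}\overline{P}$ is closed) and yields a non-zero element of $\cD'(Z,P:\sigma:\lambda)$ supported on the closed orbit with transversal degree $0$ there, even though a non-zero $\kappa'$ with $\lambda|_{\{0\}}=(-\rho_{P}-2\rho_{\cO,X}+\kappa')|_{\{0\}}$ trivially exists. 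So no argument can close the converse as you have set it up. The second assertion must be read --- and is only ever applied in the paper --- as referring to the specific $\kappa$ produced by Proposition \ref{Prop principal asymptotics}: $\trdeg_{\cO}(\mu)\neq 0$ if and only if that $\kappa$ is non-zero, and in either case $\lambda|_{\fa_{\cO}}=(-\rho_{P}-2\rho_{\cO,X}+\kappa)|_{\fa_{\cO}}$. Under that reading the ``converse'' is literally item (v), your first two steps already constitute the entire proof, and no positivity statement about $\Sigma(\overline{\fn}_{X}^{z};\fa)|_{\fa_{\cO}}$ is needed or available.
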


\begin{Rem}\label{Rem rho_O,X for maximal rank}
If $\cO$ is of maximal rank, then $\rho_{\cO,X}$ can be explicitly determined.
Let $z\in\cO$ and $X\in\fa^{-}$.  By Proposition \ref{Prop limits of max rank orbits are conjugates of h_empty} there exists a $w\in N_{G}(\fa)$ so that $\fh_{z,X}=\Ad(w)\fh_{\emptyset}$. In view of Remark \ref{Rem Orbits of max rank} (\ref{Rem Orbits of max rank - item 1}) we may choose $w$ so that (\ref{eq wSigma^+ cap -Sigma^+=wSigma(Q) cap -Sigma}) is satisfied. The latter guarantees that
$$
\Ad(w)(\overline{\fn}_{P}\cap\fl_{Q})\subseteq \overline{\fn}_{P},
$$
and hence
$$
\overline{\fn}_{z,X}
=\Ad(w)\overline{\fn}_{P}\cap\overline{\fn}_{P},
\quad
\overline{\fn}_{X}^{z}
=\Ad(w)\fn_{P}\cap\overline{\fn}_{P}
=\Ad(w)\fn_{Q}\cap\overline{\fn}_{P}.
$$
It follows that
$$
\rho_{\cO,X}
=-\frac{1}{2}\rho_{wPw^{-1}}-\frac{1}{2}\rho_{P}.
$$
In particular (\ref{eq lambda restricted to a_x}) can be rewritten as
$$
\lambda\big|_{\fa_{\cO}}
\in \Big(\Ad^{*}(w)\rho_{P}+\N_{0}\Big(-\Sigma(P)\cap\Ad^{*}(w)\Sigma(Q)\Big)\Big)\Big|_{\fa_{\cO}}.
$$
\end{Rem}

\begin{proof}[Proof of Corollary \ref{Cor condition on lambda|fa_O}]
The functional $\rho_{\cO,X}$ only depends on the connected component of the set of order-regular elements in $\fa$ in which $X$ is chosen. Every connected component of the set of order-regular elements in $\fa$ contains elements $X$ that satisfy $\kappa_{\beta}(X)\neq\kappa_{\gamma}(X)$ for any two multi-indices $\beta,\gamma$ with $|\beta|,|\gamma|\leq \trdeg_{\cO}(\mu)$ and $\kappa_{\beta}\neq\kappa_{\gamma}$. The claim therefore follows from (\ref{Prop principal asymptotics - item 2}) and (\ref{Prop principal asymptotics - item 3}) in Theorem \ref{Prop principal asymptotics}.
\end{proof}

For $\cO\in P\bs Z$ we set
$$
\cH_{\cO}
:=\big\{\lambda\in \fa_{\C}^{*}:\lambda\big|_{\fa_{\cO}}\in\frac{1}{2}\Z\Sigma\big|_{\fa_{\cO}}\big\}
$$
and we define
$$
\cH_{\nm}
:=\bigcup_{\substack{\cO\in P\bs Z\\\rank(\cO)<\rank(Z)}}\cH_{\cO}.
$$
Here $\nm$ stands for not maximal.
Then $\cH_{\nm}$ is a locally finite set of complex affine subspaces in $\fa_{\C}^{*}$ of codimension at least $1$.
Note that if $\lambda\in\cH_{\cO}$, then $\Im(\lambda)\in(\fa/\fa_{\cO})^{*}$.

\begin{Thm}\label{Thm condition on orbits for given generic lambda}
Let $\sigma\in \widehat{M}$ and $\lambda\in\fa_{\C}^{*}$.
The following assertions hold true.
\begin{enumerate}[(i)]
\item Let $\mu\in \cD'(Z,P:\sigma:\lambda)$. If $\cO\in(P\bs Z)_{\mu}$, then $\lambda\in\cH_{\cO}$.
\item If $\lambda\notin\cH_{\nm}$, then for all $\mu\in \cD'(Z,P:\sigma:\lambda)$
    $$
    (P\bs Z)_{\mu}
    \subseteq(P\bs Z)_{\max}.
    $$
\end{enumerate}
\end{Thm}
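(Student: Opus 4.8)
The plan is to obtain both parts as short consequences of Corollary~\ref{Cor condition on lambda|fa_O}; the only extra ingredient needed is the elementary fact that the $\rho$-shifts occurring there are half-integral combinations of restricted roots.

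\emph{Part (i).} I would fix $\mu\in\cD'(Z,P:\sigma:\lambda)$, an orbit $\cO\in(P\bs Z)_{\mu}$ and a point $z\in\cO$. First choose $X\in\fa^{-}$ order-regular satisfying the separation condition (\ref{eq omega_beta(X) neq omega_gamma(X)}); such an $X$ exists, as recorded in the proof of Corollary~\ref{Cor condition on lambda|fa_O}. Then (\ref{eq lambda restricted to a_x}) provides a $\kappa\in\N_{0}\Sigma(\overline{\fn}_{X}^{z};\fa)$ with
$$
\lambda\big|_{\fa_{\cO}}=\big(-\rho_{P}-2\rho_{\cO,X}+\kappa\big)\big|_{\fa_{\cO}}.
$$
Now $2\rho_{P}=\sum_{\alpha\in\Sigma(P)}\dim(\fg_{\alpha})\,\alpha\in\Z\Sigma$; since $X$ is order-regular the subspace $\overline{\fn}_{z,X}=\fh_{z,X}\cap\overline{\fn}_{P}$ is $\fa$-stable, hence a sum of restricted root spaces, so likewise $2\rho_{\cO,X}\in\Z\Sigma$; and $\kappa\in\N_{0}\Sigma\subseteq\Z\Sigma$. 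Therefore $-\rho_{P}-2\rho_{\cO,X}+\kappa\in\frac{1}{2}\Z\Sigma$, whence $\lambda|_{\fa_{\cO}}\in\frac{1}{2}\Z\Sigma|_{\fa_{\cO}}$, that is $\lambda\in\cH_{\cO}$. (In particular $\Im\lambda$ vanishes on $\fa_{\cO}$, matching the remark preceding the theorem.)

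\emph{Part (ii).} This should follow by contraposition from part (i) and the definition of $\cH_{\nm}$. Suppose $\lambda\notin\cH_{\nm}$ and let $\cO\in(P\bs Z)_{\mu}$ for some $\mu\in\cD'(Z,P:\sigma:\lambda)$. If $\rank(\cO)<\rank(Z)$ then by definition $\cH_{\cO}\subseteq\cH_{\nm}$, whereas part (i) gives $\lambda\in\cH_{\cO}$, contradicting $\lambda\notin\cH_{\nm}$. Hence every $\cO\in(P\bs Z)_{\mu}$ has $\rank(\cO)=\rank(Z)$, i.e.\ $(P\bs Z)_{\mu}\subseteq(P\bs Z)_{\max}$.

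I do not expect a genuine obstacle here: all the analytic substance — the existence of the principal-asymptotics distribution $\mu_{z,X}$ and the resulting congruence constraint on $\lambda|_{\fa_{\cO}}$ — is already packaged in Proposition~\ref{Prop principal asymptotics} and Corollary~\ref{Cor condition on lambda|fa_O}, and what remains is only the bookkeeping that $\rho_{P}$ and $\rho_{\cO,X}$ lie in $\frac{1}{2}\Z\Sigma$. The one point that deserves a line of care is that the order-regular $X$ can be taken in $\fa^{-}$ and satisfying (\ref{eq omega_beta(X) neq omega_gamma(X)}) at the same time, but this is precisely what the argument in the proof of Corollary~\ref{Cor condition on lambda|fa_O} supplies.
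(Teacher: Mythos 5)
Your proof is correct and follows exactly the route the paper takes: the paper's own proof of this theorem is the one-line remark that both assertions follow directly from Corollary~\ref{Cor condition on lambda|fa_O}. Your additional bookkeeping — that $\rho_{P}$ and $2\rho_{\cO,X}$ lie in $\frac{1}{2}\Z\Sigma$ because $\overline{\fn}_{z,X}$ is $\fa$-stable, so that $-\rho_{P}-2\rho_{\cO,X}+\kappa\in\frac{1}{2}\Z\Sigma$ — is a correct filling-in of what the paper leaves implicit, and the contrapositive argument for part (ii) is exactly as intended.
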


\begin{proof}
The assertions follow directly from Corollary \ref{Cor condition on lambda|fa_O}.
\end{proof}

\subsection{$P$-Orbits of maximal rank and transversal degree}\label{Subsection Support and transdeg - Orbits of max rank and transversal degree}
Let $\Cen(\fg)$ be the center of the universal enveloping algebra $\cU(\fg)$ of $\fg$ and let $\ft$ be a maximal abelian subalgebra in $\fm$. We fix a positive system $\Sigma_{\fm}^{+}$ of the root system of $i\ft$ in $\fm$ and write $\rho_{\fm}$ for the corresponding half-sum of positive roots. We further write $W_{\C}$ for the Weyl group of the root system $\Sigma_{\C}$ of $(\fa+i\ft)_{\C}$ in $\fg_{\C}$.
Let $\gamma$ be the Harish-Chandra homomorphism $\gamma:\Cen(\fg)\to\Sym\big((\fa\oplus i\ft)_{\C}\big)^{W_{\C}} \simeq \C\big[(\fa\oplus i \ft)^{*}\big]^{W_{\C}}$.

\begin{Prop}\label{Prop bound on transversal degree}
Let $\lambda\in\fa_{\C}^{*}$ and $\sigma\in\widehat{M}$. Further, let $\mu\in\cD'(Z,P:\sigma:\lambda)$ and let $\cO\in(P\bs Z)_{\mu}$. Assume that $\cO$ is of maximal rank. Let $\Lambda_{\sigma}\in i\ft^{*}$ be the highest-weight of $\sigma$.
If $\trdeg_{\cO}(\mu)\neq 0$, then there exists a non-zero element $\nu\in \N_{0}\Sigma(P)$ and a dominant $\Sigma_{\fm}$-integral weight $\Lambda\in i\ft^{*}$ so that
$$
W_{\C}\cdot (\lambda+\Lambda_{\sigma}+\rho_{\fm})
=W_{\C}\cdot \big(\lambda+\Lambda+\rho_{\fm}+\nu\big).
$$
Moreover, if $X_{\nu}\in\fa$ satisfies $B(X_{\nu},\dotvar)=\nu$, then $X_{\nu}\notin \fa_{\cO}$.
\end{Prop}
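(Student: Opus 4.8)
The plan is to analyse the action of the center $\Cen(\fg)$ on $\mu$ via its principal asymptotics $\mu_{z,X}$ from Proposition \ref{Prop principal asymptotics}, reducing everything to the $\overline{N}_{P}$-invariant situation where the Harish-Chandra homomorphism can be used explicitly. First I would fix a point $z\in\cO$, identify $\mu$ with an $H_{z}$-invariant element of $\cD'(P:\sigma:\lambda)$, and choose an order-regular $X\in\fa^{-}$ satisfying (\ref{eq omega_beta(X) neq omega_gamma(X)}), so that the limit distribution $\mu_{z,X}\in\cD'(\Omega,V_{\sigma})$ exists with associated $\kappa\in\N_{0}\Sigma(\overline{\fn}_{X}^{z};\fa)$. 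Since $\mu$ transforms under a character of $\Cen(\fg)$ — the infinitesimal character of the induced representation $\Ind_{P}^{G}(\sigma\otimes\lambda\otimes1)$, which is $W_{\C}\cdot(\lambda+\Lambda_{\sigma}+\rho_{\fm})$ by the classical Harish-Chandra computation — the limit $\mu_{z,X}$ inherits a $\Cen(\fg)$-eigencharacter in the limit. The key point is that $\mu_{z,X}$ is left-$P$-equivariant of type $(\sigma^{\vee},\lambda)$ by Proposition \ref{Prop principal asymptotics}(\ref{Prop principal asymptotics - item 1}) and right-invariant under $\fh_{z,X}\supseteq\overline{\fn}_{z,X}$ by (\ref{Prop principal asymptotics - item 3}), while by (\ref{Prop principal asymptotics - item 2}) it is an $\fa$-eigendistribution with eigenvalue $-\lambda-\rho_{P}-2\rho_{\cO,X}+\kappa$ on the right. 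Using Remark \ref{Rem rho_O,X for maximal rank}, which applies because $\cO$ has maximal rank, one has $2\rho_{\cO,X}=-\rho_{wPw^{-1}}-\rho_{P}$ for an appropriate $w\in N_{G}(\fa)$, so the right $\fa$-eigenvalue becomes $-\lambda+\rho_{wPw^{-1}}-\kappa$, with $\kappa\in\N_{0}\big(-\Sigma(P)\cap\Ad^{*}(w)\Sigma(Q)\big)$.

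The second step is to extract the eigenvalue equation. Because $\mu_{z,X}$ is supported on (a neighborhood of) $P\overline{N}_{z,X}$, is left-$(P,\sigma^{\vee},\lambda)$-equivariant, and right-$\overline{\fn}_{z,X}$-invariant, evaluating the action of $\Cen(\fg)$ on $\mu_{z,X}$ at the identity coset reduces — after conjugating by $w$ so that $\overline{\fn}_{z,X}$ becomes a genuine nilradical contained in $\overline{\fn}_{P}$ — to the computation of the infinitesimal character of a principal-series-type distribution, but now with an extra right $\fa$-shift recorded by $\kappa$. Concretely, the limit procedure forces
$$
W_{\C}\cdot(\lambda+\Lambda_{\sigma}+\rho_{\fm})
=W_{\C}\cdot(\lambda'+\Lambda'+\rho_{\fm}),
$$
where $\lambda'$ is the shifted $\fa$-parameter read off from (\ref{Prop principal asymptotics - item 2}) — which, after translating back by $w$ and using that $w$ permutes the relevant root data, contributes a summand $\nu\in\N_{0}\Sigma(P)$ coming from $\kappa$ — and $\Lambda'$ is a dominant $\Sigma_{\fm}$-integral weight $\Lambda$ arising as the highest weight of the $M$-type appearing in the transversal jet (here $\mu_{z,X}$ carries transversal derivatives precisely when $\kappa\neq0$, by (\ref{Prop principal asymptotics - item 5}), and the $M$-representation generated by the coefficients $c_{\beta}$ supplies $\Lambda$). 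This yields the asserted identity $W_{\C}\cdot(\lambda+\Lambda_{\sigma}+\rho_{\fm})=W_{\C}\cdot(\lambda+\Lambda+\rho_{\fm}+\nu)$ with $\nu$ non-zero, the non-vanishing of $\nu$ being exactly the hypothesis $\trdeg_{\cO}(\mu)\neq0$ translated through (\ref{Prop principal asymptotics - item 5})(c) and Remark \ref{Rem rho_O,X for maximal rank}.

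For the final assertion, suppose $X_{\nu}\in\fa$ represents $\nu$ via $B$, i.e.\ $B(X_{\nu},\cdot)=\nu$. Since $\nu\in\N_{0}\Sigma(P)$ is a non-zero sum of positive roots, $X_{\nu}$ lies in the closed positive chamber and is non-zero; pairing with $\fa_{\cO}$, note that $\lambda|_{\fa_{\cO}}$ and the shifted parameter $(\lambda+\nu)|_{\fa_{\cO}}$ must be related by an element of $W_{\C}$ fixing the $\fa_{\cO}$-direction appropriately — more precisely, from Corollary \ref{Cor condition on lambda|fa_O} together with Remark \ref{Rem rho_O,X for maximal rank}, the transversal-degree condition pins down $\kappa|_{\fa_{\cO}}$, and $\kappa|_{\fa_{\cO}}=0$ would contradict $\trdeg_{\cO}(\mu)\neq0$; hence $\nu|_{\fa_{\cO}}\neq0$, i.e.\ $B(X_{\nu},Y)\neq0$ for some $Y\in\fa_{\cO}$, which is impossible if $X_{\nu}\in\fa_{\cO}$ only in the degenerate case — so I would instead argue that $X_{\nu}\notin\fa_{\cO}$ directly: if $X_{\nu}\in\fa_{\cO}$ then $\nu$ would vanish on $\fa_{\cO}^{\perp}\cap\fa$, but the roots contributing to $\kappa$ (equivalently to $\nu$) do not vanish on $\fa\cap\fa_{\cO}^{\perp}$ by the structure of $\Ad^{*}(w)\Sigma(Q)$ and the maximal-rank identity $\fa_{\cO}=\Ad(w)\fa_{\fh}$ from Proposition \ref{Prop limits of max rank orbits are conjugates of h_empty}, giving the contradiction. \textbf{The main obstacle} I anticipate is the bookkeeping in the second step: correctly tracking how the right $\fa$-eigenvalue of $\mu_{z,X}$, the conjugation by $w$ (whose choice is only well-defined modulo $N_{L_{Q}}(\fa)$, cf.\ Remark \ref{Rem Orbits of max rank}), and the $M$-isotypic content of the transversal jet combine into a single $W_{\C}$-orbit identity with $\nu\in\N_{0}\Sigma(P)$ rather than in some larger lattice — this requires carefully matching the Harish-Chandra parameter conventions on both sides and using (\ref{eq wSigma^+ cap -Sigma^+=wSigma(Q) cap -Sigma}) to ensure $\Ad(w)\overline{\fn}_{P}\cap\overline{\fn}_{P}$ behaves as a genuine opposite nilradical.
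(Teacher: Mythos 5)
Your overall strategy coincides with the paper's: pass to the principal asymptotics $\mu_{z,X}$, use that the infinitesimal character $W_{\C}\cdot(\lambda+\Lambda_{\sigma}+\rho_{\fm})$ survives the limit, and read off a second expression for it from the explicit form of $\mu_{z,X}$, with $\nu$ coming from $\kappa$ and $\Lambda$ from the $M$-type of the transversal jet. However, the step you flag as "the main obstacle" is exactly where the paper's proof does its real work, and it is not mere bookkeeping: one must introduce the representation $\chi$ of $M$ on the span of the transversal multi-indices (isomorphic to $\Ad$ on $\bigoplus_{l\le k}(\Ad(w)\fn_{P}\cap\overline{\fn}_{P})^{\otimes l}$), decompose $u\in\Cen(\fg)$ modulo $\cU(\fg)\Ad(w)\overline{\fn}_{P}$ as $\sum_j v_{\fm,j}\otimes v_{\fa,j}$ with $v_{\fm,j}\in\Cen(\fm)$, $v_{\fa,j}\in\Sym(\fa)$, and use the uniqueness of the coefficient vector $c=(c_{\beta})$ to show it is a joint eigenvector of the operators $(\chi\otimes\sigma^{\vee})(v_{\fm,j})$, with eigenvalue given by a lowest weight $\eta$ of $\chi\otimes\sigma^{\vee}$; then $\Lambda=-\eta$ and $\nu=-\kappa$. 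Your sketch gestures at "the $M$-representation generated by the coefficients $c_{\beta}$" but does not explain why a single weight $\eta$ (rather than a mixture) is selected, which is the crux.

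Two further slips. First, the right $\fa$-eigenvalue of $\mu_{z,X}$ is $-\lambda+\rho_{wPw^{-1}}+\kappa$, not $-\lambda+\rho_{wPw^{-1}}-\kappa$; the sign matters for landing $\nu=-\kappa$ in $\N_{0}\Sigma(P)$ (the roots in $\kappa$ lie in $-\Sigma(P)\cap\Ad^{*}(w)\Sigma(Q)$). Second, your argument for $X_{\nu}\notin\fa_{\cO}$ — "the individual roots contributing to $\kappa$ do not vanish on $\fa\cap\fa_{\cO}^{\perp}$" — is insufficient as stated, since a sum of functionals each non-vanishing on a subspace can still vanish there. What saves it is positivity: after conjugating by $w^{-1}$ the roots all lie in $\Sigma(Q)$, and by Proposition \ref{Prop Local structure theorem}(\ref{Prop Local structure theorem - item 4}) there is $X_{0}\in\fa\cap\fa_{\fh}^{\perp}$ on which every root of $\Sigma(Q)$ is strictly positive, so $\Ad^{*}(w^{-1})\nu(X_{0})>0$ and hence $\Ad(w^{-1})X_{\nu}\notin\fa_{\fh}=\Ad(w^{-1})\fa_{\cO}$.
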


\begin{proof}
Let $z\in\cO$ and $X\in\fa^{-}$ be as in Proposition \ref{Prop principal asymptotics}.  By Proposition \ref{Prop limits of max rank orbits are conjugates of h_empty} there exists a $w\in N_{G}(\fa)$ so that $\fh_{z,X}=\Ad(w)\fh_{\emptyset}$. In view of Remark \ref{Rem Orbits of max rank} (\ref{Rem Orbits of max rank - item 1}) we may choose $w$ so that (\ref{eq wSigma^+ cap -Sigma^+=wSigma(Q) cap -Sigma}) is satisfied. Then
$$
\rho_{\cO,X}
=-\frac{1}{2}\rho_{wPw^{-1}}-\frac{1}{2}\rho_{P},
$$
see Remark \ref{Rem rho_O,X for maximal rank}.
In view of Proposition \ref{Prop principal asymptotics} there exists a left-$P$-invariant open neighborhood $\Omega$ of $e$ in $G$, a $\kappa\in\Sigma(\Ad(w)\fn_{P}\cap\overline{\fn}_{P};\fa)$ and non-zero distribution $\mu_{z,X}\in\cD'(\Omega,V_{\sigma})$ so that (\ref{Prop principal asymptotics - item 1}) -- (\ref{Prop principal asymptotics - item 5}) in Proposition \ref{Prop principal asymptotics} hold. In particular,
\begin{equation}\label{eq eta_(z,X) annihilated by Ad(w)ofn}
R^{\vee}(Y)\mu_{z,X}
=0
\qquad(Y\in \Ad(w)\overline{\fn}_{P}).
\end{equation}
and
\begin{equation}\label{eq equivariance of eta_(z,X) for max rank orbits}
R^{\vee}(Y)\mu_{z,X}
=\big(-\lambda+\rho_{wPw^{-1}}+\kappa\big)(Y)\mu_{z,X}
\qquad(Y\in\fa).
\end{equation}
Moreover, for every multi-index $\beta$ with $\kappa_{\beta}=\kappa$ there exists a unique $c_{\beta}\in V_{\sigma}^{*}$ such that for all $\phi\in\cD(\Omega,V_{\sigma})$
\begin{equation}\label{eq formula for eta_(z,X)}
\mu_{z,X}(\phi)
=\sum_{\substack{\beta\\ \kappa_{\beta}=\kappa}}
    \int_{M}\int_{A}\int_{N_{P}}\int_{w\overline{N}_{P}w^{-1}\cap\overline{N}_{P}} a^{-\lambda+\rho_{P}}
\Big(\sigma^{\vee}(m) c_{\beta},\partial^{\beta}\phi(man\overline{n})\Big)\,d\overline{n}\,dn\,da\,dm.
\end{equation}
The representation $\cD'(P:\xi:\lambda)$ admits an infinitesimal character, which via the Harish-Chandra isomorphism is identified with $-(\lambda+\Lambda_{\sigma}+\rho_{\fm})$. Therefore,
$$
R^{\vee}(u)\mu
=\gamma(u)\big(-(\lambda+\Lambda_{\sigma}+\rho_{\fm})\big)\mu
\qquad\big(u\in\Cen(\fg)\big).
$$
Since the elements of $\Cen(\fg)$ commutate with the adjoint action of $G$, we find for all $u\in\Cen(\fg)$
\begin{align}
\gamma(u)\big(-(\lambda+\Lambda_{\sigma}+\rho_{\fm})\big)\mu_{z,X}
\nonumber&=\lim_{t\to\infty}e^{t \big(\lambda+\rho_P+2\rho_{\cO,X}-\kappa\big)(X)}R^{\vee}\big(\exp(tX)\big)R^{\vee}(u)\mu\\
\label{eq inf char mu_(z,X)}&=R^{\vee}(u)\mu_{z,X}\,.
\end{align}
We will prove the proposition by computing $R^{\vee}(u)\mu_{z,X}$ using the formula (\ref{eq formula for eta_(z,X)}) for $\mu_{z,X}$.
For this we first look at the action of $M$ on $\mu_{z,X}$.

Let $m_{0}\in M$. By  (\ref{eq formula for eta_(z,X)}) we have for every $\phi\in\cD(\Omega,V_{\sigma})$
\begin{align*}
&R^{\vee}(m_{0})\mu_{z,X}(\phi)\\
&=\sum_{\substack{\beta\\ \kappa_{\beta}=\kappa}}
   \int_{M}\int_{A}\int_{N_{P}} \int_{w\overline{N}_{P}w^{-1}\cap\overline{N}_{P}} a^{-\lambda+\rho_{P}}
\Big(\sigma^{\vee}(m) c_{\beta},\partial^{\beta}\big(R(m_{0}^{-1})\phi\big)(man\overline{n})\Big)\,d\overline{n}\,dn\,da\,dm.
\end{align*}
The Haar-measure on each of the groups $w\overline{N}_{P}w^{-1}\cap\overline{N}_{P}$,  $N_{P}$ and $A$ is invariant under conjugation by $m_{0}$, and hence the right-hand side is equal to
\begin{align*}
&\sum_{\substack{\beta\\ \kappa_{\beta}=\kappa}}
    \int_{M}\int_{A}\int_{N_{P}}\int_{w\overline{N}_{P}w^{-1}\cap\overline{N}_{P}} a^{-\lambda+\rho_{P}}\\
    &\quad\times
        \frac{\partial^{\beta_{1}}}{\partial x_{1}^{\beta_{1}}}\cdots\frac{\partial^{\beta_{n}}}{\partial x_{n}^{\beta_{n}}}
        \bigg(\sigma^{\vee}(mm_{0})c_{\beta}, \phi\Big(man\exp\big(\sum_{i=1}^{n}x_{i}\Ad(m_{0})e_{i}\big)\overline n\Big)\bigg)\Big|_{x_{i}=0}
        \,d\overline{n}\,dn\,da\,dm.
\end{align*}

For multi-indices $\beta$ and $\beta'$, let $\chi^{\beta}_{\beta'}:M\to\C$ be determined by
$$
\partial^{\beta}\big(\psi\circ C_{m}\big)
=\sum_{\beta'}\chi^{\beta}_{\beta'}(m)\partial^{\beta'}\psi
\qquad\big(\psi\in C^{\infty}(wN_{P}w^{-1}\cap\overline{N}_{P}), m\in M\big).
$$
Here $C_{m}$ denotes conjugation by $m$.
If we denote $\trdeg_{\cO}(\mu)$ by $k$ and write $S$ for the set of multi-indices of length at most $k$, then the representation $\chi$ of $M$ on $\C^{S}$ that for all multi-indices $\beta\in S$ is given by
$$
\big(\chi(m)v\big)_{\beta}
=\sum_{\beta'}\chi^{\beta}_{\beta'}(m)v_{\beta'}
\qquad\big(m\in M, v=(v_{\beta'})_{\beta'\in S}\in \C^{S}\big)
$$
is isomorphic to the adjoint representation of $M$ on $\bigoplus_{l=0}^{k}\big(\Ad(w)\fn_{P}\cap\overline{\fn}_{P}\big)^{\otimes l}$.

Let $c\in \C^{S}\otimes V_{\sigma}^{*}\simeq (V_{\sigma}^{*})^{S}$ be the element of which the $\beta$'th component is equal to $c_{\beta}$ for each $\beta\in S$. Then for all $\phi\in \cD(\Omega, V_{\sigma})$
\begin{align}
\label{eq R^vee(m)mu_(z,X)} R^{\vee}(m_{0})\mu_{z,X}(\phi)
&=\sum_{\substack{\beta\\ \kappa_{\beta}=\kappa}}
    \int_{M}\int_{A}\int_{N_{P}}\int_{w\overline{N}_{P}w^{-1}\cap\overline{N}_{P}}
     a^{-\lambda+\rho_{P}}\\
\nonumber  &\qquad\qquad\times\bigg(\sigma^{\vee}(m)
    \Big(\big(\chi\otimes\sigma^{\vee}\big)(m_{0})c\Big)_{\beta},
    \partial^{\beta}\phi(man\overline{n})\bigg)
    \,d\overline{n}\,dn\,da\,dm.
\end{align}
The center $\Cen(\fg)$ is contained in
$$
\Cen(\fm\oplus\fa)\oplus\cU(\fg)\Ad(w)\overline{\fn}_{P}
=\big(\Cen(\fm)\otimes\Sym(\fa)\big)\oplus\cU(\fg)\Ad(w)\overline{\fn}_{P}.
$$
Let $u\in \Cen(\fg)$. There exist $v_{\fm,1},\dots,v_{\fm,k}\in\Cen(\fm)$ and $v_{\fa,1},\dots,v_{\fa,k}\in \Sym(\fa)\simeq \C[\fa^{*}]$  so that
$$
u-\sum_{j=1}^{k}v_{\fm,j}\otimes v_{\fa,j}\in\cU(\fg)\Ad(w)\overline{\fn}_{P}.
$$
We may assume that $\delta_{j}:=v_{\fa,j}\big(-\lambda+\rho_{wPw^{-1}}+\kappa\big)$ is for every $1\leq j\leq k$ either equal to $0$ or to $1$.
In view of (\ref{eq eta_(z,X) annihilated by Ad(w)ofn}),  (\ref{eq equivariance of eta_(z,X) for max rank orbits}) and (\ref{eq R^vee(m)mu_(z,X)}) we have for all $\phi\in\cD(\Omega,V_{\sigma})$
\begin{align*}
R^{\vee}(u)\mu_{z,X}(\phi)
&=\sum_{j=1}^{k}\delta_{j}\sum_{\substack{\beta\\ \kappa_{\beta}=\kappa}}
    \int_{M}\int_{A}\int_{N_{P}}\int_{w\overline{N}_{P}w^{-1}\cap\overline{N}_{P}}
     a^{-\lambda+\rho_{P}}\\
\nonumber  &\qquad\qquad\times\bigg(\sigma^{\vee}(m)
    \Big(\big(\chi\otimes\sigma^{\vee}\big)(v_{\fm,j})c\Big)_{\beta},
    \partial^{\beta}\phi(man\overline{n})\bigg)
    \,d\overline{n}\,dn\,da\,dm.
\end{align*}
Let $\Xi_{\chi\otimes\sigma^{\vee}}\subseteq(i\ft)^{*}$ be the set of lowest weights of $\chi\otimes\sigma^{\vee}$ and let $\gamma_{\fm}:\Cen(\fm)\to\Sym(\ft_{\C})\simeq\C[\ft^{*}]$ be the Harish-Chandra homomorphism for $\fm$. Then $\big(\chi\otimes\sigma^{\vee}\big)(v_{\fm,j})$ acts diagonalizably on $\C^{S}\otimes V_{\sigma}^{*}$ with eigenvalues $\gamma_{\fm}(v_{\fm,j})(\eta-\rho_{\fm})$ for $\eta\in \Xi_{\chi\otimes\sigma^{\vee}}$.
From (\ref{eq inf char mu_(z,X)}) it follows that $R^{\vee}(u)\mu_{z,X}$ is a multiple of $\mu_{z,X}$. It follows from the uniqueness of the element $c$ that there exists an $\eta\in\Xi_{\chi\otimes\sigma^{\vee}}$ so that
$$
\delta_{j}\big(\chi\otimes\sigma^{\vee}\big)(v_{\fm,j})c
=\delta_{j}\gamma_{\fm}(v_{\fm,j})(\eta-\rho_{\fm})c
\qquad(1\leq j\leq k).
$$
Therefore,
\begin{align*}
R^{\vee}(u)\mu_{z,X}
&=\sum_{j=1}^{k}\delta_{j}\gamma_{\fm}(v_{\fm,j})(\eta-\rho_{\fm})\mu_{z,X}\\
&=\sum_{j=1}^{k}\Big(v_{\fa,j}\big(-\lambda+\rho_{wPw^{-1}}+\kappa\big)\Big)\Big(\gamma_{\fm}(v_{\fm,j})(\eta-\rho_{\fm})\Big)\mu_{z,X}\\
&=\gamma(u)\big(-\lambda+\kappa+\eta-\rho_{\fm}\big)\mu_{z,X},
\end{align*}
and hence by (\ref{eq inf char mu_(z,X)})
$$
\gamma(u)\big(-\lambda-\Lambda_{\sigma}-\rho_{\fm}\big)\mu_{z,X}
=\gamma(u)\big(-\lambda+\kappa+\eta-\rho_{\fm})\mu_{z,X}.
$$
As $\mu_{z,X}\neq0$ and this identity holds for all $u\in\Cen(\fg)$, the first assertion now follows with $\nu=-\kappa$ and $\Lambda=-\eta$.
The second assertion follows from the Remarks \ref{Rem L_Q roots} and \ref{Rem rho_O,X for maximal rank}.
\end{proof}

\begin{Thm}\label{Thm bound on transversal degree}
Let $\lambda\in\fa_{\C}^{*}$, $\sigma\in\widehat{M}$ and $\mu\in\cD'(Z,P:\sigma:\lambda)$. Let  $\cO\in (P\bs Z)_{\mu}$ be of maximal rank. The following assertions hold true.
\begin{enumerate}[(i)]
\item\label{Thm bound on transversal degree - item 1} $\Im\lambda\in(\fa/\fa_{\cO})^{*}$. Assume that $\Im\lambda$ is regular in the sense that if $w\in W_{\C}$ stabilizes $\Im\lambda$, then $w$ normalizes $\fa_{\cO}+i\ft$ and acts trivially on $(\fa+i\ft)/(\fa_{\cO}+i\ft)$. Then
    $$
    \trdeg_{\cO}(\mu)=0.
    $$
\item\label{Thm bound on transversal degree - item 2} If $\trdeg_{\cO}(\mu)=0$ and  $v\in N_{G}(\fa)$ satisfies (\ref{eq wSigma^+ cap -Sigma^+=wSigma(Q) cap -Sigma}) and $\fh_{z,X}=\Ad(v)\fh_{\emptyset}$ for some $z\in \cO$ and order-regular element $X\in \fa^{-}$, then
   $$
   \Re\lambda
   \in \rho_{vPv^{-1}}+(\fa/\fa_{\cO})^{*}
   =\Ad^{*}(v)\big(\rho_{P}+(\fa/\fa_{\fh})^{*}\big)
   $$
\end{enumerate}
\end{Thm}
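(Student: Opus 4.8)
The plan is to handle the two items separately; item (\ref{Thm bound on transversal degree - item 1}) carries the real content, while item (\ref{Thm bound on transversal degree - item 2}) is a short bookkeeping deduction from the description of the principal asymptotics.

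For (\ref{Thm bound on transversal degree - item 1}), the claim $\Im\lambda\in(\fa/\fa_{\cO})^{*}$ is immediate: since $\cO\in(P\bs Z)_{\mu}$, Theorem \ref{Thm condition on orbits for given generic lambda}(i) gives $\lambda\in\cH_{\cO}$, and then $\Im\lambda\in(\fa/\fa_{\cO})^{*}$ by the way $\cH_{\cO}$ was defined; one can also read this off the first inclusion in Corollary \ref{Cor condition on lambda|fa_O}, whose right‑hand side consists of functionals real on $\fa$. For the vanishing of $\trdeg_{\cO}(\mu)$ I would argue by contradiction. Assume $\trdeg_{\cO}(\mu)\neq 0$. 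Since $\cO$ has maximal rank, Proposition \ref{Prop bound on transversal degree} produces a non‑zero $\nu\in\N_{0}\Sigma(P)$, a dominant $\Sigma_{\fm}$‑integral weight $\Lambda\in i\ft^{*}$ and an element $w\in W_{\C}$ with
$$
w(\lambda+\Lambda_{\sigma}+\rho_{\fm})=\lambda+\Lambda+\rho_{\fm}+\nu ,
$$
and moreover with $X_{\nu}\notin\fa_{\cO}$, where $B(X_{\nu},\dotvar)=\nu$.

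The crux is to split this identity into real and imaginary parts with respect to the real form $(\fa\oplus i\ft)^{*}$ of $(\fa\oplus i\ft)_{\C}^{*}$, which $W_{\C}$ preserves. Here $\Lambda_{\sigma},\rho_{\fm},\Lambda\in i\ft^{*}$ and $\nu\in\fa^{*}$ are real, and writing $\lambda=\Re\lambda+i\Im\lambda$ with $\Re\lambda,\Im\lambda\in\fa^{*}$, the imaginary part of the identity reads $w\,\Im\lambda=\Im\lambda$. Thus $w$ stabilises $\Im\lambda$, so by the regularity hypothesis $w$ normalises $\fa_{\cO}+i\ft$ and acts trivially on $(\fa+i\ft)/(\fa_{\cO}+i\ft)$. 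I would then use that $W_{\C}$ acts orthogonally for the natural inner product on $\fa\oplus i\ft$, which restricts on $\fa$ to $B$, together with $i\ft\perp\fa$: the orthocomplement of $\fa_{\cO}+i\ft$ in $\fa\oplus i\ft$ is $\fa\cap\fa_{\cO}^{\perp}$, and $w$ acting trivially on the quotient is equivalent to $w$ acting trivially on $\fa\cap\fa_{\cO}^{\perp}$. Taking the real part and setting $\mu':=\Re\lambda+\Lambda_{\sigma}+\rho_{\fm}$ gives $\nu=(w\mu'-\mu')+(\Lambda_{\sigma}-\Lambda)$; for $Y\in\fa\cap\fa_{\cO}^{\perp}$ one has $w^{-1}Y=Y$, whence $\langle w\mu'-\mu',Y\rangle=\langle\mu',w^{-1}Y-Y\rangle=0$, while $\Lambda_{\sigma}-\Lambda\in i\ft^{*}$ vanishes on $\fa\supseteq\fa\cap\fa_{\cO}^{\perp}$. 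Hence $\nu$ vanishes on $\fa\cap\fa_{\cO}^{\perp}$, so $X_{\nu}\in\fa_{\cO}$, contradicting Proposition \ref{Prop bound on transversal degree}. I expect the only genuine subtlety to be this bookkeeping with the real structure $(\fa\oplus i\ft)^{*}$ and the orthogonality of $W_{\C}$, which is exactly what converts ``$w$ trivial on the quotient'' into the statement about the $B$‑orthocomplement of $\fa_{\cO}$ in $\fa$ needed to pin down $\nu$.

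For (\ref{Thm bound on transversal degree - item 2}) I would simply combine Corollary \ref{Cor condition on lambda|fa_O} with Remark \ref{Rem rho_O,X for maximal rank}. Fix $v$, $z\in\cO$ and order‑regular $X\in\fa^{-}$ as in the statement, so $\fh_{z,X}=\Ad(v)\fh_{\emptyset}$ and $v$ satisfies (\ref{eq wSigma^+ cap -Sigma^+=wSigma(Q) cap -Sigma}). By Remark \ref{Rem rho_O,X for maximal rank} (with $w=v$) one has $\rho_{\cO,X}=-\frac{1}{2}\rho_{vPv^{-1}}-\frac{1}{2}\rho_{P}$, hence $-\rho_{P}-2\rho_{\cO,X}=\rho_{vPv^{-1}}$. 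Corollary \ref{Cor condition on lambda|fa_O} then gives $\lambda|_{\fa_{\cO}}\in(\rho_{vPv^{-1}}+\N_{0}\Sigma(\overline{\fn}_{X}^{z};\fa))|_{\fa_{\cO}}$, and since $\trdeg_{\cO}(\mu)=0$ the second part of that corollary rules out a non‑zero $\kappa$ realising this restriction, forcing $\lambda|_{\fa_{\cO}}=\rho_{vPv^{-1}}|_{\fa_{\cO}}$. Combining with $\Im\lambda|_{\fa_{\cO}}=0$ from (\ref{Thm bound on transversal degree - item 1}) yields $\Re\lambda|_{\fa_{\cO}}=\rho_{vPv^{-1}}|_{\fa_{\cO}}$, i.e. $\Re\lambda\in\rho_{vPv^{-1}}+(\fa/\fa_{\cO})^{*}$. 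Finally $\rho_{vPv^{-1}}=\Ad^{*}(v)\rho_{P}$ by the definition of $\rho_{Q}$ and $\fa_{\cO}=\Ad(v)\fa_{\fh}$ by (\ref{eq a_O=Ad(w)a_h}), so $(\fa/\fa_{\cO})^{*}=\Ad^{*}(v)(\fa/\fa_{\fh})^{*}$, which gives the asserted equality $\rho_{vPv^{-1}}+(\fa/\fa_{\cO})^{*}=\Ad^{*}(v)(\rho_{P}+(\fa/\fa_{\fh})^{*})$.
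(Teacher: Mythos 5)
Your proposal is correct and follows essentially the same route as the paper: item (i) is proved by contradiction from Proposition \ref{Prop bound on transversal degree}, splitting the $W_{\C}$-identity into real and imaginary parts over the real form $(\fa\oplus i\ft)^{*}$, using the regularity hypothesis to see that $w$ fixes the Killing orthocomplement of $\fa_{\cO}$ in $\fa$ pointwise and hence that $\nu$ vanishes there, contradicting $X_{\nu}\notin\fa_{\cO}$; and item (ii) is exactly the intended combination of Corollary \ref{Cor condition on lambda|fa_O} with Remark \ref{Rem rho_O,X for maximal rank} (the paper states this in one line, and your expansion of it is correct).
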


\begin{proof}
Let $\cO\in (P\bs Z)_{\mu}$ be of maximal rank and assume that $\trdeg_{\cO}(\mu)\neq 0$. Let $\Lambda_{\sigma}\in i\ft^{*}$ be the highest-weight of $\sigma$. By Proposition \ref{Prop bound on transversal degree} there exists a non-zero $\nu\in \N_{0}\Sigma(P)$, a dominant $\Sigma_{\fm}$-integral element $\Lambda\in i\ft^{*}$, and a $w\in W_{\C}$ so that
$$
w\cdot \big(\lambda+\Lambda_{\sigma}+\rho_{\fm}\big)
=\lambda+\Lambda+\rho_{\fm}+\nu.
$$
Moreover, the element $X_{\nu}\in\fa$ so that $B(X_{\nu},\dotvar)=\nu$ is not contained in $\fa_{\cO}$.

Note that $W_{\C}$ stabilizes the real subspace $(\fa\oplus i\ft)^{*}$ of $(\fa\oplus i\ft)_{\C}^{*}$. Therefore,
\begin{equation}\label{eq equation lambda_R}
w\cdot \big(\Re\lambda+\Lambda_{\sigma}+\rho_{\fm}\big)
=\Re\lambda+\Lambda+\rho_{\fm}+\nu
\end{equation}
and
\begin{equation}\label{eq equation lambda_I}
w\cdot \Im\lambda
=\Im\lambda.
\end{equation}
Furthermore, it follows from Corollary \ref{Cor condition on lambda|fa_O} that $\Im\lambda\in(\fa/\fa_{\cO})^{*}$.
Now assume that $\Im\lambda$ satisfies the regularity condition stated in  (\ref{Thm bound on transversal degree - item 1}). In view of (\ref{eq equation lambda_I}) the element $w$ normalizes $\fa_{\cO}+i\ft$ and acts trivially on $\big((\fa+i\ft)/(\fa_{\cO}+i\ft)\big)^{*}$. Let $\fa_{\cO}^{\perp}$ be the Killing orthocomplement of $\fa_{\cO}$ in $\fa$. Then $(\fa+i\ft)/(\fa_{\cO}+i\ft)$ is identified with $\fa_{\cO}^{\perp}$ via the Killing form and hence $w$ acts trivially on $\fa_{\cO}^{\perp}$.
It follows from (\ref{eq equation lambda_R}) that
$$
(\Re\lambda+\nu)\big|_{\fa_{\cO}^{\perp}}
=w\cdot(\Re\lambda+\Lambda_{\sigma}+\rho_{\fm})\big|_{\fa_{\cO}^{\perp}}
=\Re\lambda\big|_{\fa_{\cO}^{\perp}}
$$
and thus $\nu|_{\fa_{\cO}^{\perp}}=0$. This is in contradiction with $X_{\nu}\notin \fa_{\cO}$. We thus conclude that $\trdeg_{\cO}(\mu)= 0$.
This proves (\ref{Thm bound on transversal degree - item 1}).

Assertion (\ref{Thm bound on transversal degree - item 2}) follows from Corollary \ref{Cor condition on lambda|fa_O} and Remark \ref{Rem rho_O,X for maximal rank}.
\end{proof}

\section{Construction and properties of $H$-fixed distribution vectors}\label{Section Construction}
\subsection{$H$-spherical finite dimensional representations}\label{Subsection Construction - Finite dimensional reps}
We write $Z_{\C}$ for the complexification $\underline{G}(\C)/\underline{H}(\C)$ of $Z$. Note that $Z$ naturally embeds into $Z_{\C}$. We further write $\C[Z]^{(P)}$ for the multiplicative monoid of functions $f:Z\to\C$ so that
\begin{enumerate}[(a)]
\item there exists a non-zero regular function $\phi$ on $Z_{\C}$ so that $f=\phi\big|_{Z}$,
\item there exists a $\nu\in\fa^{*}$ so that
$$
f(man\cdot z)
=a^{\nu}f(z)
\qquad(m\in M, a\in A, n\in N).
$$
\end{enumerate}
It follows from \cite[Lemma 5.6]{KnopKrotzSayagSchlichtkrull_SimpleCompactificationsAndPolarDecomposition} that $\C[Z]^{(P)}$ is finitely generated.

For every function $f\in\C[Z]^{(P)}$ there exists a finite dimensional representation $(\pi,V)$, an $H$-fixed vector $v_{H}\in V$ and a $MN$-fixed vector $v^{*}\in V^{*}$ for the contragredient representation $\pi^{\vee}$ of $\pi$ such that $f$ is the matrix-coefficient of $v_{H}$ and $v^{*}$. If $\pi$ has lowest weight $\nu\in\fa^{*}$, then $v^{*}$ is a highest weight vector of $\pi^{\vee}$ with weight $-\nu$. Note that for $m\in M$, $a\in A$, $n\in N_{P}$, $g\in G$ and $h\in H$
\begin{equation}\label{eq f(mangh)=a^nu f(g)}
f(mangh)
=v^{*}\big(\pi(mangh)v_{H}\big)
=a^{\nu}v^{*}\big(\pi(g)v_{H}\big)
=a^{\nu}f(g).
\end{equation}
We define $\Lambda$ to be the monoid of $\fa$-weights $\nu$ that occur in $\C[Z]^{(P)}$, i.e., $\Lambda$ is the monoid of lowest $\fa$-weights of  finite dimensional representations $\pi$ with $V^{H}\neq\{0\}$ and $(V^{*})^{MN}\neq\{0\}$.
It follows from (\ref{eq f(mangh)=a^nu f(g)}) that
$$
\Lambda
\subseteq(\fa/\fa_{\fh})^{*}.
$$
The rank of the lattice generated by $\Lambda$ is equal to $\rank(Z)$, see the proof of \cite[Proposition 3.13]{KnopKrotzSayagSchlichtkrull_SimpleCompactificationsAndPolarDecomposition}.

We define the submonoid $\C[Z]^{(P)}_{+}$ of $\C[Z]^{(P)}$ by
$$
\C[Z]^{(P)}_{+}
:=\Big\{f\in\C[Z]^{(P)}:f^{-1}(\C\setminus\{0\})=\bigcup_{\substack{\cO\in P\bs Z\\\cO \text{ open}}}\cO\Big\}.
$$
By \cite[Lemma 3.6]{KnopKrotzSayagSchlichtkrull_SimpleCompactificationsAndPolarDecomposition} the set $\C[Z]^{(P)}_{+}$ is non-empty.
Furthermore, we write $\Lambda_{+}$ for the submonoid of $\Lambda$ corresponding to $\C[Z]^{(P)}_{+}$.
We note that
$\C[Z]^{(P)}_{+}\C[Z]^{(P)}=\C[Z]^{(P)}_{+}$, and hence $\Lambda_{+}+\Lambda=\Lambda_{+}$. As $\C[Z]^{(P)}_{+}$ is non-empty, the submonoid $\Lambda_{+}$ has full rank.

\begin{Lemma}\label{Lemma Occurence of M-reps in regular functions}
Let $z\in Z$ be adapted, let $\sigma\in\widehat{M}$ and let $\eta\in (V_{\sigma}^{*})^{M\cap H_{z}}$. Then there exists a $\nu\in(\fa/\fa_{\fh})^{*}$ and a regular function $f_{\eta}:Z\to V_{\sigma}^{*}$ so that $f_{\eta}(z)=\eta$ and
$$
f_{\eta}(man\cdot z')=a^{\nu}\sigma^{\vee}(m)f_{\eta}(z')
\qquad(m\in M, a\in A, n\in N_{P}, z'\in Z).
$$
\end{Lemma}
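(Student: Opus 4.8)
The plan is to produce $f_\eta$ as a suitable "matrix coefficient" of a finite-dimensional $G$-representation, generalizing the way elements of $\C[Z]^{(P)}$ are realized earlier in this subsection. First I would fix an adapted point $z$ and recall that $M\cap H_z$ is a (possibly disconnected) compact subgroup of $M$. Since $\eta\in(V_\sigma^*)^{M\cap H_z}$, the idea is to enlarge $V_\sigma^*$ to a $G$-module in which $\eta$ becomes the value at $z$ of a regular section. Concretely, I would choose a finite-dimensional representation $(\pi,W)$ of $G$ together with a vector $0\neq v^*\in W^*$ which is a highest weight vector for $\pi^\vee$ (i.e. fixed by $MN_P$, with $\fa$-weight some $\mu_0$), such that $W^{H_z}\neq\{0\}$ and, moreover, the projection of $W$ onto its $M$-isotypic component of type $\sigma$ surjects onto a copy of $V_\sigma$ carrying an $M\cap H_z$-fixed functional matching $\eta$. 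Existence of such $(\pi,W)$ with nonzero $H_z$-fixed vector and nonzero $MN_P$-highest weight vector follows from quasi-affineness of $Z$ (recorded in Section \ref{Section Setup and notation}) together with the finite generation of $\C[Z]^{(P)}$ via \cite[Lemma 5.6]{KnopKrotzSayagSchlichtkrull_SimpleCompactificationsAndPolarDecomposition}: one takes a large enough ample finite-dimensional representation.

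Next I would carry out the extraction of the $\sigma$-component. Given such $W$ and an $H_z$-fixed $w_H\in W$, consider the map $Z\to W^*$, $g\cdot z\mapsto \pi^\vee(g)v^*$; this is well defined precisely because $v^*$ is $\fa\cap\fh_z$-invariant on the relevant subspace after restricting attention to the adapted structure — more carefully, one first passes to $g\mapsto \pi^\vee(g)v^*$ as a $W^*$-valued regular function on $G$ that transforms on the left by $a^{-\mu_0}\pi^\vee(m)$ under $MAN_P$, then descends it to $Z$ using that $\fl_{Q,\nc}\subseteq\fh_z$ (Proposition \ref{Prop LST holds for adapted points}) and a suitable choice inside $W$. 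The key structural input is that, because $z$ is adapted, the diffeomorphism of Proposition \ref{Prop LST holds for adapted points}(\ref{Prop LST holds for adapted points - item 3}) lets $M/(M\cap H_z)$ and $\fa/\fa_\fh$ act cleanly, so an $M\cap H_z$-fixed functional $\eta$ extends uniquely along the $MA$-directions by the rule $f_\eta(ma\cdot z)=a^\nu\sigma^\vee(m)\eta$. I would then define $f_\eta$ on the open $P$-orbit $P\cdot z$ by $f_\eta(nma\cdot z):=a^\nu\sigma^\vee(m)\eta$ for $n\in N_P$, which is well-defined by Proposition \ref{Prop LST holds for adapted points}(\ref{Prop LST holds for adapted points - item 4}), and finally identify this formula with the restriction to $P\cdot z$ of the genuinely regular $V_\sigma^*$-valued function on $Z$ obtained by composing the $W^*$-valued matrix coefficient above with the $M$-equivariant projection $W^*\to V_\sigma^*$; regularity on all of $Z$ then comes for free from the algebraic construction, so the transformation law holds on the dense open orbit and hence everywhere by continuity.

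The main obstacle I anticipate is controlling the disconnectedness of $M\cap H_z$ and ensuring that the chosen finite-dimensional $G$-representation $W$ actually contains the prescribed $\sigma$-isotypic datum rather than merely some $H_z$-fixed vector: one needs the $M$-type $\sigma$ with its specific $M\cap H_z$-fixed functional $\eta$ to appear, which requires either tensoring and decomposing several ample representations or invoking a Frobenius-reciprocity / Peter–Weyl argument on the compact fiber $M/(M\cap H_z)$ to realize $\eta$ first as a function on that fiber and then algebraize it. A secondary technical point is checking that the $\fa$-weight $\nu$ that emerges lies in $(\fa/\fa_\fh)^*$, which follows from $\fa_\fh=\fa\cap\fh_z$ and the left-$H$-invariance built into matrix coefficients of $H$-spherical representations, exactly as in \eqref{eq f(mangh)=a^nu f(g)}.
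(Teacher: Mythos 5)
Your overall strategy is the same as the paper's: reduce the lemma to realizing $\eta$ inside the space of $N_{P,\C}$-invariant regular functions on $Z_{\C}$ restricted to the compact fiber $M/(M\cap H_{z})$, and then invoke Peter--Weyl type reasoning on that fiber. The extension along the $A$- and $N_{P}$-directions by the rule $f_{\eta}(nma\cdot z)=a^{\nu}\sigma^{\vee}(m)\eta$, and the verification that $\nu\in(\fa/\fa_{\fh})^{*}$, are handled essentially as you describe.

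However, there is a genuine gap at exactly the point you flag as your ``main obstacle'' and then leave unresolved. Knowing that $\sigma$ admits an $M\cap H_{z}$-fixed functional tells you (by Frobenius reciprocity) that $\sigma^{\vee}$ occurs in $C\big(M/(M\cap H_{z})\big)$; it does \emph{not} tell you that $\sigma^{\vee}$ occurs in the subalgebra $\cA_{M}$ of restrictions to $M\cdot z$ of regular $N_{P,\C}$-invariant functions on $Z_{\C}$, which is what you actually need in order to ``algebraize'' $\eta$. Quasi-affineness of $Z$ and finite generation of $\C[Z]^{(P)}$ give you \emph{some} finite-dimensional representations with $H_{z}$-fixed and $N_{P}$-fixed vectors, but not that every relevant $M$-type with every prescribed vector $\eta$ is hit. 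The paper closes this gap by proving that $\cA_{M}$ is dense in $C\big(M/(M\cap H_{z})\big)$ via Stone--Weierstrass, which reduces to showing that $N_{P,\C}$-invariant regular functions separate the points of $M\cdot z$; that separation is the real content of the proof and rests on three inputs you do not mention: Rosenlicht's theorem that the $N_{P}$-orbits $N_{P}m_{i}\cdot z$ are closed, Birkes's theorem upgrading this to Zariski closedness of the corresponding $\diag(G)$-orbits in $G/N_{P}\times Z$, and the reductivity of $\diag(G)$ to split $\C[G/N_{P}\times Z]^{\diag(G)}=\cI_{1}^{\diag(G)}+\cI_{2}^{\diag(G)}$ and thereby produce an invariant function taking different values on the two orbits. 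Without an argument of this kind (or an equivalent one), your construction only produces $f_{\eta}$ for those $\sigma$ and $\eta$ that happen to appear in the representations you chose, and the lemma is not established in the generality stated.
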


\begin{proof}
We may assume that $(V_{\sigma}^{*})^{M\cap H_{z}}\neq \{0\}$.
It suffices to prove the existence of a regular function $\phi:Z_{\C}\to\C$ so that $\phi$ is $N_{P,\C}$-invariant and $\sigma^{\vee}$ occurs as a direct summand in the representation of $M$ generated by $\phi$. Let $z\in Z$ be adapted. We define the algebras
$$
\cA
:=\big\{\phi:Z_{\C}\to\C:\phi \text{ is regular and }\phi(n\cdot z')=\phi(z')\text{ for all }n\in N_{P_{\C}}, z'\in Z_{\C}\big\}
$$
and
$$
\cA_{M}
:=\big\{M/(M\cap H_{z})\ni m\mapsto \phi(m\cdot z): \phi\in \cA\big\}
$$
In order to prove the existence of a function $\phi$ with the properties mentioned above, it suffices to prove that $\cA_{M}$ is dense in $C\big(M/(M\cap H_{z})\big)$. For this we use the Stone-Weierstrass theorem.

Note that $\cA_{M}$ is a subalgebra of $C\big(M/(M\cap H_{z})\big)$, is closed under complex conjugation and contains the unit, i.e., the constant function $1$. By the Stone-Weierstrass theorem $\cA_{M}$ is dense in $C\big(M/(M\cap H_{z})\big)$ if $\cA_{M}$ separates points in $M/(M\cap H_{z})$.
For the latter it suffices to prove that $\cA$ separates points in $M\cdot z\subseteq Z$.

Let $m_{1},m_{2}\in M$ and assume that $m_{1}\cdot z\neq m_{2}\cdot z$. By \cite[Theorem 2]{Rosenlicht_OnQuotientVarietiesAndTheAffineEmbeddingOfCertainHomogeneousSpaces} the $N_{P}$-orbits $N_{P}m_{1}\cdot z$ and $N_{P}m_{2}\cdot z$ are closed in $Z$.
The space $N_{P}\bs Z$ is isomorphic to the quasi-affine space $G/N_{P}\times_{\diag(G)}Z$ and
$$
\cA
\simeq \C[G/N_{P}\times Z]^{\diag(G)}.
$$
Therefore, also $\cD_{1}:=\diag(G)\cdot \big(e N_{P}\times m_{1}\cdot z\big)$ and $\cD_{2}:=\diag(G)\cdot \big(e N_{P}\times m_{2}\cdot z\big)$ are closed.
It is an straightforward corollary of the main result in \cite{Birkes_OrbitsOfLinearAlgebraicGroups} that then $\cD_{1}$ and $\cD_{2}$ are also Zariski closed.
Let $\cI_{1}$ and $\cI_{2}$ be the ideals of $\C[G/N_{P}\times Z]$ of functions vanishing on $\cD_{1}$ and $\cD_{2}$, respectively.  As $m_{1}\cdot z\neq m_{2}\cdot z$ and $z$ is adapted, it follows from the local structure theorem, Proposition \ref{Prop LST holds for adapted points} that $\cD_{1}$ and $\cD_{2}$ are disjoint. Together with the fact that $\cD_{1}$ and $\cD_{2}$ are Zariski-closed this implies that
$$
\C[G/N_{P}\times Z]
=\cI_{1}+\cI_{2}.
$$
Since $\cD_{1}$ and $\cD_{2}$ are $\diag(G)$-orbits, the ideals $\cI_{1}$ and $\cI_{2}$ are $\diag(G)$-stable.
As $\diag(G)$ is reductive, it follows that
$$
\C[G/N_{P}\times Z]^{\diag(G)}
=\cI_{1}^{\diag(G)}+\cI_{2}^{\diag(G)}.
$$
In particular, there exist a $\phi_{1}\in \cI_{1}^{\diag(G)}$ and a $\phi_{2}\in \cI_{2}^{\diag(G)}$ so that $\phi_{1}+\phi_{2}$ is the constant function $1$. Now $\phi_{2}(m_{1}\cdot z)=1$ and $\phi_{2}(m_{2}\cdot z)=0$. We thus conclude that $\cA$ separates points in $M\cdot z\subseteq Z$.
\end{proof}

\subsection{Construction on open $P$-orbits}\label{Subsection Construction - construction on open orbits}
For an adapted point $z\in Z$,  $\lambda\in \rho_{P}+(\fa/\fa_{\fh})_{\C}^{*}$, a finite dimensional representation $(\sigma,V_{\sigma})$ of $M$, and $\eta\in (V_{\sigma}^{*})^{M\cap H_{z}}$ we define the function
$$
\epsilon_{z}(P:\sigma:\lambda:\eta):Z\to V_{\sigma}^{*}
$$
by
$$
\left\{
  \begin{array}{ll}
    \epsilon_{z}(P:\sigma:\lambda:\eta)(nma\cdot z)=a^{-\lambda+\rho_{P}}\sigma^{\vee}(m)\eta, & (n\in N_{Q}, a\in A, m\in M); \\
    \epsilon_{z}(P:\sigma:\lambda:\eta)(y)=0, & (y\notin P\cdot z).
  \end{array}
\right.
$$
We note that in view of Proposition \ref{Prop LST holds for adapted points} this function is well defined.

Let $\Gamma$ be the cone in $(\fa/\fa_{\fh})^{*}$ generated by $\Lambda$, i.e.,
$$
\Gamma
=\sum_{\lambda\in\Lambda}\R_{\geq 0}\lambda.
$$
Since $\Lambda$ has full rank, the interior of $\Gamma$ is non-empty.

\begin{Prop}\label{Prop construction on open orbit}
Let $z\in Z$ be adapted.
Let $(\sigma,V_{\sigma})$ be a finite dimensional unitary representation of $M$. Assume that $\eta$ is a non-zero $M\cap H_{z}$-fixed vector in $V_{\sigma}^{*}$. For $\lambda\in\rho_{P}-\Gamma+i(\fa/\fa_{\fh})^{*}$ the function $\epsilon_{z}(P:\sigma:\lambda:\eta)$ is measurable and bounded on every compact subset of $G$.
\end{Prop}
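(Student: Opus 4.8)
The plan is to reduce the whole statement to a single estimate: on the open orbit $P\cdot z$ the function $\epsilon_{z}(P:\sigma:\lambda:\eta)$ has the shape $a^{-\lambda+\rho_{P}}\sigma^{\vee}(m)\eta$, the vector part $\sigma^{\vee}(m)\eta$ has norm $\|\eta\|$ because $\sigma$ (hence $\sigma^{\vee}$) is unitary, and the $N_{Q}$-variable does not enter; so the only thing to control is the scalar $|a^{-\lambda+\rho_{P}}|=a^{\rho_{P}-\Re\lambda}$, and by hypothesis $\rho_{P}-\Re\lambda$ lies in the cone $\Gamma$. For measurability I would first note, using the diffeomorphism of Proposition~\ref{Prop LST holds for adapted points}, that $\epsilon_{z}(P:\sigma:\lambda:\eta)$ is smooth on the (open, dense) union $U$ of all open $P$-orbits --- it is given by the displayed smooth formula on $P\cdot z$ and is identically zero on every other open orbit --- and that it vanishes on $Z\setminus U$; hence it is Borel on $Z$, so $g\mapsto\epsilon_{z}(P:\sigma:\lambda:\eta)(gH)$ is Borel on $G$. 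Here one uses that $-\lambda+\rho_{P}$ vanishes on $\fa_{\fh}$, which holds since $\lambda\in\rho_{P}-\Gamma+i(\fa/\fa_{\fh})^{*}$ and $\Gamma\subseteq(\fa/\fa_{\fh})^{*}$, and that $\eta$ is $M\cap H_{z}$-fixed, so the formula is well defined.

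For boundedness on a compact $C\subseteq G$, the points $g$ with $gH\notin P\cdot z$ contribute $0$, so write $gH=nma\cdot z$ with $n\in N_{Q}$, $m\in M$, $a\in A$; it remains to bound $a^{\mu}$ with $\mu:=\rho_{P}-\Re\lambda\in\Gamma$ uniformly for $g\in C$. Since $\C[Z]^{(P)}$ is finitely generated, its weight monoid $\Lambda$ is generated by finitely many $\nu_{1},\dots,\nu_{r}$, so $\Gamma=\sum_{j}\R_{\geq0}\nu_{j}$ and $\mu=\sum_{j}t_{j}\nu_{j}$ with $t_{j}\geq0$. For each $j$ I would pick $f_{j}\in\C[Z]^{(P)}$ of weight $\nu_{j}$ and use the key fact that $f_{j}$ is nowhere zero on the open orbit $P\cdot z$: its zero set in $Z$ is $P$-stable, so it is either empty on $P\cdot z$ or all of $P\cdot z$, but $f_{j}$ is the restriction of a non-zero regular function on the irreducible variety $Z_{\C}$ and $P\cdot z$ is Zariski dense in $Z_{\C}$, so the latter is impossible; in particular $f_{j}(z)\neq0$. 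Now $f_{j}$ is continuous on $Z$, hence bounded by some $B_{j}$ on the compact image of $C$, and the semi-invariance $f_{j}(nma\cdot z)=a^{\nu_{j}}f_{j}(z)$ (valid since $N_{Q}\subseteq N_{P}$, because $P\subseteq Q$) gives $a^{\nu_{j}}=f_{j}(gH)/f_{j}(z)\leq B_{j}/|f_{j}(z)|$. Hence $a^{\mu}=\prod_{j}(a^{\nu_{j}})^{t_{j}}$ is bounded on $C$ by a constant independent of $g$, so $\sup_{g\in C}\|\epsilon_{z}(P:\sigma:\lambda:\eta)(gH)\|<\infty$.

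The hard part, such as it is, is the fact that every $f\in\C[Z]^{(P)}$ is nowhere zero on an open $P$-orbit: this is exactly what lets the weights of the admissible auxiliary functions exhaust a generating set of $\Gamma$, whereas the more obvious choice of functions in $\C[Z]^{(P)}_{+}$ only produces weights spanning $\Gamma$ up to closure --- not enough when $\lambda$ sits on the boundary of $\rho_{P}-\Gamma+i(\fa/\fa_{\fh})^{*}$. Its proof reduces to the Zariski density of an open $P$-orbit in $Z_{\C}$, which follows from the connectedness of $\underline{G}$ and of its parabolic subgroups together with the irreducibility of $Z_{\C}$; I would record this as a short lemma.
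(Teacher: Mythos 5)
Your proof is correct and follows essentially the same route as the paper: both reduce boundedness on compacta to the continuity of finitely many $P$-semi-invariant functions $f_j\in\C[Z]^{(P)}$ whose weights generate $\Gamma$, using that on the open orbit $\epsilon_{z}(P:\sigma:\lambda:\eta)$ differs from the bounded function $\epsilon_{z}(P:\sigma:\rho_{P}:\eta)$ only by the factor $\prod_j f_j^{\nu_j}$ with $\Re\nu_j\geq 0$. The only difference is that you explicitly justify $f_j(z)\neq 0$ (via Zariski density of the open orbit), a fact the paper uses implicitly when normalizing its generators by $f_j(z)=1$.
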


\begin{proof}
Let $\cO=P\cdot z$. Note that $\cO$ is open.
The function $\epsilon_{z}(P:\sigma:\lambda:\eta)$ is continuous outside of the set $\partial(\cO)$, which has measure $0$ in $G$. Therefore, $\epsilon_{z}(P:\sigma:\lambda:\eta)$ is measurable. Now let $\lambda\in\rho_{P}-\Gamma+i(\fa/\fa_{\fh})^{*}$. Let $f_{1},\dots,f_{r}$ be a set of generators of $\C[Z]^{(P)}$ with $f_{i}(z)=1$ , and let $\lambda_{1},\dots,\lambda_{r}$ be the corresponding set of generators of $\Lambda$. Let $\nu_{1},\dots,\nu_{r}\in\C$ with $\Re\nu_{i}\geq 0$ be such that
$$
\rho_{P}-\lambda
=\sum_{i=1}^{r}\nu_{i}\lambda_{i}.
$$
Then
$$
\epsilon_{z}(P:\sigma:\lambda:\eta)
=\Big(\prod_{j=1}^{r}f_{j}^{\nu_{j}}\Big)\epsilon_{z}(P:\sigma:\rho_{P}:\eta).
$$
Therefore,
$$
\|\epsilon_{z}(P:\sigma:\lambda:\eta)(x)\|_{\sigma}
=\Big(\prod_{j=1}^{r}|f_{j}(x)|^{\Re\nu_{j}}\Big)\|\epsilon_{z}(P:\sigma:\rho_{P},\eta)(x)\|_{\sigma}.
$$
As
$$
\|\epsilon_{z}(P:\sigma:\rho_{P},\eta)(x)\|_{\sigma}
=\left\{
   \begin{array}{ll}
     \|\eta\|_{\sigma}, & (x\in \cO) \\
     0, & (x\notin\cO),
   \end{array}
 \right.
$$
the function $\epsilon_{z}(P:\sigma:\rho_{P},\eta)$ is bounded.
Since the functions $f_{j}$ are continuous, it follows that $\epsilon_{z}(P:\sigma:\lambda:\eta)$ is bounded on every compact subset of $G$.
\end{proof}

For every adapted point $z\in Z$,  finite dimensional unitary representation $(\sigma,V_{\sigma})$ of $M$,  non-zero $M\cap H_{z}$-fixed vector $\eta$ in $V_{\sigma}^{*}$, and $\lambda\in\rho_{P}-\Gamma+i(\fa/\fa_{\fh})^{*}$, the function $\epsilon_{z}(P:\sigma:\lambda:\eta)$ defines in view of Proposition \ref{Prop construction on open orbit} a distribution $\mu_{z}(P:\sigma:\lambda:\eta)$ in $\cD'(Z,P:\sigma:\lambda)$ given by
\begin{equation}\label{eq Def mu_z}
\mu_{z}(P:\sigma:\lambda:\eta):\cD(Z,V_{\sigma})\to\C;
\quad\phi\mapsto \int_{Z}\Big(\epsilon_{z}(P:\sigma:\lambda:\eta)(x),\phi(x)\Big)\,dx.
\end{equation}
It follows from Proposition \ref{Prop LST holds for adapted points}, that for all $\phi\in \cD(Z,V_{\sigma})$
\begin{align}\label{eq Formula mu_z}
&\mu_{z}(P:\sigma:\lambda:\eta)(\phi)\\
\nonumber&\qquad=\int_{N_{Q}}\int_{M/M\cap H_{z}}\int_{A/A\cap H_{z}}
    a^{-\lambda+\rho_{P}-2\rho_{Q}}\Big(\sigma^{\vee}(m)\eta,\phi(nma\cdot z)\Big)\,dm\,dn\,da.
\end{align}
It is easily seen that for a given adapted point $z\in Z$, finite dimensional representation $\sigma$ of $M$, and $M\cap H_{z}$-fixed vector $\eta$ in $V_{\sigma}^{*}$, the family $\rho_{P}-\Gamma+i(\fa/\fa_{\fh})^{*}\ni\lambda\mapsto\mu_{z}(P:\sigma:\lambda:\eta)$ is a holomorphic family of distributions in $\cD'(Z,V_{\sigma})$.
We will show that this family extends meromorphically to all of $(\fa/\fa_{\fh})_{\C}^{*}$. To do so we use the theorem of Bernstein and Sato.
Our proof is similar to that of \cite[Theorem 5.1]{Olafsson_FourierAndPoissonTransformationAssociatedToASemisimpleSymmetricSpace}.

\begin{Prop}\label{Prop meromorphic continuation on open orbit}
Let $z\in Z$ be adapted.
Let $(\sigma,V_{\sigma})$ be a finite dimensional unitary representation of $M$ and let $\eta$ be a non-zero $M\cap H_{z}$-fixed vector in $V_{\sigma}^{*}$. The family
$$
\rho_{P}-\Gamma+i(\fa/\fa_{\fh})^{*}\ni\lambda \mapsto \mu_{z}(P:\sigma:\lambda:\eta)
$$
of distributions in $\cD'(Z,V_{\sigma})$ defined in (\ref{eq Def mu_z}) is holomorphic and extends to a meromorphic family on $\rho_{P}+(\fa/\fa_{\fh})_{\C}^{*}$. There exists a locally finite union $\cH$ of complex affine hyperplanes in $(\fa/\fa_{\fh})_{\C}^{*}$ of the form
\begin{equation}\label{eq Form of hyperplanes}
\{\lambda\in(\fa/\fa_{\fh})^{*}_{\C}: \lambda(X)=a\}\quad\text{for some }X\in\fa\text{ and }a\in\R,
\end{equation}
so that the poles of the family $\lambda\mapsto \mu_{z}(P:\sigma:\lambda:\eta)$ lie on $\rho_{P}+\cH$.  For $\lambda\in\rho_{P}+ (\fa/\fa_{\fh})_{\C}^{*}$ outside of the set $\rho_{P}+\cH$ the distribution $\mu_{z}(P:\sigma:\lambda:\eta)$ thus obtained is contained in $\cD'(Z,P:\sigma:\lambda)$.
\end{Prop}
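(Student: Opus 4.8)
The plan is to follow the Bernstein--Sato method, exactly as in \cite[Theorem 5.1]{Olafsson_FourierAndPoissonTransformationAssociatedToASemisimpleSymmetricSpace}. Holomorphy of the family on $\rho_{P}-\Gamma+i(\fa/\fa_{\fh})^{*}$ has already been observed from the explicit integral formula (\ref{eq Formula mu_z}) together with Proposition \ref{Prop construction on open orbit}, so the whole content is the meromorphic continuation and the control of the pole set. As a preliminary reduction I would express $\mu_{z}(P:\sigma:\lambda:\eta)$ in terms of complex powers of fixed regular functions. Let $f_{1},\dots,f_{r}\in\C[Z]^{(P)}$ be generators with $f_{i}(z)=1$, with corresponding weights $\lambda_{1},\dots,\lambda_{r}$ generating the monoid $\Lambda$, and let $L\colon\C^{r}\to(\fa/\fa_{\fh})_{\C}^{*}$ be the linear surjection $\nu\mapsto\sum_{j}\nu_{j}\lambda_{j}$ (surjective since $\Lambda$ has full rank $=\rank(Z)$). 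As in the proof of Proposition \ref{Prop construction on open orbit}, for $\nu\in\C^{r}$ with $\Re\nu_{j}\geq 0$ one has $\epsilon_{z}(P:\sigma:\rho_{P}-L\nu:\eta)=\big(\prod_{j}f_{j}^{\nu_{j}}\big)\,\epsilon_{z}(P:\sigma:\rho_{P}:\eta)$, and the right-hand side is a locally bounded $V_{\sigma}^{*}$-valued function supported on $\overline{P\cdot z}$; note moreover that each $f_{j}$ is strictly positive on $P\cdot z$ and nonnegative on $\overline{P\cdot z}$.

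The core step is then the meromorphic continuation of
$$
\widetilde\mu(\nu):=\Big(\prod_{j}f_{j}^{\nu_{j}}\Big)\,\mu_{z}(P:\sigma:\rho_{P}:\eta)\in\cD'(Z,V_{\sigma}),
$$
initially holomorphic on $\{\nu:\Re\nu_{j}\geq 0\}$. Since $Z$ is quasi-affine, the $f_{j}$, the function $\epsilon_{z}(P:\sigma:\rho_{P}:\eta)$ (which is semialgebraic by Lemma \ref{Lemma Occurence of M-reps in regular functions}), and the invariant density all pair against a compactly supported test function through finitely many semialgebraic charts, so this reduces to the classical statement on the continuation of $\nu\mapsto\int\prod_{j}f_{j}^{\nu_{j}}\psi$ on Euclidean space. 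Applying the Bernstein--Sato functional equations to the hypersurfaces $\{f_{j}=0\}$---combined either by iteration or through the multivariate version of the theorem---yields a meromorphic extension of $\widetilde\mu$ to all of $\C^{r}$ whose poles are confined to a locally finite union of affine hyperplanes $\{\nu:\sum_{j}c_{j}\nu_{j}=a\}$ with $c_{j}\in\Z$ and $a\in\R$.

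Next I would descend this family to $(\fa/\fa_{\fh})_{\C}^{*}$. The point is that $\widetilde\mu$ is constant on the fibres of $L$: if $k\in\ker L$, then $\sum_{j}k_{j}\lambda_{j}=0$, so $\prod_{j}f_{j}^{\nu_{j}+k_{j}}$ and $\prod_{j}f_{j}^{\nu_{j}}$ agree on $P\cdot z$ (where $f_{j}(nma\cdot z)=a^{\lambda_{j}}$), hence $\widetilde\mu(\nu+k)=\widetilde\mu(\nu)$ for $\Re\nu_{j}$ large, and therefore identically by uniqueness of analytic continuation. Thus $\widetilde\mu$ factors through $L$, and for any real linear section $s$ of $L$ the prescription $\mu_{z}(P:\sigma:\lambda:\eta):=\widetilde\mu\big(s(\rho_{P}-\lambda)\big)$ gives a meromorphic family on $\rho_{P}+(\fa/\fa_{\fh})_{\C}^{*}$ that is independent of $s$ and agrees with the original family on $\rho_{P}-\Gamma+i(\fa/\fa_{\fh})^{*}$. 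Pulling the pole hyperplanes of $\widetilde\mu$ back along the affine-linear map $\lambda\mapsto s(\rho_{P}-\lambda)$ produces a locally finite union $\cH$ of complex affine hyperplanes, each of the form (\ref{eq Form of hyperplanes}) because $s$, the $c_{j}$ and $\rho_{P}$ are all real, so that the poles lie in $\rho_{P}+\cH$. Finally, for $\lambda\notin\rho_{P}+\cH$ the membership $\mu_{z}(P:\sigma:\lambda:\eta)\in\cD'(Z,P:\sigma:\lambda)$ follows by analytic continuation: the defining relations $R^{\vee}(h)\mu=\mu$ for $h\in H$ and $L^{\vee}(man)\mu=a^{\lambda-\rho_{P}}\sigma^{\vee}(m^{-1})\mu$ for $m\in M$, $a\in A$, $n\in N_{P}$ hold on the initial domain, and, tested against a fixed $\phi$ with $h$, resp.\ $m,a,n$, fixed, both sides depend meromorphically, resp.\ holomorphically, on $\lambda$, so the relations persist wherever the family is regular.

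The main obstacle is the meromorphic continuation of $\widetilde\mu$ together with the precise shape of its pole locus: one must run the Bernstein--Sato machinery for a collection of regular functions on the non-compact quasi-affine space $Z$---reducing to the classical Euclidean statement via the compact support of the test functions and the semialgebraicity of the data---and verify that the $b$-functions only contribute hyperplanes of the rational/real type (\ref{eq Form of hyperplanes}). Once this is in place, the descent through $L$ and the persistence of the equivariance relations are routine.
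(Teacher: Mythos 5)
Your overall strategy is the same as the paper's (and as Olafsson's and Brylinski--Delorme's): write $\epsilon_{z}(P:\sigma:\lambda:\eta)$ as a product of complex powers of functions in $\C[Z]^{(P)}$ times a fixed regular $V_{\sigma}^{*}$-valued function coming from Lemma \ref{Lemma Occurence of M-reps in regular functions}, and continue via Bernstein--Sato. The descent through $L$ and the persistence of the equivariance relations by analytic continuation are fine. But there is a genuine gap at the core step, namely the claim that the continuation of $\widetilde\mu(\nu)=\bigl(\prod_{j}f_{j}^{\nu_{j}}\bigr)\mu_{z}(P:\sigma:\rho_{P}:\eta)$ ``reduces to the classical statement on the continuation of $\nu\mapsto\int\prod_{j}f_{j}^{\nu_{j}}\psi$.'' It does not, because of the cutoff to the single open orbit $\cO=P\cdot z$. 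The Bernstein functional equation $D_{j}\varphi^{\lambda}=b_{j}(\lambda)\varphi^{\lambda-\nu_{j}}$ is an identity for the complex powers themselves; what you need is $D_{j}\bigl(\varphi^{\lambda}\1_{\cO}\bigr)=\bigl(D_{j}\varphi^{\lambda}\bigr)\1_{\cO}$, and this fails in general: your $f_{j}$ are only required to generate $\Lambda$, so they need not vanish on $\partial\cO$ (they are not assumed to lie in $\C[Z]^{(P)}_{+}$, and even elements of $\C[Z]^{(P)}_{+}$ are nonzero on \emph{all} open orbits, of which there may be several). Consequently $\varphi^{\lambda}\1_{\cO}$ is genuinely discontinuous across $\partial\cO$, the operators $D_{j}$ produce boundary terms supported on $\partial\cO$, and the iteration that drives the continuation breaks down. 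The classical $(f)_{+}^{s}$ theory avoids this only because there the domain of integration is exactly the positivity set of $f$, which is not the case here.

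The paper closes this gap with an extra device you are missing: it picks $f_{0}\in\C[Z]^{(P)}_{+}$ (non-empty by \cite[Lemma 3.6]{KnopKrotzSayagSchlichtkrull_SimpleCompactificationsAndPolarDecomposition}), whose non-vanishing locus is precisely the union of the open $P$-orbits and which therefore vanishes on $\partial\cO$. Writing $\varphi^{\lambda}=f_{0}^{n+1}\varphi^{\lambda-(n+1)\gamma}$ for $\lambda$ in the shifted cone $(n+1)\gamma+\sum_{j}\R_{\geq 0}\nu_{j}$, where $n$ is the maximal order of the $D_{j}$ and $\gamma$ is the weight of $f_{0}$, one sees that $\varphi^{\lambda}\1_{\cO}$ is $n$ times continuously differentiable there, so the truncated functional equation $D_{j}(\varphi^{\lambda}\1_{\cO})=b_{j}(\lambda)\varphi^{\lambda-\nu_{j}}\1_{\cO}$ holds on that shifted cone, and the meromorphic continuation then proceeds by descending in $\lambda$. (The paper also works with $(f_{j}^{2})^{u_{j}}$ rather than $f_{j}^{\nu_{j}}$ so that the complex powers are globally defined; this is a minor point compared with the truncation issue.) Without this step, or an equivalent one, your argument does not establish the continuation of the truncated family, which is the entire content of the proposition.
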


\begin{proof}
Let $\nu_{1},\dots,\nu_{r}\in \Lambda$ be a basis of $(\fa/\fa_{\fh})^{*}$ and let $f_{1},\dots,f_{r}\in \C[Z]^{(P)}$ be so that
$$
f_{j}(man\cdot z)=a^{\nu_{j}}
\qquad(1\leq j\leq r, m\in M, a\in A, n\in N_{P}).
$$
Note that each $f_{j}$ is real valued and thus $f_{j}^{2}$ is non-negative.
For $\lambda\in \sum_{j=1}^{r}\R_{\geq 0}\nu_{j}+i(\fa/\fa_{\fh})^{*}$, we define
$$
\varphi^{\lambda}
:=\prod_{j=1}^{r}\big(f_{j}^{2}\big)^{u_{j}}:Z\to\C,
$$
where $u_{j}\in \C$ is determined by
$$
\lambda
=2\sum_{j=1}^{r}u_{j}\nu_{j}.
$$
By the theorem of Bernstein (see \cite[Appendice A]{BrylinskiDelorme_VecteursDistributionsH-Invariants}) there exists for every $1\leq j\leq r$ a  polynomial function  $b_{j}$ on $(\fa/\fa_{\fh})_{\C}^{*}$ and a differential operator $D_{j}$ on $Z$ with coefficients in $\C[Z][\lambda]$, so that for all $\lambda\in\sum_{j=1}^{r}\R_{\geq 1}\nu_{j}+i(\fa/\fa_{\fh})^{*}$
\begin{equation}\label{eq Functional equation phi_lambda}
D_{j}\varphi^{\lambda}
=b_{j}(\lambda)\varphi^{\lambda-\nu_{j}}.
\end{equation}
Furthermore, there exists a locally finite union $\cH'$ of complex affine hyperplanes of the form (\ref{eq Form of hyperplanes})  in $(\fa/\fa_{\fh})_{\C}^{*}$ so that the zero's of the polynomials $b_{j}$ are contained in $\cH'$.

We now write $\cO$ for the open $P$-orbit $P\cdot z$ and $\1_{\cO}$ for its characteristic function. Let $n$ be the maximum of the degrees of the differential operators $D_{j}$. Let further $f_{0}\in\C[Z]^{(P)}_{+}$ and let $\gamma\in\Lambda_{+}$ be its weight.
Then for all $\lambda\in (n+1)\gamma+\sum_{j=1}^{r}\R_{\geq0}\nu_{j}$ we have
$$
\varphi^{\lambda}
=f_{0}^{n+1}\varphi^{\lambda-(n+1)\gamma}.
$$
Since $f_{0}$ vanishes on $\partial\cO$ and $\varphi^{\lambda-(n+1)\gamma}$ is continuous, it follows that $\varphi^{\lambda}\1_{\cO}$ is at least $n$ times continuously differentiable. From (\ref{eq Functional equation phi_lambda}) it then follows that for all $\lambda\in  (n+1)\gamma+\sum_{j=1}^{r}\R_{\geq 1}\nu_{j}$ and every $1\leq j\leq r$
$$
D_{j}\Big(\varphi^{\lambda}\1_{\cO}\Big)
=\Big(D_{j}\varphi^{\lambda}\Big)\1_{\cO}
=b_{j}(\lambda)\varphi^{\lambda-\nu_{j}}\1_{\cO}.
$$
By means of this functional equation the family
$$
\lambda\mapsto \varphi^{\cO,\lambda}:=\varphi^{\lambda}\1_{\cO}
$$
can be extended to a meromorphic family of distributions on $Z$. The poles of this family lie on $\cH'$.

Let $\sigma\in\widehat{M}$ and $\eta\in (V_{\sigma}^{*})^{M\cap H_{z}}$. By Lemma \ref{Lemma Occurence of M-reps in regular functions} there exists a regular function $f_{\eta}:Z\to V_{\sigma}^{*}$ and a $\nu\in(\fa/\fa_{\fh})^{*}$ so that
$$
f_{\eta}(man\cdot z)
=a^{\nu}\sigma^{\vee}(m)\eta
\qquad(m\in M, a\in A, n\in N_{P}).
$$
Now for $\lambda\in \rho_{P}-\nu-\sum_{j=1}^{r}\R_{\geq0}\nu_{j}+i(\fa/\fa_{\fh})^{*}$
$$
\epsilon_{z}(P:\sigma:\lambda:\eta)
=\varphi^{\cO,\rho_{P}-\lambda-\nu} f_{\eta}.
$$
It follows that for these $\lambda $ the distribution $\mu_{z}(P:\sigma:\lambda:\eta)$ is given by
$$
\mu_{z}(P:\sigma:\lambda:\eta)(\phi)
=\varphi^{\cO,\rho_{P}-\lambda-\nu}\Big(\big(f_{\eta},\phi\big)\Big)
\qquad\big(\phi\in \cD(Z,V_{\sigma})\big),
$$
where $\big(f_{\eta},\phi\big)$ is short-hand notation for the function $Z\to\C$, $z'\mapsto\big(f_{\eta}(z'),\phi(z')\big)$. As $\lambda\mapsto\varphi^{\cO,\rho_{P}-\lambda-\nu}$ is a meromorphic family of distributions, it follows that $\mu_{z}(P:\sigma:\lambda:\eta)$ extends to a meromorphic family of distributions in $\cD'(Z,V_{\sigma})$. The poles of this family lie on $\rho_{Q}-\nu-\cH'$. Moreover,  $\mu_{z}(P:\sigma:\lambda:\eta)$ is contained in $\cD'(Z,P:\sigma:\lambda)$ for all $\lambda\in\rho_{P}+(\fa/\fa_{\fh})_{\C}^{*}$ outside of the poles of the family as this is true for all $\lambda$ in the open subset $\rho_{P}-\Gamma+i(\fa/\fa_{\fh})^{*}$ of $\rho_{P}+(\fa/\fa_{\fh})_{\C}^{*}$.
\end{proof}

\subsection{Construction on $P$-orbits of maximal rank}\label{Subsection Construction - Construction on max rank orbits}

\begin{Lemma}\label{Lemma a(wPw^-1:P:sigma:lambda) is isomorphism}
Let $w\in N_{G}(\fa)$ be so that (\ref{eq wSigma^+ cap -Sigma^+=wSigma(Q) cap -Sigma}) holds. Let $\sigma\in\widehat{M}$. Then for $\lambda$ in $\Ad(w)^{*}\big(\rho_{P}+(\fa/\fa_{\fh})_{\C}^{*}\big)$  outside of a locally finite set of complex affine hyperplanes of the form
\begin{equation}\label{eq Form of hyperplanes IV}
\{\lambda\in\Ad(w)^{*}\big(\rho_{P}+(\fa/\fa_{\fh})_{\C}^{*}\big): \lambda(X)=c\}\quad\text{for some }X\in\fa\text{ and }a\in\R
\end{equation}
the intertwining operator
$$
\cA(wPw^{-1}:P:\sigma:\lambda):\cD'(P:\sigma:\lambda)\to\cD'(wPw^{-1}:\sigma:\lambda)
$$
is an isomorphism.
\end{Lemma}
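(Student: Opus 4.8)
The plan is to reduce the claim about the distribution‑level operator $\cA(wPw^{-1}:P:\sigma:\lambda)$ to the classical fact that the composition of the two standard Knapp–Stein intertwining operators acts by a scalar, and that this scalar is a nonzero meromorphic function. By the commuting diagram in Proposition \ref{Prop int formula for a(S_2:S_1:xi:lambda)}, the maps $\omega^{S}_{\sigma,\lambda}$ and $\theta^{S}_{\sigma,\lambda}$ are topological isomorphisms intertwining $\cA(wPw^{-1}:P:\sigma:\lambda)$ with the transpose $A(P:wPw^{-1}:\sigma:\lambda)^{*}$ on the spaces of functionals. Hence $\cA(wPw^{-1}:P:\sigma:\lambda)$ is an isomorphism of $\cD'$‑spaces precisely when $A(P:wPw^{-1}:\sigma:\lambda)^{*}$ is an isomorphism of the dual spaces $C^{\infty}(\cdot)'$, and by duality (the transpose of a topological isomorphism between Fréchet spaces is a topological isomorphism between the strong duals) this happens exactly when the standard operator $A(P:wPw^{-1}:\sigma:\lambda):C^{\infty}(wPw^{-1}:\sigma:\lambda)\to C^{\infty}(P:\sigma:\lambda)$ is a topological isomorphism.

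Next I would invoke the standard theory of Knapp–Stein operators: $A(wPw^{-1}:P:\sigma:\lambda)$ and $A(P:wPw^{-1}:\sigma:\lambda)$ are meromorphic families of continuous $G$‑equivariant operators, and their composition is scalar,
$$
A(wPw^{-1}:P:\sigma:\lambda)\circ A(P:wPw^{-1}:\sigma:\lambda)
=\gamma(wPw^{-1}:P:\sigma:\lambda)\,\gamma(P:wPw^{-1}:\sigma:\lambda)\,\Id,
$$
as recorded in Section \ref{Subsection Distribution vectors - Intertwining operators}. The product of $\gamma$‑functions on the right is a nonzero meromorphic function on $\fa_{\C}^{*}$ (equivalently on $\Ad(w)^{*}(\rho_{P}+(\fa/\fa_{\fh})_{\C}^{*})$, after the identification made in the statement), so outside of the locally finite union of hyperplanes consisting of the poles and zeros of $\gamma(wPw^{-1}:P:\sigma:\lambda)$ and $\gamma(P:wPw^{-1}:\sigma:\lambda)$ together with the poles of the two operator families, both compositions are nonzero scalar operators, and hence $A(P:wPw^{-1}:\sigma:\lambda)$ is invertible with continuous inverse. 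Passing back through the commuting diagram shows $\cA(wPw^{-1}:P:\sigma:\lambda)$ is an isomorphism for $\lambda$ outside this exceptional set.

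Finally I would check that the exceptional set has the prescribed shape. The poles of the normalized and unnormalized intertwining operators lie on hyperplanes of the form $\{\lambda:\lambda(\alpha^{\vee})=c\}$ with $\alpha\in\Sigma$ and $c\in\R$ — this is exactly the last assertion of Section \ref{Subsection Distribution vectors - Intertwining operators} concerning the locally finite union $\cH$ — and similarly for the zeros of the $\gamma$‑functions, which are given by products of quotients of Gamma factors in $\lambda(\alpha^{\vee})$ for $\alpha\in\Sigma(P)\cap\Ad(w)^{*}(-\Sigma(P))$; intersecting these hyperplanes with the affine subspace $\Ad(w)^{*}(\rho_{P}+(\fa/\fa_{\fh})_{\C}^{*})$ yields a locally finite union of affine hyperplanes of the form (\ref{eq Form of hyperplanes IV}). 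The main obstacle, and the only genuinely delicate point, is justifying that transposition preserves topological isomorphy in this Fréchet/dual‑Fréchet setting and that the scalar identity — originally an identity of operators on $C^{\infty}$ — transfers cleanly to the distribution side; both follow from the already‑established commutativity of the diagram in Proposition \ref{Prop int formula for a(S_2:S_1:xi:lambda)} together with the fact that $A(S_1:S_2:\xi:\lambda)^{*}$ is by definition the transpose of a continuous linear map between Fréchet spaces, so no new analysis is needed beyond bookkeeping of the exceptional hyperplanes.
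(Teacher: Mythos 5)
Your overall strategy is sound and differs from the paper's: you invert $A(P:wPw^{-1}:\sigma:\lambda)$ by using that its composition with $A(wPw^{-1}:P:\sigma:\lambda)$ is the nonzero scalar $\gamma(wPw^{-1}:P:\sigma:\lambda)\gamma(P:wPw^{-1}:\sigma:\lambda)$, whereas the paper factors $A(P:wPw^{-1}:\sigma:\lambda)$ into rank-one operators along a reduced gallery $P=P_{0},\dots,P_{l}=wPw^{-1}$ and invokes Schiffmann's theorem together with the known invertibility of rank-one operators away from hyperplanes $\{\lambda:\lambda(\alpha_{j}^{\vee})=c\}$. Both routes are legitimate and both reduce the problem to the same bookkeeping question about which hyperplanes in $\fa_{\C}^{*}$ can be exceptional. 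The transposition/duality step you worry about is indeed routine.

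The genuine gap is in the last step, and it is exactly the point where the hypothesis (\ref{eq wSigma^+ cap -Sigma^+=wSigma(Q) cap -Sigma}) must be used — yet your proof never uses it. The exceptional hyperplanes in $\fa_{\C}^{*}$ are of the form $\{\lambda:\lambda(\alpha^{\vee})=c\}$ for reduced roots $\alpha\in\Sigma(wPw^{-1})\cap\Sigma(\overline{P})$. You assert that intersecting these with the affine subspace $\Ad(w)^{*}\big(\rho_{P}+(\fa/\fa_{\fh})_{\C}^{*}\big)$ yields proper affine hyperplanes of the form (\ref{eq Form of hyperplanes IV}). But if $\alpha^{\vee}\in\Ad(w)\fa_{\fh}$, then $\lambda\mapsto\lambda(\alpha^{\vee})$ is \emph{constant} on that affine subspace, and the intersection is either empty or the entire subspace; in the latter case the "exceptional set" swallows all of the parameter space and the lemma says nothing. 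To rule this out one must show that $\alpha^{\vee}\notin\Ad(w)\fa_{\fh}$ for every relevant $\alpha$. This is where the hypothesis enters: it forces $\Sigma(wPw^{-1})\cap\Sigma(\overline{P})=\Sigma(wQw^{-1})\cap\Sigma(\overline{P})$, and by Remark \ref{Rem L_Q roots} a root $\beta$ satisfies $\beta^{\vee}\in\fa_{\fh}$ precisely when $\fg_{\beta}\subseteq\fl_{Q}$; since roots in $\Sigma(Q)$ are not roots of $\fl_{Q}$, conjugating by $w$ gives $\alpha^{\vee}\notin\Ad(w)\fa_{\fh}$ for all $\alpha\in\Sigma(wQw^{-1})$. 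Without this verification your argument does not establish the stated form of the exceptional set, and for a general $w\in N_{G}(\fa)$ not satisfying (\ref{eq wSigma^+ cap -Sigma^+=wSigma(Q) cap -Sigma}) the conclusion can genuinely fail. Adding this paragraph closes the gap.
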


\begin{proof}
Let $l$ be the length of $w$ and let $P=P_{0},\cdots, P_{l}=wPw^{-1}$ be a sequence of minimal parabolic subgroups so that $A\subseteq P_{j}$ and $P_{j}$ and $P_{j+1}$ are adjacent for every $j$. For $0\leq j< l$ let $\alpha_{j}\in\Sigma(\fa)$ be the reduced root such that
$$
\Sigma(P_{j+1},\fa)\cap \Sigma(\overline{P_{j}},\fa)
\subseteq \{\alpha_{j},2\alpha_{j}\}.
$$
The rank one standard intertwining operators $A(P_{j}:P_{j+1}:\sigma:\lambda)$ are isomorphisms for $\lambda$ in $\rho_{P}+(\fa/\fa_{\cO})_{\C}^{*}$ outside a locally finite union $\cH_{j}$ of complex affine hyperplanes of the form $\{\lambda\in \fa^{*}_{\C}:\lambda(\alpha_{j}^{\vee})=c\}$ with $c\in \Q$.
See for example \cite[Proposition B.1]{KrotzKuitOpdamSchlichtkrull_InfinitesimalCharactersOfDiscreteSeriesForRealSphericalSpaces}. From \cite[Th\'eor\`eme 1]{Schiffmann_IntegralesDEntrelacementEtFonctionsDeWhittaker} it now follows that $A(P:wPw^{-1}:\sigma:\lambda)$ is an isomorphism for
$$
\lambda\in\Big(\rho_{P}+(\fa/\fa_{\cO})_{\C}^{*}\Big)\setminus\bigcup_{j=0}^{l-1}\cH_{j}.
$$
The same then holds for $A(P:wPw^{-1}:\sigma:\lambda)^{*}$ and $\cA(wPw^{-1}:P:\sigma:\lambda)$.

Since (\ref{eq wSigma^+ cap -Sigma^+=wSigma(Q) cap -Sigma}) is assumed to hold, we have
$$
\{\alpha_{j}:0\leq j<l\}
\subseteq\Sigma(wPw^{-1},\fa)\cap\Sigma(\overline{P},\fa)
=\Sigma(wQw^{-1},\fa)\cap\Sigma(\overline{P},\fa).
$$
In view of Remark \ref{Rem L_Q roots} $\alpha_{j}^{\vee}\notin \Ad(w)\fa_{\fh}$ for all $j$. Therefore, the intersection of $\cH_{j}$ with $\Ad(w)^{*}\big(\rho_{P}+(\fa/\fa_{\fh})_{\C}^{*}\big)$ is for every $1\leq j\leq l-1$ a locally finite union of affine hyperplanes of the form  (\ref{eq Form of hyperplanes IV}).
\end{proof}

We now come to construction of distributions on maximal rank orbits.

\begin{Prop}\label{Prop construction on max rank orbits}
Let $w\in N_{G}(\fa)$ be so that (\ref{eq wSigma^+ cap -Sigma^+=wSigma(Q) cap -Sigma}) holds. Let further $\cO\in w\cdot (P\bs Z)_{\open}$ and let $z\in \cO$ be weakly adapted.
Now the $wPw^{-1}$-orbit $wPw^{-1}\cdot z$ is open in $Z$ and $z$ is adapted to $wPw^{-1}=MAwN_{P}w^{-1}$.
Let $(\sigma,V_{\sigma})$ be a finite dimensional unitary representation of $M$ and let $\eta$ be a $M\cap H_{z}$-fixed vector in $V_{\sigma}^{*}$.
The assignment
\begin{equation}\label{eq mu_(O_w)=a(wPw^(-1):P)mu_O}
\lambda\mapsto \mu_{z}(P:\sigma:\lambda:\eta):=\cA(wPw^{-1}:P:\sigma:\lambda)^{-1}\mu_{z}(wPw^{-1}:\sigma:\lambda:\eta)
\end{equation}
defines a meromorphic family on $\rho_{wPw^{-1}}+(\fa/\fa_{\cO})_{\C}^{*}=\Ad(w)^{*}\big(\rho_{P}+(\fa/\fa_{\fh})_{\C}^{*}\big)$ of distributions in $\cD'(Z,P:\sigma:\lambda)$.
These distributions have the following properties.
\begin{enumerate}[(i)]
\item\label{Prop construction on max rank orbits - item 2}  If
$$
\lambda\in
\rho_{wPw^{-1}}-\Ad^{*}(w)\Gamma+i(\fa/\fa_{\cO})^{*}
=\Ad(w)^{*}\big(\rho_{P}-\Gamma+i(\fa/\fa_{\fh})^{*}\big),
$$
then the distribution $\mu_{z}(P:\sigma:\lambda:\eta)$ is for $\phi\in\cD(G,V_{\sigma})$ given by the absolutely convergent integral
\begin{align}
\label{eq mu_cO identity}
&\mu_{z}(P:\sigma:\lambda:\eta)(\phi)\\
\nonumber
&\qquad=\int_{N_{P}\cap wN_{Q}w^{-1}}\int_{M/M\cap H_{z}}\int_{A/A\cap H_{z}}a^{-\lambda+\Ad^{*}(w)\rho_{P}-2\Ad^{*}(w)\rho_{Q}}\\
\nonumber
    &\qquad\qquad\qquad\qquad\qquad\qquad\qquad\qquad\qquad\times
        \Big(\sigma^{\vee}(m)\eta,\phi(nma\cdot z)\Big)\,da\,dm\,dn.
\end{align}
\item\label{Prop construction on max rank orbits - item 3}
There exists a locally finite union $\cH$ of complex affine hyperplanes of the form
\begin{equation}\label{eq Form of hyperplanes II}
\{\lambda\in(\fa/\fa_{\cO})^{*}_{\C}: \lambda(Y)=a\}\quad\text{for some }Y\in\fa\text{ and }a\in\R
\end{equation}
in $(\fa/\fa_{\cO})_{\C}^{*}$, so that the poles of the family $\lambda\mapsto \mu_{z}(P:\sigma:\lambda:\eta)$ lie on $\rho_{wPw^{-1}}+\cH$.
\item\label{Prop construction on max rank orbits - item 4}
For every $\lambda\in\rho_{wPw^{-1}}+\Big((\fa/\fa_{\cO})_{\C}^{*}\setminus\cH\Big)$  and $\eta\in (V_{\sigma}^{*})^{M\cap H_{z}}\setminus \{0\}$ we have
$$
\supp \mu_{z}(P:\sigma:\lambda:\eta)=\overline{\cO}.
$$
\item\label{Prop construction on max rank orbits - item 1}
Up to scaling the distributions $\mu_{z}(P:\sigma:\lambda:\eta)$ do not depend on the choice of $w$, i.e., if $w'\in N_{G}(\fa)$ satisfies (\ref{eq wSigma^+ cap -Sigma^+=wSigma(Q) cap -Sigma}) and $w\cdot(P\bs Z)_{\open}=w'\cdot(P\bs Z)_{\open}$,
then there exists a $c>0$ so that
$$\mu_{z}(P:\sigma:\lambda:\eta)
=c\cA(w'P{w'}^{-1}:P:\sigma:\lambda)^{-1}\mu_{z}(w'P{w'}^{-1}:\sigma:\lambda:\eta)
$$
as a meromorphic identity on $\rho_{wPw^{-1}}+(\fa/\fa_{\cO})_{\C}^{*}$.
\end{enumerate}
\end{Prop}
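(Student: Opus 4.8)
The plan is to build $\mu_z(P:\sigma:\lambda:\eta)$ out of the open-orbit construction of Section~\ref{Subsection Construction - construction on open orbits} applied to the parabolic $wPw^{-1}$, transported back to $P$ by the inverse of a standard intertwining operator, and then to read off all four assertions using the structure theory of maximal rank orbits from Section~\ref{Section Orbits of max rank}.

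\emph{Step 1: geometry.} From Proposition~\ref{Prop limits of max rank orbits are conjugates of h_empty}, Proposition~\ref{Prop parameterization weakly adapted points} and the description of the $W$-action in Theorem~\ref{Thm properties of W-action}, the hypotheses $\cO\in w\cdot(P\bs Z)_{\open}$, $z\in\cO$ weakly adapted, and (\ref{eq wSigma^+ cap -Sigma^+=wSigma(Q) cap -Sigma}) force — after, if necessary, replacing $z$ by an $MA$-translate, which leaves $\cO$ unchanged and merely transports $\eta$ — that $\fh_{z,X}=\Ad(w)\fh_{\emptyset}$ for some order-regular $X\in\fa^{-}$; indeed the $W$-coset determined by $\fh_{z,X}$ is the one associated to $\cO$, and by Remark~\ref{Rem Orbits of max rank}~(\ref{Rem Orbits of max rank - item 1}) the representative $w$ satisfying (\ref{eq wSigma^+ cap -Sigma^+=wSigma(Q) cap -Sigma}) already realizes it. Then Proposition~\ref{Prop limits of max rank orbits are conjugates of h_empty} gives that $wPw^{-1}\cdot z$ is open, Proposition~\ref{Prop parameterization weakly adapted points}~(\ref{Prop parameterization weakly adapted points - item 2}) that $w^{-1}\cdot z$ is adapted to $P$, hence $z$ is adapted to $wPw^{-1}=MAwN_{P}w^{-1}$ with associated Levi-parabolic $wQw^{-1}$ and $\fa\cap\fh_{z}=\fa_{\cO}=\Ad(w)\fa_{\fh}$; and Theorem~\ref{Thm structure theorem for wPw^(-1) cdot z} supplies the diffeomorphisms (\ref{eq local structure thm for Pw^(-1) cdot z})--(\ref{eq decomp of wPw^(-1) wrt O}), which are the combinatorial backbone of the rest.

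\emph{Step 2: the family and item~(\ref{Prop construction on max rank orbits - item 3}).} Running Section~\ref{Subsection Construction - construction on open orbits} with $P$ replaced by $wPw^{-1}$ and $z$ the $wPw^{-1}$-adapted point of Step~1 (so $\fa_{\fh}$, $\Gamma$, $\rho_{P}$, $\rho_{Q}$ are replaced by $\Ad(w)\fa_{\fh}$, $\Ad^{*}(w)\Gamma$, $\rho_{wPw^{-1}}=\Ad^{*}(w)\rho_{P}$, $\rho_{wQw^{-1}}=\Ad^{*}(w)\rho_{Q}$), Propositions~\ref{Prop construction on open orbit} and \ref{Prop meromorphic continuation on open orbit} produce the holomorphic family $\lambda\mapsto\mu_{z}(wPw^{-1}:\sigma:\lambda:\eta)$ on $\rho_{wPw^{-1}}-\Ad^{*}(w)\Gamma+i(\fa/\fa_{\cO})^{*}$, its meromorphic continuation to $\rho_{wPw^{-1}}+(\fa/\fa_{\cO})^{*}_{\C}$ with poles on hyperplanes of the form (\ref{eq Form of hyperplanes II}), its membership in $\cD'(Z,wPw^{-1}:\sigma:\lambda)$, and its support $\overline{wPw^{-1}\cdot z}$ away from poles. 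I then compose with $\cA(wPw^{-1}:P:\sigma:\lambda)^{-1}$: by Lemma~\ref{Lemma a(wPw^-1:P:sigma:lambda) is isomorphism} this operator is defined and is an isomorphism $\cD'(wPw^{-1}:\sigma:\lambda)\to\cD'(P:\sigma:\lambda)$ for $\lambda$ outside a locally finite union of hyperplanes of the form (\ref{eq Form of hyperplanes IV}), and it depends meromorphically on $\lambda$ because it equals, up to a scalar meromorphic factor built from the $\gamma$-functions, the operator $\cA(P:wPw^{-1}:\sigma:\lambda)$. Hence $\lambda\mapsto\mu_{z}(P:\sigma:\lambda:\eta)$ is a meromorphic family; since $\theta$, $\omega$ and the standard intertwining operators are $G$-equivariant, $\cA(wPw^{-1}:P)^{-1}$ intertwines the right $H$-actions, so $\mu_{z}(P:\sigma:\lambda:\eta)\in\cD'(P:\sigma:\lambda)^{H}=\cD'(Z,P:\sigma:\lambda)$; and its poles lie in the union of the two preceding hyperplane families, which is again of the form (\ref{eq Form of hyperplanes II}). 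This gives the opening assertions and item~(\ref{Prop construction on max rank orbits - item 3}).

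\emph{Step 3: items~(\ref{Prop construction on max rank orbits - item 2}) and (\ref{Prop construction on max rank orbits - item 4}).} For the integral formula I would compute directly on the subcone where, in addition to $\lambda\in\rho_{wPw^{-1}}-\Ad^{*}(w)\Gamma+i(\fa/\fa_{\cO})^{*}$, one has $\langle\Re\lambda,\alpha\rangle>0$ for all $\alpha\in\Sigma(\fa:P)\cap-\Sigma(\fa:wPw^{-1})$. There Proposition~\ref{Prop int formula for a(S_2:S_1:xi:lambda)} writes $\cA(P:wPw^{-1}:\sigma:\lambda)$ as integration over $N_{P}\cap\overline{N}_{wPw^{-1}}$, which by (\ref{eq wSigma^+ cap -Sigma^+=wSigma(Q) cap -Sigma}) equals $N_{P}\cap w\overline{N}_{Q}w^{-1}=\theta(\overline{N}_{P}\cap wN_{Q}w^{-1})$. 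Feeding the explicit integral for $\mu_{z}(wPw^{-1}:\sigma:\lambda:\eta)$ (the conjugate of (\ref{eq Formula mu_z}), over $N_{wQw^{-1}}=wN_{Q}w^{-1}$) into $[\cA(wPw^{-1}:P)^{-1}\mu_{z}(wPw^{-1}:\sigma:\lambda:\eta)](\phi)$, factoring $wN_{Q}w^{-1}=(N_{P}\cap wN_{Q}w^{-1})(\overline{N}_{P}\cap wN_{Q}w^{-1})$ as in the proof of Theorem~\ref{Thm structure theorem for wPw^(-1) cdot z}, and using that $\epsilon_{z}(wPw^{-1}:\sigma:\lambda:\eta)$ depends only on the $MA$-coordinate in (\ref{eq local structure thm for Pw^(-1) cdot z}), the iterated integral collapses: integration in the $N_{P}\cap w\overline{N}_{Q}w^{-1}$-variable absorbs the $\overline{N}_{P}\cap wN_{Q}w^{-1}$-factor — these being opposite unipotent groups with roots equal up to sign — and the resulting Jacobian converts $a^{-\lambda+\Ad^{*}(w)\rho_{P}-2\Ad^{*}(w)\rho_{Q}}$ integrated over $wN_{Q}w^{-1}\times M\times A$ into the same exponent integrated over $(N_{P}\cap wN_{Q}w^{-1})\times M\times A$, i.e. (\ref{eq mu_cO identity}). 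Absolute convergence and the fact that (\ref{eq mu_cO identity}) defines a distribution on $Z$ for \emph{all} $\lambda$ in the cone (not just the subcone) are obtained exactly as in Proposition~\ref{Prop construction on open orbit}, by writing the density as a product of $H$-semispherical regular functions vanishing on $\partial\cO$ times the bounded density at $\lambda=\rho_{wPw^{-1}}$; then (\ref{eq mu_cO identity}) agrees with $\mu_{z}(P:\sigma:\lambda:\eta)$ on the full cone since both are holomorphic there. Item~(\ref{Prop construction on max rank orbits - item 4}) follows: on the cone, by (\ref{eq mu_cO identity}) and the diffeomorphism (\ref{eq local structure thm for O}), $\mu_{z}(P:\sigma:\lambda:\eta)$ is a locally integrable function supported on $\overline{\cO}$ and non-vanishing on $\cO$ when $\eta\neq0$; for general $\lambda\notin\cH$ the inclusion $\overline{\cO}\subseteq\supp$ follows by meromorphy from the non-vanishing of the restriction to $\cO$, and $\supp\subseteq\overline{\cO}$ from $\cA(wPw^{-1}:P)\mu_{z}(P:\sigma:\lambda:\eta)=\mu_{z}(wPw^{-1}:\sigma:\lambda:\eta)$, whose support is $\overline{wPw^{-1}\cdot z}=(\overline{N}_{P}\cap wN_{Q}w^{-1})\cdot\cO$ by (\ref{eq decomp of wPw^(-1) wrt O}), together with the support bound for the intertwining integral and the left-$P$-invariance of $\supp\mu_{z}(P:\sigma:\lambda:\eta)$.

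\emph{Step 4: item~(\ref{Prop construction on max rank orbits - item 1}).} If $w'$ also satisfies (\ref{eq wSigma^+ cap -Sigma^+=wSigma(Q) cap -Sigma}) and $w'\cdot(P\bs Z)_{\open}=w\cdot(P\bs Z)_{\open}$, then by Remark~\ref{Rem Orbits of max rank}~(\ref{Rem Orbits of max rank - item 1}) (stabilizer of $\fh_{\emptyset}$ in $N_{G}(\fa)$ equal to $\cZ=N_{L_{Q}}(\fa)$) any two admissible choices differ by an element $v\in\cZ$, $w'=wv$, and $\Ad(w)\fa_{\fh}=\fa_{\cO}=\Ad(w')\fa_{\fh}$ forces $v\in N_{G}(\fa_{\fh})$. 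Since $v\in L_{Q}$ normalizes $N_{Q}$ and fixes $\rho_{Q}$, and $v\in N_{G}(\fa_{\fh})$ gives $(\Ad^{*}(v)\rho_{P})|_{\fa_{\fh}}=\rho_{P}|_{\fa_{\fh}}$, the right-hand side of (\ref{eq mu_cO identity}) is unchanged under $w\rightsquigarrow w'$ up to the conventional normalization of Haar measure on $N_{P}\cap wN_{Q}w^{-1}$; by item~(\ref{Prop construction on max rank orbits - item 2}) and uniqueness of meromorphic continuation, $\mu_{z}(P:\sigma:\lambda:\eta)$ is independent of the choice of $w$ up to a positive scalar.

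I expect the main obstacle to be the bookkeeping in Step~3: correctly tracking the root subspaces, the order of the unipotent factors, and the Jacobian produced when one substitutes the open-orbit integral into the intertwining integral, while simultaneously controlling the density near $\partial\cO$ so that (\ref{eq mu_cO identity}) is a distribution on the whole cone. Everything else is, modulo this computation, a matter of assembling results already established in Sections~\ref{Section Orbits of max rank} and \ref{Section Distribution vectors}.
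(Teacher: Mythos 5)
Your overall route---construct the distribution on the open $wPw^{-1}$-orbit and pull it back by $\cA(wPw^{-1}:P:\sigma:\lambda)^{-1}$---is exactly the paper's, and Steps 1 and 2 assemble the right ingredients for the meromorphy statement and item (\ref{Prop construction on max rank orbits - item 3}). The genuine gap is in Step 4. You assume two admissible choices satisfy $w'=wv$ with $v\in\cZ=N_{L_Q}(\fa)$, quoting the stabilizer of $\fh_{\emptyset}$; but the hypothesis of item (\ref{Prop construction on max rank orbits - item 1}) is only $w\cdot(P\bs Z)_{\open}=w'\cdot(P\bs Z)_{\open}$, which by Theorem \ref{Thm properties of W-action} (\ref{Thm properties of W-action - item 2}) pins $w^{-1}w'$ down only modulo the larger group $\cW$. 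The element realizing $\fh_{z,X}=\Ad(w)\fh_{\emptyset}$ depends on the choice of the order-regular $X\in\fa^{-}$ (Remark \ref{Rem Orbits of max rank} (\ref{Rem Orbits of max rank - item 3}) and Example \ref{Ex Limits in group case}), so $w^{-1}w'\in\cW\setminus\cZ$ genuinely occurs. Then $N_{P}\cap w'N_{Q}{w'}^{-1}$ and $N_{P}\cap wN_{Q}w^{-1}$ are two \emph{different} complements of $N_{P}\cap H_{y}$ in $N_{P}$ (Proposition \ref{Prop decomp of N}), and one must compare the two measures they induce on $N_{P}/(N_{P}\cap H_{ma\cdot z})$: the ratio is a character $c\,a^{\nu}$ whose exponent $\nu$ is then forced by the required $A$-equivariance of $\cA(w'P{w'}^{-1}:P:\sigma:\lambda)$ applied to the result. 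Your argument does not cover this case, and the auxiliary claim $(\Ad^{*}(v)\rho_{P})|_{\fa_{\fh}}=\rho_{P}|_{\fa_{\fh}}$ is not justified.

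A second, smaller issue is the direction of the computation in Step 3. Realizing $\cA(wPw^{-1}:P:\sigma:\lambda)^{-1}$ as the integral over $N_{P}\cap w\overline{N}_{Q}w^{-1}$ and letting it ``absorb'' the $\overline{N}_{P}\cap wN_{Q}w^{-1}$-integration is a Gindikin--Karpelevich-type collapse: it produces the scalar $\gamma(P:wPw^{-1}:\sigma:\lambda)\gamma(wPw^{-1}:P:\sigma:\lambda)$, while the inverse operator equals $\cA(P:wPw^{-1}:\sigma:\lambda)$ only after dividing by that same scalar; you account for neither, and performing the collapse directly on iterated integrals is itself a nontrivial computation. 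The clean argument runs forward: the decomposition $(\overline{N}_{P}\cap wN_{Q}w^{-1})\times(N_{P}\cap wN_{Q}w^{-1})\to wN_{Q}w^{-1}$ from Theorem \ref{Thm structure theorem for wPw^(-1) cdot z} exhibits $\mu_{z}(wPw^{-1}:\sigma:\lambda:\eta)$ as $\cA(wPw^{-1}:P:\sigma:\lambda)$ applied, via (\ref{eq a(S_2:S_1)mu}), to the right-hand side $\chi$ of (\ref{eq mu_cO identity}); one then inverts using Lemma \ref{Lemma a(wPw^-1:P:sigma:lambda) is isomorphism} and removes the genericity restriction by holomorphy in $\lambda$. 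For this you must also verify that $\chi$ actually lies in $\cD'(P:\sigma:\lambda)$---in particular its left $N_{P}$-invariance, which does not follow from Proposition \ref{Prop construction on open orbit} but from rewriting the $N_{P}\cap wN_{Q}w^{-1}$-integral as an integral over $N_{P}/(N_{P}\cap H_{ma\cdot z})$ against the invariant measure, again via Proposition \ref{Prop decomp of N}. Finally, in item (\ref{Prop construction on max rank orbits - item 4}) the inclusion $\supp\subseteq\overline{\cO}$ for general $\lambda$ should come from meromorphy of the restriction to $Z\setminus\overline{\cO}$; the support of the image under $\cA(wPw^{-1}:P:\sigma:\lambda)$ bounds the wrong side.
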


\begin{Rem}
For reductive symmetric spaces the distributions $\mu_{z}(Q:\xi:\lambda:\eta)$ from Proposition \ref{Prop construction on max rank orbits} were constructed in \cite{vdBanKuit_NormalizationsOfEisensteinIntegrals}. The proof uses the same crucial point: the geometric decomposition (\ref{eq decomp of wPw^(-1) wrt O}) in Theorem \ref{Thm structure theorem for wPw^(-1) cdot z} translates to a decomposition of a distribution $\mu_{z}(P:\sigma:\lambda:\eta)$ for an adapted point $z$, as constructed in Section \ref{Subsection Construction - construction on open orbits}, into an intertwining operator and a distribution that transforms under a conjugate minimal parabolic subgroup $wPw^{-1}$ and is supported on the closure of a non-open $wPw^{-1}$-orbit. In \cite{vdBanKuit_NormalizationsOfEisensteinIntegrals} the objects under consideration are $H$-invariant functionals on $C^{\infty}(P:\sigma:\lambda)$; we consider here distributions in $\cD'(Z,P:\sigma:\lambda)$. The resulting analysis is formally the same. However, we choose here to look at distributions rather than functionals since in this way we can avoid working with densities.
\end{Rem}

\begin{proof}
In view of Proposition \ref{Prop meromorphic continuation on open orbit} the family (\ref{eq mu_(O_w)=a(wPw^(-1):P)mu_O}) is a meromorphic family of distributions in $\cD'(Z,P:\sigma:\lambda)$. It follows from Proposition \ref{Prop meromorphic continuation on open orbit} and Lemma \ref{Lemma a(wPw^-1:P:sigma:lambda) is isomorphism} that the poles of this family lie on a locally finite union of complex affine hyperplanes of the form (\ref{eq Form of hyperplanes II}). This proves (\ref{Prop construction on max rank orbits - item 3}).

We move on to (\ref{Prop construction on max rank orbits - item 2}).
Let $\lambda\in\Ad(w)^{*}\big(\rho_{P}-\Gamma+i(\fa/\fa_{\fh})^{*}\big)$ and $\phi\in\cD(Z,V_{\sigma})$.
By Proposition \ref{Prop limits of max rank orbits are conjugates of h_empty} the $wPw^{-1}$-orbit $wPw^{-1}\cdot z $ is open. Moreover, if $P=MAN_{P}$ is replaced by $wPw^{-1}=MA(wN_{P}w^{-1})$, then the point $z$ is adapted. Therefore, it follows from Proposition \ref{Prop construction on open orbit} that the integral
\begin{align*}
&\int_{wN_{Q}w^{-1}}\int_{M/M\cap H_{z}}\int_{A/A\cap H_{z}}\int_{K}|\big(\eta,\phi(knma\cdot z)\big)|\,dk \, a^{-\Re\lambda+\Ad^{*}(w)\rho_{P}-2\Ad^{*}(w)\rho_{Q}}\,da\,dm\,dn
\end{align*}
is absolutely convergent.
The product map
$$
(wN_{Q}w^{-1}\cap \overline{N}_{P})\times(wN_{Q}w^{-1}\cap N_{P})\to wN_{Q}w^{-1};
\quad(\overline{n},n)\mapsto \overline{n}n
$$
is a diffeomorphism with Jacobean equal to the constant function $1$. Therefore, we may replace the integral over $wN_{Q}w^{-1}$ by a repeated integral, the first over $wN_{Q}w^{-1}\cap N_{P}$ and the second over $wN_{Q}w^{-1}\cap \overline{N}_{P}$.
For $\phi\in \cD(Z,V_{\sigma})$ we set
\begin{align*}
&\chi_{z}(P:\sigma:\lambda:\eta)(\phi)\\
&\qquad:=\int_{N_{P}\cap wN_{Q}w^{-1}}\int_{M/M\cap H_{z}}\int_{A/A\cap H_{z}}a^{-\lambda+\Ad^{*}(w)\rho_{P}-2\Ad^{*}(w)\rho_{Q}}\\
\nonumber
    &\qquad\qquad\qquad\qquad\qquad\qquad\qquad\qquad\times
        \Big(\sigma^{\vee}(m)\eta,\phi(nma\cdot z)\Big)\,da\,dm\,dn.
\end{align*}
It follows from Fubini's theorem that the integral  $\chi_{z}(P:\sigma:\lambda:\eta)(L_{g}\phi)$ is absolutely convergent for almost every $g\in G$, and the resulting function $I(\phi):g\mapsto \chi_{z}(P:\sigma:\lambda:\eta)(L_{g}\phi)$ is locally integrable on $G$.
We claim that the integral is absolutely convergent for every $g\in G$ and that $I(\phi)$ is smooth. Indeed, in view of \cite[Th\'eor\`eme 3.1]{DixmierMalliavin_FactorisationsDeFunctionsEtDeVecteursIndefinimentDifferentiables} we may write $\phi$ is a finite sum of convolutions $\psi*\chi$ with $\psi\in \cD(G)$ and $\chi\in\cD(Z,V_{\sigma})$.
It follows from the above analysis that the integral
$$
\int_{G}\psi(y)I(\chi)(y^{-1}g)\,dy
$$
is absolutely convergent for every $g\in G$. Moreover, it depends smoothly on $g$ and by Fubini's theorem it is equal to $I(\psi*\chi)(g)$. This proves the claim.
It is now easily seen that $\Ad(w)^{*}\big(\rho_{P}-\Gamma+i(\fa/\fa_{\fh})^{*}\big)\ni\lambda\mapsto \chi_{z}(P:\sigma:\lambda:\eta)$ defines a holomorphic family of distributions in $\cD'(Z,V_{\sigma})$.

We claim that $\chi_{z}(P:\sigma:\lambda:\eta)\in\cD'(Z,P:\sigma:\lambda)$. To prove the claim, we first note that
$$
L^{\vee}(ma)\chi_{z}(P:\sigma:\lambda:\eta)
=a^{\lambda-\rho_{P}}\sigma^{\vee}(m^{-1})\chi_{z}(P:\sigma:\lambda:\eta)
$$
for every $m\in M$ and $a\in A$. To prove the claim it thus suffices to show that
\begin{equation}\label{eq L(N)mu=mu}
L^{\vee}(n)\chi_{z}(P:\sigma:\lambda:\eta)
=\chi_{z}(P:\sigma:\lambda:\eta)
\qquad(n\in N_{P}).
\end{equation}
Let $M_{0}$ be a submanifold of $M$ so that
$$
M_{0}\to M/(M\cap H_{z});
\quad m_{0}\mapsto m_{0}(M\cap H_{z})
$$
is a diffeomorphism onto an open and dense subset of $M/(M\cap H_{z})$ and let $d\mu$ be the pull back of the invariant measure on $M/(M\cap H_{z})$ along this map. Let further $A_{0}$ be a closed subgroup of $A$ so that
$$
A_{0}\to A/(A\cap wH_{z}w^{-1});
\quad a_{0}\mapsto a_{0}(A\cap wH_{z}w^{-1})
$$
is a diffeomorphism. For every $p\in P$ the map
\begin{equation}\label{eq N cap wN_Q w^-1 diffeomorphic to N/N cap H}
N_{P}\cap wN_{Q}w^{-1}\to N_{P}/(N_{P}\cap H_{p\cdot z});
\quad n\mapsto n(N_{P}\cap H_{p\cdot z})
\end{equation}
is a diffeomorphism by Proposition \ref{Prop decomp of N}.
We normalize the $N_{P}$-invariant measure $d_{p\cdot z}\nu$ on $N_{P}/N_{P}\cap H_{p\cdot z}$ so that its pull-back along (\ref{eq N cap wN_Q w^-1 diffeomorphic to N/N cap H}) is the Haar measure on $N_{P}\cap wN_{Q}w^{-1}$.
After changing the order of integration we get for all $\phi\in\cD(G,V_{\sigma})$
\begin{align}
\nonumber&\chi_{z}(P:\sigma:\lambda:\eta)(\phi)\\
\label{eq alternative formula chi}&\quad=\int_{M_{0}}\int_{A_{0}}\int_{N_{P}/(N_{P}\cap H_{ma\cdot z})}a^{-\lambda+\Ad^{*}(w)\rho_{P}-2\Ad^{*}(w)\rho_{Q}}\\
\nonumber &\quad\qquad\qquad\qquad\qquad\qquad\qquad\qquad\times
    \Big(\sigma^{\vee}(m)\eta,\phi(nma\cdot z)\Big)\,d\nu_{ma\cdot z}(n)\,da\,d\mu(m).
\end{align}
The identity (\ref{eq L(N)mu=mu}) follows from the invariance of the measures on the homogeneous spaces $N_{P}/N_{P}\cap H_{p\cdot z}$.
We have thus proven the claim that $\chi_{z}(P:\sigma:\lambda:\eta)\in\cD'(Z,P:\sigma:\lambda)$.

We move on to show that $\chi_{z}(P:\sigma:\lambda:\eta)=\mu_{z}(P:\sigma:\lambda:\eta)$.
By (\ref{eq wSigma^+ cap -Sigma^+=wSigma(Q) cap -Sigma}) and (\ref{eq a(S_2:S_1)mu}) we have for all $\phi\in\cD(G,V_{\sigma})$
\begin{align*}
&\big[\cA(wPw^{-1}:P:\sigma:\lambda)\chi_{z}(P:\sigma:\lambda:\eta)\big](\phi)\\
&\qquad=\int_{\overline{N}_{P}\cap wN_{Q}w^{-1}}\int_{N_{P}\cap wN_{Q}w^{-1}}\int_{M/M\cap H_{z}}\int_{A/A\cap H_{z}}
    a^{-\lambda+\Ad^{*}(w)\rho_{P}-2\Ad^{*}(w)\rho_{Q}}\\
    &\qquad\qquad\qquad\qquad\qquad\qquad\qquad\qquad\qquad\times
    \Big(\sigma^{\vee}(m)\eta,\phi(\overline{n}nma\cdot z)\Big)\,da\,dm\,dn\,d\overline{n}\\
&\qquad=\int_{wN_{Q}w^{-1}}\int_{M/M\cap H_{z}}\int_{A/A\cap H_{z}}
    a^{-\lambda+\Ad^{*}(w)\rho_{P}-2\Ad^{*}(w)\rho_{Q}}\\
    &\qquad\qquad\qquad\qquad\qquad\qquad\qquad\qquad\qquad\times
    \Big(\sigma^{\vee}(m)\eta,\phi(nma\cdot z)\Big)\,da\,dm\,dn.
\end{align*}
The right-hand side is equal to $\mu_{z}(wPw^{-1}:\sigma:\lambda:\eta)(\phi)$, and hence
$$
\cA(wPw^{-1}:P:\sigma:\lambda)\chi_{z}(P:\sigma:\lambda:\eta)
=\cA(wPw^{-1}:P:\sigma:\lambda)\mu_{z}(P:\sigma:\lambda:\eta).
$$
It follows that (\ref{eq mu_cO identity}) holds for $\lambda\in \Ad(w)^{*}\big(\rho_{P}-\Gamma+i(\fa/\fa_{\fh})^{*}\big)$ for which the intertwining operator $\cA(wPw^{-1}:P:\sigma:\lambda)$ is an isomorphism. In view of Lemma \ref{Lemma a(wPw^-1:P:sigma:lambda) is isomorphism} this is the case for $\lambda$ outside of a locally finite union of hyperplanes. Since $\chi_{z}(P:\sigma:\lambda:\eta)$ depends holomorphically and $\mu_{z}(P:\sigma:\lambda:\eta)$ meromorphically on $\lambda$, the identity (\ref{eq mu_cO identity}) holds in fact for all $\lambda\in\Ad(w)^{*}\big(\rho_{P}-\Gamma+i(\fa/\fa_{\fh})^{*}\big)$.

We move on to prove (\ref{Prop construction on max rank orbits - item 4}).
Assume that $\eta\neq 0$. From (\ref{eq mu_cO identity}) it follows that $\supp \mu_{z}(P:\sigma:\lambda:\eta)\subseteq \overline{\cO}$. Since the support of $\mu_{z}(P:\sigma:\lambda:\eta)$ is a union of $P$-orbits in $Z$, it suffices to prove that the restriction of $\mu_{z}(P:\sigma:\lambda:\eta)$ to the open subset $Z\setminus\partial\cO$ is non-zero. The right-hand side of  (\ref{eq mu_cO identity}) defines for every $\lambda\in\Ad(w)^{*}\big(\rho_{P}-\Gamma+i(\fa/\fa_{\fh})^{*}\big)$ a non-zero distribution on $Z\setminus\partial\cO$. Moreover, the dependence on $\lambda$ is holomorphic, and hence the right-hand side of  (\ref{eq mu_cO identity}) defines a holomorphic family of distributions on $Z\setminus\partial\cO$ with family parameter $\lambda\in\Ad(w)^{*}\big(\rho_{P}-\Gamma+i(\fa/\fa_{\fh})^{*}\big)$. As this family coincides on a non-empty open subset of $\Ad(w)^{*}\big(\rho_{P}-\Gamma+i(\fa/\fa_{\fh})^{*}\big)$ with the meromorphic family
$$
\lambda\mapsto \mu_{z}(P:\sigma:\lambda:\eta)\big|_{Z\setminus\partial\cO},
$$
it follows that $\mu_{z}(P:\sigma:\lambda:\eta)\big|_{Z\setminus\partial\cO}\neq 0$ for all $\lambda$ for which $\mu_{z}(P:\sigma:\lambda:\eta)$ is defined. This proves (\ref{Prop construction on max rank orbits - item 4}).

Finally we prove (\ref{Prop construction on max rank orbits - item 1}). Let $w'\in N_{G}(\fa)$ satisfy (\ref{eq wSigma^+ cap -Sigma^+=wSigma(Q) cap -Sigma}) and $w\cdot(P\bs Z)_{\open}=w'\cdot(P\bs Z)_{\open}$.
Because of meromorphical continuation, it suffices to prove the uniqueness for $\lambda$ in the open subset $\Ad^{*}(w)\big(\rho_{P}-\Gamma+i(\fa/\fa_{\fh})^{*}\big)$ of $\Ad^{*}(w)\big(\rho_{P}+(\fa/\fa_{\fh})_{\C}^{*}\big)$. For these $\lambda$ the distribution $\mu_{z}(P:\sigma:\lambda:\eta)$ is given by the right-hand side of (\ref{eq alternative formula chi}).
It follows from Proposition \ref{Prop decomp of N} that there exists a $\gamma:M\times A\to\R_{>0}$ so that for every $\psi\in \cD(Z)$, $m\in M$ and $a\in A$
$$
\int_{N_{P}/(N_{P}\cap H_{ma\cdot z})}\psi(nma\cdot z)\,d\nu_{ma\cdot z}(n)
=\gamma(m,a)\int_{N_{P}\cap w'N_{Q}{w'}^{-1}}\psi(n'ma\cdot z)\,dn'.
$$
The function
$$
M\times A\to\R_{>0};\quad (m,a)\mapsto \frac{\gamma(m,a)}{\gamma(e,e)}
$$
is a character of $M\times A$. Therefore, there exists a $c>0$ and a $\nu\in \fa^{*}$ so that
$$
\gamma(m,a)=ca^{\nu}
\qquad(m\in M, a\in A).
$$
It follows that,
\begin{align*}
&\mu_{z}(P:\sigma:\lambda:\eta)(\phi)\\
&\qquad=c\int_{N_{P}\cap w'N_{Q}{w'}^{-1}}\int_{M/M\cap H_{z}}\int_{A/A\cap H_{z}}
   a^{-\lambda+\Ad^{*}(w)\rho_{P}-2\Ad^{*}(w)\rho_{Q}+\nu}\\
    &\qquad\qquad\qquad\qquad\qquad\qquad\qquad\qquad\qquad\times
        \Big(\sigma^{\vee}(m)\eta,\phi(nma\cdot z)\Big)\,da\,dm\,dn.
\end{align*}
Since $w'$ satisfies (\ref{eq wSigma^+ cap -Sigma^+=wSigma(Q) cap -Sigma}), we have $w'N_{P}w'^{-1}\cap \overline{N}_{P}=w'N_{Q}w'^{-1}\cap \overline{N}_{P}$.
Now for every $\phi\in \cD'(G,V_{\sigma})$
\begin{align*}
&\big[\cA(w'Pw'^{-1}:P:\sigma:\lambda)\mu_{z}(P:\sigma:\lambda:\eta)\big](\phi)\\
\quad&=c\int_{w'N_{Q}w'^{-1}\cap \overline{N}_{P}}\int_{N_{P}\cap w'N_{Q}{w'}^{-1}}\int_{M/M\cap H_{z}}\int_{A/A\cap H_{z}}
   a^{-\lambda+\Ad^{*}(w)\rho_{P}-2\Ad^{*}(w)\rho_{Q}+\nu}\\
\nonumber
    &\qquad\qquad\qquad\qquad\qquad\qquad\qquad\qquad\qquad\times
        \Big(\sigma^{\vee}(m)\eta,\phi(nma\cdot z)\Big)\,da\,dm\,dn\\
\quad&=c\int_{w'N_{Q}w'^{-1}}\int_{M/M\cap H_{z}}\int_{A/A\cap H_{z}}
   a^{-\lambda+\Ad^{*}(w)\rho_{P}-2\Ad^{*}(w)\rho_{Q}+\nu}\\
\nonumber
    &\qquad\qquad\qquad\qquad\qquad\qquad\qquad\qquad\qquad\times
        \Big(\sigma^{\vee}(m)\eta,\phi(nma\cdot z)\Big)\,da\,dm\,dn.
\end{align*}
Since $\cA(w'Pw'^{-1}:P:\sigma:\lambda)\mu_{z}(P:\sigma:\lambda:\eta)$ is a distribution in $\cD'(w'Pw'^{-1}:\sigma:\lambda)$, $\nu$ must satisfy
$$
-\lambda+\Ad^{*}(w)\rho_{P}-2\Ad^{*}(w)\rho_{Q}+\nu
=-\lambda+\Ad^{*}(w')\rho_{P}-2\Ad^{*}(w')\rho_{Q}.
$$
Thus, in view of (\ref{eq Formula mu_z}), we have
$$
\cA(w'Pw'^{-1}:P:\sigma:\lambda)\mu_{z}(P:\sigma:\lambda:\eta)
=c\mu_{z}(w'Pw'^{-1}:\sigma:\lambda:\eta).
$$
This concludes the proof of (\ref{Prop construction on max rank orbits - item 1}).
\end{proof}

Let $\cO\in P\bs Z$ be of maximal rank and let $z\in \cO$ be weakly adapted. Assume that $\fa_{\cO}=\fa_{\fh}$.  Let $\xi\in\widehat{M}_{Q}$. We recall from Corollary \ref{Cor irreducible restriction to M} that if $({V^{\infty}}')^{H_{z}}\neq \{0\}$, then $\xi|_{L_{Q,\nc}}$ is trivial, $\xi$ is finite dimensional and unitarizable and $\xi|_{M}$ is irreducible.
For $\xi\in\widehat{M}_{Q}$ and $\eta\in (V_{\xi}^{*})^{M_{Q}\cap H_{z}}$ we define the meromorphic family of distributions
$$
\mu_{z}(Q:\xi:\lambda:\eta)
:=\mu_{z}(P:\xi|_{M}:\lambda+\rho_{P}-\rho_{Q}:\eta)
$$
with family parameter $\lambda\in(\fa/\fa_{\fh})_{\C}^{*}$.

\begin{Thm}\label{Thm Construction on max rank orbits for Q}
For every $z$, $\xi$, $\eta$ as above, the assignment
$$
(\fa/\fa_{\fh})_{\C}^{*}\ni\lambda\to\mu_{z}(Q:\xi:\lambda:\eta)
$$
defines a meromorphic family of distributions in $\cD'(Z,Q:\xi:\lambda)$. The poles of the family lie on a locally finite union of complex affine hyperplanes of the form
$$
\{\lambda\in(\fa/\fa_{\fh})^{*}_{\C}: \lambda(Y)=a\}\quad\text{for some }Y\in\fa\setminus \fa_{\fh}\text{ and }a\in\R.
$$
Let $\cO\in P\bs Z$ satisfy $\fa_{\cO}=\fa_{\fh}$ and let $z\in \cO$ be weakly adapted. Let $w\in N_{G}(\fa)$ be so that (\ref{eq wSigma^+ cap -Sigma^+=wSigma(Q) cap -Sigma}) holds and $[\cO]=w\cdot(P\bs Z)_{\open}$.
If
$$
\lambda\in
\rho_{wQw^{-1}}-\Ad^{*}(w)\Gamma+i(\fa/\fa_{\fh})^{*}
=\Ad(w)^{*}\big(\rho_{Q}-\Gamma+i(\fa/\fa_{\fh})^{*}\big),
$$
then the distribution $\mu_{z}(Q:\xi:\lambda:\eta)$ is for $\phi\in\cD(G,V_{\xi})$ given by the absolutely convergent integral
\begin{align}
\label{eq mu_cO(Q) identity}
&\mu_{z}(Q:\xi:\lambda:\eta)(\phi)\\
\nonumber
&\qquad=\int_{N_{Q}\cap wN_{Q}w^{-1}}\int_{M/M\cap H_{z}}\int_{A/A\cap H_{z}}a^{-\lambda-\Ad^{*}(w)\rho_{Q}}
        \Big(\xi^{\vee}(m)\eta,\phi(nma\cdot z)\Big)\,da\,dm\,dn.
\end{align}
\end{Thm}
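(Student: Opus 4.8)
The plan is to transport everything through the comparison of induction from $P$ and $Q$ established in Section~\ref{Subsection Distribution vectors - Comparison between P and Q}, and then simply reread the statements of Proposition~\ref{Prop construction on max rank orbits}. First I would recall that since $z\in\cO$ is weakly adapted with $\fa_{\cO}=\fa_{\fh}$ and $\eta\in(V_\xi^*)^{M_Q\cap H_z}$ is (at least potentially) nonzero, Corollary~\ref{Cor irreducible restriction to M} applies: $\xi|_{L_{Q,\nc}}$ is trivial, $\xi$ is finite dimensional and unitarizable, and $\sigma:=\xi|_M$ is irreducible. Hence $\mu_z(Q:\xi:\lambda:\eta)$ is by definition $\mu_z(P:\sigma:\lambda+\rho_P-\rho_Q:\eta)$, and by Proposition~\ref{Prop construction on max rank orbits} (applied with the parameter shifted by $\rho_P-\rho_Q$) this is a meromorphic family of distributions in $\cD'(Z,P:\sigma:\lambda+\rho_P-\rho_Q)$ on $\Ad(w)^*(\rho_P+(\fa/\fa_\fh)^*_\C)$; note $\rho_P-\rho_Q=\rho_{P\cap M_Q}$ vanishes on $\fa_\fh$, so the affine slice is the same as $\rho_Q+(\fa/\fa_\fh)^*_\C$ after the shift, and translating $\lambda\mapsto\lambda+\rho_P-\rho_Q$ makes the parameter run over all of $(\fa/\fa_\fh)^*_\C$ as claimed. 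The poles, by Proposition~\ref{Prop construction on max rank orbits}\,(\ref{Prop construction on max rank orbits - item 3}), lie on hyperplanes $\{\lambda(Y)=a\}$; since the hyperplanes in question come from the rank-one operators with reduced roots $\alpha_j\in\Sigma(wQw^{-1})\cap\Sigma(\overline P)$ (see the proof of Lemma~\ref{Lemma a(wPw^-1:P:sigma:lambda) is isomorphism} together with Remark~\ref{Rem L_Q roots}, giving $\alpha_j^\vee\notin\Ad(w)\fa_\fh$), after shifting back the defining $Y$ can be taken in $\fa\setminus\fa_\fh$.

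Next I would upgrade $P$-equivariance to $Q$-equivariance. For this I invoke Lemma~\ref{Lemma characterization of D'(Q:sigma:lambda)}: a distribution in $\cD'(P:\sigma:\lambda+\rho_P-\rho_Q)$ that is in addition left-invariant under $M_Q\cap\overline N_P$ automatically lies in $\cD'(Q:\xi:\lambda)$. So it suffices to check $L^\vee(\overline n)\mu_z(Q:\xi:\lambda:\eta)=\mu_z(Q:\xi:\lambda:\eta)$ for $\overline n\in M_Q\cap\overline N_P=\overline N_Q\cap\fl_Q$-type elements. By meromorphy it is enough to verify this on the open tube $\Ad(w)^*(\rho_Q-\Gamma+i(\fa/\fa_\fh)^*)$, where formula~(\ref{eq mu_cO identity}) of Proposition~\ref{Prop construction on max rank orbits} gives $\mu_z(Q:\xi:\lambda:\eta)$ as an absolutely convergent integral over $(N_P\cap wN_Qw^{-1})\times M/(M\cap H_z)\times A/(A\cap H_z)$. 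Here is where the hypothesis $\fa_\cO=\fa_\fh$ pays off: by Lemma~\ref{Lemma decomposition of M_Q}\,(\ref{Lemma decomposition of M_Q - item 4}) we then have $L_{Q,\nc}\subseteq M_Q\cap H_z$, so $\eta$ is in fact $L_{Q,\nc}$-fixed; combined with the fact that $\xi$ is trivial on $L_{Q,\nc}$ and the integrand's $\sigma^\vee(m)\eta$ only feels $m\in M$, a change of variables in the $N_P\cap wN_Qw^{-1}$-integral by $M_Q\cap\overline N_P$-conjugation (which normalizes $N_P\cap wN_Qw^{-1}$ and preserves Haar measure, since $M_Q\cap\overline N_P\subseteq M_Q\subseteq L_{wQw^{-1}}\cap\ldots$ — this is the point needing care) shows the integral is unchanged under $L^\vee$ of such elements. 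Then Lemma~\ref{Lemma characterization of D'(Q:sigma:lambda)} gives $\mu_z(Q:\xi:\lambda:\eta)\in\cD'(Z,Q:\xi:\lambda)$ on the tube, hence everywhere by meromorphic continuation.

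Finally I would derive the explicit formula~(\ref{eq mu_cO(Q) identity}). Starting from~(\ref{eq mu_cO identity}) with parameter $\lambda+\rho_P-\rho_Q$ in place of $\lambda$, the exponent becomes $-(\lambda+\rho_P-\rho_Q)+\Ad^*(w)\rho_P-2\Ad^*(w)\rho_Q$; using that $w$ satisfies~(\ref{eq wSigma^+ cap -Sigma^+=wSigma(Q) cap -Sigma}) and writing $\rho_P=\rho_Q+\rho_{P\cap M_Q}$, the pieces supported on $\fl_Q$ cancel against $\rho_P-\rho_Q$, leaving $-\lambda-\Ad^*(w)\rho_Q$ on $\fa/\fa_\fh$. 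Simultaneously the domain of integration contracts: the fibers of $N_P\cap wN_Qw^{-1}\to N_Q\cap wN_Qw^{-1}$ consist of the $N_P\cap\fl_{wQw^{-1}}$-directions, along which — again because $\eta$ is $L_{Q,\nc}$-fixed and $\xi$ trivial on $L_{Q,\nc}$ — the integrand is constant in the relevant variables, so that integral collapses (absorbing a positive constant, which by the scaling freedom in Proposition~\ref{Prop construction on max rank orbits}\,(\ref{Prop construction on max rank orbits - item 1}) and the normalization of Haar measures is harmless, or can be tracked to be $1$) to the integral over $N_Q\cap wN_Qw^{-1}$. This yields~(\ref{eq mu_cO(Q) identity}) on the tube, and by meromorphic continuation as an identity of meromorphic families. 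The main obstacle I expect is the bookkeeping in this last step: correctly identifying which unipotent directions collapse, matching the $\rho$-exponents after the shift, and checking that the conjugation-invariance argument for $Q$-equivariance really only uses elements of $M_Q\cap\overline N_P$ that normalize $N_P\cap wN_Qw^{-1}$ with trivial Jacobian — all of which hinge on the structure theorem, Theorem~\ref{Thm structure theorem for wPw^(-1) cdot z}, and on $\fa_\cO=\fa_\fh$ forcing $L_{Q,\nc}\subseteq H_z$.
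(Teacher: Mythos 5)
Your overall route is the same as the paper's: reduce to Proposition \ref{Prop construction on max rank orbits} via the definition $\mu_{z}(Q:\xi:\lambda:\eta)=\mu_{z}(P:\xi|_{M}:\lambda+\rho_{P}-\rho_{Q}:\eta)$, verify left-invariance under $M_{Q}\cap\overline{N}_{P}$ on the convergence tube and invoke Lemma \ref{Lemma characterization of D'(Q:sigma:lambda)} plus meromorphic continuation, and then read off the explicit formula. The equivariance argument is also the paper's: $\fa_{\cO}=\fa_{\fh}$ forces $w\in\cN$, so $w$ normalizes $L_{Q,\nc}$, $M_{Q}$ and (by (\ref{eq wSigma^+ cap -Sigma^+=wSigma(Q) cap -Sigma})) $M_{Q}\cap\overline{N}_{P}$; adaptedness of $w^{-1}\cdot z$ gives $L_{Q,\nc}\subseteq H_{z}$; and $M_{Q}\cap\overline{N}_{P}$ normalizes $N_{P}\cap wN_{Q}w^{-1}$, so conjugating it through the integral lands it in $H_{z}$ where it acts trivially on $z$.

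The one step where your mechanism is off is the derivation of (\ref{eq mu_cO(Q) identity}). You describe the passage from the integral over $N_{P}\cap wN_{Q}w^{-1}$ to the integral over $N_{Q}\cap wN_{Q}w^{-1}$ as a collapse of fiber integrals along the $N_{P}\cap M_{Q}$-directions, absorbing a constant. If those fibers were nontrivial this would not work: you would be integrating a function that is constant along a noncompact unipotent group, and the integral would diverge rather than produce a finite constant. What actually happens — and what the paper proves — is that the two groups coincide: since $w$ normalizes $M_{Q}$, the algebra $\Ad(w)\fn_{Q}$ meets $\fm_{Q}$ trivially, and together with $\fn_{P}=(\fn_{P}\cap\fm_{Q})\oplus\fn_{Q}$ this yields $N_{P}\cap wN_{Q}w^{-1}=N_{Q}\cap wN_{Q}w^{-1}$ (the identity (\ref{eq N_P cap wN_Q w^-1=N_Q cap wN_Qw^-1}) in the paper). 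There are no fibers to integrate out and no constant to track; the domain in (\ref{eq mu_cO identity}) is literally the domain in (\ref{eq mu_cO(Q) identity}). Your computation of the exponent is fine once one notes that (\ref{eq wSigma^+ cap -Sigma^+=wSigma(Q) cap -Sigma}) forces $\Ad^{*}(w)$ to permute $\Sigma(P\cap M_{Q})$ and hence fix $\rho_{P\cap M_{Q}}=\rho_{P}-\rho_{Q}$, so that $-(\lambda+\rho_{P}-\rho_{Q})+\Ad^{*}(w)\rho_{P}-2\Ad^{*}(w)\rho_{Q}=-\lambda-\Ad^{*}(w)\rho_{Q}$. With the group identity in place of the fiber argument, your proof closes correctly.
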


\begin{proof}
We first prove that $\mu_{z}(Q:\xi:\lambda:\eta)\in \cD'(Z,Q:\xi:\lambda)$.
By Proposition \ref{Prop construction on max rank orbits} we have $\mu_{z}(Q:\xi:\lambda:\eta)\in\cD'(P:\xi|_{M}:\lambda+\rho_{P}-\rho_{Q})$.
In view of Lemma \ref{Lemma characterization of D'(Q:sigma:lambda)} and meromorphicity it suffices to show that $\mu_{z}(Q:\xi:\lambda:\eta)$ is left-$M_{Q}\cap \overline{N}_{P}$-invariant for $\lambda\in\rho_{wQw^{-1}}-\Ad^{*}(w)\Gamma+i(\fa/\fa_{\fh})^{*}$.

The fact that $\fa_{\cO}=\fa_{\fh}$ implies $w\in \cN=N_{G}(\fa)\cap N_{G}(\fa_{\fh})$. In view of Remark \ref{Rem L_Q roots} the element $w$ normalizes $L_{Q,\nc}$ and hence also $M_{Q}$. Since (\ref{eq wSigma^+ cap -Sigma^+=wSigma(Q) cap -Sigma}) is assumed to hold, $w$ even normalizes $M_{Q}\cap \overline{N}_{P}$.
The point $w^{-1}\cdot z$ is adapted. Therefore,
\begin{equation}\label{eq L_Q,nc subseteq H_z}
L_{Q,\nc}
=wL_{Q,\nc}w^{-1}
\subseteq wH_{w^{-1}z}w^{-1}
=H_{z}.
\end{equation}
In particular,
$$
M_{Q}\cap \overline{N}_{P}\subseteq H_{z}.
$$

We claim that $M_{Q}\cap \overline{N}_{P}$ normalizes $N_{P}\cap wN_{Q}w^{-1}$. In fact, $M_{Q}$ has this property. As $N_{P}=(N_{P}\cap M_{Q}) N_{Q}$, we have
\begin{equation}\label{eq N_P cap wN_Q w^-1=N_Q cap wN_Qw^-1}
N_{P}\cap wN_{Q}w^{-1}=N_{Q}\cap wN_{Q}w^{-1}.
\end{equation}
The claim is now proven by observing that  $M_{Q}$ normalizes $N_{Q}$ and hence also $wN_{Q}w^{-1}$.

As $M$ and $A$ normalize $M_{Q}\cap \overline{N}_{P}$, $M_{Q}\cap\overline{N}_{P}$ normalizes $N_{P}\cap wN_{Q}w^{-1}$, it is follows from (\ref{eq L_Q,nc subseteq H_z}) and (\ref{eq mu_cO identity}) that $\mu_{z}(Q:\xi:\lambda:\eta)$ is left-$M_{Q}\cap \overline{N}_{P}$-invariant.
This concludes the proof that $\mu_{z}(Q:\xi:\lambda:\eta)\in \cD'(Z,Q:\xi:\lambda)$.

The remaining assertions follow directly from Proposition \ref{Prop construction on max rank orbits} and (\ref{eq N_P cap wN_Q w^-1=N_Q cap wN_Qw^-1}).
\end{proof}

\begin{Rem}
Let $\cO\in P\bs Z$ be of maximal rank and  $z\in\cO$ weakly adapted. If $\fa_{\cO}=\fa_{\fh}$, then there exists a positive $H_{z}$-invariant Radon measure on $(H_{z}\cap Q)\bs H_{z}$. To prove this it suffices to show that $H_{z}\cap Q$ is unimodular. The modular character of $H_{z}\cap Q$ is given by
$$
\Delta:H_{z}\cap Q\to\R_{>0};
\quad h \mapsto \big|\det\big(\Ad(h)\big|_{\fh_{z}\cap\fq}\big)\big|
$$
Let $w\in \cN$ be so that (\ref{eq wSigma^+ cap -Sigma^+=wSigma(Q) cap -Sigma}) and $[\cO]=w\cdot(P\bs Z)_{\open}$.
In view of (\ref{eq L_Q,nc subseteq H_z}), Proposition \ref{Prop decomp of N} and Theorem \ref{Thm structure theorem for wPw^(-1) cdot z} we have
\begin{align*}
H_{z}\cap Q
&=L_{Q,\nc}(H_{z}\cap P)
=L_{Q,\nc}(M\cap H_{z})(A\cap H)(N_{P}\cap H_{z})\\
&=L_{Q,\nc}(M\cap H_{z})(A\cap H)(N_{Q}\cap H_{z}).
\end{align*}
Since $L_{Q,\nc}$ is semisimple, $M\cap H_{z}$ is compact and $N_{Q}\cap H_{z}$ is unipotent, the restriction of $\Delta$ to each of these three subgroups is trivial. It thus remains to show that the restriction to $A\cap H$ is trivial as well.

As $A\cap H$ is contained in the center of $L_{Q}$, this groups centralizes $L_{Q,\nc}(M\cap H_{z})(A\cap H)$. Therefore,
$$
\Delta(a)
=\big|\det\big(\Ad(a)\big|_{\fn_{Q}\cap\fh_{z}}\big)\big|
\qquad(a\in A\cap H_{z}).
$$
It follows from Proposition \ref{Prop decomp of N} and (\ref{eq N_P cap wN_Q w^-1=N_Q cap wN_Qw^-1}) that the multiplication map
$$
(N_{Q}\cap wN_{Q}w^{-1})\times(N_{Q}\cap H_{z})\to N_{Q};
\quad(n,n_{H})\mapsto nn_{H}
$$
is a diffeomorphism. Moreover, this map is $A\cap H_{z}$-equivariant, and hence
$$
\Delta(a)
=\Big|\frac{\det\big(\Ad(a)\big|_{\fn_{Q}}\big)}{\det\big(\Ad(a)\big|_{\fn_{Q}\cap\Ad(w)\fn_{Q}}\big)}\Big|
=\frac{a^{2\rho_{Q}}}{a^{\rho_{Q}+\Ad^{*}(w)\rho_{Q}}}
\qquad(a\in A\cap H_{z}).
$$
As $A\cap H_{z}$ is connected, $w$ centralizes this subgroup. It follows that $\Delta$ is the trivial character and hence that $H_{z}\cap Q$ is unimodular. This concludes the proof of the claim that $(H_{z}\cap Q)\bs H_{z}$ admits a positive $H_{z}$-invariant Radon measure.

The invariant measure allows to describe the distributions $\mu_{z}(Q:\xi:\lambda:\eta)$ as a functional on $C^{\infty}(Q:\xi:\lambda)$. To do so, let $g\in G$ be so that $gH=z$. We use (\ref{eq Identification D'(Z) simeq D'(G)^H}) to identify $\cD'(Z,Q:\xi:\lambda)$ with $\cD'(Q:\xi:\lambda)^{H}$. Recall the map $\omega_{\xi,\lambda}^{Q}$ from (\ref{eq Def omega^S_(xi,lambda)}). A straightforward computation shows that for a suitable normalization of the invariant measure on $(H\cap g^{-1}Qg)\bs H$
$$
\Big(\omega_{\xi,\lambda}^{Q}\mu_{z}(Q:\xi:\lambda:\eta)\Big)(f)
=\int_{(H\cap g^{-1}Qg)\bs H}\big( \eta, f(gh)\big)\,dh
\qquad\big(f\in C^{\infty}(Q:\xi:\lambda)\big).
$$

The distributions $\mu_{z}(P:\sigma:\lambda:\eta)$ from Proposition \ref{Prop construction on max rank orbits} with $z$ a weakly adapted point contained in a $P$-orbit $\cO$ in $Z$ with $\fa_{\cO}\neq \fa_{\fh}$, can be similarly description as functionals on $C^{\infty}(P:\sigma:\lambda)$. However, in this generality not all homogeneous spaces $(H_{z}\cap P)\bs H_{z}$ admit positive $H_{z}$-invariant Radon measures. To remedy this, one has to consider  the elements in $C^{\infty}(P:\sigma:\lambda)$ as smooth densities; see \cite[Lemma 3.1]{vdBanKuit_NormalizationsOfEisensteinIntegrals}.
\end{Rem}

\subsection{A description of $\cD'(Q:\xi:\lambda)^{H}$}\label{Subsection Construction - Description}
In this section we give a precise description of $\cD'(Q:\xi:\lambda)^{H}$ for a finite dimensional unitary representation $\xi$ of $M_{Q}$ and $\lambda\in i(\fa/\fa_{\fh})^{*}$ outside of a union of finitely many proper subspaces. We identify distributions on $Z$ with $H$-invariant distributions on $G$ via the map (\ref{eq Identification D'(Z) simeq D'(G)^H}). We recall that the point $eH\in Z=G/H$ is chosen to be admissible.

We define
$$
(P\bs Z)_{\fa_{\fh}}
:=\{\cO\in(P\bs Z)_{\max}:\fa_{\cO}=\fa_{\fh}\}.
$$
First we choose a set of good representatives of the $P$-orbits in $(P\bs Z)_{\fa_{\fh}}$.

By \cite[Proposition 3.13]{KuitSayag_OnTheLittleWeylGroupOfARealSphericalSpace} every open $P$-orbit contains a point $x\cdot z$ with $x\in G\cap \exp(i\fa)H_{\C}$. For such a point the equality
$$
\fa\cap\Ad(x)\fh^{\perp}=\fa\cap \fh^{\perp}
$$
holds.
For every $\cO\in (P\bs Z)_{\open}$ we choose an $x_{\cO}\in G\cap \exp(i\fa)H_{\C}$ so that $x_{\cO}H$ is an adapted point in $\cO\subseteq Z=G/H$.
We may and will choose $x_{PH}$ to be $e$.

We recall the group $\cN=N_{G}(\fa)\cap N_{G}(\fa_{\fh})$ and its subgroup $\cW$ from (\ref{eq def cN}) and (\ref{eq def cW}).
For every $\cN/\cW$ we choose a  representative $v_{w}\in K\cap \cN$ as follows. In view of Theorem \ref{Thm properties of W-action} the set $\cN/\cW$ is in bijection with the set of equivalence classes of $P$-orbits in $(P\bs Z)_{\fa_{\fh}}$, i.e., the map
$$
\cN/\cW\to (P\bs Z)_{\fa_{\fh}}/_{\sim};
\quad v\cW\mapsto v\cdot (P\bs Z)_{\open}
$$
is a bijection.
We choose an order-regular element $X\in \fa^{-}$.
For $w\in \cN/\cW$ we now choose $v_{w}\in K\cap \cN$ so that
$$
\fh_{z,X}=\Ad(v_{w})\fh_{\emptyset}
$$
for some weakly adapted point in an $P$-orbit $\cO$ with $[\cO]=w\cdot (P\bs Z)_{\open}$. We note that this equation determines $v_{w}$ up to right-multiplication by an element from $\cZ\cap K$ and that $v_{w}\cZ$ is independent of the choice of $z$.
The elements $v_{w}$ do however depend on the choice of $X$. The crucial property of the $v_{w}$ is that they are representatives of the elements in $\cN/\cW$, i.e.,
$$
v_{w}\cW=w
\qquad (w\in \cN/\cW).
$$
We may and will choose the $v_{w}$ so that  they satisfy (\ref{eq wSigma^+ cap -Sigma^+=wSigma(Q) cap -Sigma}). The representative of $e\cW$ we choose to be $e$.

By Theorem \ref{Thm properties of W-action} (\ref{Thm properties of W-action - item 3}) and (\ref{Thm properties of W-action - item 4}) the points
$$
v_{w}x_{\cO'}H
\qquad\big(w\in \cN/\cW, \cO'\in(P\bs Z)_{\open}\big)
$$
form a set of weakly admissible representatives for the $P$-orbits in $(P\bs Z)_{\fa_{\fh}}$.
If $\cO\in (P\bs Z)_{\fa_{\fh}}$, then we write $x_{\cO}$ for $v_{w}x_{\cO'}$, where $w\in \cN/\cW$ and $\cO'\in(P\bs Z)_{\open}$ are so that $Pv_{w}x_{\cO'}H$.

For a finite dimensional unitary representation $(\xi, V_{\xi})$ of $M_{Q}$ we define the vector space
$$
V^{*}(\xi)
:=\bigoplus_{\cO\in (P\bs Z)_{\fa_{\fh}}}(V_{\xi}^{*})^{M_{Q}\cap x_{\cO}Hx_{\cO}^{-1}}
$$
and equip it with the inner product induced from the inner product on $V_{\xi}$.

\begin{Prop}\label{Prop M cap xHx=M cap H}
Let $\cO,\cO'\in (P\bs Z)_{\fa_{\fh}}$. If $\cO\sim\cO'$, then
$$
M_{Q}\cap x_{\cO}Hx_{\cO}^{-1}
=M_{Q}\cap x_{\cO'}Hx_{\cO'}^{-1}.
$$
In particular
$$
M_{Q}\cap x_{\cO}Hx_{\cO}^{-1}
=M_{Q}\cap H
\qquad\big(\cO\in (P\bs Z)_{\open}\big).
$$
\end{Prop}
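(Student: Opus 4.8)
The plan is to reduce everything to the case of open orbits and then exploit the structure of the representatives $x_{\cO}$. First I would recall from the construction of the $x_{\cO}$ that when $\cO, \cO' \in (P\bs Z)_{\fa_{\fh}}$ are equivalent, they both lie in the same $\cN/\cW$-class; that is, there is a single $w \in \cN/\cW$ and open orbits $\cO_1, \cO_1' \in (P\bs Z)_{\open}$ with $x_{\cO} = v_w x_{\cO_1}$ and $x_{\cO'} = v_w x_{\cO_1'}$. Conjugating by $v_w$, since $v_w \in K \cap \cN$ normalizes $\fa$ and $\fa_{\fh}$ and was chosen to satisfy (\ref{eq wSigma^+ cap -Sigma^+=wSigma(Q) cap -Sigma}), it normalizes $L_{Q,\nc}$ (Remark \ref{Rem L_Q roots}) and hence $M_Q$; therefore $M_Q \cap v_w x_{\cO_1} H x_{\cO_1}^{-1} v_w^{-1} = v_w (M_Q \cap x_{\cO_1} H x_{\cO_1}^{-1}) v_w^{-1}$, so it suffices to prove $M_Q \cap x_{\cO_1} H x_{\cO_1}^{-1} = M_Q \cap x_{\cO_1'} H x_{\cO_1'}^{-1}$ for two open $P$-orbits, i.e. the ``in particular'' statement is the heart of the matter.

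So the main task is to show $M_Q \cap x_{\cO_1} H x_{\cO_1}^{-1} = M_Q \cap H$ for every open orbit $\cO_1$. Here I would use the key property built into the choice of representatives: $x_{\cO_1} \in G \cap \exp(i\fa) H_{\C}$, so $x_{\cO_1}$ lies in the complexified stabilizer times $\exp(i\fa)$. More precisely, I would write $x_{\cO_1} = \exp(iY) h_{\C}$ with $Y \in \fa$ and $h_{\C} \in H_{\C}$; then $x_{\cO_1} H_{\C} x_{\cO_1}^{-1} = \exp(iY) H_{\C} \exp(-iY)$ inside $G_{\C}$. The point is that $\exp(iY)$ centralizes $A$, hence centralizes $A_Q$ and normalizes $M_Q$ (indeed $M_Q A_Q = L_Q = Z_G(X')$ for a suitable $X' \in \fa$, and $\exp(iY)$ is in the complexification of $Z_{G_\C}(X')$). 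The plan is to intersect with $M_{Q,\C}$: we get $M_{Q,\C} \cap x_{\cO_1} H_\C x_{\cO_1}^{-1} = \exp(iY)(M_{Q,\C} \cap H_\C)\exp(-iY)$, and then argue that conjugation by $\exp(iY)$ fixes $M_{Q,\C} \cap H_\C$. For this last point I would use that $L_{Q,\nc} \subseteq H_{x_{\cO_1} H}$ (from Proposition \ref{Prop LST holds for adapted points}, as $x_{\cO_1}H$ is adapted), so on the Lie algebra level $\fm_Q \cap \Ad(x_{\cO_1})\fh$ contains $\fl_{Q,\nc}$ and its complement in $\fm_Q \cap \Ad(x_{\cO_1})\fh$ must be $M$-related, and $\exp(iY)$ acts trivially there because $Y \in \fa$ commutes with everything in the relevant centralizer.

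Concretely, the cleanest route is probably: show the equality of Lie algebras $\fm_Q \cap \Ad(x_{\cO_1})\fh = \fm_Q \cap \fh$ first, then upgrade to groups using that both sides are (up to components) determined by $L_{Q,\nc}$ together with a compact piece in $M$. For the Lie algebra equality, decompose $\fm_Q = \fm + \fl_{Q,\nc}$ (Lemma \ref{Lemma decomposition of M_Q}), note $\fl_{Q,\nc} \subseteq \Ad(x_{\cO_1})\fh \cap \fh$ (both $x_{\cO_1}H$ and $eH$ are adapted, so Proposition \ref{Prop LST holds for adapted points}(ii) applies to each), and for the $\fm$-part use that $x_{\cO_1} \in G \cap \exp(i\fa)H_\C$ forces $\fm \cap \Ad(x_{\cO_1})\fh = \fm \cap \fh$ — this is essentially the statement $\fa \cap \Ad(x_{\cO_1})\fh^\perp = \fa \cap \fh^\perp$ recorded just before the proposition, combined with the fact that the $\fm$-component of an adapted stabilizer is $\fm \cap \fh_z$ (equation (\ref{eq fm-components}) in the proof of Proposition \ref{Prop limits same for some X, then same for all X}) and that this $\fm$-component is the same for all adapted points in the same or different orbits when the $\fa^\perp$-part matches. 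Then pass to groups: $M_Q \cap x_{\cO_1}Hx_{\cO_1}^{-1}$ and $M_Q \cap H$ have the same Lie algebra, and both contain $L_{Q,\nc}$ and their images in $M_Q/L_{Q,\nc} \cong M/(M\cap L_{Q,\nc})$ are compact subgroups with the same Lie algebra; since these images lie in a compact group a component-count (using that $M$ meets every component of $M_Q$, Lemma \ref{Lemma decomposition of M_Q}) finishes the argument.

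The main obstacle I anticipate is the passage from Lie algebras to groups — controlling the component groups of $M_Q \cap x_{\cO}Hx_{\cO}^{-1}$. One needs to know that no ``extra'' components appear, which I expect to follow from the explicit form $x_{\cO} \in \exp(i\fa) H_\C$ together with the fact that $\exp(i\fa)$ normalizes $M_{Q,\C}$ and centralizes $A$, so that the conjugation $h_{\C} \mapsto \exp(iY) h_\C \exp(-iY)$ is a well-understood inner automorphism of $G_\C$ that, restricted to $M_{Q,\C} \cap H_\C$, must be trivial because $Y$ lands in the center of the relevant reductive piece. Making this rigorous is where the real work lies; the rest is bookkeeping with the already-established structure theory.
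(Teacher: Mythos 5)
Your reduction to open orbits by conjugating with $v_{w}$ (using that $\cN$ normalizes $M_{Q}$ via Remark \ref{Rem L_Q roots}) and your subsequent use of $M_{Q}=ML_{Q,\nc}$ with $L_{Q,\nc}\subseteq x_{\cO}Hx_{\cO}^{-1}$ are exactly the paper's first two steps. The gap is in the main step. You propose to prove the Lie algebra identity $\fm_{Q}\cap\Ad(x_{\cO})\fh=\fm_{Q}\cap\fh$ and then ``upgrade to groups'' by a component count, on the grounds that two compact subgroups of $M_{Q}/L_{Q,\nc}$ with the same Lie algebra must coincide. That inference is false in general: a maximal torus and its normalizer in a compact group share a Lie algebra. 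Equality of Lie algebras only identifies the identity components, and says nothing about which components of $M_{Q}$ each intersection meets; you flag this passage as ``where the real work lies'' and do not close it. In addition, your route to the $\fm$-part of the Lie algebra identity --- via $\fa\cap\Ad(x_{\cO})\fh^{\perp}=\fa\cap\fh^{\perp}$, (\ref{eq fm-components}), and the claim that the $\fm$-component ``is the same for all adapted points in the same or different orbits when the $\fa^{\perp}$-part matches'' --- assumes essentially what is to be proved.

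The fix is to stay entirely at the group level, where the statement is a one-liner. Write $x_{\cO}=th$ with $t\in\exp(i\fa)$ and $h\in H_{\C}$. Since $M$ centralizes $\fa$, every $m\in M$ satisfies $t^{-1}mt=m$, so $m\in tH_{\C}t^{-1}$ if and only if $m\in H_{\C}$; hence $M\cap x_{\cO}H_{\C}x_{\cO}^{-1}=M\cap tH_{\C}t^{-1}=M\cap H_{\C}$, and combining with $M\cap x_{\cO}Hx_{\cO}^{-1}=M\cap x_{\cO}H_{\C}x_{\cO}^{-1}$ and $M\cap H_{\C}=M\cap H$ finishes the proof. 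This is the paper's argument. The fact you did not exploit is that $\exp(i\fa)$ centralizes $M$ pointwise \emph{as a group}, not merely $\fm$ infinitesimally. Your detour through $M_{Q,\C}$ is what manufactures the difficulty: $\exp(iY)$ normalizes but does not centralize $M_{Q,\C}$ (it moves $L_{Q,\nc,\C}$), so ``conjugation by $\exp(iY)$ fixes $M_{Q,\C}\cap H_{\C}$'' is not available; after stripping off $L_{Q,\nc}$ at the group level, the remaining intersection lives inside $M$, which $\exp(iY)$ does centralize, and no passage from Lie algebras to groups is ever needed.
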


\begin{proof}
Let $\cO\in (P\bs Z)_{\fa_{\fh}}$. Let $w\in \cN/\cW$ be so that $[\cO]=w\cdot(P\bs Z)_{\open}$. Then $x_{\cO}=v_{w}x_{\cO_{0}}$ for some open $P$-orbit $\cO_{0}$.
It follows from Remark \ref{Rem L_Q roots} that $\cN$ normalizes $M_{Q}$.
Therefore,
$$
M_{Q}\cap x_{\cO}Hx_{\cO}^{-1}
=v_{w}\big(M_{Q}\cap x_{\cO_{0}}Hx_{\cO_{0}}^{-1}\big)v_{w}^{-1}.
$$
Since $v_{w}$ only depends on the equivalence class $[\cO]$, and not on the particular orbit $\cO$ in it, it thus suffices to prove the assertion only for the open $P$-orbits $\cO$.

We have  $M_{Q}=ML_{Q,\nc}$  and $L_{Q,\nc}\subseteq x_{\cO}Hx_{\cO}^{-1}$. Hence it suffices to prove
$$
M\cap x_{\cO}Hx_{\cO}^{-1}
=M\cap H
\qquad\big(\cO\in (P\bs Z)_{\open}\big).
$$
Note that $M\cap x_{\cO}Hx_{\cO}^{-1}= M\cap x_{\cO}H_{\C}x_{\cO}^{-1}$. Let $t\in \exp(i\fa)$ and $h\in H_{\C}$ be so that $x_{\cO}=th$. Then
$$
M\cap x_{\cO}H_{\C}x_{\cO}^{-1}
=M\cap tH_{\C}t^{-1}
=M\cap H_{\C}.
$$
For the last equality we used that $t$ centralizes $M$. The assertion now follows as $M\cap H_{\C}=M\cap H$.
\end{proof}

As a corollary of the previous proposition we obtain that the group
$$
M_{Q,[\cO]}:=M_{Q}\cap x_{\cO}Hx_{\cO}^{-1}
$$
only depends on the equivalence class $[\cO]\in(P\bs Z)_{\fa_{\fh}}/\sim$, not on the choice of the $P$-orbit in $[\cO]$. Therefore,
$$
V^{*}(\xi)
=\bigoplus_{[\cO]\in(P\bs Z)_{\fa_{\fh}}/\sim}\Big((V_{\xi}^{*})^{M_{Q,[\cO]}}\Big)^{[\cO]}
$$
Here $V^{S}$ for a vector space $V$ and a finite set $S$ denotes the vector space of functions $S\to V$. We write $v_{s}$ for the $s$-component of a vector $v\in V^{S}$, i.e., $v_{s}=v(s)$.

\begin{Rem}
The space $V^{*}(\xi)$ will serve as the multiplicity space in the Plancherel decomposition for the principal series representations $\Ind_{\overline{Q}}^{G}(\xi\otimes\lambda\otimes 1)$ with $\lambda\in i(\fa/\fa_{\fh})^{*}$. In case $H$ is symmetric, i.e., $H$ is an open subgroup of the fixed point subgroup $G^{\sigma}$ of some involution $\sigma$ of $G$, much information about these multiplicity spaces has been given in \cite{vdBanSchlichtkrull_MultiplicitiesInPlancherelDecompositionForSemisimpleSymmetricSpace}.
If $H$ is equal to the full fixed point subgroup of an algebraic involution on $G$, then Proposition \ref{Prop M cap xHx=M cap H} coincides with \cite[Lemma 7]{vdBanSchlichtkrull_MultiplicitiesInPlancherelDecompositionForSemisimpleSymmetricSpace}.
For symmetric spaces we have $(P\bs Z)_{\fa_{\fh}}=(P\bs Z)_{\open}$, see Appendix A.
\end{Rem}

\begin{Thm}\label{Thm Description D'(Z,Q:xi:lambda)}
There exists a finite union $\cS$ of proper subspaces of $(\fa/\fa_{\fh})^{*}$ so that for every finite dimensional unitary representation  $(\xi,V_{\xi})$ of $M_{Q}$ and every $\lambda\in(\fa/\fa_{\fh})_{\C}^{*}$ with $\Im\lambda\notin\cS$ the map
$$
\mu(Q:\xi:\lambda):V^{*}(\xi)\to\cD'(Q:\xi:\lambda)^{H};\quad
    \eta\mapsto \sum_{\cO\in(P\bs Z)_{\fa_{\fh}}}
    \mu_{x_{\cO}H}(Q:\xi:\lambda:\eta_{\cO})
$$
is a linear isomorphism.
\end{Thm}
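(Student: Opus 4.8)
The plan is to prove the two properties required of an isomorphism: that $\mu(Q:\xi:\lambda)$ is well defined with image in $\cD'(Q:\xi:\lambda)^{H}$, injective, and surjective, for $\lambda$ with $\Im\lambda$ outside a suitable exceptional set $\cS$. The well-definedness and the inclusion of the image in $\cD'(Q:\xi:\lambda)^{H}$ will be immediate from Theorem~\ref{Thm Construction on max rank orbits for Q}: for each $\cO\in(P\bs Z)_{\fa_{\fh}}$ and each $\eta_{\cO}\in(V_{\xi}^{*})^{M_{Q}\cap x_{\cO}Hx_{\cO}^{-1}}$, the distribution $\mu_{x_{\cO}H}(Q:\xi:\lambda:\eta_{\cO})$ lies in $\cD'(Z,Q:\xi:\lambda)\simeq\cD'(Q:\xi:\lambda)^{H}$, and its poles lie on a locally finite union of hyperplanes of the form $\lambda(Y)=a$ with $Y\in\fa\setminus\fa_{\fh}$; intersecting the imaginary axis $i(\fa/\fa_{\fh})^{*}$ with the (finitely many, after fixing $\xi$ within a bound) relevant hyperplanes gives a finite union of proper subspaces which we throw into $\cS$. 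A minor point to handle: $\cS$ must be independent of $\xi$; this follows because the hyperplanes produced by the Bernstein--Sato construction and by the poles of the intertwining operators $\cA(wQw^{-1}:Q:\sigma:\lambda)$ are controlled uniformly in $\xi$ (the $\gamma$-functions and the polynomials $b_j$ depend only on the root data, cf.\ the end of Section~\ref{Subsection Distribution vectors - Intertwining operators} and Lemma~\ref{Lemma a(wPw^-1:P:sigma:lambda) is isomorphism}).

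\textbf{Injectivity.} First I would enlarge $\cS$ to also exclude the hyperplanes from Proposition~\ref{Prop construction on max rank orbits}~(\ref{Prop construction on max rank orbits - item 4}), so that for $\Im\lambda\notin\cS$ and $\eta_{\cO}\neq 0$ we have $\supp\mu_{x_{\cO}H}(Q:\xi:\lambda:\eta_{\cO})=\overline{\cO}$ and moreover $\cO\in(P\bs Z)_{\mu_{x_{\cO}H}(Q:\xi:\lambda:\eta_{\cO})}$. Suppose $\sum_{\cO}\mu_{x_{\cO}H}(Q:\xi:\lambda:\eta_{\cO})=0$. Order the orbits $\cO\in(P\bs Z)_{\fa_{\fh}}$ by (reverse) inclusion of closures; pick a maximal $\cO_{0}$ (with respect to $\dim$) with $\eta_{\cO_{0}}\neq 0$. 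Restricting the identity to the $P$-saturated open set $Z\setminus\bigcup_{\cO\neq\cO_{0}}\overline{\cO}$ wherever this removes all other terms: concretely, choose an open neighborhood $U$ of the orbit $\cO_{0}$ in $G$ with $U\cap\supp\mu_{x_{\cO}H}(Q:\xi:\lambda:\eta_{\cO})=\emptyset$ for all $\cO\ne\cO_{0}$ that are not in $\overline{\cO_{0}}$, and such that $U\cap\overline{\cO}\subseteq\partial\cO_{0}$ has measure zero within $\supp$ for the orbits $\cO\subsetneq\overline{\cO_{0}}$—here one uses that a distribution in $\cD'(Z,Q:\xi:\lambda)$ supported on $\overline{\cO}$ with $\cO$ of strictly smaller dimension cannot contribute to the coefficient at $\cO_{0}$, which follows from Proposition~\ref{Prop support condition} together with the explicit formula~(\ref{eq mu_cO(Q) identity}) along $\cO_{0}$. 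Evaluating against test functions supported in $U$ and using~(\ref{eq mu_cO(Q) identity}) to read off $\eta_{\cO_{0}}$ from the resulting integral over $(N_{Q}\cap wN_{Q}w^{-1})\times M/(M\cap H_{z})\times A/(A\cap H_z)$ forces $\eta_{\cO_{0}}=0$, a contradiction. Iterating kills all $\eta_{\cO}$. The cleanest packaging is: the distributions $\mu_{x_{\cO}H}(Q:\xi:\lambda:\eta_{\cO})$ with distinct $\cO$ have distinct ``top orbits'' in the sense of $(P\bs Z)_{\mu}$ and the leading term along each top orbit recovers $\eta_{\cO}$ injectively, so a vanishing sum forces each summand to vanish.

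\textbf{Surjectivity.} This is the main obstacle and will consume most of the work. Let $\mu\in\cD'(Q:\xi:\lambda)^{H}\subseteq\cD'(P:\xi|_M:\lambda+\rho_P-\rho_Q)^{H}$. Enlarging $\cS$ to contain $\cH_{\nm}$ from Theorem~\ref{Thm condition on orbits for given generic lambda} together with the ``regularity'' locus from Theorem~\ref{Thm bound on transversal degree}~(\ref{Thm bound on transversal degree - item 1}), we get that $(P\bs Z)_{\mu}\subseteq(P\bs Z)_{\max}$ and that $\trdeg_{\cO}(\mu)=0$ for every $\cO\in(P\bs Z)_{\mu}$. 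By Theorem~\ref{Thm condition on orbits for given generic lambda}~(i), for $\cO\in(P\bs Z)_{\mu}$ one has $\lambda+\rho_P-\rho_Q\in\cH_{\cO}$, hence $\Im\lambda\in(\fa/\fa_{\cO})^{*}$; combined with $\Im\lambda\in(\fa/\fa_{\fh})^{*}$ and the fact that $\fa_{\cO}$ is $W$-conjugate to $\fa_{\fh}$, a dimension count (adding to $\cS$ the finitely many subspaces $(\fa/\fa_{\cO})^*\cap(\fa/\fa_{\fh})^*$ that are proper) forces $\fa_{\cO}=\fa_{\fh}$, i.e.\ $\cO\in(P\bs Z)_{\fa_{\fh}}$. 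Now argue by downward induction on $\bigcup_{\cO\in(P\bs Z)_\mu}\overline\cO$ (equivalently on the number of orbits in $\supp\mu$, Proposition~\ref{Prop support condition}): pick a top orbit $\cO_0\in(P\bs Z)_\mu$. Using the principal asymptotics of Proposition~\ref{Prop principal asymptotics} at a point $z\in\cO_0$ (with $\trdeg_{\cO_0}(\mu)=0$, so $\kappa=0$ and no transversal derivatives), the leading term $\mu_{z,X}$ is, by its explicit formula in Proposition~\ref{Prop principal asymptotics} and Remark~\ref{Rem rho_O,X for maximal rank}, exactly of the form of $\mu_{x_{\cO_0}H}(Q:\xi:\lambda:\eta_{\cO_0})$ near $\cO_0$ for a unique $\eta_{\cO_0}\in(V_\xi^*)^{M_Q\cap x_{\cO_0}Hx_{\cO_0}^{-1}}$ (the $M_Q\cap x_{\cO_0}Hx_{\cO_0}^{-1}$-invariance of $\eta_{\cO_0}$ comes from $H_z$-invariance of $\mu$ via Proposition~\ref{Prop principal asymptotics}~(\ref{Prop principal asymptotics - item 3}) and Lemma~\ref{Lemma decomposition of M_Q}~(\ref{Lemma decomposition of M_Q - item 4}), together with Corollary~\ref{Cor irreducible restriction to M}). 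Then $\mu-\mu_{x_{\cO_0}H}(Q:\xi:\lambda:\eta_{\cO_0})$ still lies in $\cD'(Q:\xi:\lambda)^H$ but its support no longer contains $\cO_0$, so by induction it is a sum of the required type over the remaining (strictly fewer) orbits, completing the proof. The delicate point in this last step is matching the \emph{global} distribution $\mu_{x_{\cO_0}H}(Q:\xi:\lambda:\eta_{\cO_0})$ (defined on all of $Z$, supported on $\overline{\cO_0}$) against the \emph{local} principal-asymptotic data, i.e.\ checking that subtracting it genuinely removes $\cO_0$ from the support without introducing spurious contributions on lower orbits outside $\overline{\cO_0}$; this is handled by the formula~(\ref{eq mu_cO(Q) identity}), which shows $\supp\mu_{x_{\cO_0}H}(Q:\xi:\lambda:\eta_{\cO_0})=\overline{\cO_0}$, so no new orbits appear.
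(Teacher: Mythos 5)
Your proposal is correct and follows essentially the same route as the paper: the exceptional set is assembled from the support restriction (Theorem \ref{Thm condition on orbits for given generic lambda}), the vanishing of transversal degrees (Theorem \ref{Thm bound on transversal degree}), and the pole loci of the constructed families, after which one peels off orbits one at a time by matching the leading coefficient on a top orbit of $\supp\mu$ against the explicit formula (\ref{eq mu_cO(Q) identity}). The only cosmetic difference is that the paper extracts $\eta_{\cO_0}$ directly by evaluating the smooth restriction of $\mu$ to $\cO_0$ at $x_{\cO_0}H$ (rather than re-invoking principal asymptotics), and treats injectivity implicitly through the same orbit-by-orbit recovery of the $\eta_{\cO}$ that you spell out.
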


\begin{proof}
Let
$$
\cS_{1}
=(\fa/\fa_{\fh})^{*}\cap\bigcup_{\substack{\cO\in P\bs Z\\\fa_{\cO}\neq\fa_{\fh}}}(\fa/\fa_{\cO})^{*}
$$
If $\lambda\in (\fa/\fa_{\fh})_{\C}^{*}$ satisfies $\Im\lambda\notin \cS_{1}$, then we have in view of Theorem \ref{Thm condition on orbits for given generic lambda}
$$
(P\bs Z)_{\mu}
\subseteq\big\{\cO\in(P\bs Z)_{\max}:\fa_{\cO}=\fa_{\fh}\big\}
$$
for every $\mu\in \cD'(Z,Q:\xi:\lambda)$. Further, it follows from Theorem \ref{Thm bound on transversal degree} that there exists a finite union $\cS_{2}$ of proper subspaces of $(\fa/\fa_{\fh})^{*}$ so that
$$
\trdeg_{\cO}(\mu)=0
\qquad\big(\cO\in(P\bs Z)_{\mu}\big)
$$
for all $\lambda\in(\fa/\fa_{\fh})_{\C}^{*}$ with $\Im \lambda\notin \cS_{1}\cup\cS_{2}$ and all $\mu\in \cD'(Z,Q:\xi:\lambda)$.

In view of Theorem \ref{Thm Construction on max rank orbits for Q} there exist a finite union $\cS_{3}$ of hyperplanes in $(\fa/\fa_{\fh})^{*}$ so that the poles of the meromorphic family of maps $\lambda\mapsto \mu(Q:\xi:\lambda)$ lie in $\cS_{3}$. We set
$$
\cS
:=\cS_{1}\cup\cS_{2}\cup\cS_{3}
$$
We fix $\lambda\in(\fa/\fa_{\fh})_{\C}^{*}$ with $\Im\lambda\notin\cS$. Let now $\mu\in \cD'(Z,Q:\xi:\lambda)$. To prove the theorem, it suffices to show that $\mu$ is a sum of distributions $\mu_{x_{\cO}H}(Q:\xi:\lambda:\eta_{\cO})$ with $\cO\in (P\bs Z)_{\fa_{\fh}}$ and $\eta_{\cO}\in (V_{\xi}^{*})^{M\cap x_{\cO}Hx_{\cO}^{-1}}$.

The condition on $\lambda$ assures that
$$
(P\bs Z)_{\mu}
\subseteq(P\bs Z)_{\fa_{\fh}}
$$
and
$$
\trdeg_{\cO}(\mu)
=0
\qquad\big(\cO\in (P\bs Z)_{\mu}\big).
$$
If $\mu\neq 0$, then there exists an $\cO\in (P\bs Z)_{\mu}$. Since $\trdeg_{\cO}(\mu)=0$, there exists a distribution $\mu_{\cO}$ on $\cO$ so that $\mu$ on
$$
U
=Z\setminus\Big(\partial \cO\cup \bigcup_{\cO'\in (P\bs Z)_{\mu}\setminus\{\cO\}}\overline{\cO'}\Big)
$$
is given by
$$
\mu(\phi)
=\mu_{\cO}(\phi\big|_{\cO})
\qquad\big(\phi\in\cD(U,V_{\xi})\big).
$$
By \cite[Lemma 5.5]{KrotzKuitOpdamSchlichtkrull_InfinitesimalCharactersOfDiscreteSeriesForRealSphericalSpaces} $\mu_{\cO}$ is in fact given by integrating against a smooth function. Moreover, since $\mu$ is right-$H$-invariant, also $\mu_{\cO}$ is right-$H$-invariant. Likewise, $\mu_{\cO}$ inherits the left-$P$-equivariance from $\mu$. As $\cO$ is a $P$-orbit in $Z$, $\mu_{\cO}$ is fully determined by its value in any given point. In particular, we may evaluate $\mu_{\cO}$ in $z_{\cO}:=x_{\cO}H$. This results in a non-zero vector $\eta_{\cO}$ in $(V_{\xi}^{*})^{M_{Q}\cap H_{z_{\cO}}}=(V_{\xi}^{*})^{M_{Q}\cap x_{\cO}Hx_{\cO}^{-1}}$. Let $w\in N_{G}(\fa)$ satisfy (\ref{eq wSigma^+ cap -Sigma^+=wSigma(Q) cap -Sigma}) and $[\cO]=w\cdot(P\bs Z)_{\open}$. It follows from Theorem \ref{Thm structure theorem for wPw^(-1) cdot z} that $\mu(\phi)$ is for every $\phi\in\cD(U,V_{\xi})$ given by
\begin{align*}
\label{eq mu_cO identity}
&\int_{N_{P}\cap wN_{Q}w^{-1}}\int_{M/M\cap H_{z}}\int_{A/A\cap H_{z}}a^{-\lambda+\Ad^{*}(w)\rho_{Q}}         \Big(\xi^{\vee}(m)\eta_{\cO},\phi(nma\cdot z)\Big)\,da\,dm\,dn\\
&\quad=\mu_{z_{\cO}}(Q:\xi:\lambda:\eta_{\cO})(\phi).
\end{align*}
Hence $\mu':=\mu-\mu_{z_{\cO}}(Q:\xi:\lambda:\eta_{\cO})$ is a distribution in $\cD'(Z,Q:\xi:\lambda)$ with
$$
(P\bs Z)_{\mu'}
\subseteq \Big((P\bs Z)_{\mu}\setminus\{\cO\}\Big)\cup \{\cO'\in (P\bs Z)_{\fa_{\fh}}: \cO'\subseteq \partial\cO\}
$$
and
$$
\trdeg_{\cO'}\big(\mu'\big)=0
\qquad\big(\cO'\in(P\bs Z)_{\mu'}\big).
$$
We now replace $\mu$ by $\mu'$ and repeat this argument. After finitely many iterations of this process we obtain that $\mu$ is a sum of distributions  $\mu_{x_{\cO}H}(Q:\xi:\lambda:\eta_{\cO})$ with $\cO\in (P\bs Z)_{\fa_{\fh}}$ and $\eta_{\cO}\in (V_{\xi}^{*})^{M\cap x_{\cO}Hx_{\cO}^{-1}}$.
\end{proof}

From now on we fix  a finite union $\cS$ of proper subspaces of $(\fa/\fa_{\fh})^{*}$, so that the conclusions of Theorem \ref{Thm Description D'(Z,Q:xi:lambda)} hold and all intertwining operators $\cI_{v}(Q:\xi:\lambda)$ and $\cI_{v}^{\circ}(Q:\xi:\lambda)$ with $v\in \cN$, $\xi\in \widehat{M}_{Q,\mathrm{fu}}$ and $\Im\lambda\notin \cS$ are isomorphisms.

\subsection{Action of $A_{E}$ on $\cD'(Q:\xi:\lambda)^{H}$}
\label{Subsection Construction - Action of N_A(H)}
We recall the from (\ref{eq def a_E}) that $\fa_{E}$ denotes the edge of $\overline{\cC}$, i.e.,
$$
\fa_{E}
=\overline{\cC}\cap -\overline{\cC}.
$$
We write $A_{E}$ for the connected subgroup of $G$ with Lie algebra $\fa_{E}$, i.e.,
$$
A_{E}
:=\exp(\fa_{E}).
$$
The group $A_{E}$ normalizes the Lie algebra $\fh$. In fact, it follows from the theory of smooth compactifications of $Z$ that $A_{E}$ normalizes $H_{\C}$, and hence also $H$. See \cite[Theorem 4.1]{DelormeKnopKrotzSchlichtkrull_PlancherelTheoryForRealSphericalSpacesConstructionOfTheBernsteinMorphisms} where this is shown for an algebraic subgroup of $\underline{G}$ for which the identity component of the group of real points is equal to $A_{E}$.
Therefore, $A_{E}$ acts from the right on $Z=G/H$ by
$$
gH\cdot a:=gaH
\qquad \big(g\in G, a\in A_{E}\big).
$$
We now investigate the induced right action of $A_{E}$ on the spaces $\cD'(Q:\xi:\lambda)^{H}$.

If $\cO\in (P\bs Z)_{\fa_{\fh}}$, then we define
\begin{equation}\label{eq Def iota_O}
\iota_{\cO}:(V_{\xi}^{*})^{M_{Q,[\cO]}}\hookrightarrow V^{*}(\xi)
\end{equation}
to be the inclusion map determined by
$$
\big(\iota_{\cO}\eta\big)_{\cO'}
=\left\{
   \begin{array}{ll}
     \eta & (\cO=\cO'), \\
     0 & (\cO\neq \cO').
   \end{array}
 \right.
$$

\begin{Prop}\label{Prop Action N_A(H)}
Let $(\xi,V_{\xi})$ be a finite dimensional unitary representation of $M_{Q}$ and $\lambda\in(\fa/\fa_{\fh})_{\C}^{*}$ with $\Im\lambda\notin\cS$.
If $w\in \cN/\cW$ and $\cO\in w\cdot(P\bs Z)_{\open}$, then
$$
R^{\vee}(a)\circ\mu(Q:\xi:\lambda)\circ\iota_{\cO}
=a^{-\Ad^{*}(v_{w}^{-1})\lambda+\rho_{Q}}\mu(Q:\xi:\lambda)\circ\iota_{\cO}
\qquad\big(a\in A_{E}\big).
$$
\end{Prop}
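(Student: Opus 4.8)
The plan is to reduce the statement to the explicit integral formula for $\mu_z(Q:\xi:\lambda:\eta)$ from Theorem~\ref{Thm Construction on max rank orbits for Q} and then track how right-translation by $a\in A_E$ acts on the representative point $x_\cO H = v_w x_{\cO'}H$. Since the right action of $A_E$ on $Z$ commutes with the left-regular action of $G$, conjugation by $R^\vee(a)$ preserves the space $\cD'(Q:\xi:\lambda)^H$; hence $R^\vee(a)\circ\mu(Q:\xi:\lambda)\circ\iota_\cO$ is again an $H$-invariant distribution in $\cD'(Q:\xi:\lambda)$ supported on $\overline{\cO}$. By the uniqueness in Theorem~\ref{Thm Description D'(Z,Q:xi:lambda)} it must be a scalar multiple of $\mu(Q:\xi:\lambda)\circ\iota_\cO$ (the support is a single orbit, so no other summands can appear), and the whole problem is to identify that scalar as $a^{-\Ad^*(v_w^{-1})\lambda+\rho_Q}$. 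Because the scalar depends meromorphically on $\lambda$, it suffices to compute it for $\lambda$ in the open shifted cone $\Ad^*(v_w)\big(\rho_Q-\Gamma+i(\fa/\fa_{\fh})^*\big)$, where the absolutely convergent integral~(\ref{eq mu_cO(Q) identity}) is available.

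\textbf{Key steps.} First I would record that $A_E$ normalizes $H$ (cited from \cite[Theorem~4.1]{DelormeKnopKrotzSchlichtkrull_PlancherelTheoryForRealSphericalSpacesConstructionOfTheBernsteinMorphisms}) and that, since $\fa_E\subseteq\overline{\cC}\cap-\overline{\cC}\subseteq\fa$ and $w=v_w\cW$ with $v_w\in\cN=N_G(\fa)\cap N_G(\fa_{\fh})$, the element $v_w$ normalizes $\fa$ and $\fa_{\fh}$; in particular $v_w^{-1}A_E v_w\subseteq A$ and $\Ad^*(v_w^{-1})\lambda$ makes sense. Next, for $a\in A_E$ and $\phi\in\cD(G,V_\xi)$, I apply~(\ref{eq mu_cO(Q) identity}) to $R(a^{-1})\phi$ at the point $x_\cO H = v_w x_{\cO'}H$. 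Writing $a' := v_w^{-1}av_w\in A$, I would move $a$ through the representative: $\phi(n m b\, v_w x_{\cO'} a\cdot H) = \phi(n m b\, v_w x_{\cO'}\cdot H)$ after a substitution in the inner variables, because $A$ centralizes $A$, $a'$ normalizes $N_Q\cap v_wN_Qv_w^{-1}$ and $M$, and $x_{\cO'}\in G\cap\exp(i\fa)H_\C$ so $x_{\cO'}a' x_{\cO'}^{-1}\in a' H_\C$ (as $\exp(i\fa)$ commutes with $a'\in A$ and $H_\C$ absorbs the rest). The change of variables $b\mapsto (a')^{-1}b$, $n\mapsto (a')^{-1}n a'$ in the integral~(\ref{eq mu_cO(Q) identity}) then produces the Jacobian factor $(a')^{2\rho_Q - 2\,\Ad^*(w)\rho_Q|_{\text{something}}}$ combined with the explicit weight $(a')^{-\lambda-\Ad^*(w)\rho_Q}$ already in the integrand, and careful bookkeeping of the $\rho$-shifts — using that $N_Q = (N_Q\cap v_wN_Qv_w^{-1})\cdot(N_Q\cap H_z)$ from Proposition~\ref{Prop decomp of N} and that $\det(\Ad(a')|_{\fn_Q\cap\Ad(w)\fn_Q})$ combines with $\det(\Ad(a')|_{\fn_Q})$ as in the modular-character computation in the Remark following Theorem~\ref{Thm Construction on max rank orbits for Q} — collapses the exponent to $(a')^{-\lambda+\rho_Q}$. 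Translating $a'=v_w^{-1}av_w$ back, $(a')^{-\lambda+\rho_Q} = a^{-\Ad^*(v_w^{-1})\lambda + \Ad^*(v_w^{-1})\rho_Q}$, and since $\rho_Q$ is fixed by the relevant Weyl element (the weights of $\fa$ in $\fl_{Q,\nc}$ vanish on $\fa\cap\fa_{\fh}^\perp$ and $v_w$ acts trivially on $\fa_E/\fa_{\fh}$ by \cite[Lemma~12.1]{KuitSayag_OnTheLittleWeylGroupOfARealSphericalSpace}, while $\fa_E$ is what $A_E$ exponentiates) one gets $\Ad^*(v_w^{-1})\rho_Q|_{\fa_E}=\rho_Q|_{\fa_E}$, yielding the claimed eigenvalue.

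\textbf{Main obstacle.} The delicate point is the $\rho$-bookkeeping: showing that after the substitution the accumulated Jacobian from the $N_Q\cap v_wN_Qv_w^{-1}$ and $A/(A\cap H_z)$ integrations exactly cancels the $-\Ad^*(w)\rho_Q$ and $-2\Ad^*(w)\rho_Q$ terms, leaving precisely $+\rho_Q$ restricted to $\fa_E$. This requires the decomposition $N_Q=(N_Q\cap v_wN_Qv_w^{-1})(N_Q\cap H_z)$ together with the fact — proven in the Remark after Theorem~\ref{Thm Construction on max rank orbits for Q} — that $a\mapsto a^{2\rho_Q}/a^{\rho_Q+\Ad^*(w)\rho_Q}$ is the modular character of $H_z\cap Q$ and hence trivial on the connected group $A\cap H_z$, which is what lets one rewrite the exponent cleanly on the quotient $A/(A\cap H_z)$. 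A secondary subtlety is justifying that $x_{\cO'}a'x_{\cO'}^{-1}\in a'H_\C$ for $a'\in A$: this uses $x_{\cO'}\in G\cap\exp(i\fa)H_\C$ from \cite[Proposition~3.13]{KuitSayag_OnTheLittleWeylGroupOfARealSphericalSpace} and the fact that $\exp(i\fa)$ commutes with $A$. Once these two algebraic facts are in place the computation is a direct, if somewhat lengthy, change of variables, and meromorphic continuation in $\lambda$ extends the identity from the shifted cone to all $\Im\lambda\notin\cS$.
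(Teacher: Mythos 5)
Your strategy is the same as the paper's: reduce by meromorphic continuation to the range where the integral formula (\ref{eq mu_cO(Q) identity}) converges, move $a\in A_{E}$ through the representative $x_{\cO}=v_{w}x_{\cO'}$ using $x_{\cO'}\in G\cap\exp(i\fa)H_{\C}$ and the fact that $A_{E}$ normalizes $H_{\C}$, and account for a $2\rho_{Q}$ coming from the measure. The paper isolates the first step as a separate lemma (the coset identity $x_{\cO}a\in v_{w}av_{w}^{-1}x_{\cO}H$) and handles the measure step by computing the modular character $\Delta_{\fh}(a)=a^{2\rho_{Q}}$ on $A_{E}$ directly from the decomposition $\fg=\fh\oplus\fn_{Q}\oplus\fm'\oplus\fa'$ and the unimodularity of $G$; combined with the substitution in the $A/(A\cap H_{z})$-integral, which produces $(v_{w}av_{w}^{-1})^{-\lambda-\Ad^{*}(v_{w})\rho_{Q}}=a^{-\Ad^{*}(v_{w}^{-1})\lambda-\rho_{Q}}$, this gives the eigenvalue with no further identities needed.

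There are two concrete problems in your execution. First, the conjugation goes the wrong way: moving $a$ from the right of $v_{w}x_{\cO'}$ to the left produces $v_{w}av_{w}^{-1}$, not $a'=v_{w}^{-1}av_{w}$, and the step where you absorb $a$ past $x_{\cO'}$ uses that $a$ \emph{itself} normalizes $H_{\C}$ (writing $x_{\cO'}a=tha=at\,(a^{-1}ha)\in ax_{\cO'}H$). Your formulation ``$x_{\cO'}a'x_{\cO'}^{-1}\in a'H_{\C}$'' with $a'=v_{w}^{-1}av_{w}$ implicitly requires $a'$ to normalize $H_{\C}$, which is not known: only $A_{E}$ normalizes $H_{\C}$, and $v_{w}^{-1}A_{E}v_{w}$ need not. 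Second, your final step requires $\Ad^{*}(v_{w}^{-1})\rho_{Q}\big|_{\fa_{E}}=\rho_{Q}\big|_{\fa_{E}}$, which you justify by the triviality of the $W_{Z}$-action on $\fa_{E}/\fa_{\fh}$. That lemma applies to $\cW$, but $v_{w}$ represents a coset in $\cN/\cW$ and in general lies outside $\cW$ — indeed, if $v_{w}$ acted trivially the eigenvalue would not involve $\Ad^{*}(v_{w}^{-1})\lambda$ at all. The identity you need is in fact true (one can deduce it by comparing $\Delta_{\fh}$ and $\Delta_{\fh_{x_{\cO}H}}$ on $A_{E}$ and $v_{w}A_{E}v_{w}^{-1}$ respectively, both equal to $a^{2\rho_{Q}}$ by the same unimodularity argument), but it is not a consequence of the cited lemma; in the paper's bookkeeping the $\Ad^{*}(v_{w})\rho_{Q}$ in the integrand cancels exactly against the $\Ad^{*}(v_{w}^{-1})$ from the substitution, so the question never arises.
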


We first prove a lemma.

\begin{Lemma}\label{Lemma x_O a=waw^(-1)x_O}
Let $w\in\cN/\cW$ and $\cO\in w\cdot (P\bs Z)_{\open}$.
Then
$$
x_{\cO}a
\in v_{w}av_{w}^{-1}x_{\cO}H
\qquad \big(a\in A_{E}\big).
$$
\end{Lemma}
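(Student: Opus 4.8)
\textbf{Proof plan for Lemma \ref{Lemma x_O a=waw^(-1)x_O}.}
The statement asserts that right translation of the chosen representative $x_{\cO}$ by an element $a\in A_{E}$ agrees, modulo $H$, with left translation by the conjugate $v_{w}av_{w}^{-1}$. Recall that by construction $x_{\cO}=v_{w}x_{\cO_{0}}$ for some open $P$-orbit $\cO_{0}$, with $x_{\cO_{0}}\in G\cap\exp(i\fa)H_{\C}$ an adapted point representative and $v_{w}\in K\cap\cN$. The plan is to reduce the claim for the general maximal rank orbit $\cO$ to the case of the open orbit $\cO_{0}$, and for the latter to exploit that $x_{\cO_{0}}$ lies in $\exp(i\fa)H_{\C}$, which commutes appropriately with $A_{E}\subseteq A$.

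First I would treat the open orbit case. Write $x_{\cO_{0}}=th$ with $t\in\exp(i\fa)$ and $h\in H_{\C}$. Since $A_{E}$ normalizes $H_{\C}$ (and $H$), and $t\in\exp(i\fa)$ centralizes $A$, hence centralizes $a\in A_{E}$, we get
$$
x_{\cO_{0}}a
=tha
=ta(a^{-1}ha)
=at\cdot(a^{-1}ha)
\in aH_{\C}\cap G.
$$
One must be slightly careful: $a^{-1}ha\in H_{\C}$ and $at\in G\cap\exp(i\fa)$, but the product $x_{\cO_{0}}a$ lies in $G$ by hypothesis; comparing with the adapted point $x_{\cO_{0}}H\in Z$ one concludes $x_{\cO_{0}}a\in ax_{\cO_{0}}H_{\C}\cap G=ax_{\cO_{0}}H$, using that $\exp(i\fa)H_{\C}\cap G$ descends to a well-defined subset of $Z$ and that $x_{\cO_{0}}$ was chosen precisely so that $\fa\cap\Ad(x_{\cO_{0}})\fh^{\perp}=\fa\cap\fh^{\perp}$. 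In other words, $a\cdot x_{\cO_{0}}H=x_{\cO_{0}}H$ as the right $A_{E}$-action, interpreted correctly, agrees with $a$ acting on the left after conjugation by $x_{\cO_{0}}$; the essential point is simply that $t$ commutes with $a$.

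Next I would pass to general $\cO$. Since $v_{w}\in\cN=N_{G}(\fa)\cap N_{G}(\fa_{\fh})$ and $\fa_{E}$ is an $N_{G}(\fa)$-invariant notion — indeed $\fa_{E}$ is intrinsic to $\overline{\cC}$ and $\cW$ acts trivially on $\fa_{E}/\fa_{\fh}$, while $\cN$ permutes the relevant data — the element $v_{w}$ normalizes $A_{E}$, so $v_{w}av_{w}^{-1}\in A_{E}$ makes sense. Then
$$
x_{\cO}a
=v_{w}x_{\cO_{0}}a
\in v_{w}(ax_{\cO_{0}}H)
=(v_{w}av_{w}^{-1})v_{w}x_{\cO_{0}}H
=(v_{w}av_{w}^{-1})x_{\cO}H,
$$
which is exactly the asserted identity. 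The main obstacle I anticipate is the bookkeeping in the open orbit case: making rigorous that $x_{\cO_{0}}a$ and $ax_{\cO_{0}}$ define the same point of $Z$. This rests on the special form $x_{\cO_{0}}\in\exp(i\fa)H_{\C}$ together with the fact that $A_{E}$ normalizes $H_{\C}$ (cited from \cite[Theorem 4.1]{DelormeKnopKrotzSchlichtkrull_PlancherelTheoryForRealSphericalSpacesConstructionOfTheBernsteinMorphisms}) and that $\exp(i\fa)$ centralizes the real torus $A\supseteq A_{E}$ inside $G_{\C}$. Everything else is formal manipulation with the chosen representatives.
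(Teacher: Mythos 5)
Your argument is correct and is essentially the paper's own proof: both write $x_{\cO}=v_{w}th$ with $t\in\exp(i\fa)$ and $h\in H_{\C}$, commute $t$ past $a$, use that $A_{E}$ normalizes $H_{\C}$ to get $a^{-1}ha\in H_{\C}$, and conclude via $h^{-1}(a^{-1}ha)\in H_{\C}\cap G=H$; the paper just does the open-orbit step and the conjugation by $v_{w}$ in a single displayed computation. (Only a cosmetic slip: the membership ``$\in aH_{\C}\cap G$'' in your first display should read $\in ax_{\cO_{0}}H_{\C}\cap G$, as your subsequent prose correctly states; also the remark that $v_{w}$ normalizes $A_{E}$ is not needed, since the lemma only requires $v_{w}av_{w}^{-1}\in G$.)
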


\begin{proof}
By \cite[Lemma 12.1]{KuitSayag_OnTheLittleWeylGroupOfARealSphericalSpace} the little Weyl group $W_{Z}$, and hence also $\cW$,  acts trivially on $\fa_{E}/\fa_{\fh}$. Let $\cO'\in (P\bs Z)_{\open}$ be so that $x_{\cO}=v_{w}x_{\cO'}$. Further let $t\in \exp(i\fa)$ and $h\in H_{\C}$ be so that $x_{\cO'}=th$.  Now
$$
x_{\cO}a
=\big(v_{w}av_{w}^{-1}\big)v_{w}t\big(a^{-1}ha\big)
=\big(v_{w}av_{w}^{-1}\big)x_{\cO}h^{-1}\big(a^{-1}ha\big)
\qquad(a\in A_{E}).
$$
The assertion now follows as
$$
h^{-1}\big(a^{-1}ha\big)
\in H_{\C}\cap G
=H.
$$
\end{proof}

\begin{proof}[Proof of Proposition \ref{Prop Action N_A(H)}]
By meromorphic continuation, it suffices to prove the assertion only for
$$
\lambda\in
\Ad(v)^{*}\big(\rho_{Q}-\Gamma+i(\fa/\fa_{\fh})^{*}\big).
$$
For these $\lambda$ the distribution $\mu_{x_{\cO}H}(Q:\xi:\lambda:\eta)$ is given by (\ref{eq mu_cO(Q) identity}).

If $\fs$ is an $A_{E}$-stable subspace of $\fg$, then we write $\Delta_{\fs}$ for the character of $A_{E}$ given by
$$
\Delta_{\fs}(a)
=\big|{\det}_{\fs}\big(\Ad(a^{-1})\big|_{\fs}\big)\big|
\qquad\big(a\in A_{E}\big).
$$
If $\psi\in \cD(G)$ and $a\in A_{E}$, then
$$
\int_{H}\phi(ha^{-1})\,dh
=\Delta_{\fh}(a)\int_{H}\phi(a^{-1}h)\,dh.
$$
As $\fg=\fh\oplus \fn_{Q}\oplus\fm'\oplus\fa'$ for suitable subspaces $\fm'$ and $\fa'$ of $\fm$ and $\fa$, respectively, we have
$$
\Delta_{\fg}=\Delta_{\fh}\Delta_{\fn_{Q}}\Delta_{\fm'\oplus\fa'}.
$$
Since $G$ is reductive, the character $\Delta_{\fg}$ is trivial. As $A_{E}$ centralizes $\fm'$ and $\fa'$, also $\Delta_{\fm'\oplus\fa'}$ is trivial. Furthermore,
$$
\Delta_{\fn_{Q}}(a)
=a^{-2\rho_{Q}}
\qquad\big(a\in A_{E}\big).
$$
We thus conclude that
$$
\Delta_{\fh}(a)
=a^{2\rho_{Q}}
\qquad\big(a\in A_{E}\big).
$$
The assertion now follows from Lemma \ref{Lemma x_O a=waw^(-1)x_O},  (\ref{eq mu_cO(Q) identity}) and the invariance of the measure on $A/A\cap H$.
\end{proof}

\subsection{$B$-matrices}
\label{Subsection Construction - B-matrices}
We continue with the notation from the previous section.

The following is an immediate corollary of  Theorem \ref{Thm Description D'(Z,Q:xi:lambda)}.

\begin{Cor}\label{Cor B-matrix}
Let $\xi$ be a finite dimensional unitary representation of $M_{Q}$, $v\in \cN$, and $\lambda\in (\fa/\fa_{\fh})_{\C}^{*}$ with $\Im\lambda\notin\cS$. Then there exists a unique linear operator
$$
\cB_{v}(Q:\xi:\lambda) :V^{*}(\xi)\to V^{*}(v\cdot\xi)
$$
so that the diagram
\begin{equation}\label{eq B-matrix diagram}
\xymatrixcolsep{10pc}\xymatrixrowsep{5pc}\xymatrix{
   \cD'(Q:\xi:\lambda)^{H} \ar[r]^{\cI_{v}(Q:\xi:\lambda)}
        & \cD'(Q:v\cdot\xi:\Ad^{*}(v)\lambda)^{H}  \\
   V^{*}(\xi)\ar@<-2pt>[u]^{\mu(Q:\xi:\lambda)} \ar[r]^{\cB_{v}(Q:\xi:\lambda)}
       & V^{*}(v\cdot\xi)\ar@<-2pt>[u]_{\mu\big(Q:v\cdot \xi: \Ad^{*}(v)\lambda\big)}
}
\end{equation}
commutes.
\end{Cor}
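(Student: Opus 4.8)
The proof will be a formal consequence of Theorem \ref{Thm Description D'(Z,Q:xi:lambda)} together with the elementary properties of the operators $\cI_{v}$ from Section \ref{Subsection Distribution vectors - Intertwining operators}: the diagram (\ref{eq B-matrix diagram}) leaves no freedom in the definition of $\cB_{v}(Q:\xi:\lambda)$, so there is nothing to construct, only a few things to verify.

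First I would check that $\cI_{v}(Q:\xi:\lambda)$ maps $\cD'(Q:\xi:\lambda)^{H}$ into $\cD'(Q:v\cdot\xi:\Ad^{*}(v)\lambda)^{H}$. Recall $\cI_{v}(Q:\xi:\lambda)=L^{\vee}(v)\circ\cA(v^{-1}Qv:Q:\xi:\lambda)$. Here $v\in\cN$ normalizes $\fa_{Q}$ and, by Remark \ref{Rem L_Q roots}, the subgroup $L_{Q,\nc}$ and hence $M_{Q}$; thus $v^{-1}Qv$ is a parabolic subgroup with split component $A_{Q}$, $v\cdot\xi$ is again a finite dimensional unitary representation of $M_{Q}$, so that $V^{*}(v\cdot\xi)$ is defined, and $\cI_{v}(Q:\xi:\lambda)$ makes sense. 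Both factors commute with the right translations $R^{\vee}(h)$, $h\in H$: the operator $\cA(v^{-1}Qv:Q:\xi:\lambda)$ intertwines $R^{\vee}$ on both sides, being obtained from the transpose of a Knapp--Stein operator through the $G$-equivariant maps $\theta^{S}_{\xi,\lambda}$ and $\omega^{S}_{\xi,\lambda}$ of Section \ref{Subsection Distribution vectors - Distribution vectors versus functionals}, while $L^{\vee}(v)$ commutes with every right translation. Hence $\cI_{v}(Q:\xi:\lambda)$ preserves the $H$-fixed subspaces.

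Next I would apply Theorem \ref{Thm Description D'(Z,Q:xi:lambda)} to both $(\xi,\lambda)$ and $(v\cdot\xi,\Ad^{*}(v)\lambda)$. Since $v\in\cN$ normalizes $\fa_{\fh}$ it acts on $(\fa/\fa_{\fh})^{*}$ through the finite group $\cN/Z_{G}(\fa)$; after enlarging $\cS$ by the union of its finitely many translates under this group, which keeps $\cS$ a finite union of proper subspaces of $(\fa/\fa_{\fh})^{*}$, the assumption $\Im\lambda\notin\cS$ also gives $\Im(\Ad^{*}(v)\lambda)=\Ad^{*}(v)\Im\lambda\notin\cS$. Theorem \ref{Thm Description D'(Z,Q:xi:lambda)} then shows that both vertical arrows $\mu(Q:\xi:\lambda)$ and $\mu(Q:v\cdot\xi:\Ad^{*}(v)\lambda)$ of (\ref{eq B-matrix diagram}) are linear isomorphisms, while $\cI_{v}(Q:\xi:\lambda)$ is an isomorphism by the choice of $\cS$.

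Finally, commutativity of (\ref{eq B-matrix diagram}) is the identity $\mu(Q:v\cdot\xi:\Ad^{*}(v)\lambda)\circ\cB_{v}(Q:\xi:\lambda)=\cI_{v}(Q:\xi:\lambda)\circ\mu(Q:\xi:\lambda)$; since the left factor on the left-hand side is invertible this forces
$$
\cB_{v}(Q:\xi:\lambda):=\mu(Q:v\cdot\xi:\Ad^{*}(v)\lambda)^{-1}\circ\cI_{v}(Q:\xi:\lambda)\circ\mu(Q:\xi:\lambda),
$$
which is linear as a composite of linear maps and makes the diagram commute by construction. This gives both existence and uniqueness. The statement is soft, so there is no real obstacle; the only points deserving a word are the $\cN$-stability of $\cS$ and that $\cI_{v}$ carries $H$-fixed distributions to $H$-fixed distributions, both dealt with above.
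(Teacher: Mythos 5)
Your proof is correct and is exactly the argument the paper intends: it presents this as an immediate corollary of Theorem \ref{Thm Description D'(Z,Q:xi:lambda)}, with $\cB_{v}(Q:\xi:\lambda)=\mu(Q:v\cdot\xi:\Ad^{*}(v)\lambda)^{-1}\circ\cI_{v}(Q:\xi:\lambda)\circ\mu(Q:\xi:\lambda)$ forced by the diagram. Your two side remarks (that $\cI_{v}$ preserves $H$-invariance, and that $\cS$ should be taken $\cN$-stable so that $\Im(\Ad^{*}(v)\lambda)\notin\cS$ as well) are exactly the points left implicit in the paper and are handled appropriately.
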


A similar map was first introduced in \cite{vdBan_PrincipalSeriesI} in the setting of real reductive symmetric spaces, where it was called the $\cB$-matrix, hence our notation.

We will prove a few properties of $\cB$-matrices.
Recall the maps $\iota_{\cO}$ from (\ref{eq Def iota_O}) and the set of representatives $\{v_{w}:w\in \cN/\cW\}$ for $\cN/\cW$ in $\cN\cap K$ from Section \ref{Subsection Construction - Description}. If $w\in\cN$, then by slight abuse of notation we write $v_{w}$ for $v_{w\cW}$.
Every element $w\in\cN$ defines a bijection
\begin{equation}\label{eq def s_w}
s_{w}:(P\bs Z)_{\fa_{\fh}}\to(P\bs Z)_{\fa_{\fh}};
\quad \cO\mapsto Pwx_{\cO}H.
\end{equation}
Note that
$$
[s_{w}(\cO)]=w\cdot[\cO]
\qquad\big(w\in \cN, \cO\in (P\bs Z)_{\fa_{\fh}}\big),
$$
and hence
$$
M_{Q,[s_{w}\cO]}
=M_{Q,w\cdot[\cO]}
=wM_{Q,[\cO]}w^{-1}
\qquad\big(w\in\cN, \cO\in (P\bs Z)_{\fa_{\fh}}\big).
$$

\begin{Prop}\label{Prop Properties of B-matrix}
Let $\xi$ be a finite dimensional unitary representation of $M_{Q}$ and $\lambda\in (\fa/\fa_{\fh})^{*}_{\C}$ with $\Im\lambda\notin \cS$.
Let $v,w\in\cN$ and let $\cO\in w\cdot(P\bs Z)_{\open}$.
Let further $\eta\in (V_{\xi}^{*})^{M_{Q,[\cO]}}=(V_{\xi}^{*})^{M_{Q}\cap v_{w}Hv_{w}^{-1}}$.
Then $\cB_{v}(Q:\xi:\lambda)\circ \iota_{\cO}(\eta)$ satisfies the following assertions.
\begin{enumerate}[(i)]
\item\label{Prop Properties of B-matrix - item 1} If $vw\notin Z_{G}(\fa_{E}/\fa_{\fh})$, then
$$
\Big(\cB_{v}(Q:\xi:\lambda)\circ\iota_{\cO}(\eta)\Big)_{\cO'}=0
\qquad\big(\cO'\in (P\bs Z)_{\open}\big).
$$
\item\label{Prop Properties of B-matrix - item 2a} If $\dim\big(v^{-1}N_{Q}v\cap \overline{N_{Q}}\big)+\dim(\cO)<\dim (Z)$, then
$$
\Big(\cB_{v}(Q:\xi:\lambda)\circ\iota_{\cO}(\eta)\Big)_{\cO'}=0
\qquad\big(\cO'\in (P\bs Z)_{\open}\big).
$$
\item\label{Prop Properties of B-matrix - item 2b} If $\dim\big(v^{-1}N_{Q}v\cap \overline{N}_{Q}\big)+\dim(\cO)=\dim (Z)$ and $vw\notin \cW$, then
$$
\Big(\cB_{v}(Q:\xi:\lambda)\circ\iota_{\cO}(\eta)\Big)_{\cO'}
=0
\qquad\big(\cO'\in (P\bs Z)_{\open}\big).
$$
\item\label{Prop Properties of B-matrix - item 3} If $v=v_{w}^{-1}$, then
$$
\Big(\cB_{v}(Q:\xi:\lambda)\circ\iota_{\cO}(\eta)\Big)_{\cO'}
=\left\{
  \begin{array}{ll}
    \eta & \big(\cO'=s_{v}(\cO)\big), \\
\\
    0 & \big(\cO'\in (P\bs Z)_{\open}, \cO'\neq s_{v}(\cO)\big).
  \end{array}
\right.
$$
\end{enumerate}
\end{Prop}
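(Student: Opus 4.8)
The statement is Proposition~\ref{Prop Properties of B-matrix}, describing the $\cB$-matrix $\cB_{v}(Q:\xi:\lambda)$ composed with the inclusion $\iota_{\cO}$ and then evaluated at an open orbit $\cO'$. The strategy is to compute the left-hand side of the defining diagram (\ref{eq B-matrix diagram}) explicitly on the range of $\mu(Q:\xi:\lambda)\circ\iota_{\cO}$. By definition $\mu(Q:\xi:\lambda)\circ\iota_{\cO}(\eta)=\mu_{x_{\cO}H}(Q:\xi:\lambda:\eta)$, a distribution which by Theorem~\ref{Thm Construction on max rank orbits for Q} is supported on $\overline{\cO}$ and, for $\Re\lambda$ in the appropriate shifted cone, is given by the explicit integral (\ref{eq mu_cO(Q) identity}). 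Applying $\cI_{v}(Q:\xi:\lambda)=L^{\vee}(v)\circ\cA(v^{-1}Qv:Q:\xi:\lambda)$ to it and then computing its value at the point $x_{\cO'}H$ for an open orbit $\cO'$ will, by the uniqueness part of Theorem~\ref{Thm Description D'(Z,Q:xi:lambda)} (a distribution in $\cD'(Z,Q:\xi:\lambda)^{H}$ is determined by the vectors $\eta_{\cO'}$ it induces on each orbit), give the components $\big(\cB_{v}(Q:\xi:\lambda)\circ\iota_{\cO}(\eta)\big)_{\cO'}$. Throughout I would work first with $\lambda$ in the region where everything converges, and then invoke meromorphic continuation, exactly as in the proofs of Proposition~\ref{Prop construction on max rank orbits} and Theorem~\ref{Thm Construction on max rank orbits for Q}.

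\textbf{The key geometric inputs.} The distribution $\cI_{v}(Q:\xi:\lambda)\mu_{x_{\cO}H}(Q:\xi:\lambda:\eta)$ is, up to the left-translation by $v$, obtained from $\mu_{x_{\cO}H}$ by integrating over $N_{v^{-1}Qv}\cap\overline{N}_{Q}=v^{-1}(N_{Q}\cap v\overline{N}_{Q}v^{-1})v$, i.e.\ over $\overline{N}_{Q}\cap v^{-1}N_{Q}v$ after conjugation. Its support is therefore contained in $(\overline{N}_{Q}\cap v^{-1}N_{Q}v)\cdot v^{-1}\cdot\overline{\cO}$, pushed forward by $v$; its value at $x_{\cO'}H$ is nonzero only if $x_{\cO'}$ lies in $v\cdot(\overline{N}_{Q}\cap v^{-1}N_{Q}v)\cdot\overline{\cO}$, equivalently if $v^{-1}x_{\cO'}H$ meets the closure of $(\overline{N}_{Q}\cap v^{-1}N_{Q}v)\cdot\cO$. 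Using Theorem~\ref{Thm structure theorem for wPw^(-1) cdot z} (the diffeomorphism (\ref{eq decomp of wPw^(-1) wrt O}) decomposing $wPw^{-1}\cdot z$ as $(\overline{N}_{P}\cap wN_{Q}w^{-1})\times\cO$ and the dimension count built into it) I can read off: (i) the relevant orbit $s_{v}(\cO)=Pvx_{\cO}H$ is open precisely when $vw\in\cW$, which by Proposition~\ref{Prop characterization cV} is the condition that $Pvx_{\cO}H$ is open, and otherwise $v^{-1}\cdot\cO'$ cannot meet $(\overline{N}_{Q}\cap v^{-1}N_{Q}v)\cdot\cO$ with the right dimensions; (ii) the $A_{E}$-equivariance from Proposition~\ref{Prop Action N_A(H)} forces the $\cO'$-component to vanish unless $\Ad^{*}(v_{w}^{-1})\lambda$ and the corresponding weight for $\cO'$ agree on $\fa_{E}/\fa_{\fh}$, which for generic $\lambda$ (recall $\cW$ acts trivially on $\fa_{E}/\fa_{\fh}$ by \cite[Lemma 12.1]{KuitSayag_OnTheLittleWeylGroupOfARealSphericalSpace}) is possible only when $vw\in Z_{G}(\fa_{E}/\fa_{\fh})$; this is assertion (i). Assertions (ii) and (iii) are the dimension-counting refinements: if $\dim(\overline{N}_{Q}\cap v^{-1}N_{Q}v)+\dim\cO<\dim Z$ the putative support has too small dimension to contain any open orbit, and if equality holds but $vw\notin\cW$ then $s_{v}(\cO)$ is a non-open orbit of full rank, so again no open orbit appears.

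\textbf{The normalization in (iv).} For assertion (iv), $v=v_{w}^{-1}$ and $vw=v_{w}^{-1}w\in\cW$ (since $v_{w}\cW=w$), so by Proposition~\ref{Prop characterization cV} the orbit $s_{v}(\cO)=Pv_{w}^{-1}x_{\cO}H$ is open and equals $x_{s_{v}(\cO)}$-orbit after the chosen normalization of representatives in Section~\ref{Subsection Construction - Description}. The equality-case of the dimension count in Theorem~\ref{Thm structure theorem for wPw^(-1) cdot z} then shows that $(\overline{N}_{Q}\cap v^{-1}N_{Q}v)\cdot\cO$ is open in $Z$ with $\cO$ a codimension-zero slice, so the intertwining integral (\ref{eq a(S_2:S_1)mu}) collapses: composing $\cA$ with the construction of $\mu_{x_{\cO}H}$ and using the product decomposition of $v^{-1}N_{Q}v$ as $(v^{-1}N_{Q}v\cap N_{Q})(v^{-1}N_{Q}v\cap\overline{N}_{Q})$ exactly as in the proof of Proposition~\ref{Prop construction on max rank orbits}(\ref{Prop construction on max rank orbits - item 1}), one recovers the distribution $\mu_{x_{s_{v}(\cO)}H}(Q:\xi:\Ad^{*}(v)\lambda:\eta)$ up to a positive scalar. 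The precise normalization of the $\gamma$-functions in $\cI^{\circ}_{v}$ versus $\cI_{v}$ has been deliberately avoided here (the statement uses the unnormalized $\cI_{v}$), so it remains only to check the scalar is exactly $1$; this follows from the careful choice of representatives $v_{w}\in K\cap\cN$ and $x_{\cO}\in G\cap\exp(i\fa)H_{\C}$, which makes the Jacobian factor $\gamma(m,a)$ appearing in that proof equal to the constant $1$ in this special case. \textbf{The main obstacle} will be precisely this bookkeeping: tracking the measures and Jacobians through the decomposition $v^{-1}N_{Q}v=(v^{-1}N_{Q}v\cap N_{Q})(v^{-1}N_{Q}v\cap\overline{N}_{Q})$ and through $L^{\vee}(v)$, and verifying that with the chosen representatives no extra exponential factor $a^{\nu}$ or scalar survives in case (iv) — in the other cases one only needs vanishing, which is softer.
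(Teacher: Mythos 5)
Your outline for parts (\ref{Prop Properties of B-matrix - item 1}) and (\ref{Prop Properties of B-matrix - item 2a}) matches the paper's proof: (i) is obtained by comparing the $A_{E}$-weights supplied by Proposition \ref{Prop Action N_A(H)} on the two sides of $\cI_{v}(Q:\xi:\lambda)\circ\mu(Q:\xi:\lambda)\circ\iota_{\cO}(\eta)=\mu(Q:v\cdot\xi:\Ad^{*}(v)\lambda)\eta'$ as identities of meromorphic functions of $\lambda$, using that $\cW\subseteq Z_{G}(\fa_{E}/\fa_{\fh})$; and (ii) is exactly the support-dimension count after reducing, by meromorphy, to $\lambda$ for which $\cA(v^{-1}Qv:Q:\xi:\lambda)$ is a convergent integral over $v^{-1}N_{Q}v\cap\overline{N}_{Q}$.

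There is, however, a genuine gap in your treatment of (\ref{Prop Properties of B-matrix - item 2b}). You argue that in the equality case with $vw\notin\cW$ ``$s_{v}(\cO)$ is a non-open orbit of full rank, so again no open orbit appears.'' This does not suffice: the support of $\cI_{v}(Q:\xi:\lambda)\mu(Q:\xi:\lambda)\iota_{\cO}(\eta)$ is only known to lie in $v\cdot\overline{(v^{-1}N_{Q}v\cap\overline{N}_{Q})\cdot\overline{\cO}}$, and when $\dim(v^{-1}N_{Q}v\cap\overline{N}_{Q})+\dim\cO=\dim Z$ this set has nonempty interior, so the containment by itself does not exclude open $P$-orbits from the support, let alone only the single orbit $s_{v}(\cO)$. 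What must be proved is the converse implication: \emph{if} some open orbit $\cO'$ lies in the support, \emph{then} $vw\in\cW$. The paper's argument for this is nontrivial: one takes $z\in v^{-1}\cdot\cO'\cap\cO$, uses Rosenlicht's closedness of unipotent orbits together with Proposition \ref{Prop decomp of N}, Theorem \ref{Thm structure theorem for wPw^(-1) cdot z} and the dimension hypothesis to conclude $(v^{-1}N_{Q}v\cap N_{P})\cdot z=N_{P}\cdot z$, arranges $z$ to be weakly admissible, and then reads off from $\fh_{v\cdot z,\Ad(v)X}=\Ad(u)\fh_{\emptyset}$ with $u\in\cW$ (admissibility of $v\cdot z$ in the open orbit $\cO'$) that $u^{-1}vw$ stabilizes $(P\bs Z)_{\open}$, whence $vw\in\cW$ by Theorem \ref{Thm properties of W-action}. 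None of this is in your sketch, and ``cannot meet with the right dimensions'' is not a substitute for it. Separately, for (\ref{Prop Properties of B-matrix - item 3}) you are working harder than necessary: since $\mu(Q:\xi:\lambda)\circ\iota_{\cO}(\eta)$ is \emph{defined} through (\ref{eq mu_(O_w)=a(wPw^(-1):P)mu_O}) as the inverse standard intertwining operator applied to the open-orbit distribution for the conjugate parabolic, composing with $\cI_{v_{w}^{-1}}=L^{\vee}(v_{w}^{-1})\circ\cA(v_{w}Qv_{w}^{-1}:Q:\xi:\lambda)$ simply undoes that construction and left-translates, giving $\mu(Q:v\cdot\xi:\Ad^{*}(v)\lambda)\circ\iota_{s_{v}(\cO)}(\eta)$ on the nose; the Jacobian bookkeeping you single out as the main obstacle does not arise on this route.
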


\begin{proof}
Let $\mu=\cI_{v}(Q:\xi:\lambda)\circ\mu(Q:\xi:\lambda)\eta$.

By Proposition \ref{Prop Action N_A(H)} we have for all $a\in A_{E}$
$$
R^{\vee}(a)\mu
=\cI_{v}(Q:\xi:\lambda)\circ R^{\vee}(a)\circ\mu(Q:\xi:\lambda)(\eta)
=a^{-\Ad^{*}(v_{w}^{-1})\lambda+\rho_{Q}}\mu.
$$
Let $\eta'\in V^{*}(v\cdot \xi)$ be so that $\mu=\mu\big(Q:v\cdot\xi:\Ad^{*}(v)\lambda\big)\eta'$.
Then for all $a\in A_{E}$
$$
R^{\vee}(a)\mu
=\sum_{w'\in \cN/\cW}\sum_{\cO'\in w'\cdot(P\bs Z)_{\open}}
    a^{-\Ad^{*}(v_{w'}^{-1}v)\lambda+\rho_{Q}}\mu(Q:\xi:\lambda)\circ\iota_{\cO'}(\eta'_{\cO'}).
$$
Both identities are identities of meromorphic functions in the parameter $\lambda$. Therefore, the only terms in the sum on the right-hand side of the second identity that can be non-zero, are those for $w'\in \cN/\cW$ with $v_{w'}^{-1}vv_{w}\in Z_{G}(\fa_{E}/\fa_{\fh})$. Since $\cW$ is a subgroup of $Z_{G}(\fa_{E}/\fa_{\fh})$, see \cite[Lemma 12.1]{KuitSayag_OnTheLittleWeylGroupOfARealSphericalSpace}, the latter condition is equivalent to $v_{w'}^{-1}vw\in Z_{G}(\fa_{E}/\fa_{\fh})$. Assertion (\ref{Prop Properties of B-matrix - item 1}) now follows by taking $w'=e\cW$.

We move on to prove (\ref{Prop Properties of B-matrix - item 2a}) and  (\ref{Prop Properties of B-matrix - item 2b}). By meromorphic continuation it suffices to prove the assertion for $\lambda\in (\fa/\fa_{\fh})_{\C}^{*}$ for which the intertwining operator $\cA(v^{-1}Qv:Q:\xi:\lambda)$ is given by a convergent integral over $v^{-1}N_{Q}v\cap \overline{N}_{Q}$. Since $\supp\big(\mu(Q:\xi:\lambda)\eta\big)\subseteq\overline{\cO}$, we then have
$$
\supp(\mu)
=v\cdot\supp\big(\cA(v^{-1}Qv:Q:\xi:\lambda)\mu\big)
\subseteq v\cdot \overline{\big(v^{-1}N_{Q}v\cap \overline{N}_{Q}\big)\cdot \overline{\cO}}.
$$
If $\dim\big(v^{-1}N_{Q}v\cap \overline{N}_{Q}\big)+\dim(\cO)<\dim (Z)$, then the interior of the support of $\mu$ is empty. This proves (\ref{Prop Properties of B-matrix - item 2a}). Assume that $\dim\big(v^{-1}N_{Q}v\cap \overline{N}_{Q}\big)+\dim(\cO)=\dim (Z)$ and the support of $\mu$ contains an open $P$-orbit $\cO'$.
Then $v^{-1}\cdot\cO'\subseteq(v^{-1}N_{Q}v\cap \overline{N}_{Q})\cdot \cO$, and hence $v^{-1}\cdot \cO'\cap \cO\neq \emptyset$. Let $z\in v^{-1}\cdot\cO'\cap \cO$. It follows from Proposition \ref{Prop decomp of N} that
$$
v^{-1}N_{Q}v\to v^{-1}N_{Q}v\cdot z;
\quad n\mapsto n\cdot z
$$
is a diffeomorphism. Since
$$
(v^{-1}N_{Q}v\cap \overline{N}_{P})\times (v^{-1}N_{Q}v\cap N_{P})\to v^{-1}N_{Q}v;
\quad(\overline{n},n)\mapsto \overline{n}n
$$
is a diffeomorphism as well, we obtain that
$$
(v^{-1}N_{Q}v\cap N_{P})\to (v^{-1}N_{Q}v\cap N_{P})\cdot z;
\quad n\mapsto n\cdot z
$$
is a diffeomorphism.
We note that $v$ normalizes $M_{Q}$ in view of Remark \ref{Rem L_Q roots} as it is contained in $\cN$. Therefore,
$$
v^{-1}N_{Q}v\cap \overline{N}_{P}
=v^{-1}N_{Q}v\cap \overline{N}_{Q}.
$$
It follows from Proposition \ref{Prop decomp of N}, Theorem \ref{Thm structure theorem for wPw^(-1) cdot z} and the assumption on the dimension of $\cO$ that
\begin{align*}
\dim (N_{P}\cdot z)
&=\dim(N_{Q})-\dim (Z)+\dim (\cO)
=\dim(N_{Q})-\dim\big(v^{-1}N_{Q}v\cap \overline{N}_{Q}\big)\\
&=\dim\big(v^{-1}N_{Q}v\cap N_{P}\big)
=\dim\Big(\big(v^{-1}N_{Q}v\cap N_{P}\big)\cdot z\Big)
\end{align*}
This implies that $(v^{-1}N_{Q}v\cap N_{P})\cdot z$ is open in $N_{P}\cdot z$.
By \cite[Theorem 2]{Rosenlicht_OnQuotientVarietiesAndTheAffineEmbeddingOfCertainHomogeneousSpaces} both $(v^{-1}N_{Q}v\cap N_{P})\cdot z$ and $N_{P}\cdot z$ are closed. Therefore, $(v^{-1}N_{Q}v\cap N_{P})\cdot z$ is also closed in $N_{P}\cdot z$. Moreover, both are connected. We thus conclude
$$
(v^{-1}N_{Q}v\cap N_{P})\cdot z
=N_{P}\cdot z.
$$
In particular we see that $N_{P}\cdot z\subseteq v^{-1}\cdot \cO'\cap \cO$.
Since every $N_{P}$-orbit in $\cO$ contains a weakly admissible point, we may without loss of generality assume that $z$ is weakly admissible.

Now $v\cdot z$ is an admissible point in $\cO'$. Hence if $X\in \fa^{-}$ is order-regular, then there exists an $u\in \cW$ so that $\fh_{v\cdot z, \Ad(v)X}=\Ad(u)\fh_{\emptyset}$ . We now have
$$
\fh_{z,X}
=\Ad(v^{-1})\fh_{v\cdot z,\Ad(v)X}
=\Ad(v^{-1}u)\fh_{\emptyset}.
$$
This implies that $\cO\in v^{-1}u\cdot(P\bs Z)_{\open}$. By assumption $\cO\in w\cdot(P\bs Z)_{\open}$. Therefore, $u^{-1}vw$ stabilizes $(P\bs Z)_{\open}$. Since the stabilizer is equal to $\cW$ by Theorem \ref{Thm properties of W-action}(\ref{Thm properties of W-action - item 2}), we may conclude that $vw\in \cW$. This proves (\ref{Prop Properties of B-matrix - item 2b}).

Finally, we prove (\ref{Prop Properties of B-matrix - item 3}). Let $v= v_{w}^{-1}$. In view of  (\ref{eq mu_(O_w)=a(wPw^(-1):P)mu_O}) we have
$$
\mu
=L^{\vee}(v)\mu_{x_{\cO}H}(v^{-1}Pv:\xi:\lambda+\rho_{P}-\rho_{Q}:\eta).
$$
Using meromorphic continuation, (\ref{eq mu_cO identity}) and the fact that $v_{w}$ satisfies (\ref{eq wSigma^+ cap -Sigma^+=wSigma(Q) cap -Sigma}) we obtain
$$
\mu
=\mu_{vx_{\cO}H}\big(P:v\cdot\xi:\Ad^{*}(v)\lambda+\rho_{P}-\rho_{Q}:\eta\big)
=\mu\big(Q:v\cdot\xi:\Ad^{*}(v)\lambda\big)\circ\iota_{s_{v}(\cO)}(\eta).
$$
This proves  (\ref{Prop Properties of B-matrix - item 3}).
\end{proof}

We define the map
$$
\beta(\xi:\lambda):V^{*}(\xi)\to V^{*}(\xi)
$$
for $\eta\in V^{*}(\xi)$, $\cO\in (P\bs Z)_{\open}$ and $w\in \cN/\cW$ to be given by
\begin{equation}\label{eq Def beta}
\Big(\beta(\xi:\lambda)\eta\Big)_{s_{v_{w}}(\cO)}
=\frac{1}{\gamma(v_{w}\overline{Q}v_{w}^{-1}:\overline{Q}:\xi:\lambda)}\Big(\cB_{v_{w}^{-1}}(Q:\xi:\lambda)\eta\Big)_{\cO}.
\end{equation}
We will use $\beta(\xi:\lambda)$ for the normalization of the map $\mu(Q:\xi:\lambda)$ in the next section.

Let $\ev_{g}$ denote evaluation in a point $g\in G$.
Since
$$
\Big(\cB_{v_{w}^{-1}}(Q:\xi:\lambda)\eta\Big)_{\cO}
=\ev_{v_{w}x_{\cO}}\circ \cA(v_{w}Qv_{w}^{-1}:Q:\xi:\lambda)\circ\mu(Q:\xi:\lambda)
$$
depends meromorphically on $\lambda\in (\fa/\fa_{\fh})_{\C}^{*}$  for $w\in \cN/\cW$ and $\cO\in (P\bs Z)_{\open}$, the assignment
$$
(\fa/\fa_{\fh})_{\C}^{*}\to \End\big(V^{*}(\xi)\big);
\quad\lambda\mapsto \beta(\xi:\lambda)
$$
is a meromorphic function. Moreover,  $\lambda\mapsto\beta(\xi:\lambda)$ is holomorphic on $\{\lambda\in (\fa/\fa_{\fh})_{\C}^{*}:\Im\lambda\notin\cS\}$.

If we order the orbits in $(P\bs Z)_{\fa_{\fh}}$ by dimension and choose a basis of $V^{*}(\xi)$ subject to the decomposition
$$
V^{*}(\xi)
=\bigoplus_{\cO\in (P\bs Z)_{\fa_{\fh}}}(V_{\xi}^{*})^{M_{Q,[\cO]}},
$$
then in view of Proposition \ref{Prop Properties of B-matrix}(\ref{Prop Properties of B-matrix - item 2a} -- \ref{Prop Properties of B-matrix - item 3}) the matrix of $\beta(\xi:\lambda)$ with respect to this basis is upper triangular and the diagonal entries are reciprocals of $\gamma$-factors. It follows that $\beta(\xi:\lambda)$ is invertible. Since $\lambda\mapsto\beta(\xi:\lambda)$ is meromorphic, it follows from Cramer's rule that also
$$
(\fa/\fa_{\fh})_{\C}^{*}\to\End\big(V^{*}(\xi)\big);
\quad \lambda\mapsto \beta(\xi:\lambda)^{-1}
$$
is meromorphic. This observation has the following corollary.

\begin{Cor}
Let $\xi$ be a finite dimensional unitary representation of $M_{Q}$. For every $v\in\cN$ the $\cB$-matrix $\cB_{v}(Q:\xi:\lambda)$ depends meromorphically on $\lambda\in (\fa/\fa_{\fh})^{*}_{\C}$.
\end{Cor}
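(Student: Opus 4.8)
The plan is to deduce the meromorphy of $\cB_v(Q:\xi:\lambda)$ for an arbitrary $v\in\cN$ from the meromorphy of $\beta(\xi:\lambda)$ and of $\beta(\xi:\lambda)^{-1}$, which has already been established, by writing $\cB_v$ in terms of the $\cB$-matrices associated to the chosen representatives $v_w$ together with the already-meromorphic intertwining operators. First I would reduce to the case $v=v_w$ for some $w\in\cN/\cW$: for a general $v\in\cN$ we may write $v=v_w z$ with $w=v\cW$ and $z\in\cW$. Using the cocycle relation $\cI^\circ_{v}(Q:w\cdot\xi:\Ad^*(w)\lambda)\circ\cI^\circ_{z}(Q:\xi:\lambda)=\cI^\circ_{vz}(Q:\xi:\lambda)$ for normalized intertwining operators from Section \ref{Subsection Distribution vectors - Intertwining operators}, and the defining commuting diagram (\ref{eq B-matrix diagram}) in Corollary \ref{Cor B-matrix}, one gets a corresponding composition relation $\cB_v(Q:\xi:\lambda)=\cB_{v_w}(Q:z\cdot\xi:\Ad^*(z)\lambda)\circ\cB_z(Q:\xi:\lambda)$, up to the relevant $\gamma$-factors which are meromorphic by construction. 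Since $z\in\cW\subseteq Z_G(\fa_E/\fa_\fh)$ moves $\lambda$ only by elements of the (finite) group $\cW$, it suffices to prove meromorphy for each $v_w$.

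Next I would, for a fixed $w\in\cN/\cW$, reconstruct $\cB_{v_w}(Q:\xi:\lambda)$ columnwise. The columns of $\cB_{v_w}$ are indexed by orbits $\cO\in(P\bs Z)_{\fa_\fh}$; for $\cO\in w'\cdot(P\bs Z)_{\open}$ one has, by the argument already given for $\beta$, that $\cB_{v_w}(Q:\xi:\lambda)\circ\iota_{\cO}(\eta)$ evaluated at a representative $v_{w''}x_{\cO'}$ equals $\ev_{v_{w''}x_{\cO'}}\circ\cA(v_w^{-1}Qv_w:Q:\xi:\lambda)\circ\mu(Q:\xi:\lambda)\circ\iota_\cO(\eta)$, which depends meromorphically on $\lambda$ because $\mu(Q:\xi:\lambda)$ does (Theorem \ref{Thm Construction on max rank orbits for Q}), the intertwining operator $\cA(v_w^{-1}Qv_w:Q:\xi:\lambda)$ does (Section \ref{Subsection Distribution vectors - Intertwining operators}), and evaluation at a point is continuous on the smooth principal series. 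Combining over all $\cO'$ and all $w''$ (a finite set) shows each entry of $\cB_{v_w}$, hence $\cB_{v_w}$ itself, is a meromorphic $\End$-valued function of $\lambda\in(\fa/\fa_\fh)^*_\C$. Strictly, the only subtle point is that $\mu(Q:\xi:\lambda)$ and $\cA(\cdots)$ are \emph{a priori} only defined for $\Im\lambda\notin\cS$ (so that $\mu(Q:\xi:\lambda)$ is an isomorphism onto $\cD'(Q:\xi:\lambda)^H$), but the composite $\cA\circ\mu_{x_\cO H}(Q:\xi:\lambda:\eta)$ is defined wherever $\mu_{x_\cO H}(Q:\xi:\lambda:\eta)$ and the intertwining operator are, which is on the complement of a locally finite hyperplane arrangement; the evaluation functionals then produce a meromorphic family on all of $(\fa/\fa_\fh)^*_\C$.

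Alternatively — and this is the route I would actually write up, since it is shortest — I would simply note that the preceding paragraphs in the text already exhibit $\beta(\xi:\lambda)$ and $\beta(\xi:\lambda)^{-1}$ as meromorphic $\End(V^*(\xi))$-valued functions, and that by definition (\ref{eq Def beta}) the map $\beta(\xi:\lambda)$ records, up to the meromorphic scalars $\gamma(v_w\overline Qv_w^{-1}:\overline Q:\xi:\lambda)$, precisely the action of the operators $\cB_{v_w^{-1}}(Q:\xi:\lambda)$ on the subspaces $\bigoplus_{\cO\in(P\bs Z)_{\open}}(V_\xi^*)^{M_{Q,[\cO]}}$. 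Reading off the matrix blocks of $\beta(\xi:\lambda)$ therefore shows each $\cB_{v_w^{-1}}(Q:\xi:\lambda)$ is meromorphic; since $\{v_w^{-1}:w\in\cN/\cW\}$ together with $\cW$ generate $\cN$ and the composition formula from the first paragraph propagates meromorphy, $\cB_v(Q:\xi:\lambda)$ is meromorphic for every $v\in\cN$.

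\textbf{Main obstacle.} The only genuine issue is bookkeeping of the $\gamma$-factors and of the finitely many $\cW$-translates of $\lambda$ when passing from the distinguished representatives $v_w$ to a general $v\in\cN$: one must check that the composition identity $\cB_{v}=\cB_{v_w}\circ\cB_z$ holds with the correct normalization and that no new poles beyond a locally finite arrangement are introduced. This is routine given the cocycle relations for normalized intertwining operators and the meromorphy of the $\gamma$-functions recorded in Section \ref{Subsection Distribution vectors - Intertwining operators}, but it is the step that requires care.
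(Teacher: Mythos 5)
Your ingredients are the right ones (the evaluation formulas behind $\beta(\xi:\lambda)$, the meromorphy of $\beta(\xi:\lambda)^{\pm1}$, of $\mu(Q:\xi:\lambda)$ and of the standard intertwining operators), but the argument as assembled has genuine gaps. First, the reduction in your opening paragraph does not close: writing $v=v_wz$ with $z\in\cW$ and invoking a composition law leaves you with the factor $\cB_z(Q:\xi:\lambda)$ for an arbitrary $z\in\cW$, which is not one of the $\cB_{v_w}$ (only $v_{e\cW}=e$ is among them) and whose meromorphy is exactly as hard as the general case; the observation that $z$ moves $\lambda$ within a finite orbit does not address this. Second, your preferred ``alternative route'' fails because the definition (\ref{eq Def beta}) of $\beta(\xi:\lambda)$ records only the components $\big(\cB_{v_w^{-1}}(Q:\xi:\lambda)\eta\big)_{\cO}$ with $\cO\in(P\bs Z)_{\open}$; the components of $\cB_{v_w^{-1}}\eta$ along the non-open orbits in $(P\bs Z)_{\fa_{\fh}}$ are simply not encoded in $\beta$, so reading off the matrix blocks of $\beta$ recovers only part of each operator. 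Third, in your columnwise reconstruction you identify $\ev_{v_{w''}x_{\cO'}}\circ\cA(\cdots)$ applied to the transported distribution with a component of $\cB_{v_w}\iota_{\cO}(\eta)$; what that evaluation actually computes (up to a $\gamma$-factor) is the corresponding component of $\beta(\cdots)$ applied to $\cB_{v_w}\iota_{\cO}(\eta)$, and one must still compose with $\beta(\cdots)^{-1}$ to recover $\cB_{v_w}$ itself.

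The paper's proof repairs exactly this last point, which also makes any reduction to representatives unnecessary: the explicit formula shows that $\beta(\xi:\lambda)\circ\mu(Q:\xi:\lambda)^{-1}$ is a composition of intertwining operators and evaluations at the points $v_wx_{\cO}$, hence sends any meromorphic family of distributions $\mu_\lambda\in\cD'(Q:\xi:\lambda)^{H}$ to a meromorphic $V^{*}(\xi)$-valued function of $\lambda$. Applying this to the meromorphic family $\cI_{v}(Q:\xi:\lambda)\circ\mu(Q:\xi:\lambda)(\eta)$ and then composing with the meromorphic $\beta(\xi:\lambda)^{-1}$ produces $\cB_{v}(Q:\xi:\lambda)\eta$ for every $v\in\cN$ in one stroke. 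If you insert the missing $\beta^{-1}$ into your second paragraph and discard the first and third, you recover essentially this argument.
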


\begin{proof}
The map $\beta(\xi:\lambda)\circ\mu(Q:\xi:\lambda)^{-1}$ is for $\mu\in \cD'(Q:\xi:\lambda)^{H}$,  $w\in \cN/\cW$ and $\cO\in (P\bs Z)_{\open}$ given by
\begin{align*}
&\Big(\beta(\xi:\lambda)\circ\mu(Q:\xi:\lambda)^{-1}(\mu)\Big)_{s_{v_{w}}(\cO)}\\
&\qquad=\frac{1}{\gamma(v_{w}\overline{Q}v_{w}^{-1}:\overline{Q}:\xi:\lambda)}\ev_{v_{w}x_{\cO}}
        \circ\cA(v_{w}Qv_{w}^{-1}:Q:\xi:\lambda)(\mu).
\end{align*}
It follows that for a meromorphic family of distributions $\mu_{\lambda}\in \cD'(Q:\xi:\lambda)^{H}$ with family parameter $\lambda\in(\fa/\fa_{\fh})_{\C}^{*}$, the assignment $\lambda\mapsto\beta(\xi:\lambda)\circ\mu(Q:\xi:\lambda)^{-1}(\mu_{\lambda})$ is meromorphic. We apply this to
$$
\mu_{\lambda}
= \cI_{v}(Q:\xi:\lambda)\circ\mu(Q:\xi:\lambda)(\eta)
$$
for $v\in \cN$ and $\eta\in V^{*}(\xi)$ and thus conclude that
\begin{align*}
\cB_{v}(Q:\xi:\lambda)\eta
&=\mu(Q:\xi:\lambda)^{-1}\circ\cI_{v}(Q:\xi:\lambda)\circ\mu(Q:\xi:\lambda)(\eta)\\
&=\beta(\xi:\lambda)^{-1}\circ \Big(\beta(\xi:\lambda)\circ\mu(Q:\xi:\lambda)^{-1}\Big)(\mu_{\lambda})
\end{align*}
depends meromorphically on $\lambda$.
\end{proof}

\subsection{Normalization}
\label{Subsection Construction - Normalization}
We continue with the notation from the previous section.
For a finite dimensional unitary representation $\xi$ of $M_{Q}$ and $\lambda\in(\fa/\fa_{\fh})^{*}_{\C}$ with $\Im\lambda\notin \cS$, we normalize our distributions $\mu(Q:\xi:\lambda)\eta$ using the map $\beta(\xi:\lambda)$ from (\ref{eq Def beta}) by defining
\begin{equation}\label{eq Def mu^circ}
\mu^{\circ}(\xi:\lambda)
:=\cA\big(Q:\overline{Q}:\xi:\lambda\big)^{-1}\circ\mu(Q:\xi:\lambda)\circ\beta(\xi:\lambda)^{-1}
:V^{*}(\xi)\to \cD'(\overline{Q}:\xi:\lambda)^{H}.
\end{equation}
The reason for normalizing the distributions is to make sure that the composition of the constant term map and $\mu^{\circ}(\xi:\lambda)$ will have a desirable form,  see  Section \ref{Subsection Most continuous part - Constant Term}.

We end this section with a reformulation of Theorem \ref{Thm Description D'(Z,Q:xi:lambda)}.

\begin{Thm}\label{Thm Description D'(overline Q:xi:lambda)^H}
For every finite dimensional unitary representation  $(\xi,V_{\xi})$ of $M_{Q}$ and every $\lambda\in(\fa/\fa_{\fh})_{\C}^{*}$ with $\Im\lambda\notin\cS$ the map
\begin{equation}\label{eq Parametrization D'(overline Q:xi:lambda)^H}
\mu^{\circ}(\xi:\lambda):V^{*}(\xi)\to\cD'(\overline{Q}:\xi:\lambda)^{H}
\end{equation}
is a linear isomorphism. The assignment
$$
\lambda\mapsto \mu^{\circ}(\xi:\lambda)\eta
$$
defines for every $\eta\in V^{*}(\xi)$ a meromorphic family of distributions in $\cD'(G, V_{\xi})$ with family parameter $\lambda\in (\fa/\fa_{\fh})^{*}_{\C}$. The poles of the family lie on a locally finite union of complex affine hyperplanes of the form
\begin{equation}\label{eq Form of hyperplanes III}
\{\lambda\in(\fa/\fa_{\fh})^{*}_{\C}: \lambda(X)=a\}\quad\text{for some }X\in\fa\text{ and }a\in\R.
\end{equation}
\end{Thm}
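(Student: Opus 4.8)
\textbf{Proof proposal for Theorem \ref{Thm Description D'(overline Q:xi:lambda)^H}.}
The plan is to deduce this theorem directly from Theorem \ref{Thm Description D'(Z,Q:xi:lambda)}, Theorem \ref{Thm Construction on max rank orbits for Q}, and the discussion of $\beta(\xi:\lambda)$ at the end of Section \ref{Subsection Construction - B-matrices}, without re-doing any geometry. The only genuinely new ingredient is bookkeeping of poles under composition with intertwining operators and inversion of the endomorphism $\beta(\xi:\lambda)$.

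First I would establish that $\mu^{\circ}(\xi:\lambda)$ is an isomorphism for $\Im\lambda\notin\cS$. By the choice of $\cS$ made just after Theorem \ref{Thm Description D'(Z,Q:xi:lambda)}, for such $\lambda$ the map $\mu(Q:\xi:\lambda):V^{*}(\xi)\to\cD'(Q:\xi:\lambda)^{H}$ is a linear isomorphism, the normalized intertwining operators $\cI^{\circ}_{v}(Q:\xi:\lambda)$ with $v\in\cN$ are isomorphisms, and, as observed after \eqref{eq Def beta}, the endomorphism $\beta(\xi:\lambda)$ of $V^{*}(\xi)$ is invertible (its matrix in a dimension-ordered basis is upper triangular with diagonal entries reciprocals of nonvanishing $\gamma$-factors). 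It remains to note that $\cA(Q:\overline{Q}:\xi:\lambda):\cD'(\overline{Q}:\xi:\lambda)\to\cD'(Q:\xi:\lambda)$ is an isomorphism for $\Im\lambda\notin\cS$; this follows from the composition identity $\cA(Q:\overline{Q}:\xi:\lambda)\circ\cA(\overline{Q}:Q:\xi:\lambda)=\gamma(Q:\overline{Q}:\xi:\lambda)\gamma(\overline{Q}:Q:\xi:\lambda)\Id$ recorded in Section \ref{Subsection Distribution vectors - Intertwining operators} together with the fact that the relevant $\gamma$-factors and the poles of the standard intertwining operators lie on a locally finite union of hyperplanes, which we may absorb into $\cS$. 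Hence the composition \eqref{eq Def mu^circ} is a composition of isomorphisms, proving the first assertion. Moreover the restriction gives an isomorphism onto $\cD'(\overline{Q}:\xi:\lambda)^{H}$ because $\cA(Q:\overline{Q}:\xi:\lambda)$ is $G$-equivariant and hence carries $H$-invariants to $H$-invariants.

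For meromorphy and the location of poles: by Theorem \ref{Thm Construction on max rank orbits for Q} each family $\lambda\mapsto\mu_{x_{\cO}H}(Q:\xi:\lambda:\eta_{\cO})$ is meromorphic with poles on a locally finite union of hyperplanes of the form $\{\lambda(X)=a\}$, $X\in\fa\setminus\fa_{\fh}$, $a\in\R$; summing over $\cO\in(P\bs Z)_{\fa_{\fh}}$ shows $\lambda\mapsto\mu(Q:\xi:\lambda)\eta$ is meromorphic of the same type for each fixed $\eta\in V^{*}(\xi)$. By the Corollary preceding Section \ref{Subsection Construction - Normalization} the $\cB$-matrices depend meromorphically on $\lambda$, and from the explicit formula \eqref{eq Def beta} together with Cramer's rule (as spelled out in Section \ref{Subsection Construction - B-matrices}) the map $\lambda\mapsto\beta(\xi:\lambda)^{-1}$ is meromorphic with poles on a locally finite hyperplane union. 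Finally $\lambda\mapsto\cA(Q:\overline{Q}:\xi:\lambda)^{-1}=\gamma(Q:\overline{Q}:\xi:\lambda)^{-1}\gamma(\overline{Q}:Q:\xi:\lambda)^{-1}\cA(\overline{Q}:Q:\xi:\lambda)$ is meromorphic with poles of the required form, by the properties of the $\gamma$-functions and of $\cA(\overline{Q}:Q:\xi:\lambda)$ from Section \ref{Subsection Distribution vectors - Intertwining operators} and Proposition \ref{Prop int formula for a(S_2:S_1:xi:lambda)}. Composing these three meromorphic families as in \eqref{eq Def mu^circ} gives that $\lambda\mapsto\mu^{\circ}(\xi:\lambda)\eta$ is a meromorphic family in $\cD'(G,V_{\xi})$ whose poles lie on a locally finite union of complex affine hyperplanes of the form \eqref{eq Form of hyperplanes III}; here one uses that a finite (indeed locally finite) union of such unions is again such a union.

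The only subtle point — and the step I expect to require the most care — is making sure that taking the \emph{inverse} $\beta(\xi:\lambda)^{-1}$ does not introduce poles outside a locally finite hyperplane union, i.e. that $\det\beta(\xi:\lambda)$ is a product (over the equivalence classes and the $\cN/\cW$-indexing) of nonzero meromorphic $\gamma$-factors whose reciprocals have hyperplane poles; this is exactly what the upper-triangularity from Proposition \ref{Prop Properties of B-matrix}\,(\ref{Prop Properties of B-matrix - item 2a}--\ref{Prop Properties of B-matrix - item 3}) is designed to give, so the argument reduces to quoting that structure and the known pole structure of the $\gamma$-functions from Section \ref{Subsection Distribution vectors - Intertwining operators}. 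Everything else is formal composition of previously established meromorphic families and isomorphisms.
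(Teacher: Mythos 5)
Your proposal is correct and follows essentially the same route as the paper: the paper's proof is just the two-sentence observation that the poles of the standard intertwining operators and the poles and zeros of the $\gamma$-functions lie on locally finite unions of hyperplanes of the form \eqref{eq Form of hyperplanes III}, after which everything reduces to Theorem \ref{Thm Description D'(Z,Q:xi:lambda)} and the invertibility of $\beta(\xi:\lambda)$ established in Section \ref{Subsection Construction - B-matrices}. Your write-up simply unpacks that composition of isomorphisms and meromorphic families in more detail, including the correct observation that the upper-triangular structure of $\beta(\xi:\lambda)$ controls the poles of $\beta(\xi:\lambda)^{-1}$.
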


\begin{proof}
The poles of standard intertwining operators, as well as the poles and zero's of $\gamma$-functions,  all lie on a locally finite union of complex affine hyperplanes of the form (\ref{eq Form of hyperplanes III}). The proposition now follows from Theorem \ref{Thm Description D'(Z,Q:xi:lambda)}.
\end{proof}

For future reference we record here that in view of the corollary we may and will equip $\cD'(\overline{Q}:\xi:\lambda)^{H}$ for $\lambda\in (\fa/\fa_{\fh})^{*}_{\C}$ with $\Im\lambda\notin \cS$  with an inner product so that the map (\ref{eq Parametrization D'(overline Q:xi:lambda)^H}) is an isometry.

\subsection{The horospherical case}
\label{Subsection Construction - Horospherical case}
We call the real spherical homogeneous space $Z$ horospherical if $\fa$ normalizes $\fh_{z}$ for one (and hence for all) adapted point $z\in Z$. We note that $Z$ is horospherical if and only if the compression cone $\cC$ equals $\fa$, which is equivalent to the little Weyl group of $Z$ being trivial.
In this case the stabilizer $H_{z}$ of an adapted point $z\in Z$ is given by
$$
H_{z}
=(L_{Q}\cap H_{z})\overline{N}_{Q}
=(M\cap H_{z})\exp(\fa_{\fh})L_{Q,\nc}\overline{N}_{Q},
$$
where $L_{Q,\nc}$ is the connected subgroup of $G$ with Lie algebra $\fl_{Q,\nc}$.

In this section we will further explicate the description of $\cD'(\overline{Q}:\xi:\lambda)^{H}$ from Theorem \ref{Thm Construction on max rank orbits for Q} and Theorem \ref{Thm Description D'(overline Q:xi:lambda)^H} under the assumption that $H$ is horospherical.

We first note that $\overline{N}_{P}\subseteq L_{Q,\nc}\overline{N}_{Q}\subseteq H_{z}$ for every adapted point $z$. As $P$ admits only one open orbit in $G/\overline{N}_{P}$, it follows that there exists precisely one open $P$-orbit $\cO_{0}$ in $Z$. Recall that the point $eH\in Z=G/H$ is assumed to be admissible. As in \cite[Example 3.5]{KuitSayag_OnTheLittleWeylGroupOfARealSphericalSpace} it is easily seen that the set of adapted points in $Z$ is equal to $MAH/H$. In particular, we are in the situation of Remark \ref{Remark properties of W-action} (\ref{Remark properties of W-action - item 2}) and (\ref{Remark properties of W-action - item 3}), and hence the Weyl group $W$ acts transitively on the set of $P$-orbits in $Z$ of maximal rank. This action is given by (\ref{eq w cdot O=Pv cdot z}) and (\ref{eq W-action of max rank orbits}). By Theorem \ref{Thm properties of W-action} (\ref{Thm properties of W-action - item 2}) the stabilizer of the open orbit is equal to $\cZ/MA$, where $\cZ=N_{L_{Q}}(\fa)$.

It follows from the Bruhat decomposition that each $P$-orbit in $Z$ is of the form $PwH$, with $w\in N_{G}(\fa)$. If $\cO=PwH$, then
$$
\fa_{\cO}
=\fa\cap \Ad(w)\fh
=\Ad(w)\fa_{\fh}.
$$
In particular, each orbit is of maximal rank. Therefore, the map
\begin{equation}\label{eq N/cV to P bs Z}
N_{G}(\fa)/\cZ\to P\bs Z;
\quad v\cZ\mapsto PvH
\end{equation}
is a bijection. We recall from (\ref{eq def cN}) that $\cN$ denotes the group $N_{G}(\fa)\cap N_{G}(\fa_{\fh})$. The image of $\cN/\cZ$ under the map (\ref{eq N/cV to P bs Z} ) is equal to the set $(P\bs Z)_{\fa_{\fh}}$ of $P$-orbits $\cO$ in $Z$ with $\fa_{\cO}=\fa_{\fh}$.
We complete the set $\{v_{w}:w\in\cN/\cW\}$ from Section \ref{Subsection Construction - Description} to a set of representatives $\fN$ of $\cN/\cZ$ in $\cN\cap K$. Then
$$
\fN\to (P\bs Z)_{\max};
\quad v\mapsto PvH
$$
is a bijection and the points $vH\in Z$ with $v\in\fN$ are weakly adapted. The $v\in\fN$ play the role of the elements $x_{\cO}\in G$ from section \ref{Subsection Construction - Description}.

Let $\xi$ be a finite dimensional unitary representation of $M_{Q}$. Then
\begin{equation}\label{eq Decomp V^*(xi) horospherical}
V^{*}(\xi)
=\bigoplus_{\substack{v\in\fN}}(V_{\xi}^{*})^{M_{Q}\cap vHv^{-1}}
\end{equation}
It follows from Proposition \ref{Prop Properties of B-matrix}(\ref{Prop Properties of B-matrix - item 1}) that the map $\beta(\xi:\lambda)$ is diagonal with respect to a basis of $V^{*}(\xi)$ subject to the decomposition (\ref{eq Decomp V^*(xi) horospherical}).
Now Proposition \ref{Prop Properties of B-matrix} (\ref{Prop Properties of B-matrix - item 3}) yields that $\mu^{\circ}(\xi:\lambda)$ for $\eta\in V^{*}(\xi)$ is given by
$$
\mu^{\circ}(\xi:\lambda)\eta
=\sum_{v\in \cN}\gamma(v^{-1}\overline{Q}v:\overline{Q}:\xi:\lambda) \cA(Q:\overline{Q}:\xi:\lambda)^{-1}
    \mu_{vH}(Q:\xi:\lambda:\eta_{v}).
$$
For $v\in\fN$ we write $\iota_{v}$ for the inclusion map
\begin{equation}\label{eq Def iota_v}
\iota_{v}:(V_{\xi}^{*})^{M_{Q}\cap vHv^{-1}}
\hookrightarrow V^{*}(\xi).
\end{equation}
For $v\in\cN$ we further write $\cI^{\circ}_{v}(\xi:\lambda)$ for the normalized intertwining operator $\cI^{\circ}_{v}(\overline{Q}:\xi:\lambda)$ from Section \ref{Subsection Distribution vectors - Intertwining operators}.

\begin{Cor}\label{Cor Description D'(Z,overline Q:xi:lambda) horospherical case}
Let $(\xi,V_{\xi})$ be a finite dimensional unitary representation of $M_{Q}$.
For all $\lambda\in(\fa/\fa_{\fh})_{\C}^{*}$ with $\Im\lambda\notin\cS$ the map
\begin{equation}\label{eq Isomorphism horospherical case}
\mu^{\circ}(\xi:\lambda): V^{*}(\xi)\to\cD'(\overline{Q}:\xi:\lambda)^{H};
\end{equation}
is a linear isomorphism. The assignment
$$
\lambda\mapsto \mu^{\circ}(\xi:\lambda)\eta
$$
defines for every $\eta\in V^{*}(\xi)$ a meromorphic family of distributions in $\cD'(G, V_{\xi})$ with family parameter $\lambda\in (\fa/\fa_{\fh})^{*}_{\C}$.
For all $v\in\fN$ the distributions $\mu\in\cD'\big(Q:v^{-1}\cdot\xi:\Ad^{*}(v^{-1})\lambda\big)^{H}$ are smooth in $e$ and may therefore be evaluated in $e$.  The inverse of (\ref{eq Isomorphism horospherical case}) is given by
\begin{align}\label{eq Inverse isomorphism horospherical case}
\nonumber\cD'(\overline{Q}:\xi:\lambda)^{H}&\to V^{*}(\xi);\\
\quad \mu &\mapsto
    \Big(\ev_{e}\circ \cA\big(Q:\overline{Q}:v^{-1}\cdot \xi:\Ad^{*}(v^{-1})\lambda\big)\circ \cI^{\circ}_{v^{-1}}(\xi:\lambda)(\mu)\Big)_{v\in\fN},
\end{align}
where $\ev_{g}$ denotes evaluation in a point $g\in G$.

Let $v\in \fN$.  Then
\begin{equation}\label{eq horospherical case a mu_v =a^(-v^-1 lambda+rho)}
R^{\vee}(a)\mu^{\circ}(\xi:\lambda)\circ\iota_{v}
=a^{-\Ad^{*}(v^{-1})\lambda+\rho_{Q}}\mu^{\circ}(\xi:\lambda)\circ\iota_{v}
\qquad(a\in A)
\end{equation}
and
\begin{equation}\label{eq transformation rule mu horospherical case}
\cI^{\circ}_{v}(\xi:\lambda)\circ\mu^{\circ}(\xi:\lambda)\circ\iota_{e}
=\mu^{\circ}\big(v\cdot\xi:\Ad^{*}(v)\lambda\big)\circ\iota_{v}.
\end{equation}
Finally, if $\lambda$ satisfies $\Re\lambda(\alpha^{\vee})>0$ for all $\alpha\in -\Sigma(Q)\cap \Sigma(vQv^{-1}) $, then for every $\eta\in (V_{\xi}^{*})^{M_{Q}\cap vHv^{-1}}$ the distribution $\mu^{\circ}(\xi:\lambda)(\iota_{v}\eta)$ is for $\phi\in \cD(G,V_{\xi})$ given by
\begin{align}\label{eq formula mu horospherical case}
&\gamma(Q:vQv^{-1}:\xi:\lambda)\mu^{\circ}(\xi:\lambda)(\iota_{v}\eta)(\phi)\\
\nonumber&\quad= \int_{\overline{N}_{Q}\cap vN_{Q}v^{-1}}\int_{M_{Q}}\int_{A}\int_{\overline{N}_{Q}}a^{-\lambda-\Ad^{*}(v)\rho_{Q}}
        \Big(\xi^{\vee}(m)\eta,\phi(nmav\overline{n})\Big)\,d\overline{n}\,da\,dm\,dn.
\end{align}
\end{Cor}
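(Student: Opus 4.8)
The isomorphism statement and the meromorphy of $\lambda\mapsto\mu^{\circ}(\xi:\lambda)\eta$, together with the location of the poles, are not special to the horospherical situation: they are Theorem \ref{Thm Description D'(overline Q:xi:lambda)^H}. So the work is to specialize the general picture and prove the four explicit assertions. Throughout I will use that horosphericity gives $\cC=\fa$, hence $\fa_{E}=\fa$ and $A_{E}=A$, that $\cW=\cZ$ and $(P\bs Z)_{\fa_{\fh}}\leftrightarrow\cN/\cZ$ via $v\cZ\mapsto PvH$ with the $vH$ ($v\in\fN$) playing the role of the $x_{\cO}$, and that there is a unique open $P$-orbit $\cO_{0}=PeH$. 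I will also take as starting point the sum formula $\mu^{\circ}(\xi:\lambda)\eta=\sum_{v\in\cN}\gamma(v^{-1}\overline{Q}v:\overline{Q}:\xi:\lambda)\,\cA(Q:\overline{Q}:\xi:\lambda)^{-1}\mu_{vH}(Q:\xi:\lambda:\eta_{v})$ derived just above the corollary, which rests on the diagonality of $\beta(\xi:\lambda)$ (Proposition \ref{Prop Properties of B-matrix}(\ref{Prop Properties of B-matrix - item 1})) and on Proposition \ref{Prop Properties of B-matrix}(\ref{Prop Properties of B-matrix - item 3}).

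First I would record two geometric facts. Since $\cO_{0}$ is open, every other $P$-orbit $\cO'$ has $\dim\cO'<\dim Z$, so $\overline{\cO'}$ (being $P$-invariant, closed and lower-dimensional) cannot meet $\cO_{0}$. Consequently, for any $\mu\in\cD'\big(Z,Q:v^{-1}\cdot\xi:\Ad^{*}(v^{-1})\lambda\big)$, Proposition \ref{Prop support condition} forces $\cO_{0}\in(P\bs Z)_{\mu}$ as soon as $eH\in\supp\mu$; as $\cO_{0}$ is open, $\trdeg_{\cO_{0}}(\mu)=0$ automatically, so by \cite[Lemma 5.5]{KrotzKuitOpdamSchlichtkrull_InfinitesimalCharactersOfDiscreteSeriesForRealSphericalSpaces} $\mu$ is a smooth function near $eH$; through (\ref{eq Identification D'(Z) simeq D'(G)^H}) it is smooth at $e\in G$, so $\ev_{e}$ is defined, giving the smoothness assertion. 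The same reasoning shows that each $\mu^{\circ}(\sigma:\mu)\iota_{u}$ ($u\in\fN$) is supported on the closure in $G$ of $PuH$ — $\mu_{uH}(Q:\cdots)$ is supported on $\overline{PuH}$ by Theorem \ref{Thm Construction on max rank orbits for Q}/Proposition \ref{Prop construction on max rank orbits}(\ref{Prop construction on max rank orbits - item 4}), and the intertwining operators occurring are equivariant under the relevant unipotent radicals and do not enlarge support — and that $e$ lies in this closure iff $PuH=\cO_{0}$, i.e. $u\in\cZ$. For (\ref{eq horospherical case a mu_v =a^(-v^-1 lambda+rho)}) I feed $A_{E}=A$ into Proposition \ref{Prop Action N_A(H)}, then use $G$-equivariance of $\cA(Q:\overline{Q}:\xi:\lambda)^{-1}$ and the diagonality of $\beta(\xi:\lambda)^{-1}$ (so it preserves the $\iota_{v}$-summand). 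For (\ref{eq transformation rule mu horospherical case}) I combine the defining diagram (\ref{eq B-matrix diagram}) of the $\cB$-matrix with the cocycle identities for the normalized operators $\cI^{\circ},\cA^{\circ}$ of Section \ref{Subsection Distribution vectors - Intertwining operators}, the $Q$-versus-$\overline{Q}$ comparison of Section \ref{Subsection Distribution vectors - Comparison between P and Q}, and the definitions (\ref{eq Def beta}), (\ref{eq Def mu^circ}); diagonality of $\beta$ with Proposition \ref{Prop Properties of B-matrix}(\ref{Prop Properties of B-matrix - item 3}) and the normalizations $\cB_{e}=\mathrm{Id}$, $\gamma(\overline{Q}:\overline{Q}:\xi:\lambda)=1$ identify the $\iota_{e}$-summand of $\cI^{\circ}_{v}(\xi:\lambda)\circ\mu^{\circ}(\xi:\lambda)$ with the $\iota_{v}$-summand of $\mu^{\circ}(v\cdot\xi:\Ad^{*}(v)\lambda)$ with unit coefficient; in particular the diagonal entry of $\beta(\xi:\lambda)$ on the $\cO_{0}$-summand is $1$.

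Next, for the inversion formula (\ref{eq Inverse isomorphism horospherical case}) I write $\mu=\mu^{\circ}(\xi:\lambda)\eta=\sum_{v'\in\fN}\mu^{\circ}(\xi:\lambda)\iota_{v'}\eta_{v'}$ and apply the claimed right inverse. Applying $\cI^{\circ}_{v^{-1}}(\xi:\lambda)$ and using (\ref{eq transformation rule mu horospherical case}) (with $(\xi,\lambda,v)$ suitably substituted) together with $\cI^{\circ}_{v^{-1}}\circ\cI^{\circ}_{v'}=\cI^{\circ}_{v^{-1}v'}$ turns the $v'$-term into $\mu^{\circ}\big((v^{-1}v')\cdot(v'^{-1}\cdot\xi):\Ad^{*}(v^{-1}v')\Ad^{*}(v'^{-1})\lambda\big)\circ\iota_{v^{-1}v'}$. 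For $v'\neq v$ we have $v^{-1}v'\notin\cZ$, so by the support statement above this is supported away from $e$ and $\ev_{e}$ kills it; for $v'=v$ it equals $\mu^{\circ}(v^{-1}\cdot\xi:\Ad^{*}(v^{-1})\lambda)\iota_{e}\eta_{v}$, and applying $\cA(Q:\overline{Q}:v^{-1}\cdot\xi:\Ad^{*}(v^{-1})\lambda)$ and unwinding (\ref{eq Def mu^circ}) gives $\mu_{eH}\big(Q:v^{-1}\cdot\xi:\Ad^{*}(v^{-1})\lambda:\eta_{v}\big)$ (using that the $\iota_{e}$-entry of $\beta^{-1}$ is $1$), whose value at $e$ is the value at $e$ of the defining function $\epsilon_{eH}(P:\cdots:\eta_{v})$, namely $\eta_{v}$. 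For (\ref{eq formula mu horospherical case}), under $\Re\lambda(\alpha^{\vee})>0$ for $\alpha\in-\Sigma(Q)\cap\Sigma(vQv^{-1})$ I express $\mu_{vH}(Q:\xi:\lambda:\eta)$ by the absolutely convergent integral of Theorem \ref{Thm Construction on max rank orbits for Q} (in the form for the minimal/parabolic subgroup conjugate through the adapted point $v^{-1}H$, using $\fa_{\cO}=\Ad(v)\fa_{\fh}$ and $\rho_{vQv^{-1}}=\Ad^{*}(v)\rho_{Q}$), then compose with the convergent integral for the inverse intertwining operator from Proposition \ref{Prop int formula for a(S_2:S_1:xi:lambda)}; the decompositions $vN_{Q}v^{-1}=(vN_{Q}v^{-1}\cap N_{Q})(vN_{Q}v^{-1}\cap\overline{N}_{Q})$ and $\overline{N}_{Q}=(\overline{N}_{Q}\cap vN_{Q}v^{-1})(\overline{N}_{Q}\cap v\overline{N}_{Q}v^{-1})$, exactly as in the proof of Proposition \ref{Prop construction on max rank orbits}, merge the unipotent integrations into the displayed shape, and the cocycle identities match the overall scalar to $\gamma(Q:vQv^{-1}:\xi:\lambda)$; meromorphic continuation in $\lambda$ then removes the positivity restriction where needed.

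The main obstacle is the normalization bookkeeping: confirming that (\ref{eq transformation rule mu horospherical case}) holds with coefficient exactly $1$ — equivalently that the $\cO_{0}$-diagonal entry of $\beta(\xi:\lambda)$ is $1$ — and that the $\gamma$-factors and $\rho$-shifts line up precisely in (\ref{eq formula mu horospherical case}); without this one obtains all four assertions only up to $\lambda$-dependent scalars, which would be insufficient for the later Maa\ss--Selberg and Plancherel computations. By contrast, the geometric inputs (uniqueness of the open orbit, the support statements, smoothness at $e$) are routine given the structure theory in Sections \ref{Section Orbits of max rank} and \ref{Section Construction}.
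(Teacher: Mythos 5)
Your treatment of the smoothness at $e$, of \eqref{eq horospherical case a mu_v =a^(-v^-1 lambda+rho)} via $A_{E}=A$ and Proposition \ref{Prop Action N_A(H)}, and of the inversion formula \eqref{eq Inverse isomorphism horospherical case} given \eqref{eq transformation rule mu horospherical case} all match the paper's argument. The genuine gap is in your proof of \eqref{eq transformation rule mu horospherical case} itself. The identity is equivalent to $\cB^{\circ}_{v}(\xi:\lambda)\circ\iota_{e}=\iota_{v}$, and the formal bookkeeping you propose cannot establish this: Proposition \ref{Prop Properties of B-matrix} only controls the components of $\cB_{u}(Q:\xi:\lambda)\circ\iota_{\cO}(\eta)$ on \emph{open} orbits $\cO'$, and in the horospherical case that is the single component at $\cO_{0}$. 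The components of $\cB_{v}(Q:\xi:\lambda)\circ\iota_{e}$ on the non-open orbits $PuH$, $u\in\fN\setminus\cZ$, are left completely undetermined by the stated propositions, and even the ``diagonal'' coefficient at $PvH$ reduces, after using the cocycle relation $\cI_{v^{-1}}\circ\cI_{v}=\gamma(Q:v^{-1}Qv:\xi:\lambda)\gamma(v^{-1}Qv:Q:\xi:\lambda)\,\Id$, to a comparison of $\gamma$-factors for $Q$ with $\gamma$-factors for $\overline{Q}$ that is not among the listed properties of the $\gamma$-functions. You have also misidentified the crux: the $\cO_{0}$-diagonal entry of $\beta(\xi:\lambda)$ being $1$ is immediate from $\cB_{e}=\Id$ and $\gamma(\overline{Q}:\overline{Q}:\xi:\lambda)=1$ (you assert it yourself), and is not what makes \eqref{eq transformation rule mu horospherical case} hold with unit coefficient.

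The paper closes this by direct computation rather than by the $\cB$-matrix formalism. One first shows, from \eqref{eq mu_cO(Q) identity} and \eqref{eq a(S_2:S_1)mu}, that
$$
\big(\mu^{\circ}(\xi:\lambda)(\iota_{e}\eta)\big)(\phi)
=\int_{M_{Q}}\int_{A}\int_{\overline{N}_{Q}}a^{-\lambda-\rho_{Q}}\Big(\xi^{\vee}(m)\eta,\phi(ma\overline{n})\Big)\,d\overline{n}\,da\,dm,
$$
an integral convergent and holomorphic for all $\lambda$. Applying $\cA\big(Q:\overline{Q}:v\cdot\xi:\Ad^{*}(v)\lambda\big)\circ\cI^{\circ}_{v}(\xi:\lambda)$ to this in integral form (valid for $\Re\lambda$ in a suitable cone) produces exactly $\gamma\big(v\overline{Q}v^{-1}:\overline{Q}:v\cdot\xi:\Ad^{*}(v)\lambda\big)$ times the defining integral \eqref{eq mu_cO(Q) identity} of $\mu_{vH}\big(Q:v\cdot\xi:\Ad^{*}(v)\lambda:\eta\big)$, which by \eqref{eq Def mu^circ} is $\cA\big(Q:\overline{Q}:v\cdot\xi:\Ad^{*}(v)\lambda\big)\circ\mu^{\circ}\big(v\cdot\xi:\Ad^{*}(v)\lambda\big)(\iota_{v}\eta)$; meromorphic continuation then gives \eqref{eq transformation rule mu horospherical case} with coefficient exactly $1$, and \eqref{eq formula mu horospherical case} drops out of the same computation. (The needed strengthening of Proposition \ref{Prop Properties of B-matrix}(\ref{Prop Properties of B-matrix - item 3}) to all components, i.e.\ $\cI_{v_{w}^{-1}}\circ\mu(Q:\xi:\lambda)\circ\iota_{\cO}=\mu\big(Q:v_{w}^{-1}\cdot\xi:\cdot\big)\circ\iota_{s_{v_{w}^{-1}}(\cO)}$, is in fact established inside the proof of that proposition, but it is not what its statement records, and your argument as written relies only on the statement.)
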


\begin{proof}
Except for the identities (\ref{eq Inverse isomorphism horospherical case}), (\ref{eq transformation rule mu horospherical case}) and (\ref{eq formula mu horospherical case}) all assertions follow directly from  Theorem \ref{Thm Construction on max rank orbits for Q}, Proposition \ref{Prop Action N_A(H)} and Theorem \ref{Thm Description D'(overline Q:xi:lambda)^H}.

Let $\eta\in V_{\xi}^{M_{Q}\cap H}$.
It follows from (\ref{eq mu_cO(Q) identity}) that $\mu(Q:\xi:\lambda)(\iota_{e}\eta)$ for $\phi\in \cD(G,V_{\xi}^{*})$ and $\lambda\in \rho_{Q}-\Gamma+i(\fa/\fa_{\fh})^{*}$ is given by
$$
\big(\mu(Q:\xi:\lambda)(\iota_{e}\eta)\big)(\phi)
=\int_{N_{Q}}\int_{M_{Q}}\int_{A}\int_{\overline{N}_{Q}}a^{-\lambda-\rho_{Q}}
        \Big(\xi^{\vee}(m)\eta,\phi(nma\overline{n})\Big)\,d\overline{n}\,da\,dm\,dn.
$$
In view of (\ref{eq a(S_2:S_1)mu}) in Proposition \ref{Prop int formula for a(S_2:S_1:xi:lambda)} and (\ref{eq Def mu^circ}) we have
\begin{equation}\label{eq mu^circ e component horospherical case}
\big(\mu^{\circ}(\xi:\lambda)(\iota_{e}\eta)\big)(\phi)
=\int_{M_{Q}}\int_{A}\int_{\overline{N}_{Q}}a^{-\lambda-\rho_{Q}}
        \Big(\xi^{\vee}(m)\eta,\phi(ma\overline{n})\Big)\,d\overline{n}\,da\,dm.
\end{equation}
The right-hand side is a convergent integral for all $\lambda\in (\fa/\fa_{\fh})^{*}_{\C}$ and depends holomorphically on $\lambda$. Therefore, the identity holds for all $\lambda\in i(\fa/\fa_{\fh})^{*}$.

Let $v\in \fN$ and $\eta\in V_{\xi}^{M_{Q}\cap H}$.
Since
\begin{align*}
&\cA\big(Q:\overline{Q}:v\cdot \xi:\Ad^{*}(v)\lambda\big)\circ\cI^{\circ}_{v}(\xi:\lambda)\\
&\quad=\frac{1}{\gamma\big(v^{-1}\overline{Q}v:\overline{Q}:\xi:\lambda\big)}
    \cA\big(Q:\overline{Q}:v\cdot\xi:\Ad^{*}(v)\lambda\big)\\
    &\qquad\qquad\qquad\qquad\qquad\qquad\qquad\qquad
    \circ \cA\big(\overline{Q}:v\overline{Q}v^{-1}:v\cdot\xi:\Ad^{*}(v)\lambda\big)\circ L^{\vee}(v)\\
&\quad=\gamma\big(v\overline{Q}v^{-1}:\overline{Q}:v\cdot\xi:\Ad^{*}(v)\lambda\big)
   \cA\big(Q:v\overline{Q}v^{-1}:v\cdot\xi:\Ad^{*}(v)\lambda\big)\circ  L^{\vee}(v),
\end{align*}
we have by (\ref{eq mu^circ e component horospherical case}) and (\ref{eq a(S_2:S_1)mu}) for $\phi\in\cD(G:V_{\xi}^{*})$
\begin{align*}
&\Big(\cA(Q:\overline{Q}:v\cdot \xi:\Ad^{*}(v)\lambda)\circ\cI^{\circ}_{v}(\xi:\lambda)\circ\mu^{\circ}(\xi:\lambda)(\iota_{e}\eta)\Big)(\phi)\\
&\quad=\gamma\big(v\overline{Q}v^{-1}:\overline{Q}:v\cdot\xi:\Ad^{*}(v)\lambda\big)\\
&\quad\times\int_{N_{Q}\cap vN_{Q}v^{-1}}\int_{M_{Q}}\int_{A}\int_{\overline{N}_{Q}}a^{-\Ad^{*}(v)\lambda-\Ad^{*}(v)\rho_{Q}}
        \Big((v\cdot\xi^{\vee})(m)\eta,\phi(nmav\overline{n})\Big)\,d\overline{n}\,da\,dm\,dn
\end{align*}
under the condition that $\lambda$ satisfies $\Re\lambda(\alpha^{\vee})>0$ for all $\alpha\in \Sigma(Q)\cap-\Sigma(v^{-1}Qv) $.
It follows from (\ref{eq mu_cO(Q) identity}) and (\ref{eq Def mu^circ}) that the right-hand side equals
\begin{align*}
&\gamma\big(v\overline{Q}v^{-1}:\overline{Q}:v\cdot\xi:\Ad^{*}(v)\lambda\big)
    \mu_{vH}\big(Q:v\cdot\xi:\Ad^{*}(v)\lambda:\eta\big)(\phi)\\
&\qquad=\Big(\cA\big(Q:\overline{Q}:v\cdot\xi:\Ad^{*}(v)\lambda\big)\circ\mu^{\circ}\big(v\cdot\xi:\Ad^{*}(v)\lambda\big)(\iota_{v}\eta)\Big)(\phi).
\end{align*}
By meromorphic continuation we obtain (\ref{eq transformation rule mu horospherical case}).

If $\lambda$ satisfies $\Re\lambda(\alpha^{\vee})>0$ for all $\alpha\in -\Sigma(Q)\cap \Sigma(vQv^{-1}) $, then (\ref{eq formula mu horospherical case}) follows from (\ref{eq a(S_2:S_1)mu}), (\ref{eq transformation rule mu horospherical case}) and (\ref{eq mu^circ e component horospherical case}).

Finally we move on to show (\ref{eq Inverse isomorphism horospherical case}).
Let $v,w\in\fN$ and $\eta\in (V_{\xi}^{*})^{M_{Q}\cap wHw^{-1}}$. We set
$$
\mu
:= \cA\big(Q:\overline{Q}:v^{-1}\cdot \xi:\Ad^{*}(v^{-1})\lambda\big)\circ \cI^{\circ}_{v^{-1}}(\xi:\lambda)\circ\mu^{\circ}(\xi:\lambda)(\iota_{w}\eta).
$$
By (\ref{eq transformation rule mu horospherical case}) we have
\begin{align*}
\mu
&= \cA\big(Q:\overline{Q}:v^{-1}\cdot \xi:\Ad^{*}(v^{-1})\lambda\big)\circ \cI^{\circ}_{v^{-1}w}\big(w^{-1}\xi:\Ad^{*}(w^{-1})\lambda\big)\\
&\qquad\qquad\qquad\qquad\qquad\qquad\qquad\qquad\circ\mu^{\circ}\big(w^{-1}\xi:\Ad^{*}(w^{-1})\lambda\big)(\iota_{e}\eta)\\
&= c
    \cA\big(Q:v^{-1}w\overline{Q}w^{-1}v:v^{-1}\cdot \xi:\Ad^{*}(v^{-1})\lambda\big)\circ L^{\vee}(v^{-1}w)\\
&\qquad\qquad\qquad\qquad\qquad\qquad\qquad\qquad\circ\mu^{\circ}\big(w^{-1}\xi:\Ad^{*}(w^{-1})\lambda\big)(\iota_{e}\eta)
\end{align*}
for some $c\in\C$.
By meromorphic continuation it follows from (\ref{eq formula mu horospherical case}) and (\ref{eq a(S_2:S_1)mu}) that
$$
\supp(\mu)
\subseteq \overline{N_{Q}v^{-1}w\overline{Q}},
$$
and hence
$e\in\supp(\mu)$ if and only if $v=w$. Now suppose $v=w$. Then $\mu$ is smooth on the open subset $vQ\overline{N}_{Q}$. We may therefore evaluate $\mu$ in $e$. Now
\begin{align*}
\ev_{e}(\mu)
&=\ev_{e}\circ \cA\big(Q:\overline{Q}:v^{-1}\cdot \xi:\Ad^{*}(v^{-1})\lambda\big)
    \circ\mu^{\circ}\big(v^{-1}\xi:\Ad^{*}(v^{-1})\lambda\big)(\iota_{e}\eta)\\
&=\ev_{e}\circ\mu\big(Q:v^{-1}\xi:\Ad^{*}(v^{-1})\lambda\big)(\iota_{e}\eta)
=\eta.
\end{align*}
This proves (\ref{eq Inverse isomorphism horospherical case}).
\end{proof}

\section{Temperedness, the constant term and wave packets}
\label{Section Wave packets}
We continue with the notation and choices from Section \ref{Subsection Construction - Description}.
If $V$ is a finite dimensional vector space, $\mu\in\cD'(G,V)$ and $\phi\in\cD(G,V)$, then we write $m_{\phi,\mu}$ for the matrix coefficient
$$
m_{\phi,\mu}:G\to \C;\quad g\mapsto \big(R^{\vee}(g)\mu\big)(\phi).
$$

\subsection{Temperedness}
\label{Subsection Wave packets - Temperedness}
The space $Z$ admits a polar decomposition, which was first given in \cite{KnopKrotzSayagSchlichtkrull_SimpleCompactificationsAndPolarDecomposition}. The following version is a slight reformulation from \cite[Proposition 8.6]{KuitSayag_OnTheLittleWeylGroupOfARealSphericalSpace}.

Recall that $K$ is a maximal compact subgroup so that $\fk$ is Killing perpendicular to $\fa$.

\begin{Prop}\label{Prop Polar decomposition}
There exists a compact subset $\Omega\subseteq G$ so that
\begin{equation}\label{eq Polar decomp}
G
=\bigcup_{\cO\in(P\bs Z)_{\open}}\Omega\exp(\overline{\cC}) x_{\cO}H.
\end{equation}
The set $\Omega$ can be chosen to be $\Omega=\bigcup_{j=1}^{r}f_{j}K_{j}$ with $r\in\N$, $ f_{j}\in G$ and the $K_{j}$  maximal compact subgroups of $G$.
\end{Prop}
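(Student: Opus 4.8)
The polar decomposition is a statement about how $G$ (equivalently $Z=G/H$, using that $\Omega$ and the finite set $\{x_{\cO}\}$ exhaust the open $P$-orbits) is covered by compact-times-cone-times-base pieces. The natural strategy is to deduce it from the corresponding statement already available in the literature --- namely the polar decomposition of \cite{KnopKrotzSayagSchlichtkrull_SimpleCompactificationsAndPolarDecomposition} --- and then to repackage it in terms of the representatives $x_{\cO}$ and the compression cone $\overline{\cC}$ chosen in this article. First I would recall that by \cite[Proposition 8.6]{KuitSayag_OnTheLittleWeylGroupOfARealSphericalSpace} there is a polar decomposition of the stated form for \emph{some} choice of finitely many base points, one for each open $P$-orbit, and for the closure of the compression cone. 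The only thing to be checked is that the specific base points $x_{\cO}H\in Z$ fixed in Section \ref{Subsection Construction - Description} --- which lie in $G\cap\exp(i\fa)H_{\C}$ and are adapted --- can be used, and that replacing the base point in a given open orbit by any other point in the same orbit only changes the compact factor.

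\textbf{Key steps.} The first step is to establish that the union in \eqref{eq Polar decomp} is independent of the choice of adapted representatives in each open $P$-orbit, up to enlarging $\Omega$ by a compact set. Concretely, if $z,z'\in\cO$ are both adapted, then by Proposition \ref{Prop parameterization adapted points} (applied with a fixed $Q$-regular $X\in\fa\cap\fa_{\fh}^{\perp}$, after translating) one has $z'=ma\cdot z$ for some $m\in M$, $a\in A$; since $M$ is compact and $a\in A=\exp(\fa)$ normalizes $\overline{\cC}$ (indeed $\overline{\cC}+\fa_{\fh}=\overline{\cC}$ and $a\cdot z=z\cdot a'$ only shifts inside the cone modulo $A_{H}$), absorbing the $M$-factor into $\Omega$ and the $\fa$-factor into $\overline{\cC}$ gives the decomposition with base point $z$. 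The second step is to invoke the existing polar decomposition to get \eqref{eq Polar decomp} with some base points, then apply the first step to transport it to the base points $x_{\cO}H$. The third step is the shape of $\Omega$: the polar decomposition of \cite{KnopKrotzSayagSchlichtkrull_SimpleCompactificationsAndPolarDecomposition} is proved by covering the (compact) ``boundary directions'' of $Z$ by finitely many pieces each modelled on a $K$-orbit through a point of $G$, which is exactly the assertion $\Omega=\bigcup_{j=1}^{r}f_{j}K_{j}$ with the $K_{j}$ maximal compact; alternatively, since $G=KAK$ (Cartan) one may take a single $K$ on the left and absorb the finitely many correction elements, writing $\Omega\subseteq\bigcup_j f_j K$ and noting $f_j K f_j^{-1}$ is again a maximal compact subgroup, so $f_j K=( f_j K f_j^{-1}) f_j$ and one may replace $K_j:=f_j K f_j^{-1}$, $f_j$ unchanged.

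\textbf{Main obstacle.} The genuinely nontrivial input --- the \emph{existence} of the compact set $\Omega$ making \eqref{eq Polar decomp} hold for the closed compression cone $\overline{\cC}$ --- is not something I would reprove; it is the content of \cite[Proposition 8.6]{KuitSayag_OnTheLittleWeylGroupOfARealSphericalSpace} and of the original \cite{KnopKrotzSayagSchlichtkrull_SimpleCompactificationsAndPolarDecomposition}, and the proof there rests on the geometry of the smooth compactification and a limit argument on boundary degenerations. So the ``hard part'' is really bookkeeping: one must verify that the cone appearing in that reference is indeed $\overline{\cC}$ as defined in Section \ref{Subsection Orbits of max rank - Compression cone} (this follows from \cite[Proposition 6.5]{KuitSayag_OnTheLittleWeylGroupOfARealSphericalSpace}, which identifies the compression cones at all adapted points), and that the finitely many open $P$-orbits are correctly indexed by $(P\bs Z)_{\open}$ with the chosen representatives $x_{\cO}$; both are routine given the material already set up. Hence the proof is essentially a citation plus the two normalization lemmas (independence of base point, and rewriting $\Omega$ as a finite union of $K$-orbits).
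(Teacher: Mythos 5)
Your proposal matches the paper's treatment: the paper gives no proof at all, presenting the proposition as "a slight reformulation" of \cite[Proposition 8.6]{KuitSayag_OnTheLittleWeylGroupOfARealSphericalSpace} (going back to \cite{KnopKrotzSayagSchlichtkrull_SimpleCompactificationsAndPolarDecomposition}), which is exactly the citation your argument rests on. The additional normalization steps you sketch (base-point independence via Proposition \ref{Prop parameterization adapted points} and reshaping $\Omega$ as a finite union of translated maximal compacts) are reasonable bookkeeping that the paper leaves implicit.
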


Let $\xi$ be a finite dimensional unitary representation of $M_{Q}$ and $\lambda\in(\fa/\fa_{\fh})^{*}_{\C}$. We call a distribution $\eta\in\cD'(\overline{Q}:\xi:\lambda)^{H}$ tempered if there exists an $N\in\N_{0}$ and a continuous seminorm $p$ on $\cD(G:V_{\xi})$ so that for every $\phi\in\cD(G:V_{\xi})$
$$
|m_{\phi,\eta}(\omega \exp(X) x_{\cO})|
\leq e^{\rho_{Q}(X)}(1+\|X\|)^{N}p(\phi)
\qquad\big(\cO\in(P\bs Z)_{\open},\omega\in\Omega, X\in\overline{\cC}\big).
$$

We recall that we have equipped the spaces $\cD'(\overline{Q}:\xi:\lambda)^{H}$, with $\Im\lambda\notin \cS$,  with an inner product so that the map (\ref{eq Parametrization D'(overline Q:xi:lambda)^H}) is an isometry.

\begin{Thm}\label{Thm temperedness}
Let $\xi$ be a finite dimensional unitary representation of $M_{Q}$ and $\fC$ a compact subset of $\{\lambda\in (\fa/\fa_{\fh})^{*}_{\C}:\Im\lambda\notin i\cS\}$.
There exist an $N\in\N_{0}$ and a continuous seminorm $p$ on $\cD(G,V_{\xi})$ so that  for every $\lambda\in \fC$,  $\mu\in \cD'(\overline{Q}:\xi:\lambda)^{H}$, $\phi\in\cD(G,V_{\xi})$, $\cO\in(P\bs Z)_{\open}$, $\omega\in\Omega$ and $X\in \overline{\cC}$
$$
|m_{\phi,\mu}  (\omega \exp(X)x_{\cO} ) |
\leq \max_{w\in W} e^{\rho_{Q}(X)+\Re\Ad^{*}(w)\lambda(X)}(1+\|X\|)^{N}\|\mu\| p(\phi).
 $$
In particular, every distribution $\mu\in\cD'(\overline{Q}:\xi:\lambda)^{H}$ with $\lambda\in i(\fa/\fa_{\fh})^{*}\setminus i\cS$ is tempered.
\end{Thm}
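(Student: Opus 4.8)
The plan is to establish the stated bound on the matrix coefficients $m_{\phi,\mu}(\omega\exp(X)x_\cO)$ by an a priori estimate that is subsequently improved through a recursive bootstrap, following the strategy of \cite{DelormeKrotzSouaifi_ConstantTerm} but allowing non-tempered eigenfunctions. First I would reduce the problem to a single open orbit $\cO$ and fix a component $f_jK_j$ of $\Omega$; since $\Omega$ is a finite union of translates of maximal compact subgroups, a uniform estimate over $\omega\in\Omega$ follows once we have one on each piece, with the $K_j$-variable absorbed into a continuous seminorm on $\cD(G,V_\xi)$ using compactness of $K_j$. Next, by Theorem \ref{Thm Description D'(overline Q:xi:lambda)^H} (together with the normalization choice fixing the inner product on $\cD'(\overline Q:\xi:\lambda)^H$) it is enough to prove the estimate for each normalized distribution $\mu^\circ(\xi:\lambda)\eta$ with $\eta$ ranging over a basis of $V^*(\xi)$ and $\lambda\in\fC$, with constants depending continuously (hence uniformly) on $\lambda$ because $\lambda\mapsto\mu^\circ(\xi:\lambda)\eta$ is holomorphic on a neighborhood of $\fC$.

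The core analytic step is to view $g\mapsto m_{\phi,\mu}(g)$ as a smooth $\Cen(\fg)$-finite (indeed eigen-)function on $Z$, via the identification of $\cD'(Z,V_\xi)$ with $\cD'(G,V_\xi)^H$ and the fact that $\cD'(\overline Q:\xi:\lambda)$ carries an infinitesimal character. The function $F(g):=m_{\phi,\mu}(g)$ is annihilated by an ideal of finite codimension in $\Cen(\fg)$ and, because $\mu$ is left-$\overline Q$-equivariant and $\phi$ has compact support, $F$ has controlled behaviour under the right action of $A$ in the direction of the compression cone. Using the polar decomposition $G=\bigcup_\cO \Omega\exp(\overline\cC)x_\cO H$ of Proposition \ref{Prop Polar decomposition}, one reduces to bounding $F(\omega\exp(X)x_\cO)$ for $X\in\overline\cC$. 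An elementary first estimate — obtained by writing $R^\vee(\exp X)\mu$ explicitly through the $\overline N_Q$-integral formulas of Theorem \ref{Thm Construction on max rank orbits for Q} and Corollary \ref{Cor Description D'(Z,overline Q:xi:lambda) horospherical case}, or more robustly by the standard Harish-Chandra type argument bounding a $\Cen(\fg)$-finite function on a cone by an exponential times a polynomial — gives a crude bound of the form $e^{\rho_Q(X)}e^{c\|X\|}(1+\|X\|)^N\|\mu\|\,p(\phi)$ with some constant $c$. The recursive improvement then uses the system of radial differential equations satisfied by $F$ along $\exp(\overline\cC)x_\cO$: writing the Casimir (and higher $\Cen(\fg)$-generators) in the $\exp(\overline\cC)$-radial coordinates produces a second-order ODE system whose indicial roots are exactly the exponents $\Ad^*(w)\lambda-\rho_Q$ for $w\in W$, so that any solution with at-most-exponential growth is a combination of asymptotics governed by these roots; each iteration of the argument lowers the exponent $c$ towards $\max_w\Re\Ad^*(w)\lambda(X)$, and after finitely many steps we reach the claimed bound. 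The uniformity in $\lambda\in\fC$ and linearity in $\mu$ (hence the factor $\|\mu\|$) are preserved at each stage because all constants come from seminorm estimates that are locally uniform in the holomorphic parameter.

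The final sentence of the theorem is then immediate: if $\lambda\in i(\fa/\fa_\fh)^*\setminus i\cS$ then $\Ad^*(w)\lambda$ is purely imaginary for every $w\in W$, so $\Re\Ad^*(w)\lambda(X)=0$ for all $X\in\overline\cC$, and the bound reduces to $|m_{\phi,\mu}(\omega\exp(X)x_\cO)|\le e^{\rho_Q(X)}(1+\|X\|)^N\|\mu\|\,p(\phi)$, which is exactly the definition of $\mu$ being tempered (absorbing the fixed $\|\mu\|$ into the seminorm $p$).

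The main obstacle I anticipate is the bootstrap itself. Unlike the reductive symmetric case treated by Van den Ban in \cite[Section 18]{vdBan_PrincipalSeriesII} using Wallach's method, there is no good polar/Cartan decomposition of $H$ available here, so one cannot simply separate variables along an $H$-side torus; the improvement must instead be extracted purely from the radial part of $\Cen(\fg)$ acting in the $\exp(\overline\cC)$-direction, and the difficulty is to control the "transversal" directions (the walls and lower-dimensional faces $\cF_I$ of $\overline\cC$) and the boundary terms that arise when integrating the differential inequalities, uniformly in $\lambda$. This is precisely where the techniques of \cite{DelormeKrotzSouaifi_ConstantTerm} for the constant term map must be adapted to the non-tempered setting, keeping track of the indicial exponents $\Ad^*(w)\lambda$ and showing that no exponent larger than $\max_w\Re\Ad^*(w)\lambda$ can contribute.
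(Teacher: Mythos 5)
Your overall strategy --- an a priori estimate followed by a recursive improvement using the differential equations satisfied by the matrix coefficients, adapting \cite{DelormeKrotzSouaifi_ConstantTerm} to non-tempered functions --- is the strategy of the paper, and your reductions (to a single component of $\Omega$, to the normalized families $\mu^{\circ}(\xi:\lambda)\eta$ with uniformity in $\lambda\in\fC$ coming from holomorphy) are sound. But the proposal stops exactly where the proof begins: the bootstrap you describe in one sentence, and then flag as ``the main obstacle I anticipate,'' is the entire content of the theorem, and the mechanism you sketch for it is not the one that works. First, the relevant differential system is not a second-order radial ODE for the Casimir with indicial roots $\Ad^{*}(w)\lambda-\rho_{Q}$. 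The paper forms, for each face $\cF$ of $\overline{\cC}$, the vector $\Phi_{\mu,\phi}$ of derivatives of $m_{\phi,\mu}$ along a finite-dimensional space $U_{\cF}$ of invariant differential operators on the boundary degeneration $Z_{\cF}$, and obtains a \emph{first-order} inhomogeneous system $\partial_{X}\Phi_{\mu,\phi}=\Gamma_{\cF,\lambda}(X)\Phi_{\mu,\phi}+\Psi_{\mu,\phi,X}$ whose generalized $\fa_{\cF}$-weights $\cQ_{\cF,\lambda}$ are \emph{not} known a priori to be of the form $-\Ad^{*}(w)\lambda+\rho_{Q}$; ruling out contributions from weights $\nu$ with $\Re\nu$ exceeding $\rho_{Q}+\langle\lambda\rangle$ somewhere on $\cF^{\circ}$, and pinning the borderline weights down, is the heart of the argument. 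The borderline (``$\cQ^{0}$'') contributions are controlled by showing that their limits along $\cF^{\circ}$ are right-$H_{z,\cF}$-invariant, i.e.\ are constant terms living on $Z_{z,\cF}$, to which the a priori estimate (Lemma \ref{Lemma A priori estimate}) applied to $Z_{z,\cF}$ gives exactly the exponent $\rho_{Q}+\langle\lambda\rangle$; nothing in your outline produces this identification, and without it the iteration has no reason to converge to the claimed exponent rather than to some larger one.

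Second, the recursion must be organized as an induction over the faces of $\overline{\cC}$, with comparison functionals that are $\cF$-piecewise linear and already equal to $\rho_{Q}$ on $\partial\cF$; the base case is the edge $\fa_{E}$, where the $A_{E}$-equivariance from Proposition \ref{Prop Action N_A(H)} forces the exact exponent. This face-by-face structure is precisely what handles the ``transversal directions'' you identify as problematic, and it also controls the inhomogeneous term $\Psi_{\mu,\phi,X}$, whose decay rate $\beta_{\cF,\lambda}$ is what each iteration gains. Finally, termination is not automatic: each step replaces $\zeta$ by the maximum of $\zeta+\tfrac12\beta_{\cF,\lambda}$, $\rho_{Q}$, and the exponents $\Re\nu-\langle\lambda\rangle$ for $\nu\in\cQ_{\cF,\lambda}^{\zeta,-}$, so the iteration can stall at the latter exponents; one needs the observation that this can happen at most $\dim(U_{\cF}^{*})$ times, giving an explicit bound on the number of iterations. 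As written, your argument asserts the conclusion of this machinery without supplying any of it, so it does not constitute a proof.
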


The proof for the theorem is by induction on the faces $\cF$ of $\overline{\cC}$.
In the Sections \ref{Subsection Wave packets - A priori estimate} we give an a priori estimate which accomplishes the initial step of the induction. In   Section \ref{Subsection Wave packets - Boundary degenerations} we recall the notion of boundary degenerations and some of their properties. As the proof of the theorem relies heavily on the theory of the constant term map from \cite{DelormeKrotzSouaifi_ConstantTerm}, we have to recall the necessary definitions and results. We do so in Section \ref{Subsection Wave packets - Preparation}. Finally the proof of the theorem will be given in Section \ref{Subsection Wave packets - Proof of Theorem}.

\subsection{An a priori estimate}\label{Subsection Wave packets - A priori estimate}

For $\lambda\in(\fa/\fa_{\fh})^{*}_{\C}$ we define $\langle\lambda\rangle:\fa\to\R$ by
$$
\langle\lambda\rangle
:=\max\Big(\{0\}\cup \{\Re\Ad^{*}(w)\lambda:w\in W\}\Big).
$$

\begin{Lemma}\label{Lemma A priori estimate}
Let $\xi$ be a finite dimensional unitary representation of $M_{Q}$ and $\fC$ a compact subset of $\{\lambda\in (\fa/\fa_{\fh})^{*}_{\C}:\Im\lambda\notin\cS\}$.
There exists an $\zeta\in(\fa/\fa_{\fh})^{*}$ with $\zeta|_{\fa_{E}}=\rho_{Q}|_{\fa_{E}}$, and a continuous seminorm $p$ on $\cD(G,V_{\xi})$ so that  for every $\lambda\in \fC$,  $\mu\in \cD'(\overline{Q}:\xi:\lambda)^{H}$ and $\phi\in\cD(G,V_{\xi})$ we have
$$
|m_{\phi,\mu}  (\omega \exp(X)  ) |
\leq  e^{\zeta(X)+\langle\lambda\rangle(X)} \|\mu\| p(\phi)
    \qquad   \big(\omega\in\Omega, X\in\exp(\overline{\cC})\big).
$$
\end{Lemma}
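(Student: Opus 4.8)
The estimate we need is an ``a priori'' bound, valid without yet knowing temperedness, on the matrix coefficients of an arbitrary $H$-fixed distribution $\mu$ in $\cD'(\overline{Q}:\xi:\lambda)$, uniformly as $\lambda$ ranges over the compact set $\fC$. The plan is to reduce everything to the explicit formula (\ref{eq formula mu horospherical case})-type descriptions of the distributions $\mu^{\circ}(\xi:\lambda)\eta$ that span $\cD'(\overline{Q}:\xi:\lambda)^{H}$, via the isomorphism (\ref{eq Parametrization D'(overline Q:xi:lambda)^H}) of Theorem \ref{Thm Description D'(overline Q:xi:lambda)^H}. Since that map is an isometry, it suffices to prove the bound for $\mu=\mu^{\circ}(\xi:\lambda)\eta$ with $\|\eta\|=1$, or even for each summand $\mu^{\circ}(\xi:\lambda)(\iota_{\cO}\eta_{\cO})$ attached to a single orbit $\cO\in(P\bs Z)_{\fa_{\fh}}$; the finitely many orbits contribute a finite sum, absorbed into the seminorm $p$.

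\textbf{Key steps.} First I would fix $\cO\in(P\bs Z)_{\fa_{\fh}}$, let $w\in\cN/\cW$ with $[\cO]=w\cdot(P\bs Z)_{\open}$, and recall from Proposition \ref{Prop construction on max rank orbits} (\ref{Prop construction on max rank orbits - item 2}) and Theorem \ref{Thm Construction on max rank orbits for Q} that, for $\lambda$ in the shifted cone $\Ad^{*}(v_{w})\big(\rho_{Q}-\Gamma+i(\fa/\fa_{\fh})^{*}\big)$, the distribution $\mu_{x_{\cO}H}(Q:\xi:\lambda:\eta)$ is given by an absolutely convergent integral over $N_{Q}\cap v_{w}N_{Q}v_{w}^{-1}$ against $a^{-\lambda-\Ad^{*}(v_{w})\rho_{Q}}$, and $\mu^{\circ}(\xi:\lambda)$ differs from it by the (normalized) intertwining operator $\cA(Q:\overline{Q}:\xi:\lambda)^{-1}$ and the invertible map $\beta(\xi:\lambda)$. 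The matrix coefficient $m_{\phi,\mu}(\omega\exp(X)x_{\cO})=\big(R^{\vee}(\omega\exp(X)x_{\cO})\mu\big)(\phi)$ is then, after translating $\phi$ by $\omega$ (absorbed into $p$), an integral of $\phi$ over a translate of $(N_{Q}\cap v_{w}N_{Q}v_{w}^{-1})\exp(X)$ weighted by the exponential factor. The right-$A_{E}$-equivariance recorded in Proposition \ref{Prop Action N_A(H)} and in (\ref{eq horospherical case a mu_v =a^(-v^-1 lambda+rho)}) gives the exact $A_{E}$-weight $a^{-\Ad^{*}(v_{w}^{-1})\lambda+\rho_{Q}}$, which explains why $\zeta|_{\fa_{E}}=\rho_{Q}|_{\fa_{E}}$: along the edge of $\overline{\cC}$ the growth is governed precisely by this character. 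On the complement of $\fa_{E}$ inside $\overline{\cC}$, one estimates the convergent integral: using compact support of $\phi$, the integral over $N_{Q}\cap v_{w}N_{Q}v_{w}^{-1}$ against $|a^{-\Re\lambda-\Ad^{*}(v_{w})\rho_{Q}}|$ is bounded by a seminorm of $\phi$ times a factor controlled by $\Re\Ad^{*}(v_{w})^{-1}\lambda$ evaluated on $X$, hence by $e^{\langle\lambda\rangle(X)}$ up to the correction $\zeta$. To handle the intertwining operator $\cA(Q:\overline{Q}:\xi:\lambda)^{-1}$ I would use Proposition \ref{Prop int formula for a(S_2:S_1:xi:lambda)}: it is (for $\Re\lambda$ in a suitable cone) again an integral over a nilpotent group, and the convolution of two such integrals still yields an $\cD'(Z,\overline{Q}:\xi:\lambda)$ object whose matrix coefficients grow no faster; meromorphy in $\lambda$ together with the assumption $\Im\lambda\notin\cS$ (so we stay off the poles, with uniform constants on $\fC$ by standard Cauchy-estimate arguments on a tube around $\fC$) extends the bound from the cone to all of $\fC$. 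Finally I would choose $\zeta\in(\fa/\fa_{\fh})^{*}$ with $\zeta|_{\fa_{E}}=\rho_{Q}|_{\fa_{E}}$ large enough on $\overline{\cC}\setminus\fa_{E}$ to dominate the finitely many characters $\Ad^{*}(v_{w})^{-1}\Re\lambda+\rho_{Q}$ appearing over $\lambda\in\fC$ and $w\in\cN/\cW$; this is possible because on $\overline{\cC}$ modulo $\fa_{E}$ the relevant differences are strictly negative on the open cone, a consequence of the description of $\overline{\cC}$ and its edge in Sections \ref{Subsection Orbits of max rank - Compression cone} and \ref{Subsection Orbits of max rank - Admissible points and little Weyl group}.

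\textbf{Main obstacle.} The delicate point is the uniformity in $\lambda\in\fC$ together with passing from the region of convergence of the defining integrals to all of $\fC$ by meromorphic continuation: the distributions $\mu^{\circ}(\xi:\lambda)\eta$ are only meromorphic in $\lambda$, and a bare meromorphic continuation does not automatically preserve an exponential-growth estimate with a seminorm of $\phi$ on the right. I expect to handle this by writing $m_{\phi,\mu^{\circ}(\xi:\lambda)\eta}(\omega\exp(X)x_{\cO})$ as a holomorphic function of $\lambda$ on a neighborhood of $\fC$ (clearing poles using that $\fC$ avoids $\cS$, or rather choosing $\fC$ inside the pole-free region as in the statement), representing it on a slightly larger tube by the convergent integral formula where available, and invoking a vector-valued maximum principle / Cauchy integral formula over a polydisc to transport the estimate inward with only a bounded multiplicative loss; the exponential factor $e^{\zeta(X)+\langle\lambda\rangle(X)}$ is continuous in $\lambda$ and hence bounded on $\fC$, so the $X$-dependence is not the issue, only the $\phi$- and $\mu$-dependence. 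A secondary technical nuisance is that the polar set $\Omega$ in Proposition \ref{Prop Polar decomposition} is a finite union $\bigcup f_jK_j$ rather than a single compact set acted on nicely by $K$; but translating $\phi$ by each $f_j$ and using $K_j$-finiteness (or simply a supremum over the compact $\Omega$) reduces this to the case treated above, at the cost of enlarging the seminorm $p$.
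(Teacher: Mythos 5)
Your reduction to $\mu=\mu^{\circ}(\xi:\lambda)\eta$ via the isometry of Theorem \ref{Thm Description D'(overline Q:xi:lambda)^H}, and your use of the $A_{E}$-equivariance from Proposition \ref{Prop Action N_A(H)} to pin down $\zeta|_{\fa_{E}}=\rho_{Q}|_{\fa_{E}}$, both match what is actually needed. The gap is in the step you yourself flag as delicate: extending the estimate from the region where the integral formulas converge to all of $\fC$. The explicit formula (\ref{eq mu_cO(Q) identity}) is only valid for $\Re\lambda$ in the shifted cone $\Ad^{*}(v_{w})(\rho_{Q}-\Gamma)$, whereas $\fC$ is an arbitrary compact subset of $\{\lambda:\Im\lambda\notin\cS\}$ — a condition on $\Im\lambda$ only — so $\fC$ may lie entirely outside, and at arbitrary distance from, the convergence region. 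A Cauchy integral formula or maximum principle on a polydisc centered at such a $\lambda_{0}\in\fC$ cannot transport the estimate inward, because the boundary of that polydisc does not meet the convergence region and hence carries no a priori bound to begin with. To make your route work one would instead have to invoke the Bernstein--Sato functional equation (\ref{eq Functional equation phi_lambda}) to shift $\lambda$ back into the convergence cone, at the cost of applying the differential operators $D_{j}$ to the test function and controlling the resulting extra exponential growth in $X$ — a substantially longer argument that you do not supply.

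The paper sidesteps all of this with a soft, $\lambda$-independent bound: it identifies the distributions of order at most $k$ in $\cD'(\overline{Q}:\xi:\lambda)$ with the Banach space $\cD'_{k}(K;V_{\xi})^{M}$ via restriction to $K$, and shows (as in van den Ban's argument for the principal series) that $R(\exp(X))$ acts on this Banach space with operator norm at most $Ce^{r\|X\|}$ for $X$ in a complement $\fa_{0}$ of $\fa_{E}$, with $C,r$ uniform over $\fC$. Since the lemma permits $\zeta$ to be arbitrarily large on $\overline{\cC}\setminus\fa_{E}$ — one takes $\zeta=\zeta_{0}+\rho_{Q}$ with $\zeta_{0}$ a sufficiently large multiple of a sum of positive spherical roots — this crude bound suffices; no convergence of explicit integrals, no intertwining-operator estimates, and no meromorphic continuation are needed. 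I recommend you abandon the explicit-formula route for this particular lemma (the sharp estimates are established later, in Theorem \ref{Thm temperedness}, by the recursive improvement of exactly this a priori bound) and adopt the operator-norm argument instead.
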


\begin{proof}
Let $\fa_{0}$ be a complementary subspace to $\fa_{E}$ in $\fa$ .
Let $k\in\N$ and let $\cD_{k}'(\overline{Q}:\xi:\lambda)$ be the subspace of $\cD'(\overline{Q}:\xi:\lambda)$ of distributions of order at most $k$. We recall the maximal compact subgroup $K$ of $G$ and note that $\cD_{k}'(\overline{Q}:\xi:\lambda)$ is canonically isomorphic to the space $\cD_{k}'(K;V_{\xi})^{M}$ of left-$M$-invariant distributions in  $\cD'(K,V_{\xi})$ of order at most $k$.  Note that $\cD_{k}'(K;V_{\xi})^{M}$ is a Banach space. The same proof as for \cite[Lemma 10.1]{vdBan_PrincipalSeriesII} yields the existence of constants $C> 0$ and  $r > 0$, independent of $\lambda\in\fC$, such that for every $X\in \fa_{0}$, the operator $R\big(\exp(X)\big)$ maps  $\cD_{k}'(\overline{Q}:\xi:\lambda)$ to itself with operator norm
$$
\|R\big(\exp(X)\big)\|
\leq Ce^{r\|X\|}.
$$
We recall that the spherical root system $\Sigma_{Z}$ in $(\fa/\fa_{E})^{*}$ admits the image $\overline{\cC}/\fa_{E}$ of $\overline{\cC}$ under the projection $\fa\to\fa/\fa_{E}$ as a Weyl chamber. By taking a sum of positive roots, we find a functional $\zeta_{0}\in(\fa/\fa_{E})^{*}$ that is strictly positive on $\overline{\cC}\setminus \fa_{E}$. Note that $\zeta_{0}|_{\fa_{E}}=0$.

By Proposition \ref{Prop Action N_A(H)} we have for all $\mu\in\cD'(\overline{Q}:\xi:\lambda)^{H}$
$$
|m_{\phi,\mu}\big(g\exp(Y)\big) |
\leq \max_{w\in W}e^{\rho_{Q}(Y)+\Re\Ad^{*}(w)\lambda(Y)}|m_{\phi,\mu}  (g) |
\qquad   \big(g\in G, Y\in\fa_{E}\big).
 $$
We recall the isometry $\mu^{\circ}(\xi:\lambda)$ from Theorem \ref{Thm Description D'(overline Q:xi:lambda)^H}. Since the distributions $\mu^{\circ}(\xi:\lambda)\eta$ depend smoothly on $\lambda\in\fC$ and linearly on $\eta$, the assertion therefore follows, after rescaling $\zeta_{0}$ if necessary, with $\zeta=\zeta_{0}+\rho_{Q}$.
\end{proof}

\subsection{Boundary degenerations}\label{Subsection Wave packets - Boundary degenerations}
For the proof of Theorem \ref{Thm temperedness} we will use the theory of the constant term as developed in \cite{DelormeKrotzSouaifi_ConstantTerm}. In the theory certain degenerations of $Z$ play an important role. We recall here the necessary definitions and results.

The closure of the compression cone $\overline{\cC}$ is finitely generated and hence polyhedral as $-\cC^{\vee}$ is finitely generated.
We call a subset $\cF\subseteq\overline{\cC}$ a face of $\overline{\cC}$ if $\cF=\overline{\cC}$ or there exists a closed half-space $\cH$ so that
$$
\cF
=\overline{\cC}\cap\cH
\quad\text{and}\quad
\cC\cap\partial\cH=\emptyset.
$$
There exist finitely many faces of $\overline{\cC}$ and each face is polyhedral cone. For a face $\cF$ of $\overline{\cC}$ we define
$$
\fa_{\cF}
:=\spn(\cF)
$$
and denote the interior of $\cF$ in $\fa_{\cF}$  by $\cF^{\circ}$.

Let $z\in Z$ be adapted and let $\cF$ be a face of $\overline{\cC}$. By \cite[ Lemma 8.1]{KuitSayag_OnTheLittleWeylGroupOfARealSphericalSpace} the limits $\fh_{z,X}$ are the same for all $X$ in the interior of $\cF$. We may thus define
$$
\fh_{z,\cF}
:=\fh_{z,X},
$$
where $X$ is any element in the interior of $\cF$.

The following lemma  is \cite[Lemma 8.3]{KuitSayag_OnTheLittleWeylGroupOfARealSphericalSpace}.

\begin{Lemma}\label{Lemma properties fh_(z,cF)}
Let $z\in Z$ be adapted and let $\cF$ be a face of $\overline{\cC}$. The Lie algebra $\fh_{z,\cF}$ is a real spherical subalgebra of $\fg$. Moreover,
$$
N_{\fg}(\fh_{z,\cF})
=\fh_{z,\cF}+\fa_{\cF}+N_{\fm}(\fh_{z,\cF}).
$$
Finally,
$$
\fh_{z,\cF}\cap\fa
=\fa_{\fh}.
$$
\end{Lemma}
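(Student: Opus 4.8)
\textbf{Proof proposal for Lemma \ref{Lemma properties fh_(z,cF)}.}

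The plan is to reduce every assertion to facts already available for limit subalgebras (Proposition \ref{Prop Limits of subspaces} and the structure theory recalled in Sections \ref{Subsection Orbits of max rank - Limits of subspaces}--\ref{Subsection Orbits of max rank - Compression cone}), and in particular to the two facts that $\fh_{z,\cF}=\fh_{z,X}$ for any $X\in\cF^{\circ}$ (already defined just above the lemma via \cite[Lemma 8.1]{KuitSayag_OnTheLittleWeylGroupOfARealSphericalSpace}) and that, by Proposition \ref{Prop Limits of subspaces} (\ref{Prop Limits of subspaces - item 1}), $\fh_{z,X}$ is a Lie subalgebra of $\fg$. Since the lemma is quoted verbatim from \cite[Lemma 8.3]{KuitSayag_OnTheLittleWeylGroupOfARealSphericalSpace}, the cleanest route is to assemble the three assertions from results in that paper together with the local structure theorem; but I will also indicate the self-contained arguments.

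First I would prove that $\fh_{z,\cF}$ is real spherical, i.e.\ that $\fp+\fh_{z,\cF}=\fg$. Pick $X\in\cF^{\circ}$; since $z$ is adapted, $P\cdot z$ is open, so $\fp+\fh_{z}=\fg$. The subspace $\fp+\fh_{z,X}$ is $\fa$-stable (its $\fa$-stability follows because $\fh_{z,X}$ is $\fa$-stable for order-regular $X$, and one passes to a general $X\in\cF^\circ$ by a component argument as in Proposition \ref{Prop Limits of subspaces} (\ref{Prop Limits of subspaces - item 3}), or one argues directly that $\fp+\fh_{z}=\fg$ forces $\fp+\Ad(\exp(tX))\fh_z=\fg$ for all $t$, hence in the limit). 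Comparing dimensions and using that $\dim\fh_{z,X}=\dim\fh_{z}$, one gets $\fp+\fh_{z,\cF}=\fg$. For the last assertion $\fh_{z,\cF}\cap\fa=\fa_{\fh}$: by Lemma \ref{Lemma fa cap fh_(pz,X) independent of p,X} and formula (\ref{eq formula for E_X}), $\fa\cap\fh_{z,X}=p_{\fa}(\fh_z\cap\fp)$, which is independent of the chosen $X\in\fa^{-}$; choosing $X\in\cF^\circ\subseteq\overline{\cC}$ sufficiently deep inside and invoking that $z$ is adapted gives $\fa\cap\fh_{z,X}=\fa_\fh$ — indeed for $X\in\cC$ the limit is an $M$-conjugate of $\fh_{\emptyset}=(\fl_Q\cap\fh)\oplus\overline{\fn}_Q$, and $\fh_{\emptyset}\cap\fa=\fl_Q\cap\fh\cap\fa=\fa_\fh$; for a general face one uses that $\cF\subseteq\overline{\cC}$ and the continuity/constancy of $p_\fa(\fh_z\cap\fp)$.

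The main obstacle, and the step deserving the most care, is the normalizer formula
$$
N_{\fg}(\fh_{z,\cF})=\fh_{z,\cF}+\fa_{\cF}+N_{\fm}(\fh_{z,\cF}).
$$
The inclusion $\supseteq$ is the easy direction: $\fa_{\cF}=\spn(\cF)$ normalizes $\fh_{z,X}$ for $X\in\cF^\circ$ because $\fh_{z,X}$ is a limit along $\cF$ and is stable under $\ad(X')$ for $X'\in\cF$ — concretely, $\fh_{z,X}$ decomposes into $\ad(\fa_\cF)$-weight spaces (being a sum of root spaces and a complement of $\fa_\fh$ in a $\theta$-stable reductive piece), so $\fa_{\cF}$ normalizes it; that $N_{\fm}(\fh_{z,\cF})$ normalizes is tautological. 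For $\subseteq$ one argues that any $Y\in\fg$ normalizing $\fh_{z,\cF}$ must, after projecting along the root-space decomposition and using the explicit $\fa$-weight structure of $\fh_{z,\cF}$ (for instance via Remark \ref{Rem rho_O,X for maximal rank}-type descriptions when $\cF$ is a maximal-rank face, and more generally via the structure theory of \cite{KuitSayag_OnTheLittleWeylGroupOfARealSphericalSpace}), have its components lie in $\fh_{z,\cF}$, in $\fa$, or in $\fm$; the $\fa$-component is forced into $\fa_\cF$ because $\fh_{z,\cF}$ is a limit precisely along $\cF$ and an element of $\fa\setminus\fa_\cF$ would move it. Since this is exactly \cite[Lemma 8.3]{KuitSayag_OnTheLittleWeylGroupOfARealSphericalSpace}, I would simply cite that lemma for the proof and remark that the ingredients are Proposition \ref{Prop Limits of subspaces}, the local structure theorem (Proposition \ref{Prop LST holds for adapted points}), and the $\fa$-weight decomposition of $\fh_{z,\cF}$ recalled above.
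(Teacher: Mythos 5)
The paper itself gives no proof of this lemma: it is imported verbatim, with the sentence preceding it simply citing \cite[Lemma 8.3]{KuitSayag_OnTheLittleWeylGroupOfARealSphericalSpace}. Since your bottom line is to cite that same lemma, your proposal coincides with what the paper actually does.

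That said, your supplementary ``self-contained'' sketch should not be mistaken for a proof. Two concrete soft spots: (i) the step ``$\fp+\Ad(\exp(tX))\fh_z=\fg$ for all $t$, hence in the limit'' is backwards --- transversality to $\fp$ is an \emph{open} condition in the Grassmannian, so it is preserved under small perturbations but not under passage to a limit (e.g.\ $\R(1,1/t)\to\R(1,0)$ stays transversal to $\R(1,0)$ for all finite $t$ but not in the limit); to make this work one must instead read off from the explicit formula (\ref{eq formula for E_X}) that, because $\fp$ is the sum of the non-positive $\ad(X)$-eigenspaces, surjectivity of the projection of $\fh_z$ onto $\fg/\fp$ passes to $\fh_{z,X}$. (ii) The inclusion $N_{\fg}(\fh_{z,\cF})\subseteq\fh_{z,\cF}+\fa_{\cF}+N_{\fm}(\fh_{z,\cF})$, which is the only substantive part of the normalizer formula, is merely described (``one argues that any $Y$ \dots must have its components lie in \dots''), with no reason given why a normalizing element cannot have a nontrivial component in a root space outside $\fh_{z,\cF}$ or an $\fa$-component outside $\fa_{\cF}$; this genuinely requires the structure theory of $\fh_{z,\cF}$ from the cited paper. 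So keep the citation as the proof and present the sketches only as motivation.
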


For an adapted point $z\in Z$ and a face $\cF$ of $\overline{\cC}$ we define $H_{z,\cF}$ to be the connected subgroup of $G$ with Lie algebra $\fh_{z,\cF}$.
Each subgroup $H_{z,\cF}$ equals the connected component of a group of real points of an algebraic subgroup of $\underline{G}$, namely the subgroups $H_{I,c}$ defined in \cite[Section 4.5]{DelormeKnopKrotzSchlichtkrull_PlancherelTheoryForRealSphericalSpacesConstructionOfTheBernsteinMorphisms}.
We write $Z_{z,\cF}$ for the homogeneous space $G/H_{z,\cF}$. These spaces are called the boundary degenerations of $Z$.
Since $\fa_{\cF}$ normalizes $\fh_{z,\cF}$, the group $A_{\cF}:=\exp(\fa_{\cF})$ normalizes $H_{z,\cF}$.

One boundary degeneration will be of particular interest to us when we come to Section \ref{Section Most continuous part}: the boundary degeneration for the face $\cF=\overline{\cC}$. If $z\in Z$ is an adapted point so that $M\cap H_{z}=M\cap H$, then $\fh_{z,X}=\fh_{\emptyset}$ for all $X\in \cC$. Therefore, the group $H_{z,\overline{\cC}}$ is in this case the connected component of the subgroup
$$
H_{\emptyset}
:=(L_{Q}\cap H)\overline{N}_{Q}.
$$

\subsection{Preparation for the proof of Theorem \ref{Thm temperedness}}
\label{Subsection Wave packets - Preparation}
The proof of Theorem \ref{Thm temperedness} relies heavily on the theory of the constant term as developed in \cite{DelormeKrotzSouaifi_ConstantTerm}. In this section we recall the necessary objects and results, which we will then use in the next section to prove the theorem. We first discuss the algebras of invariant differential operators on $Z$ and its boundary degenerations. We then give the differential equations satisfied by the matrix-coefficients. Finally, we introduce the notion of $\cF$-piece-wise linear functionals and construct an $\cF$-piece-wise linear functional $\beta_{\cF,\lambda}$, which will be used to improve the a priori estimate on the matrix-coefficients from Lemma \ref{Lemma A priori estimate}.

We fix an adapted point $z\in Z$. In this and the next section we will suppress the indices $z$ and simply write $Z_{\cF}$, $H_{\cF}$ and $\fh_{\cF}$ for $Z_{z,\cF}$, $H_{z,\cF}$ and $\fh_{z,\cF}$, respectively.

We now follow \cite[Section 5]{DelormeKrotzSouaifi_ConstantTerm}.

Let $\fb=\fm\oplus\fa\oplus\fn_{Q}$ and $\fb_{H}=(\fm\cap\fh_{z})\oplus\fa_{\fh}$. Now $\cU(\fb)\fb_{H}$ is a two-sided ideal of $\cU(\fb)$.
We recall from \cite[(5.4)]{DelormeKrotzSouaifi_ConstantTerm} that the rings $\Diff(Z)$ and $\Diff(Z_{\cF})$ of $G$-invariant differential operators on $Z$ and $Z_{\cF}$, respectively,  may be identified with subalgebras of $\cU(\fb)/\cU(\fb)\fb_{H}$.  By \cite[Lemma 5.2]{DelormeKrotzSouaifi_ConstantTerm} the limit
$$
\lim_{t\to\infty}\Ad\big(\exp(tX)\big)D
$$
exists for every $D\in \Diff(Z)$ and $X\in \cF^{\circ}$ and defines a $G$-invariant differential operator on $Z_{\cF}$. The limit does not depend on the choice of $X$. Moreover, the map
$$
\delta_{\cF}:\Diff(Z)\to\Diff(Z_{\cF});\quad D\mapsto \lim_{t\to\infty}\Ad\big(\exp(tX)\big)D
$$
is an injective algebra morphism.
The $\fa$-weights occurring in $\delta_{\cF}(D)-D$, with $D\in \Diff(Z)$, considered as an element of $\cU(\fb)/\cU(\fb)\fb_{H}$, are strictly negative on $\cF^{\circ}$ and the $\fa$-weights occurring in $\delta_{\cF}(D)$ are non-positive on $\overline{\cC}$.

Every element $u$ of the center $\cZ(\fg)$ of $\cU(\fg)$ determines a differential operator $D_{u}\in\Diff(Z_{\cF})$. We write $\Diff_{0}(Z_{\cF})$ for the image of $\cZ(\fg)$ in $\Diff(Z_{\cF})$.
By \cite[Lemma 5.6]{DelormeKrotzSouaifi_ConstantTerm} the ring  $\Diff(Z_{\cF})$ is finitely generated as a $\Diff_{0}(Z_{\cF})$-module. Let $V_{\cF}$ be a finite dimensional vector subspace of $\Diff(Z_{\cF})$ so that the linear map
$$
\Diff_{0}(Z_{\cF})\otimes V_{\cF}\to\Diff(Z_{\cF});\quad \sum_{j}D_{j}\otimes u_{j}\mapsto \sum_{j}D_{j}u_{j}
$$
is surjective.

For $\lambda\in (\fa/\fa_{\fh})^{*}_{\C}$ we write $\cI_{\lambda}$ and  $\cI_{\cF,\lambda}$ for the ideals of $\Diff(Z)$ and $\Diff(Z_{\cF})$, respectively, generated by the elements of the form $D_{u}-\chi_{\lambda}(u)$ with $u\in\cZ(\fg)$, where $\chi_{\lambda}:\cZ(\fg)\to\C$ is the infinitesimal character of $\cD'(\overline{Q}:\xi:\lambda)$. Now $\cI_{\cF,\lambda}=\delta_{\cF}(\cI_{\lambda})$. As
$$
\Diff_{0}(Z_{\cF})
=\C+\cI_{\cF,\lambda},
$$
we have
$$
\Diff(Z_{\cF})
=(\C+\cI_{\cF,\lambda})V_{\cF}.
$$
Since
\begin{equation}\label{eq I_F U= D(Z_F) I_F}
\cI_{\cF,\lambda}V_{\cF}
=\cI_{\cF,\lambda}\Diff_{0}(Z_{\cF})V_{\cF}
=\cI_{\cF,\lambda}\Diff(Z_{\cF})
=\Diff(Z_{\cF})\cI_{\cF,\lambda},
\end{equation}
we find
$$
\Diff(Z_{\cF})
=V_{\cF}+\Diff(Z_{\cF})\cI_{\cF,\lambda}.
$$

For every $\lambda\in (\fa/\fa_{\fh})^{*}_{\C}$ there exists a subspace $U_{\cF}$ of $V_{\cF}$ so that the sum
\begin{equation}\label{eq Assumption on B}
\Diff(Z_{\cF})
=U_{\cF}\oplus\Diff(Z_{\cF})\cI_{\cF,\lambda}
\end{equation}
is direct sum of vector spaces. As $V_{\cF}$ is finite dimensional and $V_{\cF}\cap \Diff(Z_{\cF})\cI_{\cF,\lambda}$ depends continuously on $\lambda$, the subspace $U_{\cF}$ can in fact be chosen locally uniformly with respect to $\lambda$, i.e.,  every $\lambda \in (\fa/\fa_{\fh})^{*}_{\C}$ has an open neighborhood $\fB$ in $(\fa/\fa_{\fh})^{*}_{\C}$ so that there exists a subspace $U_{\cF}$ of $V_{\cF}$ for which (\ref{eq Assumption on B}) holds for all $\lambda\in\fB$.

We define
$$
\rho_{\cF,\lambda}:\Diff(Z_{\cF})\to \End(U_{\cF})
$$
to be the map determined by
\begin{equation}\label{eq Def rho_F}
Du\in
\rho_{\cF,\lambda}(D)u+\Diff(Z_{\cF})\cI_{\cF,\lambda}
\qquad\big(D\in \Diff(Z_{\cF}), u\in \cU_{\cF}\big).
\end{equation}
Then $\rho_{\cF,\lambda}$ defines a representation of $\Diff(Z_{\cF})$ on $U_{\cF}$ which is isomorphic to the canonical representation of $\Diff(Z_{\cF})$ on $\Diff(Z_{\cF})/\Diff(Z_{\cF})\cI_{\cF,\lambda}$, and $\rho_{\cF,\lambda}$ depends polynomially on $\lambda$.

There exists a natural injective algebra homomorphism
\begin{equation}\label{eq S(a_F) hookrightarrow Diff(Z_F)}
S(\fa_{\cF})\hookrightarrow \Diff(Z_{\cF}); \quad X\mapsto D_{X},
\end{equation}
which is determined by
$$
D_{X}f(z)=\frac{d}{dt}f\big(z\cdot \exp(tX)\big)\big|_{t=0}
\qquad\big(X\in \fa_{\cF}, f\in \cE(Z), z\in Z\big).
$$
In view of (\ref{eq S(a_F) hookrightarrow Diff(Z_F)}) the $\Diff(Z_{\cF})$-representation $\rho_{\cF,\lambda}$ induces a Lie algebra homomorphism
$$
\Gamma_{\cF,\lambda}:\fa_{\cF} \to  \operatorname{End} (U_{\cF}^*);
\quad X\mapsto \rho_{\cF,\lambda}(D_{X})^{t}.
$$
We note that $\Gamma_{\cF,\lambda}$ depends polynomially on $\lambda$.

We now use this machinery to analyse the matrix-coefficients $m_{\phi,\mu}$ for $\mu\in\cD'(\overline{Q}:\xi:\lambda)^{H}$ and $\phi\in\cD(G,V_{\xi})$. To do so we define the map
$$
\Phi_{\mu,\phi}:A\to U_{\cF}^{*}
$$
by setting
$$
\Big(\Phi_{\mu,\phi}(a)\Big)(u)
=\big(R(u)m_{\phi,\mu}\big)(a)
\qquad(a\in A, u\in U_{\cF}).
$$
The function $\Phi_{\mu,\phi}$ satisfies the system of differential equations
\begin{equation}\label{eq ODE Phi}
\partial_{X}\Phi_{\mu,\phi}
= \Gamma_{\cF,\lambda}(X)  \Phi_{\mu,\phi} + \Psi_{\mu,\phi,X}   \qquad (X\in \fa_{\cF})
\end{equation}
with
$$
\Psi_{\mu,\phi,X}:A\to U_{\cF}^{*}
$$
given by
$$
\Psi_{\mu,\phi,X}(a)(u)
=\Big( R\big(Xu-\rho_{\cF,\lambda}(X)u\big) m_{\phi,\mu}\Big)(a)\qquad (a\in A, u\in U_{\cF}).
$$
As in \cite[Lemma 5.7]{DelormeKrotzSouaifi_ConstantTerm} we may solve the ordinary differential equation (\ref{eq ODE Phi}) and obtain that for all $a\in A$ and $X\in\fa_{\cF}$,
\begin{equation}\label{eq Formula Phi}
\Phi_{\mu,\phi}(a\exp(X))
=e^{\Gamma_{\cF,\lambda}(X)}\Phi_{\mu,\phi}(a)+\int_{0}^{1}e^{(1-s)\Gamma_{\cF,\lambda}(X)}\Psi_{\mu,\phi,X}\big(a\exp(sX)\big)\,ds.
\end{equation}

Let $\cQ_{\cF,\lambda}$ be the set of generalized $\fa_{\cF}$-weights of $\Gamma_{\cF,\lambda}$.
For $\nu\in\cQ_{\cF,\lambda}$ we define $E_{\nu}\in\End(\cU_{\cF}^{*})$ to be the projection onto the generalized eigenspace with eigenvalue $\nu$. We further define
$$
\Phi_{\mu,\phi}^{\nu}
:=E_{\nu}\circ\Phi_{\mu,\phi}
\qquad\big(\phi\in\cD(G,V_{\xi})\big).
$$
In view of (\ref{eq Formula Phi}) we have
$$
\Phi_{\mu,\phi}^{\nu}(a\exp(X))
=e^{\Gamma_{\cF,\lambda}(X)}\Phi_{\mu,\phi}^{\nu}(a)+\int_{0}^{1}E_{\nu}e^{(1-s)\Gamma_{\cF,\lambda}(X)}\Psi_{\mu,\phi,X}\big(a\exp(sX)\big)\,ds
$$
for all $\phi\in\cD(G,V_{\xi})$, $\nu\in\cQ_{\cF,\lambda}$, $a\in A$ and $X\in\fa_{\cF}$.

If $X\in\fa_{\cF}$ and $u\in U_{\cF}$, then in view of (\ref{eq Def rho_F}) and  (\ref{eq I_F U= D(Z_F) I_F}) the element $Xu-\rho_{\cF,\lambda}(X)u$ is contained in $V_{\cF}\cI_{\cF,\lambda}=V_{\cF}\delta_{\cF}(\cI_{\lambda})$. Therefore, if $u_{1},\dots, u_{n}$ is a basis of $V_{\cF}$, then there exist  bilinear maps
$$
\omega_{\cF,\lambda}^{i}:\fa_{\cF}\times U_{\cF}\to \cI_{\lambda}
\qquad(1\leq i\leq n)
$$
so that
$$
Xu-\rho_{\cF,\lambda}(X)u
=\sum_{i=1}^{n}u_{i}\delta_{\cF}\big(\omega_{\cF,\lambda}^{i}(X,u)\big)
\qquad\big(X\in \fa_{\cF}, u\in U_{\cF}\big).
$$
We denote by $\Xi_{\cF,\lambda}$ the finite set of all $\fa$-weights that occur in
$$
\big\{\delta_{\cF}\big(\omega_{\cF,\lambda}^{i}(X,u)\big)-\omega_{\cF,\lambda}^{i}(X,u):1\leq i\leq n, X\in \fa_{\cF}, u\in U_{\cF}\big\}.
$$
We recall from Section \ref{Subsection Orbits of max rank - Admissible points and little Weyl group} that the image of $\overline{\cC}$ under the projection $\fa\to\fa/\fa_{E}$ is a Weyl chamber of the spherical root system $\Sigma_{Z}$ in $(\fa/\fa_{E})^{*}$.  Let $\Sigma_{Z}^{+}$ be the positive system of $\Sigma_{Z}$ so that this Weyl chamber is the negative one.
We then define $\beta_{\cF,\lambda}$ on $\fa$ by
$$
\beta_{\cF,\lambda}(X)
:=\max_{\nu\in\Xi_{\cF,\lambda}\cup \Sigma_{Z}^{+}}\nu(X),\quad (X\in \fa).
$$
Because of the signs of the $\fa$-weights occurring in $\Diff(Z_{\cF})$ and elements of the form $\delta_{\cF}(D)-D$ with $D\in \Diff(Z)$, we have $\beta_{\cF,\lambda}\big|_{\overline{\cC}}\leq 0$ and $\beta_{\cF,\lambda}\big|_{\cF^{\circ}}< 0$. The maximum in the definition of $\beta_{\cF,\lambda}$ also runs over the set of positive roots in $\Sigma_{Z}$; this is to ensure that $\beta_{\cF,\lambda}$ vanishes on the edge of $\cF$. We do not actually need this, but we simply follow the definition in \cite[(5.23)]{DelormeKrotzSouaifi_ConstantTerm}. Note that $\beta_{\cF,\lambda}$ depends polynomially on $\lambda$.

\subsection{Proof of Theorem \ref{Thm temperedness}}
\label{Subsection Wave packets - Proof of Theorem}
We continue with the notation from the previous section.
We recall that the proof of Theorem \ref{Thm temperedness} is by induction on the faces $\cF$ of $\overline{\cC}$.

Let $\cF$ be a face of $\overline{\cC}$. We call a function $\zeta: \fa\to\R$ an $\cF$-piecewise linear functional on $\fa$ if it is piecewise linear and satisfies
$$
\zeta\big|_{\partial \cF}=\rho_{Q}\big|_{\partial\cF}.
$$
Here $\partial\cF$ denotes the union of the faces $\cF'$ of $\overline{\cC}$ with $\cF'\subsetneq\cF$.
Note that for any two $\cF$-piecewise linear functionals $\zeta$ and $\zeta'$, also the functions
$$
\fa\ni X\mapsto \max\big(\zeta(X), \zeta'(X)\big)
\quad\text{and}\quad
\fa\ni X\mapsto \min\big(\zeta(X), \zeta'(X)\big)
$$
are $\cF$-piecewise linear functionals on $\fa$. Moreover,
$$
\fa=\fa_{\cF}\times\big(\fa\cap \fa_{\cF}^{\perp}\big)\ni (X,Y)\mapsto \zeta(X)+\zeta'(Y)
$$
defines an $\cF$-piecewise linear functional as well.

For an $\cF$-piece-wise linear functional $\zeta$ we decompose $\cQ_{\cF,\lambda}$ as
$$
\cQ_{\cF,\lambda}
=\cQ_{\cF,\lambda}^{\zeta,+}\cup\cQ_{\cF,\lambda}^{\zeta, 0}\cup\cQ_{\cF,\lambda}^{\zeta, -},
$$
with
\begin{align*}
\cQ_{\cF,\lambda}^{\zeta,+}
&:=\big\{\nu\in \cQ_{\cF,\lambda}:\Re\nu(X)>\zeta(X)+\langle\lambda\rangle(X) \text{ for some } X\in\cF^{\circ}\big\},\\
\cQ_{\cF,\lambda}^{\zeta, -}
&:=\big\{\nu\in \cQ_{\cF,\lambda}:\Re\nu<\zeta+\langle\lambda\rangle\big|_{\cF^{\circ}}\big\},\\
\cQ_{\cF,\lambda}^{\zeta, 0}
&:=\cQ_{\cF,\lambda}\setminus\big(\cQ_{\cF,\lambda}^{\zeta,+}\cup\cQ_{\cF,\lambda}^{\zeta,-}\big)\\
&\:=\big\{\nu\in \cQ_{\cF,\lambda}: \Re\nu\leq\zeta+\langle\lambda\rangle\big|_{\cF}\\
&\qquad \qquad\qquad\text{ and }
    \Re\nu(X)=\zeta(X)+\langle\lambda\rangle(X)\text{ for some } X\in\cF^{\circ}\big\}.
\end{align*}
The following lemma is needed for the induction step.

\begin{Lemma}\label{Lemma improvement of estimate}
Let $\cF$ be a face of $\overline{\cC}$ and $\xi$ a finite dimensional unitary representation of $M_{Q}$. Let further $\fC$ be a compact subset of $\{\lambda\in (\fa/\fa_{\fh})^{*}_{\C}:\Im\lambda\notin\cS\}$ such that there exists a subspace $U_{\cF}$ of $V_{\cF}$ for which (\ref{eq Assumption on B}) holds for all $\lambda\in\fC$. Let $\zeta:\fC\times\fa\to\R$ be a continuous function so that $\zeta_{\lambda}:=\zeta(\lambda, \dotvar)$ is an $\cF$-piecewise linear functional on $\fa$ for all $\lambda\in \fC$.  Assume that there exists an $N\in \N_{0}$ and a continuous seminorm $p$ on $\cD(G,V_{\xi})$ so that for every $\lambda\in \fC$,  $\mu\in \cD'(\overline{Q}:\xi:\lambda)^{H}$ and $\phi\in\cD(G,V_{\xi})$
\begin{equation} \label{eq initial estimate}
\big|m_{\phi,\mu}\big(\omega \exp(X)\big)\big|
\leq  e^{\big(\zeta_{\lambda}+\langle\lambda\rangle\big)(X)}(1+\|X\|)^{N} \|\mu\| p(\phi)
\qquad\big(\omega\in\Omega, X\in\overline{\cC}\big).
\end{equation}
Then there exists an $N'\in\N_{0}$, a continuous semi-norm $p'$ on $\cD(G,V_{\xi})$,  and a continuous function $\zeta':\fC\times\fa\to\R$
so that
\begin{enumerate}[(i)]
\item $\zeta'_{\lambda}:=\zeta'(\lambda, \dotvar)$ is an $\cF$-piecewise linear functional for all $\lambda\in \fC$
\item
$\zeta_{\lambda}'\big|_{\cF}
=\max\Big(
    \big\{\zeta_{\lambda}+\frac{1}{2}\beta_{\cF,\lambda},\rho_{Q}\big\}
    \cup\big\{\Re\nu-\langle\lambda\rangle:\nu\in \cQ_{\cF,\lambda}^{\zeta_{\lambda},-}\big\}
    \Big)\Big|_{\cF}$  for all $\lambda\in\fC$
\item For all $\lambda\in\fC$, $\mu\in\cD'(\overline{Q}:\xi:\lambda)^{H}$ and $\phi\in\cD(G,V_{\xi})$
$$
\big|m_{\phi,\mu}\big(\omega\exp(X)\big)\big|
\leq e^{\big(\zeta_{\lambda}'+\langle\lambda\rangle\big)(X)}(1+\|X\|)^{N'}\|\mu\|p'(\phi)
\qquad   \big(\omega\in\Omega, X\in\overline{\cC}\big).
$$
\end{enumerate}
\end{Lemma}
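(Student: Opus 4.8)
The strategy is to feed the hypothesis (\ref{eq initial estimate}) into the integral formula (\ref{eq Formula Phi}) for $\Phi_{\mu,\phi}$ and extract a better exponential bound, exactly along the lines of the proof of \cite[Lemma 5.7]{DelormeKrotzSouaifi_ConstantTerm}, but keeping careful track of the dependence on $\mu$ through $\|\mu\|$ and of uniformity in $\lambda\in\fC$. First I would record, for each $\nu\in\cQ_{\cF,\lambda}$ and each polynomial-growth constant, the standard estimate $\|e^{\Gamma_{\cF,\lambda}(X)}E_\nu\|\lesssim e^{\Re\nu(X)}(1+\|X\|)^{d}$ for $X\in\cF^{\circ}$, where $d$ bounds the nilpotent part of $\Gamma_{\cF,\lambda}$; since $\Gamma_{\cF,\lambda}$ depends polynomially on $\lambda$ and $\fC$ is compact, $d$ and the implied constant may be taken uniform over $\fC$. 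Next I would control the inhomogeneous term: by definition of $\omega^{i}_{\cF,\lambda}$ and $\Xi_{\cF,\lambda}$, the function $\Psi_{\mu,\phi,X}$ is a finite sum of terms $R\big(u_{i}\,[\delta_{\cF}(\omega^{i}_{\cF,\lambda}(X,u))-\omega^{i}_{\cF,\lambda}(X,u)]\big)m_{\phi,\mu}$ acting on $m_{\phi,\mu}$ (the $\omega^{i}_{\cF,\lambda}(X,u)$ part annihilates $m_{\phi,\mu}$ since $\mu$ has the correct infinitesimal character), so each such term is a matrix coefficient $m_{\phi',\mu}$ with $\phi'$ ranging over a finite-dimensional space obtained from $\phi$ by differential operators with coefficients depending polynomially on $\lambda$; hence by (\ref{eq initial estimate}) applied to these $\phi'$, and the fact that the $\fa$-weights involved lie in $\Xi_{\cF,\lambda}$, we get $\|\Psi_{\mu,\phi,X}(a\exp(sX))\|\lesssim e^{(\zeta_\lambda+\langle\lambda\rangle)(sX)}e^{\beta_{\cF,\lambda}(sX)}(1+s\|X\|)^{N}\|\mu\|\,p_1(\phi)$ for a continuous seminorm $p_1$ and $a$ in a fixed compact set (coming from $\Omega$).

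With these two ingredients the plan is to split $\Phi_{\mu,\phi}=\sum_{\nu}\Phi^{\nu}_{\mu,\phi}$ according to $\cQ_{\cF,\lambda}=\cQ^{\zeta_\lambda,+}_{\cF,\lambda}\cup\cQ^{\zeta_\lambda,0}_{\cF,\lambda}\cup\cQ^{\zeta_\lambda,-}_{\cF,\lambda}$ and estimate each piece from (\ref{eq Formula Phi}). For $\nu\in\cQ^{\zeta_\lambda,-}_{\cF,\lambda}$ one keeps the homogeneous term, which contributes $e^{\Re\nu(X)}$-growth, i.e.\ the terms $\Re\nu-\langle\lambda\rangle$ appearing in the claimed formula for $\zeta'_\lambda$. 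For $\nu$ in the $+$ or $0$ part one argues that the homogeneous term is already dominated by the inhomogeneous contribution: integrating the bound on $\Psi$ over $s\in[0,1]$ against $e^{(1-s)\Gamma_{\cF,\lambda}(X)}E_\nu$ and using $\beta_{\cF,\lambda}|_{\cF^{\circ}}<0$ together with the standard Laplace-type integral lemma (as in \cite[Lemma 5.7]{DelormeKrotzSouaifi_ConstantTerm}) yields a bound $e^{(\zeta_\lambda+\frac12\beta_{\cF,\lambda}+\langle\lambda\rangle)(X)}$ up to a polynomial factor, provided the homogeneous initial value $\Phi^{\nu}_{\mu,\phi}(a)$ is itself controlled — and that initial value is controlled by the a priori estimate on $\partial\cF$ together with (\ref{eq initial estimate}). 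Taking $\zeta'_\lambda$ to be the pointwise maximum on $\cF$ of $\zeta_\lambda+\frac12\beta_{\cF,\lambda}$, of $\rho_Q$ (to guarantee the $\cF$-piecewise-linear boundary condition $\zeta'_\lambda|_{\partial\cF}=\rho_Q|_{\partial\cF}$, using $\beta_{\cF,\lambda}|_{\partial\cF}\le 0$ and the normalization of $\zeta_\lambda$), and of the $\Re\nu-\langle\lambda\rangle$ with $\nu\in\cQ^{\zeta_\lambda,-}_{\cF,\lambda}$, and then extending off $\fa_\cF$ by the product construction $\zeta'_\lambda(X,Y)=\zeta'_\lambda(X)+(\text{old }\zeta_\lambda)(Y)$, one obtains an $\cF$-piecewise linear functional; continuity in $\lambda$ follows since $\beta_{\cF,\lambda}$, $\langle\lambda\rangle$ and the generalized weights $\cQ_{\cF,\lambda}$ depend continuously (polynomially) on $\lambda$ over $\fC$. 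Assembling the three pieces gives assertion (iii) with some new $N'$ and seminorm $p'$.

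The main obstacle I anticipate is the bookkeeping in the $\cQ^{\zeta_\lambda,0}_{\cF,\lambda}$ (borderline) case: here the homogeneous term grows exactly at the rate $e^{(\zeta_\lambda+\langle\lambda\rangle)(X)}$, so no genuine improvement comes from it, and one must instead show that the recursion terminates — i.e.\ that feeding $\zeta_\lambda+\frac12\beta_{\cF,\lambda}$ back in eventually pushes all weights into the $-$ part or below $\rho_Q$. This is precisely why the $\frac12$ appears (one only improves by half the gap each time), and why the statement is phrased as a single improvement step rather than a closed form; verifying that the continuous/uniform-in-$\lambda$ version of this dichotomy goes through — in particular that the finite set $\cQ_{\cF,\lambda}$ does not degenerate or jump in a way that breaks continuity of $\zeta'$ on $\fC$ — is the delicate point, and it is handled by shrinking $\fC$ if necessary so that the partition of $\cQ_{\cF,\lambda}$ relative to $\zeta_\lambda+\langle\lambda\rangle$ is locally constant, which is possible since $\fC$ already avoids the exceptional set $\cS$ and $U_{\cF}$ was chosen uniformly over $\fC$.
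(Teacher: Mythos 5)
Your overall architecture (split $\Phi_{\mu,\phi}$ by the generalized weights $\cQ_{\cF,\lambda}$, use the inhomogeneous ODE (\ref{eq Formula Phi}), gain $\beta_{\cF,\lambda}$ from $\Psi_{\mu,\phi,X}$, keep the $\Re\nu-\langle\lambda\rangle$ terms from $\cQ_{\cF,\lambda}^{\zeta_\lambda,-}$) matches the paper, and your uniform estimates on $E_\nu$, $\Psi$ and the weight projections are the right preliminary steps. But there is a genuine gap in your treatment of the critical weights $\nu\in\cQ_{\cF,\lambda}^{\zeta_\lambda,0}$, and this is precisely the heart of the lemma.

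For such $\nu$ the homogeneous term $e^{\Gamma_{\cF,\lambda}(X)}E_\nu$ grows at the rate $e^{\Re\nu(X)}=e^{(\zeta_\lambda+\langle\lambda\rangle)(X)}$ along some ray in $\cF^{\circ}$, so it is \emph{not} dominated by the inhomogeneous contribution, and controlling the initial value $\Phi^\nu_{\mu,\phi}(a)$ on a compact set does nothing to improve the growth rate. Yet conclusion (ii) requires the $\cQ^{0}$ contribution to be absorbed into $\max\big(\zeta_\lambda+\tfrac12\beta_{\cF,\lambda},\rho_Q\big)+\langle\lambda\rangle$, which is a strict improvement whenever $\zeta_\lambda>\rho_Q$ somewhere on $\cF^{\circ}$. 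Deferring this to ``the recursion terminates'' is circular: the lemma is the single improvement step that the recursion in the proof of Theorem \ref{Thm temperedness} iterates, so the improvement for $\cQ^{0}$ must be produced here. The paper's mechanism, which is absent from your proposal, is a constant-term argument: for $\nu\in\cQ_{\cF,\lambda}^{\zeta_\lambda,0}$ the limit $\Phi^{\nu,\infty}_{\mu,\phi}(a)=\lim_{t\to\infty}e^{-t\Gamma_{\cF,\lambda}(Y)}\Phi^{\nu}_{\mu,\phi}(a\exp(tY))$ exists, satisfies $\Phi^{\nu,\infty}_{\mu,\phi}(a\exp(X))=e^{\Gamma_{\cF,\lambda}(X)}\Phi^{\nu,\infty}_{\mu,\phi}(a)$, and the associated distribution $\mu_{\cF,\nu}$ is shown to be right $H_{z,\cF}$-invariant, i.e.\ it lives on the boundary degeneration $Z_{\cF}$. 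Applying the a priori estimate of Lemma \ref{Lemma A priori estimate} \emph{to $Z_{\cF}$} then bounds $\Phi^{\nu,\infty}$ by $e^{(\zeta_{\cF}+\langle\lambda\rangle)(X)}$ with $\zeta_{\cF}|_{\fa_{\cF}}=\rho_Q|_{\fa_{\cF}}$ — this is where the $\rho_Q$ in (ii) actually comes from (not from the boundary normalization of $\cF$-piecewise linear functionals, as you suggest) — while the difference $\Phi^\nu-\Phi^{\nu,\infty}$ decays by the factor $e^{\delta_\lambda\beta_{\cF,\lambda}(X)}$. Without introducing $\Phi^{\nu,\infty}$ and the $H_{z,\cF}$-invariance, the claimed bound for the $\cQ^{0}$ part does not follow. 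A secondary, smaller issue: on $\fa\cap\fa_{\cF}^{\perp}$ the paper takes $\zeta'_\lambda=\max(\zeta_\lambda,\zeta_{\cF})$ rather than just $\zeta_\lambda$, again because the constant-term estimate contributes $\zeta_{\cF}$ on all of $\overline{\cC}$; and shrinking $\fC$ to make the partition of $\cQ_{\cF,\lambda}$ locally constant is unnecessary, since $\zeta'$ is manifestly continuous as a maximum of continuous functions and the non-constancy of $\delta_\lambda$ is harmless.
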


\begin{proof}
We apply Lemma \ref{Lemma A priori estimate} to $Z_{\cF}$ instead of $Z$, and find that there exists an $N_{\cF}\in\N_{0}$, a continuous semi-norm $p_{\cF}$ on $\cD(G,V_{\xi})$ and a $\zeta_{\cF}\in \fa^{*}$ so that
$$
\zeta_{\cF}\big|_{\fa_{\cF}}
=\rho_{Q}\big|_{\fa_{\cF}}
$$
and for every $\mu_{\cF}\in \cD'(\overline{Q}:\xi:\lambda)^{H_{\cF}}$ and $\phi\in \cD(G,V_{\xi})$
\begin{equation}\label{eq Estimate matrix-coefficient for Z_F}
\big|m_{\phi,\mu_{\cF}}\big(\exp(X)\big)\big|
\leq e^{\big(\zeta_{\cF}+\langle\lambda\rangle\big)(X)}(1+\|X\|)^{N_{\cF}}\|\mu_{\cF}\|p_{\cF}(\phi)
\qquad(\lambda\in \fC, X\in\overline{\cC}).
\end{equation}
We define $\zeta':\fC\times\fa\to\R$ by requiring that for all $X\in\fa_{\cF}$ and $Y\in \fa\cap \fa_{\cF}^{\perp}$
\begin{align*}
&\zeta'(\lambda,X)
=\max\Big(
    \big\{\zeta_{\lambda}(X)+\frac{1}{2}\beta_{\cF,\lambda}(X),\rho_{Q}(X)\big\}
    \cup\big\{\Re\nu(X)-\langle\lambda\rangle(X):\nu\in \cQ_{\cF,\lambda}^{\zeta_{\lambda},-}\big\}
    \Big),\\
&\zeta'(\lambda,Y)
=\max\big(\zeta_{\lambda}(Y),\zeta_{\cF}(Y)\big),\\
&\zeta'(\lambda,X+Y)
=\zeta'(\lambda,X)+\zeta'(\lambda,Y).
\end{align*}
Observe that $\zeta'$ is continuous and $\zeta'(\lambda,\dotvar)$ defines for every $\lambda\in\fC$ an $\cF$-piecewise linear functional on $\fa$. Therefore, it suffices to prove the existence of an $N'\in\N_{0}$ and a continuous semi-norm $p'$ on $\cD(G,V_{\xi})$ such that for every  $\lambda\in\fC$,  $\mu\in\cD'(\overline{Q}:\xi:\lambda)^{H}$, $\phi\in\cD(G,V_{\xi})$ and $\nu\in\cQ_{\cF,\lambda}$
$$
\|\Phi_{\mu,\phi}^{\nu}\big(\exp(X)\big)\|
\leq e^{\big(\zeta_{\lambda}'+\langle\lambda\rangle\big)(X)}(1+\|X\|)^{N'}\|\mu\|p'(\phi)
\qquad(X\in\overline{\cC}).
$$
For this we follow \cite[Sections 5.3, 5.4, 6.2 \& 6.3]{DelormeKrotzSouaifi_ConstantTerm}.

If one uses the estimate (\ref{eq initial estimate}) instead of the tempered estimates, then in the same way as in the proof for \cite[Lemma 5.8]{DelormeKrotzSouaifi_ConstantTerm} it follows that there exists a continuous semi-norm $q$ on $\cD(G,V_{\xi})$ such that for all $\lambda\in\fC$,  $\mu\in\cD'(\overline{Q}:\xi:\lambda)^{H}$, and $\phi\in\cD(G,V_{\xi})$
\begin{equation}\label{eq Phi estimate - 1}
\| L(v) \Phi_{\mu,\phi}\big(\exp(X)\big)\|
\leq  e^{\big(\zeta_{\lambda}+\langle\lambda\rangle\big)(X)}(1+\|X\|)^{N}\|\mu\|q\big(L(v)\phi\big)
\qquad \big(v\in \cU(\fa), X\in\overline{\cC}\big),
\end{equation}
and for every compact subset $B\subseteq \fa$, there exists a constant $C>0$ so that
\begin{equation}\label{eq Psi estimate}
\| L(v) \Psi_{\mu,\phi,X}(\exp(Y))\|
\leq  Ce^{\big(\zeta_{\lambda}+\beta_{\cF,\lambda}+\langle\lambda\rangle\big)(Y)}(1+\|Y\|)^{N} \|\mu\|\|X\| q\big(L(v)\phi\big)
\end{equation}
for all $v\in \cU(\fa)$, $X\in \fa_{\cF}$ and $Y\in B+\overline{\cC}$.
Let
$$
E_{\nu}(\lambda, X)
:=e^{-\nu(X)}E_{\nu}\circ\Gamma_{\cF,\lambda}(X)
\qquad\big(\lambda\in\fC,\nu\in\cQ_{\cF,\lambda},X\in\fa_{\cF}\big).
$$
The same arguments as the ones for \cite[Lemma 5.9]{DelormeKrotzSouaifi_ConstantTerm} show the existence of constants $c>0$ and $n\in\N_{0}$ so that
\begin{equation}\label{eq E_nu estimate}
\|E_{\nu}(\lambda, X)\|
\leq c(1+\|X\|)^{n}
\qquad\big(\lambda\in\fC,\nu\in\cQ_{\cF,\lambda},X\in\fa_{\cF}\big).
\end{equation}
We define $\delta_{\lambda}:\fa_{\cF}\to[0,\frac{1}{2}]$ by
$$
\delta_{\lambda}(X)
=\min\bigg(\Big\{\frac{\Re\nu(X)-\zeta_{\lambda}(X)-\langle\lambda\rangle(X)}{\beta_{\cF,\lambda(X)}}:\nu\in\cQ_{\cF,\lambda}^{\zeta_{\lambda},-}\Big\}\cup\Big\{ \frac{1}{2}\Big\}\bigg)
\qquad(X\in\cF^{\circ}).
$$
In view of (\ref{eq Phi estimate - 1}), (\ref{eq Psi estimate}) and (\ref{eq E_nu estimate}) it suffices to prove that for every $\lambda\in\fC$,  $\mu\in\cD'(\overline{Q}:\xi:\lambda)^{H}$, $\phi\in\cD(G,V_{\xi})$ and $\nu\in\cQ_{\cF,\lambda}$ there exists a function $\Phi_{\mu,\phi}^{\nu,\infty}:A\to U_{\cF}^{*}$ so that
\begin{align}\label{eq Phi estimate - 2}
&\big\|\Phi_{\mu,\phi}^{\nu}\big(a\exp(tX)\big)-\Phi_{\mu,\phi}^{\nu,\infty}\big(a\exp(tX)\big)\big\|\\
\nonumber&\leq e^{\big(\zeta_{\lambda}+\delta_{\lambda}\beta_{\cF,\lambda}+\langle\lambda\rangle\big)(tX)}\bigg(
    \|E_{\nu}(\lambda, tX)\|\|\Phi_{\mu,\phi}(a)\|\\
\nonumber &\qquad\qquad+
    \int_{0}^{\infty}e^{-\big(\zeta_{\lambda}+\frac{1}{2}\beta_{\cF,\lambda}+\langle\lambda\rangle\big)(sX)}\big\|E_{\nu}\big(\lambda, (t-s)X\big)\big\|
        \big\|\Psi_{\mu,\phi,X}\big(a\exp(sX)\big)\big\|\,ds\bigg)
\end{align}
for all $a\in A$, $X\in\cF^{\circ}$ and $t\geq0$, and
\begin{equation}\label{eq Phi^infty estimate}
\big\|\Phi_{\mu,\phi}^{\nu,\infty}\big(\exp(X)\big)\big\|
\leq e^{\big(\zeta_{\cF}+\langle\lambda\rangle\big)(X)}(1+\|X\|)^{N_{\infty}}\|\mu\|q_{\infty}(\phi)
\qquad(X\in\overline{\cC})
\end{equation}
for some $N_{\infty}\in\N_{0}$ and a continuous semi-norm $q_{\infty}$ on $\cD(G,V_{\xi})$.

If $\nu\in\cQ_{\cF,\lambda}^{\zeta_{\lambda},+}$ or $\nu\in\cQ_{\cF,\lambda}^{\zeta_{\lambda},-}$, then we may take $\Phi_{\mu,\phi}^{\nu,\infty}=0$. The estimate (\ref{eq Phi estimate - 2}) is the analogue of \cite[Lemma 5.11]{DelormeKrotzSouaifi_ConstantTerm} and is obtained as in \cite[Corollary 5.16 \& Lemma 5.17]{DelormeKrotzSouaifi_ConstantTerm} and \cite[Lemma 5.18]{DelormeKrotzSouaifi_ConstantTerm}, respectively.

Let  $\nu\in\cQ_{\cF,\lambda}^{\zeta_{\lambda},0}$. If $Y\in\cF^{\circ}$ with
$\Re\nu(Y)> \zeta_{\lambda}(Y)+\beta_{\cF,\lambda}(Y)+\langle\lambda\rangle(Y)$,
then it follows as in \cite[Section 5.4.1]{DelormeKrotzSouaifi_ConstantTerm} that the limit
$$
\Phi_{\mu,\phi}^{\nu,\infty}(a)
:=\lim_{t\to\infty}e^{-t\Gamma_{\cF,\lambda}(Y)}\Phi_{\mu,\phi}^{\nu}\big(a\exp(tY)\big)
\qquad(a\in A)
$$
exists and is independent of the choice of $Y$. Note that such $Y$ exist because $\nu\in\cQ_{\cF,\lambda}^{\zeta_{\lambda},0}$
and $\beta_{\cF,\lambda}|_{\cF^{\circ}<0}$.
We first show that (\ref{eq Phi^infty estimate}) is satisfied.
For $\mu\in\cD'(\overline{Q}:\xi:\lambda)^{H}$ we define
$$
\mu_{\cF,\nu}:\cD(G,V_{\xi})\to\C;\quad \phi\mapsto \Phi_{\mu,\phi}^{\nu,\infty}(e)(1).
$$
From the definitions it is easily seen that $\mu_{\cF,\zeta_{\lambda}}$ is a distribution and belongs to $\cD'(\overline{Q}:\xi:\lambda)$. We claim that $\mu_{\cF,\nu}$ is right $H_{\cF}$-invariant.
To prove the claim we choose $Y\in\cF^{\circ}$ so that
$\zeta_{\lambda}(Y)+\langle\lambda\rangle(Y)=\Re\nu(Y)$.
From (\ref{eq Phi estimate - 2}) it follows that
$$
\lim_{t\to\infty}e^{-t\nu(Y)}\|\Phi_{\mu,\phi}^{\nu}\big(\exp(tY)\big)-\Phi_{\mu,\phi}^{\nu,\infty}\big(\exp(tY)\big)\|
=0.
$$
Moreover, if we use this $Y$ in the proof of \cite[Lemma 6.2]{DelormeKrotzSouaifi_ConstantTerm}, then we obtain
$$
\Phi_{\mu,\phi}^{\nu,\infty}\big(a\exp(X)\big)
=e^{\Gamma_{\cF,\lambda}(X)}\Phi_{\mu,\phi}^{\nu,\infty}(a)
\qquad(a\in A, X\in\fa_{\cF}).
$$
Now the claim follows with the same arguments as in the proof for \cite[Lemma 6.5(iii)]{DelormeKrotzSouaifi_ConstantTerm}.
The estimate (\ref{eq Phi^infty estimate})  follows from the claim and (\ref{eq Estimate matrix-coefficient for Z_F}).

We finish the proof by showing that (\ref{eq Phi estimate - 2}) holds also in this case.
If $\nu\in\cQ_{\cF,\lambda}^{\zeta_{\lambda},0}$ and $X\in\cF^{\circ}$ with
$\Re\nu(X)\leq \zeta_{\lambda}(X)+\frac{1}{2}\beta_{\cF,\lambda}(X)+\langle\lambda\rangle(X)$,
then the estimate follows from (\ref{eq Phi^infty estimate}) and the estimate on $\Phi_{\mu,\phi}^{\nu}\big(a\exp(tX)\big)$ that one obtains analogous to \cite[Lemma 5.17]{DelormeKrotzSouaifi_ConstantTerm}.
If $X\in \cF^{\circ}$ with
$$
\zeta_{\lambda}(X)+\frac{1}{2}\beta_{\cF,\lambda}(X)+\langle\lambda\rangle(X)
\leq \Re\nu(X)
\leq \zeta_{\lambda}(X)+\langle\lambda\rangle(X),
$$
then we find as in the proof for \cite[Lemma 5.19]{DelormeKrotzSouaifi_ConstantTerm}
that
$$
\Phi_{\mu,\phi}^{\nu,\infty}\big(a\exp(tX)\big)
=\Phi_{\mu,\phi}^{\nu}\big(a\exp(tX)\big)+\int_{t}^{\infty}E_{\nu}e^{(t-s)\Gamma_{\cF,\lambda}(X)}\Psi_{\mu,\phi,X}\big(a\exp(sX)\big)\,ds.
$$
Now (\ref{eq Phi estimate - 2}) follows from (\ref{eq Psi estimate}) and (\ref{eq E_nu estimate}).
(The fact that $\delta_{\lambda}$ is not a constant like in \cite[(5.37)]{DelormeKrotzSouaifi_ConstantTerm} is irrelevant for the proof.)
\end{proof}

\begin{proof}[Proof of Theorem \ref{Thm temperedness}]
We prove the theorem using the principle of induction on the faces $\cF$ of $\overline{\cC}$. In particular we will show that for every face $\cF$ of $\overline{\cC}$ there exists an $N\in \N_{0}$, a continuous seminorm $p$ on $\cD(G,V_{\xi})$ and a continuous function $\zeta:\fC\times \fa\to\R$
so that
\begin{enumerate}[(i)]
\item\label{eq Estimate for cF - item 1} $\zeta_{\lambda}:=\zeta(\lambda, \dotvar)$ is an $\cF$-piecewise linear functional for all $\lambda\in \fC$,
\item\label{eq Estimate for cF - item 2} $\zeta_{\lambda}\big|_{\cF}=\rho_{Q}\big|_{\cF}$,
\item\label{eq Estimate for cF - item 3} For all $\lambda\in\fC$, $\mu\in\cD'(\overline{Q}:\xi:\lambda)^{H}$ and $\phi\in\cD(G,V_{\xi})$
$$
\big|m_{\phi,\mu}\big(\omega\exp(X)\big)\big|
\leq e^{\big(\zeta_{\lambda}+\langle\lambda\rangle\big)(X)}(1+\|X\|)^{N}\|\mu\|p(\phi)
\qquad   \big(\omega\in\Omega, X\in\overline{\cC}\big).
$$
\end{enumerate}
Lemma \ref{Lemma A priori estimate} serves as the initial step with $\cF=\fa_{E}$.
Let $\cF$ be a face of $\overline{\cC}$ with $\cF\neq\fa_{E}$, and assume that for every face $\cF'$ of $\overline{\cC}$ with $\cF'\subsetneq \cF$  there exists an $N_{\cF'}\in \N_{0}$, a continuous seminorm $p_{\cF'}$ on $\cD(G,V_{\xi})$ and a continuous function $\zeta_{\cF'}:\fC\times \fa\to\R$
so that (\ref{eq Estimate for cF - item 1}) -- (\ref{eq Estimate for cF - item 3}) hold with $\cF'$ in place of $\cF$. Without loss of generality may assume that $\zeta_{\cF'}(\lambda,X)\geq \rho_{Q}(X)$ for all $\lambda\in\fC$ and $X\in \cF$.
We define
$$
\zeta_{0}:\fC\times\fa\to\R;\quad (\lambda,X)\mapsto \min_{\cF'\subsetneq\cF}\zeta_{\cF'}(\lambda,X)
$$
Then $\zeta_{0}$ is a continuous function with the property that $\zeta_{0}(\lambda,\dotvar)$ is an $\cF$-piecewise linear functional on $\fa$ and there exists an $N_{0}\in \N_{0}$ and a continuous seminorm $p_{0}$ on $\cD(G,V_{\xi})$ so that  for all $\lambda\in\fC$, $\mu\in\cD'(\overline{Q}:\xi:\lambda)^{H}$ and $\phi\in\cD(G,V_{\xi})$
$$
\big|m_{\phi,\mu}\big(\omega\exp(X)\big)\big|
\leq e^{\zeta_{0}(\lambda,X)+\langle\lambda\rangle(X)}(1+\|X\|)^{N_{0}}\|\mu\|p_{0}(\phi)
\qquad   \big(\omega\in\Omega, X\in\overline{\cC}\big).
$$
We use Lemma \ref{Lemma improvement of estimate} to improve this estimate.

After passing to a finite cover of $\fC$ of sufficiently small compact subsets of $\{\lambda\in (\fa/\fa_{\fh})^{*}_{\C}:\Im\lambda\notin\cS\}$, we may assume that $\fC$ satisfies the condition in Lemma \ref{Lemma improvement of estimate}.
We now apply Lemma \ref{Lemma improvement of estimate} repeatedly and obtain sequences $(N_{k})_{k\in\N_{0}}$, $(p_{k})_{k\in\N_{0}}$ and $(\zeta_{k})_{k\in\N_{0}}$ of natural numbers, continuous seminorms on $\cD(G,V_{\xi})$ and continuous functions on $\fC\times \fa\to\R$, respectively, so that the above assertions (\ref{eq Estimate for cF - item 1}) and (\ref{eq Estimate for cF - item 3}) hold with $N_{k}$, $p_{k}$ and $\zeta_{k}$ in place of $N$, $p$ and $\zeta$.
The sequence $(\zeta_{k})_{k\in\N_{0}}$ satisfies for $\lambda\in\fC$ and $X\in \cF$
\begin{align}\label{eq recurrence relation zeta_k}
&\zeta_{k+1}(\lambda,X)\\
\nonumber&\quad=\max\Big(
    \big\{\zeta_{k}(\lambda, X)+\frac{1}{2}\beta_{\cF,\lambda}(X),\rho_{Q}(X)\big\}
    \cup \big\{\Re\nu(X)-\langle\lambda\rangle(X):\nu\in \cQ_{\cF,\lambda}^{\zeta_{k}(\lambda,\dotvar),-}\big\}\Big).
\end{align}

We claim that there exists an $n\in\N$ so that $ \zeta_{k}(\lambda,\dotvar)|_{\cF} =\rho_{Q}|_{\cF}$ for every $k\geq n$ and $\lambda\in \fC$.
To see this, we first note that the subsequence $(\zeta_{k})_{k\in\N}$ is decreasing. This implies that the sets $\cQ_{\cF,\lambda}^{\zeta_{k}(\lambda,\dotvar),0}$ and $\cQ_{\cF,\lambda}^{\zeta_{k}(\lambda,\dotvar),-}$ are decreasing with $k$. Note that the cardinality of $\cQ_{\cF,\lambda}^{\zeta_{k}(\lambda,\dotvar),-}$ is bounded by the dimension of $U_{\cF}^{*}$.
Furthermore, the fact that $\beta_{\cF,\lambda}$ is a piece-wise linear functional that depends continuously on $\lambda$ implies that
$$
n':=\sup_{\lambda\in\fC,X\in \cF^{\circ}}-2\frac{\zeta_{0}(\lambda,X)-\rho_{Q}(X)}{\beta_{\cF,\lambda}(X)}<\infty
$$
The claim now follows from (\ref{eq recurrence relation zeta_k}) with $n=n'+\dim(U_{\cF}^{*})$.
The above assertions (\ref{eq Estimate for cF - item 1}) -- (\ref{eq Estimate for cF - item 3})  now follow with $N=N_{n}$, $p=p_{n}$ and $\zeta=\zeta_{n}$.
\end{proof}

\subsection{Constant term approximation}
\label{Subsection Wave packets - Constant term}
We now give a version of the constant term approximation (see \cite[Theorem 1.2]{DelormeKrotzSouaifi_ConstantTerm} and \cite[Theorem 6.2]{KrotzSayagSchlichtkrull_GeometricCountingOnWavefrontRealSphericalSpaces}) which is applicable to our setting.

\begin{Thm}\label{Thm constant term}
Let $\xi$ be a finite dimensional unitary representation of $M_{Q}$. Let further $z\in Z$ be admissible and let $\cF$ be a face of $\overline{\cC}$.
There exists an open neighborhood $\fU$ of  $i(\fa/\fa_{\fh})^{*}\setminus i\cS$ in $(\fa/\fa_{\fh})^{*}_{\C}$ and for every $\lambda\in \fU$ a linear map
$$
\Const_{z,\cF}(\xi:\lambda):\cD'(\overline{Q}:\xi:\lambda)^{H}\to\cD'(\overline{Q}:\xi:\lambda)^{H_{z,\cF}};
\quad \mu\mapsto \mu_{z,\cF}
$$
with the following properties.
\begin{enumerate}[(i)]
\item\label{Thm constant term - item 3}
For every $\eta\in V^{*}(\xi)$ the map
$$
\fU\to\cD'(\overline{Q}:\xi:\lambda)^{H_{z,\cF}};
\quad\lambda\mapsto \Const_{z,\cF}(\xi:\lambda)\circ\mu^{\circ}(\xi:\lambda)(\eta)
$$
is a holomorphic family of distributions.
\item\label{Thm constant term - item 1}
Let $x\in G$ be so that $z=xH\in G/H=Z$ and let $X\in \cF^{\circ}$. If $\lambda\in i(\fa/\fa_{\fh})^{*}\setminus i\cS$ and $\mu\in\cD'(\overline{Q}:\xi:\lambda)^{H}$, then
\begin{equation}\label{eq Constant term limit formula}
\lim_{t\to \infty} e^{-t\rho_{Q}(X)}\Big(R^{\vee}\big(\exp(tX)x\big)\mu-R^{\vee}\big(\exp(tX)\big)\mu_{z,\cF}\Big)=0
\end{equation}
with convergence in $\cD'(G, V_{\xi})$.
\item\label{Thm constant term - item 2}
For every compact subset $\fC$ of  $i(\fa/\fa_{\fh})^{*}\setminus i\cS$, every compact subset $B$ of $G$ and every closed cone $\Upsilon\subset \cF^{\circ}\cup\{0\}$, there exists a $\gamma\in\fa^{*}$ with $\gamma|_{\overline{\cC}}\leq 0$ and $\gamma|_{\Upsilon\setminus\{0\}}<0$,  an $N\in\N_{0}$, and a continuous seminorm $p$ on $\cD(G, V_{\xi})$,  so that
\begin{align*}
&e^{-\rho_{Q}(Y+X)}\Big| m_{\phi,\mu}\big(g \exp(Y+X)x\big)-m_{\phi,\mu_{z,\cF}}\big(g \exp(Y+X)\big)\Big|\\
&\qquad\leq e^{\gamma(X)}(1+\|Y\|)^N\|\mu\| p(\phi)
\end{align*}
for all $\lambda\in \fC$, $\mu\in \cD'(\overline{Q}:\xi:\lambda)^{H}$, $\phi\in\cD(G,V_{\xi})$,  $Y\in\overline{\cC}$, $X\in\Upsilon$ and $g\in B$.
\item\label{Thm constant term - item 4}
Let $z\in Z$ be an admissible point so that $M\cap H_{z}=M\cap H$.
For the face $\cF=\overline{\cC}$ the image of $\const_{z,\overline{\cC}}(\xi:\lambda)$ lies for all $\lambda\in \fU$ in $\cD'(\overline{Q}:\xi:\lambda)^{H_{\emptyset}}$,
where
$$
H_{\emptyset}
=(L_{Q}\cap H)\overline{N}_{Q}.
$$
\end{enumerate}
\end{Thm}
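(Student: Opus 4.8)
The plan is to construct the constant term map directly from the description of $\cD'(\overline{Q}:\xi:\lambda)^{H}$ furnished by Theorem \ref{Thm Description D'(overline Q:xi:lambda)^H} and the horospherical constant term computations, and then verify each listed property by reducing, via the polar decomposition and the uniformity estimates of Theorem \ref{Thm temperedness}, to the analysis of the ordinary differential equation (\ref{eq Formula Phi}) satisfied by the $U_{\cF}^{*}$-valued functions $\Phi_{\mu,\phi}$. The key point is that the generalized $\fa_{\cF}$-weights appearing in $\Gamma_{\cF,\lambda}$ that contribute to the asymptotic behaviour along $\cF^{\circ}$ are precisely those whose real part equals $\rho_{Q}$ on $\cF^{\circ}$; extracting the corresponding projection of $\lim_{t\to\infty}e^{-t\Gamma_{\cF,\lambda}(X)}\Phi_{\mu,\phi}\big(\exp(tX)\big)$ gives a distribution on $G$ that is right $H_{z,\cF}$-invariant (this is the mechanism already used in the proof of Lemma \ref{Lemma improvement of estimate} to produce $\mu_{\cF,\nu}$). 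The map $\mu\mapsto\mu_{z,\cF}$ is then defined by collecting these boundary pieces; that it lands in $\cD'(\overline{Q}:\xi:\lambda)^{H_{z,\cF}}$ follows because $\fa_{\cF}$ acts on it through the weights with $\Re\nu|_{\cF}=\rho_{Q}|_{\cF}$ and hence $H_{z,\cF}=\lim_{t}\Ad(\exp(tX))H_{z}$ fixes it by the same limiting argument as in \cite[Lemma 6.5]{DelormeKrotzSouaifi_ConstantTerm}.

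First I would fix an admissible $z$ and, using the generic description $\mu^{\circ}(\xi:\lambda)$ from Theorem \ref{Thm Description D'(overline Q:xi:lambda)^H}, define $\Const_{z,\cF}(\xi:\lambda)$ on the dense open set $\{\Im\lambda\notin\cS\}$ by the formula $\Const_{z,\cF}(\xi:\lambda)\mu=\sum_{\nu}E_{\nu}^{\infty}\Phi_{\mu,\cdot}^{\nu,\infty}$ where the sum runs over the weights $\nu\in\cQ_{\cF,\lambda}$ with $\Re\nu|_{\cF^{\circ}}=\rho_{Q}|_{\cF^{\circ}}$ and $\Phi_{\mu,\phi}^{\nu,\infty}$ is the limit already constructed in the proof of Lemma \ref{Lemma improvement of estimate}; since on $i(\fa/\fa_{\fh})^{*}\setminus i\cS$ all matrix coefficients are tempered by Theorem \ref{Thm temperedness}, this limit exists and one argues exactly as in \cite[Sections 5--6]{DelormeKrotzSouaifi_ConstantTerm}. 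For (\ref{Thm constant term - item 3}), holomorphy of $\lambda\mapsto\Const_{z,\cF}(\xi:\lambda)\circ\mu^{\circ}(\xi:\lambda)(\eta)$ on a neighbourhood $\fU$ of the imaginary axis follows because $\Gamma_{\cF,\lambda}$, $\beta_{\cF,\lambda}$, the projections $E_{\nu}$, and $\lambda\mapsto\mu^{\circ}(\xi:\lambda)\eta$ all depend holomorphically (resp. polynomially) on $\lambda$ near that axis, together with the fact that the relevant weights $\nu$ stay in the "critical" set after a small perturbation of $\lambda$ — here one shrinks $\fU$ so that $\cQ_{\cF,\lambda}^{\rho_{Q},0}$ is locally constant, which is possible away from $\cS$.

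Assertions (\ref{Thm constant term - item 1}) and (\ref{Thm constant term - item 2}) are the asymptotic statements. Property (\ref{Thm constant term - item 1}) is the pointwise (in $\cD'(G,V_{\xi})$) version and follows from (\ref{Thm constant term - item 2}) by specialising to $Y=0$, a single $\lambda$, and letting $\gamma(X)<0$ force the difference to zero. For (\ref{Thm constant term - item 2}) I would invoke the estimate (\ref{eq Phi estimate - 2}) from Lemma \ref{Lemma improvement of estimate}, applied with the final functional $\zeta_{\lambda}=\rho_{Q}|_{\cF}$ produced by the recursion in the proof of Theorem \ref{Thm temperedness}: this directly bounds $\|\Phi_{\mu,\phi}^{\nu}(a\exp(tX))-\Phi_{\mu,\phi}^{\nu,\infty}(a\exp(tX))\|$ by $e^{(\zeta_{\lambda}+\delta_{\lambda}\beta_{\cF,\lambda}+\langle\lambda\rangle)(tX)}$ times a polynomial, and since $\langle\lambda\rangle=0$ on the imaginary axis and $\beta_{\cF,\lambda}|_{\Upsilon\setminus\{0\}}<0$, choosing $\gamma$ to be a small negative multiple of the maximum of the $\fa$-weights occurring gives the claim; uniformity over $\fC$, $B$, $\Upsilon$ is inherited from the corresponding uniformities in Lemma \ref{Lemma improvement of estimate} and the Dixmier--Malliavin reduction used to pass from $x\in B$ and $g\in B$ to $g=e$. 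Finally, (\ref{Thm constant term - item 4}) is immediate: when $M\cap H_{z}=M\cap H$ one has $\fh_{z,X}=\fh_{\emptyset}$ for all $X\in\cC$ by the computation recalled in Section \ref{Subsection Wave packets - Boundary degenerations}, so $H_{z,\overline{\cC}}$ is the identity component of $H_{\emptyset}=(L_{Q}\cap H)\overline{N}_{Q}$, and $\cD'(\overline{Q}:\xi:\lambda)^{H_{z,\overline{\cC}}}=\cD'(\overline{Q}:\xi:\lambda)^{H_{\emptyset}}$ because $\overline{N}_{Q}$ and $L_{Q}\cap H$ are connected while the remaining generators of $H_{\emptyset}$ lie in $Q$ and already act trivially through the $\overline{Q}$-equivariance; one also checks $A_{\cF}=A$ here so no extra component appears.

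The main obstacle I expect is property (\ref{Thm constant term - item 3}) together with the well-definedness of the map on the whole neighbourhood $\fU$ of the imaginary axis rather than just on the axis: off the imaginary axis $\langle\lambda\rangle$ is no longer zero, temperedness fails, and the "critical" set of weights $\cQ_{\cF,\lambda}$ entering the limit changes; one must argue that the limits $\Phi_{\mu,\phi}^{\nu,\infty}$ still exist for $\nu$ in a suitable half-open range and depend holomorphically on $\lambda$, which requires carefully tracking the dependence of $\delta_{\lambda}$ and of the decomposition $\cQ_{\cF,\lambda}=\cQ^{+}\cup\cQ^{0}\cup\cQ^{-}$ on $\lambda$ and shrinking $\fU$ (and enlarging $\cS$ if necessary) so that no weight crosses the threshold $\Re\nu(X)=\zeta_{\lambda}(X)+\langle\lambda\rangle(X)$. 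A secondary technical point is verifying the $H_{z,\cF}$-invariance of $\mu_{z,\cF}$ for a general face $\cF$ (not just $\overline{\cC}$): this is the content of the argument around \cite[Lemma 6.5]{DelormeKrotzSouaifi_ConstantTerm}, and one must make sure the hypotheses there — in particular that $Y\in\cF^{\circ}$ can be chosen with $\zeta_{\lambda}(Y)+\langle\lambda\rangle(Y)=\Re\nu(Y)$ — are met in the present setting, which they are precisely because $\zeta_{\lambda}|_{\cF}=\rho_{Q}|_{\cF}$ and $\nu$ ranges over the critical weights.
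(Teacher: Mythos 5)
Your construction and verification of items (i)--(iii) follow essentially the same route as the paper: define $\Const_{z,\cF}(\xi:\lambda)$ from the asymptotics of the $U_{\cF}^{*}$-valued functions $\Phi_{\mu,\phi}$ solving (\ref{eq Formula Phi}), use the estimates of Lemma \ref{Lemma improvement of estimate} (with $\zeta_{\lambda}=\rho_{Q}$ on $\cF$) for the convergence and the uniform bounds in (iii), and identify the result with the constant term of \cite{DelormeKrotzSouaifi_ConstantTerm} on $i(\fa/\fa_{\fh})^{*}\setminus i\cS$ to get the $H_{z,\cF}$-invariance, extending off the axis by analytic continuation. The obstacle you flag for (i) — that the critical set of weights may move as $\lambda$ leaves the imaginary axis — is resolved in the paper not by making $\cQ_{\cF,\lambda}^{0}$ locally constant (individual eigenvalues of $\Gamma_{\cF,\lambda}$ can split and merge, so the individual projections $E_{\nu}$ need not vary holomorphically) but by summing the spectral projections over all eigenvalues lying in a fixed polydisc $\fA$ centered at $\rho_{Q}|_{\fa_{\cF}}$, chosen so that no eigenvalue crosses $\partial\fA$ for $\lambda$ in a small neighbourhood; the Cauchy integral formula then makes $\lambda\mapsto E(\lambda)$ holomorphic. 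Your sketch is compatible with this but should be phrased in terms of the contour projection rather than the individual $E_{\nu}$.

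There is a genuine gap in your argument for (iv). You assert that $\cD'(\overline{Q}:\xi:\lambda)^{H_{z,\overline{\cC}}}=\cD'(\overline{Q}:\xi:\lambda)^{H_{\emptyset}}$ because ``$\overline{N}_{Q}$ and $L_{Q}\cap H$ are connected while the remaining generators of $H_{\emptyset}$ lie in $Q$ and already act trivially through the $\overline{Q}$-equivariance.'' Both halves fail: $L_{Q}\cap H$ contains $M\cap H$, which is disconnected in general (this is exactly why (iv) is not automatic), and the $\overline{Q}$-equivariance (\ref{eq L(man)mu=a^lambda xi(m) mu}) is a condition on the \emph{left} action $L^{\vee}$, whereas membership in $\cD'(\overline{Q}:\xi:\lambda)^{H_{\emptyset}}$ requires invariance under the \emph{right} action $R^{\vee}$; left-equivariance under $\overline{Q}$ says nothing about right-invariance under $M\cap H$. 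The missing step is to show that the constant term of an $H$-invariant $\mu$ is invariant under the full, possibly disconnected, group $M\cap H$. The paper does this by first noting that the constant term is uniquely characterized by the limit (\ref{eq Constant term limit formula}) (via \cite[Lemma 6.5]{DelormeKrotzSouaifi_ConstantTerm} and Corollary \ref{Cor Description D'(Z,overline Q:xi:lambda) horospherical case}), and that $M$ centralizes $\fa$, so that
$$
R^{\vee}(m)\circ\Const_{z,\overline{\cC}}(\xi:\lambda)=\Const_{z,\overline{\cC}}(\xi:\lambda)\circ R^{\vee}(m)\qquad(m\in M);
$$
applying this to $m\in M\cap H$ and using $R^{\vee}(m)\mu=\mu$ gives $R^{\vee}(m)\mu_{z,\overline{\cC}}=\mu_{z,\overline{\cC}}$, and then $H_{\emptyset}=(M\cap H)H_{z,\overline{\cC}}$ finishes the proof. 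You need to supply this commutation argument (or an equivalent) to close (iv).
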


The distribution   $\mu_{z,\cF}$ is called the constant term of $\mu$ with respect to the adapted point $z$ and the face $\cF$.

\begin{proof}
Without loss of generality we may assume that $z=eH$.
We fix $\lambda_{0}\in i(\fa/\fa_{\fh})^{*}\setminus i\cS$ and set
\begin{align*}
\cQ_{\cF}^{+}
&:=\big\{\nu\in \cQ_{\cF,\lambda_{0}}:\Re\nu(X)>\rho_{Q}(X) \text{ for some } X\in\cF^{\circ}\big\},\\
\cQ_{\cF}^{-}
&:=\big\{\nu\in \cQ_{\cF,\lambda_{0}}:\big(\Re\nu-\rho_{Q}\big)\big|_{\cF^{\circ}}<0\big\},\\
\cQ_{\cF}^{0}
&:=\cQ_{\cF,\lambda_{0}}\setminus\big(\cQ_{\cF}^{+}\cup\cQ_{\cF}^{-}\big)
=\big\{\nu\in \cQ_{\cF,\lambda_{0}}: \Re\nu|_{\cF}=\rho_{Q}|_{\cF}\big\}.
\end{align*}
Let $\fA\subseteq \fa_{\cF}^{*}$ be an open polydisc centered at $\rho_{Q}|_{\fa_{\cF}}$ so that
$$
\big(\cQ_{\cF}^{+}\cup\cQ_{\cF}^{-}\big)\cap\big(\overline{\fA}+i\fa_{\cF}\big)
=\emptyset.
$$
There exits an open neighborhood $\fU_{0}$ of $\lambda_{0}$ in $\{\lambda\in (\fa/\fa_{\fh})_{\C}^{*}:\Im\lambda\notin\cS\}$ so that $\cQ_{\cF,\lambda}$ does not intersect with the boundary of $\fA$. We may choose $\fU_{0}$ so small that there exists a subspace $U_{\cF}$ of $V_{\cF}$ for which (\ref{eq Assumption on B}) holds for all $\lambda\in\fU_{0}$.
We define
$$
E:\fU_{0}\to\End(U_{\cF})
$$
for $\lambda\in \fU_{0}$ to be the projection onto the generalized eigenspaces of  $\Gamma_{\cF,\lambda}$ with eigenvalues in $\fA$, i.e.,
$$
E(\lambda)
:=\sum_{\nu\in \fA\cap \cQ_{\cF,\lambda}}E_{\nu}.
$$
It follows from the Cauchy integral formula for spectral projections from functional calculus that $E$ is holomorphic.

We fix an element $X\in\cF^{\circ}$. After shrinking $\fU_{0}$ we may assume that
$$
\Re\nu(X)
>\rho_{Q}(X)+\langle\lambda\rangle(X)+\beta_{\cF,\lambda}(X)
\qquad \big(\lambda\in \fU_{0}, \nu\in \cQ_{\cF,\lambda}^{0}\big).
$$
From the estimate (\ref{eq Psi estimate}) with $\zeta=\rho_{Q}$, we obtain that for $\mu\in \cD'(\overline{Q}:\xi:\lambda)^{H}$, with $\lambda\in \fU_{0}$, and $\phi\in\cD(G,V_{\xi})$, the $U_{\cF}^{*}$-valued integral
$$
\int_{0}^{\infty}E(\lambda)e^{-s\Gamma_{\cF,\lambda}(X)}\Psi_{\mu,\phi,X}\big(\exp(sX)\big)\,ds
$$
converges uniformly on any compact subset of $\fU_{0}$. We may thus define $\Const(\xi:\lambda)\mu\in \cD'(G, V_{\xi})$  for $\phi\in\cD(G,V_{\xi})$ by
$$
\big(\Const_{z,\cF}(\xi:\lambda)\mu\big)(\phi)
=\Big(E(\lambda)\circ\Phi_{\mu,\phi}(e)
    +\int_{0}^{\infty}E(\lambda)e^{-s\Gamma_{\cF,\lambda}(X)}\Psi_{\mu,\phi,X}\big(\exp(sX)\big)\,ds\Big)(1).
$$
For every $\eta\in V^{*}(\xi)$ the family of distributions
$$
\fU_{0}\ni\lambda\mapsto \Const_{z,\cF}(\xi:\lambda)\circ\mu^{\circ}(\xi:\lambda)\eta
$$
is holomorphic.

In view of Theorem \ref{Thm temperedness} all distributions in $\cD'(\overline{Q}:\xi:\lambda)^{H}$ with $\lambda\in i(\fa/\fa_{\fh})^{*}\setminus i\cS$ are tempered. We may therefore apply \cite[Theorem 6.9]{DelormeKrotzSouaifi_ConstantTerm} to these distributions. It follows from \cite[(5.36) \& (6.1)]{DelormeKrotzSouaifi_ConstantTerm} that the constant term-map coincides with $\Const_{z,\cF}(\xi:\lambda)$ for $\lambda\in\fU_{0}\cap i(\fa/\fa_{\fh})^{*}$. It follows that $\Const_{z,\cF}(\xi:\lambda)$ maps $\cD'(\overline{Q}:\xi:\lambda)^{H}$ to $\cD'(\overline{Q}:\xi:\lambda)^{H_{z,\cF}}$ for these $\lambda$. By analytic continuation the same holds for all $\lambda\in \fU_{0}$.

With the above construction we find for every $\lambda_{0}\in i(\fa/\fa_{\fh})^{*}\setminus i\cS$ an open neighborhood $\fU_{0}$ so that the constant term map $\cD'(\overline{Q}:\xi:\lambda)^{H}\to\cD'(\overline{Q}:\xi:\lambda)^{H_{z,\cF}}$ from \cite{DelormeKrotzSouaifi_ConstantTerm} extends holomorphically to $\lambda\in \fU_{0}$. It follows that there exists an open neighborhood $\fU$ of $i(\fa/\fa_{\fh})^{*}\setminus i\cS$ so that the constant term map extends holomorphically to $\fU$.

The remaining assertions in (\ref{Thm constant term - item 1}) and (\ref{Thm constant term - item 2}) are a reformulation of \cite[Theorem 6.9]{DelormeKrotzSouaifi_ConstantTerm} with uniformity in the estimate in $\lambda\in\fC$.
The uniform estimates are obtained by using the estimates (\ref{eq Phi estimate - 1}), (\ref{eq Psi estimate}) and (\ref{eq E_nu estimate}), which are uniform in $\lambda\in\fC$, instead of the estimates in \cite[Lemmas 5.8 \& 5.9]{DelormeKrotzSouaifi_ConstantTerm}.

Finally, we turn to (\ref{Thm constant term - item 4}).
By holomorphicity it suffices to prove the assertion for $\lambda\in i(\fa/\fa_{\fh})^{*}\setminus i\cS$.
For these $\lambda $ it follows from  \cite[Lemma 6.5]{DelormeKrotzSouaifi_ConstantTerm} and Corollary \ref{Cor Description D'(Z,overline Q:xi:lambda) horospherical case} that $\Const_{z, \overline{\cC}}(\xi:\lambda)\mu$ for a $\mu\in\cD'(\overline{Q}:\xi:\lambda)^{H}$ is uniquely determined by (\ref{eq Constant term limit formula}). Since $M$ centralizes $\fa$, it is easily seen that
$$
R^{\vee}(m)\circ\Const_{z,\overline{\cC}}(\xi:\lambda)
=\Const_{z,\overline{\cC}}(\xi:\lambda)\circ R^{\vee}(m)
\qquad(m\in M).
$$
In particular, for every $m\in M\cap H$ and $\mu\in \cD'(\overline{Q}:\xi:\lambda)^{H}$
$$
R^{\vee}(m)\big(\Const_{z,\overline{\cC}}(\xi:\lambda)\mu\big)
=\Const_{z,\overline{\cC}}(\xi:\lambda)\big(R^{\vee}(m)\mu\big)
=\Const_{z,\overline{\cC}}(\xi:\lambda)\mu.
$$
As
$$
H_{\emptyset}
=(M\cap H)(H_{\emptyset})_{e}
=(M\cap H)H_{z,\overline{\cC}},
$$
this proves (\ref{Thm constant term - item 4}).
\end{proof}

\subsection{Construction of wave packets}
\label{Subsection Wave packets - Construction of wave packets}
We use the notation from Section \ref{Subsection Construction - Description}, and recall the finite union $\cS$ of hyperplanes in $(\fa/\fa_{\fh})^{*}$ from the end of Section \ref{Subsection Construction - Description}.

For a finite dimensional unitary representation $\xi$ of $M_{Q}$ we define the wave packet transform
$$
\WP_{\xi}:\cD\big(i(\fa/\fa_{\fh})^{*}\setminus i\cS\big)\otimes V^{*}(\xi)\otimes \cD(G,V_{\xi}) \to \cE(Z),
$$
to be given by
$$
\WP_{\xi}(\psi\otimes \eta\otimes\phi)(gH)
=\int_{i(\fa/\fa_{\fh})^{*}}\psi(\lambda) \Big(R^{\vee}(g)\circ\mu^{\circ}(\xi:\lambda)(\eta)\Big)(\phi)\,d\lambda
$$
for $\psi\in \cD\big(i(\fa/\fa_{\fh})^{*}\setminus i\cS\big)$, $\eta\in V^{*}(\xi)$, $\phi\in\cD(G,V_{\xi})$ and $g\in G$.

\begin{Thm}\label{Thm Wave packets are L^2}
Let $\xi$ be a finite dimensional unitary representation of $M_{Q}$.
The image of $\WP_{\xi}$ is consists of square integrable functions on $Z$.
\end{Thm}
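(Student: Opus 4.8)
The goal is to show that the wave packet $\WP_{\xi}(\psi\otimes\eta\otimes\phi)$ belongs to $L^{2}(Z)$. I would follow the strategy used for reductive symmetric spaces by Van den Ban, Carmona and Delorme \cite{vdBanCarmonaDelorme_PaquetsDOndesDansLEspaceDeSchwartzDUnEspaceSymmetrique}, which rests on the polar decomposition of $Z$ (Proposition \ref{Prop Polar decomposition}) together with the constant term approximation (Theorem \ref{Thm constant term}). The square-integrability is estimated region by region in the polar decomposition. Writing $F=\WP_{\xi}(\psi\otimes\eta\otimes\phi)$, in view of Proposition \ref{Prop Polar decomposition} it suffices to bound $\int_{\overline{\cC}}|F(\omega\exp(X)x_{\cO})|^{2}\,J(X)\,dX$ for each open orbit $\cO$ and each $\omega\in\Omega$, where $J(X)\asymp e^{2\rho_{Q}(X)}$ is the Jacobian of the polar map (this is where unimodularity of $Z$ and the normalization of $\mu_{Z}$ enter). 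Thus, after dividing by $e^{\rho_{Q}(X)}$, one needs that $X\mapsto e^{-\rho_{Q}(X)}F(\omega\exp(X)x_{\cO})$ is square integrable on $\overline{\cC}$ with respect to the Euclidean measure.

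First I would reduce to a single closed subcone: decompose $\overline{\cC}$ into finitely many closed subcones $\Upsilon$, one around the relative interior $\cF^{\circ}$ of each face $\cF$ of $\overline{\cC}$, so that $\Upsilon\setminus\{0\}\subseteq\cF^{\circ}$ and $\Upsilon$ is separated from the other faces. On such a region I would invoke Theorem \ref{Thm constant term} (\ref{Thm constant term - item 2}) with $B=\Omega$: there is a $\gamma\in\fa^{*}$ with $\gamma|_{\overline{\cC}}\le 0$, $\gamma|_{\Upsilon\setminus\{0\}}<0$, an $N\in\N_{0}$ and a continuous seminorm $p$ so that, uniformly for $\lambda$ in the (compact) support of $\psi$,
$$
e^{-\rho_{Q}(Y+X)}\bigl|m_{\phi,\mu^{\circ}(\xi:\lambda)(\eta)}\bigl(\omega\exp(Y+X)x_{\cO}\bigr)-m_{\phi,\mu^{\circ}(\xi:\lambda)(\eta)_{x_{\cO}H,\cF}}\bigl(\omega\exp(Y+X)\bigr)\bigr|
\le e^{\gamma(X)}(1+\|Y\|)^{N}\,\|\mu^{\circ}(\xi:\lambda)(\eta)\|\,p(\phi).
$$
Integrating against $\psi(\lambda)\,d\lambda$ and using that $\lambda\mapsto\mu^{\circ}(\xi:\lambda)(\eta)$ is holomorphic near $i(\fa/\fa_{\fh})^{*}\setminus i\cS$ (Theorem \ref{Thm Description D'(overline Q:xi:lambda)^H}) and hence bounded in norm on $\operatorname{supp}\psi$, the difference between $F$ and the "constant-term wave packet" $F_{\cF}$, built from $\Const_{x_{\cO}H,\cF}(\xi:\lambda)\circ\mu^{\circ}(\xi:\lambda)$, satisfies $e^{-\rho_{Q}(Y+X)}|F(\omega\exp(Y+X)x_{\cO})-F_{\cF}(\omega\exp(Y+X))|\le C\,e^{\gamma(X)}(1+\|Y\|)^{N}$ on $\overline{\cC}\times\Upsilon$. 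Since $\gamma$ is strictly negative on the ray directions transverse to $\cF$ while the polar-region measure $e^{2\rho_{Q}}$ is cancelled by the $e^{-\rho_{Q}}$ twist, the exponential decay in the $X$-directions makes the $X$-integral converge, reducing the problem to the $Y\in\cF$ directions, i.e.\ to the integral over $\fa_{\cF}$ of the constant-term wave packet $F_{\cF}$.

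Then I would treat $F_{\cF}$ on $\fa_{\cF}$ by a Plancherel/Fourier-transform argument. The constant term $\Const_{x_{\cO}H,\cF}(\xi:\lambda)\circ\mu^{\circ}(\xi:\lambda)(\eta)$ lies in $\cD'(\overline{Q}:\xi:\lambda)^{H_{x_{\cO}H,\cF}}$, and since $A_{\cF}$ normalizes $H_{x_{\cO}H,\cF}$ it transforms under the right $A_{\cF}$-action by a sum of characters of the form $a\mapsto a^{-\Ad^{*}(w^{-1})\lambda+\rho_{Q}}$ (as in Proposition \ref{Prop Action N_A(H)} and the formula $e^{\Gamma_{\cF,\lambda}(X)}$ governing $\Phi^{\nu,\infty}$ in the proof of Theorem \ref{Thm constant term}). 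Consequently $Y\mapsto e^{-\rho_{Q}(Y)}m_{\phi,\Const(\ldots)}(\omega\exp(Y))$ is a finite sum of terms $\int\psi(\lambda)c_{w}(\lambda)\,e^{-\Ad^{*}(w^{-1})\lambda(Y)}d\lambda$ times functions bounded on $\overline{\cC}\cap\fa_{\cF}^{\perp}$-slices; these are genuine Euclidean Fourier transforms in $Y\in i(\fa_{\cF}/\fa_{\fh})^{*}$ of compactly supported smooth functions, hence Schwartz, hence $L^{2}(\fa_{\cF})$, and the Plancherel theorem on $\fa_{\cF}$ controls the norm. Assembling: $F-F_{\cF}$ is $L^{2}$ on the region by the decay estimate, and $F_{\cF}$ restricted to the region is $L^{2}$ by Euclidean Plancherel and the exponential decay in transverse directions; summing over the finitely many faces $\cF$, over $\cO\in(P\bs Z)_{\open}$ and over $\omega\in\Omega$ gives $F\in L^{2}(Z)$. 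The main obstacle I anticipate is bookkeeping the interaction between the various cone decompositions: one must choose the subcones $\Upsilon_{\cF}$ so that every face is handled, ensure the constant term estimates of Theorem \ref{Thm constant term} (\ref{Thm constant term - item 2}) glue consistently on overlaps, and correctly match the $\fa_{\cF}$-weights of the constant term with the Fourier variable so that the Euclidean Plancherel step is legitimate; a secondary subtlety is verifying that the Jacobian of the polar map is indeed comparable to $e^{2\rho_{Q}}$ on each $\Upsilon_{\cF}$ uniformly, which one extracts from the structure of $\overline{\cC}$ and the normalization of $\mu_{Z}$ fixed in Section \ref{Section Setup and notation}.
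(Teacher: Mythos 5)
Your overall strategy -- polar decomposition, a face-by-face conical decomposition of $\overline{\cC}$, the constant term approximation to reduce to constant-term wave packets, and Euclidean Fourier analysis along $\fa_{\cF}$ -- is exactly the route the paper takes. There is, however, a genuine gap in how you control the directions transverse to a face, and as written that step fails.

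In Theorem~\ref{Thm constant term}~(\ref{Thm constant term - item 2}) the exponential decay $e^{\gamma(X)}$ is in the variable $X\in\Upsilon\subseteq\cF^{\circ}\cup\{0\}$, i.e.\ \emph{along} the face, while the complementary variable $Y\in\overline{\cC}$ carries only polynomial growth $(1+\|Y\|)^{N}$; you have these roles reversed (``$\gamma$ is strictly negative on the ray directions transverse to $\cF$'', ``reducing the problem to the $Y\in\cF$ directions''). Neither $F-F_{\cF}$ nor $F_{\cF}$ decays exponentially transversally to $\cF$ -- the constant-term wave packet is merely tempered there -- so your claim that ``exponential decay in transverse directions'' makes $F_{\cF}$ square integrable on the region is unjustified, and Plancherel on $\fa_{\cF}$ alone cannot compensate, since the transverse extent of the region is unbounded. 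Moreover, finitely many closed subcones $\Upsilon$ with $\Upsilon\setminus\{0\}\subseteq\cF^{\circ}$ cannot cover $\overline{\cC}$. The missing ingredient is the precise choice of regions $C_{\cF}$ with $\|X\|\leq(1+\delta)\|p_{\cF}(X)\|$: on $C_{\cF}$ the transverse component is dominated by the $\fa_{\cF}$-component, and it is this domination that lets the exponential (resp.\ rapid Schwartz) decay along $\fa_{\cF}$ absorb the polynomial transverse growth of $F-F_{\cF}$ (resp.\ of $F_{\cF}$). A secondary gap: to get rapid decay of the constant-term wave packet along $\fa_{\cF}$ uniformly in the transverse group variable with only polynomial loss, one must bound all $\lambda$-derivatives of $\big(R^{\vee}(k\exp(Y))\mu_{\cF,w}(\xi:\lambda)\eta\big)(\phi)$ uniformly in $Y$; ``compactly supported and smooth in $\lambda$, hence Schwartz'' does not give this by itself. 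The paper obtains it from the holomorphic extension of the constant term to a complex neighborhood of $i(\fa/\fa_{\fh})^{*}\setminus i\cS$ (Theorem~\ref{Thm constant term}~(\ref{Thm constant term - item 3})) together with Cauchy estimates on polydiscs of radius comparable to $(1+\|Y\|)^{-1}$. (Finally, the Jacobian of the polar map is bounded by $Ce^{-2\rho_{Q}(X)}$ on $\overline{\cC}$, not comparable to $e^{2\rho_{Q}(X)}$; your subsequent twist by $e^{-\rho_{Q}}$ is nevertheless the correct one.)
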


Before we prove the theorem, we first describe an important consequence. In view of (\ref{eq Def B}) the decomposition of $L^{2}_{\mc}(Z)$ into a direct integral of irreducible representations is of the form
\begin{equation}\label{eq abstract Plancherel decomp L^2_mc}
L^{2}_{\mc}(Z)
\simeq \widehat{\bigoplus_{[\xi]\in \widehat{M}_{Q,\mathrm{fu}}}}\int^{\oplus}_{i(\fa/\fa_{\fh})^{*}}
 \cM_{\xi,\lambda}\otimes \Ind_{\overline{Q}}^{G}\big(\xi\otimes\lambda\otimes \1\big)\,d\lambda.
\end{equation}
Here each multiplicity space $\cM_{\xi,\lambda}$ is a subspace of the space of $H$-fixed functionals on $C^{\infty}(\overline{Q}:\xi:\lambda)$. In view of the topological isomorphism (\ref{eq def theta}) we may view $\cM_{\xi,\lambda}$  as a subspace of $\cD'(\overline{Q}:\xi:\lambda)^{H}$.
The Theorems \ref{Thm Description D'(overline Q:xi:lambda)^H} and \ref{Thm Wave packets are L^2} now have the following immediate corollary.

\begin{Cor}\label{Cor Multiplicity space is V^*(xi)}
Let $\xi$ be a finite dimensional unitary representation of $M_{Q}$.
For almost every $\lambda\in i(\fa/\fa_{\fh})^{*}$ the multiplicity space $\cM_{\xi,\lambda}$ is equal to $\cD'(\overline{Q}:\xi:\lambda)^{H}$ and the map
$$
\mu^{\circ}(\xi:\lambda):V^{*}(\xi)\to\cM_{\xi,\lambda}
$$
is a linear isomorphism.
\end{Cor}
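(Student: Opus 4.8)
\textbf{Proof plan for Corollary \ref{Cor Multiplicity space is V^*(xi)}.}
The statement has two parts that are essentially one: given the equality $\cM_{\xi,\lambda} = \cD'(\overline{Q}:\xi:\lambda)^{H}$ for almost all $\lambda$, the isomorphism assertion is immediate from Theorem \ref{Thm Description D'(overline Q:xi:lambda)^H} (or its horospherical refinement Corollary \ref{Cor Description D'(Z,overline Q:xi:lambda) horospherical case}), since $\mu^{\circ}(\xi:\lambda)$ is a linear isomorphism $V^{*}(\xi)\to\cD'(\overline{Q}:\xi:\lambda)^{H}$ for $\Im\lambda\notin\cS$ and $\cS$ is a finite union of proper subspaces, hence has measure zero in $i(\fa/\fa_{\fh})^{*}$. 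So the real content is the inclusion $\cD'(\overline{Q}:\xi:\lambda)^{H}\subseteq \cM_{\xi,\lambda}$, i.e.\ every $H$-fixed distribution vector contributes to the Plancherel decomposition for generic $\lambda$; the reverse inclusion $\cM_{\xi,\lambda}\subseteq\cD'(\overline{Q}:\xi:\lambda)^{H}$ is already part of the abstract setup recalled just before the corollary (each multiplicity space is, by construction, realized inside the space of $H$-fixed functionals, which via $\theta^{\overline{Q}}_{\xi,\lambda}$ is $\cD'(\overline{Q}:\xi:\lambda)^{H}$).

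First I would recall the abstract Plancherel decomposition \eqref{eq abstract Plancherel decomp L^2_mc} of $L^{2}_{\mc}(Z)$ and the general principle (going back to the direct-integral decomposition theory; in this context one can invoke the structure of $B_{\emptyset}$ and the fact that $L^{2}_{\mc}(Z)$ is $G$-invariant) that the multiplicity spaces $\cM_{\xi,\lambda}$ are measurably determined and that a distribution $\mu\in\cD'(\overline{Q}:\xi:\lambda)^{H}$ lies in $\cM_{\xi,\lambda}$ precisely when the associated generalized matrix coefficients $m_{\phi,\mu}$, assembled into wave packets, stay inside $L^{2}(Z)$. Concretely: the map $\eta\mapsto \mu^{\circ}(\xi:\lambda)(\eta)$ together with the wave-packet transform $\WP_{\xi}$ of Section \ref{Subsection Wave packets - Construction of wave packets} produces, for every $\psi\in\cD(i(\fa/\fa_{\fh})^{*}\setminus i\cS)$, $\eta\in V^{*}(\xi)$, $\phi\in\cD(G,V_{\xi})$, a function $\WP_{\xi}(\psi\otimes\eta\otimes\phi)$ on $Z$. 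By Theorem \ref{Thm Wave packets are L^2} this function is square integrable. Since such wave packets are dense (in the appropriate sense) and the whole construction is $G$-equivariant and continuous, this forces $\mu^{\circ}(\xi:\lambda)(\eta)$ to define, for almost every $\lambda$, a vector of the multiplicity space: the non-$L^{2}$ part of any $H$-fixed distribution vector would produce wave packets leaving $L^{2}(Z)$, contradicting Theorem \ref{Thm Wave packets are L^2}. Hence $\mu^{\circ}(\xi:\lambda)\big(V^{*}(\xi)\big)\subseteq\cM_{\xi,\lambda}$ for a.e.\ $\lambda$, and since $\mu^{\circ}(\xi:\lambda)$ is onto $\cD'(\overline{Q}:\xi:\lambda)^{H}$ we get $\cD'(\overline{Q}:\xi:\lambda)^{H}\subseteq\cM_{\xi,\lambda}$ for a.e.\ $\lambda$.

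Combining the two inclusions gives $\cM_{\xi,\lambda}=\cD'(\overline{Q}:\xi:\lambda)^{H}$ for almost every $\lambda\in i(\fa/\fa_{\fh})^{*}$, and the displayed isomorphism follows. The main obstacle is not in this bookkeeping but in making rigorous the passage ``wave packets are $L^{2}$'' $\Rightarrow$ ``distribution vectors lie in the multiplicity space''; this is where one must be careful about which $\lambda$ are exceptional (the set $i\cS$, plus possibly a further null set coming from the measurable dependence of $\cM_{\xi,\lambda}$ on $\lambda$), and about the fact that the abstract Plancherel decomposition only pins down the $\cM_{\xi,\lambda}$ up to a null set. I would handle this by citing the general measurable-field argument (as in the symmetric-space treatments, e.g.\ the references to Van den Ban--Carmona--Delorme invoked in Section \ref{Subsection Wave packets - Construction of wave packets}) together with the temperedness statement Theorem \ref{Thm temperedness}, which guarantees that \emph{every} $H$-fixed distribution vector at generic $\lambda$ is tempered and hence a candidate for the multiplicity space in the first place. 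With temperedness and square-integrability of wave packets both in hand, the corollary is formal.
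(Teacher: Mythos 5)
Your proposal is correct and follows essentially the same route as the paper: the paper presents the corollary as an immediate consequence of Theorem \ref{Thm Description D'(overline Q:xi:lambda)^H} (surjectivity of $\mu^{\circ}(\xi:\lambda)$ onto $\cD'(\overline{Q}:\xi:\lambda)^{H}$ off the null set $i\cS$) together with Theorem \ref{Thm Wave packets are L^2} (square-integrability of the wave packets), exactly the two inclusions you combine. Your extra remarks on temperedness and the measurable-field bookkeeping only make explicit what the paper leaves implicit in calling the corollary ``immediate.''
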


\begin{proof}[Proof of Theorem \ref{Thm Wave packets are L^2}]
In view of (\ref{eq Polar decomp}) we have every integrable function $\chi$ on $Z$
$$
\int_{Z}\chi(z)\,dz
=\sum_{j=1}^{r}\sum_{\cO\in (P\bs Z)_{\open}}\int_{K_{j}}\int_{\overline{\cC}}\chi\big(f_{j}k\exp(X) x_{\cO}H\big)J_{j,\cO}(k,X)\,dX\,dk.
$$
Here $dX$ is the Lebesgue-measure on $\fa$, $dk$ denotes for each $j$  the Haar measure on $K_{j}$.
The Jacobian
$$
J_{j,\cO}:K_{j}\times\overline{\cC}\to\R_{\geq0}
$$
are readily seen to be constant in the first variable. We therefore consider these functions as functions on $\overline{\cC}$ only.
Important for our consideration is the estimate
$$
J_{j,\cO}(X)\leq C e^{-2\rho_{Q}(X)}
\qquad\big(f\in F, \cO\in(P\bs Z)_{\open}, X\in\overline{\cC}\big)
$$
for some constant $C>0$.
This estimate follows from \cite[Proposition 4.3]{KnopKrotzSayagSchlichtkrull_VolumeGrowthTemperednessAndIntegrabilityOfMatrixCoefficients}.

We will decompose the integral over $\overline{\cC}$ as a sum of integrals over suitable subsets which allow to apply Theorem \ref{Thm constant term}.
We recall from Section \ref{Subsection Orbits of max rank - Admissible points and little Weyl group} that the little Weyl group is the Weyl group of the spherical root system $\Sigma_{Z}$ in $(\fa/\fa_{E})^{*}$. The faces of $\overline{\cC}$ are in bijection with the power set of $\Sigma_{Z}$. To be more precise, to a face $\cF$ of $\overline{\cC}$ a subset $S_{\cF}$ of $\Sigma_{Z}$ is attached with the property
$$
\cF
=\big\{X\in \fa:\sigma(X)=0 \text{ for all } \sigma\in S_{\cF} \text{ and }\sigma(X)<0 \text{ for all } \sigma\in \Sigma_{Z}\setminus S_{\cF}\big\}.
$$
The assignment $\cF\mapsto S_{\cF}$ is a bijection between the faces of $\overline{\cC}$ and the power set of $\Sigma_{Z}$.
Let $\cF$ be a face of $\overline{\cC}$. If $\cF'$ is the unique face of $\overline{\cC}$ with $S_{\cF'}=\Sigma_{Z}\setminus S_{\cF}$, then $\cF\cap \cF'=\overline{\cC}\cap (-\overline{\cC})=\fa_{E}$ is the edge of $\overline{\cC}$.
We then define the cone
$$
\cF_{\perp}
:=\cF'\cap \fa_{E}^{\perp}
$$
and set
$$
\fa_{\cF,\perp}
:=\spn(\cF_{\perp}).
$$
Now
\begin{equation}\label{eq fa=spn(F)+spn(F^0)}
\fa
=\fa_{\cF}\oplus\fa_{\cF_{\perp}}
\end{equation}
and
$$
\overline{\cC}
=\cF+\cF_{\perp}.
$$
We write $p_{\cF}$ to be the  projection $\fa\to\fa_{\cF}$ along the decomposition (\ref{eq fa=spn(F)+spn(F^0)}). We fix a $\delta>0$ and define cones $C_{\cF}$ in $\overline{\cC}$ by setting
$$
C_{\fa_{E}}
:=\{X\in\overline{\cC}: \|X\|\leq (1+\delta)\big\|p_{\cF}(X)\big\|\}
$$
if $\cF=\fa_{E}$, and then recursively by
$$
C_{\cF}
:=\Big\{X\in\overline{\cC}\setminus \bigcup_{\substack{\cF' \text{ face of }\overline{\cC}\\\cF'\subsetneq\cF}}C_{\cF'}:
        \|X\|\leq (1+\delta)\big\|p_{\cF}(X)\big\|\Big\}
$$
for the remaining faces $\cF$. Then for every face $\cF$ of $\overline{\cC}$ there exists a closed cone $\Upsilon_{\cF}$ in $\cF^{\circ}\cup \{0\}$  so that
$$
\overline{C_{\cF}}
\subseteq\Big\{X\in\overline{\cC}:p_{\cF}(X)\in\Upsilon_{\cF}, \|X\|\leq (1+\delta)\big\|p_{\cF}(X)\big\|\Big\}.
$$
Moreover,
$$
\overline{\cC}
=\bigsqcup_{\cF \text{ face of }\overline{\cC}}C_{\cF}.
$$
Now
\begin{align*}
\int_{Z}\chi(z)\,dz
&=\sum_{j=1}^{r}\sum_{\cO\in (P\bs Z)_{\open}}\sum_{\cF \text{ face of }\overline{\cC}}
    \int_{K_{j}}\int_{C_{\cF}}\chi\big(f_{j}k\exp(X) x_{\cO}H\big)J_{j,\cO}(X)\,dX\,dk\\
&\leq C\sum_{j=1}^{r}\sum_{\cO\in (P\bs Z)_{\open}}\sum_{\cF \text{ face of }\overline{\cC}}
    \int_{K_{j}}\int_{C_{\cF}}\chi\big(f_{j}k\exp(X) x_{\cO}H\big)e^{-2\rho_{Q}(X)}\,dX\,dk
\end{align*}
for every non-negative measurable function $\chi$ on $Z$.

Let $K'$ be a maximal compact subgroup, $\cF$ a face of $\overline{\cC}$ and $z\in Z$ an adapted point. To prove the theorem, it suffices to prove that for every $\psi\in \cD\big(i(\fa/\fa_{\fh})^{*}\setminus i\cS\big)$, $\eta\in V^{*}(\xi)$ and $\phi\in\cD(G,V_{\xi})$ the function
$$
K'\times C_{\cF}\to\C;
\quad (k,X)\mapsto e^{-\rho_{Q}(X)}\WP_{\xi}(\psi\otimes\eta\otimes\phi)\big(k\exp(X)\cdot z\big)
$$
is square integrable.

It follows from Proposition \ref{Prop Action N_A(H)} that for each $w\in W$ there exists a linear map $\mu_{\cF,w}(\xi:\lambda):V^{*}(\xi)\to\cD'(G,V_{\xi})^{H_{z,\cF}}$  so that
$$
\Const_{z,\cF}(\xi:\lambda)\circ\mu^{\circ}(\xi:\lambda)
=\sum_{w\in W}\mu_{\cF,w}(\xi:\lambda)
$$
and
$$
R^{\vee}\big(\exp(X)\big)\circ\mu_{\cF,w}(\xi:\lambda)
=e^{\big(-\Ad^{*}(w^{-1})\lambda+\rho_{Q}\big)(X)}\mu_{\cF,w}(\xi:\lambda)
\qquad\big(w\in W, X\in\fa_{\cF}\big).
$$
The family $\lambda\mapsto \mu_{\cF,w}(\xi:\lambda)$ is meromorphic and holomorphic on $\{\lambda\in (\fa/\fa_{\fh})^{*}_{\C}:\Im\lambda\notin\cS\}$.

We now fix a face $\cF$ of $\overline{\cC}$ and a $w\in W$.
Let $\gamma_{\cF}\in\fa^{*}$ and $p=p_{z,\cF}$ satisfy the properties of Theorem \ref{Thm constant term} (\ref{Thm constant term - item 2}) with the closed cone $\Upsilon$ taken to be $\Upsilon_{\cF}$ and the compact subset $B$ equal to $K'$.
As $\gamma_{\cF}|_{\Upsilon_{\cF}}<0$, we have
$$
\int_{C_{\cF}}\Big(e^{\gamma_{\cF}\big(p_{\cF}(X)\big)}(1+\|X-p_{\cF}(X)\|)^{N}\Big)^{2}\,dX<\infty
$$
for every $N\in\N$. Therefore,  it suffices to prove that for every $\psi\in\cD\big(i(\fa/\fa_{\fh})^{*}\setminus i\cS\big)$, $\eta\in V^{*}(\xi)$ and $\phi\in\cD(G,V_{\xi})$ the function
\begin{align*}
\Omega_{\psi, \eta, \phi}:G\times (\fa/\fa_{\fh})&\to\C;\\
(g,X)&\mapsto e^{-\rho_{Q}(X)}\int_{i(\fa/\fa_{\fh})^{*}}\psi(\lambda)\Big(R^{\vee}\big(g\exp(X)\big)\mu_{\cF,w}(\xi:\lambda)\eta\Big)(\phi)\,d\lambda
\end{align*}
is square integrable on $K'\times C_{\cF}$.

Let $\Ft_{\eucl}$ be the Euclidean Fourier transform on $\fa/\fa_{\fh}$, i.e., the transform
$$
\Ft_{\eucl}:\cS\big(\fa/\fa_{\fh}\big)\to\cS\big(i(\fa/\fa_{\fh})^{*}\big)
$$
given by
$$
\Ft_{\eucl}\psi(\xi)
=\int_{\fa/\fa_{\fh}}\psi(X)e^{\xi(X)}\,dX
\qquad\big(\psi\in \cS(\fa/\fa_{\fh}), \xi\in i(\fa/\fa_{\fh})^{*}\big).
$$
For every $X\in \fa_{\cF}$ and $\eta\in V^{*}(\xi)$
$$
e^{-\rho_{Q}(X)}R^{\vee}\big(\exp(X)\big)\mu_{\cF,w}(\xi:\lambda)\eta
=e^{-\lambda\big(\Ad(w)X\big)}\mu_{\cF,w}(\xi:\lambda)\eta,
$$
and hence
\begin{align*}
\Omega_{\psi, \eta, \phi}(g,X)
&=e^{-\rho_{Q}(X)}\int_{i(\fa/\fa_{\fh})^{*}}\psi(\lambda)\Big(R^{\vee}\big(g\exp(X)\big)\mu_{\cF,w}(\xi:\lambda)\eta\Big)(\phi)\,d\lambda\\
&=\Ft_{\eucl}^{-1}\Big(\lambda\mapsto\psi(\lambda)\big(R^{\vee}(g)\mu_{\cF,w}(\xi:\lambda)\eta\big)(\phi)\Big)\big(\Ad(w)X\big).
\end{align*}
Since $\lambda\mapsto\psi(\lambda)\big(R^{\vee}(g)\mu_{\cF,w}(\xi:\lambda)\eta\big)(\phi)$ is compactly supported and smooth, the function $X\mapsto \Omega_{\psi, \eta, \phi}(g,X)$ is contained in $\cS\big(\fa_{\cF}\big)$. Moreover, the continuity of $\Ft_{\eucl}$ implies that for every continuous seminorm $p$ on $\cS(\fa_{\cF})$ there exists a continuous seminorm $q$ on $\cD\big(i(\fa/\fa_{\fh})^{*}\big)$, independent of $g\in G$ and $\phi\in \cD(G,V_{\xi})$,  so that
\begin{equation}\label{eq estimate Omega^ct}
p\big(\Omega_{\psi, \eta, \phi}(g,\dotvar)\big)
\leq q\Big(\lambda\mapsto\psi(\lambda)\big(R^{\vee}(g)\mu_{\cF,w}(\xi:\lambda)\eta\big)(\phi)\Big).
\end{equation}

Let $\fC\subseteq i(\fa/\fa_{\fh})^{*}\setminus i\cS$ be a compact subset.
We claim that for every differential operator $D$ on $i(\fa/\fa_{\fh})^{*}$ with constant coefficients there exists an $N\in \N_{0}$ and a continuous seminorm $r$ on $\cD(G,V_{\xi})$ so that for all $\lambda\in\fC$, $\phi\in \cD(G, V_{\xi})$, $k\in K'$ and $Y\in \fa$ the estimate
\begin{equation}\label{eq Estimate Derivatives in lambda of matrix coefficient}
\big|D\big(R^{\vee}\big(k\exp(Y)\big)\mu_{\cF,w}(\xi:\lambda)\eta\big)(\phi)\big|
\leq e^{\rho_{Q}(Y)}(1+\|Y\|)^{N} r(\phi)
\end{equation}
holds.
It suffices to prove the claim for $D= \partial_{\lambda}^{\alpha}$, with $\alpha$ a multi-index.
We first note that it follows from Theorem \ref{Thm constant term} that $\lambda\mapsto \mu_{\cF,w}(\xi:\lambda)\eta$ extends to a holomorphic family of distributions with family parameter $\lambda$ in an open neighborhood $\fU$ of $i(\fa/\fa_{\fh})^{*}\setminus i\cS$.
Let $\epsilon>0$ be so small that the polydisc $\Delta$ with radius $\epsilon$ and center $\lambda$ is contained in $\fU$. Let now $\Delta_{\delta}$ be the polydisc centered at $\lambda$ of radius $\delta>0$.  For every $\delta\leq \epsilon$ we obtain from Cauchy's integral formula the estimate
$$
\big|\partial_{\lambda}^{\alpha}\big(R^{\vee}\big(k\exp(Y)\big)\mu_{\cF,w}(\xi:\lambda)\eta\big)(\phi)\big|
\leq\frac{\alpha!}{\delta^{|\alpha|}}\sup_{\lambda'\in \Delta_{\delta}}\big|\big(R^{\vee}\big(k\exp(Y)\big)\mu_{\cF,w}(\xi:\lambda')\eta\big)(\phi)\big|.
$$
We now invoke Theorem \ref{Thm temperedness}. This yields the existence of an $N'\in\N_{0}$ and a continuous seminorm $r'$ on $\cD(G,V_{\xi})$ so that  \begin{align*}
&\sup_{\lambda'\in \Delta_{\delta}}\big|\big(R^{\vee}\big(k\exp(Y)\big)\mu_{\cF,w}(\xi:\lambda')\eta\big)(\phi)\big|\\
&\quad\leq\sup_{\lambda'\in \Delta_{\delta}} \max_{w\in W} e^{\rho_{Q}(Y)+\Re\Ad^{*}(w)\lambda'(Y)}\big(1+\|Y\|\big)^{N'}r'(\phi)
\leq e^{\rho_{Q}(Y)+\delta\|Y\|}\big(1+\|Y\|\big)^{N'}r'(\phi).
 \end{align*}
The claim now follows with $N=N'+|\alpha|$ and $r=e\alpha!\, r'$ by taking $\delta$ equal to the minimum of $\epsilon$ and $(1+\|Y\|)^{-1}$.

We now consider the space $\cD_{\fC}\big(i(\fa/\fa_{\fh})^{*}\big)$ of functions $\psi\in\cD\big(i(\fa/\fa_{\fh})^{*}\big)$ with $\supp(\psi)\subseteq \fC$.
Every continuous seminorm on $\cD_{\fC}\big(i(\fa/\fa_{\fh})^{*}\big)$ can be dominated by a sum of seminorms of the sort $\psi\mapsto \sup|D\phi|$, where $D$ is a differential operator with constant coefficients. It follows from  (\ref{eq estimate Omega^ct}), the Leibnitz rule and (\ref{eq Estimate Derivatives in lambda of matrix coefficient}) that for every continuous seminorm $p$ on $\cS(\fa_{\cF})$ there exist continuous seminorms $r$ and $s$ on $\cD_{\fC}\big(i(\fa/\fa_{\fh})^{*}\big)$ and $\cD(G,V_{\xi})$, respectively, and an $N\in \N_{0}$,  so that for all $\psi\in\cD_{\fC}\big(i(\fa/\fa_{\fh})^{*}\big)$, $\eta\in V^{*}(\xi)$, $\phi\in\cD(G,V_{\xi})$, $\lambda\in\fC$, $k\in K'$ and $Y\in\fa$ the estimate
$$
p\Big(\fa_{\cF}\ni X\mapsto \Omega_{\psi, \eta, \phi}\big(k,X+Y\big)\Big)
\leq (1+\|Y\|)^{N} r(\psi)\|\eta\|s(\phi).
$$
holds.
In particular, for every $n\in\N_{0}$ there exists continuous seminorms $r_{n}$ on $\cD_{\fC}\big(i(\fa/\fa_{\fh})^{*}\big)$ and $s_{n}$ on $\cD(G,V_{\xi})$ so that
$$
\sup_{X\in\fa_{\cF}}(1+\|X\|)^{n}\big|\Omega_{\psi, \eta, \phi}(k,X+Y)\big|
\leq (1+\|Y\|)^{N} r_{n}(\psi)\|\eta\|s_{n}(\phi).
$$
for every $\psi\in\cD_{\fC}\big(i(\fa/\fa_{\fh})^{*}\big)$, $\eta\in V^{*}(\xi)$, $\phi\in\cD(G,V_{\xi})$, $k\in K'$ and $Y\in \fa$.

From the definition of $C_{\cF}$ it follows that there exists a constant $c>0$, so that if  $X\in\fa_{\cF}$, $Y\in\fa_{\cF_{\perp}}$ and $X+Y\in C_{\cF}$, then $\|Y\|\leq c\|X\|$.
For $n> N+\dim(\fa/\fa_{\fh})/2$ the integral
$$
\int_{K'}\int_{C_{\cF}}\big|\Omega_{\psi, \eta, \phi}(k,X)\big|^{2}\,dX\,dk
$$
is therefore absolutely convergent and bounded by
$$
\mathrm{vol}(K')r_{n}(\psi)\|\eta\|s_{n}(\phi)\int_{\Upsilon_{\cF}}(1+\|X\|)^{2N+\dim(\fa_{\cF_{\perp}})-2n}\,dX.
$$
This proves the theorem.
\end{proof}

\section{The most continuous part of $L^{2}(Z)$}
\label{Section Most continuous part}
\subsection{Abstract Plancherel decomposition}
\label{Subsection Most continuous part - Abstract Plancherel decomposition}
We denote by $\widehat{G}$ the unitary dual of $G$. For each equivalence class $[\pi]\in\widehat{G}$ we choose a representative $(\pi,\cH_{\pi})$, i.e., $\cH_{\pi}$ is a Hilbert space and $\pi$ is a unitary representation of $G$ on $\cH_{\pi}$ in the equivalence class $[\pi]$.
We denote the space of smooth vectors of $\pi$ by $\cH_{\pi}^{\infty}$.

Let $[\pi]\in \widehat{G}$.
Since $Z$ is real spherical, the space $({\cH_{\pi^{\vee}}^{\infty}}')^{H}$ is finite dimensional. See \cite[Theorem C]{KobayashiOshima_FiniteMultiplicitiyTheorems} and \cite{KrotzSchlichtkrull_MultiplicityBoundsAndSubrepresentationTheorem}.
For every $\mu\in ({\cH_{\pi^{\vee}}^{\infty}}')^{H}$ and $f\in \cD(Z)$ the functional
$$
\cH_{\pi^{\vee}}^{\infty}\ni v\mapsto \int_{Z}f(g H) \Big( \pi(g)\mu\Big)(v)\,dgH
$$
actually defines a smooth vector for $\pi$.
We define the Fourier transform
$$
\Ft f(\pi)\in \Hom_{\C}\big(({\cH_{\pi^{\vee}}^{\infty}}')^{H},\cH_{\pi}^{\infty}\big)
$$
of a function $f\in\cD(Z)$ and $\mu\in({\cH_{\pi^{\vee}}^{\infty}}')^{H}$ by
\begin{align*}
\Ft f(\pi)\mu
=\int_{Z}f(g H) \pi(g)\mu\,dgH.
\end{align*}

By the abstract Plancherel Theorem there exists a Radon measure $d_{\Pl}[\pi]$ on $\widehat{G}$ and for every $[\pi] \in\widehat{G}$ a Hilbert space
$$
\cM_{\pi}
\subseteq ({\cH_{\pi}^{\infty}}')^{H},
$$
depending measurably on $[\pi]$, so that the Fourier transform
$$
\Ft : \cD(Z)\to\int_{\widehat{G}}^{\oplus}\Hom_{\C}\big(\cM_{\pi^{\vee}},\cH_{\pi}\big)\,d_{\Pl}[\pi]
$$
with the induced Hilbert space structure on $\Hom_{\C}\big(\cM_{\pi^{\vee}},\cH_{\pi}\big)$ extends to a unitary $G$-isomorphism
\begin{equation}\label{eq abstract Plancherel decomposition}
\Ft:L^{2}(Z)\to\int_{\widehat{G}}^{\oplus}\Hom_{\C}\big(\cM_{\pi^{\vee}},\cH_{\pi}\big)\,d_{\Pl}[\pi].
\end{equation}
The measure class of  the Plancherel measure $d_{\Pl}[\pi]$ is uniquely determined by $Z$. Once $d_{\Pl}[\pi]$ has been fixed, the multiplicity spaces $\cM_{\pi}$, including their inner products, are uniquely determined for almost all $[\pi]\in \widehat{G}$.
By dualizing (\ref{eq abstract Plancherel decomposition}) we obtain that the dual space of $\cM_{\pi^{\vee}}$ is equal to $\cM_{\pi}$. Therefore,  the abstract Plancherel decomposition may also be written in its more common form
$$
L^{2}(Z)\simeq
\int_{\widehat{G}}^{\oplus}\cM_{\pi}\otimes\cH_{\pi}\,d_{\Pl}[\pi].
$$

We recall the Bernstein morphisms $B_{I}$ with $I\subseteq\Sigma_{Z}$ from (\ref{eq Def B}) and (\ref{eq Def B_I}).
In the remainder of Section \ref{Section Most continuous part} we will derive the decomposition of the most continuous part of $L^{2}(Z)$
$$
L^{2}_{\mc}(Z)
:=\Im(B_{\emptyset})\cap L^{2}(Z)
$$
into a direct integral of irreducible unitary representations of $G$.

\subsection{Plancherel decomposition for $Z_{\emptyset}$}
\label{Subsection Most continuous part - Plancherel decomposition for Z_empty}
We first determine the Plancherel decomposition for $Z_{\emptyset}$.

We choose a set of representatives $\fN$ of $\cN/\cZ$ in $\cN\cap K$ as in Section \ref{Subsection Construction - Horospherical case}
and define
$$
V_{\emptyset}^{*}(\xi)
:=\bigoplus_{v\in\fN}(V_{\xi}^{*})^{M_{Q}\cap vHv^{-1}}.
$$
We write
$$
\mu_{\emptyset}^{\circ}(\xi:\lambda):V^{*}_{\emptyset}(\xi)\to\cD'(\overline{Q}:\xi:\lambda)^{H_{\emptyset}}
$$
for the map from Corollary \ref{Cor Description D'(Z,overline Q:xi:lambda) horospherical case} for the space $Z_{\emptyset}$.
Now we define the Fourier transform
$$
\Ft_{\emptyset}f(\xi:\lambda)\in \Hom_{\C}\big(V_{\emptyset}^{*}(\xi^{\vee}),C^{\infty}(\overline{Q}:\xi:\lambda)\big)
=V_{\emptyset}^{*}(\xi)\otimes C^{\infty}(\overline{Q}:\xi:\lambda).
$$
of a function $f\in\cD(Z_{\emptyset})$ by
\begin{align*}
\Ft_{\emptyset}f(\xi:\lambda)\eta
=\int_{Z_{\emptyset}}f(g H_{\emptyset})
    R^{\vee}(g)\Big(\mu^{\circ}_{\emptyset}(\xi^{\vee}:-\lambda)\eta\Big)\,dgH_{\emptyset}.
\end{align*}
Let $\langle\cdot,\cdot\rangle_{\emptyset,\xi^{\vee}}$ be the inner product on $V_{\emptyset}^{*}(\xi^{\vee})$ induced by the inner product on $V_{\xi^{\vee}}$, and let $\langle\cdot,\cdot\rangle_{\emptyset,\xi,\lambda}$ be the inner product on
$V_{\emptyset}^{*}(\xi)\otimes \Ind_{\overline{Q}}^{G}(\xi\otimes\lambda\otimes\1)$ induced by the inner products $\langle\cdot,\cdot\rangle_{\emptyset,\xi}$ and $\langle \cdot, \cdot\rangle_{\overline{Q},\xi,\lambda}$ on $V_{\emptyset}^{*}(\xi)$ and $\Ind_{\overline{Q}}^{G}(\xi\otimes\lambda\otimes\1)$, respectively.

Recall that $\widehat{M}_{Q,\mathrm{fu}}$ denotes the set of equivalence classes of finite dimensional unitary representations of $M_{Q}$.
Let $(\fa/\fa_{\fh})_{+}^{*}$ be a fundamental domain for the action of $\cN$ on $(\fa/\fa_{\fh})^{*}$.
We recall that we normalize Lebesgue measure on $i(\fa/\fa_{\fh})^{*}$ by requiring that
$$
\phi(e)
=\int_{i(\fa/\fa_{\fh})^{*}}\int_{A/(A\cap H)}\phi(a)a^{\lambda}\,d\lambda\
\qquad\Big(\phi\in\cD\big(A/(A\cap H)\big)\Big).
$$
We then have the following Plancherel decomposition.

\begin{Thm}\label{Thm Plancherel Theorem horospherical case}
The Fourier transform $f\mapsto\Ft_{\emptyset}f$ extends to a continuous linear operator
\begin{equation}\label{eq Plancherel Thm horospherical case}
L^{2}(Z_{\emptyset})\to \widehat{\bigoplus_{\xi\in\widehat{M}_{Q,\mathrm{fu}}}}\int_{i(\fa/\fa_{\fh})^{*}}^{\oplus} V_{\emptyset}^{*}(\xi)\otimes C^{\infty}(\overline{Q}:\xi:\lambda)\,d\lambda.
\end{equation}
Moreover, for every $f_{1},f_{2}\in L^{2}(Z_{\emptyset})$
\begin{equation}\label{eq Plancherel Thm horospherical case - Pl identity}
\int_{Z_{\emptyset}}f_{1}(z)\overline{f_{2}(z)}\,dz
=\sum_{[\xi]\in \widehat{M}_{Q,\mathrm{fu}}}\dim(V_{\xi})
    \int_{(\fa/\fa_{\fh})_{+}^{*}}\Big\langle\Ft_{\emptyset}f_{1}(\xi:\lambda),\Ft_{\emptyset}f_{2}(\xi:\lambda)\Big\rangle_{\emptyset,\xi,\lambda}\,d\lambda.
\end{equation}
\end{Thm}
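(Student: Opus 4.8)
The strategy is to reduce the Plancherel formula for $Z_{\emptyset}$ to the classical Plancherel decomposition for the group $A/(A\cap H)$ (Euclidean Fourier analysis) combined with the Peter--Weyl decomposition for the compact quotient $M_{Q}/(M_{Q}\cap H)$, using the explicit description of $Z_{\emptyset}$ recalled in the introduction. First I would recall that $H_{\emptyset}=(L_{Q}\cap H)\overline{N}_{Q}$ satisfies $\overline{N}_{Q}\subseteq H_{\emptyset}\subseteq\overline{Q}$, so that by induction in stages
$$
L^{2}(Z_{\emptyset})
=\Ind_{H_{\emptyset}}^{G}(\1)
=\Ind_{\overline{Q}}^{G}\Big(\Ind_{H_{\emptyset}}^{\overline{Q}}(\1)\Big)
=\Ind_{\overline{Q}}^{G}\Big(L^{2}\big(M_{Q}/M_{Q}\cap H\big)\otimes L^{2}\big(A/A\cap H\big)\Big).
$$
Applying the Peter--Weyl theorem to the compact homogeneous space $M_{Q}/(M_{Q}\cap H)$ (and Corollary \ref{Cor M_Q,fu simeq M reps with trivial restriction to M cap L_Q,nc} to identify the relevant $M_{Q}$-types with $\widehat{M}_{Q,\mathrm{fu}}$) and the Euclidean Plancherel theorem to $L^{2}(A/A\cap H)$ with the normalization of Lebesgue measure on $i(\fa/\fa_{\fh})^{*}$ fixed in Section \ref{Section Setup and notation}, one obtains an abstract unitary identification of $L^{2}(Z_{\emptyset})$ with a direct integral over $i(\fa/\fa_{\fh})^{*}$ of the spaces $\bigl(\bigoplus_{\xi}\,(V_{\xi}^{*})^{M_{Q}\cap H}\otimes V_{\xi}\bigr)\otimes C^{\infty}(\overline{Q}:\xi:\lambda)$, with multiplicities $\dim(V_{\xi})$ arising from the Peter--Weyl orthogonality relations.

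The second step is to pass from the multiplicity space $(V_{\xi}^{*})^{M_{Q}\cap H}$ at the base point $eH_{\emptyset}$ to the full space $V_{\emptyset}^{*}(\xi)=\bigoplus_{v\in\fN}(V_{\xi}^{*})^{M_{Q}\cap vHv^{-1}}$ and to fold the integral over $i(\fa/\fa_{\fh})^{*}$ down to the fundamental domain $(\fa/\fa_{\fh})_{+}^{*}$ for the $\cN$-action. Here I would invoke the transformation rule (\ref{eq transformation rule mu horospherical case}) in Corollary \ref{Cor Description D'(Z,overline Q:xi:lambda) horospherical case}, namely
$$
\cI^{\circ}_{v}(\xi:\lambda)\circ\mu_{\emptyset}^{\circ}(\xi:\lambda)\circ\iota_{e}
=\mu_{\emptyset}^{\circ}\big(v\cdot\xi:\Ad^{*}(v)\lambda\big)\circ\iota_{v}
\qquad(v\in\fN),
$$
together with the unitarity of the normalized intertwining operators $\cI^{\circ}_{v}(\xi:\lambda)$ on the imaginary axis (from Section \ref{Subsection Distribution vectors - Intertwining operators}) and the equivariance (\ref{eq horospherical case a mu_v =a^(-v^-1 lambda+rho)}) for the right $A$-action. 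This shows that the contributions of the various translated points $vH$, $v\in\fN$, together reconstitute exactly one copy of $V_{\emptyset}^{*}(\xi)\otimes C^{\infty}(\overline{Q}:\xi:\lambda)$ per $\cN$-orbit in $i(\fa/\fa_{\fh})^{*}$, and that the inner products match up: the isometry property of $\mu_{\emptyset}^{\circ}(\xi:\lambda)$ (the spaces $\cD'(\overline{Q}:\xi:\lambda)^{H_{\emptyset}}$ being normalized so that $\mu_{\emptyset}^{\circ}(\xi:\lambda)$ is an isometry, as recorded after Theorem \ref{Thm Description D'(overline Q:xi:lambda)^H}) guarantees that the inner product on $V_{\emptyset}^{*}(\xi)$ appearing on the right of (\ref{eq Plancherel Thm horospherical case - Pl identity}) is the one induced from $V_{\xi}$, up to the factor $\dim(V_{\xi})$ coming from Peter--Weyl.

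The third step is bookkeeping: verify that the Fourier transform $\Ft_{\emptyset}$ as defined via $\mu_{\emptyset}^{\circ}(\xi^{\vee}:-\lambda)$ coincides, under the above identifications, with the map coming out of induction in stages, so that the abstract unitary isomorphism is literally the extension of $f\mapsto\Ft_{\emptyset}f$. For $f\in\cD(Z_{\emptyset})$ one writes out $\Ft_{\emptyset}f(\xi:\lambda)\eta=\int_{Z_{\emptyset}}f(gH_{\emptyset})R^{\vee}(g)(\mu_{\emptyset}^{\circ}(\xi^{\vee}:-\lambda)\eta)\,dgH_{\emptyset}$ using the explicit integral formula (\ref{eq formula mu horospherical case}) for $\mu_{\emptyset}^{\circ}$ and the Iwasawa-type coordinates on $Z_{\emptyset}$, and matches it term by term with the Euclidean/Peter--Weyl Fourier coefficients. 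Continuity of the extension (\ref{eq Plancherel Thm horospherical case}) and the Plancherel identity (\ref{eq Plancherel Thm horospherical case - Pl identity}) then follow from the corresponding statements for $A/(A\cap H)$ and $M_{Q}/(M_{Q}\cap H)$.

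\textbf{Main obstacle.} The routine part is the induction in stages and the Peter--Weyl/Euclidean decompositions; the delicate point, and the one I expect to require the most care, is tracking \emph{all} the normalization constants simultaneously: the Haar measures on $N_{Q}$, $\overline{N}_{Q}$, $M_{Q}$, $A$ fixed in Section \ref{Section Setup and notation}, the normalization of $i(\fa/\fa_{\fh})^{*}$, the $\gamma$-factors in the definition (\ref{eq Def mu^circ}) of $\mu^{\circ}$, and the Peter--Weyl factor $\dim(V_{\xi})$, must combine to give precisely the constant $\dim(V_{\xi})$ in (\ref{eq Plancherel Thm horospherical case - Pl identity}) with no residual scalar. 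Getting this exactly right — rather than up to an undetermined positive constant — is where the direct computation referred to in the methods section is genuinely needed, and it is essentially forced by demanding that $\mu_{\emptyset}^{\circ}(\xi:\lambda)$ be an isometry onto $\cD'(\overline{Q}:\xi:\lambda)^{H_{\emptyset}}$ with the inner product normalized as in Section \ref{Subsection Construction - Normalization}.
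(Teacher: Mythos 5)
Your proposal is correct and follows essentially the same route as the paper: the paper's proof is precisely the direct computation you describe, unwinding $\|f\|_{L^{2}(Z_{\emptyset})}^{2}$ via the decomposition $G=KAH_{\emptyset}$, applying the Euclidean Plancherel theorem on $A/(A\cap H)$ and Peter--Weyl on $M/(M\cap H)$ (identified with $M_{Q}/(M_{Q}\cap H)$ via Corollary \ref{Cor M_Q,fu simeq M reps with trivial restriction to M cap L_Q,nc}), matching the result with $\Ft_{\emptyset}f(\xi:\lambda)(\iota_{e}\eta)$ through the explicit integral formula, and then folding to the fundamental domain $(\fa/\fa_{\fh})^{*}_{+}$ using exactly the transformation rule (\ref{eq transformation rule mu horospherical case}) and the unitarity of the normalized intertwining operators. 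The normalization issue you flag as the main obstacle is handled in the paper by isolating these computations in a separate lemma (the identities for $\Ft_{\emptyset}$), which is the "direct computation" you anticipate.
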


\begin{Rem}
In view of the following assertions the decomposition (\ref{eq Plancherel Thm horospherical case}) is in fact the Plancherel decomposition for $Z_{\emptyset}$.
\begin{enumerate}[(a)]
\item
Let $\xi\in\widehat{M}_{Q}$. For every $\lambda\in i(\fa/\fa_{\fh})^{*}$ the representation $\Ind_{\overline{Q}}^{G}(\xi\otimes\lambda\otimes\1)$ is irreducible.
See \cite[p. 203, Th{\'e}or{\`e}me 4]{Bruhat_SurLesRepresentationsInduitesDesGroupesDeLie} and \cite[Theorem 4.11]{KolkVaradarajan_TransverseSymbolOfVectorialDistributions}.
\item
Let $\xi,\xi'\in\widehat{M}_{Q}$. For almost all $\lambda,\lambda'\in  i(\fa/\fa_{\fh})^{*}$ the representations $\Ind_{\overline{Q}}^{G}(\xi\otimes\lambda\otimes\1)$ and $\Ind_{\overline{Q}}^{G}(\xi'\otimes\lambda'\otimes\1)$ are equivalent if and only if there exists a $w\in \cN$ so that $\xi=w\cdot \xi'$ and $\lambda=\Ad^{*}(w)\lambda'$.
This assertion is proven by comparing the infinitesimal characters of the principal series representations.
\end{enumerate}
\end{Rem}

We first prove a lemma. Recall the inclusions $\iota_{v}$ for $v\in\cN$ from (\ref{eq Def iota_v}).

\begin{Lemma}\label{Lemma Identities cF_empty}
Let $\xi$ be a finite dimensional unitary representation of $M_{Q}$, $\lambda\in i(\fa/\fa_{\fh})^{*}\setminus i\cS$ and $f\in \cD(Z_{\emptyset})$. Then for every $\eta\in V_{\xi}^{M_{Q}\cap H}$
\begin{equation}\label{eq Identity cF_empty - 1}
\Ft_{\emptyset}f(\xi:\lambda)\big(\iota_{e}\eta\big):g\mapsto
\int_{M_{Q}/(M_{Q}\cap H)}\int_{A/(A\cap H)}f(g^{-1}am H_{\emptyset})a^{\lambda-\rho_{Q}}\xi(m)\eta\,da\,dm.
\end{equation}
Furthermore, for every $v\in \fN$
\begin{align}\label{eq Identity cF_empty - 2}
&\Ft_{\emptyset}f\big(v\cdot\xi:\Ad^{*}(v)\lambda\big)\circ\iota_{v}\\
\nonumber&\qquad=\frac{1}{\gamma(v^{-1}\overline{Q}v:\overline{Q}:\xi:\lambda\big)}
    L(v)\circ A(v^{-1}\overline{Q}v:\overline{Q}:\xi:\lambda)\circ\Ft_{\emptyset}f(\xi:\lambda)\circ\iota_{e}.
\end{align}
Finally, for every $v\in \fN$ and $\eta\in V_{v\cdot\xi}^{M_{Q}\cap vHv^{-1}}=V_{\xi}^{M_{Q}\cap H}$ we have
\begin{equation}\label{eq Identity cF_empty - 3}
\big\|\Ft_{\emptyset}f\big(v\cdot\xi:\Ad^{*}(v)\lambda\big)\big(\iota_{v}\eta\big)\big\|_{\overline{Q},\xi,\lambda}
=\big\|\Ft_{\emptyset}f\big(\xi:\lambda\big)\big(\iota_{e}\eta\big)\big\|_{\overline{Q},\xi,\lambda}.
\end{equation}
\end{Lemma}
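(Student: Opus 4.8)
The three displayed identities in Lemma \ref{Lemma Identities cF_empty} are all of the same flavour: each follows by unwinding the definition of $\Ft_{\emptyset}$ together with the explicit formulas for $\mu_{\emptyset}^{\circ}(\xi:\lambda)$ established in Corollary \ref{Cor Description D'(Z,overline Q:xi:lambda) horospherical case}, in particular the transformation rule (\ref{eq transformation rule mu horospherical case}), the equivariance (\ref{eq horospherical case a mu_v =a^(-v^-1 lambda+rho)}), and the unitarity of the normalized intertwining operator $\cI^{\circ}_{v}(\xi:\lambda)$ on the imaginary axis. I will prove them in the order (\ref{eq Identity cF_empty - 1}), (\ref{eq Identity cF_empty - 2}), (\ref{eq Identity cF_empty - 3}), since the second uses the first and the third is an immediate consequence of the second.

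\textbf{Step 1: the base identity (\ref{eq Identity cF_empty - 1}).} I would start from the definition
$$
\Ft_{\emptyset}f(\xi:\lambda)(\iota_{e}\eta)
=\int_{Z_{\emptyset}}f(gH_{\emptyset})\, R^{\vee}(g)\big(\mu^{\circ}_{\emptyset}(\xi^{\vee}:-\lambda)(\iota_{e}\eta)\big)\,dgH_{\emptyset},
$$
and use the explicit formula (\ref{eq mu^circ e component horospherical case}) for $\mu^{\circ}(\xi^{\vee}:-\lambda)(\iota_{e}\eta)$ as an integral over $M_{Q}\times A\times \overline{N}_{Q}$. The value of $\Ft_{\emptyset}f(\xi:\lambda)(\iota_{e}\eta)$ at a point $g\in G$ is obtained by pairing $R^{\vee}(g)\mu^{\circ}_{\emptyset}(\xi^{\vee}:-\lambda)(\iota_{e}\eta)$ against a suitable approximate identity, or more cleanly by noting that evaluation of a section of $C^{\infty}(\overline{Q}:\xi:\lambda)$ at $g$ is the $\ev_{g}$ functional; one applies $\ev_{g}$ to $R^{\vee}(g)$, uses left-$H_{\emptyset}$-invariance of $f$, and the Langlands decomposition of $Z_{\emptyset}=G/H_{\emptyset}$ with $H_{\emptyset}=(L_{Q}\cap H)\overline{N}_{Q}$ to rewrite the $Z_{\emptyset}$-integral as an integral over $M_{Q}/(M_{Q}\cap H)\times A/(A\cap H)$. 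The factor $a^{\lambda-\rho_{Q}}$ and the $\xi(m)$ come out of the induction data $\xi^{\vee}\otimes(-\lambda)$ after dualizing. This is a routine but slightly delicate bookkeeping computation with measures and the normalization of $d\lambda$ fixed in Section \ref{Section Setup and notation}.

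\textbf{Step 2: the intertwining identity (\ref{eq Identity cF_empty - 2}) and the norm identity (\ref{eq Identity cF_empty - 3}).} For (\ref{eq Identity cF_empty - 2}), I would write $\mu^{\circ}_{\emptyset}\big((v\cdot\xi)^{\vee}:-\Ad^{*}(v)\lambda\big)\circ\iota_{v}$ using (\ref{eq transformation rule mu horospherical case}) applied with $\xi$ replaced by $\xi^{\vee}$ and $\lambda$ by $-\lambda$: it equals $\cI^{\circ}_{v}(\xi^{\vee}:-\lambda)\circ\mu^{\circ}_{\emptyset}(\xi^{\vee}:-\lambda)\circ\iota_{e}$. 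Substituting this into the definition of $\Ft_{\emptyset}f\big(v\cdot\xi:\Ad^{*}(v)\lambda\big)\circ\iota_{v}$, pulling the $G$-equivariant operator $\cI^{\circ}_{v}(\xi^{\vee}:-\lambda)=L^{\vee}(v)\circ\cA(v^{-1}\overline{Q}v:\overline{Q}:\xi^{\vee}:-\lambda)/\gamma(v^{-1}\overline{Q}v:\overline{Q}:\xi^{\vee}:-\lambda)$ out through the integral over $Z_{\emptyset}$ (it commutes with $R^{\vee}(g)$ since it intertwines representations), and dualizing the adjoint intertwining operator $\cA(\dotvar)$ back to the Knapp--Stein operator $A(v^{-1}\overline{Q}v:\overline{Q}:\xi:\lambda)$ via the diagram in Proposition \ref{Prop int formula for a(S_2:S_1:xi:lambda)}, together with the $\gamma$-factor symmetry $\gamma(S_{1}:S_{2}:\xi:\lambda)=\overline{\gamma(S_{2}:S_{1}:\xi:-\overline{\lambda})}$ (which on $i(\fa/\fa_{\fh})^{*}$ gives $\gamma(v^{-1}\overline{Q}v:\overline{Q}:\xi^{\vee}:-\lambda)=\overline{\gamma(\overline{Q}:v^{-1}\overline{Q}v:\xi:\lambda)}$; I will need to check this matches the normalizing factor in the statement, possibly up to the invariance $\gamma(S_{1}:S_{2}:\xi':\lambda)=\gamma(S_{1}:S_{2}:\xi:\lambda)$ for $\xi'\simeq\xi$ and the $N_{G}(\fa_{S})$-invariance of $\gamma$). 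Then (\ref{eq Identity cF_empty - 3}) follows immediately: on $i(\fa/\fa_{\fh})^{*}$ the operator $\cI^{\circ}_{v}(\xi:\lambda)$ is unitary from $C^{\infty}(\overline{Q}:\xi:\lambda)\to C^{\infty}(\overline{Q}:v\cdot\xi:\Ad^{*}(v)\lambda)$ (as noted after the definition of $A^{\circ}$), and the inner products $\langle\cdot,\cdot\rangle_{\overline{Q},\xi,\lambda}$ are defined by restriction to $K$, hence preserved; so the norm of $\Ft_{\emptyset}f\big(v\cdot\xi:\Ad^{*}(v)\lambda\big)(\iota_{v}\eta)$ equals that of $L(v)$ applied to a normalized intertwining image of $\Ft_{\emptyset}f(\xi:\lambda)(\iota_{e}\eta)$, and both $L(v)$ and $\cI^{\circ}_{v}$ are isometries.

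\textbf{Main obstacle.} The genuinely fiddly point is Step 2: keeping precise track of the several $\gamma$-factors and of where contragredient representations $\xi^{\vee}$ versus $\xi$, and parameters $-\lambda$ versus $\lambda$, enter — the definition of $\Ft_{\emptyset}$ pairs $f$ against $\mu^{\circ}_{\emptyset}(\xi^{\vee}:-\lambda)$, while the target inner product lives on $C^{\infty}(\overline{Q}:\xi:\lambda)$, so one must carefully dualize using the nondegenerate $K$-pairing $C^{\infty}(\overline{Q}:\xi^{\vee}:-\lambda)\times C^{\infty}(\overline{Q}:\xi:\lambda)\to\C$. Verifying that the normalizing scalar produced by this process is exactly $1/\gamma(v^{-1}\overline{Q}v:\overline{Q}:\xi:\lambda)$ as stated, rather than its conjugate or reciprocal, will require invoking the reality/positivity of $\gamma$ on $\fa^{*}$ and the symmetry relations listed in Section \ref{Subsection Distribution vectors - Intertwining operators}, and restricting attention to $\lambda\in i(\fa/\fa_{\fh})^{*}\setminus i\cS$ where everything is holomorphic and the intertwining operators are honest convergent integrals or their analytic continuations. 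Once the dictionary between $\cA$, $A$, $\cI_{v}$, and $\cI^{\circ}_{v}$ is set up carefully via Proposition \ref{Prop int formula for a(S_2:S_1:xi:lambda)} and Corollary \ref{Cor Description D'(Z,overline Q:xi:lambda) horospherical case}, the three identities fall out with no further analytic input.
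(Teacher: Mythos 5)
Your proposal is correct and follows essentially the same route as the paper: identity (\ref{eq Identity cF_empty - 1}) by substituting the explicit formula (\ref{eq mu^circ e component horospherical case}) into the definition of $\Ft_{\emptyset}$ and unfolding the $Z_{\emptyset}$-integral with Fubini, identity (\ref{eq Identity cF_empty - 2}) from the transformation rule (\ref{eq transformation rule mu horospherical case}) together with the fact that $\cI^{\circ}_{v}(\xi^{\vee}:-\lambda)$ acts on the smooth subspace $C^{\infty}(\overline{Q}:\xi:\lambda)$ by $\gamma(v^{-1}\overline{Q}v:\overline{Q}:\xi:\lambda)^{-1}L(v)\circ A(v^{-1}\overline{Q}v:\overline{Q}:\xi:\lambda)$, and identity (\ref{eq Identity cF_empty - 3}) from the unitarity of that operator on the imaginary axis.
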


\begin{proof}
Let $\eta\in V_{\xi}^{M_{Q}\cap H}$.
By (\ref{eq formula mu horospherical case}) the distribution $\mu_{\emptyset}^{\circ}(\xi^{\vee}:-\lambda)(\iota_{e}\eta)$ is for $\phi\in \cD(G,V_{\xi}^{*})$ and $\lambda\in (\fa/\fa_{\fh})^{*}_{\C}$ given by
$$
\big(\mu_{\emptyset}^{\circ}(\xi^{\vee}:-\lambda)(\iota_{e}\eta)\big)(\phi)
=\int_{M_{Q}}\int_{A}\int_{\overline{N}_{Q}}a^{\lambda-\rho_{Q}}
        \Big(\xi(m)\eta,\phi(ma\overline{n})\Big)\,d\overline{n}\,da\,dm\,dn.
$$
Let $f\in \cD(Z_{\emptyset})$. Then
\begin{align*}
&\Big(\Ft_{\emptyset}f(\xi:\lambda)\big(\iota_{e}\eta\big)\Big)(\phi)\\
&\quad=\int_{Z_{\emptyset}}f(g H_{\emptyset})
    R^{\vee}(g)\Big(\mu^{\circ}_{\emptyset}(\xi^{\vee}:-\lambda)(\iota_{e}\eta)\Big)(\phi)\,dgH_{\emptyset}\\
&\quad=\int_{Z_{\emptyset}}
    \int_{M_{Q}}\int_{A}\int_{\overline{N}_{Q}}f(g H_{\emptyset})a^{\lambda-\rho_{Q}}
        \Big(\xi(m)\eta,\phi(ma\overline{n}g^{-1})\Big)\,d\overline{n}\,da\,dm\,dgH_{\emptyset}\\
&\quad=\int_{Z_{\emptyset}}
    \int_{M_{Q}/(M_{Q}\cap H)}\int_{A/(A\cap H)}\int_{H_{\emptyset}}f(g H_{\emptyset})a^{\lambda-\rho_{Q}}
        \Big(\xi(m)\eta,\phi(mah^{-1}g^{-1})\Big)\,dh\,da\,dm\,dgH_{\emptyset}.
\end{align*}
Let $M_{0}$ be a submanifold of $M_{Q}$ so that
$$
M_{0}\to M_{Q}/(M_{Q}\cap H);
\quad m_{0}\mapsto m_{0}(M_{Q}\cap H)
$$
is a diffeomorphism onto an open and dense subset of $M_{Q}/(M_{Q}\cap H)$ and let $d\mu$ be the pull back of the invariant measure on $M_{Q}/(M_{Q}\cap H)$ along this map. Let further $A_{0}$ be a closed subgroup of $A$ so that
$$
A_{0}\to A/(A\cap H);
\quad a_{0}\mapsto a_{0}(A\cap H)
$$
is a diffeomorphism. Then
\begin{align*}
&\Big(\Ft_{\emptyset}f(\xi:\lambda)\big(\iota_{e}\eta\big)\Big)(\phi)\\
&\qquad=\int_{Z_{\emptyset}}\int_{H_{\emptyset}}
    \int_{M_{0}}\int_{A_{0}}f(gh H_{\emptyset})a^{\lambda-\rho_{Q}}
        \Big(\xi(m)\eta,\phi(mah^{-1}g^{-1})\Big)\,da\,d\mu(m)\,dh\,dgH_{\emptyset}\\
&\qquad=\int_{G}\int_{M_{0}}\int_{A_{0}}f(g H_{\emptyset})a^{\lambda-\rho_{Q}}
        \Big(\xi(m)\eta,\phi(mag^{-1})\Big)\,da\,d\mu(m)\,dg\\
&\qquad=\int_{G}\bigg(\int_{M_{0}}\int_{A_{0}}f(g^{-1}am H_{\emptyset})a^{\lambda-\rho_{Q}}
        \xi(m)\eta\,da\,d\mu(m),\phi(g)\bigg)\,dg\\
&\qquad=\int_{G}\bigg( \int_{M_{Q}/(M_{Q}\cap H)}\int_{A/(A\cap H)}f(g^{-1}am H_{\emptyset})a^{\lambda-\rho_{Q}}
        \xi(m)\eta\,da\,dm,\phi(g)\bigg)\,dg.
\end{align*}
This proves (\ref{eq Identity cF_empty - 1}).

The identity (\ref{eq Identity cF_empty - 2}) follows from (\ref{eq transformation rule mu horospherical case}) as $\cI^{\circ}_{v}(\xi^{\vee}:-\lambda)$ acts on the subspace $C^{\infty}(\overline{Q}:\xi:\lambda)$ of $\cD'(\overline{Q}:\xi^{\vee}:-\lambda)$ by
\begin{equation}\label{eq Intertwiner 1/gamma L(v) circ A(v^-1oQv:oQ)}
\frac{1}{\gamma(v^{-1}\overline{Q}v:\overline{Q}:\xi:\lambda\big)}
    L(v)\circ A(v^{-1}\overline{Q}v:\overline{Q}:\xi:\lambda).
\end{equation}

Finally, (\ref{eq Identity cF_empty - 3}) follows from (\ref{eq Identity cF_empty - 2}) as (\ref{eq Intertwiner 1/gamma L(v) circ A(v^-1oQv:oQ)}) is a unitary map.
\end{proof}

\begin{proof}[Proof of Theorem \ref{Thm Plancherel Theorem horospherical case}]
Let $f\in\cD(Z_{\emptyset})$. In view of the decomposition polar $G=KAH_{\emptyset}$, we have
$$
\int_{Z_{\emptyset}}|f(z)|^{2}\,dz
=\int_{K}\int_{A/(A\cap H)}a^{-2\rho_{Q}} |f(kaH_{\emptyset})|^{2}\,da\,dk.
$$
By Fubini's theorem the function
$$
A/A\cap H\ni a\mapsto a^{-\rho_{Q}}f(kaH_{\emptyset})
$$
is square integrable for almost every $k\in K$. We now apply the Plancherel theorem for the euclidean Fourier transform on $A/A\cap H$ to the inner integral and obtain
$$
\int_{Z_{\emptyset}}|f(z)|^{2}\,dz
=\int_{K}\int_{i(\fa/\fa_{\fh})^{*}}\left|\int_{A/(A\cap H)}a^{\lambda-\rho_{Q}}f(kaH_{\emptyset})\,da\right|^{2}\,d\lambda\,dk.
$$
Since $M\subseteq K$, we have for every $\theta\in\cD(G/H_{\emptyset})$ and $a\in A$
$$
\int_{K}\theta(kaH_{\emptyset})\,dk
=\int_{K}\int_{M/(M\cap H)}\theta(kmaH_{\emptyset})\,dm\,dk.
$$
It follows that
\begin{align*}
\int_{Z_{\emptyset}}|f(z)|^{2}\,dz
&=\int_{K}\int_{i(\fa/\fa_{\fh})^{*}}
\int_{M/(M\cap H)}\left|\int_{A/(A\cap H)}a^{\lambda-\rho_{Q}}f(kmaH_{\emptyset})\,da\right|^{2}\,dm\,d\lambda\,dk.
\end{align*}
By Fubini's theorem the function
$$
M/(M\cap H)\ni m\mapsto \int_{A/(A\cap H)}a^{\lambda-\rho_{Q}}f(kmaH_{\emptyset})\,da
$$
is square integrable for almost every $k\in K$ and $\lambda\in i(\fa/\fa_{\fh})^{*}$.
For every finite dimensional representation  $\sigma$ of $M$ we choose an orthonormal basis $E_{\sigma}$ of $V_{\sigma}^{M\cap H}$. The set of equivalence classes of irreducible unitary representations of $M$ we denote by $\widehat{M}$. We now apply the Peter-Weyl theorem for $M/M\cap H$ and obtain
\begin{align*}
&\int_{M/(M\cap H)}\left|\int_{A/(A\cap H)}a^{\lambda-\rho_{Q}}f(kmaH_{\emptyset})\,da\right|^{2}\,dm\\
&\qquad=\sum_{[\sigma]\in \widehat{M}}\dim(V_{\sigma})\sum_{\eta\in E_{\sigma}}
    \left\|\int_{M/(M\cap H)}\int_{A/(A\cap H)}a^{\lambda-\rho_{Q}}f(kmaH_{\emptyset})\sigma(m)\eta\,da\,dm\right\|^{2}_{\sigma}.
\end{align*}
In view of Lemma \ref{Lemma decomposition of M_Q} and Corollary \ref{Cor M_Q,fu simeq M reps with trivial restriction to M cap L_Q,nc} we may replace $M$ by $M_{Q}$, hence the right-hand side equals
$$
\sum_{[\xi]\in \widehat{M}_{Q,\mathrm{fu}}}\dim(V_{\xi})\sum_{\eta\in E_{\xi,e}}
    \left\|\int_{M_{Q}/(M_{Q}\cap H)}\int_{A/(A\cap H)}a^{\lambda-\rho_{Q}}f(kmaH_{\emptyset})\xi(m)\eta\,da\,dm\right\|^{2}_{\xi}.
$$
Here $E_{\xi,e}$ denotes a choice of an orthonormal basis of $V_{\xi}^{M_{Q}\cap H}$.
By (\ref{eq Identity cF_empty - 1}) in Lemma \ref{Lemma Identities cF_empty}
\begin{align*}
\int_{Z_{\emptyset}}|f(z)|^{2}\,dz
&=\sum_{[\xi]\in \widehat{M}_{Q,\mathrm{fu}}}\dim(V_{\xi})\sum_{\eta\in E_{\xi,e}}\int_{i(\fa/\fa_{\fh})^{*}}
\int_{K}\big\|\Ft_{\emptyset}f(\xi:\lambda)\big(\iota_{e}\eta\big)(k)\big\|_{\xi}^{2}\,dk\,d\lambda\\
&=\sum_{[\xi]\in \widehat{M}_{Q,\mathrm{fu}}}\dim(V_{\xi})\sum_{\eta\in E_{\xi,e}}\int_{i(\fa/\fa_{\fh})^{*}}
\big\|\Ft_{\emptyset}f(\xi:\lambda)\big(\iota_{e}\eta\big)\big\|_{\overline{Q},\xi,\lambda}^{2}\,d\lambda.
\end{align*}
Since $\fN$ is a set of representatives of $\cN/\cZ$ in $K\cap\cN$ and $(\fa/\fa_{\fh})^{*}_{+}$ a fundamental domain for the action of $\cN/\cZ$ on $(\fa/\fa_{\fh})^{*}$, the right-hand side equals the sum over $v\in\fN$ of
$$
\sum_{[\xi]\in \widehat{M}_{Q,\mathrm{fu}}}\dim(V_{v^{-1}\cdot\xi})\sum_{\eta\in E_{v^{-1}\cdot\xi,e}}\int_{i(\fa/\fa_{\fh})^{*}_{+}}
\big\|\Ft_{\emptyset}f(v^{-1}\cdot\xi:\Ad^{*}(v^{-1})\lambda)\big(\iota_{e}\eta\big)\big\|_{\overline{Q},v^{-1}\cdot\xi,\Ad^{*}(v^{-1})\lambda}^{2}\,d\lambda.
$$
Since $V_{v^{-1}\cdot\xi}^{M_{Q}\cap H}=V_{\xi}^{M_{Q}\cap vHv^{-1}}$, the set $E_{\xi,v}:=E_{v^{-1}\cdot\xi,e}$ is an orthonormal basis of $V_{\xi}^{M_{Q}\cap vHv^{-1}}$.
Therefore, $\bigcup_{v\in\fN}E_{\xi,v}$ is an orthonormal basis of $V_{\emptyset}^{*}(\xi)$.
We now apply (\ref{eq Identity cF_empty - 3}). This yields
\begin{align*}
\int_{Z_{\emptyset}}|f(z)|^{2}\,dz
&=\sum_{v\in\fN}\sum_{[\xi]\in \widehat{M}_{Q,\mathrm{fu}}}\dim(V_{\xi})\sum_{\eta\in E_{\xi,v}}\int_{i(\fa/\fa_{\fh})^{*}_{+}}
\big\|\Ft_{\emptyset}f(\xi:\lambda)\big(\iota_{v}\eta\big)\big\|_{\overline{Q},\xi,\lambda}^{2}\,d\lambda\\
&=\sum_{[\xi]\in \widehat{M}_{Q,\mathrm{fu}}}\dim(V_{\xi})\sum_{\eta\in E_{\xi}}\int_{i(\fa/\fa_{\fh})^{*}_{+}}
\big\|\Ft_{\emptyset}f(\xi:\lambda)\eta\big\|_{\overline{Q},\xi,\lambda}^{2}\,d\lambda\\
&=\sum_{[\xi]\in \widehat{M}_{Q,\mathrm{fu}}}\dim(V_{\xi})\int_{i(\fa/\fa_{\fh})^{*}_{+}}
\big\|\Ft_{\emptyset}f(\xi:\lambda)\big\|_{\emptyset,\xi,\lambda}^{2}\,d\lambda.
\end{align*}
This proves (\ref{eq Plancherel Thm horospherical case - Pl identity}) for $f\in \cD(Z_{\emptyset})$.

From (\ref{eq Plancherel Thm horospherical case - Pl identity}) and the density of $\cD(Z_{\emptyset})$ in $L^{2}(Z_{\emptyset})$ it follows that $f\mapsto\Ft_{\emptyset}f$ extends uniquely to a continuous linear operator (\ref{eq Plancherel Thm horospherical case}) and the identity (\ref{eq Plancherel Thm horospherical case - Pl identity}) holds for $f\in L^{2}(Z_{\emptyset})$ as well.
\end{proof}

\subsection{Constant Term}
\label{Subsection Most continuous part - Constant Term}
In Section \ref{Subsection Most continuous part - Plancherel decomposition} we will relate the Plancherel decomposition for $Z_{\emptyset}$ to the decomposition of $L^{2}_{\mc}(Z)$ using the Maa\ss-Selberg relations from \cite[\S 9.4]{DelormeKnopKrotzSchlichtkrull_PlancherelTheoryForRealSphericalSpacesConstructionOfTheBernsteinMorphisms}. For this we need a description of the constant term map from Section \ref{Subsection Wave packets - Constant term} for $\cF=\overline{\cC}$.

We recall the finite union of proper subspaces $\cS\subseteq (\fa/\fa_{\fh})^{*}$ and the set of elements $\{x_{\cO}:\cO\in (P\bs Z)_{\fa_{\fh}}\}$ from Section \ref{Subsection Construction - Description}.
For a finite dimensional unitary representation $\xi$ of $M_{Q}$, $\lambda\in i(\fa/\fa_{\fh})^{*}\setminus i\cS$ and $\cO\in (P\bs Z)_{\open}$ we write
$$
\Const_{\cO}(\xi:\lambda):\cD'(\overline{Q}:\xi:\lambda)^{H}\to\cD'(\overline{Q}:\xi:\lambda)^{H_{\emptyset}};
\quad \mu\mapsto \mu_{x_{\cO}H,\overline{\cC}}
$$
for the constant term map for the adapted point $z=x_{\cO}H$ and face $\cF=\overline{\cC}$.
Since we only consider $\cF=\overline{\cC}$ we have dropped the subscript $\cF$.

Our description of the constant term map will be given in terms of the intertwining operators from Section \ref{Subsection Distribution vectors - Intertwining operators}. We recall the choice of a set of representatives $\fN$ of $\cN/\cZ$ in $\cN\cap K$  and the space $V_{\emptyset}^{*}(\xi)$ from Section \ref{Subsection Most continuous part - Plancherel decomposition for Z_empty}.

\begin{Prop}\label{Prop Const mu formula}
Let $\xi$ be a finite dimensional unitary representation of $M_{Q}$, $\lambda\in i(\fa/\fa_{\fh})^{*}\setminus i\cS$ and $\cO\in (P\bs Z)_{\open}$. For every $\mu\in \cD'(\overline{Q}:\xi:\lambda)^{H}$ the distribution
$$
\cA\big(Q:\overline{Q}:v^{-1}\cdot \xi:\Ad^{*}(v^{-1})\lambda\big)\circ \cI^{\circ}_{v^{-1}}(\xi:\lambda)(\mu)
$$
is smooth in the point $x_{\cO}$. We have
$$
\Const_{\cO}(\xi:\lambda)\mu
=\mu^{\circ}_{\emptyset}(\xi:\lambda)\eta,
$$
where $\eta\in V^{*}_{\emptyset}(\xi)$ is given by
$$
\eta_{v}
=\ev_{x_{\cO}}\circ \cA\big(Q:\overline{Q}:v^{-1}\cdot \xi:\Ad^{*}(v^{-1})\lambda\big)\circ \cI^{\circ}_{v^{-1}}(\xi:\lambda)(\mu)
\qquad(v\in \fN).
$$
\end{Prop}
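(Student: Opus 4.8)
The statement says two things: first, that for $\mu\in\cD'(\overline{Q}:\xi:\lambda)^{H}$ the distribution $\cA(Q:\overline{Q}:v^{-1}\cdot\xi:\Ad^{*}(v^{-1})\lambda)\circ\cI^{\circ}_{v^{-1}}(\xi:\lambda)(\mu)$ is smooth at the point $x_{\cO}$; and second, the explicit limit-free formula for $\Const_{\cO}(\xi:\lambda)\mu$ in terms of these evaluations. The strategy is to reduce everything to the isomorphism $\mu^{\circ}(\xi:\lambda):V^{*}(\xi)\to\cD'(\overline{Q}:\xi:\lambda)^{H}$ of Theorem~\ref{Thm Description D'(overline Q:xi:lambda)^H}, so that it suffices to prove the identity for $\mu=\mu^{\circ}(\xi:\lambda)(\eta')$ with $\eta'$ running over a basis of $V^{*}(\xi)$, and then (by holomorphy, Theorem~\ref{Thm constant term}(\ref{Thm constant term - item 3})) only for $\lambda$ in a nonempty open subset of $i(\fa/\fa_{\fh})^{*}\setminus i\cS$ where the intertwining integrals converge absolutely. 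First I would fix $\cO\in(P\bs Z)_{\open}$, write $z=x_{\cO}H$, and use the defining property Theorem~\ref{Thm constant term}(\ref{Thm constant term - item 1}): for $X\in\overline{\cC}^{\,\circ}$ the constant term $\mu_{z,\overline{\cC}}$ is characterized by
\[
\lim_{t\to\infty}e^{-t\rho_{Q}(X)}\Big(R^{\vee}\big(\exp(tX)x_{\cO}\big)\mu-R^{\vee}\big(\exp(tX)\big)\mu_{z,\overline{\cC}}\Big)=0 .
\]
Since $\mu_{z,\overline{\cC}}\in\cD'(\overline{Q}:\xi:\lambda)^{H_{\emptyset}}$ by Theorem~\ref{Thm constant term}(\ref{Thm constant term - item 4}) (note $x_{\cO}H$ has $M\cap H_{z}=M\cap H$ by Proposition~\ref{Prop M cap xHx=M cap H}), and since $\mu^{\circ}_{\emptyset}(\xi:\lambda)$ of Corollary~\ref{Cor Description D'(Z,overline Q:xi:lambda) horospherical case} is an isomorphism onto that space, we may write $\mu_{z,\overline{\cC}}=\mu^{\circ}_{\emptyset}(\xi:\lambda)\eta$ for a unique $\eta\in V^{*}_{\emptyset}(\xi)$; the task is to identify the components $\eta_{v}$.

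\textbf{Identifying the components.} The key is the inversion formula (\ref{eq Inverse isomorphism horospherical case}) in Corollary~\ref{Cor Description D'(Z,overline Q:xi:lambda) horospherical case}, which recovers $\eta_{v}$ from $\mu_{z,\overline{\cC}}$ as $\ev_{e}\circ\cA(Q:\overline{Q}:v^{-1}\cdot\xi:\Ad^{*}(v^{-1})\lambda)\circ\cI^{\circ}_{v^{-1}}(\xi:\lambda)(\mu_{z,\overline{\cC}})$. So I would apply the operator $D_{v}:=\cA(Q:\overline{Q}:v^{-1}\cdot\xi:\Ad^{*}(v^{-1})\lambda)\circ\cI^{\circ}_{v^{-1}}(\xi:\lambda)$ to both $\mu$ and $\mu_{z,\overline{\cC}}$, and show that $D_{v}\mu$ is smooth near $x_{\cO}$ and that $(\ev_{x_{\cO}}D_{v})\mu=(\ev_{e}D_{v})\mu_{z,\overline{\cC}}=\eta_{v}$. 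For the smoothness and the transfer of the evaluation, I would feed the constant-term limit through $D_{v}$: because $\cI^{\circ}_{v^{-1}}(\xi:\lambda)$ is built from $L^{\vee}(v^{-1})$ and a standard intertwining operator (Section~\ref{Subsection Distribution vectors - Intertwining operators}), and the right translations $R^{\vee}(\exp(tX))$ commute with all of $D_{v}$ up to the scalar $a^{-\Ad^{*}(\text{w})\lambda+\rho_{Q}}$-type factors governed by Proposition~\ref{Prop Action N_A(H)}, the limit relation above is preserved after applying $D_{v}$. Concretely, $D_{v}\mu_{z,\overline{\cC}}$ lies in a space on which $R^{\vee}(\exp(tX))$ acts by an honest character (by (\ref{eq horospherical case a mu_v =a^(-v^-1 lambda+rho)}) and the transformation rule (\ref{eq transformation rule mu horospherical case})), so taking $X\in\overline{\cC}^{\,\circ}$ generic one can divide by the character and pass to the limit, landing exactly on $\ev_{e}(D_{v}\mu_{z,\overline{\cC}})=\eta_{v}$; the same limit computes $\lim_t R^{\vee}(\exp(tX))^{-1}(\text{character})^{-1}(D_{v}\mu)$ evaluated near $x_{\cO}$, forcing $D_{v}\mu$ to be smooth at $x_{\cO}$ with $\ev_{x_{\cO}}(D_{v}\mu)=\eta_{v}$. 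For the smoothness claim itself I would also invoke the explicit convergent-integral form (\ref{eq formula mu horospherical case}) of $\mu^{\circ}_{\emptyset}$ together with Proposition~\ref{Prop int formula for a(S_2:S_1:xi:lambda)} describing $\cA$ as integration over $N_{S_{2}}\cap\overline{N}_{S_{1}}$, which shows that on the relevant open Bruhat-type cell the resulting distribution is given by integration against a smooth function — so evaluation at $x_{\cO}$ is legitimate. By Theorem~\ref{Thm Description D'(overline Q:xi:lambda)^H} it suffices to check all this for $\mu=\mu^{\circ}(\xi:\lambda)(\eta')$, where the relevant supports are known (contained in $\overline{\cO'}$, with $\cO'\in(P\bs Z)_{\fa_{\fh}}$) and the intertwining operators act by the explicit integral formulas, making the smoothness at $x_{\cO}$ and the evaluation a direct computation.

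\textbf{Main obstacle.} The delicate point is the interchange of the operator $D_{v}$, which is a \emph{standard intertwining operator} given by a noncompact integral (hence only conditionally defined for $\lambda$ on the unitary axis, and meromorphic in general), with the \emph{asymptotic limit} $t\to\infty$ defining the constant term. Making this rigorous requires working first in the region $\Re\lambda(\alpha^{\vee})>0$ for $\alpha\in-\Sigma(Q)\cap\Sigma(v Q v^{-1})$ where (\ref{eq formula mu horospherical case}) and Proposition~\ref{Prop int formula for a(S_2:S_1:xi:lambda)} give absolutely convergent integrals, establishing the identity there, and then propagating to $i(\fa/\fa_{\fh})^{*}\setminus i\cS$ by analytic continuation using Theorem~\ref{Thm constant term}(\ref{Thm constant term - item 3}) (holomorphy of $\lambda\mapsto\Const_{\cO}(\xi:\lambda)\circ\mu^{\circ}(\xi:\lambda)$) and the meromorphy of the intertwining operators from Section~\ref{Subsection Distribution vectors - Intertwining operators}; one must check that the unavoidable poles of $D_{v}$ and of $\mu^{\circ}(\xi:\lambda)$ already lie in $\cS$ and so do not obstruct the continuation. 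A second, more bookkeeping-type obstacle is matching the normalizing $\gamma$-factors and the choice of representatives $v\in\fN$ versus $v_{w}$ in Section~\ref{Subsection Construction - Description}, so that the components of $\eta$ are indexed correctly over $\fN$; this is handled by the transformation rule (\ref{eq transformation rule mu horospherical case}) and the cocycle property $\cI^{\circ}_{v}\circ\cI^{\circ}_{w}=\cI^{\circ}_{vw}$ of the normalized intertwiners.
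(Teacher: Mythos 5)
Your overall skeleton is the same as the paper's: write $\Const_{\cO}(\xi:\lambda)\mu=\mu^{\circ}_{\emptyset}(\xi:\lambda)\eta$, recover $\eta_{v}$ via the inversion formula (\ref{eq Inverse isomorphism horospherical case}), apply the operator $D_{v}=\cA\big(Q:\overline{Q}:v^{-1}\cdot\xi:\Ad^{*}(v^{-1})\lambda\big)\circ\cI^{\circ}_{v^{-1}}(\xi:\lambda)$ to both $\mu$ and its constant term (using that $D_{v}$ commutes with right translations to push the limit relation (\ref{eq Constant term limit formula}) through), and then show $\ev_{x_{\cO}}(D_{v}\mu)=\ev_{e}(D_{v}\Const_{\cO}(\xi:\lambda)\mu)$. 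Up to that point you are on track.

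The genuine gap is in the last step, which you dispose of with ``one can divide by the character and pass to the limit, landing exactly on $\ev_{e}(D_{v}\mu_{z,\overline{\cC}})$.'' The limit in (\ref{eq Constant term limit formula}) is convergence of distributions (weak-$*$), and point evaluation is not continuous for that topology; knowing that $e^{-t\rho_{Q}(X)}R^{\vee}(\exp(tX)x_{\cO})\mu_{v}$ converges to $\mu_{v,\emptyset}$ as a distribution does not by itself tell you that $\mu_{v,\emptyset}$ near $e$ is the pointwise limit of the translated smooth function $\mu_{v}|_{\cO}$, nor that $\ev_{e}(\mu_{v,\emptyset})=\ev_{x_{\cO}}(\mu_{v})$. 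This is exactly where the paper inserts a real idea: writing $\chi=\ev_{x_{\cO}}(\mu_{v})$, it produces via Lemma \ref{Lemma Occurence of M-reps in regular functions} a regular $V_{\xi}^{*}$-valued function $f_{\chi}$ and scalar functions $f_{1},\dots,f_{r}\in\C[Z]^{(P)}$ so that $\mu_{v}|_{\cO}=\prod_{j}(f_{j}^{2})^{u_{j}}f_{\chi}$ on $\cO$. Each factor generates a finite-dimensional $G$-module, so the rescaled right translates $e^{-t\nu(X)}R(\exp(tX)x_{\cO})$ of each factor converge (in a finite-dimensional space, hence uniformly on compacta) to a lowest weight vector whose value at $e$ is computable; this upgrades the weak-$*$ convergence to uniform convergence of functions on a neighborhood of $e$ and yields $\ev_{e}(\mu_{v,\emptyset})=f_{\chi}(x_{\cO})=\chi$. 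Without some device of this kind your argument does not close.

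A secondary problem is your fallback strategy of first proving the identity in the tube $\Re\lambda(\alpha^{\vee})>0$ where the intertwining integrals converge absolutely, and then continuing analytically to $i(\fa/\fa_{\fh})^{*}\setminus i\cS$. Theorem \ref{Thm constant term} only provides $\Const_{\cO}(\xi:\lambda)$ holomorphically on an open neighborhood $\fU$ of $i(\fa/\fa_{\fh})^{*}\setminus i\cS$; there is no statement that $\fU$ reaches the region of absolute convergence of the $\cA$-integrals, so the two regions on which your two sides are defined need not overlap in a set from which you can continue. The paper's argument avoids this entirely by working directly at $\lambda\in i(\fa/\fa_{\fh})^{*}\setminus i\cS$.
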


\begin{proof}
Let $\mu\in\cD'(\overline{Q}:\xi:\lambda)^{H}$.
Since $\Const_{\cO}(\xi:\lambda)\mu\in \cD'(\overline{Q}:\xi:\lambda)^{H_{\emptyset}}$, it follows from Corollary \ref{Cor Description D'(Z,overline Q:xi:lambda) horospherical case} that there exists an $\eta\in V_{\emptyset}^{*}(\xi)$ so that $\Const_{\cO}(\xi:\lambda)\mu=\mu^{\circ}_{\emptyset}(\xi:\lambda)\eta$. Moreover, $\eta$ is given by
$$
\eta_{v}
=\ev_{e}\circ\cA\big(Q:\overline{Q}:v^{-1}\cdot \xi:\Ad^{*}(v^{-1})\lambda\big)\circ \cI^{\circ}_{v^{-1}}(\xi:\lambda)\circ\Const_{\cO}(\xi:\lambda)\mu
\qquad(v\in\fN).
$$
We set
$$
\mu_{v}
:=\cA\big(Q:\overline{Q}:v^{-1}\cdot \xi:\Ad^{*}(v^{-1})\lambda\big)\circ \cI^{\circ}_{v^{-1}}(\xi:\lambda)(\mu)
$$
and
$$
\mu_{v,\emptyset}
:=\cA\big(Q:\overline{Q}:v^{-1}\cdot \xi:\Ad^{*}(v^{-1})\lambda\big)\circ \cI^{\circ}_{v^{-1}}(\xi:\lambda)(\mu)\circ\Const_{\cO}(\xi:\lambda)(\mu).
$$
It then suffices to prove that for every $v\in\fN$
\begin{equation}\label{eq Const computation}
\ev_{x_{\cO}}(\mu_{v})
=\ev_{e}(\mu_{v,\emptyset}).
\end{equation}

It follows from  Theorem \ref{Thm constant term} that for every $X\in\cC$ the limit
$$
\lim_{t\to \infty} e^{-t\rho_{Q}(X)}\Big(R^{\vee}\big(\exp(tX)x_{\cO}\big)\mu_{v}-R^{\vee}\big(\exp(tX)\big)\mu_{v,\emptyset}\Big)
$$
exists and equals $0$.
Since $\mu_{v}$ is contained in $\cD'\big(Q:v^{-1}\cdot\xi:\Ad^{*}(v^{-1})\lambda\big)^{H}$, it is given by a smooth function on the open subset $\cO$. Let $\chi=\ev_{x_{\cO}}(\mu_{v})$. Then
$$
\mu_{v}(manx_{\cO}h)=a^{-\lambda+\rho_{Q}}\xi^{\vee}(m)\chi
\qquad\big(m\in M, a\in A, n\in N_{P}, h\in H).
$$
By Lemma \ref{Lemma Occurence of M-reps in regular functions} there exists a $\nu\in(\fa/\fa_{\fh})^{*}$ and a regular function $f_{\chi}:G\to V_{\xi}^{*}$ so that
$$
f_{\chi}(manx_{\cO}h)=a^{\nu}\xi^{\vee}(m)\chi
\qquad(m\in M, a\in A, n\in N_{P}, h\in H).
$$
Let $\nu_{1},\dots,\nu_{r}\in \Lambda$ be a basis of $(\fa/\fa_{\fh})^{*}$. Then there exist regular functions $f_{1},\dots,f_{r}:G\to\R$ so that
$$
f_{j}(manx_{\cO}h)=a^{\nu_{j}}
\qquad(1\leq j\leq r, m\in M, a\in A, n\in N_{P}, h\in H).
$$
Note that each $f_{j}$ is real valued and thus $f_{j}^{2}$ is non-negative.
Now
$$
\mu_{v}\big|_{\cO}
=\Big(\prod_{j=1}^{r}\big(f_{j}^{2}\big)^{u_{j}}f_{\chi}\Big)\big|_{\cO}
$$
where $u_{j}\in \C$ is determined by
$$
-\lambda+\rho_{Q}-\nu
=2\sum_{j=1}^{r}u_{j}\nu_{j}.
$$
Let $V$ be the span of $R(G)f_{\chi}$. Then $V$ is finite dimensional and the restriction of $R$ to $V$ has lowest weight $\nu$. Note that $f_{\chi}$ is an $H$-fixed vector in $V$.
The limit of
$$
e^{-t\nu(X)}R\big(\exp(tX)x_{\cO}\big)f_{\chi}
$$
for $t\to\infty$ exist and is a non-zero lowest weight vector in $V$. In fact,
$$
\ev_{e}\Big(\lim_{t\to\infty}e^{-t\nu(X)}R\big(\exp(tX)x_{\cO}\big)f_{\chi}\Big)
=f_{\chi}(x_{\cO})
=\chi.
$$
Likewise, for every $1\leq j\leq r$ the span $V_{j}$ of $R(G)f_{j}$ is finite dimensional and the restriction of $R$ to $V_{j}$ has lowest weight $\nu_{j}$. The limits of
$$
e^{-t\nu_{j}(X)}R\big(\exp(tX)x_{\cO}\big)f_{j}
\qquad(1\leq j\leq r)
$$
for $t\to\infty$ exist, and
$$
\ev_{e}\Big(\lim_{t\to\infty}e^{-t\nu_{j}(X)}R\big(\exp(tX)x_{\cO}\big)f_{j}\Big)
=f_{j}(x_{\cO})
=1
\qquad(1\leq j\leq r).
$$
It follows that
$$
e^{-t\nu(X)}R^{\vee}\big(\exp(tX)x_{\cO}\big)\mu_{v}
$$
converges for $t\to\infty$ uniformly on a neighborhood of $e$ in $G$, and the limit $\mu_{v,\emptyset}$ satisfies
$$
\ev_{e}(\mu_{v,\emptyset})
=\ev_{e}\Big(\lim_{t\to\infty}e^{-t\nu(X)}R^{\vee}\big(\exp(tX)x_{\cO}\big)\mu_{v}\Big)
=\chi
=\ev_{x_{\cO}}(\mu_{v}).
$$
This establishes (\ref{eq Const computation}).
\end{proof}

In view of Theorem \ref{Thm Description D'(overline Q:xi:lambda)^H} and Corollary \ref{Cor Description D'(Z,overline Q:xi:lambda) horospherical case} there exists for every $\cO\in(P\bs Z)_{\open}$ a unique linear map
$$
\const_{\cO}(\xi:\lambda):V^{*}(\xi)\to V^{*}_{\emptyset}(\xi)
$$
so that the diagram
$$
\xymatrix{
   \cD'(\overline{Q}:\xi:\lambda)^{H} \ar[rr]^{\Const_{\cO}(\xi:\lambda)}
        && \cD'(\overline{Q}:\xi:\lambda)^{H_{\emptyset}}  \\
\\
   V^{*}(\xi)\ar@<-2pt>[uu]^{\mu^{\circ}(\xi:\lambda)} \ar[rr]^{\const_{\cO}(\xi:\lambda)}
       && V_{\emptyset}^{*}(\xi)\ar@<-2pt>[uu]_{\mu_{\emptyset}^{\circ}(\xi:\lambda)}
}
$$
commutes. We end this section with a description of this map $\const_{\cO}(\xi:\lambda)$ in terms of the $B$-matrices and the map $\beta(\xi:\lambda)$  from Section \ref{Subsection Construction - B-matrices}.

We recall the maps $s_{w}$ for $w\in\cN$ from (\ref{eq def s_w}).

\begin{Prop}\label{Prop const formula}
Let $\xi$ be a finite dimensional unitary representation of $M_{Q}$, $\lambda\in i(\fa/\fa_{\fh})^{*}\setminus i\cS$ and $\cO\in (P\bs Z)_{\open}$.
Then for every $\eta\in V^{*}(\xi)$ and $v\in \fN$
\begin{equation}\label{eq Formula const}
\Big(\const_{\cO}(\xi:\lambda)\eta\Big)_{v}
=\frac{1}{\gamma(v\overline{Q}v^{-1}:\overline{Q}:\xi:\lambda)}\Big(\cB_{v^{-1}}(Q:\xi:\lambda)\circ\beta(\xi:\lambda)^{-1}\eta\Big)_{\cO}.
\end{equation}
In particular, if $\eta\in V^{*}(\xi)$, $\cO\in (P\bs Z)_{\open}$ and $v_{w}$ is the representative in $K\cap \cN$ of an element $w\in \cN/\cW$ from Section \ref{Subsection Construction - Description}, then
\begin{equation}\label{eq Formula const for v_w components}
\Big(\const_{\cO}(\xi:\lambda)\eta\Big)_{v_{w}}
=\eta_{s_{v_{w}}(\cO)}.
\end{equation}
\end{Prop}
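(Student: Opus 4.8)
### Proof proposal for Proposition \ref{Prop const formula}

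The plan is to compute $\const_{\cO}(\xi:\lambda)$ by chasing the defining diagram and combining it with the explicit constant-term formula of Proposition \ref{Prop Const mu formula}. First I would recall that $\const_{\cO}(\xi:\lambda)$ is the unique map making the square with vertical arrows $\mu^{\circ}(\xi:\lambda)$ and $\mu_{\emptyset}^{\circ}(\xi:\lambda)$ and horizontal arrows $\const_{\cO}(\xi:\lambda)$ and $\Const_{\cO}(\xi:\lambda)$ commute; both vertical maps are isomorphisms by Theorem \ref{Thm Description D'(overline Q:xi:lambda)^H} and Corollary \ref{Cor Description D'(Z,overline Q:xi:lambda) horospherical case} for $\Im\lambda\notin\cS$. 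Hence for $\eta\in V^{*}(\xi)$ I have $\const_{\cO}(\xi:\lambda)\eta = \big(\mu_{\emptyset}^{\circ}(\xi:\lambda)\big)^{-1}\circ\Const_{\cO}(\xi:\lambda)\circ\mu^{\circ}(\xi:\lambda)(\eta)$. Apply Proposition \ref{Prop Const mu formula} to the distribution $\mu := \mu^{\circ}(\xi:\lambda)(\eta)$: its constant term $\Const_{\cO}(\xi:\lambda)\mu$ equals $\mu_{\emptyset}^{\circ}(\xi:\lambda)\eta'$ with $\eta'_{v}=\ev_{x_{\cO}}\circ\cA\big(Q:\overline{Q}:v^{-1}\cdot\xi:\Ad^{*}(v^{-1})\lambda\big)\circ\cI^{\circ}_{v^{-1}}(\xi:\lambda)(\mu)$. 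So it remains to identify this $\eta'_{v}$ with the right-hand side of (\ref{eq Formula const}).

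The key algebraic step is to relate $\cI^{\circ}_{v^{-1}}(\xi:\lambda)$ composed with $\cA\big(Q:\overline{Q}:v^{-1}\cdot\xi:\Ad^{*}(v^{-1})\lambda\big)$ and evaluation at $x_{\cO}$ to the $\cB$-matrix $\cB_{v^{-1}}(Q:\xi:\lambda)$, up to the scalar $\gamma(v\overline{Q}v^{-1}:\overline{Q}:\xi:\lambda)^{-1}$. Recall from Section \ref{Subsection Construction - Normalization} that $\mu^{\circ}(\xi:\lambda) = \cA(Q:\overline{Q}:\xi:\lambda)^{-1}\circ\mu(Q:\xi:\lambda)\circ\beta(\xi:\lambda)^{-1}$. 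Substituting $\mu=\mu^{\circ}(\xi:\lambda)(\eta)$ and pushing the operator $\cA\big(Q:\overline{Q}:v^{-1}\cdot\xi:\Ad^{*}(v^{-1})\lambda\big)\circ\cI^{\circ}_{v^{-1}}(\xi:\lambda)$ through, I would expand $\cI^{\circ}_{v^{-1}}$ via its definition in terms of $L^{\vee}(v^{-1})$ and $\cA(vQv^{-1}:Q:\cdots)$, and use the cocycle relations for the normalized operators to rewrite the composite as $\gamma$-scalar times $\cA(vQv^{-1}:Q:\xi:\lambda)\circ$ (the unnormalized intertwiner), whose evaluation at $x_{\cO}$ is precisely the $\cO$-component of $\cB_{v^{-1}}(Q:\xi:\lambda)$ applied to $\beta(\xi:\lambda)^{-1}\eta$ — this is exactly the recipe from (\ref{eq Def beta}) and the observation right after it that $\big(\cB_{v_{w}^{-1}}(Q:\xi:\lambda)\eta\big)_{\cO} = \ev_{v_{w}x_{\cO}}\circ\cA(v_{w}Qv_{w}^{-1}:Q:\xi:\lambda)\circ\mu(Q:\xi:\lambda)(\eta)$, suitably conjugated so that evaluation happens at $x_{\cO}$ rather than $v x_{\cO}$ after the left translation $L^{\vee}(v^{-1})$ is absorbed. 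Tracking the $\gamma$-factors carefully (using the symmetry relations $\gamma(v^{-1}\overline{Q}v:\overline{Q}:\xi:\lambda) = \gamma(v\overline{Q}v^{-1}:\overline{Q}:v\cdot\xi:\Ad^{*}(v)\lambda)$ up to the $W$-invariance and $\overline{\lambda}$-conjugation relations listed in Section \ref{Subsection Distribution vectors - Intertwining operators}, together with $\lambda\in i(\fa/\fa_{\fh})^{*}$) should produce exactly the scalar $\gamma(v\overline{Q}v^{-1}:\overline{Q}:\xi:\lambda)^{-1}$ in (\ref{eq Formula const}).

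For the final assertion (\ref{eq Formula const for v_w components}), I would specialize $v=v_{w}$ with $w\in\cN/\cW$ and invoke Proposition \ref{Prop Properties of B-matrix}(\ref{Prop Properties of B-matrix - item 3}): for $v_{w}^{-1}$, the $\cB$-matrix acts by $\big(\cB_{v_{w}^{-1}}(Q:\xi:\lambda)\circ\iota_{\cO}(\zeta)\big)_{\cO'} = \zeta$ if $\cO'=s_{v_{w}}(\cO)$ and $0$ otherwise on the open-orbit components; combined with the fact (Section \ref{Subsection Construction - B-matrices}) that $\beta(\xi:\lambda)$ is upper triangular with the $v_{w}$-diagonal entry equal to $\gamma(v_{w}\overline{Q}v_{w}^{-1}:\overline{Q}:\xi:\lambda)^{-1}$ and that, by (\ref{eq Def beta}), precomposing $\cB_{v_{w}^{-1}}$ with $\beta(\xi:\lambda)^{-1}$ and multiplying by $\gamma(v_{w}\overline{Q}v_{w}^{-1}:\overline{Q}:\xi:\lambda)$ exactly cancels this factor — so $\big(\const_{\cO}(\xi:\lambda)\eta\big)_{v_{w}} = \big(\beta(\xi:\lambda)\eta'\big)$-type cancellation yields $\eta_{s_{v_{w}}(\cO)}$. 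The main obstacle I anticipate is bookkeeping: getting all the $\gamma$-factors, the left-translation by $v^{-1}$, and the evaluation point to line up without sign or conjugation errors, since the identity $\mu^{\circ}$ involves a composition of three operators and $\cI^{\circ}_{v^{-1}}$ itself unwinds into two more. A clean way to manage this is to verify the identity first for $\lambda$ in the convergence range where all intertwiners and the distributions $\mu_{z}(Q:\xi:\lambda:\eta)$ are given by absolutely convergent integrals (as in Theorem \ref{Thm Construction on max rank orbits for Q} and Corollary \ref{Cor Description D'(Z,overline Q:xi:lambda) horospherical case}), where the computation reduces to a change of variables in nilpotent integrals, and then extend to all $\Im\lambda\notin\cS$ by meromorphic continuation, all families involved being meromorphic by the corollaries in Section \ref{Subsection Construction - B-matrices} and Theorem \ref{Thm constant term}(\ref{Thm constant term - item 3}).
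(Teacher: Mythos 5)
Your proposal matches the paper's proof essentially step for step: express $\big(\const_{\cO}(\xi:\lambda)\eta\big)_{v}$ via Proposition \ref{Prop Const mu formula}, unwind $\mu^{\circ}$ by its definition (\ref{eq Def mu^circ}), use the cocycle relations to identify $\cA\big(Q:\overline{Q}:v^{-1}\cdot\xi:\Ad^{*}(v^{-1})\lambda\big)\circ\cI^{\circ}_{v^{-1}}(\xi:\lambda)\circ\cA(Q:\overline{Q}:\xi:\lambda)^{-1}$ with $\gamma(v\overline{Q}v^{-1}:\overline{Q}:\xi:\lambda)^{-1}\cI_{v^{-1}}(Q:\xi:\lambda)$, and then apply the $\cB$-matrix diagram (\ref{eq B-matrix diagram}) and evaluation at $x_{\cO}$. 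For (\ref{eq Formula const for v_w components}) the paper simply substitutes $\beta(\xi:\lambda)^{-1}\eta$ into the defining relation (\ref{eq Def beta}), which is marginally more direct than your detour through Proposition \ref{Prop Properties of B-matrix}(\ref{Prop Properties of B-matrix - item 3}), but both are correct.
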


\begin{proof}
Let $\eta\in V^{*}(\xi)$, $\cO\in (P\bs Z)_{\open}$ and $v\in\fN$. Then by Proposition \ref{Prop Const mu formula}
$$
\Big(\const_{\cO}(\xi:\lambda)\eta\Big)_{v}
=\ev_{x_{\cO}}\circ \cA\big(Q:\overline{Q}:v^{-1}\cdot \xi:\Ad^{*}(v^{-1})\lambda\big)\circ \cI^{\circ}_{v^{-1}}(\xi:\lambda)\circ\mu^{\circ}(\xi:\lambda)(\eta).
$$
Using (\ref{eq Def mu^circ}) and  the identity
\begin{align*}
&\cA\big(Q:\overline{Q}:v^{-1}\cdot\xi:\Ad^{*}(v^{-1})\lambda\big)\circ\cI^{\circ}_{v^{-1}}(\xi:\lambda)
    \circ\cA\big(Q:\overline{Q}:\xi:\lambda\big)^{-1}\\
&\qquad=\frac{1}{\gamma\big(v\overline{Q}v^{-1}:\overline{Q}:\xi:\lambda\big)} L^{\vee}(v^{-1})\circ\cA\big(vQv^{-1}:Q:\xi:\lambda\big)\\
&\qquad=\frac{1}{\gamma\big(v\overline{Q}v^{-1}:\overline{Q}:\xi:\lambda\big)} I_{v^{-1}}(Q:\xi:\lambda),
\end{align*}
we find
\begin{align*}
&\Big(\const_{\cO}(\xi:\lambda)\eta\Big)_{v}\\
&\qquad=\frac{1}{\gamma(v\overline{Q}v^{-1}:\overline{Q}:\xi:\lambda)}\ev_{x_{\cO}}\circ I_{v^{-1}}(Q:\xi:\lambda)\circ\mu(Q:\xi:\lambda)
\circ\beta(\xi:\lambda)^{-1}(\eta).
\end{align*}
By (\ref{eq B-matrix diagram}) we thus have
\begin{align*}
&\Big(\const_{\cO}(\xi:\lambda)\eta\Big)_{v}\\
&\qquad=\frac{1}{\gamma(v\overline{Q}v^{-1}:\overline{Q}:\xi:\lambda)}\ev_{x_{\cO}}\circ \mu(Q:\xi:\lambda)\circ \cB_{v^{-1}}(Q:\xi:\lambda)
\circ\beta(\xi:\lambda)^{-1}(\eta)\\
&\qquad=\frac{1}{\gamma(v\overline{Q}v^{-1}:\overline{Q}:\xi:\lambda)}\Big(\cB_{v^{-1}}(Q:\xi:\lambda)\circ\beta(\xi:\lambda)^{-1}(\eta)\Big)_{\cO}.
\end{align*}
This proves (\ref{eq Formula const}). The identity (\ref{eq Formula const for v_w components}) follows from (\ref{eq Formula const}) and the definition (\ref{eq Def beta}) of the function $\beta(\xi:\lambda)$.
\end{proof}

For a finite dimensional unitary representation $\xi$ of $M_{Q}$ and $v\in\fN$ we define the space
\begin{equation}\label{eq Def V_empty,w(xi)}
V_{\emptyset,v}^{*}(\xi)
:=\bigoplus_{\cO\in (P\bs Z)_{\open}}(V_{\xi}^{*})^{M_{Q}\cap vHv^{-1}}.
\end{equation}
We view $V_{\emptyset,v}^{*}(\xi)$ as a subspace of $\bigoplus_{\cO\in(P\bs Z)_{\open}}V_{\emptyset}^{*}(\xi)$.
We now reorder the components of the constant term maps and thus define
$$
\const_{v}(\xi:\lambda)
:V^{*}(\xi)\to V_{\emptyset,v}^{*}(\xi)\subseteq\bigoplus_{\cO\in (P\bs Z)_{\open}}V_{\emptyset}^{*}(\xi)
$$
by setting
$$
\Big(\const_{v}(\xi:\lambda)\eta\Big)_{\cO}
:=\pr_{v}\circ\const_{\cO}(\xi:\lambda)
\qquad\big(\eta\in V^{*}(\xi),\cO\in (P\bs Z)_{\open}\big),
$$
where
\begin{equation}\label{eq Def pr_w}
\pr_{v}:V_{\emptyset}^{*}(\xi)\to (V_{\xi}^{*})^{M_{Q}\cap vHv^{-1}};
\quad\eta\mapsto \eta_{v}.
\end{equation}
Now Proposition \ref{Prop const formula} has the following corollary.

\begin{Cor}\label{Cor Formula const_v_w}
Let $\xi$ be a finite dimensional unitary representation of $M_{Q}$ and $\lambda\in i(\fa/\fa_{\fh})^{*}\setminus i\cS$.
Then for every $w\in \cN/\cW$
$$
\const_{v_{w}}(\xi:\lambda)\eta
=\big(\eta_{s_{v_{w}}(\cO)}\big)_{\cO\in (P\bs Z)_{\open}}
\in V_{\emptyset,v_{w}}^{*}(\xi).
$$

\end{Cor}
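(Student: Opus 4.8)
\textbf{Proof plan for Corollary \ref{Cor Formula const_v_w}.}
The plan is to simply unwind the two definitions involved and invoke the already-established identity \eqref{eq Formula const for v_w components} of Proposition \ref{Prop const formula}. By definition of $\const_{v}(\xi:\lambda)$, for every open $P$-orbit $\cO$ we have
$$
\Big(\const_{v_{w}}(\xi:\lambda)\eta\Big)_{\cO}
=\pr_{v_{w}}\circ\const_{\cO}(\xi:\lambda)\eta
=\Big(\const_{\cO}(\xi:\lambda)\eta\Big)_{v_{w}},
$$
where in the last equality I use the definition \eqref{eq Def pr_w} of $\pr_{v_{w}}$ as the $v_{w}$-component projection. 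Now Proposition \ref{Prop const formula}, specifically \eqref{eq Formula const for v_w components}, evaluates precisely this quantity: since $v_{w}$ is the chosen representative in $K\cap\cN$ of $w\in\cN/\cW$ from Section \ref{Subsection Construction - Description}, one has $\big(\const_{\cO}(\xi:\lambda)\eta\big)_{v_{w}}=\eta_{s_{v_{w}}(\cO)}$. Hence $\big(\const_{v_{w}}(\xi:\lambda)\eta\big)_{\cO}=\eta_{s_{v_{w}}(\cO)}$ for all $\cO\in(P\bs Z)_{\open}$, which is exactly the claimed formula.

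To complete the argument I would only need to check that the right-hand side indeed lands in the subspace $V_{\emptyset,v_{w}}^{*}(\xi)$ as asserted, i.e. that each $\eta_{s_{v_{w}}(\cO)}$ lies in $(V_{\xi}^{*})^{M_{Q}\cap v_{w}Hv_{w}^{-1}}$. This is where Proposition \ref{Prop M cap xHx=M cap H} enters: the map $s_{v_{w}}$ from \eqref{eq def s_w} sends an open orbit $\cO$ to the orbit $Pv_{w}x_{\cO}H$, whose equivalence class is $w\cdot(P\bs Z)_{\open}$, so $M_{Q,[s_{v_{w}}(\cO)]}=v_{w}M_{Q,[\cO]}v_{w}^{-1}=v_{w}(M_{Q}\cap H)v_{w}^{-1}=M_{Q}\cap v_{w}Hv_{w}^{-1}$, using that for open orbits $M_{Q,[\cO]}=M_{Q}\cap H$. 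Therefore the component $\eta_{s_{v_{w}}(\cO)}$ of $\eta\in V^{*}(\xi)$, which by the decomposition of $V^{*}(\xi)$ lies in $(V_{\xi}^{*})^{M_{Q,[s_{v_{w}}(\cO)]}}$, is fixed by $M_{Q}\cap v_{w}Hv_{w}^{-1}$, matching the definition \eqref{eq Def V_empty,w(xi)} of $V_{\emptyset,v_{w}}^{*}(\xi)$.

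There is essentially no obstacle here: the corollary is a bookkeeping reformulation of Proposition \ref{Prop const formula}, repackaging the per-orbit constant term maps $\const_{\cO}$ into per-$v$ maps $\const_{v}$ by transposing the role of the indices $\cO\in(P\bs Z)_{\open}$ and $v\in\fN$. The only point requiring a sentence of care is the membership statement above, and even that is immediate from Proposition \ref{Prop M cap xHx=M cap H} together with Theorem \ref{Thm properties of W-action}(\ref{Thm properties of W-action - item 1})--(\ref{Thm properties of W-action - item 2}). Thus the proof is a two-line computation followed by the membership check.
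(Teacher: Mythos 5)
Your argument is correct and matches the paper's proof, which likewise observes that the identity is just a reformulation of (\ref{eq Formula const for v_w components}) via the definitions of $\const_{v}$ and $\pr_{v}$. The additional membership check you include is fine but already implicit in the definitions of $V^{*}(\xi)$ and $V_{\emptyset,v}^{*}(\xi)$.
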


\begin{proof}
The identity is a reformulation of (\ref{eq Formula const for v_w components}).
\end{proof}

\subsection{Invariant differential operators}
\label{Subsection Most continuous part - Invariant differential operators}

Let $\xi$ be a finite dimensional unitary representation of $M_{Q}$. For $w\in \cN/\cW$ define the subspace of $V^{*}(\xi)$
\begin{equation}\label{eq Decom V^*(xi)}
V_{w}^{*}(\xi)
:=\bigoplus_{\cO\in w\cdot(P\bs Z)_{\open}}\big(V_{\xi}^{*}\big)^{M_{Q}\cap v_{w}Hv_{w}^{-1}}.
\end{equation}
We view $V^{*}_{w}(\xi)$ as a subspace of $V^{*}(\xi)$.
In this section we show that the subspaces $\mu^{\circ}(\xi:\lambda)\big(V^{*}_{w}(\xi)\big)$ of $\cD'(\overline{Q}:\xi:\lambda)^{H}$ for $w\in\cN/\cW$ are spectrally separated by the invariant differential operators on $Z$.
Recall the maps $\iota_{\cO}$ for $\cO\in (P\bs Z)_{\fa_{\fh}}$ from (\ref{eq Def iota_O}).

\begin{Prop}\label{Prop distributions separated by inv diff operators}
Let $\xi$ be a finite dimensional unitary representation of $M_{Q}$. For  every $w\in \cN/\cW$ and for $\lambda\in i(\fa/\fa_{\fh})^{*}$ outside of a finite union of proper subspaces  there exists a differential operator $D_{w}$ in the center of $\Diff(Z)$ so that
$$
D_{w}\circ\mu^{\circ}(\xi:\lambda)\circ\iota_{\cO}
=\left\{
   \begin{array}{ll}
     \mu^{\circ}(\xi:\lambda)\circ\iota_{\cO} & (\cO\in w\cdot(P\bs Z)_{\open}), \\
     0 & (\cO\notin w\cdot(P\bs Z)_{\open}).
   \end{array}
 \right.
$$
\end{Prop}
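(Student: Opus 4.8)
The plan is to use Knop's Harish-Chandra homomorphism for the ring $\Diff(Z)$ of $G$-invariant differential operators, combined with the explicit action of $\Diff(Z)$ through the constant term maps. The key observation is that the constant term maps $\Const_{\cO}(\xi:\lambda)$ intertwine the $\Diff(Z)$-action on $\cD'(\overline{Q}:\xi:\lambda)^{H}$ with the $\delta_{\overline{\cC}}$-image of $\Diff(Z)$ acting on $\cD'(\overline{Q}:\xi:\lambda)^{H_{\emptyset}}$; and on the horospherical space $Z_\emptyset$ the ring $\Diff(Z_\emptyset)$ acts semisimply on $\cD'(\overline{Q}:\xi:\lambda)^{H_{\emptyset}}$, with the generalized $A$-eigencharacter of the line $\mu_{\emptyset}^{\circ}(\xi:\lambda)\circ\iota_{v}$ recorded by \eqref{eq horospherical case a mu_v =a^(-v^-1 lambda+rho)}. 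This eigencharacter is, after applying the Harish-Chandra homomorphism, essentially $\Ad^{*}(v^{-1})\lambda$ up to the Weyl group. Thus distinct $W$-orbits of $\lambda$ are separated by elements of $\Diff(Z_\emptyset)$, and pulling back along $\delta_{\overline{\cC}}$ and the constant term map should separate the subspaces $\mu^{\circ}(\xi:\lambda)(V_w^{*}(\xi))$.

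\textbf{Key steps.} First I would recall from Section \ref{Subsection Wave packets - Preparation} that $\Diff_{0}(Z)=\C+\cI_\lambda$ generates enough of $\Diff(Z)$ via the finite-dimensional module $V_{\overline{\cC}}$, and that the injective algebra map $\delta_{\overline{\cC}}:\Diff(Z)\hookrightarrow\Diff(Z_{\overline{\cC}})=\Diff(Z_\emptyset)$ together with the $\fa$-weight sign conditions control the asymptotic behavior; in particular, for $D$ in the center of $\Diff(Z)$ I would use that $D$ and $\delta_{\overline{\cC}}(D)$ have the same ``leading term'' on $\overline{\cC}$. Second, I would show that for $w\in\cN/\cW$ and $\cO\in w\cdot(P\bs Z)_{\open}$, the constant term $\Const_{\cO}(\xi:\lambda)\circ\mu^{\circ}(\xi:\lambda)\circ\iota_{\cO}$ is, by Proposition \ref{Prop const formula} and Corollary \ref{Cor Formula const_v_w}, a nonzero multiple of $\mu_{\emptyset}^{\circ}(\xi:\lambda)\circ\iota_{v_w}$ (after reordering components); this is the crucial bridge. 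Third, using Theorem \ref{Thm constant term}(\ref{Thm constant term - item 1}) applied with $X\in\overline{\cC}^{\circ}$, I would argue that if $D\in\Diff(Z)$ annihilates $\mu^{\circ}(\xi:\lambda)\circ\iota_{\cO}$ then $\delta_{\overline{\cC}}(D)$ annihilates its constant term, and conversely: the asymptotic limit formula \eqref{eq Constant term limit formula} forces compatibility of the $\Diff(Z)$-action with the limiting $\Diff(Z_\emptyset)$-action along $\overline{\cC}^{\circ}$. Fourth, on $Z_\emptyset$ I would use \eqref{eq horospherical case a mu_v =a^(-v^-1 lambda+rho)}: the element $D_{X}\in\Diff(Z_\emptyset)$ (for $X\in\fa_\emptyset=\fa$, under the embedding $S(\fa_\emptyset)\hookrightarrow\Diff(Z_\emptyset)$) acts on $\mu_\emptyset^{\circ}(\xi:\lambda)\circ\iota_{v}$ by the scalar $(-\Ad^{*}(v^{-1})\lambda+\rho_Q)(X)$; hence by taking a polynomial in such $D_X$ — or more intrinsically an element of $\Diff_0(Z_\emptyset)$ separating the finitely many $W$-translates $\Ad^{*}(v_w^{-1})\lambda$, $w\in\cN/\cW$ — I obtain an operator $\tilde D_w\in\Diff(Z_\emptyset)$ acting as $1$ on $\mu_\emptyset^{\circ}(\xi:\lambda)\circ\iota_{v_w}$ and as $0$ on $\mu_\emptyset^{\circ}(\xi:\lambda)\circ\iota_{v_{w'}}$ for $w'\ne w$, valid for $\lambda$ outside a finite union of proper subspaces (those where $\Ad^{*}(v_{w}^{-1})\lambda=\Ad^{*}(v_{w'}^{-1})\lambda$ for some $w\ne w'$). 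Finally, I would descend $\tilde D_w$ through Knop's Harish-Chandra isomorphism: the center of $\Diff(Z)$ maps isomorphically (via $\delta_{\overline{\cC}}$ composed with the Harish-Chandra homomorphism of \cite{Knop_HarishChandraHomomorphism}) onto a subring of $\C[(\fa/\fa_E)^{*}]^{W_Z}$-type invariants, and I would pick $D_w$ in the center of $\Diff(Z)$ whose image matches $\tilde D_w$ up to the aforementioned identifications.

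\textbf{Main obstacle.} The hard part will be matching Knop's Harish-Chandra homomorphism for $\Diff(Z)$ with the elementary $S(\fa_\emptyset)$-action on $Z_\emptyset$ through the limit map $\delta_{\overline{\cC}}$, i.e.\ verifying that the center of $\Diff(Z)$ surjects (after restricting to the generic $\lambda$-stratum and modulo the $W_Z$-action) onto enough functions of $\lambda$ to separate the finitely many points $\{\Ad^{*}(v_w^{-1})\lambda : w\in\cN/\cW\}$. This amounts to: (a) the $\delta_{\overline{\cC}}$-image of the center of $\Diff(Z)$ in $\Diff(Z_\emptyset)\supseteq S(\fa_\emptyset)$ consists of $\cW$-invariant (equivalently $W_Z$-invariant modulo $\cZ$) polynomials in $\fa/\fa_\fh$, by the compatibility of Knop's homomorphism with degenerations; and (b) distinct cosets $w\cW$ give distinct $\cW$-orbits of $\Ad^{*}(v_w^{-1})\lambda$ for generic $\lambda$, which follows because $\cN/\cW$ indexes the distinct orbits and $\cW$ is exactly the stabilizer (Theorem \ref{Thm properties of W-action}). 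Once (a) and (b) are in place, a $\cW$-invariant polynomial separating the generic $\cN$-orbit points exists by a standard averaging/Lagrange-interpolation argument, the excluded $\lambda$'s form the stated finite union of proper subspaces, and the resulting central differential operator $D_w$ does the job by the constant-term compatibility established in steps two and three.
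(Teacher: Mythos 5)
Your overall strategy is the one the paper follows: intertwine the $\Diff(Z)$-action with the $\Diff(Z_{\emptyset})$-action through the constant term (Lemma \ref{Lemma Const D=delta(D) Const} and Corollary \ref{Cor r(delta D) const_w=const_w r(D)}), read off the spectrum on $Z_{\emptyset}$ from the explicit $A$-equivariance of $\mu^{\circ}_{\emptyset}(\xi:\lambda)\circ\iota_{v}$, and produce the projector $D_{w}$ by interpolation inside the image of Knop's Harish-Chandra homomorphism. The two supporting facts you take for granted are exactly the ones the paper has to prove: the compatibility of Knop's homomorphism with the degeneration $\delta_{\emptyset}$ (Lemma \ref{Lemma Knop HC-hom diagram commutes}, which requires constructing an explicit two-parameter toroidal degeneration) and the computation of the $\Cen(Z_{\emptyset,\C})$-characters on the image of $\mu^{\circ}_{\emptyset}(\xi:\lambda)\circ\iota_{v}$ (Lemma \ref{Lemma D(Z_empty)-character equals lambda on fa}).

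The genuine gap is in your separation step (a)/(b). The central operators you can access via Knop's homomorphism form $\Cen(Z_{\C})\simeq\C\big[-\rho_{B}+(\fj/\fj\cap\fh)^{*}_{\C}\big]^{W_{Z,\C}}$: invariants of the \emph{complex} little Weyl group $W_{Z,\C}$ on the full Cartan $\fj/\fj\cap\fh=(\fa/\fa_{\fh})\oplus(\ft/\ft\cap\fh)$, not $\cW$-invariant polynomials on $\fa/\fa_{\fh}$ alone as you assert in (a). Correspondingly, the objects to be separated are the full characters $W_{Z_{\emptyset},\C}\cdot\big(-\Ad^{*}(v_{w}^{-1})\lambda+\nu_{j}\big)$, where the $\nu_{j}$ carry a nontrivial $\ft$-component coming from $\xi$; a priori an element of $W_{Z,\C}$ could carry one of these onto one with a different $w$ by mixing the $\fa$- and $\ft$-directions, in which case no invariant interpolation polynomial exists. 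Your appeal to ``$\cW$ is exactly the stabilizer'' only controls the real group $W_{Z}=\cW/\cZ$ acting on $\fa/\fa_{\fh}$. To close the gap one needs (i) the identification $W_{Z}=N_{W_{Z,\C}}(\fa/\fa_{\fh})/Z_{W_{Z,\C}}(\fa/\fa_{\fh})$ from \cite[Theorem 9.5]{KnopKrotz_ReductiveGroupActions}, and (ii) a further regularity condition on $\lambda$, beyond the stabilizer condition, guaranteeing that any $v\in N_{G_{\C}}(\fj/\fj\cap\fh)$ with $\Ad^{*}(v)\lambda\in i(\fa/\fa_{\fh})^{*}$ normalizes both $(\fa/\fa_{\fh})^{*}$ and $(\ft/\ft\cap\fh)^{*}$; one then uses that the real forms $(\fa/\fa_{\fh})^{*}\oplus i(\ft/\ft\cap\fh)^{*}$ and $i(\fa/\fa_{\fh})^{*}\oplus(\ft/\ft\cap\fh)^{*}$ are preserved, so imaginary parts cannot leak between the two summands. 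Both conditions still only remove a finite union of proper subspaces of $i(\fa/\fa_{\fh})^{*}$, so the statement survives, but without them the interpolation producing $p_{w}$, and hence $D_{w}=\gamma_{Z_{\C}}^{-1}(p_{w})$, is unjustified.
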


Before we prove the proposition we first give a corollary, which we will use in Section \ref{Subsection Most continuous part - Plancherel decomposition}.
By Corollary \ref{Cor Multiplicity space is V^*(xi)} the multiplicity spaces $\cM_{\xi,\lambda}$ in (\ref{eq abstract Plancherel decomp L^2_mc}) can for almost every $\lambda\in i(\fa/\fa_{\fh})^{*}$  be identified with $V^{*}(\xi)$ via the map $\mu^{\circ}(\xi:\lambda)$. The multiplicity spaces are Hilbert spaces and thus equipped with an inner product. We write $\langle\cdot,\cdot\rangle_{\Pl,\xi,\lambda}$ for the inner product on $V^{*}(\xi)$ that is induced by the inner product on the multiplicity space $\cM_{\xi,\lambda}$.

\begin{Cor}\label{Cor Orthogonal decomposition V^*(xi)}
The decomposition
$$
V^{*}(\xi)
=\bigoplus_{w\in \cN/\cW}V^{*}_{w}(\xi).
$$
is for almost every $\lambda\in i(\fa/\fa_{\fh})^{*}$ orthogonal with respect to the inner product $\langle\cdot,\cdot\rangle_{\Pl,\xi,\lambda}$.
\end{Cor}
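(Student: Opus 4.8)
The statement to be proved is Corollary \ref{Cor Orthogonal decomposition V^*(xi)}: for almost every $\lambda \in i(\fa/\fa_{\fh})^{*}$, the decomposition $V^{*}(\xi) = \bigoplus_{w \in \cN/\cW} V^{*}_{w}(\xi)$ is orthogonal with respect to the Plancherel inner product $\langle\cdot,\cdot\rangle_{\Pl,\xi,\lambda}$ transported to $V^{*}(\xi)$ via the isomorphism $\mu^{\circ}(\xi:\lambda)$. The key input is Proposition \ref{Prop distributions separated by inv diff operators}, which produces, for generic $\lambda$ and each $w \in \cN/\cW$, a differential operator $D_{w}$ in the \emph{center} of $\Diff(Z)$ that acts as the identity on $\mu^{\circ}(\xi:\lambda)(V^{*}_{w}(\xi))$ and annihilates $\mu^{\circ}(\xi:\lambda)(V^{*}_{w'}(\xi))$ for $w' \ne w$. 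In other words, $D_{w}$ realizes the projection onto the $w$-block of $V^{*}(\xi)$ along the other blocks.

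\textbf{Main steps.} First I would recall the compatibility of the abstract Plancherel decomposition with $\Diff(Z)$. Because each $D \in \Diff(Z)$ is a $G$-invariant differential operator on $Z$, it acts on $L^{2}(Z)$ as a densely defined operator commuting with the $G$-action; under the Plancherel isomorphism (\ref{eq abstract Plancherel decomposition}), restricted to $L^{2}_{\mc}(Z)$ with its explicit description (\ref{eq abstract Plancherel decomp L^2_mc}), such an operator acts fibrewise, i.e.\ for almost every $\lambda$ it induces an operator on the multiplicity space $\cM_{\xi,\lambda}$ (and on $\Ind_{\overline{Q}}^{G}(\xi\otimes\lambda\otimes\1)$ it acts by a scalar, being in particular central and hence acting by its infinitesimal character on the irreducible constituents). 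The crucial point is that the induced action on $\cM_{\xi,\lambda}$ coincides — via $\mu^{\circ}(\xi:\lambda)$, which identifies $\cM_{\xi,\lambda}$ with $V^{*}(\xi)$ for almost every $\lambda$ by Corollary \ref{Cor Multiplicity space is V^*(xi)} — with the action of $D$ on $\cD'(\overline{Q}:\xi:\lambda)^{H}$ by the natural (right) representation. This is where one needs that the identification $\cD'(Z,\overline{Q}:\xi:\lambda) \simeq \cD'(\overline{Q}:\xi:\lambda)^{H}$ from (\ref{eq Identification D'(Z) simeq D'(G)^H}) intertwines the $\Diff(Z)$-action with the action of the center of $\cU(\fg)$ modulo the relevant ideal, as set up via the identifications recalled in Section \ref{Subsection Wave packets - Preparation} (the embedding $\Diff(Z) \hookrightarrow \cU(\fb)/\cU(\fb)\fb_{H}$). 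Once this fibrewise-action statement is in place, one applies it to the operators $D_{w}$ of Proposition \ref{Prop distributions separated by inv diff operators}: since $\cM_{\xi,\lambda}$ is a Hilbert space and $D_{w}$ is central hence its action there is normal — in fact, the $D_{w}$ generate a commutative family of idempotent-like operators — and since the multiplicity space inner product is $G$-invariant while $\Diff(Z)$ commutes with $G$, the image and kernel of $D_{w}$ on $\cM_{\xi,\lambda}$ must be mutually orthogonal. Indeed an idempotent operator (here $D_{w}$ acts as a projection on the subspaces in question, and one argues that $D_{w}$ restricted to the full $V^{*}(\xi)$ is idempotent, or at least that $\mathrm{Id} = \sum_{w} D_{w}$ with $D_{w}D_{w'} = 0$ for $w \ne w'$) that is realized by a formally self-adjoint operator is an \emph{orthogonal} projection. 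This yields that $V^{*}_{w}(\xi) = \mu^{\circ}(\xi:\lambda)^{-1}\big(\mathrm{Im}(D_{w}|_{\cM_{\xi,\lambda}})\big)$ and $\bigoplus_{w' \ne w} V^{*}_{w'}(\xi) = \mu^{\circ}(\xi:\lambda)^{-1}\big(\ker(D_{w}|_{\cM_{\xi,\lambda}})\big)$ are orthogonal for $\langle\cdot,\cdot\rangle_{\Pl,\xi,\lambda}$; summing over $w$ gives the claimed orthogonal decomposition. The exceptional set of $\lambda$ is the union of the finite union of proper subspaces from Proposition \ref{Prop distributions separated by inv diff operators} over the finitely many $w \in \cN/\cW$, together with the (measure-zero) set on which the abstract Plancherel identification or Corollary \ref{Cor Multiplicity space is V^*(xi)} fails, so the conclusion holds for almost every $\lambda$.

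\textbf{Main obstacle.} The routine-looking but genuinely delicate step is the compatibility claim: that the fibrewise action of a $G$-invariant differential operator on the multiplicity space $\cM_{\xi,\lambda}$ (as abstractly defined by the Plancherel disintegration) agrees, under $\mu^{\circ}(\xi:\lambda)$, with the action by the center of $\cU(\fg)$ on $\cD'(\overline{Q}:\xi:\lambda)^{H}$, and that self-adjointness of $D_{w}$ in $L^{2}(Z)$ descends to self-adjointness (hence orthogonality of image/kernel) on each fibre. One must be careful that the $D_{w}$ of Proposition \ref{Prop distributions separated by inv diff operators}, while in the center of $\Diff(Z)$, need not a priori be formally self-adjoint; the fix is to replace $D_{w}$ by $D_{w}^{*}D_{w}$ or to note that on each irreducible $\Ind_{\overline{Q}}^{G}(\xi\otimes\lambda\otimes\1)$ the operator $D_{w}$ acts by a (real, because of the structure of the infinitesimal character on the unitary axis $\lambda \in i(\fa/\fa_{\fh})^{*}$) scalar, so on the fibre $V^{*}(\xi) \otimes \Ind_{\overline{Q}}^{G}(\xi\otimes\lambda\otimes\1)$ it acts as $(\text{scalar}_{w}) \cdot \mathrm{Id}$ where the scalar is $1$ on the $w$-block and $0$ elsewhere — and a diagonalizable operator with real eigenvalues that commutes with a self-adjoint family and whose eigenspaces exhaust the space has mutually orthogonal eigenspaces in any Hilbert structure for which it is symmetric. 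Assembling this carefully, with attention to the density/domain issues for unbounded operators and to the measurability of the fibrewise decomposition, is the technical heart of the argument; everything else is bookkeeping with the isomorphisms already established in Sections \ref{Section Construction}–\ref{Section Wave packets}.
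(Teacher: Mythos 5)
Your proposal is correct and follows essentially the same route as the paper: apply Proposition \ref{Prop distributions separated by inv diff operators}, observe that the formal adjoint of a central element of $\Diff(Z)$ is again central so that the induced operator $r(\xi:\lambda)(D_{w})$ on $\cM_{\xi,\lambda}\simeq V^{*}(\xi)$ is normal, and conclude that its eigenspaces (the $w$-block for eigenvalue $1$, the remaining blocks for eigenvalue $0$) are mutually orthogonal. The self-adjointness worry in your last paragraph is unnecessary — normality, which you already derive from centrality, is all that is needed — so the proposed fixes via $D_{w}^{*}D_{w}$ or reality of scalars can be dropped.
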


\begin{proof}
Every differential operator $D\in \Diff(Z)$ defines a operator on $L^{2}(Z)$ with domain $\cD(Z)$. To every $D\in \Diff(Z)$ we can associate a formal adjoint $D^{*}$ which is defined by
$$
\int_{Z}D^{*}\phi(z)\overline{\psi(z)}\,dz
=\int_{Z}\phi(z)\overline{D\psi(z)}\,dz
\qquad\big(\phi,\psi\in\cD(Z)\big).
$$
It is easy to see that $D^{*}$ is a $G$-invariant differential operators and thus is contained in $\Diff(Z)$. Furthermore, if $D$ is contained in the center of $\Diff(Z)$, then for all $\phi,\psi\in\cD(Z)$ and $D'\in \Diff(Z)$
\begin{align*}
\int_{Z}D'D^{*}\phi(z)\overline{\psi(z)}\,dz
&=\int_{Z}\phi(z)\overline{DD'^{*}\psi(z)}\,dz
=\int_{Z}\phi(z)\overline{D'^{*}D\psi(z)}\,dz\\
&=\int_{Z}D^{*}D'\phi(z)\overline{\psi(z)}\,dz,
\end{align*}
and hence $D^{*}$ is contained in the center of $\Diff(Z)$ as well.

Since the differential operators in $\Diff(Z)$ commute with the regular representation of $G$,  each $D\in \Diff(Z)$ induces for $\xi\in\widehat{M}_{Q,\mathrm{fu}}$ and $\lambda\in i(\fa/\fa_{\fh})^{*}$ an operator $r(\xi:\lambda)(D)$ on the multiplicity space $\cM_{\xi,\lambda}=V^{*}(\xi)$. The operator is given by
\begin{equation}\label{eq Def r(xi,lambda)(D)}
D\circ\mu^{\circ}(\xi:\lambda)
=\mu^{\circ}(\xi:\lambda)\circ r(\xi:\lambda)(D)
\qquad\big(D\in \Diff(Z)\big).
\end{equation}
Furthermore, if ${}^{\dagger}$ denotes the hermitian conjugate with respect to the inner product $\langle\cdot,\cdot,\rangle_{\Pl,\xi,\lambda}$, then
$$
r(\xi:\lambda)(D^{*})
=\big(r(\xi:\lambda)(D)\big)^{\dagger}
\qquad\big(D\in \Diff(Z)\big).
$$
If $D$ is contained in the center of $\Diff(Z)$, then $r(\xi,\lambda)(D)$ commutes with $r(\xi,\lambda)(D)^{\dagger}$, and hence $r(\xi,\lambda)(D)$ is normal. In particular, eigenspaces corresponding to different eigenvalues are orthogonal to each other. The assertion now follows from Proposition \ref{Prop distributions separated by inv diff operators}.

\end{proof}

In the remainder of this section we give the proof of Proposition \ref{Prop distributions separated by inv diff operators}. We begin with a description and comparison of $\Diff(Z)$ and $\Diff(Z_{\emptyset})$.

The right-action induces a natural isomorphism
$$
\Diff(Z)
\simeq \cU(\fg)_{H}/\cU(\fg)\fh,
$$
where
$$
\cU(\fg)_{H}
:=\big\{u\in \cU(\fg):\Ad(h)u-u\in \cU(\fg)\fh\text{ for all }h\in H\big\}.
$$
Likewise, we have
$$
\Diff(Z_{\C})
\simeq \cU(\fg)_{H_{\C}}/\cU(\fg)\fh,
$$
where
$$
\cU(\fg)_{H_{\C}}
:=\big\{u\in \cU(\fg):\Ad(h)u-u\in \cU(\fg)\fh\text{ for all }h\in H_{\C}\big\}.
$$
Clearly, $\Diff(Z_{\C})\subseteq \Diff(Z)$.
Let $\fU(Z_{\C})$ be the algebra of completely regular differential operators on $Z_{\C}$ from \cite[Section 3]{Knop_HarishChandraHomomorphism} and let $\Cen(Z_{\C})$ be its center. We consider $\Cen(Z_{\C})$ as a subalgebra of $\Diff(Z)$. By \cite[Corollary 9.2]{Knop_HarishChandraHomomorphism} $\Cen(Z_{\C})$ is contained in the center of $\Diff(Z_{\C})$. Knop's Harish-Chandra homomorphism identifies $\Cen(Z_{\C})$ as a polynomial ring.

In order to describe Knop's Harish-Chandra homomorphism we first apply the local structure theorem, \cite[Theorem 4.2]{KnopKrotz_ReductiveGroupActions}, to $Z_{\C}$. Let $B$ be a Borel subgroup of $G_{\C}$ that is contained in $P_{\C}$. The local structure theorem then yields a parabolic subgroup $R$ of $G_{\C}$ and a Levi-decomposition $R=L_{R}N_{R}$ so that $R\cap H_{\C}=L_{R}\cap H_{\C}$ is a normal subgroup of $L_{R}$ and $L_{R}/(L_{R}\cap H_{\C})$ is a torus. By \cite[Lemma  9.3]{KnopKrotz_ReductiveGroupActions} $L_{R}$ may be chosen so that $A_{\C}\subseteq L_{R}$. Let now $\ft$ be a maximal abelian subalgebra of $\fm\cap\fl_{R}$. Then $\fj:=\fa\oplus\ft$ is a Cartan subalgebra of $\fg$. Without loss of generality we may assume that $\fj_{\C}=\fa_{\C}\oplus \ft_{\C}$ is contained in the Lie algebra of $B$. Let $\rho_{B}$ be the half-sum of the roots of $\Lie(B)$ in $\fj$. Further, let $W_{Z,\C}$ be the little Weyl group of $Z_{\C}$, see \cite[(9.13)]{KnopKrotz_ReductiveGroupActions}, and $W_{\C}$ the Weyl group of the root system of $\fg_{\C}$ in $\fj$. Finally, let $\gamma:\Cen(\fg)\to\C[\fj_{\C}^{*}]^{W_{\C}}$ be the Harish-Chandra isomorphism.
By \cite[Theorem 6.5]{Knop_HarishChandraHomomorphism} there exists a canonical isomorphism $\gamma_{Z_{\C}}:\Cen(Z_{\C})\to\C[-\rho_{B}+(\fj/\fj\cap \fh)_{\C}^{*}]^{W_{Z,\C}}$ so that the diagram
\begin{equation}\label{eq Knop HC-iso}
\xymatrixcolsep{3pc}\xymatrix{
   \Cen(\fg)\ar@<-2pt>[d]\ar[r]^{\gamma}    &  \C[\fj_{\C}]^{W_{\C}}\ar@<-2pt>[d]\\
  \Cen(Z_{\C})\ar[r]^-{\gamma_{Z_{\C}}} & \C\big[-\rho_{B}+(\fj/\fj\cap \fh)_{\C}^{*}\big]^{W_{Z,\C}}
}
\end{equation}
commutes. Here the right vertical arrow is the restriction map. The map $\gamma_{Z_{\C}}$ is Knop's Harish-Chandra homomorphism.

Let $\fb=\fm\oplus\fa\oplus\fn_{Q}$ and $\fb_{H}=(\fm\cap\fh)\oplus\fa_{\fh}$. Then $\cU(\fb)\fb_{H}$ is a two-sided ideal of $\cU(\fb)$. From the Poincaré-Birkhoff-Witt theorem we obtain the isomorphism
$$
\cU(\fb)_{H}/\cU(\fb)\fb_{H}
\simeq \cU(\fg)_{H}/\cU(\fg)\fh
\simeq\Diff(Z),
$$
where $\cU(\fb)_{H}=\cU(\fb)\cap \cU(\fg)_{H}$.
We thus may view $\Diff(Z)$ as a subring of $\cU(\fb)/\cU(\fb)\fb_{H}$. The same holds for $\Diff(Z_{\emptyset})$.
We recall from \cite[Lemma 5.2]{DelormeKrotzSouaifi_ConstantTerm} that the limit
$$
\lim_{t\to\infty}\Ad\big(\exp(tX)\big)D
$$
exists (in $\cU(\fb)/\cU(\fb)\fb_{H}$) for every $D\in \Diff(Z)$ and $X\in \cC$ and defines a $G$-invariant differential operator on $Z_{\emptyset}$. The limit does not depend on the choice of $X$. Moreover, the map
$$
\delta_{\emptyset}:\Diff(Z)\to\Diff(Z_{\emptyset});\quad D\mapsto \lim_{t\to\infty}\Ad\big(\exp(tX)\big)D
$$
is an injective algebra morphism.

\begin{Lemma}\label{Lemma Knop HC-hom diagram commutes}
The diagram
$$
\xymatrixcolsep{3pc}\xymatrix{
\Cen(\fg)\ar@<-2pt>[d]\ar[r]^{\gamma}    &  \C[\fj_{\C}]^{W_{\C}}\ar@<-2pt>[d]\\
  \Cen(Z_{\C}) \ar@{^{(}->}[d]^{\delta_{\emptyset}}\ar[r]^-{\gamma_{Z_{\C}}}&  \C\big[-\rho_{B}+(\fj/\fj\cap \fh)_{\C}^{*}\big]^{W_{Z,\C}} \ar@{^{(}->}[d]\\
  \Cen(Z_{\emptyset,\C})\ar[r]^-{\gamma_{Z_{\emptyset,\C}}} & \C\big[-\rho_{B}+(\fj/\fj\cap \fh)_{\C}^{*}\big]^{W_{Z_{\emptyset},\C}}
}
$$
commutes.
\end{Lemma}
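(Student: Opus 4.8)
The statement is a compatibility assertion: the two squares of the diagram commute, so their composite does. The top square is precisely the content of \cite[Theorem 6.5]{Knop_HarishChandraHomomorphism}, recalled above as \eqref{eq Knop HC-iso}, so nothing new is needed there. The bottom square is the one that requires work. The plan is to show that the vertical map $\delta_{\emptyset}:\Cen(Z_{\C})\to\Cen(Z_{\emptyset,\C})$, defined by the limit $D\mapsto \lim_{t\to\infty}\Ad(\exp(tX))D$ for $X\in\cC$ in the model $\cU(\fb)/\cU(\fb)\fb_{H}$, is compatible with Knop's Harish-Chandra homomorphisms $\gamma_{Z_{\C}}$ and $\gamma_{Z_{\emptyset,\C}}$ once the target polynomial rings are identified via the inclusion $W_{Z_{\emptyset},\C}\subseteq W_{Z,\C}$ (the little Weyl group of the horospherical degeneration $Z_{\emptyset}$ is trivial, and more generally $\Sigma_{Z_{\emptyset}}$ is empty, so the right-hand vertical arrow on the bottom is just the natural inclusion of $W_{Z,\C}$-invariants into all polynomials on $-\rho_{B}+(\fj/\fj\cap\fh)_{\C}^{*}$).

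\textbf{Key steps.} First I would fix the local structure theorem data for both $Z_{\C}$ and $Z_{\emptyset,\C}$: by construction $\fh_{\emptyset}=(\fl_{Q}\cap\fh)\oplus\overline{\fn}_{Q}$ arises as a limit $\fh_{z,\cF}$ for the face $\cF=\overline{\cC}$, and by Lemma \ref{Lemma properties fh_(z,cF)} one has $\fh_{\emptyset}\cap\fa=\fa_{\fh}$; in particular $\fj\cap\fh=\fj\cap\fh_{\emptyset}$, so the Cartan subalgebra $\fj=\fa\oplus\ft$ and the functional $\rho_{B}$ can be used for both spaces, and the two target rings of Knop's isomorphism live in the same ambient polynomial ring $\C[-\rho_{B}+(\fj/\fj\cap\fh)_{\C}^{*}]$. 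Second, I would trace an element $u\in\Cen(\fg)$ around the outer rectangle: going down-then-right along the left and bottom, $u$ maps to $\gamma_{Z_{\emptyset,\C}}(\delta_{\emptyset}(D_{u}))$; going right-then-down along the top and right it maps to $\gamma(u)$ restricted to $-\rho_{B}+(\fj/\fj\cap\fh)_{\C}^{*}$. The top square of \eqref{eq Knop HC-iso} applied to $Z_{\emptyset,\C}$ (which is just as legitimate a spherical space) already says $\gamma_{Z_{\emptyset,\C}}(D_{u})=\gamma(u)|_{-\rho_{B}+(\fj/\fj\cap\fh)_{\C}^{*}}$. So the problem reduces to showing $\gamma_{Z_{\emptyset,\C}}\circ\delta_{\emptyset}=\gamma_{Z_{\emptyset,\C}}$ on the image of $\Cen(\fg)$, i.e. that $\delta_{\emptyset}$ fixes the distinguished differential operators $D_{u}$ coming from $\cZ(\fg)$. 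Third, I would verify this last point directly: $D_{u}$ for $u\in\cZ(\fg)$ is represented, in $\cU(\fb)/\cU(\fb)\fb_{H}$, by an element whose $\fa$-weights are all nonpositive on $\overline{\cC}$ and whose weight-zero component (for $X\in\cC$) already represents $D_{u}$ on $Z_{\emptyset}$ — this is exactly the statement recalled above that $\Diff_{0}(Z_{\emptyset})=\delta_{\emptyset}(\Diff_{0}(Z))$ and that $\delta_{\emptyset}$ is the identity on the subalgebra generated by $\cZ(\fg)$; alternatively one invokes that the limit map $\delta_{\emptyset}$ commutes with the algebra morphism $\cZ(\fg)\to\Diff(Z)$ because $u\in\cZ(\fg)$ is $\Ad(G)$-invariant, hence $\Ad(\exp(tX))D_{u}=D_{u}$ for all $t$. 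Finally, since both $\Cen(Z_{\C})$ and $\Cen(Z_{\emptyset,\C})$ are generated by the images of $\cZ(\fg)$ together with (for the middle target) the restriction picture of \eqref{eq Knop HC-iso}, and since all maps involved are algebra homomorphisms, commutativity on the generators gives commutativity of the bottom square; chaining with the top square of \eqref{eq Knop HC-iso} for $Z_{\C}$ yields the lemma.

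\textbf{Main obstacle.} The delicate point is the identification of Knop's target ring for $Z_{\emptyset,\C}$ with a subring of that for $Z_{\C}$ and the compatibility of the two instances of Knop's isomorphism $\gamma_{Z_{\C}}$ and $\gamma_{Z_{\emptyset,\C}}$: Knop's construction is canonical for each spherical space separately, so one must check that the choices made (Borel $B\subseteq P_{\C}$, parabolic $R=L_{R}N_{R}$ from the local structure theorem, Cartan $\fj$, functional $\rho_{B}$) can be made simultaneously for $Z_{\C}$ and $Z_{\emptyset,\C}$ and that under these compatible choices the little Weyl groups satisfy $W_{Z_{\emptyset},\C}\subseteq W_{Z,\C}$ with the expected containment of invariant subrings. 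This is where I would be most careful; once it is in place the rest is a formal diagram chase using that $\delta_{\emptyset}$ is an algebra morphism fixing the operators coming from $\cZ(\fg)$, together with the already-established square \eqref{eq Knop HC-iso}. The actual verification that $\delta_{\emptyset}$ fixes $D_{u}$ for $u\in\cZ(\fg)$ is routine given $\Ad(G)$-invariance of $u$, so it is the setup of compatible local-structure data, not any hard analysis, that constitutes the real work.
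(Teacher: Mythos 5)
Your reduction of the bottom square to the subalgebra $\Diff_{0}(Z_{\C})$, i.e.\ the image of $\Cen(\fg)$ in $\Cen(Z_{\C})$, is where the argument breaks down. The observation that $\delta_{\emptyset}$ fixes the operators $D_{u}$ for $u\in\Cen(\fg)$ (by $\Ad(G)$-invariance of $u$) is correct, and combined with the two instances of \eqref{eq Knop HC-iso} it does give commutativity of the outer rectangle, equivalently of the bottom square restricted to $\Diff_{0}(Z_{\C})$. But the lemma asserts commutativity on all of $\Cen(Z_{\C})$, and your closing claim that ``both $\Cen(Z_{\C})$ and $\Cen(Z_{\emptyset,\C})$ are generated by the images of $\cZ(\fg)$'' is false in general: by Knop's theorem $\Cen(Z_{\C})\simeq\C[-\rho_{B}+(\fj/\fj\cap\fh)_{\C}^{*}]^{W_{Z,\C}}$, whereas the image of $\Cen(\fg)$ corresponds to the restrictions of $W_{\C}$-invariant polynomials, which is typically a proper subalgebra over which the former is merely a finite module. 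Two algebra homomorphisms agreeing on such a subalgebra need not agree everywhere, so the commutativity of the bottom square on the remaining generators of $\Cen(Z_{\C})$ is exactly the content of the lemma and is left unproved by your argument.

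The paper closes this gap by a different mechanism: it realizes $\delta_{\emptyset}|_{\Cen(Z_{\C})}$ as the canonical map $i_{\emptyset}$ attached, via \cite[Lemma 3.1 \& 3.5]{Knop_HarishChandraHomomorphism}, to a one-parameter toroidal degeneration $Y_{\emptyset}$ of $Z_{\C}$ with special fiber $Z_{\emptyset,\C}$, and then builds a two-parameter degeneration $Y_{\hor}$ whose fibers interpolate between $Z_{\C}$, $Z_{\emptyset,\C}$ and a horospherical space $\cV\times G_{\C}/S$. Since $\gamma_{Z_{\C}}$ and $\gamma_{Z_{\emptyset,\C}}$ are themselves \emph{defined} as the canonical maps to the horospherical fiber, the uniqueness statement in Knop's Lemma 3.1 applied to the factorization $\iota_{\hor}=\iota_{2}\circ\iota_{1}$ of inclusions of fibers yields $i_{1}\circ i_{2}=i_{\hor}$ on all of $\fU(Z_{\C})$, which is precisely the commutativity of the bottom square on the full center. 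If you want to salvage your approach you would need an independent argument that the two maps $\gamma_{Z_{\emptyset,\C}}\circ\delta_{\emptyset}$ and $\gamma_{Z_{\C}}$ agree on a genuine generating set of $\Cen(Z_{\C})$; the degeneration formalism is how the paper supplies that.
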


\begin{proof}
We first follow \cite[Section 9]{Delorme_ScatteringOperators} by considering the partial toroidal $G_{\C} \times \C^{*}$-compactification $Y_{\emptyset}$ of $Z_{\C}\times \C^{*}$ attached to the fan with only one non trivial cone $\R^{+}(X, 1)$, where $X\in\cC/\fa_{\fh}$ is an element contained in the lattice of cocharacters of $\fa/\fa_{\fh}$.
By functoriality of compactifications we then find a regular map $\Delta_{\emptyset}:Y_{\emptyset} \to\C$ so that the diagram
$$
\xymatrixcolsep{3pc}\xymatrix{
Z_{\C}\times\C^{*}\ar@<-2pt>[d]\ar[r]    &  Y_{\emptyset}\ar@<-2pt>[d]^{\Delta_{\emptyset}}\\
  \C^{*}\ar@{^{(}->}[r]&  \C
}
$$
commutes. Now $\Delta_{\emptyset}^{-1}(\{t\})$ is isomorphic to $Z_{\C}$ for $t\neq 0$, and to $Z_{\emptyset,\C}$ for $t=0$. In the same way as in the proof for \cite[(9.1)]{Delorme_ScatteringOperators} it follows that $Y_{\emptyset}$ is a degeneration in the sense of Knop, \cite[Theorem 1.1]{Knop_HarishChandraHomomorphism}. To be more precise, embed $Z_{\C}$ in $\P(V)$ for some finite dimensional representation $V$ of $G_{\C}$ and let $\tilde{Z}\subseteq V$ be the affine cone of the closure of $Z_{\C}$ in $\P(V)$. We now provide $\C[\tilde{Z}]$ with a filtration induced by the cocharacter $X$ by setting
$$
\C[\tilde{Z}]^{(n)}
=\bigoplus_{k=0}^{n}\C[\tilde{Z}]_{(k)}
\qquad(n\in\N),
$$
where $\C[\tilde{Z}]_{(k)}$ is the eigenspace of $\C[\tilde{Z}]$ for the action of $X$ with eigenvalue $k$. We now define the ring
$$
R
=\bigoplus_{n=0}^{\infty}\C[\tilde{Z}]^{(n)}t^{n}
\subseteq \C[\tilde{Z}][t]
$$
and set $\tilde{Y}_{\emptyset}=\spec(R)$. Let $\tilde{\Delta}_{\emptyset}:\tilde{Y}_{\emptyset}\to \C$ be the map corresponding to inclusion homomorphism $\C[t]\hookrightarrow R$.
Now $Y_{\emptyset}$ is obtained as a suitable open $G_{\C}\times\C^{\times}$-invariant subset of the projectivization of $\tilde{Y}_{\emptyset}$. The map $\tilde{\Delta}_{\emptyset}$ factorizes and in this way we recover the map $\Delta_{\emptyset}$.

Let $\fU(\cX)$ be the algebra of completely regular differential operators on a $G_{\C}$-variety $\cX$, as defined on p. 262 in \cite{Knop_HarishChandraHomomorphism}, and let $\Cen(\cX):=\fU(\cX)^{G_{\C}}$. In view of \cite[Corollary 9.2]{Knop_HarishChandraHomomorphism} $\Cen(\cX)$ is contained in the center of $\Diff(\cX)$.
As in the proof of \cite[Theorem 6.5]{Knop_HarishChandraHomomorphism} we obtain a canonical map
$$
i_{\emptyset}:\fU(Z_{\C})=\fU(Z_{\C}\times\C)=\fU(Y_{\emptyset})\to\fU(Z_{\emptyset,\C}) .
$$
The first two equalities follow from \cite[Lemma 3.5]{Knop_HarishChandraHomomorphism} and the canonical map $\fU(Y_{\emptyset})\to\fU(Z_{\emptyset,\C})$ is obtained by applying \cite[Lemma 3.1]{Knop_HarishChandraHomomorphism} to the injection $Z_{\emptyset,\C}\hookrightarrow Y_{\emptyset}$. We note that $i_{\emptyset}$ maps $\Cen(Z_{\C})$ to $\Cen(Z_{\emptyset,\C})$. With the same arguments as in the proof of \cite[Proposition 9.2]{Delorme_ScatteringOperators} one shows that the restrictions of $i_{\emptyset}$ and $\delta_{\emptyset}$ to $\Cen(Z_{\C})$ coincide.

We now consider a horospherical degeneration $\Delta_{\hor}: Y_{\hor}\to\C$, in the sense of Knop, of the $G_{\C}\times \C^{\times}$-variety $Y_{\emptyset}$. For this, let $X'\in (\fa\oplus i\ft)/\big(\fa_{\fh}\oplus i(\ft\cap \fh)\big)$ be a cocharacter so that $\alpha(X')<0$ for every spherical root $\alpha$ of $Z_{\C}$.
We now provide $\C[\tilde{Z}]$ with a double filtration induced by the cocharacters $X$ and $X'$ by setting
$$
\C[\tilde{Z}]^{(m,n)}
=\bigoplus_{k=0}^{m}\bigoplus_{l=0}^{n}\C[\tilde{Z}]_{(k,l)}
\qquad(n\in\N),
$$
where $\C[\tilde{Z}]_{(k,l)}$ is the joint eigenspace of $\C[\tilde{Z}]$ for the actions of $X$ and $X'$ with eigenvalue $k$ and $l$, respectively. We now define the ring
$$
S
=\bigoplus_{m,n=0}^{\infty}\C[\tilde{Z}]^{(m,n)}s^{m}t^{n}
\subseteq \C[\tilde{Z}][s,t]
$$
and set $\tilde{Y}_{\hor}=\spec(S)$. Let $\tilde{\Delta}:\tilde{Y}_{\hor}\to \C$ be the map corresponding to the inclusion homomorphism $\C[s,t]\hookrightarrow S$.
The map $\tilde{\Delta}$ factorizes through the projectivization of $\tilde{Y}_{\hor}$. This yields a regular $G_{\C}\times \C^{\times}\times \C^{\times}$-equivariant map $\Delta$. By taking a suitable open $G_{\C}\times\C^{\times}\times\C^{\times}$-invariant subset $Y_{\hor}$ of the projectivization of $\tilde{Y}_{\hor}$ and passing to the restriction of $\Delta$ to $Y_{\hor}$, we may arrange things so that the fibers of $\Delta$ satisfy
$$
\Delta^{-1}(\{(s,t)\})\simeq
\left\{
  \begin{array}{ll}
    Z_{\C} & (s,t\neq 0) \\
    Z_{\emptyset,\C} & (s=0, t\neq 0)\\
    \cV\times G_{\C}/S & (s\in\C, t=0),\\
  \end{array}
\right.
$$
where $\cV$ is a complex algebraic variety on which $G_{\C}$ acts trivially and $S$ is a subgroup of $G_{\C}$ containing the unipotent radical of the Borel subgroup $B$.

We now consider the inclusions
\begin{align*}
&\iota_{1}:\cV\times G_{\C}/S=\Delta^{-1}(\{(0,0)\})\hookrightarrow \Delta^{-1}(\{0\}\times\C)=Y_{\emptyset},\\
&\iota_{2}:Y_{\emptyset}= \Delta^{-1}(\{0\}\times\C)\hookrightarrow Y_{\hor},\\
&\iota_{\hor}=\iota_{2}\circ\iota_{1}:\cV\times G_{\C}/S=\Delta^{-1}(\{(0,0)\})\hookrightarrow Y_{\hor}.
\end{align*}
As in the proof of \cite[Theorem 6.5]{Knop_HarishChandraHomomorphism} we may apply \cite[Lemma 3.1]{Knop_HarishChandraHomomorphism} to these inclusions and use this in combination with \cite[Lemma 3.5]{Knop_HarishChandraHomomorphism} to obtain canonical maps
$$
i_{1}:\fU(Z_{\emptyset,\C})\to\fU(G_{\C}/S),\quad
i_{2}:\fU(Z_{\C})\to \fU(Z_{\emptyset,\C}),\quad
i_{\hor}:\fU(Z_{\C})\to\fU(G_{\C}/S).
$$
The maps $\gamma_{Z_{\C}}$ and $\gamma_{Z_{\emptyset,\C}}$ are obtained from $i_{\hor}$ and $i_{1}$ by restricting them to $\Cen(Z_{\C})$ and $\Cen(Z_{\emptyset,\C})$, respectively, and applying \cite[Lemma 6.4.]{Knop_HarishChandraHomomorphism}. As $\iota_{2}\circ\iota_{1}=\iota_{\hor}$, the uniqueness of the maps obtained from\cite[Lemma 3.1]{Knop_HarishChandraHomomorphism} implies that $i_{1}\circ i_{2}=i_{\hor}$. Moreover, it follows from \cite[Lemma 3.5]{Knop_HarishChandraHomomorphism} that  $i_{2}=i_{\emptyset}$. This proves the lemma.
\end{proof}

For every $D\in\Diff(Z)$ the $\fa$-weights that occur in $D-\delta_{\emptyset}(D)$, considered as an element of $\cU(\fb)/\cU(\fb)\fb_{H}$, are strictly negative on $\cC$. When applied to $Z_{\emptyset}$ this leads to
$$
\Diff(Z_{\emptyset})\simeq S(\fa)/S(\fa)\fa_{\fh}\otimes \cU(\fm)_{H}/\cU(\fm)(\fm\cap \fh),
$$
where $\cU(\fm)_{H}:=\cU(\fm)\cap\cU(\fg)_{H}$. We note that $\cU(\fm)_{H}/\cU(\fm)(\fm\cap \fh)$ may be identified with the ring $\Diff\big(M/(M\cap H)\big)$ of $M$-invariant differential operators on $M/(M\cap H)$.
In particular,
$$
\Cen(Z_{\emptyset,\C})
\subseteq S(\fa)/S(\fa)\fa_{\fh}\otimes \Diff\big(M/(M\cap H)\big).
$$
The little Weyl group $W_{Z_{\emptyset},\C}$ of $Z_{\emptyset,\C}$ acts trivially on the subspace $(\fa/\fa_{\fh})^{*}_{\C}$ of $(\fj/\fj\cap\fh)^{*}_{\C}$ and is isomorphic to the little Weyl group $W_{Z,M}$ of $M_{\C}/(M_{\C}\cap H_{\C})$.

The set of characters of $\Cen(Z_{\C})$ is in view of (\ref{eq Knop HC-iso}) in bijection with  $\big(-\rho_{B}+(\fj/\fj\cap \fh)^{*}_{\C}\big)/W_{Z,\C}$. Likewise, the set of characters of $\Cen(Z_{\emptyset,\C})$ is in bijection with $\big(-\rho_{B}+(\fj/\fj\cap \fh)^{*}_{\C}\big)/W_{Z_{\emptyset},\C}$.

\begin{Lemma}\label{Lemma D(Z_empty)-character equals lambda on fa}
Let $\xi$ be a finite dimensional unitary representation of $M_{Q}$ and let $\lambda\in (\fa/\fa_{\fh})^{*}_{\C}$ with $\Im\lambda\notin\cS$. Let further $v\in \fN$. The image of $\mu^{\circ}_{\emptyset}(\xi:\lambda)\circ\iota_{v}$ is stable under the action of $\Diff(Z_{\emptyset})$. Moreover, if the $\Cen(Z_{\emptyset,\C})$-character $W_{Z_{\emptyset},\C}\cdot\nu\in \big(-\rho_{B}+(\fj/\fj\cap \fh)^{*}_{\C}\big)/W_{Z_{\emptyset},\C}$ occurs in the spectrum of the action of $\Cen(Z_{\emptyset,\C})$ on the image of $\mu^{\circ}_{\emptyset}(\xi:\lambda)\circ\iota_{v}$, then
\begin{align*}
&\Im \nu\big|_{\fa}=-\Ad(v^{-1})\lambda,\\
&\nu\big|_{\ft}\in i\ft^{*}/W_{Z,M}.
\end{align*}
\end{Lemma}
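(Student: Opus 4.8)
\textbf{Proof strategy for Lemma \ref{Lemma D(Z_empty)-character equals lambda on fa}.}
The plan is to exploit the very explicit formula for $\mu^{\circ}_{\emptyset}(\xi:\lambda)\circ\iota_{v}$ supplied by Corollary \ref{Cor Description D'(Z,overline Q:xi:lambda) horospherical case}. First I would reduce to the case $v=e$: by the transformation rule (\ref{eq transformation rule mu horospherical case}) we have
$$
\cI^{\circ}_{v}(\xi:\lambda)\circ\mu^{\circ}(\xi:\lambda)\circ\iota_{e}
=\mu^{\circ}\big(v\cdot\xi:\Ad^{*}(v)\lambda\big)\circ\iota_{v},
$$
and $\cI^{\circ}_{v}$ is a $G$-equivariant isomorphism onto its image, hence intertwines the $\Diff(Z_{\emptyset})$-actions. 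Therefore the $\Diff(Z_{\emptyset})$-stability of the image of $\mu^{\circ}_{\emptyset}(\xi:\lambda)\circ\iota_{v}$, and the computation of the $\Cen(Z_{\emptyset,\C})$-spectrum, follow once we treat the $\iota_{e}$-component; the induction parameter $\lambda$ gets replaced by $\Ad^{*}(v)\lambda$, which accounts for the shift $-\Ad(v^{-1})\lambda$ appearing in the statement.

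For the $\iota_{e}$-piece I would use the identity (\ref{eq horospherical case a mu_v =a^(-v^-1 lambda+rho)}), which for $v=e$ reads
$$
R^{\vee}(a)\,\mu^{\circ}(\xi:\lambda)\circ\iota_{e}
=a^{-\lambda+\rho_{Q}}\,\mu^{\circ}(\xi:\lambda)\circ\iota_{e}
\qquad(a\in A),
$$
together with the fact that $\mu^{\circ}(\xi:\lambda)\circ\iota_{e}$ is, by construction, supported on (the closure of) the open orbit $Z_{\emptyset}$ with $H_{\emptyset}=(L_{Q}\cap H)\overline{N}_{Q}$, so that the right-regular action of $M$ on it is, via the description in (\ref{eq mu^circ e component horospherical case}), essentially the action on $C^{\infty}\big(M/(M\cap H)\big)$ twisted by $\xi$. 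Using the identification $\Diff(Z_{\emptyset})\simeq S(\fa)/S(\fa)\fa_{\fh}\otimes\Diff\big(M/(M\cap H)\big)$, an element $D_{X}$ with $X\in\fa$ acts on $\mu^{\circ}(\xi:\lambda)\circ\iota_{e}$ by the scalar $(-\lambda+\rho_{Q})(X)$, and an element of $\cU(\fm)_{H}/\cU(\fm)(\fm\cap\fh)=\Diff\big(M/(M\cap H)\big)$ acts within the finite-dimensional $\xi$-isotypic picture; in particular the image is stable. The $\fa$-factor therefore contributes, through $\gamma_{Z_{\emptyset,\C}}$ and the commuting diagram of Lemma \ref{Lemma Knop HC-hom diagram commutes} (restricted along $\Cen(Z_{\emptyset,\C})\subseteq S(\fa)/S(\fa)\fa_{\fh}\otimes\Diff(M/(M\cap H))$), exactly the value $-\lambda$ on $\fa$ up to the $\rho_{B}$-shift built into Knop's normalization, i.e. $\Im\nu|_{\fa}=-\lambda$ after passing to $\Im$ because $\lambda\in i(\fa/\fa_{\fh})^{*}$ modulo $\cS$; the remaining part of $\nu$ lives on $\ft$ and is pinned down only up to $W_{Z,M}$ because the $M$-side spectrum of $\Diff\big(M/(M\cap H)\big)$ is governed by the little Weyl group $W_{Z,M}\cong W_{Z_{\emptyset},\C}$ of $M_{\C}/(M_{\C}\cap H_{\C})$.

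Concretely the steps are: (1) record that $\mu^{\circ}(\xi:\lambda)\circ\iota_{e}$ is an $H_{\emptyset}$-invariant distribution, smooth on the open orbit, with the stated $A$-transformation behaviour; (2) decompose $\Diff(Z_{\emptyset})$ as above and check that each tensor factor preserves the image, using that $M$ normalizes everything and $\fa$ acts by a character; (3) feed the commuting diagram of Lemma \ref{Lemma Knop HC-hom diagram commutes} to identify a $\Cen(Z_{\emptyset,\C})$-character occurring in the image with a $W_{Z_{\emptyset},\C}$-orbit $W_{Z_{\emptyset},\C}\cdot\nu$ in $-\rho_{B}+(\fj/\fj\cap\fh)^{*}_{\C}$, and read off $\nu|_{\fa}$ from step (1) and $\nu|_{\ft}\in i\ft^{*}/W_{Z,M}$ from the triviality of $W_{Z_{\emptyset},\C}$ on $(\fa/\fa_{\fh})^{*}_{\C}$; (4) transport back to general $v$ by (\ref{eq transformation rule mu horospherical case}). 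The main obstacle I anticipate is step (3): matching Knop's Harish-Chandra normalization (the $-\rho_{B}$ shift and the precise embedding of $(\fa/\fa_{\fh})^{*}$ into $(\fj/\fj\cap\fh)^{*}$) with the $\rho_{Q}$-shift in our induced-representation conventions, so that the scalar $-\lambda+\rho_{Q}$ on the $\fa$-factor translates cleanly to $\Im\nu|_{\fa}=-\Ad(v^{-1})\lambda$; this is a bookkeeping issue about compatible choices of positive systems and $\rho$-shifts rather than a conceptual one, and the fact that we only claim an identity for $\Im\nu$ (with $\lambda$ purely imaginary off $\cS$) is what makes the real $\rho$-shifts drop out.
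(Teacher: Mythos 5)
Your overall strategy coincides with the paper's: read off the $\fa$-part of the character from the explicit $A$-equivariance (\ref{eq horospherical case a mu_v =a^(-v^-1 lambda+rho)}), use the tensor decomposition $\Diff(Z_{\emptyset})\simeq S(\fa)/S(\fa)\fa_{\fh}\otimes \Diff\big(M/(M\cap H)\big)$ for stability, and take imaginary parts so that the real $\rho$-shifts in Knop's normalization are irrelevant. The reduction to $v=e$ via (\ref{eq transformation rule mu horospherical case}) is a cosmetic variant — the paper works directly with general $v$ since (\ref{eq horospherical case a mu_v =a^(-v^-1 lambda+rho)}) already carries the twist $\Ad^{*}(v^{-1})\lambda$ — and your appeal to the full diagram of Lemma \ref{Lemma Knop HC-hom diagram commutes} is more than is needed here (only the Harish-Chandra homomorphism $\gamma_{Z_{\emptyset,\C}}$ for $Z_{\emptyset,\C}$ itself enters; the compatibility with $\delta_{\emptyset}$ is used later, in Proposition \ref{Prop distributions separated by inv diff operators}).

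The one point you have not actually justified is the claim $\nu|_{\ft}\in i\ft^{*}/W_{Z,M}$. You attribute it to the triviality of $W_{Z_{\emptyset},\C}$ on $(\fa/\fa_{\fh})^{*}_{\C}$, but that only explains why the $\ft$-component of $\nu$ is well defined modulo $W_{Z,M}$; it says nothing about where in $\ft^{*}_{\C}$ it lies. The missing ingredient is an identification of the scalar by which $\Cen(\fm)\subseteq\Diff\big(M/(M\cap H)\big)$ acts on the image of $\mu^{\circ}_{\emptyset}(\xi:\lambda)\circ\iota_{v}$: from the explicit integral formula (\ref{eq formula mu horospherical case}) (for anti-dominant $\lambda$, then by meromorphic continuation) this is the infinitesimal character of $(v^{-1}\cdot\xi)|_{M}$, and since $\xi$ is finite dimensional and unitary this infinitesimal character is \emph{real}, i.e. lies in $i\ft^{*}$. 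That is what forces $\nu|_{\ft}\in i\ft^{*}/W_{Z,M}$. Your step (2) gestures at the "$\xi$-isotypic picture" but stops short of drawing this conclusion, so you should add it explicitly.
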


\begin{proof}
Let $X\in \fa$. The corresponding invariant differential operator $D_{X}=R^{\vee}(X)$ acts in view of (\ref{eq horospherical case a mu_v =a^(-v^-1 lambda+rho)}) on the image of $\mu^{\circ}_{\emptyset}(\xi:\lambda)\circ\iota_{v}$ by the scalar $-\Ad(v^{-1})\lambda(X)+\rho_{Q}(X)$.
Since the operators $D_{X}$ with $X\in \fa$ are contained in the center of $\Diff(Z_{\emptyset})$, it follows that action of $\Diff(Z_{\emptyset})$ preserves the image of $\mu^{\circ}_{\emptyset}(\xi:\lambda)\circ\iota_{v}$ if $\lambda$ is sufficiently regular. By meromorphic continuation the same then holds for all $\lambda$.

Assume that the $\Cen(Z_{\emptyset,\C})$-character $W_{Z_{\emptyset},\C}\cdot\nu\in \big(-\rho_{B}+(\fj/\fj\cap \fh)^{*}_{\C}\big)/W_{Z_{\emptyset},\C}$ occurs in the spectrum of the action of $\Cen(Z_{\emptyset,\C})$ on the image of $\mu^{\circ}_{\emptyset}(\xi:\lambda)\circ\iota_{v}$.
Since
$$
\Im \big(\gamma_{Z_{\emptyset}}(D_{X})\big)(\nu)
=\Im \nu(X)
$$
it follows that $\Im \nu|_{\fa}=-\Ad(v^{-1})\lambda$.

From the explicit formula (\ref{eq formula mu horospherical case}) for $\mu^{\circ}(\xi:\lambda)\circ\iota_{v}$ for sufficiently anti-dominant $\lambda$ and by using meromorphic continuation one easily sees that $\Cen(\fm)\subseteq\Diff\big(M/(M\cap H)\big)$ acts on the image of $\mu^{\circ}\circ\iota_{v}$ by the infinitesimal character of the restriction $(v^{-1}\cdot \xi)|_{M}$ of $v^{-1}\cdot\xi$ to $M$. Since this infinitesimal character is real, it follows that the restriction of $\nu$ to $\ft$ is contained $i\ft^{*}/W_{Z,M}$.
 \end{proof}

The final ingredient for the proof of Proposition \ref{Prop distributions separated by inv diff operators} is a relation between the constant term and the map $\delta_{\emptyset}$.

\begin{Lemma}\label{Lemma Const D=delta(D) Const}
Let $\xi$ be a finite dimensional unitary representation of $M_{Q}$ and $\lambda\in i(\fa/\fa_{\fh})^{*}\setminus i\cS$. Then
$$
\Const_{\cO}(\xi:\lambda)\circ D
=\delta_{\emptyset}(D)\circ\Const_{\cO}(\xi:\lambda)
\qquad\big(D\in \Diff(Z), \cO\in (P\bs Z)_{\open}\big).
$$
\end{Lemma}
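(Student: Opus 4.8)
The plan is to reduce the asserted commutation of the constant term map with invariant differential operators to the defining limit characterization of $\Const_{\cO}(\xi:\lambda)$ given in Theorem \ref{Thm constant term} (\ref{Thm constant term - item 1}). First I would fix $D\in\Diff(Z)$, an orbit $\cO\in(P\bs Z)_{\open}$, a representative $x_{\cO}\in G$ with $x_{\cO}H=z$ an adapted point with $M\cap H_z=M\cap H$ (which is arranged for the chosen representatives in Section \ref{Subsection Construction - Description}), and an element $X\in\cC$ in the interior of the face $\overline{\cC}$. For $\mu\in\cD'(\overline{Q}:\xi:\lambda)^{H}$ write $\mu_{z,\overline{\cC}}=\Const_{\cO}(\xi:\lambda)\mu$. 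By Theorem \ref{Thm constant term} (\ref{Thm constant term - item 1}) we have the limit identity
$$
\lim_{t\to\infty} e^{-t\rho_Q(X)}\Big(R^{\vee}\big(\exp(tX)x_{\cO}\big)\mu-R^{\vee}\big(\exp(tX)\big)\mu_{z,\overline{\cC}}\Big)=0
$$
in $\cD'(G,V_\xi)$. Since invariant differential operators act on $\cD'(Z)\simeq\cD'(G)^{H}$ via the right-regular representation of $\cU(\fg)$, and $D$ corresponds to an element of $\cU(\fb)_H/\cU(\fb)\fb_H$, applying $R^{\vee}(D)$ (the right action of the corresponding $u\in\cU(\fb)_H$) to both terms and using that $R^{\vee}(u)$ is continuous on $\cD'(G,V_\xi)$ should let me pass $R^{\vee}(u)$ through the limit.

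The key computational point is the conjugation behaviour of $u$ under $\Ad(\exp(tX))$. I would use that for $D\in\Diff(Z)$ the $\fa$-weights occurring in $D-\delta_\emptyset(D)$, viewed in $\cU(\fb)/\cU(\fb)\fb_H$, are strictly negative on $\cC$, which is precisely the fact recalled just before Lemma \ref{Lemma Knop HC-hom diagram commutes} (and in \cite[Lemma 5.2]{DelormeKrotzSouaifi_ConstantTerm}). Hence $\Ad(\exp(tX))u = \delta_\emptyset(u) + (\text{terms decaying like } e^{-\epsilon t})$, and the decaying remainder applied to $R^{\vee}(\exp(tX)x_{\cO})\mu$ will still tend to $0$ after the $e^{-t\rho_Q(X)}$ normalization, because the matrix coefficients of $\mu$ grow at most like $e^{t\rho_Q(X)+\langle\lambda\rangle(X)}$ on the relevant region by Theorem \ref{Thm temperedness} and $\langle\lambda\rangle=0$ for $\lambda\in i(\fa/\fa_{\fh})^{*}$. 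So applying $R^{\vee}(D)$ to the first term and rewriting,
$$
e^{-t\rho_Q(X)}R^{\vee}(D)R^{\vee}\big(\exp(tX)x_{\cO}\big)\mu
= e^{-t\rho_Q(X)}R^{\vee}\big(\exp(tX)x_{\cO}\big)R^{\vee}(D)\mu
$$
(using that $R^{\vee}$ is a representation and $D$ commutes with left translations, but here we need it on the right — actually $D$ acts on the right so it commutes with $R^{\vee}(g)$ only up to the conjugation just described, which is the content), and similarly for the second term $R^{\vee}(D)R^{\vee}(\exp(tX))\mu_{z,\overline{\cC}}$ produces $\delta_\emptyset(D)$ acting after $R^{\vee}(\exp(tX))$ since $\mu_{z,\overline{\cC}}$ is $H_\emptyset$-invariant and $\delta_\emptyset(D)\in\Diff(Z_\emptyset)$ is genuinely left-translation-invariant with respect to the relevant $\fa$-action. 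Thus the limit identity becomes
$$
\lim_{t\to\infty} e^{-t\rho_Q(X)}\Big(R^{\vee}\big(\exp(tX)x_{\cO}\big)\big(R^{\vee}(D)\mu\big)-R^{\vee}\big(\exp(tX)\big)\big(\delta_\emptyset(D)\mu_{z,\overline{\cC}}\big)\Big)=0.
$$
Since $R^{\vee}(D)\mu\in\cD'(\overline{Q}:\xi:\lambda)^{H}$ and $\delta_\emptyset(D)\mu_{z,\overline{\cC}}\in\cD'(\overline{Q}:\xi:\lambda)^{H_\emptyset}$, by the uniqueness in the characterization of the constant term (which by Theorem \ref{Thm constant term} (\ref{Thm constant term - item 4}) and Corollary \ref{Cor Description D'(Z,overline Q:xi:lambda) horospherical case} determines $\Const_{\cO}(\xi:\lambda)$ uniquely — the limit determines an element of $\cD'(\overline{Q}:\xi:\lambda)^{H_\emptyset}$ uniquely) we conclude $\Const_{\cO}(\xi:\lambda)\big(R^{\vee}(D)\mu\big)=\delta_\emptyset(D)\big(\Const_{\cO}(\xi:\lambda)\mu\big)$, which is the claim.

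The main obstacle I anticipate is making the interchange of $R^{\vee}(D)$ with the limit fully rigorous, in particular controlling the remainder term $\Ad(\exp(tX))u-\delta_\emptyset(u)$ after the $e^{-t\rho_Q(X)}$ weighting. One needs a version of the estimates in Theorem \ref{Thm temperedness} (or Theorem \ref{Thm constant term} (\ref{Thm constant term - item 2})) applied not just to $\mu$ but to $R^{\vee}(v)\mu$ for $v$ in a suitable finite-dimensional subspace of $\cU(\fg)$, together with the fact that the convergence in (\ref{eq Constant term limit formula}) is in the weak-$*$ (equivalently, on compacta after testing) topology on $\cD'(G,V_\xi)$, so that pairing with a fixed test function $\phi\in\cD(G,V_\xi)$ reduces everything to a scalar limit where the decaying-weight argument is elementary. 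Once the weak-$*$ convergence is used to localize to a single test function, the argument is a straightforward bookkeeping of $\fa$-weights using the sign conditions already recorded in the excerpt, so I expect no further serious difficulty; it suffices to note that by holomorphicity in $\lambda$ it is enough to treat $\lambda\in i(\fa/\fa_{\fh})^{*}\setminus i\cS$, where temperedness of all distributions in $\cD'(\overline{Q}:\xi:\lambda)^{H}$ holds by Theorem \ref{Thm temperedness}.
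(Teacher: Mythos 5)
Your proposal is correct and follows essentially the same route as the paper's proof: decompose $\Ad(\exp(tX))(D-\delta_{\emptyset}(D))$ using the strict negativity of its $\fa$-weights on $\cC$, kill the resulting $e^{-\epsilon t}$-weighted remainder via the temperedness estimates of Theorem \ref{Thm temperedness}, use that $X$ centralizes $\delta_{\emptyset}(D)$ to rewrite the limit identity (\ref{eq Constant term limit formula}) for $D\mu$, and conclude by the uniqueness of the constant term. The technical point you flag about interchanging $R^{\vee}(D)$ with the limit is handled in the paper exactly as you suggest, by observing that $\Ad(\exp(tX))u$ converges as $t\to\infty$ so the temperedness bound applies to $R^{\vee}(u)\mu$ uniformly.
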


\begin{proof}
After replacing $H$ by $x_{\cO}Hx_{\cO}^{-1}$ we may assume that $\cO=PH$.
Let $\mu\in \cD'(\overline{Q}:\xi:\lambda)^{H}$ and $D\in \Diff(Z)$.  For every $D\in\Diff(Z)$ the $\fa$-weights that occur in $D-\delta_{\emptyset}(D)$, considered as an element of $\cU(\fb)/\cU(\fb)\fb_{H}$, are strictly negative on $\cC$. Therefore, for a fixed $X\in \cC$ there exists an $\epsilon>0$ and a $u\in \cU(\fb)/\cU(\fb)\fb_{H}$ whose weights are non-positive on $\cC$ so that
$$
\Ad\big(\exp(tX)\big)\Big(D-\delta_{\emptyset}(D)\Big)
=e^{-\epsilon t}\Ad\big(\exp(tX)\big)u
\qquad(t>0).
$$
It follows from Theorem \ref{Thm temperedness} that
$$
\lim_{t\to\infty}e^{-t\rho_{Q}(X)-\epsilon t}R^{\vee}\big(\exp(tX)\big)\mu
=0.
$$
Since $\Ad\big(\exp(tX)\big)u$ converges for $t\to\infty$, it follows that
\begin{equation}\label{eq e^-t(rho+epsilon) exp(tX)mu ->0}
\lim_{t\to\infty}
e^{-t\rho_{Q}(X)-\epsilon t}R^{\vee}\big(\exp(tX)\big)R^{\vee}(u)\mu
=0.
\end{equation}
Using that $X$ centralizes $\delta_{\emptyset}(D)$, we obtain
\begin{align*}
&e^{-t\rho_{Q}(X)}\Big(R^{\vee}\big(\exp(tX)\big)D\mu-R^{\vee}\big(\exp(tX)\big)\delta_{\emptyset}(D)\mu_{\emptyset}\Big)\\
&\qquad=e^{-t\rho_{Q}(X)-\epsilon t} R^{\vee}\big(\exp(tX)\big)R^{\vee}(u)\mu\\
    &\qquad\qquad+e^{-t\rho_{Q}(X)}\delta_{\emptyset}(D)
        \Big(R^{\vee}\big(\exp(tX)\big)\mu-R^{\vee}\big(\exp(tX)\big)\mu_{\emptyset}\Big).
\end{align*}
In view of (\ref{eq e^-t(rho+epsilon) exp(tX)mu ->0}) and (\ref{eq Constant term limit formula}) in Theorem \ref{Thm constant term} the right-hand side converges to $0$ for $t\to\infty$. By \cite[Lemma 6.5]{DelormeKrotzSouaifi_ConstantTerm} this identifies $\delta_{\emptyset}(D)\Const_{\cO}(\xi:\lambda)(\mu)$ as the constant term of $D\mu$.
\end{proof}

Recall the maps $\pr_{v}$ from (\ref{eq Def pr_w}).
From Corollary \ref{Cor Description D'(Z,overline Q:xi:lambda) horospherical case} it is easily seen that for every $v\in\fN$ there exists an element $u\in S(\fa)/S(\fa)\fa_{\fh}\subseteq \Diff(Z_{\emptyset})$ so that the diagram
$$
\xymatrixcolsep{3pc}\xymatrix{
   \cD'(\overline{Q}:\xi:\lambda)^{H_{\emptyset}}\ar[r]^{R^{\vee}(u)}    &  \cD'(\overline{Q}:\xi:\lambda)^{H_{\emptyset}}\\ \\
  V_{\emptyset}^{*}(\xi)\ar[r]^-{\pr_{v}}\ar@<-2pt>[uu]^{\mu_{\emptyset}^{\circ}(\xi:\lambda)} &(V_{\xi}^{*})^{M_{Q}\cap vHv^{-1}}\ar@<-2pt>[uu]_{\mu_{\emptyset}^{\circ}(\xi:\lambda)\circ\iota_{v}}
}
$$
commutes. Note that $R^{\vee}(u)$ is contained in the center of $\Diff(Z_{\emptyset})$.
For $v\in\fN$ we define a map $r_{\emptyset,v}(\xi:\lambda):\Diff(Z_{\emptyset})\to\End\big(V_{\emptyset,v}^{*}(\xi)\big)$ similar to (\ref{eq Def r(xi,lambda)(D)}) by requiring that the identity
$$
D\mu_{\emptyset}^{\circ}(\xi:\lambda)(\eta_{\cO})
=\mu_{\emptyset}^{\circ}(\xi:\lambda)\Big( \big(r_{\emptyset,v}(\xi:\lambda)(D)\eta\big)_{\cO}\Big)
$$
holds for every $D\in \Diff(Z_{\emptyset})$, $\eta\in V^{*}_{\emptyset,v}(\xi)$ and $\cO\in(P\bs Z)_{\open}$.
As before $V^{*}_{\emptyset,v}(\xi)$ is considered here to be a subspace of $\bigoplus_{\cO\in(P\bs Z)_{\open}}V_{\emptyset}^{*}(\xi)$.
Now Lemma \ref{Lemma Const D=delta(D) Const} has the following immediate corollary.

\begin{Cor}\label{Cor r(delta D) const_w=const_w r(D)}
Let $\xi$ be a finite dimensional unitary representation of $M_{Q}$ and $\lambda\in i(\fa/\fa_{\fh})^{*}\setminus i\cS$. Then for every $D\in\Diff(Z)$ and $v\in\fN$
$$
\const_{v}(\xi:\lambda)\circ r(\xi:\lambda)(D)
=r_{\emptyset,v}(\xi:\lambda)\big(\delta_{\emptyset}(D)\big)\circ\const_{v}(\xi:\lambda).
$$
\end{Cor}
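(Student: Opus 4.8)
The plan is to obtain the identity as a purely formal consequence of Lemma~\ref{Lemma Const D=delta(D) Const}, combined with the defining relations of the operators $r(\xi:\lambda)$ and $r_{\emptyset,v}(\xi:\lambda)$ and the fact that the operator $R^{\vee}(u)\in\Diff(Z_{\emptyset})$ introduced just before the corollary implements the projection $\pr_{v}$ at the level of distributions and lies in the center of $\Diff(Z_{\emptyset})$. No new analytic input is needed beyond Lemma~\ref{Lemma Const D=delta(D) Const}; the work is entirely bookkeeping with the diagrams set up in this section.

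Concretely, I would fix $D\in\Diff(Z)$, $v\in\fN$, $\eta\in V^{*}(\xi)$ and $\cO\in(P\bs Z)_{\open}$, and put $\mu:=\mu^{\circ}(\xi:\lambda)(\eta)$. Unwinding the definition of $r(\xi:\lambda)(D)$ gives $D\mu=\mu^{\circ}(\xi:\lambda)\big(r(\xi:\lambda)(D)\eta\big)$, so the square defining $\const_{\cO}(\xi:\lambda)$ yields $\mu_{\emptyset}^{\circ}(\xi:\lambda)\big(\const_{\cO}(\xi:\lambda)(r(\xi:\lambda)(D)\eta)\big)=\Const_{\cO}(\xi:\lambda)(D\mu)$. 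By Lemma~\ref{Lemma Const D=delta(D) Const} the right-hand side equals $\delta_{\emptyset}(D)\,\Const_{\cO}(\xi:\lambda)(\mu)=\delta_{\emptyset}(D)\,\mu_{\emptyset}^{\circ}(\xi:\lambda)\big(\const_{\cO}(\xi:\lambda)(\eta)\big)$. Now I apply $R^{\vee}(u)$ (the operator attached to $v$) to both sides. On the left, the commuting square relating $R^{\vee}(u)$ and $\pr_{v}$ gives $R^{\vee}(u)\circ\mu_{\emptyset}^{\circ}(\xi:\lambda)=\mu_{\emptyset}^{\circ}(\xi:\lambda)\circ\iota_{v}\circ\pr_{v}$, so the left-hand side becomes $\mu_{\emptyset}^{\circ}(\xi:\lambda)\big((\const_{v}(\xi:\lambda)(r(\xi:\lambda)(D)\eta))_{\cO}\big)$. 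On the right, since $R^{\vee}(u)$ is central in $\Diff(Z_{\emptyset})$ it commutes with $\delta_{\emptyset}(D)$, and the same square turns $R^{\vee}(u)\,\delta_{\emptyset}(D)\,\mu_{\emptyset}^{\circ}(\xi:\lambda)(\const_{\cO}(\xi:\lambda)\eta)$ into $\delta_{\emptyset}(D)\,\mu_{\emptyset}^{\circ}(\xi:\lambda)\big((\const_{v}(\xi:\lambda)\eta)_{\cO}\big)$, which by the definition of $r_{\emptyset,v}(\xi:\lambda)$ equals $\mu_{\emptyset}^{\circ}(\xi:\lambda)\big((r_{\emptyset,v}(\xi:\lambda)(\delta_{\emptyset}(D))\const_{v}(\xi:\lambda)\eta)_{\cO}\big)$. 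Comparing, and using that $\mu_{\emptyset}^{\circ}(\xi:\lambda)$ is a linear isomorphism by Corollary~\ref{Cor Description D'(Z,overline Q:xi:lambda) horospherical case}, the $\cO$-components of $\const_{v}(\xi:\lambda)(r(\xi:\lambda)(D)\eta)$ and of $r_{\emptyset,v}(\xi:\lambda)(\delta_{\emptyset}(D))\const_{v}(\xi:\lambda)\eta$ agree for every $\cO$; as $\eta$ is arbitrary this is the asserted identity.

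The one point requiring a little care is that all the intermediate objects must lie in the subspaces where $\iota_{v}$, $\pr_{v}$ and $r_{\emptyset,v}(\xi:\lambda)$ are defined: one uses that $r_{\emptyset,v}(\xi:\lambda)$ maps $V^{*}_{\emptyset,v}(\xi)$ into itself and that, by Lemma~\ref{Lemma D(Z_empty)-character equals lambda on fa}, the image of $\mu^{\circ}_{\emptyset}(\xi:\lambda)\circ\iota_{v}$ is stable under $\Diff(Z_{\emptyset})$, so that applying $\delta_{\emptyset}(D)$ does not leave the $v$-block. I do not expect any genuine obstacle here; the only non-formal ingredient, Lemma~\ref{Lemma Const D=delta(D) Const}, has already been established.
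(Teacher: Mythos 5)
Your proof is correct and is exactly the unwinding the paper intends: the paper states this as an ``immediate corollary'' of Lemma~\ref{Lemma Const D=delta(D) Const} without further argument, and your chain of equalities — transporting the lemma through the squares defining $\const_{\cO}(\xi:\lambda)$, $r(\xi:\lambda)$, $r_{\emptyset,v}(\xi:\lambda)$ and the central operator $R^{\vee}(u)$ implementing $\pr_{v}$ — supplies precisely the bookkeeping that makes it immediate. Your closing remark about the $v$-block being preserved (via Lemma~\ref{Lemma D(Z_empty)-character equals lambda on fa}) correctly identifies the one point needed for $r_{\emptyset,v}(\xi:\lambda)\big(\delta_{\emptyset}(D)\big)$ to be well defined.
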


\begin{proof}[Proof of Proposition \ref{Prop distributions separated by inv diff operators}]
We only consider $\lambda\in i(\fa/\fa_{\fh})^{*}$ whose stabilizer in $\cN$ is equal to $\cZ$ and for which the implication for $v\in N_{G_{\C}}(\fj/\fj\cap \fh)$
$$
\Ad^{*}(v)\lambda\in i(\fa/\fa_{\fh})^{*}
\Rightarrow v\in N_{G_{\C}}\big((\fa/\fa_{\fh})^{*}\big)\cap N_{G_{\C}}\big((\ft/\ft\cap\fh)^{*}\big)
$$
holds.
It suffices to prove the proposition only for these $\lambda$ as all elements $i(\fa/\fa_{\fh})^{*}$ outside of a finite union of proper subspaces have these properties.

We claim that the $W_{Z,\C}$-orbit $W_{Z,\C}\cdot\lambda$ does not contain the points $\Ad^{*}(v_{w}^{-1})\lambda$ for $w\neq e\cW\in \cN/\cW$. To prove the claim we assume that there exists a $v\in W_{Z,\C}$ so that $v\cdot\lambda=\Ad^{*}(v_{w}^{-1})\lambda$ for some $w\in \cN/\cW$. We will prove the claim by showing that $w=e\cW$.

The assumption on $\lambda$ guarantees that $v$ normalizes $\fa/\fa_{\fh}$.
Let $N_{W_{Z,\C}}(\fa/\fa_{\fh})$ and $Z_{W_{Z,\C}}(\fa/\fa_{\fh})$ be the normalizer and centralizer, respectively, of $\fa/\fa_{\fh}$ in $W_{Z,\C}$.
By \cite[Theorem 9.5]{KnopKrotz_ReductiveGroupActions} the little Weyl group $W_{Z}$ of $Z$ is related to the little Weyl group $W_{Z,\C}$ of $Z_{\C}$ by
$$
W_{Z}
= N_{W_{Z,\C}}(\fa/\fa_{\fh})/Z_{W_{Z,\C}}(\fa/\fa_{\fh}).
$$
This identity is to be considered as an identity of finite reflection groups on $\fa/\fa_{\fh}$. It follows that $v\cdot \lambda\in W_{Z}\cdot\lambda=\Ad^{*}(\cW)\lambda$. Since the stabilizer of $\lambda$ in $\cN$ is by assumption equal to $\cZ$, it follows that $v_{w}\in \cW$, and hence $w=e\cW$. This proves the claim.

Let $v\in \cN$. After replacing $\lambda$ by $-\Ad^{*}(v^{-1})\lambda$ we may conclude from the claim that the $W_{Z,\C}$-orbit through $-\Ad^{*}(v^{-1})\lambda$ is disjunct from the $W_{Z,\C}$-orbits through $-\Ad^{*}({v'}^{-1})\lambda$ with $v'\in \cN\setminus v\cW$.

In view of Lemma \ref{Lemma D(Z_empty)-character equals lambda on fa} there exist $\nu_{1},\dots,\nu_{r}\in (\fa/\fa_{\fh})^{*}\oplus i(\ft/\ft\cap\fh)^{*}$ so that the $\Cen(Z_{\emptyset,\C})$-characters occurring in $\cD'(\overline{Q}:\xi:\lambda)^{H_{\emptyset}}$ are given by
$$
W_{Z,\C}\cdot \big(-\Ad(v_{w}^{-1})\lambda+\nu_{j}\big)
\qquad\big(w\in\cN/\cW, 1\leq j\leq r\big).
$$
The real subspaces $(\fa/\fa_{\fh})^{*}\oplus i(\ft/\ft\cap\fh)^{*}$ and $i(\fa/\fa_{\fh})^{*}\oplus (\ft/\ft\cap\fh)^{*}$ of $(\fj/\fj\cap \fh)^{*}_{\C}$ are stable under the action of $N_{G_{\C}}(\fj/\fj\cap \fh)$. Therefore, the $W_{Z,\C}$-orbits through $-\Ad^{*}(v^{-1})\lambda+\nu_{j}$ for $1\leq j\leq r$ are disjunct from the $W_{Z,\C}$-orbits through $-\Ad^{*}({v'}^{-1})\lambda+\nu_{j}$ for $1\leq j\leq r$ and $v'\in \cN\setminus v\cW$.

Let now $w\in\cN/\cW$. It follows that there exists a polynomial $p_{w}\in \C\big[-\rho_{B}+(\fj/\fj\cap\fh)^{*}_{\C}\big]^{W_{Z,\C}}$ so that
\begin{align*}
&p_{w}(-\Ad^{*}(v^{-1})\lambda+\nu_{j})=1,\qquad\big(1\leq j\leq r, v\in v_{w}\cW)\\
&p_{w}\big(-\Ad^{*}({v}^{-1})\lambda+\nu_{j}\big)=0\qquad\big(1\leq j\leq r, v\in\cN\setminus v_{w}\cW\big).
\end{align*}
Let $D_{w}:=\gamma_{Z}^{-1}\big(p_{w}(\lambda)\big)$. We claim that the differential operator $D$ has the desired properties.

To prove the claim, let $v\in \fN$.
Lemma \ref{Lemma Knop HC-hom diagram commutes}, Corollary \ref{Cor r(delta D) const_w=const_w r(D)} and the construction of $D$ guarantee that
\begin{align}\label{eq const_v circ r(D) formula}
\nonumber&\const_{v}(\xi:\lambda)\circ r(\xi:\lambda)(D_{w})\\
&\qquad=r_{\emptyset,v}(\xi:\lambda)\big(\delta(D_{w})\big)\circ \const_{v}(\xi:\lambda)
=\left\{
   \begin{array}{ll}
     \const_{v}(\xi:\lambda) & (v\cW=w), \\
     0 & (v\cW\neq w).
   \end{array}
 \right.
\end{align}
We now substitute $v=v_{w}$ with $w\in\cN/\cW$ and apply Corollary \ref{Cor Formula const_v_w}. We then have for $\eta\in V^{*}(\xi)$
$$
\Big(r(\xi:\lambda)(D_{w})\eta\Big)_{\cO}
=\left\{
   \begin{array}{ll}
     \eta_{\cO} & \big(\cO\in w\cdot (P\bs Z)_{\open}\big), \\
     0 & \big(\cO\notin w\cdot (P\bs Z)_{\open}\big).
   \end{array}
 \right.
$$
The proposition now follows by applying $\mu^{\circ}(\xi:\lambda)$ to both sides and using the identity $\mu^{\circ}(\xi:\lambda)\circ r(\xi:\lambda)(D_{w})=D_{w}\circ\mu^{\circ}(\xi:\lambda)$.
\end{proof}

\subsection{Plancherel decomposition of $L^{2}_{\mc}(Z)$}
\label{Subsection Most continuous part - Plancherel decomposition}
For a finite dimensional unitary representation $\xi$ of $M_{Q}$ and $\lambda\in i(\fa/\fa_{\fh})^{*}\setminus i\cS$ we define the Fourier transform
$$
\Ft f(\xi:\lambda)\in \Hom_{\C}\big(V^{*}(\xi^{\vee}),C^{\infty}(\overline{Q}:\xi:\lambda)\big)=V^{*}(\xi)\otimes C^{\infty}(\overline{Q}:\xi:\lambda)
$$
of a function $f\in\cD(Z)$ by
\begin{align*}
\Ft f(\xi:\lambda)\eta
=\int_{Z}f(g H)
    R^{\vee}(g)\Big(\mu^{\circ}(\xi^{\vee}:-\lambda)\eta\Big)\,dgH.
\end{align*}
Let $\langle\cdot,\cdot\rangle_{\xi^{\vee}}$ be the inner product on $V^{*}(\xi^{\vee})$ induced by the inner product on $V_{\xi^{\vee}}$, and let $\langle\cdot,\cdot\rangle_{\xi,\lambda}$ be the inner product on
$V^{*}(\xi)\otimes \Ind_{\overline{Q}}^{G}(\xi\otimes\lambda\otimes\1)$ induced by the inner products $\langle\cdot,\cdot\rangle_{\xi}$ and $\langle \cdot, \cdot\rangle_{\overline{Q},\xi,\lambda}$ on $V^{*}(\xi)$ and $\Ind_{\overline{Q}}^{G}(\xi\otimes\lambda\otimes\1)$, respectively.
We recall that $\widehat{M}_{Q,\mathrm{fu}}$ denotes the set of equivalence classes of finite dimensional unitary representations of $M_{Q}$ and that  $(\fa/\fa_{\fh})_{+}^{*}$ is a fundamental domain for the action of $\cN$ on $(\fa/\fa_{\fh})^{*}$.
Further, we recall that the Lebesgue measure on $i(\fa/\fa_{\fh})^{*}$ is normalized by requiring that
$$
\phi(e)
=\int_{i(\fa/\fa_{\fh})^{*}}\int_{A/(A\cap H)}\phi(a)a^{\lambda}\,d\lambda\
\qquad\Big(\phi\in\cD\big(A/(A\cap H)\big)\Big).
$$
We then have the following description of the Plancherel decomposition of $L^{2}_{\mc}(Z)$.

\begin{Thm}\label{Thm Plancherel Theorem}
The Fourier transform $f\mapsto\Ft f$ extends to a continuous linear operator
$$
L^{2}(Z)
    \to \widehat{\bigoplus_{\xi\in\widehat{M}_{Q,\mathrm{fu}}}}\int_{i(\fa/\fa_{\fh})^{*}}^{\oplus} V^{*}(\xi)\otimes\Ind_{\overline{Q}}^{G}(\xi\otimes\lambda\otimes \1)\,d\lambda.
$$
Moreover, for every $f_{1},f_{2}\in L^{2}_{\mc}(Z)$ we have
$$
\int_{Z}f_{1}(z)\overline{f_{2}(z)}\,dz
=\sum_{[\xi]\in \widehat{M}_{Q,\mathrm{fu}}}\dim(V_{\xi})
    \int_{i(\fa/\fa_{\fh})_{+}^{*}}\Big\langle\Ft f_{1}(\xi:\lambda),\Ft f_{2}(\xi:\lambda)\Big\rangle_{\xi,\lambda}\,d\lambda.
$$
\end{Thm}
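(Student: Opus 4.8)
The plan is to combine the abstract Plancherel decomposition (\ref{eq abstract Plancherel decomposition}) of $L^{2}(Z)$ with the explicit identification of the multiplicity spaces from Corollary \ref{Cor Multiplicity space is V^*(xi)}, and then to pin down both the inner products on the multiplicity spaces and the Plancherel measure on the most continuous part by transporting this information from the horospherical degeneration $Z_{\emptyset}$, where everything is already computed in Theorem \ref{Thm Plancherel Theorem horospherical case}. The bridge between the two is the constant term map of Theorem \ref{Thm constant term}, which on the level of multiplicity spaces is the explicit map $\const_{\cO}(\xi:\lambda)$ of Propositions \ref{Prop Const mu formula} and \ref{Prop const formula}, controlled in the refined Maa{\ss}--Selberg relations of \cite[Theorem 9.6]{DelormeKnopKrotzSchlichtkrull_PlancherelTheoryForRealSphericalSpacesConstructionOfTheBernsteinMorphisms} and Corollary \ref{Cor Maass-Selberg relations}.

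First I would reduce the theorem to a statement about the multiplicity space $V^{*}(\xi)$. By the two facts recorded after the statement --- generic irreducibility of the $\Ind_{\overline{Q}}^{G}(\xi\otimes\lambda\otimes\1)$, and the fact that two of them are equivalent for generic parameters only when the parameters are related by an element of $\cN$ --- the subset of $\widehat{G}$ on which $L^{2}_{\mc}(Z)$ is supported is parametrized, up to a set of Plancherel measure zero, by pairs $(\xi,\lambda)$ with $[\xi]\in\widehat{M}_{Q,\mathrm{fu}}$ and $\lambda\in i(\fa/\fa_{\fh})^{*}_{+}$. By Corollary \ref{Cor Multiplicity space is V^*(xi)}, for almost every such $\lambda$ the abstract multiplicity space $\cM_{\pi}$ with $\pi=\Ind_{\overline{Q}}^{G}(\xi\otimes\lambda\otimes\1)$ equals $\cD'(\overline{Q}:\xi:\lambda)^{H}$ and $\mu^{\circ}(\xi:\lambda)\colon V^{*}(\xi)\to\cM_{\pi}$ is a linear isomorphism; under this isomorphism (and its dual) $\Ft f(\xi:\lambda)$ is exactly the abstract Fourier coefficient of $f\in\cD(Z)$. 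Transporting the abstract inner product of $\cM_{\pi}$ to $V^{*}(\xi)$ gives the inner product $\langle\cdot,\cdot\rangle_{\Pl,\xi,\lambda}$ introduced in Section \ref{Subsection Most continuous part - Invariant differential operators}, and the abstract Plancherel identity becomes
\[
\int_{Z}f_{1}(z)\overline{f_{2}(z)}\,dz
=\sum_{[\xi]\in\widehat{M}_{Q,\mathrm{fu}}}\int_{i(\fa/\fa_{\fh})^{*}_{+}}\big\langle\Ft f_{1}(\xi:\lambda),\Ft f_{2}(\xi:\lambda)\big\rangle_{\Pl,\xi,\lambda}\,d\mu_{\Pl,\xi}(\lambda)
\]
for $f_{1},f_{2}\in\cD(Z)\cap L^{2}_{\mc}(Z)$, where $d\mu_{\Pl,\xi}$ lies in the Plancherel measure class. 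Since only the product of the Radon--Nikodym density of $d\mu_{\Pl,\xi}$ with respect to Lebesgue measure and the inner product $\langle\cdot,\cdot\rangle_{\Pl,\xi,\lambda}$ is intrinsic, it remains to show that this product equals $\dim(V_{\xi})$ times the inner product $\langle\cdot,\cdot\rangle_{\xi}$ on $V^{*}(\xi)$ induced from $V_{\xi}$, almost everywhere in $\lambda$; the theorem then follows after extending $\Ft$ from $\cD(Z)\cap L^{2}_{\mc}(Z)$ to all of $L^{2}_{\mc}(Z)$ by density and continuity, the latter being furnished by the square-integrability of wave packets in Theorem \ref{Thm Wave packets are L^2}.

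The computation of this product is carried out one $\cN/\cW$-block at a time. By Corollary \ref{Cor Orthogonal decomposition V^*(xi)} the decomposition $V^{*}(\xi)=\bigoplus_{w\in\cN/\cW}V^{*}_{w}(\xi)$ is orthogonal for $\langle\cdot,\cdot\rangle_{\Pl,\xi,\lambda}$, and it is also orthogonal for $\langle\cdot,\cdot\rangle_{\xi}$ since the blocks are spanned by disjoint subfamilies of the index set $(P\bs Z)_{\fa_{\fh}}$; so it suffices to treat each $V^{*}_{w}(\xi)$ separately. On $V^{*}_{w}(\xi)$ the component $\const_{v_{w}}(\xi:\lambda)$ of the constant term is, by Corollary \ref{Cor Formula const_v_w}, the reindexing $\eta\mapsto(\eta_{s_{v_{w}}(\cO)})_{\cO\in(P\bs Z)_{\open}}$, hence an isometry of $V^{*}_{w}(\xi)$ onto the block $V^{*}_{\emptyset,v_{w}}(\xi)$ of $V^{*}_{\emptyset}(\xi)$ for the inner products induced from $V_{\xi}$. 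The Maa{\ss}--Selberg relations of \cite[Theorem 9.6]{DelormeKnopKrotzSchlichtkrull_PlancherelTheoryForRealSphericalSpacesConstructionOfTheBernsteinMorphisms}, in the refined form of Corollary \ref{Cor Maass-Selberg relations}, identify $\langle\cdot,\cdot\rangle_{\Pl,\xi,\lambda}$ restricted to $V^{*}_{w}(\xi)$, up to the $\gamma$-factors occurring in (\ref{eq Formula const}) which have modulus $1$ for $\lambda\in i(\fa/\fa_{\fh})^{*}$, with the Plancherel inner product of $Z_{\emptyset}$ transported to $V^{*}_{\emptyset,v_{w}}(\xi)$; and Theorem \ref{Thm Plancherel Theorem horospherical case} together with the explicit description of $\mu^{\circ}_{\emptyset}$ in Corollary \ref{Cor Description D'(Z,overline Q:xi:lambda) horospherical case} shows that the latter equals the $\lambda$-independent inner product $\dim(V_{\xi})\langle\cdot,\cdot\rangle_{\emptyset,\xi}$ and that the corresponding Plancherel measure on $i(\fa/\fa_{\fh})^{*}_{+}$ is Lebesgue measure $d\lambda$. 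Reassembling over $w\in\cN/\cW$ --- legitimate because of the orthogonality --- yields $\langle\cdot,\cdot\rangle_{\Pl,\xi,\lambda}=\dim(V_{\xi})\langle\cdot,\cdot\rangle_{\xi}$ and $d\mu_{\Pl,\xi}(\lambda)=d\lambda$, which is exactly the asserted identity.

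The main obstacle is the translation in the third paragraph: the Maa{\ss}--Selberg relations of \cite{DelormeKnopKrotzSchlichtkrull_PlancherelTheoryForRealSphericalSpacesConstructionOfTheBernsteinMorphisms} are phrased in terms of the Bernstein morphisms $B_{\emptyset}$, $B$ and the associated un-normalized constant-term and scattering maps, and one has to unwind these --- together with all the Haar-measure normalizations fixed in Section \ref{Section Setup and notation} and the Lebesgue normalization of $i(\fa/\fa_{\fh})^{*}$ --- to see that, after the normalization built into $\mu^{\circ}(\xi:\lambda)$, they become precisely the statement that the maps $\const_{v_{w}}(\xi:\lambda)$ are isometries between the matching pieces of the two Plancherel inner products, with no leftover constant; this is exactly what Corollary \ref{Cor Maass-Selberg relations} is designed to encode, and its proof requires the $G$-invariant differential operators built in Section \ref{Subsection Most continuous part - Invariant differential operators} via Knop's Harish-Chandra homomorphism. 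The orthogonality input from Corollary \ref{Cor Orthogonal decomposition V^*(xi)} is what makes the reduction to the horospherical case possible: it permits working one block at a time and thereby replaces the global comparison of $Z$ with $Z_{\emptyset}$ --- where the multiplicity spaces have different total dimension --- by the block-wise comparison in which no combinatorial overcounting factor appears.
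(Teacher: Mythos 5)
Your proposal is correct and follows essentially the same route as the paper: reduce via Corollary \ref{Cor Multiplicity space is V^*(xi)} to the identity $\langle\cdot,\cdot\rangle_{\Pl,\xi,\lambda}=\dim(V_{\xi})\langle\cdot,\cdot\rangle_{\xi}$ on $V^{*}(\xi)$, then establish it blockwise over $\cN/\cW$ using the orthogonality of Corollary \ref{Cor Orthogonal decomposition V^*(xi)}, the explicit reindexing form of $\const_{v_{w}}(\xi:\lambda)$ from Corollary \ref{Cor Formula const_v_w}, the Maa{\ss}--Selberg relations, and the horospherical Plancherel formula of Theorem \ref{Thm Plancherel Theorem horospherical case}. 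The only cosmetic difference is that you keep the Plancherel density explicit where the paper absorbs it into the inner product; the substance is identical.
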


\begin{proof}
By Corollary \ref{Cor Multiplicity space is V^*(xi)} the multiplicity space $\cM_{\xi,\lambda}$ is isomorphic to $V^{*}(\xi)$ for all $\xi\in \widehat{M}_{Q,\mathrm{fu}}$ and almost every $\lambda\in i(\fa/\fa_{\fh})^{*}$.
In view of (\ref{eq abstract Plancherel decomp L^2_mc}) it therefore suffices to show that for almost all $\lambda\in i(\fa/\fa_{\fh})^{*}$ we have the equality
\begin{equation}\label{eq Pl-inner product= xi inner product}
\langle\cdot,\cdot\rangle_{\Pl,\xi,\lambda}
=\dim(V_{\xi})\langle\cdot,\cdot\rangle_{\xi}
\end{equation}
of inner products on $V^{*}(\xi)$.
To prove this identity we will use Theorem \ref{Thm Plancherel Theorem horospherical case}, Corollary \ref{Cor Orthogonal decomposition V^*(xi)} and the Maa\ss-Selberg relations from \cite[Theorem 9.6]{DelormeKnopKrotzSchlichtkrull_PlancherelTheoryForRealSphericalSpacesConstructionOfTheBernsteinMorphisms}.

We fix a finite dimensional unitary representation $\xi$ of $M_{Q}$.
It follows from Theorem \ref{Thm Plancherel Theorem horospherical case} that the multiplicity space for the representation $\Ind_{\overline{Q}}^{G}\big(\xi\otimes\lambda\otimes \1\big)$ in the Plancherel decomposition of $L^{2}(Z_{\emptyset})$ is for almost all $\lambda\in i(\fa/\fa_{\fh})^{*}$ equal to $V_{\emptyset}^{*}(\xi)$. Moreover, the inner product on the multiplicity spaces induced by the Plancherel decomposition is the inner product $\dim(V_{\xi})\langle\cdot,\cdot\rangle_{\emptyset,\xi}$, where $\langle\cdot,\cdot\rangle_{\emptyset,\xi}$ is the inner product on $V_{\emptyset}^{*}(\xi)$ induced from the natural $M_{Q}$-invariant inner product on $V_{\xi}$. We recall the spaces $V_{\emptyset,v_{w}}^{*}(\xi)$ from (\ref{eq Def V_empty,w(xi)}). We view these spaces as subspaces of $V_{\emptyset}^{*}(\xi)$. The inner product  $\langle\cdot,\cdot\rangle_{v_{w},\xi}$ on $V^{*}_{\emptyset,v_{w}}(\xi)$ obtained by restriction equals the inner-product induced by the natural inner product on $V_{\xi}$.

By Corollary \ref{Cor Formula const_v_w} the kernel of $\const_{v_{w}}(\xi:\lambda)$ equals the direct sum of the subspaces $V^{*}_{w'}(\xi)$ with $w'\in \cN/\cW$, $w'\neq w$. In view of  Corollary \ref{Cor Orthogonal decomposition V^*(xi)} the latter equals the orthocomplement of $V^{*}_{w}(\xi)$ with respect to $\langle\cdot,\cdot\rangle_{\Pl,\xi,\lambda}$. Let $\const_{v_{w}}(\xi:\lambda)^{\dagger}$ be the dual map of $\const_{v_{w}}(\xi:\lambda)$ with respect to the inner products $\dim(V_{\xi})\langle\cdot,\cdot,\rangle_{v_{w},\xi}$ and $\langle\cdot,\cdot\rangle_{\Pl,\xi,\lambda}$ on $V_{\emptyset,v_{w}}^{*}(\xi)$ and $V^{*}(\xi)$, respectively. It then follows that the image of $\const_{v_{w}}(\xi:\lambda)^{\dagger}$ is equal to $V^{*}_{w}(\xi)$.
Moreover,  by the Maa\ss-Selberg relations from \cite[Theorem 9.6]{DelormeKnopKrotzSchlichtkrull_PlancherelTheoryForRealSphericalSpacesConstructionOfTheBernsteinMorphisms} the map $\const_{v_{w}}(\xi:\lambda)$ is a partial isometry for every $w\in\cN/\cW$ and  almost every $\lambda\in i(\fa/\fa_{\fh})^{*}$, i.e., $ \const_{v_{w}}(\xi:\lambda)^{\dagger}$ is for almost all $\lambda\in i(\fa/\fa_{\fh})^{*}$ a unitary map onto its image  $V^{*}_{w}(\xi)$. Since $\const_{v_{w}}(\xi:\lambda)$ is essentially the identity map,  the restrictions of $\dim(\xi)\langle\cdot,\cdot\rangle_{\xi}$ and $\langle\cdot,\cdot\rangle_{\Pl,\xi,\lambda}$ to the subspaces $V^{*}_{w}(\xi)$ with $w\in \cN/\cW$ coincide for almost every $\lambda\in i(\fa/\fa_{\fh})^{*}$. This proves the identity (\ref{eq Pl-inner product= xi inner product}) as the decomposition (\ref{eq Decom V^*(xi)}) is orthogonal with respect to both  $\dim(\xi)\langle\cdot,\cdot\rangle_{\xi}$ and $\langle\cdot,\cdot\rangle_{\Pl,\xi,\lambda}$ by Corollary \ref{Cor Orthogonal decomposition V^*(xi)}.
\end{proof}

\subsection{Corollaries I: regularity of the families of distributions}
\label{Subsection Most continuous part - Regularity}
\begin{Cor}\label{Cor mu^circ holomorphic on imaginary axis}
Let $\xi$ be a finite dimensional unitary representation of $M_{Q}$. For every $\eta\in V^{*}(\xi)$ the family of distributions $\lambda\mapsto\mu^{\circ}(\xi:\lambda)\eta$ is holomorphic on a neighborhood of $i(\fa/\fa_{\fh})^{*}$.
\end{Cor}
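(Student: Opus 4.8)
By Theorem~\ref{Thm Description D'(overline Q:xi:lambda)^H} the family $\lambda\mapsto\mu^{\circ}(\xi:\lambda)\eta$ is already known to be meromorphic on $(\fa/\fa_{\fh})^{*}_{\C}$, with poles confined to a locally finite union of complex affine hyperplanes of the form $\{\lambda\in(\fa/\fa_{\fh})^{*}_{\C}:\lambda(X)=a\}$ with $X\in\fa$ and $a\in\R$. Since such a union is closed, the statement of the corollary is equivalent to the assertion that none of these hyperplanes meets $i(\fa/\fa_{\fh})^{*}$. The geometry is immediate: if $\lambda=i\mu$ with $\mu\in(\fa/\fa_{\fh})^{*}$ then $\lambda(X)=i\mu(X)$ is purely imaginary, so the hyperplane $\{\lambda(X)=a\}$ meets $i(\fa/\fa_{\fh})^{*}$ only when $a=0$, in which case necessarily $X\notin\fa_{\fh}$ and the intersection is the real hyperplane $H_{X}:=\{i\mu:\mu\in(\fa/\fa_{\fh})^{*},\ \mu(X)=0\}$. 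Thus it suffices to prove that $\lambda\mapsto\mu^{\circ}(\xi:\lambda)\eta$ has no pole along any hyperplane $\{\lambda(X)=0\}$, for any finite dimensional unitary $\xi$ and any $\eta\in V^{*}(\xi)$.

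\textbf{The argument.} Suppose, for contradiction, that $\lambda\mapsto\mu^{\circ}(\xi_{0}:\lambda)\eta_{0}$ has a pole along $\{\lambda(X)=0\}$ for some finite dimensional unitary $\xi_{0}$, some $\eta_{0}\in V^{*}(\xi_{0})$, and some $X\in\fa\setminus\fa_{\fh}$. I would feed this into the Fourier transform with spectral parameter $\xi:=\xi_{0}^{\vee}$: by the definition in Section~\ref{Subsection Most continuous part - Plancherel decomposition}, $\Ft f(\xi:\lambda)\eta_{0}=\int_{Z}f(gH)\,R^{\vee}(g)\bigl(\mu^{\circ}(\xi_{0}:-\lambda)\eta_{0}\bigr)\,dgH$ for $f\in\cD(Z)$, so $\lambda\mapsto\Ft f(\xi:\lambda)\eta_{0}$ is meromorphic with poles contained among those of $\lambda\mapsto\mu^{\circ}(\xi_{0}:-\lambda)\eta_{0}$, in particular (since $\{\lambda(X)=0\}$ is invariant under $\lambda\mapsto-\lambda$) possibly along $\{\lambda(X)=0\}$. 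Fix a generic point $\lambda_{0}\in H_{X}$, lying on no other pole hyperplane of $\mu^{\circ}(\xi_{0}:-\cdot)\eta_{0}$ and such that the pole order along $\{\lambda(X)=0\}$ is attained at $\lambda_{0}$; choose $\zeta\in(\fa/\fa_{\fh})^{*}$ real with $\zeta(X)\neq0$, so that $s\mapsto\lambda_{0}+is\zeta$ is a holomorphic disk through $\lambda_{0}$ contained in $i(\fa/\fa_{\fh})^{*}$ and transverse to $\{\lambda(X)=0\}$. Then $s\mapsto\mu^{\circ}(\xi_{0}:-(\lambda_{0}+is\zeta))\eta_{0}$ has a pole of some order $k\geq1$ at $s=0$, whose leading Laurent coefficient $\tau$ is a nonzero $H$\nobreakdash-fixed distribution in $\cD'(\overline{Q}:\xi_{0}:-\lambda_{0})$ (the $H$\nobreakdash-invariance passes to the limit). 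Because $\tau\neq0$, the non-degeneracy of the matrix-coefficient pairing (the isomorphism $\omega^{\overline Q}$ of Section~\ref{Subsection Distribution vectors - Distribution vectors versus functionals}, together with the fact that matrix coefficients of a nonzero distribution against test functions do not all vanish) provides $f\in\cD(Z)$ and a test vector $\phi$ for which the leading Laurent coefficient of $s\mapsto\langle\Ft f(\xi:\lambda_{0}+is\zeta)\eta_{0},\phi\rangle$ is nonzero; hence this scalar meromorphic function has a pole of order exactly $k$ at $s=0$, and consequently $\|\Ft f(\xi:\lambda)\|^{2}_{\xi,\lambda}$ is bounded below by a positive constant times $|s|^{-2k}$ as $\lambda=\lambda_{0}+is\zeta+(\text{tangential})$ approaches $H_{X}$ near $\lambda_{0}$, the bound being uniform for the tangential coordinate in a small compact neighbourhood (as $\lambda_{0}$ was chosen generic).

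\textbf{The contradiction and the obstacle.} By Theorem~\ref{Thm Plancherel Theorem} the Fourier transform maps $\cD(Z)\subseteq L^{2}(Z)$ continuously into $\widehat{\bigoplus_{\xi}}\int_{i(\fa/\fa_{\fh})^{*}}^{\oplus}V^{*}(\xi)\otimes\Ind_{\overline{Q}}^{G}(\xi\otimes\lambda\otimes\1)\,d\lambda$, where the integral runs over all of $i(\fa/\fa_{\fh})^{*}$; in particular $\int_{i(\fa/\fa_{\fh})^{*}}\|\Ft f(\xi:\lambda)\|^{2}_{\xi,\lambda}\,d\lambda<\infty$. Integrating the lower bound from the previous paragraph over a small neighbourhood of $\lambda_{0}$ in $i(\fa/\fa_{\fh})^{*}$ and using that the transverse one-dimensional integral $\int_{|s|<\delta}|s|^{-2k}\,ds$ diverges for $2k\geq 2>1$, we obtain $\int_{i(\fa/\fa_{\fh})^{*}}\|\Ft f(\xi:\lambda)\|^{2}_{\xi,\lambda}\,d\lambda=\infty$, a contradiction. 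Therefore no pole hyperplane of $\lambda\mapsto\mu^{\circ}(\xi:\lambda)\eta$ meets $i(\fa/\fa_{\fh})^{*}$, and the complement of the (locally finite, hence closed) union of these hyperplanes is an open neighbourhood of $i(\fa/\fa_{\fh})^{*}$ on which the family is holomorphic. The step I expect to require the most care is the non-degeneracy argument in the middle paragraph: one must check that the genuine pole of $\mu^{\circ}(\xi_{0}:-\cdot)\eta_{0}$ is \emph{not} annihilated by the convolution-against-$f$ operation defining $\Ft f$, i.e.\ that the residue distribution $\tau$ is detected by some pair $(f,\phi)$; this is where the precise description of $\Ft$ in terms of $\mu^{\circ}(\xi^{\vee}:-\lambda)$ and the identification of $\cD'(Z,V)$ with $\cD'(G,V)^{H}$ are used, and is the reason for passing to $\xi=\xi_{0}^{\vee}$ so that the pole appears \emph{undivided} in the Fourier transform rather than cancelled (as it would be in a wave-packet composition $\Ft\circ\WP$).
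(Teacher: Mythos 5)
Your proposal is correct and follows essentially the same route as the paper's own proof: meromorphy from Theorem \ref{Thm Description D'(overline Q:xi:lambda)^H}, localization of any purported singularity to a real-codimension-one subspace of $i(\fa/\fa_{\fh})^{*}$, transfer of the pole to $\lambda\mapsto\Ft f(\xi^{\vee}:\lambda)\eta$ for a suitable $f\in\cD(Z)$, and contradiction with the square-integrability guaranteed by Theorem \ref{Thm Plancherel Theorem}. The only difference is that you spell out the non-degeneracy step (detecting the leading Laurent coefficient $\tau$ by a pair $(f,\phi)$) and the divergence of the transverse integral, both of which the paper asserts without elaboration.
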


\begin{proof}
Let $\eta\in V^{*}(\xi)$.
By Theorem \ref{Thm Description D'(overline Q:xi:lambda)^H} the family $\lambda\mapsto\mu^{\circ}(\xi:\lambda)\eta$ is meromorphic. It therefore suffices to proof that the family does not have any singularities on $i(\fa/\fa_{\fh})^{*}$.

We aim for a contradiction an assume that $\lambda\mapsto \mu^{\circ}(\xi:\lambda)\eta$ has a singularity on $i(\fa/\fa_{\fh})^{*}$.
The poles of the family lie in view of Theorem \ref{Thm Description D'(overline Q:xi:lambda)^H} on a locally finite union of complex affine hyperplanes of the form
$$
\{\lambda\in(\fa/\fa_{\fh})^{*}_{\C}: \lambda(X)=a\}\quad\text{for some }X\in\fa\text{ and }a\in\R.
$$
Since the singular set of a meromorphic function on $(\fa/\fa_{\fh})_{\C}^{*}$ is a union of complex analytic submanifolds of  $\C$-codimension $1$, it follows that there exists a subspace $\cH$ of $i(\fa/\fa_{\fh})^{*}$ of codimension $1$ so that $\lambda\mapsto \mu^{\circ}(\xi:\lambda)$ is singular on $\cH$. For every $f\in\cD(Z)$ the assignment $\lambda\mapsto \Ft f(\xi^{\vee}:\lambda)\eta$ defines a meromorphic function on $(\fa/\fa_{\fh})^{*}_{\C}$.  Furthermore, there exist functions $f\in\cD(Z)$ so that $\Ft f(\xi^{\vee}:\lambda)\eta$ is singular on $\cH$. It follows that $\lambda\mapsto\Ft f(\xi^{\vee}:\lambda)\eta $ is not square integrable. This is however in contradiction with Theorem \ref{Thm Plancherel Theorem}.
\end{proof}

\subsection{Corollaries II: refined Maa{\ss}-Selberg relations}
\label{Subsection Most continuous part - Maass-Selberg relations}
The Maa\ss-Selberg relations from \cite[Theorem 9.6]{DelormeKnopKrotzSchlichtkrull_PlancherelTheoryForRealSphericalSpacesConstructionOfTheBernsteinMorphisms} state that each of the maps
$$
\const_{v}(\xi:\lambda)
:V^{*}(\xi)\to V_{\emptyset,v}^{*}(\xi)
\qquad\big(v\in\fN\big).
$$
is a partial isometry, i.e., its Hermitian dual of $\const_{v}(\xi:\lambda)$ with respect to the inner products on $V^{*}(\xi)$ and $V_{\emptyset,v}^{*}(\xi)$, induced by the Plancherel decompositions Theorems \ref{Thm Plancherel Theorem} and \ref{Thm Plancherel Theorem horospherical case}, is a unitary isometry. In view of the Theorems \ref{Thm Plancherel Theorem horospherical case} and \ref{Thm Plancherel Theorem} these inner products are up to factor $\dim (V_{\xi})$ equal to the inner products induced by the inner product on $V_{\xi}$.
We can now refine the Maa\ss-Selberg relations from \cite{DelormeKnopKrotzSchlichtkrull_PlancherelTheoryForRealSphericalSpacesConstructionOfTheBernsteinMorphisms} for the most continuous part of $L^{2}(Z)$ as follows.

\begin{Cor}\label{Cor Maass-Selberg relations}
Let $\xi$ be a finite dimensional unitary representation of $M_{Q}$, $\lambda\in i(\fa/\fa^{*})$ and $v\in \fN$. Then
$$
\const_{v}(\xi:\lambda)\big|_{V^{*}_{w}(\xi)}=0\qquad \big(w\in \cN/\cW, v\cW\neq w\big).
$$
Moreover, if $w=v\cW$, then
$$
\const_{v}(\xi:\lambda)\big|_{V^{*}_{w}(\xi)}:V^{*}_{w}(\xi)\to V^{*}_{\emptyset,v}(\xi)
$$
is a unitary map.
\end{Cor}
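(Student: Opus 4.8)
The plan is to obtain the two assertions — that $\const_{v}(\xi:\lambda)$ annihilates $V^{*}_{w}(\xi)$ for $w\neq v\cW$, and that its restriction to $V^{*}_{v\cW}(\xi)$ is unitary onto $V^{*}_{\emptyset,v}(\xi)$ — from the explicit description of the constant term on the open orbits (Corollary~\ref{Cor Formula const_v_w}), the invariant differential operators of Proposition~\ref{Prop distributions separated by inv diff operators}, and the Maa\ss--Selberg relations of \cite[Theorem 9.6]{DelormeKnopKrotzSchlichtkrull_PlancherelTheoryForRealSphericalSpacesConstructionOfTheBernsteinMorphisms}. I would first dispose of the warm-up case $v=v_{w}$, the chosen representative of $w=v\cW\in\cN/\cW$ from Section~\ref{Subsection Construction - Description}. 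By Corollary~\ref{Cor Formula const_v_w} the map $\const_{v_{w}}(\xi:\lambda)$ is independent of $\lambda$ and equals $\eta\mapsto(\eta_{s_{v_{w}}(\cO)})_{\cO\in(P\bs Z)_{\open}}$; since $s_{v_{w}}$ restricts to a bijection of $(P\bs Z)_{\open}$ onto the equivalence class $w\cdot(P\bs Z)_{\open}$ (Theorem~\ref{Thm properties of W-action}~(\ref{Thm properties of W-action - item 3})), this map is a mere relabelling, hence a linear isometry of $V^{*}_{w}(\xi)$ onto $V^{*}_{\emptyset,v_{w}}(\xi)$ for the natural inner products, and it kills every $V^{*}_{w'}(\xi)$ with $w'\neq w$ because distinct equivalence classes are disjoint. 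This settles the case $v=v_{w}$ without using the Maa\ss--Selberg relations.

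Next I would prove the vanishing for an arbitrary $v\in\fN$. By Proposition~\ref{Prop distributions separated by inv diff operators}, for $\lambda$ outside a finite union of proper subspaces and for each $w'\in\cN/\cW$ there is a central element $D_{w'}\in\Diff(Z)$ whose induced operator $r(\xi:\lambda)(D_{w'})$ on $V^{*}(\xi)$ is — by the proof of that proposition — the projection onto $V^{*}_{w'}(\xi)$, and for which the identity~(\ref{eq const_v circ r(D) formula}) holds: $\const_{v}(\xi:\lambda)\circ r(\xi:\lambda)(D_{w'})$ equals $\const_{v}(\xi:\lambda)$ when $v\cW=w'$ and vanishes otherwise. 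Taking $w'\neq v\cW$ and evaluating on $\eta\in V^{*}_{w'}(\xi)$ gives $\const_{v}(\xi:\lambda)\eta=\const_{v}(\xi:\lambda)\bigl(r(\xi:\lambda)(D_{w'})\eta\bigr)=0$. Composed with $\mu^{\circ}(\xi:\lambda)$, which is holomorphic near $i(\fa/\fa_{\fh})^{*}$ (Corollary~\ref{Cor mu^circ holomorphic on imaginary axis}), the family $\const_{v}(\xi:\lambda)$ is holomorphic on a neighborhood of $i(\fa/\fa_{\fh})^{*}\setminus i\cS$ (Theorem~\ref{Thm constant term}), so the vanishing extends by analytic continuation to all of $i(\fa/\fa_{\fh})^{*}\setminus i\cS$.

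Finally I would deduce the unitarity on $V^{*}_{v\cW}(\xi)$. From the vanishing just shown and the orthogonality of the decomposition $V^{*}(\xi)=\bigoplus_{w\in\cN/\cW}V^{*}_{w}(\xi)$ (Corollary~\ref{Cor Orthogonal decomposition V^*(xi)}) one gets $\ker\const_{v}(\xi:\lambda)\supseteq\bigl(V^{*}_{v\cW}(\xi)\bigr)^{\perp}$, so $\const_{v}(\xi:\lambda)$ factors through $V^{*}_{v\cW}(\xi)$ and $\const_{v}(\xi:\lambda)\big|_{V^{*}_{v\cW}(\xi)}$ has the same image as $\const_{v}(\xi:\lambda)$. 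By the Maa\ss--Selberg relations of \cite[Theorem 9.6]{DelormeKnopKrotzSchlichtkrull_PlancherelTheoryForRealSphericalSpacesConstructionOfTheBernsteinMorphisms} — read through Theorems~\ref{Thm Plancherel Theorem} and \ref{Thm Plancherel Theorem horospherical case}, which identify the Plancherel inner products on $V^{*}(\xi)$ and $V^{*}_{\emptyset,v}(\xi)$ with $\dim(V_{\xi})$ times the natural ones — the map $\const_{v}(\xi:\lambda)$ is a partial isometry whose Hermitian adjoint is an isometric embedding, hence $\const_{v}(\xi:\lambda)$ is onto $V^{*}_{\emptyset,v}(\xi)$ and $\const_{v}(\xi:\lambda)\big|_{V^{*}_{v\cW}(\xi)}\colon V^{*}_{v\cW}(\xi)\to V^{*}_{\emptyset,v}(\xi)$ is a surjective partial isometry.

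What remains, and the main obstacle, is to show this restriction is injective, equivalently that $\dim V^{*}_{v\cW}(\xi)=\dim V^{*}_{\emptyset,v}(\xi)$. By Theorem~\ref{Thm properties of W-action}~(\ref{Thm properties of W-action - item 3}) both sides equal $\#(P\bs Z)_{\open}$ times, respectively, $\dim(V^{*}_{\xi})^{M_{Q}\cap v_{v\cW}Hv_{v\cW}^{-1}}$ and $\dim(V^{*}_{\xi})^{M_{Q}\cap vHv^{-1}}$, so one must show that conjugation of the subgroup $M_{Q}\cap H$ of $M_{Q}$ by $v_{v\cW}$ and by $v$ produces groups with the same dimension of $\xi^{*}$-fixed vectors. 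Writing $v=v_{v\cW}u$ with $u=v_{v\cW}^{-1}v\in\cW$, and using that $v$ and $v_{v\cW}$ lie in $\cN\cap K$ and hence normalize $M_{Q}$ and $\fm$ (Remark~\ref{Rem L_Q roots}), that the representations in play are trivial on $L_{Q,\nc}\subseteq H$ so that only the $M$-part of the isotropy matters, and that $u^{\pm 1}\cdot eH$ are adapted points by Proposition~\ref{Prop characterization cV} — so $\fm\cap\Ad(u)\fh$ is $\Ad(M)$-conjugate to $\fm\cap\fh$ by the compression-cone description $\fh_{z,X}=\Ad(m)\fh_{\emptyset}$ of the limit subalgebras — together with Proposition~\ref{Prop M cap xHx=M cap H}, I expect to get that $M_{Q}\cap vHv^{-1}$ and $M_{Q}\cap v_{v\cW}Hv_{v\cW}^{-1}$ are conjugate in $M_{Q}$, whence the two fixed-space dimensions agree. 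With the dimension equality in hand, the surjection $\const_{v}(\xi:\lambda)\big|_{V^{*}_{v\cW}(\xi)}$ is bijective, so $\ker\const_{v}(\xi:\lambda)=\bigl(V^{*}_{v\cW}(\xi)\bigr)^{\perp}$ exactly; being the restriction of the partial isometry $\const_{v}(\xi:\lambda)$ to $\bigl(\ker\const_{v}(\xi:\lambda)\bigr)^{\perp}$, it is an isometry, and being onto, a unitary map.
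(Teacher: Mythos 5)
Your argument follows the same route as the paper's (one\-/sentence) proof: the vanishing of $\const_{v}(\xi:\lambda)$ on $V^{*}_{w}(\xi)$ for $w\neq v\cW$ is exactly the combination of (\ref{eq const_v circ r(D) formula}) with the fact, from Proposition \ref{Prop distributions separated by inv diff operators}, that $r(\xi:\lambda)(D_{w})$ is the projection onto $V^{*}_{w}(\xi)$ along the other summands; and the unitarity is extracted from the Maa\ss-Selberg relations together with Corollary \ref{Cor Orthogonal decomposition V^*(xi)}. Your warm-up case $v=v_{w}$ via Corollary \ref{Cor Formula const_v_w}, and the observation that the partial-isometry statement only yields a unitary map from $\big(\ker\const_{v}(\xi:\lambda)\big)^{\perp}\subseteq V^{*}_{v\cW}(\xi)$ onto $V^{*}_{\emptyset,v}(\xi)$, are both correct. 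You are also right that to upgrade this to unitarity of the restriction to all of $V^{*}_{v\cW}(\xi)$ one needs $\dim V^{*}_{v\cW}(\xi)=\dim V^{*}_{\emptyset,v}(\xi)$; the paper's citation-only proof suppresses this point, and it is not vacuous for $v\neq v_{v\cW}$.

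Your proposed resolution of that last step is correct, and it can be closed as follows. Writing $v=v_{v\cW}u$ with $u\in\cW$ and conjugating by $v_{v\cW}$, which normalizes $M_{Q}$, the claim reduces to: $M_{Q}\cap uHu^{-1}$ is $M_{Q}$-conjugate to $M_{Q}\cap H$ for $u\in\cW$. Since $eH$ is admissible and $u^{-1}\in\cW$, Proposition \ref{Prop characterization cV} shows that $u\cdot eH$ is an adapted point of some open orbit $\cO_{0}$, so $u\in px_{\cO_{0}}H$ with $p\in P\subseteq Q$ and hence $Q\cap uHu^{-1}=p\big(Q\cap x_{\cO_{0}}Hx_{\cO_{0}}^{-1}\big)p^{-1}$. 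Applying Proposition \ref{Prop LST holds for adapted points}(\ref{Prop LST holds for adapted points - item 1}) at both adapted points and writing $p=\ell n$ with $\ell\in L_{Q}\cap P$ and $n\in N_{Q}$, the inclusion $n S n^{-1}\subseteq L_{Q}$ for the subgroup $S=\ell\big(L_{Q}\cap x_{\cO_{0}}Hx_{\cO_{0}}^{-1}\big)\ell^{-1}$ of $L_{Q}$ forces $n$ to centralize $S$ (because $nsn^{-1}\in sN_{Q}\cap L_{Q}=\{s\}$), and the $A_{Q}$-component of $\ell$ is central in $L_{Q}$; intersecting with $M_{Q}$ and invoking Proposition \ref{Prop M cap xHx=M cap H} gives $M_{Q}\cap uHu^{-1}=m_{0}(M_{Q}\cap H)m_{0}^{-1}$ with $m_{0}\in M_{Q}$. (Be slightly careful here: the comparison must be made at the level of the groups $M_{Q}\cap uHu^{-1}$, not just their Lie algebras, since the fixed-vector space of $\xi^{\vee}$ sees the full compact group; the argument above does this.) With the dimension equality in hand your concluding step is exactly right, and the proof is complete.
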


\begin{proof}
The assertions follow Proposition \ref{Prop distributions separated by inv diff operators}, (\ref{eq const_v circ r(D) formula}) and the Maa\ss-Selberg relations from \cite[Theorem 9.6]{DelormeKnopKrotzSchlichtkrull_PlancherelTheoryForRealSphericalSpacesConstructionOfTheBernsteinMorphisms}.
\end{proof}

The Maa\ss-Selberg relations from Corollary \ref{Cor Maass-Selberg relations} are reflected in the symmetries of the combined constant term  map
$$
\const(\xi:\lambda):V^{*}(\xi)\to \bigoplus_{\cO\in (P\bs Z)_{\open}}V_{\emptyset}^{*}(\xi)
$$
given by
$$
\Big(\const(\xi:\lambda)\eta\Big)_{\cO}=\const_{\cO}(\xi:\lambda)\eta
\qquad\big(\eta\in V^{*}(\xi), \cO\in (P\bs Z)_{\open}\big).
$$
Note that $\const(\xi:\lambda)$ decomposes according to the decomposition $V_{\emptyset}^{*}(\xi)=\bigoplus_{v\in \fN}V_{\emptyset,v}^{*}(\xi)$ as
$$
\Big(\const(\xi:\lambda)\eta\Big)_{v}
=\const_{v}(\xi:\lambda)\eta\in V_{\emptyset,v}^{*}(\xi)
\qquad\big(\eta\in V^{*}(\xi), v\in \fN\big).
$$

To describe the symmetries we first introduce an action of $\cN/\cZ$ on $\fN$. For $w\in \cN/\cZ$ and $v\in \fN\subseteq\cN$ we define $w\cdot v$ to be the element in $\fN$ determined by the identity in $\cN/\cZ$
$$
(w\cdot v)\cZ
=vw^{-1},
$$
i.e., $w\cdot v$ is the representative of $vw^{-1}\in\cN/\cZ$ in $\fN$.

For $w\in \cN/\cZ$ we now define the scattering operator
$$
\cS_{w}(\xi:\lambda)
:=\sum_{v\in \fN}\const_{w\cdot v}(\xi:\lambda)\circ\const_{v}(\xi:\lambda)^{\dagger}
:\bigoplus_{\cO\in (P\bs Z)_{\open}}V_{\emptyset}^{*}(\xi)\to\bigoplus_{\cO\in (P\bs Z)_{\open}}V_{\emptyset}^{*}(\xi).
$$

We then have the following immediate corollary of Corollary \ref{Cor Maass-Selberg relations}.

\begin{Cor}\label{Cor Scattering operators}
Let $\xi$ be a finite dimensional unitary representation of $M_{Q}$ and $\lambda\in i(\fa/\fa_{\fh})^{*}\setminus i\cS$. The following hold true.
\begin{enumerate}[(i)]
\item $\cS_{w}(\xi:\lambda)=0$ for every $w\in (\cN/\cZ)\setminus W_{Z}$.
\item The assignment $W_{Z}\ni w\mapsto \cS_{w}(\xi:\lambda)$ defines a unitary representation of $W_{Z}$ on
    $\bigoplus_{\cO\in (P\bs Z)_{\open}}V_{\emptyset}^{*}(\xi)$.
\item For every $w\in W_{Z}$ we have $\cS_{w}(\xi:\lambda)\circ \const(\xi:\lambda)=\const(\xi:\lambda)$.
\end{enumerate}
\end{Cor}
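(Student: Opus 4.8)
The final statement is Corollary \ref{Cor Scattering operators}, which collects three properties of the scattering operators $\cS_w(\xi:\lambda)$. The plan is to derive everything from Corollary \ref{Cor Maass-Selberg relations} (the refined Maa{\ss}--Selberg relations) together with the decompositions $V^*(\xi)=\bigoplus_{w\in\cN/\cW}V^*_w(\xi)$ and $V_\emptyset^*(\xi)=\bigoplus_{v\in\fN}V^*_{\emptyset,v}(\xi)$, and the bookkeeping of the $\cN/\cZ$-action $w\cdot v$ on $\fN$. Throughout, I would fix $\lambda\in i(\fa/\fa_\fh)^*\setminus i\cS$ and suppress it from the notation.

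First I would record the structural consequences of Corollary \ref{Cor Maass-Selberg relations}. For $v\in\fN$ write $w_v:=v\cW\in\cN/\cW$. By that corollary, $\const_v(\xi:\lambda)$ kills $V^*_{w}(\xi)$ for all $w\neq w_v$ and restricts to a unitary isomorphism $V^*_{w_v}(\xi)\xrightarrow{\sim}V^*_{\emptyset,v}(\xi)$. Hence $\const_v(\xi:\lambda)^\dagger$ (the Hermitian adjoint with respect to the normalized inner products, which by Theorems \ref{Thm Plancherel Theorem} and \ref{Thm Plancherel Theorem horospherical case} are, up to the common factor $\dim V_\xi$, induced from the inner product on $V_\xi$) is the inverse unitary $V^*_{\emptyset,v}(\xi)\xrightarrow{\sim}V^*_{w_v}(\xi)$, composed with the orthogonal projection onto $V^*_{\emptyset,v}(\xi)$ in the ambient space $\bigoplus_{\cO}V_\emptyset^*(\xi)$. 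Thus $\const_v\circ\const_v^\dagger$ is the orthogonal projection $E_v$ onto $V^*_{\emptyset,v}(\xi)$, and $\const_v^\dagger\circ\const_v$ is the orthogonal projection $P_{w_v}$ onto $V^*_{w_v}(\xi)$ inside $V^*(\xi)$; moreover $\const_v\circ\const_{v'}^\dagger=0$ whenever $w_v\neq w_{v'}$, since the image of $\const_{v'}^\dagger$ is $V^*_{w_{v'}}(\xi)$ on which $\const_v$ vanishes. These identities are the computational heart of all three assertions.

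Next, for (i) and (ii) I would analyse $\cS_w=\sum_{v\in\fN}\const_{w\cdot v}\circ\const_v^\dagger$. By the vanishing just noted, the $v$-th summand is zero unless $w_{w\cdot v}=w_v$; since $(w\cdot v)\cZ=vw^{-1}$, this holds iff $vw^{-1}\cW=v\cW$, i.e. iff $w\in v\cW v^{-1}$ as an element of $\cN/\cZ$. If $w\notin\bigcup_{v}v\cW v^{-1}$, in particular if $w\notin W_Z$ (recall $\cW/\cZ\subseteq$ gives $W_Z$, and a conjugate of $\cW$ in $\cN$ still lies over the reflection group), all summands vanish, giving (i). For (ii), using $\const_{w\cdot v}\circ\const_v^\dagger$ as a composite of two unitaries between the corresponding pieces, one computes directly that $\cS_{w'}\circ\cS_w$ and $\cS_{w'w}$ have the same block decomposition relative to $\bigoplus_v V^*_{\emptyset,v}(\xi)$, because $\const_{w'\cdot(w\cdot v)}=\const_{(w'w)\cdot v}$ by the defining identity $(w'\cdot(w\cdot v))\cZ=(w\cdot v)w'^{-1}=v(w'w)^{-1}$; here I must keep careful track that $\cN/\cZ$ acts on $\fN$ on the right in the exponent, so the cocycle works out. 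Unitarity of each $\cS_w$ follows since it is a sum of mutually orthogonal partial isometries (distinct summands have orthogonal domains $V^*_{\emptyset,v}(\xi)$ and orthogonal ranges $V^*_{\emptyset,w\cdot v}(\xi)$) whose domains and ranges together exhaust $\bigoplus_v V^*_{\emptyset,v}(\xi)$, the latter because $v\mapsto w\cdot v$ is a bijection of $\fN$. Finally, for (iii), $\cS_w\circ\const(\xi:\lambda)$ has $u$-component $\sum_v\const_{w\cdot v}\const_v^\dagger\const_u$; by the projection identities this collapses to $\const_{w\cdot u'}$ summed appropriately, and using Corollary \ref{Cor Maass-Selberg relations} once more to identify $\const_v^\dagger\const_u$ with $P_{w_u}$ when $w_v=w_u$, one checks the $u$-component equals $\const_u(\xi:\lambda)$ on the relevant block $V^*_{w_u}(\xi)$ and that the other blocks contribute zero, i.e. $\cS_w\circ\const=\const$.

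The main obstacle I anticipate is purely combinatorial rather than analytic: getting the $\cN/\cZ$-action $w\cdot v$, the block labels $w_v=v\cW$, and the adjoints $\const_v^\dagger$ to line up so that (a) the summands in $\cS_w$ pair up the correct orthogonal pieces, and (b) the composition law $\cS_{w'}\cS_w=\cS_{w'w}$ comes out with no stray permutation. This requires being scrupulous about whether $W_Z=\cW/\cZ$ acts on $(\fa/\fa_\fh)^*$ and on $\fN$ on the left or the right, and reconciling the exponent $vw^{-1}$ in the definition with the left action of $W$ on $(P\bs Z)_{\max}/\!\sim$ from Theorem \ref{Thm properties of W-action}. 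Once the indexing conventions are fixed, each of (i), (ii), (iii) is a two-line consequence of the projection identities extracted from Corollary \ref{Cor Maass-Selberg relations}.
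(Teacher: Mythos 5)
Your overall strategy is exactly the paper's: the text offers no written proof, presenting the corollary as immediate from Corollary \ref{Cor Maass-Selberg relations}, and the identities you extract ($\const_v\circ\const_v^{\dagger}=E_v$, $\const_v^{\dagger}\circ\const_v=P_{v\cW}$, $\const_v\circ\const_{v'}^{\dagger}=0$ for $v\cW\neq v'\cW$) are precisely the right engine. However, the combinatorial step you yourself flagged as the main risk is carried out incorrectly. The $v$-th summand $\const_{w\cdot v}\circ\const_v^{\dagger}$ is nonzero only if $(w\cdot v)\cW=v\cW$, and since $(w\cdot v)\cZ=vw^{-1}$ this reads $vw^{-1}\cW=v\cW$; multiplying on the left by $v^{-1}$ gives $w^{-1}\cW=\cW$, i.e.\ $w\in\cW$, i.e.\ $w\in W_Z=\cW/\cZ$ --- a condition \emph{independent of} $v$. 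You instead obtain $w\in v\cW v^{-1}$, which is the answer for a left-multiplication convention, not the right-multiplication one used in the definition of $w\cdot v$. This error then forces you to invoke the claim that ``a conjugate of $\cW$ in $\cN$ still lies over the reflection group'' to deduce (i); that claim amounts to $v\cW v^{-1}\subseteq\cW$ for all $v\in\fN$, i.e.\ normality of $\cW$ in $\cN$, which is not available in general (it holds for wavefront spaces, where $\cW=\cN$, but not otherwise). With the corrected computation, (i) is immediate and the uniformity in $v$ is exactly what makes $P_{(w\cdot v)\cW}\const_v^{\dagger}=\const_v^{\dagger}$ hold for every $v$ simultaneously when $w\in W_Z$, which is what (ii) and (iii) need.

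A second, smaller slip occurs in (iii): you assert $\const_v^{\dagger}\const_u=P_{u\cW}$ ``when $w_v=w_u$''. In fact $\const_v^{\dagger}$ annihilates the orthocomplement of $V^*_{\emptyset,v}(\xi)$ in $\bigoplus_{\cO}V_{\emptyset}^*(\xi)$, so $\const_v^{\dagger}\const_u=\delta_{uv}\,P_{u\cW}$; it vanishes for $v\neq u$ even when $v\cW=u\cW$. The clean computation is $\cS_w\circ\const=\sum_v\const_{w\cdot v}\circ\const_v^{\dagger}\circ\const_v=\sum_v\const_{w\cdot v}\circ P_{v\cW}$, whose component indexed by $u=w\cdot v$ is $\const_u\circ P_{(w^{-1}\cdot u)\cW}=\const_u\circ P_{u\cW}=\const_u$ for $w\in W_Z$. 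Neither error is fatal --- the conclusions survive and the fixes are one-line recomputations --- but as written the justification of (i) rests on a false auxiliary statement, so the coset bookkeeping must be redone before the argument is complete.
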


\begin{Rem}
In \cite{Delorme_ScatteringOperators} scattering operators were defined under the restricting assumption that $G$ is split, but for all boundary degenerations, not just for the horospherical boundary degeneration $Z_{\emptyset}$ as we do here.
\end{Rem}

We finish this section with a description of the scattering operators in terms of the action of standard intertwining operators on $\cD'(\overline{Q}:\xi:\lambda)^{H}$, or rather in terms of the induced action on the parameter spaces $V^{*}(\xi)$. We first define for $v\in\cN$ the normalized $\cB$-matrix
$$
\cB_{v}^{\circ}(\xi:\lambda):V^{*}(\xi)\to V^{*}(v\cdot\xi)
$$
by
$$
\cB_{v}^{\circ}(\xi:\lambda)
:=\frac{1}{\gamma(v^{-1}\overline{Q}v:\overline{Q}:\xi:\lambda)}
    \beta(v\cdot\xi:\Ad^{*}(v)\lambda)\circ \cB_{v}(Q:\xi:\lambda)\circ\beta(\xi:\lambda)^{-1}.
$$
The normalized $\cB$-matrices are characterized by the fact that the diagram
$$
\xymatrixcolsep{8pc}\xymatrixrowsep{3pc}\xymatrix{
   \cD'(\overline{Q}:\xi:\lambda)^{H} \ar[r]^{\cI^{\circ}_{v}(\xi:\lambda)}
        & \cD'(\overline{Q}:v\cdot\xi:\Ad^{*}(v)\lambda)^{H}  \\
   V^{*}(\xi)\ar@<-2pt>[u]^{\mu^{\circ}(\xi:\lambda)} \ar[r]^{\cB_{v}^{\circ}(\xi:\lambda)}
       & V^{*}(v\cdot\xi)\ar@<-2pt>[u]_{\mu^{\circ}\big(v\cdot \xi: \Ad^{*}(v)\lambda\big)}
}
$$
commutes.
If $v_{1},v_{2}\in \cN$ then the identity
$$
\cI^{\circ}_{v_{1}}\big(v_{2}\cdot\xi:\Ad^{*}(v_{2})\lambda\big)\circ \cI^{\circ}_{v_{2}}(\xi:\lambda)
=\cI^{\circ}_{v_{1}v_{2}}(\xi:\lambda)
$$
implies
\begin{equation}\label{eq group law normalized B-matrices}
\cB^{\circ}_{v_{1}}\big(v_{2}\cdot\xi:\Ad^{*}(v_{2})\lambda\big)\circ \cB^{\circ}_{v_{2}}(\xi:\lambda)
=\cB^{\circ}_{v_{1}v_{2}}(\xi:\lambda).
\end{equation}
Each normalized intertwining operator $\cI^{\circ}_{v}(\xi:\lambda)$ is unitary and hence is a unitary map between the multiplicity spaces $\cM_{\xi,\lambda}$ and $\cM_{v\cdot\xi,\Ad^{*}(v)\lambda}$. Furthermore,  $\mu^{\circ}(\xi:\lambda)$ is in view of Theorem \ref{Thm Plancherel Theorem} a unitary map from $V^{*}(\xi)$ (equipped with $\dim(V_{\xi})$ times the inner product induced by the one on $V_{\xi}$) to $\cM_{\xi,\lambda}$. Therefore, the normalized $\cB$-matrices $\cB_{v}^{\circ}(\xi:\lambda)$ are unitary.

For $v\in \fN$ we define the map
$$
j_{v}(\xi):V^{*}(v^{-1}\cdot\xi)\to V_{\emptyset,v}^{*}(\xi)\subseteq\bigoplus_{\cO\in (P\bs Z)_{\open}}V_{\emptyset}^{*}(\xi)
$$
by
$$
\big(j_{v}(\xi)\eta\big)_{\cO}
:=\eta_{\cO,e}
\qquad \big(\eta\in V^{*}(v^{-1}\cdot\xi), \cO\in (P\bs Z)_{\open}\big).
$$
The dual map
$$
j_{v}(\xi)^{\dagger}:\bigoplus_{\cO\in (P\bs Z)_{\open}}V_{\emptyset}^{*}(\xi)\to V_{e\cW}^{*}(v^{-1}\cdot\xi)\subseteq V^{*}(v^{-1}\cdot \xi)
$$
is given by
$$
j_{v}(\xi)^{\dagger}\big|_{V_{\emptyset,v'}^{*}(\xi)}=0\qquad(v'\in\fN, v'\neq v)
$$
and
$$
\big(j_{v}(\xi)^{\dagger}\eta\big)_{\cO}=\eta_{\cO}
\qquad\big(\eta\in V_{\emptyset,v}^{*}(\xi), \cO\in (P\bs Z)_{\open}\big).
$$
Now the scattering maps are given by the following.

\begin{Cor}\label{Cor Formula scattering operators}
Let $\xi$ be a finite dimensional unitary representation of $M_{Q}$ and let $\lambda\in i(\fa/\fa_{\fh})^{*}\setminus i\cS$.
Then
$$
\cS_{w}(\xi:\lambda)
=\sum_{v\in \fN} j_{w\cdot v}(\xi)\circ \cB^{\circ}_{(w\cdot v)^{-1}v}(\xi:\lambda)\circ j_{v}(\xi)^{\dagger}
\qquad(w\in W_{Z}).
$$
\end{Cor}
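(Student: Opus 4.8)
The goal is to identify the scattering operator $\cS_w(\xi:\lambda)$, defined via the constant term maps $\const_v(\xi:\lambda)$, with an explicit sum of normalized $\cB$-matrices sandwiched between the inclusions $j_v(\xi)$ and their duals. The strategy is to unwind both sides using the formula for $\const_v(\xi:\lambda)$ from Proposition \ref{Prop const formula}, together with the refined Maa\ss-Selberg relations of Corollary \ref{Cor Maass-Selberg relations}, and then match components with respect to the decompositions $V_\emptyset^*(\xi)=\bigoplus_{v\in\fN}V_{\emptyset,v}^*(\xi)$ and $V^*(\xi)=\bigoplus_{w\in\cN/\cW}V^*_w(\xi)$.

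\textbf{Step 1: reduce to components.} Both $\cS_w(\xi:\lambda)$ and the claimed right-hand side are operators on $\bigoplus_{\cO\in(P\bs Z)_{\open}}V_\emptyset^*(\xi)$, so it suffices to compute their matrix entries with respect to the decomposition indexed by $v\in\fN$. Fix $v_0\in\fN$. By definition $\cS_w(\xi:\lambda)$ restricted to the $v_0$-summand equals $\const_{w\cdot v_0}(\xi:\lambda)\circ\const_{v_0}(\xi:\lambda)^\dagger$. The first task is to understand $\const_{v_0}(\xi:\lambda)^\dagger$. By Corollary \ref{Cor Maass-Selberg relations}, $\const_{v_0}(\xi:\lambda)$ vanishes on every $V^*_{w'}(\xi)$ with $v_0\cW\neq w'$ and is a unitary map from $V^*_{v_0\cW}(\xi)$ onto $V^*_{\emptyset,v_0}(\xi)$; hence $\const_{v_0}(\xi:\lambda)^\dagger$ maps $V^*_{\emptyset,v_0}(\xi)$ unitarily onto $V^*_{v_0\cW}(\xi)\subseteq V^*(\xi)$ and is zero on the other $V^*_{\emptyset,v'}(\xi)$. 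Composing with $\const_{w\cdot v_0}(\xi:\lambda)$, which by the same corollary is nonzero only on $V^*_{(w\cdot v_0)\cW}(\xi)$, forces the relation $(w\cdot v_0)\cW = v_0\cW$ inside $\cN/\cW$ for the composition to be nonzero — this is automatic from the definition $(w\cdot v_0)\cZ = v_0 w^{-1}$ once $w\in W_Z=\cW/\cZ$, and this is precisely where the hypothesis $w\in W_Z$ (rather than general $\cN/\cZ$) enters.

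\textbf{Step 2: identify the composition with a $\cB$-matrix.} On $V^*_{v_0\cW}(\xi)$ the map $\const_{v_0}(\xi:\lambda)$ is, up to the reordering built into $j_{v_0}(\xi)$, essentially the identity: this is the content of the explicit formula \eqref{eq Formula const for v_w components}, $\big(\const_\cO(\xi:\lambda)\eta\big)_{v_w}=\eta_{s_{v_w}(\cO)}$, which says that $\const$ just relabels orbits by the bijection $s$. Combining this with the formula \eqref{eq Formula const} in Proposition \ref{Prop const formula}, which expresses the general component of $\const_\cO(\xi:\lambda)$ through $\cB_{v^{-1}}(Q:\xi:\lambda)\circ\beta(\xi:\lambda)^{-1}$, and with the definition of the normalized $\cB$-matrix $\cB_v^\circ(\xi:\lambda)$ in terms of $\gamma$-factors and $\beta$, one computes that $\const_{w\cdot v_0}(\xi:\lambda)\circ\const_{v_0}(\xi:\lambda)^\dagger$, transported to $V^*(w\cdot v_0)^{-1}\cdot\xi)$ and $V^*(v_0^{-1}\cdot\xi)$ via $j_{w\cdot v_0}(\xi)$ and $j_{v_0}(\xi)$ respectively, is exactly $\cB^\circ_{(w\cdot v_0)^{-1}v_0}(\xi:\lambda)$. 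Here the cocycle identity \eqref{eq group law normalized B-matrices} for normalized $\cB$-matrices is used to combine the two constant-term contributions into a single $\cB$-matrix with index $(w\cdot v_0)^{-1}v_0$, and one should check that this index indeed lies in $\cW$ (equivalently fixes the open-orbit class), consistent with Corollary \ref{Cor Scattering operators}(iii) and the fact that $\cB^\circ$ restricted to $\cW$ preserves $V^*_{e\cW}$.

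\textbf{Step 3: assemble and conclude.} Summing the component identities over $v_0\in\fN$ and using that $\{j_v(\xi)\}_{v\in\fN}$ together with $\{j_v(\xi)^\dagger\}_{v\in\fN}$ furnish orthogonal decompositions of $\bigoplus_{\cO}V_\emptyset^*(\xi)$ and of the relevant spaces $V^*(v^{-1}\cdot\xi)$, one obtains $\cS_w(\xi:\lambda)=\sum_{v\in\fN}j_{w\cdot v}(\xi)\circ\cB^\circ_{(w\cdot v)^{-1}v}(\xi:\lambda)\circ j_v(\xi)^\dagger$, which is the claimed formula. The main obstacle I expect is bookkeeping: carefully tracking the three bijections at play — the $W$-action $\cO\mapsto w\cdot\cO$ on open orbits, the maps $s_{v_w}$ relabeling $(P\bs Z)_{\fa_\fh}$, and the $\cN/\cZ$-action $w\cdot v$ on $\fN$ — and verifying that the $\gamma$-factor normalizations in the definitions of $\const$, $\beta$, $\cB^\circ$ and $\mu^\circ$ all cancel correctly so that the composition is genuinely $\cB^\circ_{(w\cdot v)^{-1}v}$ with no residual scalar. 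The unitarity statements of Corollaries \ref{Cor Maass-Selberg relations} and \ref{Cor Scattering operators} provide useful consistency checks: the right-hand side must be unitary and must satisfy the $W_Z$-representation law, both of which follow from the cocycle identity \eqref{eq group law normalized B-matrices} once the component identification is established.
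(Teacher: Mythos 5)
Your proposal is correct and follows essentially the same route as the paper: the paper's proof likewise reduces to the identity $\const_{v}(\xi:\lambda)=j_{v}(\xi)\circ\cB^{\circ}_{v^{-1}}(\xi:\lambda)$ (obtained from Proposition \ref{Prop Const mu formula} and Corollary \ref{Cor Formula const_v_w}, i.e.\ your equations (\ref{eq Formula const}) and (\ref{eq Formula const for v_w components})), and then concludes by taking adjoints using the unitarity of the normalized $\cB$-matrices together with the cocycle identity (\ref{eq group law normalized B-matrices}). Your detour through Corollary \ref{Cor Maass-Selberg relations} to compute $\const_{v}(\xi:\lambda)^{\dagger}$ is an equivalent (slightly longer) way of packaging the same unitarity input, so the two arguments coincide in substance.
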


\begin{proof}
Let $w\in\cN/\cW$ and let $v\in \fN$ be so that $v\cW=w$.
In view of Proposition \ref{Prop Const mu formula} we have
$$
\const_{v}(\xi:\lambda)
=\const_{e}\big(v^{-1}\cdot\xi:\Ad^{*}(v^{-1})\lambda\big)\circ\cB^{\circ}_{v^{-1}}(\xi:\lambda)
\qquad(v\in\fN).
$$
Moreover, by Corollary \ref{Cor Formula const_v_w}
$$
\const_{e}\big(v^{-1}\cdot\xi:\Ad^{*}(v^{-1})\lambda\big)
=j_{v}(\xi),
$$
and hence
$$
\const_{v}(\xi:\lambda)
=j_{v}(\xi)\circ \cB^{\circ}_{v^{-1}}(\xi:\lambda)
\qquad(v\in\fN).
$$
The assertion now follows from the unitarity of the normalized $\cB$-matrices and (\ref{eq group law normalized B-matrices}).
\end{proof}

\section*{Appendices}
\addcontentsline{toc}{section}{Appendices}
\renewcommand{\thesubsection} {\Alph{subsection}}
\numberwithin{Thm}{subsection}
\numberwithin{equation}{subsection}

\subsection*{Appendix A: Wavefront spaces}
\setcounter{subsection}{1}
\addcontentsline{toc}{subsection}{\protect\numberline{\thesubsection} Wavefront spaces}
\setcounter{Thm}{0}
\setcounter{equation}{0}

The space $Z$ is called wavefront if the compression cone of $Z$ is given by
$$
\cC
=\fa^{-}+\fa_{\fh}.
$$
A main class of examples of wavefront spaces is the class of reductive symmetric spaces, i.e., the spaces $Z=G/H$ with $H$ an open subgroup of the fixed point subgroup of an involutive automorphism of $G$.

For the most continuous part of $L^{2}(Z)$ the $P$-orbits $\cO$ of maximal rank with $\fa_{\cO}=\fa_{\fh}$ are of relevance. The open $P$-orbits satisfy this condition.
In view of the following proposition, the open $P$-orbits are all orbits with $\fa_{\cO}=\fa_{\fh}$ in case $Z$ is wavefront.
We recall the groups $\cN$, $\cZ$ and $\cW$ from (\ref{eq def cN}), (\ref{eq def cZ}) and (\ref{eq def cW}), respectively.

\begin{Prop}\label{Prop wavefront}
Assume that $Z$ is wavefront. Then $\cW=\cN$. In particular, the little Weyl group is equal to
$$
W_{Z}
=\cN/\cZ.
$$
\end{Prop}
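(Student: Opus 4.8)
The inclusion $\cW\subseteq\cN$ is already available — it is \cite[Theorem~11.1]{KuitSayag_OnTheLittleWeylGroupOfARealSphericalSpace}, quoted just after~(\ref{eq def cW}) — so the plan is to prove the reverse inclusion $\cN\subseteq\cW$. Fix $v\in\cN$; by Proposition~\ref{Prop characterization cV}, applied with the admissible point $z=eH$, it suffices to show that $Pv^{-1}\cdot z$ is open. Choose an order-regular $X\in\fa^-$ generic enough that $\Ad(v)X$ is again order-regular. Since $z$ is admissible there is $w\in N_G(\fa)$ with $\fh_{z,\Ad(v)X}=\Ad(w)\fh_\emptyset$, and $w\in\cW$ by the definition~(\ref{eq def cW}) of $\cW$. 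Proposition~\ref{Prop equivalence for h_(z,X)=Ad(w)h_empty} then gives $\Ad(v)X\in\Ad(w)\cC$, while $X\in\fa^-\subseteq\cC$ gives $\Ad(v)X\in\Ad(v)\cC$; thus the open cones $\Ad(v)\cC$ and $\Ad(w)\cC$ have a point in common.

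The wavefront hypothesis is used as follows. Since $\cC=\fa^-+\fa_\fh$ and $v,w\in\cN$ normalize $\fa_\fh$, each of $\Ad(v)\cC=\Ad(v)\fa^-+\fa_\fh$ and $\Ad(w)\cC=\Ad(w)\fa^-+\fa_\fh$ is the thickening by $\fa_\fh$ of a Weyl chamber of $\fa$; their images in $\fa/\fa_\fh$ are cones with nonempty interior, and $\Ad(w)\cC/\fa_\fh$ is an open chamber of the finite reflection group $W_Z=\cW/\cZ$ because $w\in\cW$. The key point I would prove is that $\Ad(v)\cC/\fa_\fh$ is likewise a $W_Z$-chamber. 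Granting this, two $W_Z$-chambers with a common point coincide, so $\Ad(v)\cC=\Ad(w)\cC$; then $v^{-1}w$ stabilizes $\cC$, hence its edge $\fa_E=\fa_\fh$ (wavefront), hence fixes the chamber $\overline{\cC}/\fa_\fh$, and — a chamber of a reflection group having trivial stabilizer — acts trivially on $\fa/\fa_\fh$. By the identification $\cZ=N_{L_Q}(\fa)=\ker\!\bigl(\cN\to\mathrm{GL}(\fa/\fa_\fh)\bigr)$ coming from~(\ref{eq def cZ}) and Remark~\ref{Rem L_Q roots}, this yields $v^{-1}w\in\cZ$, so $v\in w\cZ\subseteq\cW$. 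Hence $\cN=\cW$, and the last assertion $W_Z=\cW/\cZ=\cN/\cZ$ is immediate.

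The genuine obstacle is the key point just flagged: that for wavefront $Z$ a thickening of a Weyl chamber of $\fa$ by $\fa_\fh$ projects to a $W_Z$-chamber in $\fa/\fa_\fh$ — equivalently, that $\cN/\cZ$ contains no chamber-stabilizing element outside $W_Z$. This is the wavefront analogue of Rossmann's description of the little Weyl group of a reductive symmetric space as $N_W(\fa_\fh)/Z_W(\fa_\fh)$, and I do not expect a purely soft argument. The plan would be to identify the walls of $\overline{\cC}/\fa_\fh$ with the traces on $\fa/\fa_\fh$ of precisely those simple-root hyperplanes $\ker\alpha_i\subseteq\fa$ that contain $\fa_\fh$ (for instance by analysing $\fh_{z,\cF}$ for a codimension-one face $\cF$ of $\overline{\cC}$, cf.\ Lemma~\ref{Lemma properties fh_(z,cF)}), so that the $W_Z$-wall arrangement on $\fa/\fa_\fh$ is exactly the $W$-wall arrangement on $\fa$ cut down to $\fa_\fh^{\perp}\cong\fa/\fa_\fh$; an element of $\cN/\cZ$ stabilizing $\overline{\cC}/\fa_\fh$ then permutes these walls, and one checks this permutation is forced to be trivial, placing the element in $W_Z$. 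This combinatorial step is where the hypothesis $\cC=\fa^-+\fa_\fh$ is genuinely consumed; the rest is bookkeeping with results already established in the paper.
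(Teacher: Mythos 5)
The first half of your argument --- producing $w\in\cW$ with $\Ad(v)\cC\cap\Ad(w)\cC\neq\emptyset$ via Proposition \ref{Prop equivalence for h_(z,X)=Ad(w)h_empty} and the admissibility of $eH$ --- is sound, and it closely parallels the opening reduction of the paper's proof (which uses the fundamental-domain property of $\overline{\cC}/\fa_{\fh}$ to arrange $\Ad(w)\cC\cap\fa^{-}\neq\emptyset$ after multiplying by an element of $\cW$). The problem is everything after that. Your ``key point'' (that $\Ad(v)\cC/\fa_{\fh}$ is a $W_{Z}$-chamber for every $v\in\cN$), together with the two further claims you need --- that an element of $\cN/\cZ$ preserving $\overline{\cC}/\fa_{\fh}$ acts trivially on $\fa/\fa_{\fh}$, and that $\cZ$ is exactly the kernel of $\cN\to\mathrm{GL}(\fa/\fa_{\fh})$ --- jointly amount to a restatement of $\cN=\cW$: the $W_{Z}$-chambers are by definition the sets $\Ad(u)\cC/\fa_{\fh}$ with $u\in\cW$, so asserting that $\Ad(v)\cC/\fa_{\fh}$ is one of them and that the resulting $u^{-1}v$ lands in $\cZ$ is the proposition itself. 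You flag the key point as unproven, but the plan you sketch does not close the circle: the trivial-stabilizer property of chambers holds for elements of the reflection group $W_{Z}$, not for arbitrary linear maps preserving the chamber (diagram automorphisms preserve a Weyl chamber without being trivial), and ruling out such ``outer'' elements of $\cN/\cZ$ is precisely the content of the statement. In short, the proposal reduces the proposition to an equivalent unproved assertion, and the combinatorial route you propose for that assertion is itself doubtful at the step ``one checks this permutation is forced to be trivial.''

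For comparison, the paper closes the argument by a different and entirely self-contained mechanism with no chamber combinatorics. After arranging $\Ad(w)\cC\cap\fa^{-}\neq\emptyset$ and adjusting $w$ by an element of $\cZ$ so that (\ref{eq wSigma^+ cap -Sigma^+=wSigma(Q) cap -Sigma}) holds, it suffices to show that $w$ fixes $\rho_{P}$, since the stabilizer of $\rho_{P}$ in $N_{G}(\fa)$ is $MA\subseteq\cZ\subseteq\cW$. The wavefront hypothesis $\cC=\fa^{-}+\fa_{\fh}$ makes $(\Ad(w)\fa^{+}+\fa_{\fh})\cap\fa^{+}$ open and nonempty, so its dual cone $\big(\sum_{\alpha\in\Sigma(wPw^{-1})}\R_{\geq0}\alpha\cap(\fa/\fa_{\fh})^{*}\big)+\sum_{\alpha\in\Sigma(P)}\R_{\geq0}\alpha$ is proper; unimodularity of $Z$ gives $\rho_{Q}\in(\fa/\fa_{\fh})^{*}$, and the normalization of $w$ gives $\rho_{P}-\Ad^{*}(w)\rho_{P}=\rho_{Q}-\Ad^{*}(w)\rho_{Q}$, so that both $\Ad^{*}(w)\rho_{P}-\rho_{P}$ and its negative lie in that proper cone. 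Hence $\Ad^{*}(w)\rho_{P}=\rho_{P}$. If you want to salvage your approach, you would need an independent proof of your key point; as it stands, the proof is not complete.
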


Theorem \ref{Thm properties of W-action} and Proposition \ref{Prop wavefront} have the following corollary.

\begin{Cor}\label{Cor wavefront}
Assume that $Z$ is wavefront and let $\cO\in (P\bs Z)_{\max}$. Then $\cO$ is open if and only if $\fa_{\cO}=\fa_{\fh}$.
\end{Cor}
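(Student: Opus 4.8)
The plan is to treat the two implications separately. The implication ``$\cO$ open $\Rightarrow \fa_{\cO}=\fa_{\fh}$'' requires nothing beyond real sphericality: it is precisely the Remark following the definition of $\fa_{\cO}$ in Section~\ref{Subsection Orbits of max rank - Rank of orbit}, which deduces it from Proposition~\ref{Prop properties compression cone}(\ref{Prop properties compression cone - item 2}). So the real content is the converse, and this is where Theorem~\ref{Thm properties of W-action} and Proposition~\ref{Prop wavefront} are to be combined.

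Suppose then that $\cO\in(P\bs Z)_{\max}$ satisfies $\fa_{\cO}=\fa_{\fh}$. First I would use transitivity of the $W$-action (Theorem~\ref{Thm properties of W-action}(\ref{Thm properties of W-action - item 1})) to choose $w\in W$ with $w\cdot[\cO]=(P\bs Z)_{\open}$, the equivalence class of the open $P$-orbits; recall from the Remark preceding Theorem~\ref{Thm properties of W-action} that this class is exactly the set of all open orbits, which is nonempty since $Z$ is real spherical. Picking a representative $v\in N_{G}(\fa)$ of $w$ and applying Theorem~\ref{Thm properties of W-action}(\ref{Thm properties of W-action - item 4}) to any open orbit $\cO'\in w\cdot[\cO]$ gives $\fa_{\cO'}=\Ad(v)\fa_{\cO}$. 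Since $\fa_{\cO'}=\fa_{\fh}$ (the forward implication applied to $\cO'$) and $\fa_{\cO}=\fa_{\fh}$ by assumption, this forces $\Ad(v)\fa_{\fh}=\fa_{\fh}$, i.e.\ $v\in N_{G}(\fa)\cap N_{G}(\fa_{\fh})=\cN$. This is the single point at which the wavefront hypothesis enters: Proposition~\ref{Prop wavefront} identifies $\cN$ with $\cW$, so $v\in\cW$ and hence $w\in\cW/MA$.

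To finish, I would invoke Theorem~\ref{Thm properties of W-action}(\ref{Thm properties of W-action - item 2}): $\cW/MA$ is the stabilizer of the class $(P\bs Z)_{\open}$ for the $W$-action on $(P\bs Z)_{\max}/{\sim}$. Hence $w$ stabilizes $(P\bs Z)_{\open}$, and together with $w\cdot[\cO]=(P\bs Z)_{\open}$ this yields $[\cO]=(P\bs Z)_{\open}$; as $\cO$ lies in this class and the class is precisely the set of open orbits, $\cO$ is open. I do not anticipate a genuine obstacle here — the argument is a bookkeeping assembly of results already in hand. The only points needing a little care are that $MA=Z_{G}(\fa)\subseteq\cZ\subseteq\cW$, so that the quotient $\cW/MA\subseteq W$ appearing in Theorem~\ref{Thm properties of W-action}(\ref{Thm properties of W-action - item 2}) is meaningful, and that membership in the equivalence class $(P\bs Z)_{\open}$ is synonymous with being an open orbit, which is what converts ``$[\cO]=(P\bs Z)_{\open}$'' into the desired conclusion.
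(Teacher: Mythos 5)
Your proof is correct and uses exactly the ingredients the paper intends (the paper states the corollary as an immediate consequence of Theorem~\ref{Thm properties of W-action} and Proposition~\ref{Prop wavefront} without writing out the argument): the easy direction from Proposition~\ref{Prop properties compression cone}(\ref{Prop properties compression cone - item 2}), and for the converse the transitivity of the $W$-action, the equivariance $\fa_{\cO'}=\Ad(w)\fa_{\cO}$ from part~(\ref{Thm properties of W-action - item 4}), the identification of the stabilizer of $(P\bs Z)_{\open}$ with $\cW/MA$, and $\cW=\cN$ in the wavefront case. The bookkeeping points you flag ($MA\subseteq\cZ\subseteq\cW$ and the fact that $(P\bs Z)_{\open}$ is precisely the equivalence class of open orbits) are indeed the only things to check, and both hold as stated in the paper.
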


\begin{proof}[Proof of Proposition \ref{Prop wavefront}]
As $\cW$ is a subgroup of $\cN$, we only have to prove the inclusion $\cN\subseteq \cW$. Let $w\in \cN$. From the fact that  $\overline{\cC}/\fa_{\fh}$ is a fundamental domain for action of  the little Weyl group on $\fa/\fa_{\fh}$ it follows that there exists a $v\in\cW$ so that $\Ad(v^{-1}w^{-1})\fa^{-}\cap\cC\neq \emptyset$. After replacing $w$ by $wv$ we may thus assume that $\Ad(w)\cC\cap \fa^{-}\neq\emptyset$. We may further adjust $w$ by multiplying it from the right by an element from $\cZ$ and assume that (\ref{eq wSigma^+ cap -Sigma^+=wSigma(Q) cap -Sigma}) holds. It now suffices to prove that $w\in MA$. As the stabilizer of $\rho_{P}$ in $N_{G}(\fa)$ is equal to $MA$, it is thus enough to show that $w$ stabilizes $\rho_{P}$.

Since
$$
\big(\Ad(w)\fa^{+}+\fa_{\fh}\big)\cap\fa^{+}
=-\big(\Ad(w)\cC\cap\fa^{-}\big)
$$
is open and nonempty, its dual cone
\begin{equation}\label{eq dual cone of Ad(w)cC cap fa^-(P) in wavefront case}
\Big(\big(\sum_{\alpha\in\Sigma(wPw^{-1})}\R_{\geq0}\alpha\big)\cap(\fa/\fa_{\fh})^{*}\Big)
    +\sum_{\alpha\in\Sigma(P)}\R_{\geq0}\alpha
\end{equation}
is proper. Note that
$$
\rho_{P}-\Ad^{*}(w)\rho_{P}
=\sum_{\alpha\in\Sigma(P)\cap-\Sigma(wPw^{-1})}\dim(\fg_{\alpha})\alpha.
$$
Since $w$ normalizes $\fa_{\fh}$, it follows from Remark \ref{Rem L_Q roots} that $w$ normalizes $\fl_{Q}$.
In view of (\ref{eq wSigma^+ cap -Sigma^+=wSigma(Q) cap -Sigma}) we have
$$
\Sigma(P)\cap-\Sigma(wPw^{-1})
=\Sigma(P)\cap-\Sigma(wQw^{-1})
=\Sigma(Q)\cap-\Sigma(wQw^{-1}),
$$
and hence
$$
\rho_{P}-\Ad^{*}(w)\rho_{P}
=\rho_{Q}-\Ad^{*}(w)\rho_{Q}.
$$
As $Z$ is unimodular, we have
$$
\rho_{Q}(X)
=\tr\ad(X)\big|_{\fn_{Q}}
=-\tr\ad(X)\big|_{\fm+\fa+\fh}
=-\tr\ad(X)\big|_{\fh}
=0\qquad(X\in\fa_{\fh}).
$$
Therefore, $\rho_{Q}\in(\fa/\fa_{\fh})^{*}$ and, as $w$ normalizes $\fa_{\fh}$, also $\Ad(w)^{*}\rho_{Q}\in(\fa/\fa_{\fh})^{*}$.
It follows that
$$
\Ad^{*}(w)\rho_{P}-\rho_{P}
\in\big(\sum_{\alpha\in\Sigma(wPw^{-1})}\R_{\geq0}\alpha\big)\cap(\fa/\fa_{\fh})^{*}
$$
and
$$
\rho_{P}-\Ad^{*}(w)\rho_{P}
\in \sum_{\alpha\in\Sigma(P)}\R_{\geq0}\alpha.
$$
Since (\ref{eq dual cone of Ad(w)cC cap fa^-(P) in wavefront case}) is proper and contains both $\Ad^{*}(w)\rho_{P}-\rho_{P}$ and $\rho_{P}-\Ad^{*}(w)\rho_{P}$, we conclude that $\rho_{P}=\Ad^{*}(w)\rho_{P}$. This proves the claim.
\end{proof}

\subsection*{Appendix B: Intertwining operators}
\setcounter{subsection}{2}
\addcontentsline{toc}{subsection}{\protect\numberline{\thesubsection} Intertwining operators}
\setcounter{Thm}{0}
\setcounter{equation}{0}
Let $S$ be a parabolic subgroup of $G$ with Langlands decomposition $S=M_{S}A_{S}N_{S}$. Further, let $\xi$ be a representation of $M_{S}$ on a Hilbert space $V_{\xi}$ and $\lambda\in\fa_{S,\C}^{*}$.
In this appendix we will be concerned with a description of the action of standard intertwining operators on $\cD'(S:\xi:\lambda)$.
In the course of this appendix we will prove all assertions in Proposition \ref{Prop int formula for a(S_2:S_1:xi:lambda)}.
We begin by introducing some spaces of functions.

Recall the map $\Iwasawa_{S}:G\to A_{S}$ which is given by
$$
x\in N_{S}\Iwasawa_{S}(x)M_{S}K\qquad(x\in G).
$$
We write $\cL_{S,\xi,\lambda}$ for the space of equivalence classes of measurable functions $\phi:G\to V_{\xi}$ such that
$$
x\mapsto \Iwasawa_{S}^{-\Re\lambda+\rho_{S}}(x)\|\phi(x)\|_{\xi}
$$
is integrable. Here two functions are equivalent if and only if only differ on a set of measure $0$.  We endow $\cL_{S,\xi,\lambda}$ with the norm
$$
\phi \mapsto \int_{G}\|\Iwasawa_{S}^{-\lambda+\rho_{S}}(x)\phi(x)\|_{\xi}\,dx<\infty.
$$
With this norm $\cL_{S,\xi,\lambda}$ is a Banach space.

\begin{Lemma}\label{Lemma equivalence of A_S^(-lambda+rho) under right translations}
For every compact subset $C$ of $G$ there exists a constant $c>0$ such that for every $g\in C$ and $x\in G$
$$
c^{-1}|\Iwasawa_{S}^{-\lambda+\rho_{S}}(xg)|
\leq |\Iwasawa_{S}^{-\lambda+\rho_{S}}(x)|
\leq c|\Iwasawa_{S}^{-\lambda+\rho_{S}}(xg)|.
$$
\end{Lemma}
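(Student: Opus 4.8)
The plan is to reduce the statement to the continuity of the character $y\mapsto |\Iwasawa_{S}^{-\lambda+\rho_{S}}(y)|$ on $G$, combined with a cocycle-type identity for the Iwasawa projection $\Iwasawa_{S}$ under right translations and a compactness argument. First I would record the elementary left-equivariance properties of $\Iwasawa_{S}$ coming from the decomposition $G=N_{S}A_{S}M_{S}K$: it is left-invariant under $N_{S}$ (immediate, since $N_{S}N_{S}=N_{S}$); it satisfies $\Iwasawa_{S}(ay)=a\,\Iwasawa_{S}(y)$ for $a\in A_{S}$ (because $A_{S}$ normalizes $N_{S}$); and it is left-invariant under $M_{S}$, because $M_{S}$ normalizes $N_{S}$ and centralizes $A_{S}$, so writing $y=n'a'm'k'$ one has $my=(mn'm^{-1})a'(mm')k'$, whence $\Iwasawa_{S}(my)=a'=\Iwasawa_{S}(y)$. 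Note that in the decomposition of $y$ the components $m'$ and $k'$ are only determined up to $M_{S}\cap K$, but the $A_{S}$-component $\Iwasawa_{S}(y)$ is not, so these identities are well defined.

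Next, for fixed $x,g\in G$ choose $n\in N_{S}$, $a=\Iwasawa_{S}(x)\in A_{S}$, $m\in M_{S}$ and $k\in K$ with $x=namk$. Then $xg=nam(kg)$, and applying successively left $N_{S}$-invariance, the $A_{S}$-intertwining property, and left $M_{S}$-invariance yields
\[
\Iwasawa_{S}(xg)=\Iwasawa_{S}\big(nam(kg)\big)=a\,\Iwasawa_{S}\big(m(kg)\big)=a\,\Iwasawa_{S}(kg)=\Iwasawa_{S}(x)\,\Iwasawa_{S}(kg).
\]
Applying the character $-\lambda+\rho_{S}$ and taking moduli gives
$|\Iwasawa_{S}^{-\lambda+\rho_{S}}(xg)|=|\Iwasawa_{S}^{-\lambda+\rho_{S}}(x)|\cdot|\Iwasawa_{S}^{-\lambda+\rho_{S}}(kg)|$, in which the second factor depends only on the point $kg\in KC$. (This is a pointwise identity for the fixed pair $x,g$, so no measurable section $x\mapsto k$ is needed.)

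Finally I would invoke compactness: $KC$ is a compact subset of $G$, and $y\mapsto|\Iwasawa_{S}^{-\lambda+\rho_{S}}(y)|$ is continuous (indeed real-analytic) and strictly positive on $G$, hence bounded above and bounded below away from $0$ on $KC$. Taking $c$ to be the maximum of this upper bound and the reciprocal of this lower bound (so $c\ge 1$), we get $c^{-1}\le|\Iwasawa_{S}^{-\lambda+\rho_{S}}(kg)|\le c$ for all $k\in K$ and $g\in C$; combined with the displayed multiplicative identity this gives both inequalities of the lemma. I do not expect a genuine obstacle here; the only points requiring care are the verification of the left $N_{S}A_{S}M_{S}$-equivariance of $\Iwasawa_{S}$ and the observation that the cocycle identity above is purely pointwise, so that the $K$-component of $x$ may be chosen arbitrarily and no measurability considerations enter.
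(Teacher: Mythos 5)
Your proposal is correct and follows essentially the same route as the paper: the key identity $\Iwasawa_{S}(xg)\in\Iwasawa_{S}(x)\,\Iwasawa_{S}(Kg)$ followed by compactness of $KC$ and continuity of $\Iwasawa_{S}^{\pm\lambda\pm\rho_{S}}$. You merely spell out the left $N_{S}A_{S}M_{S}$-equivariance in more detail than the paper does.
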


\begin{proof}
Let $C$ be a compact subset of $G$, let $g\in C$ and let $x\in G$. Then
$$
\Iwasawa_{S}(xg)
\in \Iwasawa_{S}(x)\Iwasawa_{S}(Kg).
$$
Since $KC$ is a compact subset of $G$ and $\Iwasawa_{S}^{\pm\lambda\pm\rho_{S}}$ is continuous, there exists a constant $c>0$ so that
$$
c\geq|\Iwasawa_{S}^{\mp\lambda\pm\rho_{S}}(y)|
\qquad (y\in KC).
$$
With this constant $c$ the desired inequalities hold.
\end{proof}

It follows from Lemma \ref{Lemma equivalence of A_S^(-lambda+rho) under right translations} that $\cL_{S,\xi,\lambda}$ is invariant under right translations by elements of $G$.
We write $R$ for the right-regular representation of $G$ on $\cL_{S,\xi,\lambda}$.

\begin{Prop}
The representation $\big(R,\cL_{S,\xi,\lambda}\big)$ is a continuous Banach representation.
\end{Prop}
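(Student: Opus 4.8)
<br>

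The plan is to verify the two defining properties of a continuous Banach representation: that $R(g)$ is a bounded operator on $\cL_{S,\xi,\lambda}$ for each $g\in G$, and that the map $G\times \cL_{S,\xi,\lambda}\to \cL_{S,\xi,\lambda}$, $(g,\phi)\mapsto R(g)\phi$, is continuous. Boundedness of each $R(g)$ follows immediately from Lemma~\ref{Lemma equivalence of A_S^(-lambda+rho) under right translations} applied to the singleton $C=\{g\}$: for this $g$ there is a constant $c>0$ with $|\Iwasawa_{S}^{-\lambda+\rho_{S}}(x)|\le c\,|\Iwasawa_{S}^{-\lambda+\rho_{S}}(xg)|$ for all $x$, and hence, after the change of variables $x\mapsto xg^{-1}$ in the defining integral (the Haar measure on $G$ is bi-invariant since $G$ is reductive, in particular unimodular), one gets $\|R(g)\phi\|_{\cL_{S,\xi,\lambda}}\le c\,\|\phi\|_{\cL_{S,\xi,\lambda}}$. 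So $R$ is a representation of $G$ by bounded operators, with operator norms locally bounded (again by Lemma~\ref{Lemma equivalence of A_S^(-lambda+rho) under right translations}, now applied to an arbitrary compact neighbourhood of a given point).

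For continuity, by the standard reduction it suffices to prove strong continuity at the identity, i.e.\ $\|R(g)\phi-\phi\|_{\cL_{S,\xi,\lambda}}\to 0$ as $g\to e$, for each fixed $\phi\in\cL_{S,\xi,\lambda}$; the local uniform bound on the operator norms together with a standard $3\varepsilon$-argument then upgrades this to joint continuity on $G\times\cL_{S,\xi,\lambda}$. To prove strong continuity at $e$, I would first establish it for a dense subspace on which the statement is transparent, namely $\cD(G,V_{\xi})$ (which is contained in $\cL_{S,\xi,\lambda}$ and is dense there, by the usual truncation-and-mollification argument in $L^{1}$-type spaces). For $\phi\in\cD(G,V_{\xi})$, fix a compact neighbourhood $C$ of $e$; then $\supp(R(g)\phi-\phi)$ stays inside the fixed compact set $(\supp\phi)C^{-1}$ for $g\in C$, on which $|\Iwasawa_{S}^{-\Re\lambda+\rho_{S}}|$ is bounded, and $R(g)\phi\to\phi$ uniformly by uniform continuity of $\phi$; dominated convergence then gives $\|R(g)\phi-\phi\|_{\cL_{S,\xi,\lambda}}\to 0$. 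Finally, given arbitrary $\phi\in\cL_{S,\xi,\lambda}$ and $\varepsilon>0$, choose $\psi\in\cD(G,V_{\xi})$ with $\|\phi-\psi\|_{\cL_{S,\xi,\lambda}}<\varepsilon$, and for $g$ in a compact neighbourhood $C$ of $e$ bound
$$
\|R(g)\phi-\phi\|
\le \|R(g)(\phi-\psi)\|+\|R(g)\psi-\psi\|+\|\psi-\phi\|
\le (c_{C}+1)\varepsilon+\|R(g)\psi-\psi\|,
$$
where $c_{C}=\sup_{g\in C}\|R(g)\|<\infty$; letting $g\to e$ the middle term vanishes, proving strong continuity at $e$.

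The main obstacle, and the only point requiring genuine care, is the density of $\cD(G,V_{\xi})$ in $\cL_{S,\xi,\lambda}$ in the norm of $\cL_{S,\xi,\lambda}$ rather than in the ordinary $L^{1}(G,V_{\xi})$-norm. This is really a statement about the weighted space $L^{1}\big(G,\Iwasawa_{S}^{-\Re\lambda+\rho_{S}}(x)\,dx;V_{\xi}\big)$, and it follows from the fact that the weight $\Iwasawa_{S}^{-\Re\lambda+\rho_{S}}$ is continuous and strictly positive, hence bounded above and below on compacta: one approximates a general $\phi$ first by its restriction to a large compact set (monotone/dominated convergence), then mollifies by convolution with an approximate identity supported near $e$, using the already-established local boundedness of the right-translation operators to control the weighted norm of the mollification. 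Once density is in hand, the rest of the argument is the routine $3\varepsilon$-scheme sketched above, and no further input beyond Lemma~\ref{Lemma equivalence of A_S^(-lambda+rho) under right translations} and unimodularity of $G$ is needed.
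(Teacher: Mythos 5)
Your proof is correct and follows exactly the standard argument that the paper itself only cites (it defers to \cite[Proposition 2.9]{Kuit_SupportTheorem}, whose proof is precisely this scheme: locally uniform operator bounds from Lemma \ref{Lemma equivalence of A_S^(-lambda+rho) under right translations} together with unimodularity, strong continuity on the dense subspace $\cD(G,V_{\xi})$ of the weighted $L^{1}$-space, and the $3\varepsilon$-upgrade). The only cosmetic point is that the translation bound uses the lemma with $C=\{g^{-1}\}$ rather than $\{g\}$ after the change of variables, which changes nothing.
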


\begin{proof}
The proof is the same as the proof for \cite[Proposition 2.9]{Kuit_SupportTheorem}.
\end{proof}

Let $\cV_{S,\xi,\lambda}$ be the space of smooth $V_{\xi}$-valued functions on $G$ that represent a smooth vector for $R$ in $\cL_{S,\xi,\lambda}$. The local Sobolev lemma ensures that every smooth vector in $\cL_{S,\xi,\lambda}$ can indeed be represented by a smooth $V_{\xi}$-valued function. See also \cite[Theorem 5.1]{Poulsen_OnSmoothVectorsAndIntertwiningBilinearForms}. We endow $\cV_{S,\xi,\lambda}$ with the unique Fr\'echet topology so that the natural bijection $\cV_{S,\xi,\lambda}\to\cL_{S,\xi,\lambda}^{\infty}$ is a topological isomorphism.
Note that
$$
\cV_{S,\xi,\lambda}
=\big\{\phi\in \cE(G,V_{\xi}):\int_{G}\|\Iwasawa_{S}^{-\lambda+\rho_{S}}(x)\,R(u)\phi(x)\|_{\xi}\,dx<\infty\text{ for every }u\in\cU(\fg)\big\}.
$$

\begin{Lemma}\label{Lemma map V_(S,xi,lambda) -> C^infty(S:xi:lambda)}
For $\phi\in\cV_{S,\xi,\lambda}$ the function
$$
G\to V_{\xi};
\quad
x\mapsto\int_{M_{S}}\int_{A_{S}}\int_{N_{S}} a^{-\lambda+\rho_{S}}\xi(m^{-1}) \phi(manx)\,dn\,da\,dm
$$
is defined by absolutely convergent integrals and forms an element of $C^{\infty}(S:\xi:\lambda)$. Moreover, the map
$$
\cV_{S,\xi,\lambda}\to C^{\infty}(S:\xi:\lambda)
$$
thus obtained is $G$-equivariant and continuous.
\end{Lemma}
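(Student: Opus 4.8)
Write $T=T_{S,\xi,\lambda}$ for the operator in the statement, $(T\phi)(x)=\int_{M_S}\int_{A_S}\int_{N_S}a^{-\lambda+\rho_S}\xi(m^{-1})\phi(manx)\,dn\,da\,dm$. The plan starts from two elementary facts about $\Iwasawa_{S}$: from the defining relation $x\in N_S\Iwasawa_{S}(x)M_SK$ one checks by a one-line manipulation that $\Iwasawa_{S}(many)=a\,\Iwasawa_{S}(y)$ for $m\in M_S$, $a\in A_S$, $n\in N_S$, $y\in G$, and that $\Iwasawa_{S}(k)=e$ for $k\in K$. Combined with the integration formula for $G$ in $M_SA_SN_SK$-coordinates (obtained from the Haar normalisation fixed in Section~\ref{Section Setup and notation} by passing to $g^{-1}$ and conjugating), this rewrites $p_{S,\xi,\lambda,u}(\phi)$ as a constant times $\int_K\int_{M_S}\int_{A_S}\int_{N_S}a^{-\Re\lambda+\rho_S}\|R(u)\phi(mank)\|_\xi\,dn\,da\,dm\,dk$; in particular the weight $a^{-\Re\lambda+\rho_S}$ occurring there is exactly the one governing the absolute value of the integrand of $(T\phi)(k)$. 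Since $T(R(g)\phi)=R(g)(T\phi)$ holds by the very definition of $R(g)$, and the seminorms $p_{S,\xi,\lambda,u}$ are continuous under the right regular action, it suffices to bound $T\phi$ at points of $K$; and the same covariance identity together with left invariance of $dn\,da\,dm$ and a change of variables shows, once the integral converges, that $(T\phi)(m_0a_0n_0x)=a_0^{\lambda+\rho_S}\xi(m_0)(T\phi)(x)$, i.e. that $T\phi$ satisfies the defining covariance of $C^\infty(S:\xi:\lambda)$ provided it is smooth.

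The heart of the argument is an a priori estimate, proved first for $\phi\in\cD(G,V_\xi)$ where the region of integration $\{man:manx\in\supp\phi\}$ is compact so that all integrals are legitimate. Fix $d>\dim K$ and a basis $u_1,\dots,u_r$ of the subspace of $\cU(\fk)$ of order $\le d$. Differentiating under the integral, $k\mapsto(T\phi)(k)$ is smooth with $R(u_j)$-derivative $T(R(u_j)\phi)(k)$, and by the rewritten formula for $p$ together with Fubini each of these is integrable over $K$ with $\|T(R(u_j)\phi)\|_{L^1(K,V_\xi)}$ bounded by a constant times $p_{S,\xi,\lambda,u_j}(\phi)$ (here one uses that $\|\xi(m^{-1})\|_{\mathrm{op}}$ is bounded, i.e. the case of unitary $\xi$, which is all that is needed in the paper). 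A Sobolev inequality on the compact manifold $K$ then yields $\sup_{k\in K}\|(T\phi)(k)\|_\xi\le C\sum_{j}p_{S,\xi,\lambda,u_j}(\phi)$. Applying this with $R(v)\phi$ in place of $\phi$ for arbitrary $v\in\cU(\fg)$ and using $R(v)(T\phi)=T(R(v)\phi)$ controls every Fréchet seminorm $f\mapsto\sup_{k\in K}\|R(v)f(k)\|_\xi$ of $C^\infty(S:\xi:\lambda)$, so $T\colon\cD(G,V_\xi)\to C^\infty(S:\xi:\lambda)$ is continuous for the topology of $\cV_{S,\xi,\lambda}$, and $T\phi$ is visibly smooth and covariant, hence lies in $C^\infty(S:\xi:\lambda)$.

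To conclude I would use that $\cD(G,V_\xi)$ is dense in $\cV_{S,\xi,\lambda}=\cL_{S,\xi,\lambda}^{\infty}$ (multiply by an exhausting sequence of cut-offs), so that $T$ extends uniquely to a continuous $G$-equivariant map $\overline{T}\colon\cV_{S,\xi,\lambda}\to C^\infty(S:\xi:\lambda)$. It then remains to see that for every $\phi\in\cV_{S,\xi,\lambda}$ and every $x\in G$ the defining integral of $(T\phi)(x)$ converges absolutely and equals $\overline{T}\phi(x)$. Absolute convergence for every $x$ (not merely for almost every $x$) follows from a localised version of the Sobolev estimate above, transverse to the embedded submanifold $S\cdot x\subseteq G$: smoothness of $\phi$ in the $\dim(K/(M_S\cap K))$ directions complementary to $S\cdot x$, together with integrability of $\phi$ and its $\cU(\fg)$-derivatives against the weight $a^{-\Re\lambda+\rho_S}$, bounds the orbital integral over $S\cdot x$ by finitely many of the $p_{S,\xi,\lambda,u}(\phi)$; the identification of this absolutely convergent integral with $\overline{T}\phi(x)$ is then a dominated-convergence argument along a sequence $\phi_n\to\phi$ in $\cV_{S,\xi,\lambda}$ with $\phi_n\in\cD(G,V_\xi)$. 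I expect the main obstacle to be precisely this pair of Sobolev-type reductions — integrating a function over the measure-zero orbit $S\cdot x$ and dominating the result by global weighted $L^1$-norms of the function and its derivatives — together with the bookkeeping that checks that the exponential weights produced by $\Iwasawa_{S}$ in the Haar-measure formula match those in the definition of $\cV_{S,\xi,\lambda}$, which is what makes the estimate weight-neutral.
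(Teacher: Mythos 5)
Your overall architecture (reduce to $\phi\in\cD(G,V_{\xi})$ via an a priori Sobolev estimate on $K$, then extend by density) is different from the paper's, and the continuity part of it is sound: the bound $\|\cT\phi\|_{L^{1}(K,V_{\xi})}\leq p_{S,\xi,\lambda,1}(\phi)$ combined with $R(u)\circ\cT=\cT\circ R(u)$ and Sobolev embedding on $K$ is essentially the same mechanism the paper uses (the paper phrases it as: $\cT$ is continuous into $L^{1}(K:\xi)$, hence into $L^{1}(K:\xi)^{\infty}\simeq C^{\infty}(K:\xi)\simeq C^{\infty}(S:\xi:\lambda)$ by functoriality of smooth vectors, which quietly avoids your unproven claim that $\cD(G,V_{\xi})$ is dense in $\cV_{S,\xi,\lambda}$). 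The genuine gap is exactly where you flag it: everywhere-convergence and smoothness of $\cT\phi$ for general $\phi\in\cV_{S,\xi,\lambda}$. Your ``localised Sobolev estimate transverse to $S\cdot x$'' is not carried out, and even granting it, a slicing-plus-Sobolev argument applied to the a.e.-defined, $L^{1}$ orbital integrals of $\phi$ and its derivatives only produces a \emph{continuous representative} of an almost-everywhere defined function; it does not by itself show that the integral $\int_{M_{S}}\int_{A_{S}}\int_{N_{S}}a^{-\Re\lambda+\rho_{S}}\|\phi(manx)\|_{\xi}\,dn\,da\,dm$ is finite at \emph{every} $x$. One would still need a supplementary argument (e.g.\ Fatou's lemma showing the orbital integral of $\|\phi\|$ is lower semicontinuous, hence dominated everywhere by its continuous a.e.-representative), and the concluding ``dominated convergence along $\phi_{n}\to\phi$'' presupposes pointwise control that is again this same unproven estimate.

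The paper closes this gap with a different and cleaner device: by Dixmier--Malliavin, $\cV_{S,\xi,\lambda}=\spn\{\pi(f)\psi:f\in\cD(G),\ \psi\in\cV_{S,\xi,\lambda}\}$, and Fubini gives
$$
\cT\big(\pi(f)\psi\big)(x)=\int_{G}f(x^{-1}y)\,\cT\psi(y)\,dy,
$$
where $\cT\psi$ is a priori only almost-everywhere defined but locally integrable (this much does follow from Fubini and the covariance of $\cT\psi$ under $L(man)$). A convolution of a test function with a locally integrable function converges absolutely at every point and is smooth, so both everywhere-convergence and smoothness of $\cT\phi$ drop out at once, with no density of $\cD(G,V_{\xi})$, no transverse Sobolev estimate, and no limiting argument needed. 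If you want to keep your route, you must actually prove the transverse estimate and the density claim; replacing that whole block by the Dixmier--Malliavin factorization is the shorter path.
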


\begin{proof}
Let $\phi\in\cV_{S,\xi,\lambda}$. By Fubini's theorem the integral
\begin{equation}\label{eq T def}
\cT\phi(x)
:=\int_{M_{S}}\int_{A_{S}}\int_{N_{S}} a^{-\lambda+\rho_{S}}\xi(m^{-1}) \phi(manx)\,dn\,da\,dm
\end{equation}
is absolutely convergent for almost every $x\in K$ and the function $\cT\phi:K\to V_{\xi}$ thus obtained is integrable.
Since $L(man)\cT\phi=a^{-\lambda-\rho_{S}}\xi(m^{-1})\cT\phi$, it follows that the integral $\cT\phi(x)$ is absolutely convergent for almost every $x\in G$ and the function $\cT\phi:G\to V_{\xi}$ thus obtained is locally integrable.

We claim that the integral (\ref{eq T def}) is in fact absolutely convergent for every $x\in G$ and that $\cT\phi$ is a smooth function for every $\phi\in\cV_{S,\xi,\lambda}$.
From \cite[Th\'eor\`eme 3.3]{DixmierMalliavin_FactorisationsDeFunctionsEtDeVecteursIndefinimentDifferentiables} it follows that
\begin{equation}\label{eq cV=span(f*phi)}
\cV_{S,\xi,\lambda}
=\spn\big\{\pi(f)\phi:f\in\cD(G),\phi\in\cV_{S,\xi,\lambda}\big\}.
\end{equation}
Therefore, to prove the claim it suffices to show that for every $f\in\cD(G)$ and $\phi\in\cV_{S,\xi,\lambda}$ the function $\cT\big(\pi(f)\phi\big)$ is smooth. Let $f\in\cD(G)$ and $\phi\in\cV_{S,\xi,\lambda}$. Then, by Fubini's theorem
\begin{align*}
\cT(\pi(f)\phi)(x)
&=\int_{M_{S}}\int_{A_{S}}\int_{N_{S}} a^{-\lambda+\rho_{S}}\xi(m^{-1})\int_{G}f(y) \phi(manxy)\,dy\,dn\,da\,dm\\
&=\int_{G}f(x^{-1}y)\Big(\int_{M_{S}}\int_{A_{S}}\int_{N_{S}} a^{-\lambda+\rho_{S}}\xi(m^{-1}) \phi(many)\,dn\,da\,dm\Big)\,dy\\
&=\int_{G}f(x^{-1}y)\cT\phi(y)\,dy.
\end{align*}
Since $\cT\phi$ is locally integrable, the last expression defines a smooth function in $x\in G$. This proves the claim. Note that it follows from the claim that $\cT\phi\in C^{\infty}(S:\xi:\lambda)$ for every $\phi\in\cV_{S,\xi,\lambda}$. This proves the first statement in the lemma.

The equivariance of $\cT$ is clear. It thus remains to prove the continuity.
Let $L^{1}(K:\xi)$ be the space of integrable functions $\phi:K\to V_{\xi}$ that satisfy
$$
\phi(mk)=\xi(m)\phi(k)
\qquad(m\in M\cap K, k\in K).
$$
As stated above, the restriction of $\cT\phi$ to $K$ is integrable for every $\phi\in\cV_{S,\xi,\lambda}$. Moreover,
$$
\int_{K}\|\cT\phi(k)\|_{\xi}\,dk
\leq\int_{G}\|\Iwasawa_{S}^{-\lambda+\rho_{S}}(x)\phi(x)\|_{\xi}\,dx.
$$
Therefore $\cT$ defines a continuous map $\cV_{S,\xi,\lambda}\to L^{1}(K:\xi)$ which intertwines $\pi\big|_{K}$ and the right regular representation of $K$ on $L^{1}(K:\xi)$. Since $\big(\pi\big|_{K},\cV_{S,\xi,\lambda}\big)$ is a smooth representation, $\cT$ in fact defines a continuous map $\cV_{S,\xi,\lambda}\to L^{1}(K:\xi)^{\infty}$. From the local Sobolev lemma it follows that there is a natural identification between $L^{1}(K:\xi)^{\infty}$ and the space $C^{\infty}(K:\xi)$ consisting of all smooth functions $f:K\to V_{\xi}$ such that
$$
f(mk)
=\xi(m)f(k)
\qquad(m\in M\cap K, k\in K).
$$
This identification is a topological isomorphism. See also \cite[Theorem 5.1]{Poulsen_OnSmoothVectorsAndIntertwiningBilinearForms}.
Finally, the restriction map $\phi\mapsto \phi\big|_{K}$ is a $K$-equivariant topological isomorphism between $C^{\infty}(S:\xi:\lambda)$ and $C^{\infty}(K:\xi)$. This proves the second claim in the lemma.
\end{proof}

It follows from Lemma \ref{Lemma map V_(S,xi,lambda) -> C^infty(S:xi:lambda)} that for every $\eta\in C^{\infty}(S:\xi:\lambda)'$ the right-hand side of (\ref{eq def theta}) defines  a continuous linear functional on $\cV_{S,\xi,\lambda}$. We thus conclude that every $\mu\in\cD'(S:\xi:\lambda)$ extends to a continuous linear function on $\cV_{S,\xi,\lambda}$.
In fact, the injection
\begin{equation}\label{eq Bijection V'_S,xi,lambda to D'(S:xi:lambda)}
\big\{\mu\in\cV_{S,\xi,\lambda}':\mu \text{ satisfies } (\ref{eq L(man)mu=a^lambda xi(m) mu})\big\}
\hookrightarrow \cD'(S:\xi:\lambda);
\quad \mu\mapsto \mu\big|_{\cD(G,V_{\xi})}
\end{equation}
is a bijection.

Now let $S_{1}$ and $S_{2}$ be parabolic subgroups such that $A_{S_{1}}=A_{S_{2}}\subseteq A$. We identify $\fa_{S,\C}^{*}$ by the subspace of $\fa_{\C}^{*}$ of elements that vanish on $\fa\cap\fm_{S}$.

\begin{Prop}\label{Prop phi mapsto int_N phi continuous map on L^1 level}
Let $\lambda\in\fa_{S,\C}^{*}$ satisfy
\begin{equation}\label{eq dominance condition lambda}
\langle\Re\lambda,\alpha\rangle>0\qquad\big(\alpha\in\Sigma(\fa:S_{2})\cap-\Sigma(\fa:S_{1})\big).
\end{equation}
For every $\phi\in \cL_{S_{2},\xi,\lambda}$ and almost every $x\in G$ the integral
$$
\int_{N_{S_{2}}\cap \overline{N}_{S_{1}}}\phi(nx)\,dn
$$
is absolutely convergent and the function $\int_{N_{S_{2}}\cap\overline{N}_{S_{1}}}\phi(n\dotvar)\,dn$ thus obtained represents an element of $\cL_{S_{1},\xi,\lambda}$. Moreover, the map
$$
\cL_{S_{2},\xi,\lambda}\to\cL_{S_{1},\xi,\lambda};
\quad\phi\mapsto\int_{N_{S_{2}}\cap\overline{N}_{S_{1}}}\phi(n\dotvar)\,dn
$$
is continuous.
\end{Prop}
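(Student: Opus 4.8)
The plan is to deduce the whole statement from a single scalar $\cL^{1}$-estimate. Since $\cL_{S_{1},\xi,\lambda}$ and $\cL_{S_{2},\xi,\lambda}$ are Banach spaces and the map is linear, it suffices to exhibit a constant $C=C(\lambda)<\infty$ such that for every $\phi\in\cL_{S_{2},\xi,\lambda}$
\[
\int_{G}\int_{N_{S_{2}}\cap\overline{N}_{S_{1}}}|\Iwasawa_{S_{1}}^{-\Re\lambda+\rho_{S_{1}}}(x)|\,\|\phi(nx)\|_{\xi}\,dn\,dx
\;\le\;C\int_{G}|\Iwasawa_{S_{2}}^{-\Re\lambda+\rho_{S_{2}}}(x)|\,\|\phi(x)\|_{\xi}\,dx .
\]
Indeed, once this is known, Tonelli's theorem shows that for almost every $x\in G$ the inner integral of $\|\phi(nx)\|_{\xi}$ over $N_{S_{2}}\cap\overline{N}_{S_{1}}$ is finite, so the $V_{\xi}$-valued integral $\int_{N_{S_{2}}\cap\overline{N}_{S_{1}}}\phi(nx)\,dn$ converges absolutely for a.e.\ $x$; the inequality then shows the resulting function lies in $\cL_{S_{1},\xi,\lambda}$ with norm at most $C\|\phi\|_{\cL_{S_{2},\xi,\lambda}}$, and a bounded linear map is continuous. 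On the dense subspace $\cD(G,V_{\xi})$ the $N_{S_{2}}\cap\overline{N}_{S_{1}}$-integral is automatically absolutely convergent, so it is enough to prove the inequality there.

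Write $N:=N_{S_{2}}\cap\overline{N}_{S_{1}}$; it is unipotent, hence unimodular, with $N^{-1}=N$. By Tonelli the left-hand side equals $\int_{N}\int_{G}|\Iwasawa_{S_{1}}^{-\Re\lambda+\rho_{S_{1}}}(x)|\,\|\phi(nx)\|_{\xi}\,dx\,dn$; substituting $x\mapsto n^{-1}x$ in the inner integral (left invariance of Haar measure on $G$) and applying Tonelli once more, it becomes
\[
\int_{G}\Big(\int_{N}|\Iwasawa_{S_{1}}^{-\Re\lambda+\rho_{S_{1}}}(n^{-1}x)|\,dn\Big)\,\|\phi(x)\|_{\xi}\,dx .
\]
Thus the whole statement reduces to the pointwise estimate $\int_{N}|\Iwasawa_{S_{1}}^{-\Re\lambda+\rho_{S_{1}}}(n^{-1}x)|\,dn\le C(\lambda)\,|\Iwasawa_{S_{2}}^{-\Re\lambda+\rho_{S_{2}}}(x)|$ for all $x\in G$, with $C(\lambda)$ locally bounded in $\lambda$ on the region $\langle\Re\lambda,\alpha\rangle>0$ for $\alpha\in\Sigma(\fa:S_{2})\cap-\Sigma(\fa:S_{1})$.

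To prove this pointwise bound I would first reduce to the case that $S_{1}$ and $S_{2}$ are adjacent: choosing a reduced chain $S_{1}=T_{0},T_{1},\dots,T_{\ell}=S_{2}$ of parabolics with $A_{T_{j}}=A_{S}$ and consecutive members adjacent, the integral over $N$ factors as an iterated integral over the rank-one groups $N_{T_{j+1}}\cap\overline{N}_{T_{j}}$ (the standard cocycle decomposition, cf.\ \cite{Schiffmann_IntegralesDEntrelacementEtFonctionsDeWhittaker}), and the dominance hypothesis restricts to the corresponding condition at each step. For adjacent $S_{1},S_{2}$ one writes $x=\nu\,a_{2}\,m_{2}\,k$ with $\nu\in N_{S_{2}}$, $a_{2}=\Iwasawa_{S_{2}}(x)\in A_{S}$, $m_{2}\in M_{S_{2}}$, $k\in K$, and pushes $n^{-1}x$ through the $S_{1}$-Iwasawa decomposition using that $(N_{S_{1}}\cap N_{S_{2}})\times N\to N_{S_{2}}$ is a diffeomorphism, that $\Iwasawa_{S_{1}}$ is invariant under left translation by $N_{S_{1}}M_{S_{1}}$ and obeys the cocycle identity $\Iwasawa_{S_{1}}(az)=a\,\Iwasawa_{S_{1}}(z)$ for $a\in A_{S}$, and that $M_{S_{2}}\subseteq L_{S_{1}}$ (because $\fa_{S}$ is central in $\fl_{S_{2}}$) with $M_{S_{2}}$ normalising $N$. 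In this process the $N_{S_{1}}$-part drops out, $a_{2}$ is pulled to the left, the discrepancy $\rho_{S_{1}}-\rho_{S_{2}}=-2\rho_{N}$ on $\fa_{S}$ (with $\rho_{N}(X)=\tfrac12\tr\ad(X)\big|_{\Lie(N)}$) is cancelled by the Jacobian $a_{2}^{2\rho_{N}}$ of the conjugation by $a_{2}$, the $M_{S_{2}}$-part is controlled via the parabolic $S_{1}\cap M_{S_{2}}$ of $M_{S_{2}}$, and the $K$-part is absorbed with Lemma~\ref{Lemma equivalence of A_S^(-lambda+rho) under right translations}; what remains is $|\Iwasawa_{S_{2}}^{-\Re\lambda+\rho_{S_{2}}}(x)|$ times $\int_{N}|\Iwasawa_{S_{1}}^{-\Re\lambda+\rho_{S_{1}}}(n)|\,dn$. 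The convergence of this last integral — locally uniformly in $\lambda$ — under exactly the stated dominance condition is the classical Gindikin--Karpelevich/Harish-Chandra criterion for intertwining integrals (cf.\ \cite[\S\S VII.6, XIV]{Knapp_RepresentationTheoryOfSemisimpleGroups}, and in the rank-one form \cite[Proposition~B.1]{KrotzKuitOpdamSchlichtkrull_InfinitesimalCharactersOfDiscreteSeriesForRealSphericalSpaces}).

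The main obstacle is precisely this pointwise Iwasawa comparison: one must carry out the bookkeeping with two parabolic structures at once and check that the various Jacobians, the $\rho_{S_{1}}$-versus-$\rho_{S_{2}}$ mismatch, and the (possibly noncompact) $M_{S_{2}}$-component combine into a bound whose constant is uniform in $x\in G$ and locally uniform in $\lambda$. Reducing to the adjacent (rank-one) case is what makes this manageable, since there $M_{S_{2}}$ interacts with $N$ only through the rank-one subgroup attached to the wall crossed. Once the scalar inequality is in place, the return to the $\cL^{1}$-statement and to continuity is routine measure theory.
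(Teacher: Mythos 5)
Your proposal is correct and follows essentially the same route as the paper: reduce by Tonelli and left-invariance of Haar measure to the pointwise bound $\int_{N_{S_{2}}\cap\overline{N}_{S_{1}}}|\Iwasawa_{S_{1}}^{-\Re\lambda+\rho_{S_{1}}}(nx)|\,dn\leq C(\lambda)\,\Iwasawa_{S_{2}}^{-\Re\lambda+\rho_{S_{2}}}(x)$, which the paper states as an exact identity with $C(\lambda)=c(S_{2}:S_{1}:-\Re\lambda)$, the partial $c$-function, convergent under the dominance condition (\ref{eq dominance condition lambda}). The only difference is that you additionally sketch the rank-one/Gindikin--Karpelevich proof of this identity, which the paper simply quotes.
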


\begin{proof}
Let $\phi\in\cL_{S_{2},\xi,\lambda}$.
In view of Fubini's theorem, it suffices to show that
\begin{equation}\label{eq continuity of intertwining operator}
\int_{G}\int_{N_{S_{2}}\cap \overline{N}_{S_{1}}}\Big\|\Iwasawa_{S_{1}}^{-\lambda+\rho_{S_{1}}}(x)\phi(nx)\Big\|_{\xi}\,dn\,dx
\leq c \int_{G}\|\Iwasawa_{S_{2}}^{-\lambda+\rho_{S_{2}}}(x)\phi(x)\|_{\xi}\,dx
\end{equation}
for some $c>0$.

Using the invariance of the Haar measure on $G$, we obtain
\begin{align*}
\int_{G}\int_{N_{S_{2}}\cap \overline{N}_{S_{1}}}\Big\|\Iwasawa_{S_{1}}^{-\lambda+\rho_{S_{1}}}(x)\phi(nx)\Big\|_{\xi}\,dn\,dx
&=\int_{N_{S_{2}}\cap \overline{N}_{S_{1}}}\int_{G}\big\|\Iwasawa_{S_{1}}^{-\lambda+\rho_{S_{1}}}(x)\phi(nx)\big\|_{\xi}\,dx\,dn\\
&=\int_{N_{S_{2}}\cap \overline{N}_{S_{1}}}\int_{G}\big\|\Iwasawa_{S_{1}}^{-\lambda+\rho_{S_{1}}}(nx)\phi(x)\big\|_{\xi}\,dx\,dn\\
&=\int_{G}\int_{N_{S_{2}}\cap \overline{N}_{S_{1}}}\big|\Iwasawa_{S_{1}}^{-\lambda+\rho_{S_{1}}}(nx)\big|\,dn\, \|\phi(x)\|_{\xi}\,dx.
\end{align*}
Note that
$$
\int_{N_{S_{2}}\cap \overline{N}_{S_{1}}}\big|\Iwasawa_{S_{1}}^{-\lambda+\rho_{S_{1}}}(nx)\big|\,dn
=c(S_{2}:S_{1}:-\Re\lambda)\Iwasawa_{S_{2}}^{-\Re\lambda+\rho_{S_{2}}}(x)\qquad(x\in G)
$$
where $c(S_{2}:S_{1}:\dotvar)$ is the partial $c$-function which is given by the absolutely convergent integral
$$
c(S_{2}:S_{1}:\nu)
=\int_{N_{S_{2}}\cap \overline{N}_{S_{1}}}\Iwasawa_{S_{1}}^{\nu+\rho_{S_{1}}}(n)\,dn
$$
in case $\lambda=-\nu\in\fa_{\C}^{*}$ satisfies (\ref{eq dominance condition lambda}).
Hence (\ref{eq continuity of intertwining operator}) holds with $c=c(S_{2}:S_{1}:-\Re\lambda)$.
This proves the proposition.
\end{proof}

\begin{Cor}\label{Cor phi mapsto int_N phi continuous map on cV level}
Let $\lambda\in\fa_{\C}^{*}$ satisfy (\ref{eq dominance condition lambda}).
For every $\phi\in \cV_{S_{2},\xi,\lambda}$ and every $x\in G$ the integral
\begin{equation}\label{eq I(S_2:S_1:xi:lambda) phi}
\int_{N_{S_{2}}\cap \overline{N}_{S_{1}}}\phi(nx)\,dx
\end{equation}
is absolutely convergent and the function $\int_{N_{S_{2}}\cap \overline{N}_{S_{1}}}\phi(n\dotvar)\,dx$ thus obtained is an element of $\cV_{S_{1},\xi,\lambda}$. Moreover, the map
\begin{equation}\label{eq phi mapsto int_N phi}
\cV_{S_{2},\xi,\lambda}\to\cV_{S_{1},\xi,\lambda};\quad\phi\mapsto\int_{N_{S_{2}}\cap \overline{N}_{S_{1}}}\phi(n\dotvar)\,dn
\end{equation}
is continuous.
\end{Cor}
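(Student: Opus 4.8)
The plan is to deduce the corollary from its $L^{1}$-version, Proposition \ref{Prop phi mapsto int_N phi continuous map on L^1 level}, by passing to smooth vectors, and then to recover the pointwise integral formula on the Dixmier--Malliavin subspace.

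First I would observe that the operator
$$
\cJ:\phi\mapsto\int_{N_{S_{2}}\cap\overline{N}_{S_{1}}}\phi(n\dotvar)\,dn
$$
intertwines the right-regular representations $R$ on $\cL_{S_{2},\xi,\lambda}$ and $\cL_{S_{1},\xi,\lambda}$, since $\cJ\big(R(g)\phi\big)(x)=\int_{N_{S_{2}}\cap\overline{N}_{S_{1}}}\phi(nxg)\,dn=\big(R(g)\cJ\phi\big)(x)$ for all $g,x\in G$, which requires no invariance property of $N_{S_{2}}\cap\overline{N}_{S_{1}}$. By Proposition \ref{Prop phi mapsto int_N phi continuous map on L^1 level} the map $\cJ:\cL_{S_{2},\xi,\lambda}\to\cL_{S_{1},\xi,\lambda}$ is continuous, and by the proposition preceding Lemma \ref{Lemma map V_(S,xi,lambda) -> C^infty(S:xi:lambda)} both spaces carry continuous Banach $G$-representations. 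It is standard that a continuous $G$-equivariant linear map between continuous Banach representations restricts to a continuous map between the corresponding spaces of smooth vectors (equipped with their Fr\'echet topologies); hence $\cJ$ induces a continuous linear map $\cL_{S_{2},\xi,\lambda}^{\infty}\to\cL_{S_{1},\xi,\lambda}^{\infty}$. Under the topological isomorphisms $\cV_{S_{i},\xi,\lambda}\simeq\cL_{S_{i},\xi,\lambda}^{\infty}$ introduced before Lemma \ref{Lemma map V_(S,xi,lambda) -> C^infty(S:xi:lambda)}, this is a continuous linear map $\cV_{S_{2},\xi,\lambda}\to\cV_{S_{1},\xi,\lambda}$; in particular $\cJ\phi$ is, for every $\phi\in\cV_{S_{2},\xi,\lambda}$, represented by a smooth $V_{\xi}$-valued function on $G$ (using the local Sobolev lemma as in the paper).

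It then remains to identify this abstract operator with the pointwise integral (\ref{eq I(S_2:S_1:xi:lambda) phi}) and to verify pointwise absolute convergence. By \cite[Th\'eor\`eme 3.3]{DixmierMalliavin_FactorisationsDeFunctionsEtDeVecteursIndefinimentDifferentiables}, every $\phi\in\cV_{S_{2},\xi,\lambda}$ is a finite sum of elements $\pi(f)\psi$ with $f\in\cD(G)$ and $\psi\in\cV_{S_{2},\xi,\lambda}$; since convergence in $\cV_{S_{2},\xi,\lambda}$ entails locally uniform convergence on $G$, this is an identity of functions. For a single summand, Fubini's theorem together with the invariance of the Haar measure gives, exactly as in the proof of Lemma \ref{Lemma map V_(S,xi,lambda) -> C^infty(S:xi:lambda)},
$$
\int_{N_{S_{2}}\cap\overline{N}_{S_{1}}}\big(\pi(f)\psi\big)(nx)\,dn
=\int_{G}f(x^{-1}y)\Big(\int_{N_{S_{2}}\cap\overline{N}_{S_{1}}}\psi(ny)\,dn\Big)\,dy,
$$
where the inner integral is the locally integrable representative of $\cJ\psi\in\cL_{S_{1},\xi,\lambda}$ furnished by Proposition \ref{Prop phi mapsto int_N phi continuous map on L^1 level}; hence the right-hand side is an absolutely convergent integral for every $x\in G$, is smooth in $x$, and represents $\cJ\big(\pi(f)\psi\big)$. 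Summing over the finitely many terms shows that for general $\phi\in\cV_{S_{2},\xi,\lambda}$ the integral $\int_{N_{S_{2}}\cap\overline{N}_{S_{1}}}\phi(nx)\,dn$ converges absolutely for every $x\in G$, defines a smooth function of $x$, and equals the smooth vector $\cJ\phi\in\cV_{S_{1},\xi,\lambda}$ constructed above; together with the continuity from the previous paragraph this is the assertion of the corollary.

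I expect the only real work to be the bookkeeping of the last step: checking that the Dixmier--Malliavin decomposition may be evaluated pointwise (which follows from the locally uniform control built into the topology of $\cV_{S_{2},\xi,\lambda}$) and that the Fubini interchange is legitimate because $\cJ\psi$ is genuinely locally integrable rather than merely measurable. Since these points are already present in the proof of Lemma \ref{Lemma map V_(S,xi,lambda) -> C^infty(S:xi:lambda)}, no new ideas are required beyond Proposition \ref{Prop phi mapsto int_N phi continuous map on L^1 level}, which carries the analytic weight.
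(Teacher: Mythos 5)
Your proposal is correct and follows essentially the same route as the paper's own proof: continuity is deduced from the $L^{1}$-level Proposition by passing to smooth vectors via equivariance, and pointwise absolute convergence and smoothness are obtained on Dixmier--Malliavin factorizations $\pi(f)\psi$ by the same Fubini computation. The extra detail you supply (the explicit intertwining identity and the locally uniform convergence justifying the pointwise reading of the factorization) only makes explicit what the paper leaves implicit.
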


\begin{proof}
It follows from Proposition \ref{Prop phi mapsto int_N phi continuous map on L^1 level} that $\phi\mapsto\int_{N_{S_{2}}\cap \overline{N}_{S_{1}}}\phi(n\dotvar)\,dn$ defines  a continuous map between the spaces of smooth vectors in $\cL_{S_{2},\xi,\lambda}$ and $\cL_{S_{1},\xi,\lambda}$ respectively. It therefore suffices to show that for every $\phi\in\cV_{S_{2},\xi,\lambda}$ and $x\in G$ the integral (\ref{eq I(S_2:S_1:xi:lambda) phi}) is absolutely convergent and the function$\int_{N_{S_{2}}\cap \overline{N}_{S_{1}}}\phi(n\dotvar)\,dn$ is smooth. In view of (\ref{eq cV=span(f*phi)}) it suffices to do this for $\phi$ of the form $\phi=\pi(f)\psi$ with $f\in\cD(G)$ and $\psi\in\cV_{S_{2},\xi,\lambda}$.

Let $f\in \cD(G)$, $\phi\in\cV_{S_{2},\xi,\lambda}$ and $x\in G$.
It follows from Proposition \ref{Prop phi mapsto int_N phi continuous map on L^1 level} that the integral
$$
\int_{G}f(x^{-1}y)\int_{N_{S_{2}}\cap\overline{N}_{S_{1}}}\psi(ny)\,dn\,dy
$$
is absolutely convergent. Moreover, it depends smoothly on $x$.  By Fubini's theorem this integral is equal to
$$
\int_{N_{S_{2}}\cap\overline{N}_{S_{1}}}\big(\pi(f)\psi\big)(nx)\,dn.
$$
This proves the corollary.
\end{proof}

We define
$$
\cA(S_{2}:S_{1}:\xi:\lambda)
:=\theta^{S_{2}}_{\xi,\lambda}\circ A(S_{1}:S_{2}:\xi:\lambda)^{*}\circ\omega^{S_{1}}_{\xi,\lambda}
$$
The following diagram commutes.
$$
\xymatrix{
   \cD'(S_{1}:\xi:\lambda) \ar[rr]^{\cA(S_{2}:S_{1}:\xi:\lambda)} \ar@<-2pt>[dd]_{\omega^{S_{1}}_{\xi,\lambda}}
        && \cD'(S_{2}:\xi:\lambda) \ar@<-2pt>[dd]_{\omega^{S_{2}}_{\xi,\lambda}} \\
\\
   C^{\infty}(S_{1}:\xi:\lambda)'\ar@<-2pt>[uu]_{\theta^{S_{1}}_{\xi,\lambda}} \ar[rr]^{A(S_{1}:S_{2}:\xi:\lambda)^{*}}
        && C^{\infty}(S_{2}:\xi:\lambda)' \ar@<-2pt>[uu]_{\theta^{S_{2}}_{\xi,\lambda}}
}
$$
We recall from (\ref{eq Bijection V'_S,xi,lambda to D'(S:xi:lambda)}) that every distribution $\mu\in\cD'(S:\xi:\lambda)$ extends to a continuous linear functional on $\cV_{S,\xi,\lambda}$. Therefore, if  $\lambda\in\fa_{\C}^{*}$ satisfies (\ref{eq dominance condition lambda}), then in view of Corollary \ref{Cor phi mapsto int_N phi continuous map on cV level} the assignment
$$
\phi\mapsto\mu\Big(\int_{N_{S_{2}}\cap\overline{N}_{S_{1}}}\phi(n\dotvar)\,dn\Big)
$$
defines for every $\mu\in\cD'(S_{1}:\xi:\lambda)$  a distribution in $\cD'(G,V_{\xi})$.

\begin{Prop}\label{Prop int formula for a(S_2:S_1:xi:lambda) - appendix}
Let $\lambda\in\fa_{\C}^{*}$ satisfy (\ref{eq dominance condition lambda}).
For every $\mu\in\cD'(S_{1}:\xi:\lambda)$ the distribution $\cA(S_{2}:S_{1}:\xi:\lambda)\mu\in\cD'(S_{2}:\xi:\lambda)$ is given by
$$
\big[\cA(S_{2}:S_{1}:\xi:\lambda)\mu\big](\phi)
=\mu\Big(\int_{N_{S_{2}}\cap\overline{N}_{S_{1}}}\phi(n\dotvar)\,dn\Big)
\qquad\big(\phi\in\cD(G,V_{\xi})\big).
$$
\end{Prop}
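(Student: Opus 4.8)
The plan is to unwind the definition $\cA(S_{2}:S_{1}:\xi:\lambda)=\theta^{S_{2}}_{\xi,\lambda}\circ A(S_{1}:S_{2}:\xi:\lambda)^{*}\circ\omega^{S_{1}}_{\xi,\lambda}$ and to reduce the asserted formula to a single compatibility relation between the averaging maps and the standard intertwining operator. For a parabolic $S$ (with the fixed $\xi$, $\lambda$) write $\cT_{S}\colon\cV_{S,\xi,\lambda}\to C^{\infty}(S:\xi:\lambda)$ for the map from the lemma on $\cV_{S,\xi,\lambda}\to C^{\infty}(S:\xi:\lambda)$, so that $\cT_{S}\phi(x)=\int_{M_{S}}\int_{A_{S}}\int_{N_{S}}a^{-\lambda+\rho_{S}}\xi(m^{-1})\phi(manx)\,dn\,da\,dm$, and write $\cR\phi:=\int_{N_{S_{2}}\cap\overline{N}_{S_{1}}}\phi(n\,\dotvar)\,dn$, which by Corollary \ref{Cor phi mapsto int_N phi continuous map on cV level} sends $\cV_{S_{2},\xi,\lambda}$ continuously into $\cV_{S_{1},\xi,\lambda}$ under the dominance hypothesis (\ref{eq dominance condition lambda}). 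By construction the extension of a distribution $\mu\in\cD'(S_{1}:\xi:\lambda)$ to $\cV_{S_{1},\xi,\lambda}$ is $\psi\mapsto(\omega^{S_{1}}_{\xi,\lambda}\mu)(\cT_{S_{1}}\psi)$, and $(\theta^{S_{2}}_{\xi,\lambda}\eta)(\phi)=\eta(\cT_{S_{2}}\phi)$ for $\phi\in\cD(G,V_{\xi})$.

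Next, granting the ``kernel identity''
\begin{equation*}
\cT_{S_{1}}\cR\phi=A(S_{1}:S_{2}:\xi:\lambda)\,\cT_{S_{2}}\phi
\qquad\big(\phi\in\cD(G,V_{\xi})\big),
\tag{$\star$}
\end{equation*}
the proof is a short unravelling: for $\phi\in\cD(G,V_{\xi})\subseteq\cV_{S_{2},\xi,\lambda}$ and $\mu\in\cD'(S_{1}:\xi:\lambda)$,
\begin{align*}
\big[\cA(S_{2}:S_{1}:\xi:\lambda)\mu\big](\phi)
&=\big(A(S_{1}:S_{2}:\xi:\lambda)^{*}\,\omega^{S_{1}}_{\xi,\lambda}\mu\big)(\cT_{S_{2}}\phi)\\
&=\big(\omega^{S_{1}}_{\xi,\lambda}\mu\big)\big(A(S_{1}:S_{2}:\xi:\lambda)\,\cT_{S_{2}}\phi\big)
\overset{(\star)}{=}\big(\omega^{S_{1}}_{\xi,\lambda}\mu\big)(\cT_{S_{1}}\cR\phi),
\end{align*}
and the last term is by definition the value of the extension of $\mu$ on $\cR\phi\in\cV_{S_{1},\xi,\lambda}$, i.e. $\mu\big(\int_{N_{S_{2}}\cap\overline{N}_{S_{1}}}\phi(n\,\dotvar)\,dn\big)$, which is exactly the claim.

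Finally I would prove $(\star)$. Under the dominance hypothesis the standard intertwining operator $A(S_{1}:S_{2}:\xi:\lambda)$ is given on $C^{\infty}(S_{2}:\xi:\lambda)$ by the absolutely convergent integral $g\mapsto\int_{N_{S_{1}}\cap\overline{N}_{S_{2}}}g(n\,\dotvar)\,dn$: since $S_{1}$ and $S_{2}$ share the split centre $A_{S}$ they have a common Levi, and the relevant convergence is that of the partial $c$-function $\int_{N_{S_{1}}\cap\overline{N}_{S_{2}}}\Iwasawa_{S_{2}}^{-\Re\lambda+\rho_{S_{2}}}(n)\,dn$, which is governed by exactly the roots in $\Sigma(\fa:S_{2})\cap-\Sigma(\fa:S_{1})=-\big(\Sigma(\fa:S_{1})\cap-\Sigma(\fa:S_{2})\big)$ (cf. the $c$-function estimate in the proof of Proposition \ref{Prop phi mapsto int_N phi continuous map on L^1 level}). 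Writing out both sides of $(\star)$ at a point $x\in G$ as iterated integrals over $M_{S}$, $A_{S}$ and the relevant unipotent subgroups, one matches them using: the product decompositions $N_{S_{i}}=(N_{S_{i}}\cap N_{S_{3-i}})(N_{S_{i}}\cap\overline{N}_{S_{3-i}})$ into $\fa_{S}$-stable closed subgroups; the left $N_{S_{2}}$-invariance of $\cT_{S_{2}}\phi$ and the left $N_{S_{1}}$-invariance of $\cT_{S_{1}}\cR\phi$, which absorb the surplus unipotent integrations; and unimodularity of the unipotent groups together with the Jacobians of conjugation by $M_{S}A_{S}$, which supply precisely the shift from $a^{-\lambda+\rho_{S_{1}}}$ to $a^{-\lambda+\rho_{S_{2}}}$ (the difference $\rho_{S_{1}}-\rho_{S_{2}}$ being the sum of the $\fa_{S}$-roots occurring in $N_{S_{1}}\cap\overline{N}_{S_{2}}$). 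All Fubini interchanges are legitimate by the absolute convergence provided by (\ref{eq dominance condition lambda}). The only genuine obstacle is this last piece of bookkeeping---lining up the unipotent decompositions and the $\rho$-shift on the two sides---everything else being a formal manipulation of the definitions already assembled in the appendix.
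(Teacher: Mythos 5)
Your proposal is correct, and its skeleton is sound: the reduction of the claim to the kernel identity $(\star)$, $\cT_{S_{1}}\cR\phi=A(S_{1}:S_{2}:\xi:\lambda)\cT_{S_{2}}\phi$, is exactly the right way to make the formal duality work, and your identification of the extension of $\mu$ to $\cV_{S_{1},\xi,\lambda}$ as $\psi\mapsto(\omega^{S_{1}}_{\xi,\lambda}\mu)(\cT_{S_{1}}\psi)$ matches how the paper sets things up. The paper organizes the argument differently: instead of proving the pointwise function identity $(\star)$ for every $\phi\in\cD(G,V_{\xi})$, it first uses the density of $C^{\infty}(S_{1}:\xi^{\vee}:-\lambda)$ in $\cD'(S_{1}:\xi:\lambda)$ (together with the continuity of $\cR$ from Corollary \ref{Cor phi mapsto int_N phi continuous map on cV level}) to reduce to the case where $\mu$ is a smooth vector, and then verifies the identity \emph{weakly}, pairing everything against $\mu$ via the $K$-integral of Lemma \ref{Lemma expression for omega phi}; in that setting all manipulations are Fubini interchanges of a single absolutely convergent scalar integral over $G$. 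Your route buys a cleaner conceptual statement (a genuine identity of smooth functions, independent of $\mu$) at the cost of having to carry out, as an identity of $V_{\xi}$-valued functions, precisely the decomposition-and-Jacobian bookkeeping that you only sketch; the paper's route trades that for a soft density argument and then performs the same bookkeeping in the scalar-valued setting. The ingredients you list for $(\star)$ — the decompositions $N_{S_{i}}=(N_{S_{i}}\cap N_{S_{3-i}})(N_{S_{i}}\cap\overline{N}_{S_{3-i}})$ with unit Jacobian, and the conjugation Jacobian of $M_{S}A_{S}$ on $N_{S_{2}}\cap\overline{N}_{S_{1}}$ supplying the shift $\rho_{S_{1}}\to\rho_{S_{2}}$ — are the correct ones and the computation does close up, so I would not call the missing details a gap, but be aware that this verification is where essentially all of the content of the proposition lives, so in a final write-up it must be done in full. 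One small slip: the convergence of $A(S_{1}:S_{2}:\xi:\lambda)$ on $C^{\infty}(S_{2}:\xi:\lambda)$ is governed by $\int_{N_{S_{1}}\cap\overline{N}_{S_{2}}}\Iwasawa_{S_{2}}^{\Re\lambda+\rho_{S_{2}}}(n)\,dn$ (the induced function transforms by $a^{\lambda+\rho_{S_{2}}}$, not $a^{-\lambda+\rho_{S_{2}}}$); the condition this imposes is nevertheless exactly (\ref{eq dominance condition lambda}), so your conclusion stands.
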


For the proof of the proposition we need the following lemma.

\begin{Lemma}\label{Lemma expression for omega phi}
Let $\phi\in C^{\infty}(S:\xi^{\vee}:-\lambda)$ and consider $\phi$ as an element of $\cD'(S:\xi:\lambda)$. Then
$$
\big(\omega^{S}_{\xi,\lambda}\phi\big)(f)
=\int_{K}\Big(\phi(k),f(k)\Big)\,dk
\qquad\big(f\in C^{\infty}(S:\xi:\lambda)\big).
$$
\end{Lemma}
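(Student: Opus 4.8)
The plan is to compute $(\omega^{S}_{\xi,\lambda}\phi)(f)$ directly from the definition, unfolding a single integral over $G$. Recall that regarding $\phi\in C^{\infty}(S:\xi^{\vee}:-\lambda)\subseteq\cE(G,V_{\xi}^{*})$ as an element of $\cD'(G,V_{\xi})$ means $\phi(\psi)=\int_{G}(\phi(g),\psi(g))\,dg$ for $\psi\in\cD(G,V_{\xi})$, where $(\cdot,\cdot)$ is the canonical pairing of $V_{\xi}^{*}=V_{\xi^{\vee}}$ with $V_{\xi}$. This distribution does lie in $\cD'(S:\xi:\lambda)$: writing $n^{-1}a^{-1}m^{-1}=m^{-1}a^{-1}n'$ with $n'\in N_{S}$ and using that $\phi(nx)=\phi(x)$ for $n\in N_{S}$, one gets $L^{\vee}(man)\phi=a^{\lambda-\rho_{S}}\xi^{\vee}(m^{-1})\phi$. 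Hence for $f\in C^{\infty}(S:\xi:\lambda)$,
$$
(\omega^{S}_{\xi,\lambda}\phi)(f)=\phi(\psi_{0}f)=\int_{G}\psi_{0}(g)\,\big(\phi(g),f(g)\big)\,dg.
$$

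The key observation concerns the scalar function $F(g):=\big(\phi(g),f(g)\big)$. For $m\in M_{S}$, $a\in A_{S}$, $n\in N_{S}$ and $x\in G$ one has $\phi(manx)=a^{-\lambda+\rho_{S}}\xi^{\vee}(m)\phi(x)$ and $f(manx)=a^{\lambda+\rho_{S}}\xi(m)f(x)$; the $\lambda$-contributions cancel, and since the pairing $V_{\xi^{\vee}}\times V_{\xi}\to\C$ is $M_{S}$-invariant,
$$
F(manx)=a^{2\rho_{S}}\big(\xi^{\vee}(m)\phi(x),\xi(m)f(x)\big)=a^{2\rho_{S}}F(x).
$$
I would then unfold $\int_{G}\psi_{0}(g)F(g)\,dg$ using the standard Iwasawa-type integration formula relative to $G=M_{S}A_{S}N_{S}K$, which — for Haar measures normalized as in the definition of $\omega^{S}_{\xi,\lambda}$, and reducing to the formula of Section~\ref{Section Setup and notation} when $S$ is minimal — reads $\int_{G}\Phi(g)\,dg=\int_{M_{S}}\int_{A_{S}}\int_{N_{S}}\int_{K}\Phi(mank)\,dk\,dn\,da\,dm$ for $\Phi\in\cD(G)$. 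Applying this to $\Phi=\psi_{0}\cdot F$ and inserting $F(mank)=a^{2\rho_{S}}F(k)$,
$$
(\omega^{S}_{\xi,\lambda}\phi)(f)=\int_{K}\Big(\int_{M_{S}}\int_{A_{S}}\int_{N_{S}}a^{2\rho_{S}}\,\psi_{0}(mank)\,dn\,da\,dm\Big)F(k)\,dk,
$$
and the inner triple integral equals $1$ for every $k\in K$ by the defining property of $\psi_{0}$. Therefore $(\omega^{S}_{\xi,\lambda}\phi)(f)=\int_{K}F(k)\,dk=\int_{K}(\phi(k),f(k))\,dk$, as claimed.

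All the manipulations are routine applications of Fubini's theorem, the integrands being compactly supported and continuous, so I do not anticipate any genuine difficulty; the only external ingredient is the integration formula for $G=M_{S}A_{S}N_{S}K$ above, which is classical and is the parabolic analogue of the one recorded in Section~\ref{Section Setup and notation}. Alternatively, one can prove the lemma by verifying — through the same unfolding — that $\theta^{S}_{\xi,\lambda}$ sends the functional $f\mapsto\int_{K}(\phi(k),f(k))\,dk$ to $\phi$, and then invoking that $\theta^{S}_{\xi,\lambda}$ and $\omega^{S}_{\xi,\lambda}$ are mutually inverse.
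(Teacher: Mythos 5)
Your proof is correct and follows essentially the same route as the paper: unfold $(\omega^{S}_{\xi,\lambda}\phi)(f)=\int_{G}\psi_{0}(g)\big(\phi(g),f(g)\big)\,dg$, use the $a^{2\rho_{S}}$-equivariance of $g\mapsto(\phi(g),f(g))$ together with the $M_{S}A_{S}N_{S}K$ integration formula, and finish with the normalization of $\psi_{0}$. Your version is if anything slightly cleaner, since you apply the defining property of $\psi_{0}$ pointwise in $k$ rather than tacitly invoking a right-$K$-invariant choice of $\psi_{0}$ as the paper does.
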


\begin{proof}
Let $f\in C^{\infty}(S:\xi:\lambda)$. Then
\begin{align*}
\big(\omega^{S}_{\xi,\lambda}\phi\big)(f)
&=\int_{G}\Big(\phi(x),\psi_{0}(x) f(x)\Big)\,dx\\
&=\int_{M_{S}}\int_{A_{S}}\int_{N_{S}}a^{2\rho_{S}}\psi_{0}(man)\,dn\,da\,dm\int_{K}\Big(\phi(k),f(k)\Big)\,dk.
\end{align*}
The claim in the lemma now follows from the observation that
\begin{align*}
\int_{M_{S}}\int_{A_{S}}\int_{N_{S}}a^{2\rho_{S}}\psi_{0}(man)\,dn\,da\,dm
&=\int_{M_{S}}\int_{A_{S}}\int_{N_{S}}\int_{K}a^{2\rho_{S}}\psi_{0}(mank)\,dn\,da\,dm\,dk\\
&=\int_{G}\Iwasawa_{S}^{2\rho_{S}}(x)\psi_{0}(x)\,dx
=1.
\end{align*}
\end{proof}

\begin{proof}[Proof of Proposition \ref{Prop int formula for a(S_2:S_1:xi:lambda) - appendix}]
Since (\ref{eq phi mapsto int_N phi}) is continuous and $C^{\infty}(S_{1}:\xi^{\vee}:-\lambda)$ is a dense subspace of $\cD'(S_{1}:\xi:\lambda)$, it suffices to prove the identity only for $\mu\in C^{\infty}(S_{1}:\xi^{\vee}:-\lambda)$. Let $\mu$ be such a function and let $\phi\in \cD(G,V_{\xi})$. Then
\begin{align*}
&\big[\cA(S_{2}:S_{1}:\xi:\lambda)\mu \big](\phi)\\
&\quad=\omega_{\xi,\lambda}^{S_{1}}(\mu)\Big(x\mapsto\int_{M_{S}}\int_{A_{S}}\int_{N_{S_{2}}}\int_{N_{S_{1}}\cap \overline{N}_{S_{2}}}
    a^{-\lambda+\rho_{S_{1}}}\xi(m^{-1})\phi(man\overline{n}x)\,d\overline{n}\,dn\,da\,dm\Big).
\end{align*}
It follows from Lemma \ref{Lemma expression for omega phi} that the right-hand side is equal to
\begin{align*}
&\int_{K}\Big(\mu(k),\int_{M_{S}}\int_{A_{S}}\int_{N_{S_{2}}}\int_{N_{S_{1}}\cap \overline{N}_{S_{2}}}
    a^{-\lambda+\rho_{S_{1}}}\xi(m^{-1})\phi(man\overline{n}k)\,d\overline{n}\,dn\,da\,dm\Big)\,dk\\
&\quad=\int_{K}\int_{M_{S}}\int_{A_{S}}\int_{N_{S_{2}}}\int_{N_{S_{1}}\cap \overline{N}_{S_{2}}}
    a^{-\lambda+\rho_{S_{1}}}\Big(\mu(k),\xi(m^{-1})\phi(man\overline{n}k)\Big)\,d\overline{n}\,dn\,da\,dm\,dk.
\end{align*}
Since the multiplication maps
$$
\big(N_{S_{1}}\cap N_{S_{2}}\big)\times \big(\overline{N}_{S_{1}}\cap N_{S_{2}}\big)\to N_{S_{2}},
\quad \big(N_{S_{1}}\cap N_{S_{2}}\big)\times \big(N_{S_{1}}\cap \overline{N}_{S_{2}}\big)\to N_{S_{1}}
$$
are diffeomorphisms with Jacobian equal to $1$, we can rewrite this repeated integral as
\begin{align*}
&\int_{K}\int_{M_{S}}\int_{A_{S}}\int_{ \overline{N}_{S_{1}}\cap N_{S_{2}}}\int_{N_{S_{1}}}a^{-\lambda+\rho_{S_{1}}}
    \Big(\mu(k),\xi(m^{-1})\phi(ma\overline{n}nk)\Big)\,dn\,d\overline{n}\,da\,dm\,dk\\
&\quad=\int_{K}\int_{M_{S}}\int_{A_{S}}\int_{N_{S_{1}}}
    \Big(\mu(mank),\int_{N_{S_{2}}\cap \overline{N}_{S_{1}}}\phi(\overline{n}mank)d\overline{n}\Big)\,dn\,\,da\,dm\,dk\\
&\quad=\int_{G}
    \Big(\mu(x),\int_{N_{S_{2}}\cap \overline{N}_{S_{1}}}\phi(\overline{n}x)d\overline{n}\Big)\,dx.
\end{align*}
This proves the proposition.
\end{proof}


\def\adritem#1{\hbox{\small #1}}
\def\distance{\hbox{\hspace{3.5cm}}}
\def\apetail{@}
\def\addSayag{\vbox{
\adritem{E. Sayag}
\adritem{Department of Mathematics}
\adritem{Ben-Gurion University of the Negev}
\adritem{P.O.B. 653}
\adritem{Be'er Sheva 8410501}
\adritem{Israel}
\adritem{E-mail: sayage{\apetail}math.bgu.ac.il}
}
}
\def\addKuit{\vbox{
\adritem{J.~J.~Kuit}
\adritem{Institut f\"ur Mathematik}
\adritem{Universit\"at Paderborn}
\adritem{Warburger Stra{\ss}e 100}
\adritem{33089 Paderborn}
\adritem{Germany}
\adritem{E-mail: j.j.kuit{\apetail}gmail.com}
}
}
\mbox{}
\vfill
\hbox{\vbox{\addKuit}\vbox{\distance}\vbox{\addSayag}}

\end{document}